\theoremstyle{plain}
\newtheorem{theorem}{Theorem}[section]
\newtheorem*{theorem*}{Theorem}
\newtheorem{lemma}[theorem]{Lemma}
\newtheorem{corollary}[theorem]{Corollary}
\newtheorem*{corollary*}{Corollary}
\newtheorem{proposition}[theorem]{Proposition}
\theoremstyle{definition}
\newtheorem{example}[theorem]{Example}
\newtheorem{definition}[theorem]{Definition}
\newtheorem{hypothesis}[theorem]{Hypothesis}
\theoremstyle{remark}
\newtheorem{remark}[theorem]{Remark}
\theoremstyle{plain}
\providecommand{\customgenericname}{}
\newcommand{\newcustomtheorem}[2]{%
  \newenvironment{#1}[1]
  {%
   \renewcommand\customgenericname{#2}%
   \renewcommand\theinnercustomgeneric{##1}%
   \innercustomgeneric
  }
  {\endinnercustomgeneric}
}
\numberwithin{equation}{section}
\newcounter{quote}
\renewcommand{\thequote}{(H\arabic{quote})}
\hfill\llap{\thequote}\hfill\parbox{\dimexpr \textwidth-2cm}%
{\normalsize{{\BODY}}}%
\hfill\vspace{1ex}\par}
 \renewcommand\AA{{\mathbb{A}}}
 \def\QQ{\mathbb{Q}}
 \newcommand\ZZ{{\mathbb{Z}}}
 \def\A{\mathcal{A}}
 \def\B{{\mathcal B}}
 \def\E{{\mathcal E}}
 \def\F{{\mathcal F}}
 \def\K{{\mathcal K}}
 \def\N{{\mathcal N}}
 \def\R{{\mathcal R}}
 \def\Lk{{\text{Lk}}}
 \def\join{*}
 \def\St{\text{St}}  
 \def\ujoin{\,\underline{\join}\,}
 \def\op{\mathrm{op}}
 \newcommand{\GF}[1]{\mathbb{F}_{#1}}
 \DeclareMathOperator\PSL{PSL}
 \DeclareMathOperator{\Ln}{L}
 \DeclareMathOperator{\PSU}{PSU}
 \DeclareMathOperator{\U}{U}
 \DeclareMathOperator{\PSp}{PSp}
 \DeclareMathOperator{\Symplectic}{Sp}
 \DeclareMathOperator\Sz{Sz}
 \DeclareMathOperator\McL{McL}
 \newcommand{\Fi}[1]{{\mathrm{M}}(#1)}
 \DeclareMathOperator\Ly{{{Ly}}}
 \DeclareMathOperator\ON{{{O'N}}}
 \DeclareMathOperator\HS{{{HS}}}
 \DeclareMathOperator\He{{{He}}}
 \def\HN{F_5}
 \DeclareMathOperator\Ru{{{Ru}}}
 \def\Th{F_3}
 \DeclareMathOperator\Janko{{{J}}}
 \DeclareMathOperator\Mathieu{{{M}}}
 \newcommand{\Conway}[1]{\cdot{#1}}
 \def\Monster{F_1}
 \def\Baby{F_2}
 \DeclareMathOperator\Suz{{{Suz}}}
 \DeclareMathOperator{\Sym}{Sym}
 \DeclareMathOperator{\Alt}{Alt}
 \DeclareMathOperator{\Cyclic}{C}
 \DeclareMathOperator{\Dihedral}{D}
 \DeclareMathOperator\Lie{Lie}
 \DeclareMathOperator\Aut{Aut}
 \DeclareMathOperator\Out{Out}
 \DeclareMathOperator\Inn{Inn}
 \DeclareMathOperator\Outdiag{Out diag}
 \DeclareMathOperator\Inndiag{Inn diag}
 \DeclareMathOperator\Id{Id}
 \def\iret{r_{\mathfrak{i}}}
 \newcommand{\tq}{\mathrel{{\ensuremath{\: : \: }}}}
 \DeclareMathOperator\Fix{Fix}
 \def\BrownPoset{\mathcal{S}}
 \def\QuillenPoset{\mathcal{A}}
 \def\BoucPoset{\mathcal{B}}
 \def\Sp{\mathcal{S}_p} 
 \def\Ap{\mathcal{A}_p}
 \def\Bp{\mathcal{B}_p}
 \DeclareMathOperator\Outposet{{O}}
 \def\Imageposet{{\mathcal{A}}}
 \def\Phiposet{\Outposet^{\Phi}}
 \def\QD{(\mathcal{Q}\mathcal{D})}
 \DeclareMathOperator\Syl{Syl}
 \def\groupiso{\cong}
 \newcommand\gen[1]{\left\langle#1\right\rangle}
 \newcommand\Span[1]{\left\langle#1\right\rangle}
\newcommand{\donerk}{\hfill $\diamondsuit$ \smallskip \newline}
      \def\@setcopyright{}
      \def\serieslogo@{}
\title  [Some results on Quillen's Conjecture \\ via equivalent-poset techniques]
        {Some results on Quillen's Conjecture \\ via equivalent-poset techniques}
\author{ Kevin Iv\'{a}n Piterman* \\
          Departamento de Matem\'{a}tica \\
	  IMAS-CONICET, FCEyN \\
	  Universidad de Buenos Aires \\
	  Buenos Aires  ARGENTINA \\  
	  e-mail: { \tt kpiterman@dm.uba.ar } \\ 
	       \\ 
        Stephen D. Smith \\ 
         Department of Mathematics  \\
         University of Illinois at Chicago \\
         Chicago Illinois USA \\
	 (home: 728 Wisconsin, Oak Park IL 60304 USA) \\
	 e-mail: {\tt smiths@uic.edu}       \\
       }
\thanks{*Supported by an Oberwolfach Leibniz Fellowship, a CONICET postdoctoral fellowship and grants PIP 11220170100357,
        PICT 2017-2997, and UBACYT 20020160100081BA}
\begin{document}

\begin{abstract}
We extend the Main Theorem of Aschbacher and Smith on Quillen's Conjecture
from~$p>5$ to the remaining odd primes~$p = 3,5$.
In the process, we develop further combinatorial and homotopical methods
for studying the poset of nontrivial elementary abelian~$p$-subgroups of a finite group.
The techniques lead to a number of further results on the Conjecture, 
often reducing dependence on the CFSG;
in particular, we also provide some partial results toward the case of~$p=2$.
   \end{abstract}

\subjclass[2010]{20J05, 20D05, 20D25, 20D30, 05E18, 06A11.}

\keywords{$p$-subgroups, Quillen's conjecture, posets, finite groups.}

\maketitle

\tableofcontents

\part{Introduction and relevant literature}

%\footnote{
%  fakenote:
%  I added a "part" subdivision, mainly to clarify the Table of Contents.
%  Do you like it?  (K) Yes!
%          }

\section{Introduction: Background and statement of principal results}

The original version of this paper was designed to prove an extension to~$p=3,5$ of the Main Theorem of~\cite{AS93},
which we state as Theorem~\ref{theoremASExtension} below.
But our methods also led to other useful results;
notably a closely related Theorem~\ref{theoremASExtensionAlternative}---which gives
simplifications both in the hypothesis for the extension,
and in certain aspects of the inductive proof (including a ``milder'' use of the~CFSG).
Other results contribute to the Conjecture for the remaining prime~$p = 2$.

This Introduction-section will provide some general background for all these results;
beginning with a discussion of these two versions of the extension to all odd~$p$,
with their similarities and differences.

\vspace{0.2cm}

Recall for a prime~$p$ that~$\Ap(G)$ is the poset of nontrivial elementary abelian~$p$-subgroups of a finite group~$G$.

Quillen showed (\cite[Prop 2.4]{Qui78}) that if~$O_p(G) > 1$, then~$\Ap(G)$ is contractible;
and the converse is the original statement of Quillen's Conjecture (see~\cite[2.9]{Qui78}):
that if~$O_p(G) = 1$, then~$\Ap(G)$ should be non-contractible. 
As in much of the subsequent literature, we instead work with a stronger ``homology'' form of the conjecture, 
replacing non-contractibility with non-vanishing of rational (reduced) homology: 

\vspace{0.2cm}

\begin{flushleft}
(H-QC) \quad If~$O_p(G) = 1$, then~$\tilde{H}_*(\Ap(G),\QQ) \neq 0$.
\end{flushleft}

\vspace{0.2cm}

\noindent
We will recall some of the general literature on the Conjecture, starting at later Theorem~\ref{psolvableQC}.

\subsection*{Extending the Aschbacher-Smith result and its overall strategy}

\bigskip

But next, we will begin our Introduction
by instead focusing on one particular contribution to that literature, namely the Main Theorem of Aschbacher-Smith in~\cite{AS93}. 
Indeed one of our principal results in this paper is Theorem~\ref{theoremASExtension} below,
in which we extend their original statement, by adding the cases~$p=3,5$ to their original treatment for~$p > 5$.
We mention that the original extension to~$p = 5$ was given in~\cite{KP20}---see later Corollary~\ref{extensionsASQC5}.

For the statement below,
the technical condition~$\QD_p$ is given in later Definition~\ref{defn:QD}---including the term~{\em $p$-extension\/},
for a split extension~$LB$ of some~$L$ by an elementary~$p$-group~$B$ of {\em outer\/} automorphisms
(we will call such a~$B$ a {\em $p$-outer\/} of~$L$).
The term~``(H2u)'' is a name we have invented for the restriction on unitary groups in the hypothesis in~\cite{AS93}.
For {\em components\/} of~$G$, see later Remark~\ref{rk:compsandF*G}---these quasisimple subnormal subgroups
are crucial to building up the structure of~$G$.

\begin{theorem}
\label{theoremASExtension}
We extend~\cite[Main Theorem]{AS93}, from~$p>5$ there, to all odd~$p$ here.

That is, suppose that~$G$ is a finite group,~$p$ is an odd prime, and the following condition holds:

\begin{flushleft}
\begin{tabular}{lcl}
(H2u) & & \pbox{13cm}{Whenever $L \groupiso \U_n(q)$ is a component of~$G$, with~$q$ odd and~$p$ dividing~$q+1$,
          then all~$p$-extensions of~$\U_m(q^{p^e})$ have~$\QD_p$ for all~$m \leq n$ and~$e \in \ZZ$.}
\end{tabular}
\end{flushleft}

Then~$G$ satisfies~(H-QC).
\end{theorem}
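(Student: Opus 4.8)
The plan is to argue by induction on $|G|$, following the architecture of the proof of \cite[Main Theorem]{AS93} but incorporating the equivalent-poset techniques developed in this paper, so as to handle the extra configurations that arise at the small primes $p=3,5$. If $O_p(G)>1$ there is nothing to prove --- the hypothesis of (H-QC) fails, and in fact $\Ap(G)$ is contractible by \cite[Prop 2.4]{Qui78} --- so I assume $O_p(G)=1$ and aim to produce a nonzero class in $\tilde{H}_*(\Ap(G),\QQ)$. The inductive hypothesis is legitimately available throughout, since (H2u) is inherited by the subgroups and quotients of $G$ arising in the reductions below (their unitary components being, after the structural operations, among those of $G$).

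First I would carry out the standard structural reductions. Analyzing $F^*(G)=E(G)F(G)$ (Remark~\ref{rk:compsandF*G}): since $O_p(G)=1$ the Fitting subgroup $F(G)$ is a $p'$-group; using Quillen's join formula $\Ap(G_1\times G_2)\homotequiv\Ap(G_1)\join\Ap(G_2)$ together with the Künneth theorem and fiber-type tools (Quillen's fiber theorem and its homological refinements, in the equivalent-poset form used here) to propagate nonzero homology from normal subgroups up to $G$, and disposing of the $p'$-components, of $F(G)$, and of the permutation action on a $G$-orbit of components (a wreath-product configuration), one reduces to the case $E(G)=L$ of a single quasisimple component of order divisible by $p$, with $G=LB\,C_G(L)$ for some $p$-outer $B$ of $L$. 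Here $C_G(L)$ either contributes no $p$-torsion to $\Ap$ or is handled inductively and joined back on, so the essential problem is to show that the $p$-extension $LB$ forces $\tilde{H}_*(\Ap(G),\QQ)\neq 0$ --- and, wherever possible, to establish the stronger Quillen-dimension property $\QD_p$ of Definition~\ref{defn:QD} for $LB$ itself.

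The substantive step is the division into cases on the isomorphism type of the component $L$. If $L$ is of Lie type in the defining characteristic $p$, the required homology comes from the Tits building: $\Ap(L)$ is homotopy equivalent to the building (Quillen), which is a wedge of spheres by Solomon--Tits, and this is propagated through the $p$-outer $B$. If $L$ is alternating, sporadic, or of Lie type in characteristic $\neq p$, one establishes $\QD_p$ for the relevant $p$-extensions $LB$; this is where the combinatorial and homotopical machinery of the paper --- replacing $\Ap$ by $\Bp$, $\Sp$, or by the reduced posets of $p$-outers, nerve and wedge decompositions, and explicit homology estimates for the finitely many small groups that arise --- supplies what is, for $p=3,5$, the genuinely new input. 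Finally, if $L\groupiso\U_n(q)$ with $q$ odd and $p\mid q+1$ --- precisely the situation in which $\QD_p$ may genuinely fail --- one appeals to hypothesis (H2u), the field-twisted unitary groups $\U_m(q^{p^e})$ occurring there being forced by the inductive analysis of the $p$-local structure of unitary groups. For $p>5$ the sum of these verifications is exactly \cite[Main Theorem]{AS93}, and $p=5$ was obtained in \cite{KP20}; the content added here is carrying the same analysis through the additional exceptional cases at $p=3,5$.

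I expect the principal obstacle to be precisely this last step at the small primes. The generic building argument handles defining-characteristic Lie type cleanly, but at $p=3$ --- and, to a lesser extent, $p=5$ --- there are many more alternating, sporadic, and small-rank cross-characteristic components to settle; $p$ now divides the orders of more outer-automorphism groups (including the triality automorphisms of $D_4$ at $p=3$), so the interplay between $L$ and its $p$-outers $B$ becomes more delicate; and in several small cases $\QD_p$ must either be salvaged by a careful choice of equivalent poset or replaced by an ad hoc non-vanishing computation that still drives the induction. Quarantining into (H2u) the one family for which the desired homological property is genuinely unavailable is what makes a uniform statement over all odd $p$ possible.
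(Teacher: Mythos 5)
Your proposal has the right outermost shape---minimal counterexample, structural reduction to a component, case analysis---but it omits or mischaracterizes precisely the steps where the new content of the extension lives, so as written it would not go through.

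First, you assert that for a component $L$ alternating, sporadic, or of Lie type in characteristic $\neq p$, ``one establishes $\QD_p$ for the relevant $p$-extensions $LB$.'' This is false for $p=3,5$: the Aschbacher--Smith $\QD$-List (quoted here as Theorem~\ref{theoremQDList}) records exactly which simple groups have $p$-extensions that genuinely \emph{fail} $\QD_p$, and for $p=3$ that list includes $\Alt_6$, $\Alt_n$ ($n\geq 9$), numerous sporadic groups, and whole families of cross-characteristic Lie type groups; for $p=5$ it includes $\HN,\Baby,\Monster,\McL,\Ly$, among others. The actual strategy for such components is not to establish $\QD_p$ but to construct \emph{Robinson subgroups} and thereby verify the Lefschetz form (L-QC) via Property $\R(p)$ (Section~\ref{sectionLefschetz}, Proposition~\ref{propRobinson}). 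Your proposal never mentions this mechanism, which is the engine of the (Rob-nonelim) step in Remark~\ref{rk:stratelimRob}, and without it the argument stalls on every component that appears in the $\QD$-List.

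Second, your treatment of defining-characteristic Lie type (``the required homology comes from the Tits building... and this is propagated through the $p$-outer $B$'') glosses over the concrete obstruction at $p=3$: the Ree groups $^2G_2(3^a)$ for $a>1$ do \emph{not} admit Robinson subgroups (every field automorphism of order $3$ centralizes some Sylow $2$-subgroup; see Remark~\ref{remarkReeFailsRobinson}), so the argument of \cite[Theorem~5.3]{AS93} breaks precisely there. The paper must therefore insert a new elimination result---Theorem~\ref{theoremLiep} (propagation via the pre-join poset $W_G^{\BoucPoset}$, the Bouc-dimension Theorem~\ref{theoremHomologyCharP}, etc.)---to remove Ree components \emph{before} reaching the Robinson step. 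Third, the structural reductions \cite[Props~1.6,~1.7]{AS93} that you invoke depend on \cite[Theorems~2.3,~2.4]{AS93}, whose proofs require excluding the particular components $\PSL_2(2^3)$, $\PSU_3(2^3)$, $\Sz(2^5)$ at $p=3,3,5$ respectively. One must therefore eliminate these three components \emph{first} (Theorem~\ref{alternativeParticularComponentes} under~(H1u)) before the Aschbacher--Smith reduction machinery applies at the new primes; your proposal treats the structural reductions as available from the start, which they are not. In short: the missing ideas are the case division between $\QD_p$-friendly components (handled by elimination) and $\QD$-failing components (handled by Robinson subgroups/(L-QC)); the specific unlocking of $O_{p'}(G)=1$ via the three particular eliminations; and the elimination of Ree groups at $p=3$, which is the genuine new obstruction that this extension had to clear.
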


\noindent
We will prove this extension Theorem~\ref{theoremASExtension} as later Theorem~\ref{proofQCOdd};
our proof there mainly follows the path of the proof in~\cite{AS93}.

\bigskip

In fact, the possibility of such an extension was already suggested in~\cite{AS93},
just after the statement of the Main Theorem there,
using the language ``by excluding certain further groups as components of~$G$''.
We implement such an approach here---but now using the language of {\em elimination-results\/} in our recent work~\cite{PS}:
that is, results which state that a particular quasi\-simple~$L$ cannot appear as a component in a counterexample~$G$ to~(H-QC).
It will be convenient to describe the role of these eliminations within the following  general context:

\begin{remark}[Adapting the overall strategy of Aschbacher-Smith]
\label{rk:stratelimRob}
Here is our (over-simplified) viewpoint on the basic structure of the proof in~\cite{AS93}.
The first main step can be phrased as:

  \quad (elimQD) \quad Apply~\cite[Prop~1.7]{AS93} to eliminate any~$L$ with~$\QD_p$ as a component of~$G$.

\noindent
The remaining main step, carried out in~\cite[Thm~5.3]{AS93}, amounts to:

  \quad (Rob-nonQD) \quad Show any remaining~$L$ (i.e., without~$\QD_p$) has a ``Robinson subgroup''; 

\noindent
because the final-argument at~\cite[pp~490--492]{AS93} then shows that such Robinson subgroups
lead to~$\tilde{\chi} \bigl( \Ap(G)^g \bigr) \neq 0$ for some $g\in G$, and hence to~(H-QC).
Of course this strategy is idealized: since (only) certain unitary groups might%
\footnote{
  Conjecture~4.1 of~\cite{AS93}, that unitary groups should have~$\QD_p$, remains open.
          }
fail both steps,
and hence they must be excluded via the hypothesis~(H2u).

Now in proving our extension as Theorem~\ref{proofQCOdd},
it turns out that we will need some further elimination-results---which we will establish along the way.
For example: 
For~$p=5$ we can mostly use the original strategy above---but
we will need Theorem~\ref{alternativeParticularComponentes} to eliminate three particular components,
which would otherwise prevent the application of the intermediate result~\cite[Thm~2.3]{AS93}.  
And for~$p=3$, we will need Theorem~\ref{theoremLiep} to eliminate Ree-group components---since 
the usual strategy fails here, because Ree groups do not have Robinson subgroups in the argument for~\cite[Thm~5.3]{AS93}.

As a result, we are in effect implementing the following obvious adaptation of the original Aschbacher-Smith strategy above:

  \quad (elim) \quad Apply {\em various\/} elimination-results, to further reduce the possible components~$L$.
   
  \quad (Rob-nonelim) \quad Show Robinson subgroups occur for any~$L$ in the smaller non-(elim) list. 

\bigskip

\noindent
Furthermore as we will indicate later,
this variant-strategy would seem natural for any future work on extending the approach of~\cite{AS93} to the final prime~$p=2$;
and so we will adopt this strategic viewpoint in the relevant Section~\ref{sectionEvenQC}.
We mention that because of difficulties in extending, to~$p = 2$, the $\QD$-List of~\cite[Thm~3.1]{AS93}
(which we reproduce as later Theorem~\ref{theoremQDList}),
it seems likely that significant further elimination-results will also be needed in treating~$p = 2$.

The emphasis in the present  Remark on overall strategy and elimination-results will continue to be in the background of our later discussions---of many further variants on results and techniques contained within \cite{AS93}.
\donerk
\end{remark}

\subsection*{Variant hypotheses such as~(H1), related to induction-replacement}

Our methods for Theorem~\ref{theoremASExtension} above also lead to another closely related principal result,
namely Theorem~\ref{theoremASExtensionAlternative} below:
this further variant continues the above theme of adding the cases~$p=3,5$;
but it also involves simplifications both in the hypotheses,
and in certain corresponding aspects of the inductive proof (including a ``milder" use of the~CFSG).

In order to cast some light on these simplifications, we will now continue our Introduction
by developing some of the rationale for the variant-hypotheses---which will lead up 
to our statement of Theorem~\ref{theoremASExtensionAlternative}.

\bigskip

In proving~\cite[Main Theorem]{AS93}, Aschbacher and Smith argue by induction.
More specifically, they assume that~$G$ is a counterexample to (H-QC), of minimal order subject to satisfying~(H2u).
Mimicking now the language of~``(H1)'' used in the recent paper~\cite{KP20}, we give this condition the name~(H1u); that is:

\vspace{0.2cm}

\begin{flushleft}
\begin{tabular}{lcl}
(H1u) & & \pbox{13cm}{If~$H$ satisfies~(H2u), and further~$|H|<|G|$, then~$H$ satisfies~(H-QC).}
\end{tabular}
\end{flushleft}

\vspace{0.2cm}

\noindent
Indeed we even adopt the viewpoint that the applications of induction in the proof in~\cite{AS93}
can instead be {\em replaced\/}%
\footnote{
  This roughly avoids the mechanics of re-verifying the various hypotheses, at each application of induction.
          }
by applications of~(H1u)---regarded as a further formal hypothesis in Theorem~\ref{theoremASExtension}.  
Thus in this viewpoint, we regard the original proof in~\cite{AS93} as roughly showing:

\centerline{
If~$p > 5$, and~$G$ satisfies~(H1u) and~(H2u), then~$G$ satisfies~(H-QC).
            }

\noindent
In fact, our variant Theorem~\ref{theoremASExtensionAlternative} takes this viewpoint even further:
Namely, in place of the hypotheses~(H1u,H2u) above, it uses the simplified versions~(H1,H2) indicated below;
and these will lead to the promised simplifications in the proof of Theorem~\ref{theoremASExtensionAlternative}. 

\bigskip

\noindent
We first lead up to the variant-hypothesis~(H1) used in~\cite{KP20}:

Note that~(H1u) above includes mention of~(H2u)---since it is designed
for a minimal-order counterexample to Theorem~\ref{theoremASExtension}.
However, if we instead want to study the context of a minimal-order counterexample just to~(H-QC) itself,
we could at least in principle consider the somewhat simpler induction-replacement hypothesis given by:

\vspace{0.2cm}

\begin{flushleft}
\begin{tabular}{lcl}
(H1-) & & \pbox{13cm}{Any~$H$, with~$|H|<|G|$, must satisfy~(H-QC).}
\end{tabular}
\end{flushleft}

\vspace{0.2cm}

\noindent
And then~(H2u) would be just one possible ``second condition~(H2)'', for our more general counterexample~$G$.  
But we can also further observe that, in~\cite{AS93},
induction is in fact applied to handle just subgroups, and quotients by a central~$p'$-group.
So (as in~\cite{KP20}) we will prefer to work instead with the further-simplified induction-replacement hypothesis:

\vspace{0.2cm}

\begin{flushleft}
\begin{tabular}{lcl}
(H1) & & \pbox{13cm}{Proper subgroups, and proper~$p'$-central quotients, of~$G$ satisfy~(H-QC).}
\end{tabular}
\end{flushleft}

\vspace{0.2cm}

\noindent
And in fact, the work of~\cite{KP20} shows that it can be advantageous in proofs,
to focus our arguments on classes of groups closed under subgroups and central quotients;
here are some further details from that paper:

\begin{remark}[Some context for results under~(H1)---e.g., as seen in some earlier results]
\label{rk:contextofH1results}
First, results proved under~(H1) are not really ``standalone'' results---in the sense that
we usually would not expect to verify~(H1), within some general argument toward~(H-QC);
instead, such results might be regarded as ``segments of logic'',
applicable when we already have~(H1)---typically because we are in the situation of a minimal-order counterexample to~(H-QC). 
So in particular, as we had essentially observed in discussion above:

\begin{flushleft}
\begin{tabular}{lcl}
(MOC) & & \pbox{12.5cm}{(H-QC)-results under~(H1) apply to a minimal-order counterexample to~(H-QC).}
\end{tabular}
\end{flushleft}

\noindent
For example, Corollary~1.2 of~\cite{PS} assumes a counterexample as in~(MOC),%
\footnote{
   By a standard argument (e.g.~\cite[0.11]{AS93}), it suffices to take a minimal (under inclusion) counterexample.
          }
which gives~(H1)---as needed, for quoting the more technical result Theorem~1.6 there.

But we also want to know when proposed results with hypotheses~``(H2x)'' other than~(H1) in fact automatically satisfy~(H1). 
Such results are important for the overall context of the growing literature
of results on the possible structure of a counterexample to~(H-QC).
So we mention:

\begin{flushleft}
\begin{tabular}{lcl}
(Inh) & & \pbox{13cm}{For an~(H-QC)-result,
          if hypotheses~(H2x) other than~(H1) are inherited by subgroups and~$p'$-central quotients,
	  then~(H1) is automatic for a minimal-order counterexample to that result---so that
	  in fact (H-QC) holds, without {\em assuming\/}~(H1).}
\end{tabular}
\end{flushleft}

\noindent
This observation is of course just a version of the effect of induction on~$|G|$ in such a result.
For example, Theorem~4 of~\cite{KP20} has~``(H2x)'' given by~$m_p(G) \leq 4$---which has~(Inh);
this is essentially why its proof can be given under~(H1), so that~(H1) need not be in its statement.

By contrast, Theorems~4.1, 5.1, and~6.1 there are proved under~(H1)---since 
the relevant~(H2x) in those cases does {\em not} satisfy~(Inh),
so that those results do not automatically hold without assuming~(H1).
(This remark, in the case of Theorem~5.1 of~\cite{KP20},
is why we need the adjusted-version given by Theorem~\ref{alternativeParticularComponentes} here,
to treat~$p = 5$---see the discussion before that result.)
\donerk
\end{remark}

Considerations such as the above provide some initial motivation
for stating our particular variant Theorem~\ref{theoremASExtensionAlternative} below under~(H1);
and we will indicate further benefits, as we go on. 

\bigskip

So now we introduce~(H2)---our specific choice of~``(H2x)'',
to use in place of~(H2u) in our statement of Theorem~\ref{theoremASExtensionAlternative} under~(H1).
Here it turns out that a further advantage of the form~(H1),
is that we will be able to avoid the restrictions on~$m \leq n$ and~$e \in \mathbb{Z}$
that appear in~(H2u) of Theorem~\ref{theoremASExtension} above;
namely we can just use:

\vspace{0.2cm}

\begin{flushleft}
\begin{tabular}{lcl}
(H2) & & \pbox{13cm}{If~$L \groupiso \U_n(q)$ is a component of~$G$ with~$q$ odd and~$p \mid q+1$,
         then the~$p$-extensions of~$\U_n(q)$ occurring in~$G$ satisfy the~$\QD_p$ property.}
\end{tabular}
\end{flushleft}

\vspace{0.2cm}

\noindent
This version allows us to focus just on the single component~$\U_n(q)$, and not other unitary extensions for different~$n,q$;
this makes proofs easier---but as noted above,
the application of this version is for situations such as a minimal-order counterexample just to~(H-QC),
where~(H1) does hold.

So we will be proving the following variant-form of Theorem~\ref{theoremASExtension}:

\begin{theorem}
\label{theoremASExtensionAlternative}
Let~$p$ be an odd prime, and let~$G$ be a finite group satisfying~(H1) and~(H2) above.
Then~$G$ satisfies~(H-QC).
\end{theorem}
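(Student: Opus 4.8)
The plan is to argue by contradiction along the lines of~\cite{AS93}, but with every appeal to induction there replaced by an appeal to~(H1). So suppose the theorem fails, and let~$G$ be a counterexample of minimal order: thus~$p$ is odd, $G$ satisfies~(H1) and~(H2), $O_p(G)=1$, yet~$\tilde{H}_*(\Ap(G),\QQ)=0$. First I would carry out the standard opening reductions of~\cite{AS93}: using the clause of~(H1) on proper~$p'$-central quotients to dispose of a nontrivial central~$p'$-subgroup~$Z$ (legitimate since~$\Ap(G)\cong\Ap(G/Z)$ for such~$Z$), and the subgroup clause of~(H1) together with the usual Fitting-subgroup analysis to dispose of solvable obstructions, one reduces to a~$G$ whose structure is controlled by its components~$L_1,\dots,L_r$ in the sense of Remark~\ref{rk:compsandF*G}. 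In this reduced setting~(H2) says exactly that every~$p$-extension, occurring inside~$G$, of a unitary component~$L_i\groupiso\U_n(q)$ with~$q$ odd and~$p\mid q+1$ has the~$\QD_p$ property.

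Next I would feed this into the Quillen-fiber and poset-product machinery of~\cite{AS93}: the propagation results underlying~\cite[Prop~1.7]{AS93}, and the component reduction of~\cite[Thm~2.3]{AS93}, relate the homology of~$\Ap(G)$ to that of the~$\Ap$ of the components and of their~$p$-extensions inside~$G$. From~$\tilde{H}_*(\Ap(G),\QQ)=0$ one then extracts, as in the~(elim) step of Remark~\ref{rk:stratelimRob}, that no component---together with its relevant~$p$-extension in~$G$---may have~$\QD_p$, since~\cite[Prop~1.7]{AS93} would otherwise push non-vanishing homology up to~$G$. This is precisely where~(H2) enters: a unitary component~$\U_n(q)$ with~$q$ odd and~$p\mid q+1$ has~$p$-extensions in~$G$ with the~$\QD_p$ property by hypothesis, and so is eliminated as well. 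The gain from working under~(H1) rather than~(H1u,H2u) is that~$G$ is now a minimal counterexample to~(H-QC) \emph{itself}, so~(H-QC) is available for \emph{all} proper subgroups and proper~$p'$-central quotients---not just those satisfying~(H2u)---which lets us drop the ``$m\le n$, $e\in\ZZ$'' clauses of~(H2u) and yields the milder use of the~CFSG promised in the Introduction.

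With the~$\QD_p$-components removed, I would then cut the list of surviving components down further using the remaining elimination-results: the~$\QD_p$-classification of~\cite[Thm~3.1]{AS93}, reproduced as Theorem~\ref{theoremQDList}, records which quasisimple groups carry~$\QD_p$; for~$p=3$, Theorem~\ref{theoremLiep} removes the Ree-group components (which would otherwise block~\cite[Thm~5.3]{AS93}, having no Robinson subgroup there); and for~$p=5$, Theorem~\ref{alternativeParticularComponentes} removes the three exceptional components that would obstruct~\cite[Thm~2.3]{AS93}. After these eliminations, for each odd~$p$ one is left with a list of components each of which carries a Robinson subgroup in the sense of (the adaptation of)~\cite[Thm~5.3]{AS93}---the~(Rob-nonelim) step.

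Finally I would invoke the end-game of~\cite[pp~490--492]{AS93}: such a Robinson subgroup yields~$g\in G$ with~$\tilde\chi\bigl(\Ap(G)^g\bigr)\neq 0$, hence~$\tilde\chi\bigl(\Ap(G)\bigr)\neq 0$ by the standard argument, and therefore~$\tilde{H}_*(\Ap(G),\QQ)\neq 0$---contrary to the choice of~$G$. I expect the main obstacle to be the second step: checking that the weaker hypothesis~(H2)---which constrains only the~$p$-extensions of the single group~$\U_n(q)$ that actually occurs in~$G$, not all~$\U_m(q^{p^e})$---still suffices to run the~\cite[Thm~2.3]{AS93}-style component reduction, and, in tandem with this, confirming that the collected elimination-results really do account for every quasisimple group surviving the~$\QD_p$-elimination when~$p=3$ and when~$p=5$.
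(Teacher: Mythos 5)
Your outline has a concrete gap at the $(\text{Rob-nonelim})$ stage for $p=3$. After the $\QD_p$-elimination and the use of~(H2), the surviving components must appear on the $\QD$-List (Theorem~\ref{theoremQDList}), and for $p=3$ that list still contains the alternating groups $\Alt_6$, $\Alt_n$ ($n\geq 9$) in item~(8) and the long list of sporadic groups in item~(12). Neither \cite[Thm~5.3]{AS93} nor anything else available establishes the Robinson property~$\R(3)$ for these groups, so the endgame via $\tilde\chi\bigl(\Ap(G)^g\bigr)\neq 0$ cannot close for them; your proposal never removes them. The paper disposes of them by a separate elimination you omit: Theorem~\ref{PStheorem}, which shows that in a counterexample every component must admit $p$-outers in~$G$, and for odd~$p$ this rules out alternating and sporadic components because their outer automorphism groups are $2$-groups. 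Some such step is indispensable before the Robinson argument can be run at $p=3$.

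Beyond that, your route differs from the paper's in a way worth flagging. The paper's proof of this theorem deliberately does \emph{not} pass through \cite[Props~1.6,\,1.7]{AS93} and \cite[Thms~2.3,\,2.4]{AS93}: it replaces them by the (H1)-variants \cite[Thm~4.1]{KP20} (i.e.\ Theorem~\ref{generalReduction}(2)) for the reduction to $O_{p'}(G)=1$, and Theorem~\ref{QDpReductionPS} for the elimination of $\QD_p$-components, neither of which uses the CFSG beyond the $p$-solvable case. Your plan of re-running the Aschbacher--Smith machinery with~(H1) substituted for~(H1u) is workable in principle---(H1) gives~(H-QC) for all proper subgroups, so the inductive appeals go through, and the three components obstructing \cite[Thm~2.3]{AS93} are removed under~(H1) by Theorem~\ref{generalReduction}(5),(6) rather than by Theorem~\ref{alternativeParticularComponentes}, which is an (H1u)-statement and should not be cited here---but it reinstates the heavy CFSG-dependence of \cite[Thm~2.3]{AS93}, forfeiting the ``milder use of the CFSG'' that is the point of proving the result under~(H1) and~(H2).
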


\noindent
The proof of Theorem~\ref{theoremASExtensionAlternative} appears in the latter part of Section~\ref{sec:2varASforp=35}.

We had mentioned earlier that this proof makes a milder use of the~CFSG
than the proof of Theorem~\ref{theoremASExtension};
we now briefly preview some aspects of that simplification---and some others:

\begin{remark}[Reducing use of the~CFSG in Theorem~\ref{theoremASExtensionAlternative} under~(H1)]
\label{rk:compareCFSGinASExtandAlt}
The proof of our Theorem~\ref{theoremASExtensionAlternative} stated under~(H1)
employs the classification of the finite simple groups to a much lesser extent
than in~\cite{AS93} (and hence in Theorem~\ref{theoremASExtension}); 
this is thanks to our corresponding~(H1)-variants of Propositions~1.6 and~1.7 of~\cite{AS93},
which are given by~\cite[Theorem~4.1]{KP20} and Theorem~\ref{QDpReductionPS} here---as we summarize below:

The original form of those two Propositions in~\cite{AS93} state in effect that under~(H1u),
we can suppose that~$O_{p'}(G) = 1$,
and that any simple component of~$G$ where the indicated~$p$-extensions satisfy~$\QD_p$ leads to~(H-QC).
We contrast that situation with our~(H1)-variants:
In~\cite[Theorem 4.1]{KP20}, it was shown that if~$G$ satisfies~(H1), and~$O_{p'}(G) \neq 1$, then~(H-QC) holds for~$G$.
And in Theorem~\ref{QDpReductionPS} we show that, assuming~(H1),
if a component of~$G$ satisfies~$\QD_p$ for all its~$p$-extensions, then~$G$ satisfies~(H-QC).
Thus our variants proved under~(H1) give us essentially the same reductions as in~\cite{AS93} with Propositions~1.6 and~1.7 there.

Furthermore, the proof of~\cite[Theorem 4.1]{KP20}, which leads to the reduction~$O_{p'}(G) = 1$,
only uses the~CFSG to invoke the~$p$-solvable case of the conjecture;
and the proof of Theorem~\ref{QDpReductionPS} does not use the~CFSG at all.
In contrast, the proofs of Propositions~1.6 and~1.7 of~\cite{AS93} invoke Theorem~2.3 of~\cite{AS93},
which is stated for odd~$p$, and relies deeply on the~CFSG.
\donerk
\end{remark}

\begin{remark}[Equivalent posets and simplification of boundary-calculations]
\label{rk:equivvisualbdry}
In addition, the proofs of both our above extensions of~\cite{AS93} to all odd~$p$
will in fact involve focusing on some more intrinsic combinatorial properties of the~$\Ap$-posets---as
indicated in the title of this paper;
see especially Sections~\ref{sec:overviewequivApG} and~\ref{sec:prejoinandreplposets}.
For example:
Results such as Proposition~\ref{propositionContractibleCentralizers} allow us to ``remove unnecessary points''---so
we can work toward~(H-QC) using smaller and more convenient posets equivalent to~$\Ap(G)$.
Other equivalences such as Theorem~\ref{theoremChangeABSPosets}
(building on the viewpoint of Theorem~\ref{theoremInductiveHomotopyType})
provide a convenient visual-format for poset-chains based on the Cartesian-product and resulting in simplified boundary-calculations
in technical results on homology propagation.
(This last term means, roughly, the process of showing for some~$H < G$
that nonzero reduced homology for~$\Ap(H)$ ``propagates'' to nonzero homology also for~$\Ap(G)$---to give~(H-QC).
We will begin a more detailed discussion of the fundamental technique of propagation
at later Remark~\ref{rk:contextequivandpropag}.)
%In particular, this will also help us to avoid invoking some~CFSG-dependent results (under either~(H1u) or~(H1)).
\donerk
\end{remark}

\subsection*{Some sample results---toward the case of \texorpdfstring{$p=2$}{p=2}}

To continue an~(H1)-theme from Remark~\ref{rk:compareCFSGinASExtandAlt} just above,
we can in fact use \textit{any} prime~$p$, including~$p = 2$, 
in the reduction to~$O_{p'}(G) = 1$ in~\cite[Theorem~4.1]{KP20},
and in the elimination of~$\QD_p$-components in Theorem~\ref{QDpReductionPS}.
Also~\cite[Theorem~4.1]{KP20} does not require the~CFSG.
These features provide strong motivation for working under~(H1), in future contributions toward~(H-QC) for~$p = 2$.

In particular, we could at least in principle follow
essentially the original Aschbacher-Smith strategy of~(elimQD,Rob-nonQD) indicated in earlier Remark~\ref{rk:stratelimRob}. 
However, as we had also indicated in that Remark, 
there seem to be significant difficulties in extending, to~$p = 2$, the $\QD$-List of~\cite[Thm~3.1]{AS93}---namely 
the components which would ideally be eliminated by Theorem~\ref{QDpReductionPS}.
For this reason, it seems likely that many such problem-components
will need to instead be handled via new elimination-results---so 
that we would be using the modified strategy of (elim)/(Rob-nonelim) in Remark~\ref{rk:stratelimRob}. 

So we now also indicate, from later in the paper, some partial elimination-results for~$p = 2$:

\begin{theorem}
\label{mainTheoremEvenCase}
Assume~$p=2$.
Let~$G$ be a group satisfying~(H1), along with one of the following:
\begin{enumerate}
\item $G$ contains a component~$L$, such that~$L/Z(L)$ is isomorphic to one of the following groups:%
\begin{align*}
  \Alt_5, \Alt_6, \Sz(&2^{2n+1}), {}^2F_4(2^{2n+1})', {}^2G_2(3^{2n+1}),
  \Mathieu_{11}, \Mathieu_{12},\Mathieu_{22}, \Mathieu_{23}, \Mathieu_{24},\\
 &\Janko_1, \Janko_4, \Conway{1},\Conway{2},\Conway{3}, \Fi{23}, \HS, \Ly, \Ru, \Th, \Baby,\Monster.
\end{align*}

\item $G$ contains a simple component~$L$, such that every~$p$-extension~$LB \leq G$ of~$L$ satisfies~$\QD_2$.

\item $G$ contains a component~$L$, such that~$L/Z(L)$ is a finite group of Lie type in characteristic $2$ other than the types:~$\PSL_n(2^m) (n \geq 3)\ ; \, D_n(2^m)  (n \geq 4)\ ; \,  E_6(2^m)$;%

 \item If~$G$ contains a component~$L$, such that~$L/Z(L)$ is a finite group of Lie type,
       then either~$L/Z(L)$ is described in~(1--3), or else~$L/Z(L)$ is defined in characteristic~$3$.
\end{enumerate}
Then~$G$ satisfies~(H-QC).
\end{theorem}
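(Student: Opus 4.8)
The plan is to deduce each case of Theorem~\ref{mainTheoremEvenCase} from the machinery developed earlier in the paper, treating the four hypotheses as successively less explicit but all amenable to the same ``elimination + propagation'' pattern described in Remark~\ref{rk:stratelimRob}. Throughout, since~(H1) is assumed, we may invoke~\cite[Theorem~4.1]{KP20} to reduce to the case~$O_{p'}(G) = 1$ (this step works for~$p = 2$ as well, and costs only the~$p$-solvable case of the Conjecture), and we may invoke Theorem~\ref{QDpReductionPS} to reduce away any component all of whose~$p$-extensions in~$G$ have~$\QD_2$. First I would dispose of case~(2): it is immediate from Theorem~\ref{QDpReductionPS}, since that result says precisely that a component~$L$ all of whose~$p$-extensions~$LB \le G$ satisfy~$\QD_p$ forces~(H-QC) (no CFSG needed).

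For case~(1), the strategy is to show that each listed quasisimple~$L$ is subject to an \emph{elimination-result}: namely that~$L$ (or the relevant~$p$-extensions of~$L$, when~$L$ is only quasisimple) satisfies~$\QD_2$, so that Theorem~\ref{QDpReductionPS} again applies. Here I would go through the list group-by-group, computing or quoting~$m_2$ and the structure of~$\Ap$-posets (equivalently, using Proposition~\ref{propositionContractibleCentralizers} to pass to smaller posets, and the inductive homotopy-type results Theorem~\ref{theoremInductiveHomotopyType} / Theorem~\ref{theoremChangeABSPosets} to organize the boundary-calculations): for the small alternating groups~$\Alt_5, \Alt_6$ and the small-rank sporadics one verifies~$\QD_2$ directly from known subgroup structure; for the Suzuki groups~$\Sz(2^{2n+1})$, the Ree-type groups~${}^2F_4(2^{2n+1})'$ and the odd Ree groups~${}^2G_2(3^{2n+1})$, one uses the Lie-type analysis analogous to Theorem~\ref{theoremLiep} (note~$\Sz$ and~${}^2F_4$ are in characteristic~$2$, so their~$2$-local structure is transparent; the~${}^2G_2(3^{2n+1})$ are in odd characteristic and were already handled for odd~$p$, and a parallel computation gives~$\QD_2$). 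The large sporadics on the list (Conway groups, $\Fi{23}$, $\Baby$, $\Monster$, etc.) would be handled by citing~$\QD_2$-verifications from the ATLAS / modular-atlas data together with the reduction-results, exactly the role the~$\QD$-List~\cite[Thm~3.1]{AS93} played for odd~$p$.

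For cases~(3) and~(4), the approach is to reduce to the characteristic-$2$ Lie-type classification and then to invoke case~(1) together with known~$\QD_2$-results for Lie-type groups. In case~(3), after the reductions we have a component~$L$ with~$L/Z(L)$ of Lie type in characteristic~$2$, and we must show~$\QD_2$ holds for its~$p$-extensions unless~$L/Z(L)$ is among the three excluded families~$\PSL_n(2^m)\,(n\ge3)$, $D_n(2^m)\,(n\ge4)$, $E_6(2^m)$; this is a case analysis over the (finite) list of Lie types, using that in the defining characteristic the poset~$\Ap(L)$ is controlled by the building (Solomon--Tits) and its~$p$-extension behaviour is tractable, so~$\QD_2$ can be checked type-by-type. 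In case~(4), one observes that if~$L/Z(L)$ is of Lie type then it is either in characteristic~$2$ --- whence case~(3) applies, so~$L/Z(L)$ must lie in the excluded triple, but those are handled separately or are covered by~(1--3) --- or it is in odd characteristic, and the hypothesis itself restricts that to characteristic~$3$, which is the remaining stated alternative; so~(4) is essentially a bookkeeping statement asserting that the union of the exceptional cases in~(1) and~(3) together with characteristic-$3$ types exhausts all Lie-type possibilities, after which the reduction-results finish the argument.

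The main obstacle I expect is case~(3): producing uniform~$\QD_2$-verifications for all characteristic-$2$ Lie-type groups \emph{outside} the three excluded families, while correctly identifying exactly which families must be excluded. Unlike the odd-$p$ situation, where the~$\QD$-List of~\cite[Thm~3.1]{AS93} is available off the shelf, here one is in effect \emph{building} the relevant part of a~$p=2$ analogue of that list; the delicate points are the low-rank and twisted types (where the~$2$-local geometry is not simply the building), and verifying that the~$p$-extensions --- not merely~$L$ itself --- retain~$\QD_2$, which is where the combinatorial tools of Sections~\ref{sec:overviewequivApG} and~\ref{sec:prejoinandreplposets} (removing unnecessary points, Cartesian-product chain formats) do the real work in keeping the boundary-calculations manageable. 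The excluded families~$\PSL_n(2^m)$, $D_n(2^m)$, $E_6(2^m)$ are precisely those whose~$2$-extensions include graph (or graph-field) automorphisms obstructing the~$\QD_2$ argument, mirroring the role of unitary groups and~(H2u)/(H2) in the odd case.
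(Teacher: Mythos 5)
Your proposal diverges from the paper's argument in a way that would not close the gap, and the divergence is not a matter of taste but a matter of tools that actually apply.

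The central problem is that you try to route essentially everything through a single mechanism: verify~$\QD_2$ for the component (or its~$p$-extensions), then invoke Theorem~\ref{QDpReductionPS}. This works for case~(2) — that case \emph{is} exactly Theorem~\ref{QDpReductionPS}, and you identify it correctly — but it is the wrong mechanism for cases~(1),~(3), and~(4), and the paper explicitly warns that it must be: Remark~\ref{rk:stratelimRob} and the surrounding discussion note that extending the~$\QD$-List of~\cite[Thm~3.1]{AS93} to~$p=2$ is precisely the thing that seems \emph{not} to be feasible, and that one should therefore expect to need distinct elimination-results of a different flavour. Many of the groups in case~(1) do not have~$\QD_2$ at all — the whole point of putting them on the list is that a different argument disposes of them. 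For the sporadic groups, the paper uses Proposition~\ref{propSporadicComponents}, which shows for each group in the list~(\ref{casesHP}) that the \emph{inclusion} $\A_2(L)\to\A_2\bigl(\Aut(L)\bigr)$ is nonzero in homology; this is fed into Theorem~\ref{PStheorem}, not Theorem~\ref{QDpReductionPS}. That argument never establishes~$\QD_2$ and does not need to: it exploits instead that $\Out(L)$ is trivial or small (a $2$-group), so that no $2$-outers obstruct the map, or else that the conical-centralizer removal of Lemma~\ref{lemmaReconstructionFromSubposet}(3) collapses~$\A_2(\Aut(L))$ back onto~$\A_2(L)$. For the small-rank Lie-type groups (including~$\Alt_5\cong\PSL_2(4)$, $\Alt_6\cong \Symplectic_4(2)'$, $\Sz$, ${}^2F_4$), the paper invokes Theorem~\ref{thm:Liep(H1)}, whose proof goes through the homology-propagation machinery of Theorem~\ref{theoremLiep} (or Proposition~6.9 of~\cite{PS} in the non-generic cases), again never asserting~$\QD_2$ for these groups.

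Case~(3) is handled entirely by Theorem~\ref{thm:Liep(H1)}, not by a type-by-type~$\QD_2$-check; the excluded families $\PSL_n(2^m)$, $D_n(2^m)$, $E_6(2^m)$ are exactly the cases of Theorem~\ref{thm:Liep(H1)} where a graph- or graph-field $2$-outer can break the cyclic hypothesis~($p$-cyclic) needed for that propagation argument, not (as you suggest) cases where~$\QD_2$ fails to hold. Finally, case~(4) is not bookkeeping: after (1)--(3) eliminate the listed possibilities, the paper shows the surviving components all satisfy Property~$\R(2)$ — alternating by Proposition~\ref{prof:R(2)forAltngeq7}, the nine remaining sporadics by the list~(\ref{casesRobinsonNC}) in Proposition~\ref{propSporadicComponents}, and Lie type in characteristic~$3$ by Proposition~\ref{propRobinsonEvenChar} — and then runs the Robinson-subgroup argument of Proposition~\ref{propRobinson} to establish~(L-QC), and hence~(H-QC). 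This last step is a substantive Lefschetz-module argument, which is why the paper flags in the Introduction that conclusion~(4) is ``not really a `pure' elimination-result.'' Your proposal omits the Robinson route entirely, so it has no way to treat the components that genuinely fail~$\QD_2$ and fail the hypotheses of Theorem~\ref{PStheorem} and Theorem~\ref{thm:Liep(H1)} but do carry a Robinson subgroup.

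Concretely: the missing ideas are (a) Theorem~\ref{PStheorem} together with Proposition~\ref{propSporadicComponents} for the sporadics of case~(1); (b) Theorem~\ref{thm:Liep(H1)} for the characteristic-$2$ Lie-type groups of cases~(1) and~(3); and (c) the~$\R(2)$-verifications plus Proposition~\ref{propRobinson} for case~(4). Replacing all of these with prospective~$\QD_2$-verifications is not merely a longer route — for most of the groups involved it leads to a false intermediate claim.
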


We remark that conclusion~(4) is not really a ``pure'' elimination-result---but instead
depends on some of our later results about Robinson-subgroups.%
%\footnote{
%   fakenote:
%   added---if I understand? See the comments added to the proof, in sec10.
%          }
The proof of this Theorem can be found at the end of Section~\ref{sectionEvenQC}.

\bigskip

For~$H \leq G$, let~$C_G(H)$ denote the centralizer of~$H$ in~$G$.
Write:

\centerline{
  $\Outposet_G(H) := \{ E \in \Ap(G) \tq E \cap (H C_G(H)) = 1 \}$.
            }

\noindent
Using the above in conjunction with the results of~\cite{KP20,PS}, we get:

\begin{corollary}
Let~$p = 2$.
If~$G$ satisfies~(H1) but fails~(H-QC), then the following hold:
\begin{enumerate}
\item $O_2(G) = 1 = O_{2'}(G)$;
\item $\A_2(G)$ is simply connected;
\item $G$ has~$2$-rank at least~$5$;
\item for every component~$L$ of~$G$, $\Outposet_G(L) \neq \emptyset$;
\item $G$ contains a component~$L$ of Lie type in characteristic~$\neq 3$, and some~$p$-extension~$LB \leq G$ of~$L$ fails~$\QD_p$;
\item $G$ does not contain components of the following types:
\begin{center}
of Lie type and Lie rank $1$ in characteristic $2$, $ \Alt_6, \Sz(2^{2n+1}), {}^2F_4(2^{2n+1})',$\\
\vspace{0.05cm}
${}^2G_2(3^{2n+1}), \Mathieu_{11}, \Mathieu_{12},\Mathieu_{22}, \Mathieu_{23}, \Mathieu_{24}, 
\Janko_1, \Janko_4, \Conway{1},\Conway{2},\Conway{3}, \Fi{23}, \HS, \Ly, \Ru, \Th, \Baby,\Monster.$
\end{center}
\end{enumerate}
\end{corollary}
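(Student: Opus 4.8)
The strategy is to assume $G$ satisfies (H1) but fails (H-QC), hence $G$ is (by the standard minimal-counterexample reduction, cf.~the footnote to (Inh)) a minimal-order counterexample to (H-QC) among groups satisfying (H1); every proper subgroup and proper $p'$-central quotient of $G$ then also satisfies (H1), so the results proved earlier ``under (H1)'' all apply to $G$. I would then simply collect the relevant conclusions. For (1): $O_2(G)=1$ is immediate since $O_p(G)>1$ forces $\A_p(G)$ contractible by Quillen's result quoted in the Introduction, contradicting failure of (H-QC); and $O_{2'}(G)=1$ follows from \cite[Theorem~4.1]{KP20}, which (as recalled in Remark~\ref{rk:compareCFSGinASExtandAlt}, and valid for any prime including $p=2$) says that if $G$ satisfies (H1) and $O_{p'}(G)\neq 1$ then (H-QC) holds. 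For (2): simple connectedness of $\A_2(G)$ is one of the structural consequences for a minimal counterexample established in \cite{KP20,PS} (the fundamental group must vanish, else one propagates homology up from a proper subgroup); I would cite it directly. For (3): the $2$-rank bound $m_2(G)\geq 5$ follows from \cite[Theorem~4]{KP20}, whose hypothesis $m_p(G)\leq 4$ satisfies (Inh) as noted in Remark~\ref{rk:contextofH1results}, so a counterexample under (H1) cannot have $2$-rank $\leq 4$.

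For (4): if some component $L$ had $\Outposet_G(L)=\emptyset$, then every $E\in\A_2(G)$ meets $LC_G(L)$ nontrivially; this is precisely the hypothesis under which the homology-propagation machinery (the propagation results phrased via $\Outposet_G$ in \cite{PS}, combined with the component-by-component analysis) forces (H-QC) to hold for $G$ — so this must fail for every component, giving (4). For (5): by Theorem~\ref{mainTheoremEvenCase}(2), if every $2$-extension $LB\le G$ of a simple component $L$ satisfies $\QD_2$ then (H-QC) holds; hence $G$ must have a component with some $p$-extension failing $\QD_2$. By Theorem~\ref{mainTheoremEvenCase}(4), if every component of Lie type is either among the list in parts (1)–(3) of that theorem or else is defined in characteristic $3$, then (H-QC) holds; combined with (the contrapositive of) parts (1) and (3) of Theorem~\ref{mainTheoremEvenCase} — which say that the presence of a component of any of the listed types, or of Lie type in characteristic $2$ outside the three excluded families, already forces (H-QC) — one concludes that $G$ must contain a component $L$ of Lie type in characteristic $\neq 3$ (indeed, in characteristic $2$, of one of the three excluded families $\PSL_n(2^m)$, $D_n(2^m)$, $E_6(2^m)$) with some $p$-extension failing $\QD_2$. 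That yields (5).

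For (6): the list in (6) is exactly the list in Theorem~\ref{mainTheoremEvenCase}(1) together with ``Lie type of Lie rank $1$ in characteristic $2$'' (the latter subsuming $\Alt_5\cong\PSL_2(4)$ and the small rank-one groups), each of which, by Theorem~\ref{mainTheoremEvenCase}(1) (and part (3) for the rank-one Lie-type groups in characteristic $2$), forces (H-QC) the moment it appears as a component. Since $G$ fails (H-QC), $G$ contains no component of any of these types, which is (6). The only points requiring genuine care, rather than bookkeeping, are (2) and (4): for (2) one must be sure the ``simply connected'' conclusion in \cite{KP20,PS} is stated (or extends) to $p=2$ under (H1) alone, and for (4) one must verify that the $\Outposet_G(L)=\emptyset$ hypothesis is the exact trigger for the propagation argument in the generality needed here — this is the step I would expect to be the main obstacle, since it is where the precise form of the $p=2$ propagation results of \cite{PS} has to be matched against the component structure of $G$. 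Everything else is assembling already-established (H1)-results into a single statement.
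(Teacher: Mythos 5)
Your overall strategy — collecting the conclusions of the quoted (H1)-results (Theorem~\ref{generalReduction}, Theorem~\ref{PStheorem}) together with Theorem~\ref{mainTheoremEvenCase} into a single statement — is exactly what the paper intends; the sentence ``Using the above in conjunction with the results of \cite{KP20,PS}, we get:'' signals that the proof is precisely this assembly. Items (1)--(3) and (6) of your write-up are correct and match the intended derivation.

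Two places deserve a flag. For (4), your stated mechanism is off: $\Outposet_G(L) = \emptyset$ says that every $E \in \Ap(N_G(L))$ meets $L C_G(L)$, not every $E \in \Ap(G)$; the two are not equivalent, since $\Outposet_G(L)$ is by definition a subposet of $\Ap(N_G(L))$, not of $\Ap(G)$. The correct route (which you gesture at by invoking \cite{PS}) is via Theorem~\ref{PStheorem} directly: when $\Outposet_G(L) = \emptyset$, Remark~\ref{remarkImagePoset} gives $\Imageposet_{\hat{N}_G(L),L} = \Ap(L)$, so the map $\Ap(L) \to \Imageposet_{\hat{N}_G(L),L}$ is the identity and hence nonzero in homology by the almost-simple result (Theorem~\ref{almostSimpleQC}); together with $p \mid |L|$ (from (1)) and (H-QC) for the proper subgroup $C_G(\hat{L})$ (from (H1)), the hypotheses of Theorem~\ref{PStheorem} are met and (H-QC) for $G$ follows, a contradiction. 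For (5), your parenthetical ``(indeed, in characteristic $2$, of one of the three excluded families $\PSL_n(2^m)$, $D_n(2^m)$, $E_6(2^m)$)'' over-claims: from the failure of Theorem~\ref{mainTheoremEvenCase}(4) one only learns that some Lie-type component lies outside the scope of parts (1)--(3) and is not in characteristic~$3$; such a component could also live in characteristic $5,7,\dots$, since part~(3) (and the Robinson apparatus behind part~(4), which pegs $q=3$ in Property $\R(2)$) constrains only the characteristics $2$ and $3$. The statement of (5) in the Corollary is correspondingly limited to ``characteristic $\neq 3$'', and your argument should make the same claim and no more. With those two corrections, the proposal is a faithful reconstruction.
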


\bigskip

\noindent
In view of the length of the paper, we provide the reader with a brief explanatory road-map:

\begin{remark}[Overview: Organization of the paper]
\label{rk:outlineorgpaper}
The preliminary Section~\ref{sec:prelims} establishes our overall context, collecting standard definitions and various quoted results.

\smallskip 

Then Sections~\ref{sec:overviewequivApG}--\ref{sectionLefschetz} lead up to the main-proofs in Section~\ref{sec:2varASforp=35}
of our two variants extending~\cite{AS93} to~$p=3,5$ (namely Theorems~\ref{theoremASExtension} and~\ref{theoremASExtensionAlternative}).
The logical sequence is perhaps best explained in {\em reverse\/} order---regarded roughly as a sequence of successive reductions:

The main-proofs above follow the basic strategy indicated in Remark~\ref{rk:stratelimRob}:
The~(Rob-nonelim) step requires in Section~\ref{sectionLefschetz} some extensions of earlier results on Robinson subgroups.
The (elim)~step requires some new elimination results:  
in Section~\ref{sec:elimQCunderH1},
a variant Theorem~\ref{QDpReductionPS} of the~(elimQD)-step eliminating $\QD_p$-components in~\cite{AS93};
and in Section~\ref{sec:elimLierk1samecharp},
the elimination via Theorem~\ref{theoremLiep} of some Lie-type groups in the same characteristic~$p$.

The required-results above in turn depend on some more technical results, which appear in still-earlier sections:
notably the generalized homology-propagation result Theorem~\ref{theoremNewHomologyPropagation}
% replacing old~\ref{theoremHomologyPropagation} by new~\ref{theoremNewHomologyPropagation}
established in Section~\ref{sec:prejoinposetsinHomologyPropagation}.
This result is in fact stated in the viewpoint of the pre-join (rather than the usual poset-join),
which is developed in Section~\ref{sec:prejoinandreplposets}:
the Cartesian-product structure of the pre-join can be much more convenient for our various calculations than the poset-join.
The propagation uses a replacement-poset equivalent to~$\Ap(G)$; 
that replacement is a suitable adjustment, for the pre-join,
of a more standard replacement-construction indicated
in Definition~\ref{definitionXBHposet} of Section~\ref{sec:overviewequivApG}.

\smallskip 

Finally Appendix section~\ref{sec:appendixAfiber} recalls a generalized version
of the Quillen fiber-Theorem~\ref{variantQuillenFiber};
and further Appendix sections provide various technical details---which we moved to there,
in order to not slow down the more fundamental parts of the logical flow in the main text.
\donerk
\end{remark}

This article is intended to be primarily self-contained:
In particular, to avoid the need for frequent external look-ups, statements will be given for many quoted results. 

Results concerning the Quillen Conjecture will often be expressed with several alternative hypotheses,
with the aim of finding different approaches to the proof of~(H-QC);
notably approaches that avoid much of the use of the~CFSG.

\bigskip

\textbf{Acknowledgements.}
Part of this work was carried out during a research stay of the first author at The Mathematisches Forschungsinstitut Oberwolfach.
He is very grateful for the hospitality and the support of the staff of~MFO,
and also to the other Leibniz Fellows for numerous helpful conversations.

\bigskip
\bigskip

\section{Preliminaries}
\label{sec:prelims}

In this section, we recall various elementary facts that we will need on finite groups, posets, and simplicial complexes.
We also establish the notation that we will use throughout the article,
and review some of the literature on Quillen's Conjecture.

\subsection*{Finite groups}

Our main reference for the properties on finite groups is Aschbacher's book \cite{AscFGT}.
We will follow the conventions and notation on simple groups of~\cite{AS93, GLS98, GL83}.
We use the definitions of~\cite{GLS98} for the different types of automorphisms of simple groups, but also we will sometimes refer to~\cite{GL83} for details. 
The reader should take into account that some notation and definitions on simple groups and their automorphism types
occasionally differ between~\cite{GLS98} and~\cite{GL83}; we will try always to indicate such differences.

We will always work with finite groups.
Denote by $\Cyclic_n$, $\Dihedral_n$, $\Sym_n$ and $\Alt_n$ the cyclic group of order $n$, the dihedral group of order $n$,
the symmetric group on $n$ letters and the alternating groups on $n$ letters respectively.
By a simple group we always mean a {\em non-abelian\/} simple group.

Let $G$ be a finite group.
For subgroups $K,H\leq G$, let $N_K(H)$ be the normalizer of $H$ in $K$, and $C_K(H)$ the centralizer of $H$ in $K$.
Denote by $N_G(H_1,\ \ldots\ ,H_n)$ the intersection of normalizers~$N_G(H_1) \cap \ldots \cap N_G(H_n)$ for subgroups $H_i\leq G$.
Write $[H,K]$ for the subgroup generated by the commutators between elements of $H$ and $K$.
For a fixed prime $p$, $\Omega_1(G)$ is the subgroup of~$G$ generated by the elements of order $p$.
Recall that the $p$-rank of $G$, denoted by $m_p(G)$, is the maximal dimension of an elementary abelian $p$-subgroup of~$G$.

Denote by~$Z(G)$, $F(G)$, $O_p(G)$, $O_{p'}(G)$
the center, the Fitting subgroup, the largest normal~$p$-subgroup, and the largest normal~$p'$-subgroup of~$G$, respectively.
Recall that the Fitting subgroup~$F(G)$ is the direct product of all the subgroups~$O_p(G)$, for~$p$ prime dividing the order of~$G$.
For solvable groups~$G$, we have that~$F(G)$ is self-centralizing: $C_G \bigl( F(G) \bigr) \leq F(G)$.
However, this property does not hold for arbitrary groups~$G$,
and so in general we instead work with the \textit{generalized Fitting subgroup}~$F^*(G)$ of~$G$;
it is fundamental for the structure of finite groups:

\begin{remark}[Components and the generalized Fitting subgroup]
\label{rk:compsandF*G}
Recall that~$F^*(G)$ is the central product of the subgroups~$F(G)$ and~$E(G)$;
here~$E(G)$ is the layer of~$G$, that is, the central product of the {\em components\/} of~$G$---namely
the quasisimple subnormal subgroups of~$G$.
The generalized Fitting subgroup is self-centralizing:

\centerline{
  $C_G \bigl( F^*(G) \bigr) \leq F^*(G)$;
            }

\noindent
and when~$G$ is solvable, $F^*(G) = F(G)$.
Also~$Z \bigl( F^*(G) \bigr) = Z \bigl( F(G) \bigr)$ and $Z \bigl( E(G) \bigr) \leq Z \bigl( F(G) \bigr)$.
\donerk
\end{remark}

We sometimes use the following (non-standard) notation for an orbit of components of~$G$:

\begin{definition}
\label{defOrbitComponentes}
Let $L$ be a component of a finite group~$G$.
We denote by~$\hat{L}$ the (central) product of the $G$-conjugates of $L$, and by $\hat{N}_G(L)$ the kernel of this action.

That is, if $L_1,\ldots,L_r$ is the $G$-orbit of $L$, then:

  \hfill $\hat{L}=L_1\ldots L_r$ and $\hat{N}_G(L) = N_G(L_1,\ldots,L_r) = \bigcap_i N_G(L_i) . $ \donerk
\end{definition}

Note that we always have~$F(G) \leq O_p(G)O_{p'}(G)$.
For the Quillen Conjecture, 
we will often work under the assumption that~$O_p(G) = 1 = O_{p'}(G)$, so that~$F(G) = 1$, and hence~$Z \bigl( E(G) \bigr) = 1$.
Since~$Z \bigl( E(G) \bigr)$ is the product of the centres of the components of~$G$,
we see that the components of~$G$ are simple groups in this case.
We include this in the following frequently-used lemma:

\begin{lemma}
\label{lemmaOpandp}
Let $G$ be a finite group and $p$ a prime number dividing its order.

\begin{enumerate}
\item If~$F(G) = 1$, then~$F^*(G) = E(G) = L_1 \ldots L_t$ is the direct product of the components $L_i$ of~$G$, which are all simple.
      Since~$C_G \bigl( F^*(G) \bigr) = Z \bigl( F(G) \bigr) = 1$, we have a natural inclusion:
  \[ F^*(G) \leq G \leq \Aut \bigl( F^*(G) \bigr) . \]
This holds when~$O_p(G) = 1 = O_{p'}(G)$; and then~$p$ divides the order of the components.

\item $E(\ C_G \bigl( E(G) \bigr)\ ) = 1$ and hence $F^*(\ C_G \bigl( E(G) \bigr)\ ) = F(G)$.

\item Let $L_{1},\ldots,L_{s}$ be distinct components of $G$.
Then the components of $C_G(L_{1}\ldots L_{s})$ are the components of $G$ other than the $L_{i}$ for $1\leq i\leq s$.

\item Under the hypothesis of (3), $F \bigl( C_G(L_1 \ldots L_s) \bigr) = F(G)$. 
In particular, if $O_p(G) = 1$, then~$O_p \bigl( C_G(L_{1} \ldots L_{s}) \bigr) = 1$.
\end{enumerate}
\end{lemma}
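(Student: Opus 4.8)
The plan is to establish the four parts in order, using only standard properties of the generalized Fitting subgroup as recalled in Remark~\ref{rk:compsandF*G}. For part~(1): since $F(G)=1$, the central product $F^*(G)=F(G)E(G)$ collapses to $E(G)$, and since $Z(E(G))\leq Z(F(G))=1$ the layer is in fact the \emph{direct} product of the components $L_1,\dots,L_t$, each of which is then simple (being quasisimple with trivial center). The self-centralizing property $C_G(F^*(G))\leq F^*(G)$ combined with $Z(F(G))=1$ gives $C_G(F^*(G))\leq Z(F^*(G))=Z(F(G))=1$; hence conjugation embeds $G\hookrightarrow\Aut(F^*(G))$, with $F^*(G)=\Inn(F^*(G))$ sitting inside as the inner automorphisms. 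For the last sentence of~(1): if $O_p(G)=O_{p'}(G)=1$ then, since $F(G)\leq O_p(G)O_{p'}(G)=1$, the hypothesis $F(G)=1$ holds; and $p$ must divide $|F^*(G)|=|L_1|\cdots|L_t|$ — otherwise $F^*(G)$ would be a $p'$-group normal in $G$, forcing $p\nmid|G|$ by the embedding $G\hookrightarrow\Aut(F^*(G))$ — wait, more directly: a nontrivial normal $p'$-subgroup would lie in $O_{p'}(G)=1$, so $p\mid|F^*(G)|$, and as the $L_i$ are the only factors, $p$ divides the order of some (indeed, here one wants: of the) components. I would phrase this carefully, noting that $p\mid|G|$ and $C_G(F^*(G))=1$ force $p\mid|F^*(G)|$.

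For part~(2), set $C:=C_G(E(G))$. Any component $K$ of $C$ is a quasisimple subnormal subgroup of $C$, hence subnormal in $G$, hence a component of $G$, hence $K\leq E(G)$; but also $K\leq C=C_G(E(G))$, so $K\leq Z(E(G))$, contradicting that $K$ is nonabelian (quasisimple). Thus $E(C)=1$, and therefore $F^*(C)=F(C)$. Then one checks $F(C)=F(G)$: the inclusion $F(G)\cap C\supseteq$ is clear since $F(G)$ centralizes... actually $F(G)$ need not centralize $E(G)$ in general, but $O_{p'}(F(G))$-type pieces do; the cleanest route is to observe $F(G)=C_{F^*(G)}(E(G))$ lies in $C$ and is normal nilpotent, so $F(G)\leq F(C)$, while $F(C)$ is a nilpotent normal subgroup of $C$ that is also normalized by $E(G)$ (as $E(G)$ centralizes $C$) and hence normal in $C\cdot E(G)$; since $C\cdot E(G)$ has index dividing $|F(G)|$... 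I expect part~(2) to need the standard fact that $C_G(E(G))\cdot E(G)\trianglelefteq G$ with nilpotent-residual arguments, so I would instead cite the componentwise description directly or reduce to part~(3) with $s=t$.

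Parts~(3) and~(4) are the heart, and part~(3) is where I expect the main work. Given distinct components $L_1,\dots,L_s$ of $G$, put $C:=C_G(L_1\cdots L_s)$. First, every component $L_j$ of $G$ with $j\notin\{1,\dots,s\}$ centralizes each $L_i$ (distinct components commute, as $[L_i,L_j]$ is a subnormal subgroup of both a simple-up-to-center $L_i$ and $L_j$, forcing $[L_i,L_j]\leq Z(L_i)\cap Z(L_j)$, and then a standard commutator argument gives $[L_i,L_j]=1$), so $L_j\leq C$, and being subnormal in $G$ it is subnormal in $C$, hence a component of $C$. Conversely, a component $K$ of $C$ is subnormal in $G$, hence a component of $G$; it cannot be any $L_i$ ($1\leq i\leq s$) since $L_i$ does not centralize itself (nonabelian); so $K$ is one of the other components. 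This gives the stated bijection. For part~(4): $F(G)$ is nilpotent normal in $G$ and centralizes each $L_i$ — here I use that $F(G)$ and $E(G)$ centralize each other in $F^*(G)$, so $F(G)\leq C_G(E(G))\leq C$ — and is normal in $C$; conversely $F(C)\trianglelefteq C$, and since $L_1\cdots L_s$ centralizes $C$ we get $F(C)$ normalized by $\langle C, L_1\cdots L_s\rangle$; combined with subnormality one deduces $F(C)\trianglelefteq G$ (using that $F(C)$ together with the $L_i$ generate a subgroup in which $F(C)$ is normal, and $G/(\text{that})$ acts...), so $F(C)$ is nilpotent normal in $G$, giving $F(C)\leq F(G)$, whence equality. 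The specialization to $O_p$ is immediate: $O_p(C)\leq F(C)=F(G)$, and if $O_p(G)=1$ then $F(G)$ is a $p'$-group, so $O_p(C)=1$. The main obstacle throughout is the careful handling of central products and the fact that ``centralizes'' must be used precisely — components are only quasisimple, not simple, in parts~(2)–(4) — so I would be explicit about where $Z(L_i)$ enters and invoke the commutator-of-subnormal-subgroups lemma rather than assuming simplicity.
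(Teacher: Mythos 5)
The most serious issue is in your proof of part~(3). You write ``a component $K$ of $C$ is subnormal in $G$, hence a component of $G$.'' But $C = C_G(L_1\cdots L_s)$ is \emph{not} a normal (or even subnormal) subgroup of $G$ in general, since $G$ typically does not stabilize the chosen subset $\{L_1,\dots,L_s\}$ of components. Subnormality of $K$ in $C$ therefore does not pass up to $G$, so this step is unjustified as stated; you have essentially asserted the conclusion of~(3). The paper's route avoids this: it shows that a hypothetical rogue component $M$ of $C$ (one not among the $M_j$) would centralize each $M_j$ (distinct components of $C$ commute) and each $L_i$ (since $M \leq C$), hence $M \leq C_G(E(G))$; and since $C_G(E(G))$ \emph{is} normal in $G$ (and hence in $C$), $M$ is a component of $C_G(E(G))$, contradicting part~(2). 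Your intuition that the $Z(L_i)$'s and the commutator lemma must enter is right, but you need this detour through $C_G(E(G))$ to get a subgroup whose normality in $G$ is available.

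A similar issue then propagates into part~(4): your claim that $F(C)$ is normalized by $\langle C, L_1\cdots L_s\rangle$ and that ``one deduces $F(C)\trianglelefteq G$'' is not a valid deduction — $\langle C, L_1\cdots L_s\rangle$ need not be all of $G$, and normality there does not yield normality in $G$. The clean argument is: $F(C)$ centralizes $E(C) = M_1\cdots M_r$ (by the $F^*$-structure) and centralizes $L_1\cdots L_s$ (being inside $C$), so by~(3) it centralizes all of $E(G)$; thus $F(C) \leq C_G(E(G))$, and $F(C)$ is a normal nilpotent subgroup of $C_G(E(G))$ (normal because $C_G(E(G)) \leq C$ and $F(C)$ is characteristic in $C$), whence $F(C) \leq F(C_G(E(G))) = F(G)$ by part~(2). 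The reverse inclusion is as you say. Finally, on part~(2): your treatment of $E(C_G(E(G)))=1$ is fine and matches the paper, but for $F(C_G(E(G)))=F(G)$ you explicitly punt (``I would instead cite the componentwise description directly or reduce to part~(3) with $s=t$''); note the reduction to~(3) would be circular, since the paper proves~(3) from~(2). The missing observation is simply that $F(C_G(E(G)))$ is characteristic in $C_G(E(G)) \trianglelefteq G$, hence is a normal nilpotent subgroup of $G$ containing $F(G)$ (which lies in $C_G(E(G))$ because $[F(G),E(G)]=1$), so equality holds. So the overall approach is in the right spirit, but you should commit to~(2) first, then funnel the problematic components in~(3) into $C_G(E(G))$ (rather than appealing to subnormality of $C$ in $G$), and likewise route part~(4) through $C_G(E(G))$.
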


\begin{proof}
Part (1) is immediate from the previous discussion.

We prove part (2).
Suppose that~$L$ is a component of~$C_G \bigl( E(G) \bigr)$.
Since~$E(G)$ is normal in~$G$, so is~$C_G \bigl( E(G) \bigr)$, and therefore~$L$ is a component of~$G$ (see~\cite[(31.3)]{AscFGT}).
So~$L \leq E(G)$---but also lies in~$C_G \bigl( E(G) \bigr)$, contrary to quasisimplicity of~$L$.
This proves that~$E(\ C_G \bigl( E(G) \bigr)\ ) = 1$, and so~$F^*(\ C_G \bigl( E(G) \bigr)\ ) = F(\ C_G \bigl( E(G) \bigr)\ )$.
But the latter is a normal nilpotent subgroup of $G$, which contains $F(G)$ since $[F(G),E(G)] = 1$.
Hence $F(\ C_G \bigl( E(G) \bigr)\ ) = F(G)$.

We prove part (3).
Let $\{ M_1, \ldots , M_r \}$ be the set of components of $G$ distinct from the $L_i$, so that $E(G) = L_1\ldots L_s M_1\ldots M_r$.
Write $K = C_G(L_1\ldots L_s)$.
Then $M_j\leq K$ for all $1\leq j\leq r$ and then the $M_j$ are components of $K$.
That is, $M_1\ldots M_r \leq E(K)$.

For the reverse inclusion, let $M$ denote a component of $K$, hence of $E(K)$.
Assume, by way of contradiction, that $M\neq M_j$ for all $j$.
By~\cite[(31.5)]{AscFGT} applied in~$K$, we see that~$M$ centralizes each~$M_j$.
Since $M \leq K = C_G(L_1\ldots L_s)$, we get $M \leq C_G \bigl( E(G) \bigr) \leq K$.
This shows that $M$ is also a component of $C_G \bigl( E(G) \bigr)$.
But this contradicts item (2).
In consequence, $M = M_j$ for some $j$.
This concludes the proof of part (3).

We now prove part~(4).
Let~$M = F \bigl( C_G(L_1 \ldots L_s) \bigr)$.
Part~(3) leads to~$[M,E(G)] = 1$.
Hence~$M \leq C_G \bigl( E(G) \bigr) \leq C_G(L_1 \ldots L_s) \leq G$.
Then~$M$ is a normal nilpotent subgroup of~$C_G \bigl( E(G) \bigr)$ using~(3).
Therefore~$M \leq F(\ C_G \bigl( E(G) \bigr)\ ) = F(G)$.
Since~$F(G) \leq C_G \bigl( E(G) \bigr) \leq C_G(L_1\ldots L_s)$,
we see that~$F(G)$ is a normal nilpotent subgroup of $C_G(L_1 \ldots L_s)$, and so $F(G) \leq M$.
So we have~$M = F(G)$;
hence~$O_p \bigl( C_G(L_1 \ldots L_s) \bigr) = O_p(\ F \bigl( C_G(L_1 \ldots L_s) \bigr)\ ) = O_p \bigl( F(G) \bigr) = O_p(G)$.
\end{proof}

We will also often use (sometimes possibly implicitly) the following elementary lemma:

\begin{lemma}
\label{lemmaNormalizeComponent}
If~$K,L \leq G$, where~$L$ is a component of~$G$ and~$K$ normalizes a non-central subset of $L$, then~$K$ normalizes~$L$.
\end{lemma}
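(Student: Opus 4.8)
The plan is to argue pointwise: I will show that every $k \in K$ normalizes $L$, which immediately gives $K \leq N_G(L)$. So fix $k \in K$; since $K \leq G$, conjugation by $k$ is an automorphism of $G$, and it carries components to components (a quasisimple subnormal subgroup is sent to a quasisimple subnormal subgroup); this is the same fact already used implicitly in Definition~\ref{defOrbitComponentes}. Hence $L^k$ is again a component of $G$.

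Let $S \subseteq L$ be the non-central subset that $K$ normalizes, so that $S \not\subseteq Z(L)$ and $S^k = S$. Then $S = S^k \subseteq L^k$, and therefore $S \subseteq L \cap L^k$. The key step is now to observe that if $L^k \neq L$, then $L$ and $L^k$ are distinct components of $G$, so they commute: $[L, L^k] = 1$ (see~\cite[(31.5)]{AscFGT}, or the central-product structure of~$E(G)$ recalled in Remark~\ref{rk:compsandF*G}). In particular $L^k \leq C_G(L)$, whence $L \cap L^k \leq L \cap C_G(L) = Z(L)$. This forces $S \subseteq Z(L)$, contradicting the choice of $S$ as non-central in~$L$.

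Therefore $L^k = L$, i.e.\ $k \in N_G(L)$; and as $k \in K$ was arbitrary, $K$ normalizes~$L$. I expect no real obstacle here: the argument needs only the two standard structural facts about components---that $G$-conjugation permutes them, and that distinct components commute (hence meet inside~$Z(L)$)---both available from~\cite{AscFGT} and recalled above. The one point worth stating carefully is the interpretation of ``non-central subset of~$L$'', namely that $S$ is not contained in~$Z(L)$, since this is precisely the hypothesis that yields the contradiction.
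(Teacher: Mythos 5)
Your proof is correct and follows essentially the same route as the paper's: fix $k \in K$, observe $S \subseteq L \cap L^k$, and use the fact that distinct components intersect inside $Z(L)$ to conclude $L^k = L$. The only difference is that you spell out why $L \cap L^k \leq Z(L)$ (via commuting components), whereas the paper cites this as a known fact.
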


\begin{proof}
Let $S \subseteq L$ be a non-central subset normalized by~$K$.
If~$k \in K$, then~$S \subseteq L \cap L^k$.
Since~$L^g \neq L$ implies~$L\cap L^g\leq Z(L)$ for any~$g \in G$, and~$S$ is not central in $L$, we conclude that~$L^k = L$.
Hence~$K \leq N_G(L)$.
\end{proof}

Lemma~\ref{lemmaTrivialOpPropagationCentralizer} below is for more specialized situations:
it provides a downward-inheritance property for trivial~$p$-core, in centralizers of extensions of components by~$p$-groups---such
as the~$p$-extensions of Definition~\ref{defn:QD}. 
The~$O_p(-)=1$ condition for the centralizers in the lemma will correspond
to the ``nonconical''-centralizer situation in the homology-propagation literature;
see for example the hypothesis~``(someNC)'' in our discussion in later Remark~\ref{remarkClosingSection}.
We will apply the nonconical conclusion of the Lemma to provide the particular version of~(someNC)
which is appropriate for the propagation used in proving later Theorem~\ref{QDpReductionPS};
and also in the proof of Proposition~\ref{propHomotEquivTildePosets}.

\begin{lemma}
\label{lemmaTrivialOpPropagationCentralizer}
Suppose that~$L \leq G$ is a subgroup, and let~$B,E \leq N_G(L)$ be abelian~$p$-subgroups.
Suppose in addition that~$LB \leq LE$ and that~$O_p \bigl( C_G(LE) \bigr) = 1$.
Then~$O_p \bigl( C_G(LB) \bigr) = 1$.

If further~$L$ is a component of~$G$, then also~$O_p(LB) = 1$.
\end{lemma}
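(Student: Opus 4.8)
The plan is to deduce the triviality of $O_p\bigl(C_G(LB)\bigr)$ from that of $O_p\bigl(C_G(LE)\bigr)$ by a direct containment/normalizer argument, exploiting the hypothesis $LB \leq LE$. First I would observe that since $B,E$ normalize $L$, both $LB$ and $LE$ are subgroups of $N_G(L)$, and the relation $LB \leq LE$ gives an inclusion of centralizers in the opposite direction: $C_G(LE) \leq C_G(LB)$. So the subgroup $Q := O_p\bigl(C_G(LB)\bigr)$ is a $p$-subgroup which need not lie in $C_G(LE)$, and the content of the lemma is that it is nonetheless trivial. The key point to extract is that $Q$ normalizes $C_G(LE)$: indeed $C_G(LE)$ is normal in $C_G(LB)$ (since $LE \leq N_G(L)$ and $E$ normalizes $LB$, so $C_G(LB)$ normalizes $LE$ and hence its centralizer — one must check that $C_G(LB)$ normalizes $LE$; this holds because $LB \trianglelefteq LE$ is false in general, so instead I would argue via $LE = LB\cdot E$ together with the fact that $E \leq N_G(L)$ normalizes $L$, and that $C_G(LB)$ centralizes $L$ and $B$, hence normalizes $L$, $LB$, and the abelian group generated modulo $C_G(L)$... this needs care). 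The cleanest route: $C_G(LB)$ centralizes $L$, so $C_G(LB) \leq C_G(L) \leq N_G(L)$, and inside $N_G(L)$ conjugation by $C_G(LB)$ fixes $L$ pointwise; I would then show $C_G(LB)$ normalizes $C_G(LE)$ by noting $C_G(LE) = C_{C_G(L)}(E)$ and $C_G(LB)=C_{C_G(L)}(B)$, and within the group $C_G(L)$ the subgroups $B \leq LB$ and $E$ satisfy... Actually the simplest valid argument is: $Q = O_p(C_G(LB))$ normalizes $C_G(LE)$ because $C_G(LE) \trianglelefteq C_G(LB)$, which follows since $LE = \langle LB, E\rangle$ and any element centralizing $LB$ and normalizing the whole ambient configuration... — I will pin this down by working inside $C := C_G(L)$, where $LB$-centralizer is $C_B := C_C(B)$ and $LE$-centralizer is $C_E := C_C(E)$, with $B \leq E C_C(L)$-type relations.

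Granting that $Q$ normalizes $R := C_G(LE)$, I would then consider the subgroup $QR$. Since $R$ is normal in it and $Q$ is a $p$-group, and by hypothesis $O_p(R)=1$, I want to conclude $Q$ acts trivially enough to force $Q=1$; but that is not automatic. The better approach is: $Q$ normalizes $R$, so $Q$ normalizes $O_p(C_R(Q))$ and more usefully $[Q,R]$. Hmm. Let me instead use the standard trick: since $Q \leq C_G(LB)$ centralizes $B$, and $E$ is an abelian $p$-group with $LB \leq LE$, write any $e \in E$ as $e = \ell b q'$... no. The genuinely clean argument, which I expect the authors use: $Q$ is a $p$-subgroup of $C_G(LB)$ normalizing $LE$ (since $Q \leq C_G(L) \cap C_G(B) \leq C_G(L)$ normalizes $L$, and $Q$ normalizes $E$? no).

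Let me restructure. The correct and short proof: set $K = C_G(LB)$ and $Q = O_p(K)$. Then $QLE$ makes sense because $Q \leq C_G(LB) \leq C_G(L)$ so $Q$ normalizes $L$; also $Q$ centralizes $B$; but $Q$ need not normalize $E$. So instead consider $Q_0 = Q \cap C_G(LE) \leq O_p(C_G(LE)) \cap Q$; but $O_p(C_G(LE))=1$ only gives $Q \cap C_G(LE)$ is a $p$-subgroup of the trivial group's... no, $Q\cap C_G(LE) \leq C_G(LE)$ is a normal $p$-subgroup of $C_G(LB) \cap C_G(LE) = C_G(LE)$? It is $p$-subgroup, normal in $C_G(LB)$ hence normal in $C_G(LE)$, so $Q \cap C_G(LE) \leq O_p(C_G(LE)) = 1$. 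Good, so $Q \cap C_G(LE) = 1$. Now I need $Q \leq C_G(LE)$, i.e. $Q$ centralizes $E$. Since $Q$ centralizes $L$ and $B$, and $LE = LB$ together with... if $E \leq LB \cdot C_G(L)$ then $Q$ centralizes $E$ modulo centralizing $L$, done. And indeed $LB \leq LE$ with $B,E$ abelian $p$-subgroups of $N_G(L)$: is $E \leq LB\,C_G(L)$? Not obviously. The hard part will be exactly this — showing $Q = O_p(C_G(LB))$ actually lies in $C_G(LE)$, or finding the right normality statement to bypass it; once that is in hand, $Q \cap C_G(LE) = 1$ from the hypothesis finishes the first assertion immediately.

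For the final sentence: assuming $L$ is a component of $G$ and we have already shown $O_p\bigl(C_G(LB)\bigr)=1$, I would argue $O_p(LB)=1$ as follows. Since $L$ is quasisimple, $L = L' \leq LB$, so $O_p(LB)$ normalizes $L$; thus $[O_p(LB), L] \leq O_p(LB) \cap L = O_p(L) \leq Z(L)$ (using that $L$ is quasisimple, or that $O_p(LB)\cap L \trianglelefteq L$ is a normal $p$-subgroup of the quasisimple group $L$, hence central). Then by a standard commutator argument ($[L, O_p(LB)] \leq Z(L)$ together with $L = [L,L]$ via the three-subgroups lemma) $O_p(LB)$ centralizes $L$, so $O_p(LB) \leq C_G(L)$. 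Since $O_p(LB)$ also normalizes $B$ (it is normal in $LB \geq B$) — in fact $O_p(LB) \leq C_{LB}(L)$ and the image of $LB$ in $\operatorname{Aut}(L)/\operatorname{Inn}(L)$ shows $C_{LB}(L)$ is a $p$-group mapping isomorphically... more simply: $O_p(LB)$ centralizes $L$, hence $O_p(LB) \leq C_{LB}(L)$; but also $O_p(LB) \leq LB$ and $LB/L \cong B/(B\cap L)$ is abelian, so $O_p(LB)$ centralizes $B$ as well (any subgroup of the abelian group $B$ commutes with $B$, after reducing mod $L$ and using that $O_p(LB)$ centralizes $L$). Therefore $O_p(LB) \leq C_G(L) \cap C_G(B) = C_G(LB)$, so $O_p(LB) \leq O_p\bigl(C_G(LB)\bigr) = 1$. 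The only subtle point here is passing from $[L,O_p(LB)]\le Z(L)$ to $O_p(LB) \le C_G(L)$, which is the routine three-subgroups / perfectness argument for quasisimple $L$, so I do not expect difficulty there.
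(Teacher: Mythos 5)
There is a genuine gap in both parts, and in both cases the missing ingredient is the same: a $p$-group fixed-point argument replacing the containment you were trying (and failing) to prove.

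For the first assertion, the pivotal observation you explicitly dismissed is in fact correct: $LB \trianglelefteq LE$. Indeed $LB/L \leq LE/L \cong E/(E \cap L)$, and the latter is abelian since $E$ is, so $LB/L$ is normal in $LE/L$, and pulling back along $LE \to LE/L$ gives $LB \trianglelefteq LE$. This is what makes $E$ act by conjugation on $C_G(LB)$ and hence on the characteristic subgroup $N := O_p\bigl(C_G(LB)\bigr)$. You are right that one cannot show $N \leq C_G(LE)$ in general, and that is exactly why the paper does not try to; the correct step is instead to pass to fixed points. If $N > 1$, then since both $N$ and $E$ are $p$-groups, the orbit-counting argument gives $C_N(E) > 1$. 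But $C_N(E) = N \cap C_G(E) = N \cap C_G(L) \cap C_G(E) = N \cap C_G(LE)$ is a $p$-subgroup normal in $C_G(LE)$ (for $x \in C_G(LE) \leq C_G(LB)$ one has $N^x = N$ since $N$ is characteristic in $C_G(LB)$, and $E^x = E$), hence $C_N(E) \leq O_p\bigl(C_G(LE)\bigr) = 1$, a contradiction. You had the observation $Q \cap C_G(LE) \leq O_p(C_G(LE)) = 1$ in hand; what was missing was not $Q \leq C_G(LE)$ but rather $LB \trianglelefteq LE$ plus the $p$-group fixed-point step forcing $Q = 1$ once $C_Q(E) = 1$.

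The second assertion has a similar gap. Your route aims to show $O_p(LB) \leq C_G(B)$, but the strongest thing the three-subgroups/perfectness argument gives is $[O_p(LB), B] \leq L \cap O_p(LB) \leq Z(L)$, and there is no hypothesis here that $Z(L)$ is a $p'$-group, so this commutator need not vanish. The paper again sidesteps by the fixed-point trick: $L$ centralizes $O_p(LB)$ (your quasisimplicity argument is the right one for this) and $B$ normalizes $O_p(LB)$, so $C_{O_p(LB)}(B) \leq C_{LB}(L) \cap C_{LB}(B) = Z(LB)$; being a $p$-group normal in $C_G(LB)$, this fixed-point subgroup lies in $O_p(Z(LB)) \leq O_p\bigl(C_G(LB)\bigr) = 1$, and then the action of the $p$-group $B$ on the $p$-group $O_p(LB)$ with trivial fixed points forces $O_p(LB) = 1$.
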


\begin{proof}
Let~$N := O_p \bigl( C_G(LB) \bigr)$.
We show that~$N = 1$.

Note that~$LB$ is normal in~$LE$ since~$LB/L \leq LE/L \groupiso E/E\cap L$ is abelian.
Therefore, $E$ acts on~$C_G(LB)$, and hence on the characteristic subgroup $N$.
Since $N$ and $E$ are $p$-groups, from~$N > 1$ we would get~$C_N(E) > 1$.
On the other hand, $C_N(E)$ is a normal subgroup of $C_G(LE)$:
since for~$x \in C_G(LE)$, we see~$C_N(E)^x = C_{N^x}(E^x) = C_N(E)$ using~$C_G(LE) \leq C_G(LB)$.
So we get that~$C_N(E) \leq O_p \bigl( C_G(LE) \bigr) = 1$ using the hypothesis, which forces~$N = 1$, as desired.

Finally assume~$L$ is a component of~$G$; note then that~$O_p(LB)$ is centralized by~$L$ and normalized by~$B$;
so~$C_{O_p(LB)}(B) \leq O_p ( Z(LB) ) \leq O_p \bigl( C_G(LB) \bigr) = N = 1$, which forces~$O_p(LB) = 1$.
\end{proof}

Now we establish some notation on automorphism groups.
Denote by~$\Aut(G)$ the group of automorphisms of~$G$,
by~$\Inn(G)$ the group of inner automorphisms of~$G$ (which is~$G/Z(G)$),
and by $\Out(G) = \Aut(G) / \Inn(G)$ the group of outer automorphisms of~$G$.
If~$x \in \Aut(G) - \Inn(G)$, we say that~$x$ {\em induces an outer automorphism\/} on~$G$.

If $H\leq G$, then $\Aut_G(H) = N_G(H)/C_G(H)$ (resp. $\Out_G(H) = N_G(H)/ (H C_G(H))$)
is the group of automorphisms (resp. outer automorphisms) of $H$ induced by $G$.
The subgroup $HC_G(H)$ can be regarded as the subgroup of $G$ whose elements induce inner automorphisms on $H$.
We say that a subgroup $K\leq G$ induces outer automorphisms on $H$ if $K$ normalizes $H$ and $K$ contains no inner automorphism of $H$.
That is, $K$ induces outer automorphisms on $H$ iff~$K \cap (H C_G(H)) = 1$.

\medskip

We recall now some basic facts on outer automorphisms of {\em simple\/} groups:

\begin{remark}[Summary: Outer automorphisms of the finite simple groups]
\label{rk:outerautsofsimple}
We will usually abbreviate the Classification of Finite Simple Groups by~CFSG.
The~CFSG states in overview that a simple group is one of:

\centerline{
   (CFSG): an alternating group; a simple group of Lie type; or one of the~26 sporadic groups.
            }

\noindent
One celebrated consequence of the~CFSG is the Schreier conjecture;
which states that for a simple group~$L$, $\Out(L)$ is solvable.
But we get fuller details from the study of the cases for $L$
(using the~CFSG to know that our list of the simple~$L$ is complete): 

Namely we have detailed description of the structure of~$\Out(L)$ for the simple groups~$L$.
These specific descriptions are important throughout the literature;
but especially for the study of~$p$-extensions:
for example in establishing the Aschbacher-Smith~$\QD_p$-List (which we quote below as Theorem~\ref{theoremQDList});
and in many other results on the Quillen Conjecture---including our results in Section~\ref{sec:elimLierk1samecharp}.
(Indeed Theorem~\ref{PStheorem} shows that we should {\em expect\/}~$p$-extensions, in pursuing~(H-QC).)

\bigskip

\noindent
Here are some of those fundamental details:

\smallskip

$\bullet$
For~$L$ an alternating group, $\Out(L) = \Cyclic_2$---except
for~$L = \Alt_6$, where~$\Out(\Alt_6) = \Cyclic_2\times \Cyclic_2$.
(See e.g.~[Thm~5.2.1]\cite{GLS98}.)

\smallskip

$\bullet$
For~$L$ a sporadic group, $\Out(L) = 1$ or~$\Cyclic_2$.
(See e.g.~\cite[Sec~5.3]{GLS98} for the specific cases.)

\medskip

$\bullet$
Now consider~$L$ of Lie type---defined over a field of characteristic we will denote by~$r$:

\noindent
Below we give a fairly informal summary of the structure of~$\Out(L)$,%
\footnote{
   In the case of the Lie-type groups~$L^+ \groupiso \Symplectic_4(2)$, ${^2}F_4(2)$, $G_2(2)$, ${^2}G_2(3)$ for $p = 2,2,2,3$, 
   this description applies to~$\Out(L^+)$; whereas the simple group~$L$ is the commutator subgroup of index~$p$---which
   has further automorphisms, that we must treat separately.
   For example, this distinction arises explicitly in our later Definition \ref{defn:sLie-p}(2).
   See also the discussion $Lie_{exc}$ in Definition 2.2.8 of \cite{GLS98}.
          }
%\footnote{
%  "fake footnote":
%  (Decide later.)
%  We need to make sure that we DO take care of these groups---$Lie_{exc}$ Defn~2.2.8 in \cite{GLS98}!
%          }
based on the beginning of Section~7 (Part~I) of~\cite{GL83}.
(For our later results, we will require a more detailed description---based on the treatment in~\cite[Sec~5.2]{GLS98};
see our ``first aspect'' discussion after the statement of Theorem~\ref{theoremLiep}, 
and the underlying structures in Appendix Section~\ref{sec:fldgraphautcharp}.)
In the standard overview:

\centerline{
   Every element of~$\Aut(L)$ can be written as a product~$idfg$,
            }

\noindent
where~$i$ is an inner automorphism, $d$ a diagonal automorphism, $f$ a field automorphism, and~$g$ a graph automorphism.
For present purposes, it will suffice to describe these automorphism types fairly informally:

\smallskip

(d) Diagonal automorphisms correspond to suitable diagonal matrices, 
when~$L$ is given in adjoint form---namely as a group of matrices group over the field~${\mathbb F}_{r^a}$ for suitable~$a$.
In particular, diagonal automorphisms have order dividing~$r^a - 1$ and hence coprime to $r$.

\smallskip

(f) Field automorphisms of that adjoint matrix group 
are induced by elements of the Galois group of~${\mathbb F}_{r^a}$ over ${\mathbb F}_r$. 
In particular, the subgroup of field automorphisms is cyclic.%
\footnote{
  Of order $a$---or at least dividing $a$, depending on naming-conventions
  we describe in later Definition~\ref{defn:fldgraphconvtypePhi}.
          }

\smallskip

(g) Graph automorphisms arise%
\footnote{
  Our wording here is deliberately vague, for the moment;
  we will be more precise about the meaning of ``graph automorphism''
  (and various other points) in the already-mentioned later discussions beginning after Theorem~\ref{theoremLiep}. 
          }
from elements of the symmetry group~$\Delta$ of the underlying Dynkin diagram  for the Lie type of~$L$.
The cases where~$\Delta > 1$, listed by diagram type, are:

\centerline{
  (i) $\Delta = \Cyclic_2$ for~$A_n$ ($n \geq 2$), $B_2$, $C_2$, $D_n$ ($n > 4$), $E_6$, $F_4$, $G_2$;
  and~$\Delta = \Sym_3$ for~$D_4$.
            }

\noindent
In particular, the only possible primes~$s$ dividing the order of~$\Delta$ are~$s = 2,3$. 
Furthermore (see~1.15.4(b) of~\cite{GLS98}) for a diagram-symmetry
to actually lead to a graph automorphism (in the original sense of Steinberg) of an untwisted group~$L$,
at the level of an overlying algebraic group~$\bar L$, 
some of the above Lie-types give a further restriction on the characteristic~$r$:

\centerline{
  (ii) A graph automorphism in~$B_2 \cong C_2$ or~$F_4$ requires~$r=2$; and in~$G_2$ requires~$r=3$;
            }

\noindent
here~$B_2 \cong C_2$ recalls a standard isomorphism of the finite groups of those types.

Finally, for the single-bond diagrams in~(i) above~(Lie-types~$A_n$, $D_n$, $E_6$),
the graph automorphisms from~$\Delta$ always lead to corresponding {\em twisted\/} simple groups;
whereas for the Lie-types in~(ii) above, there is a further restriction:

  (iii) The cases in~(ii) give a twisted group only over a field with order an {\em odd\/} power of~$r$.
\donerk
\end{remark}

\subsection*{Posets and simplicial complexes}

We establish the main notation and definitions on posets and simplicial complexes.

We will always consider finite posets and simplicial complexes.
We typically denote the order-relation of some unspecified poset~$X$ by~$<$;
in posets of subgroups of a finite group~$G$, the relation will usually coincide with group-inclusion~$<$---but
we will also consider some other relations, for which we will normally specify some different notation.

If~$X$ is a finite poset, denote by~$\K(X)$ its {\em order complex\/}:
recall that the simplices of the order complex~$\K(X)$ are the non-empty~$<$-chains of~$X$.
We will study the homotopy properties of the poset~$X$---meaning the homotopy properties of the complex~$\K(X)$.% 
\footnote{
  Thus in topological situations, we in effect identify~$X$ with~$\K(X)$: 
  this should be implicitly clear from the context---but usually we try to make the complex~$\K(X)$ explicit.
          }

If~$f : X \to Y$ is an order-preserving map between finite posets,
then~$f$ induces a simplicial map~$f : \K(X) \to \K(Y)$ between the order complexes.

The basic Homotopy  Property:
Assume~$f,g : X \to Y$ are two order-preserving maps between finite posets.
We say that~$f$ and~$g$ are~{\em $<$-comparable\/}, if either~$f \leq g$ (which means~$f(x) \leq g(x)$ for all~$x \in X$),
or analogously~$f \geq g$.
In that case, we have a homotopy of the maps~$f,g : \K(X) \to \K(Y)$ induced on the order complexes.

We often use the following special case where~$X = Y$---so that~$f$ is a poset {\em endomorphism\/}:

\begin{lemma}[Homotopy from endomorphisms]
\label{lm:increndo}
Let~$X$ be a finite poset.

\noindent
(1) Assume~$f_0, f_1 , \dots, f_t$ are poset endomorphisms of~$X$, with~$f_0 = \Id_X$, such that:

  (i) Successive pairs are~$<$-comparable: That is, for each~$i$ we have~$f_i \leq f_{i+1}$ or $f_i \geq f_{i+1}$;

  (ii) The final map~$f_t$ is constant: That is, there is~$x_t \in X$, such that~$f_t(x) = x_t$ for all~$x \in X$. 

\noindent
Then $\Id_X = f_0$ is homotopic to the the constant map~$f_t$; so~$X$ is contractible to~$\{ x_t \}$. 

\smallskip

\noindent
(2) Assume that~$f$ is a poset endomorphism of~$X$, which is {\em monotone\/}: that is,~$f \leq \Id_X$ or~$f \geq \Id_X$. 

\noindent
Then~$f$ induces a homotopy equivalence~$X \simeq f(X)$ with its image; 

     indeed we get~$X \simeq Y$ for any intermediate~$Y$, namely where~$f(X) \subseteq Y \subseteq X$. 

\noindent
If further~$f$ is the identity on~$f(X)$, then~$f$ induces a poset-strong deformation retraction. 
\end{lemma}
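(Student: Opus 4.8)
The plan is to derive both parts from the basic Homotopy Property recalled just above the statement — that $<$-comparable order-preserving maps induce homotopic maps on order complexes — together with transitivity of the homotopy relation. That single fact is the only input needed; there is no genuine obstacle, so the proof is really just careful bookkeeping.

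For part (1), I would simply chain homotopies: hypothesis (i) makes each consecutive pair $f_i, f_{i+1}$ of self-maps of $X$ $<$-comparable, so the Homotopy Property gives $f_i \simeq f_{i+1}$ as self-maps of $\K(X)$, and hence (finitely many times, by transitivity) $\Id_X = f_0 \simeq f_1 \simeq \cdots \simeq f_t$. By (ii) the last map is the constant map at $x_t$, so $\Id_X$ is homotopic to a constant map; this is precisely contractibility of $\K(X)$, with the resulting homotopy contracting $X$ onto $\{x_t\}$.

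For part (2), assume $f \leq \Id_X$ (the case $f \geq \Id_X$ is entirely dual, reversing all comparabilities). Fix an intermediate subposet $Y$ with $f(X) \subseteq Y \subseteq X$; write $k \colon Y \hookrightarrow X$ for the inclusion and $\bar f \colon X \to Y$ for the corestriction of $f$, which is legitimate since $f(X) \subseteq Y$ and which remains order-preserving. Then $k \circ \bar f = f$ as self-maps of $X$, and $f \leq \Id_X$, so $k \circ \bar f \simeq \Id_X$. Conversely, since $f(Y) \subseteq f(X) \subseteq Y$, the map $f$ also corestricts to a self-map $f|_Y$ of $Y$, which equals $\bar f \circ k$ and satisfies $f|_Y \leq \Id_Y$ (inherited from $f \leq \Id_X$); hence $\bar f \circ k \simeq \Id_Y$. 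Thus $k$ and $\bar f$ are mutually inverse homotopy equivalences and $X \simeq Y$; the special case $Y = f(X)$ is the first assertion. For the last sentence, if moreover $f$ is the identity on $f(X)$, then $f$ is a monotone idempotent poset endomorphism with image the subposet $f(X)$, i.e.\ a retraction $\bar f \colon X \to f(X)$ with $\iota \circ \bar f = f \leq \Id_X$ and with $f$ acting as the identity on $f(X)$; the comparability-homotopy between $\Id_X$ and $f$ is then stationary on $\K(f(X))$, which is exactly what the poset-level notion of a ``poset-strong deformation retraction'' of $X$ onto $f(X)$ records.

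As for where care is needed: the only subtleties are the bookkeeping verifications that the corestrictions land in the claimed subposets — namely $f(X) \subseteq Y$ (given) and $f(Y) \subseteq f(X) \subseteq Y$, so that $f$ is genuinely a self-map of $Y$ — and that $<$-comparability is inherited by restrictions and corestrictions, together with matching the final conclusion to the precise definition of ``poset-strong deformation retraction'' in force in the paper. None of this is hard; the mathematical content is just one forward and one ``backward'' application of the Homotopy Property.
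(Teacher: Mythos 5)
Your proof is correct and is exactly the ``standard'' argument the paper alludes to (the paper's own proof consists solely of the citation ``Standard; e.g.~\cite[1.5]{Qui78}''). Part~(1) by chaining the comparability homotopies and part~(2) by checking that $f$ restricts to a self-map of $Y$ and using $k\circ\bar f = f\leq \Id_X$, $\bar f\circ k = f|_Y \leq \Id_Y$ is precisely how one fleshes out Quillen's observation, and your remark that idempotence plus monotonicity gives a homotopy stationary on $\K(f(X))$ matches the paper's intended meaning of ``poset-strong deformation retraction'' as explained in the discussion following the lemma.
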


\begin{proof}
Standard; e.g.~\cite[1.5]{Qui78}.
\end{proof}

\noindent
Here are a few comments about our usage of these results:

\medskip

In applications of~(1), the overall setup is often summarized via a ``zigzag'' or ``fence'';
where for~$x \in X$, we record just the following information, roughly in the form:

  (i$^{\prime}$) \quad $x \bigl( = f_0(x) \bigr)\ \leq\ f_1(x)\ \geq\ f_2(x)\  \leq\ \dots\ \geq\ \bigl( f_t(x) = \bigr) x_t$ . 

\noindent
In particular, this includes the definition of the maps~$f_i$,
and their~$<$-comparability relations for~(1)(i)---in this case~$f_0 \leq f_1 \geq f_2 \leq \dots \geq f_t$;
but we emphasize that we must still prove that the set-maps~$f_i$ thus defined really do afford {\em poset\/} endomorphisms.

Moreover, we frequently want to use~(1) to show contractibility of some subposet~$Z \subseteq X$.
And then, in order to have the endomorphism-condition for the restrictions~$f_i \vert_Z$,
we must {\em further\/} show that~$f_i(Z) \subseteq Z$. 
In fact, showing this can often be the trickiest part of the proof.

We will illustrate these zigzag-features, within our standard~$p$-subgroup-poset context, 
in upcoming Example~\ref{ex:zigzagconical}.

\smallskip

In~(2), our strong deformation retraction has a further ``very-strong'' property:
namely the map~$f$ is even poset-homotopic to~$\Id_Z$;
that is, here we have the length-$1$ case of a sequence of comparable maps as in~(1)---where
we replace the constant map~$f_t$ with the retraction~$f$. 
We will normally reserve the term ``deformation retraction'' for this ``poset-strong'' situation,
where~$f$ has such a poset-homotopy sequence.

\bigskip

Assume~$Y \subseteq X$ and~$x \in X$.
Set~$Y_{\geq x} := \{ y\in Y\tq y\geq x\}$.
Define analogously~$Y_{>x}, Y_{\leq x}, Y_{<x}$.
%Denote by~$\Max(X)$ (resp.~$\Min(X)$) the set of maximal (resp. minimal) elements of~$X$.%
%\footnote{
%  "fake footnote":
%  (Decide later)
%  Maybe we want to NOT use this capital-M Max notation in the paper? 
%          }
%For a single element~$x$, We set~$\Max(x) = \Max(X_{\geq x})$ and~$\Min(x) = \Min(X_{\leq x})$.
The {\em link\/}~$\Lk_Y(x)$ in~$Y$ of some~$x \in X$ is given by the subposet~$Y_{<x} \cup Y_{>x}$.
We will usually call~$Y_{<x}$ the {\em lower link\/} in~$Y$ of~$x$, and~$Y_{>x}$ the {\em upper link\/} in~$Y$ of~$x$.

\bigskip

The following standard condition is relevant in later Proposition~\ref{propositionContractibleCentralizers}:

\begin{definition}[upward-closed subposets]
\label{defOpenClosed}
A subposet~$Y \subseteq X$ is said to be {\em upward-closed\/} in~$X$,
if for all~$y \in Y$, we have~$X_{>y} \subseteq Y$.
\donerk
\end{definition}

For a poset~$X$, a {\em chain\/} is an ``inclusion-chain''---a
subset~$a \subseteq X$ such that the elements of~$a$ are pairwise comparable.

\centerline{
   Write~$X'$ for the poset of non-empty chains of~$X$.
            }

\noindent
This is just the {\em face poset\/} of the order complex~$\K(X)$.
An~$n$-chain of~$X$ is a chain~$a \subseteq X$ which has size~$|a|=n+1$.
Recall that~$X$ and~$X'$ are homotopy equivalent.
If we wish to emphasize the order of the elements in the~$n$-chain~$a \in X'$,
we may write~$a = (x_0 < x_1 < \ldots < x_n)$.
We will denote by $\max(a)$ the maximal element of the chain $a$.

\begin{definition}[The join and Cartesian product of posets]
\label{defn:joinCartprodofposets}
We denote by~$X \join Y$ the {\em join\/} of the posets~$X$ and~$Y$.
This is a poset whose underlying set is the disjoint union of~$X$ and~$Y$, and the order is given as follows.
We keep the given order in~$X$ and~$Y$, and we put~$x < y$ for~$x \in X$ and~$y \in Y$.
It can be shown that~$\K(X \join Y) = \K(X) \join \K(Y)$, where the latter join is the join of simplicial complexes.
Moreover, after taking geometric realizations, it coincides with the classical join of topological spaces.
That is, if~$|K|$ denotes the geometric realization of the simplicial complex~$K$,
then we have a homeomorphism~$|K \join L| \cong |K| \join |L|$.
For more details see~\cite{Qui78}.

Recall also that the {\em Cartesian product\/}~$X \times Y$ of two posets 
is a poset with the order given by~$(x,y) \leq (x',y')$ if~$x \leq x'$ and~$y \leq y'$.

We will explore these constructions more carefully in later Section~\ref{sec:prejoinandreplposets}. 
\donerk
\end{definition}

We will often use Quillen's fiber-theorem to prove that certain maps between posets are homotopy equivalences
(see~\cite[Prop~1.6]{Qui78}); and hence that certain posets have the same homotopy type.
We include an equivariant version (see~\cite{TW})
in Theorem~\ref{variantQuillenFiber} below;
and an extended version in the Appendix as Proposition~\ref{prop:QuilFiberConn}.

\begin{theorem}[Quillen fiber-theorem]
\label{variantQuillenFiber}
Suppose $f:X\to Y$ is a map between $G$-posets $X$ and $Y$ such that for all~$y \in Y$, we have that~$f^{-1}(Y_{\leq y}) * Y_{>y}$ (resp.~$f^{-1}(Y_{\geq y}) * Y_{<y}$) is contractible.
Then~$f$ is a homotopy equivalence.
If in addition~$f$ is~$G$-equivariant, and we have~$G_y$-contractibility for each~$y$,
then~$f$ is a~$G$-homotopy equivalence.

In particular if $X \subseteq Y$ (so that $f$ is the inclusion map), 
and for all~$y \in Y - X$, we have~$X_{<y} * Y_{>y}$ (resp.~$X_{>y} * Y_{<y}$) contractible,
then~$f$ is a homotopy equivalence;
and indeed a $G$-homotopy equivalence, when we have $G_y$-contractibility for those links.
\hfill $\Box$
\end{theorem}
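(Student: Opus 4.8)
\emph{Strategy.} The plan is to reduce the whole statement --- the equivariant version included --- to the last displayed sentence, namely the case where $f$ is an \emph{inclusion} $X \subseteq Y$; and then to prove that case by induction on $|Y - X|$, attaching one $G$-orbit of new points at a time and controlling the homotopy type by a gluing argument. The ``resp.''\ alternative with $f^{-1}(Y_{\ge y})$ and $Y_{<y}$ is just the order-dual of the first: since $\K(Z) = \K(Z^{\op})$ for any poset $Z$, it follows at once from the first version applied to $f \colon X^{\op} \to Y^{\op}$, so I would treat only the $Y_{\le y}$, $Y_{>y}$ form.

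\emph{Reduction to an inclusion.} First I would replace $f$ by an inclusion using the non-Hausdorff mapping-cylinder poset $M(f)$: its underlying set is $X \sqcup Y$, it keeps the given orders on $X$ and on $Y$, and for $x \in X$, $y \in Y$ one declares $x < y$ exactly when $f(x) \le y$ (adding no relations $y < x$); transitivity is immediate. The map $r \colon M(f) \to M(f)$ fixing $Y$ and sending $x \mapsto f(x)$ is an order-preserving endomorphism with $r \ge \Id$, image $Y$, and $r|_Y = \Id$, so by Lemma~\ref{lm:increndo}(2) it realises $Y$ as a deformation retract of $M(f)$ --- $G$-equivariantly when $f$ is. Since $f$ factors as $X \hookrightarrow M(f) \xrightarrow{\,r\,} Y$, it suffices to show $\iota \colon X \hookrightarrow M(f)$ is a (resp.\ $G$-)homotopy equivalence. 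For $y \in M(f) - X = Y$ the definition of $M(f)$ gives $\iota^{-1}\bigl(M(f)_{<y}\bigr) = f^{-1}(Y_{\le y})$ and $M(f)_{>y} = Y_{>y}$, so the hypothesis on $f$ (including the $G_y$-contractibility clause) is exactly the hypothesis of the inclusion case applied to $\iota$.

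\emph{The inclusion case.} So assume $X \subseteq Y$ with $X_{<y} \join Y_{>y}$ (resp.\ $G_y$-)contractible for every $y \in Y - X$, and induct on $|Y - X|$; the case $Y = X$ is trivial. Otherwise choose a $G$-orbit $O$ of points of $Y - X$ that are \emph{minimal} in $Y - X$ (conjugates of such a point are again minimal there), and put $Y_1 := Y - O$. Because no point of $Y - X$ lies strictly below a point of $O$, one checks that (i) $Y_{<y_0} = X_{<y_0}$ for $y_0 \in O$, so the link of $y_0$ in $\K(Y)$ is exactly $X_{<y_0} \join Y_{>y_0}$; (ii) the points of $O$ are pairwise incomparable; and (iii) the hypothesis descends to the pair $X \subseteq Y_1$. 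Then $\K(Y)$ is the pushout obtained from $\K(Y_1)$ by gluing in, for each $y_0 \in O$, the closed star $\overline{\St}(y_0)$ --- a cone on the (resp.\ $G_{y_0}$-)contractible link $X_{<y_0} \join Y_{>y_0}$ --- along that link, with $G$ permuting the cones. By the gluing lemma (in its equivariant form, using the $G_{y_0}$-contractibility of the link together with the $G_{y_0}$-fixed apex of the cone) the inclusion $\K(Y_1) \hookrightarrow \K(Y)$ is a (resp.\ $G$-)homotopy equivalence; composing with the inductive equivalence $X \hookrightarrow Y_1$ finishes the argument.

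\emph{Main obstacle.} The combinatorial steps --- transitivity of $M(f)$, the retraction, items (i)--(iii), and the identification of $\K(Y)$ as a pushout --- are routine. The real content is the final gluing step: non-equivariantly this is just the standard fact that coning off a contractible subcomplex preserves homotopy type, but the equivariant refinement (a pushout of a $G$-cofibration along a $G$-equivalence of $G$-CW-complexes is again a $G$-equivalence) is where one must appeal to the machinery of~\cite{TW} --- equivalently, the equivariant Whitehead theorem and Bredon cohomology --- and it is precisely there that having \emph{$G_y$-}contractible links rather than merely contractible ones becomes essential.
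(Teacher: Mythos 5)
Your proof is correct, and its skeleton --- replace $f$ by an inclusion via the non-Hausdorff mapping cylinder, then strip off one $G$-orbit of points at a time, using the contractibility of the link at each step --- is exactly the one the paper uses in its Appendix argument for the extended version (Proposition~\ref{prop:QuilFiberConn}); the theorem itself is merely quoted in the main text, with a $\Box$, from \cite{Qui78} and \cite{TW}. Where you genuinely diverge is in justifying a single orbit-removal. The paper passes to the face posets $X'$ and $Z'$, factors the inclusion as $X'\hookrightarrow Z'-Y_0'\hookrightarrow Z'$, quotes \cite[Thm~1]{TW} for the second map and uses the explicit equivariant poset retraction $\tau\mapsto\tau-Y_0$ for the first; you instead exhibit $\K(Y)$ as a $G$-pushout of $\K(Y_1)$ along a disjoint union of closed stars (cones with $G_{y_0}$-fixed apex on the $G_{y_0}$-contractible links) and invoke the equivariant gluing lemma together with the equivariant Whitehead theorem. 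Your route is more geometric and avoids the face-poset detour at the cost of importing $G$-CW machinery; the paper's stays entirely at the level of comparable poset maps, in the spirit of its Lemma~\ref{lm:increndo}. Your choice of an orbit of points \emph{minimal} in $Y-X$ --- which forces $Y_{<y_0}=X_{<y_0}$, pairwise incomparability within the orbit, and $(Y_1)_{>y}=Y_{>y}$ so the hypothesis descends --- corresponds precisely to the paper's minimal-first linear extension, and you are right that the $G_y$-contractibility of the links (not mere contractibility) is exactly what the equivariant gluing step consumes.
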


Let~$X$ be a finite poset.
We denote by~$\tilde{H}_*(X,R)$ the reduced homology of~$X$ with coefficients in the ring~$R$,
which is the reduced homology of its order complex $\K(X)$.
In general we will work with $R = \QQ$ and we will just write $\tilde{H}_*(X)$.
Finally $\tilde{\chi}(X)$ denotes the reduced Euler characteristic of a topological space or finite poset $X$.

\begin{remark}[Product homology for joins]
\label{rk:prodhomolforjoins}
If $R$ is a field, the homology of a join of spaces is the tensor product of homologies.
That is, we have that:
\[\tilde{H}_*(X\join Y,R) = \tilde{H}_*(X,R)\otimes_R \tilde{H}_*(Y,R),\]
\[\tilde{H}_n(X\join Y, R) = \bigoplus_{i+j=n-1} \tilde{H}_i(X,R) \otimes_R \tilde{H}_j(Y,R).\]
For more details on this isomorphism, see \cite{milnor}.

By the above isomorphism between the homology of a join of spaces and the tensor product of homologies,
we get $\tilde{\chi}(X_1 * \ldots * X_n) = (-1)^{n-1} \prod_{i=1}^n \tilde{\chi}(X_i)$.

Product homology for joins is fundamental for the ``classical'' approach to homology propagation;
see the discussion at later Remark~\ref{rk:homopropviajoinAP}.
\donerk
\end{remark}

\subsection*{The \texorpdfstring{$p$}{p}-subgroup posets and results on Quillen's Conjecture}

We close the preliminaries section with the notation of the different $p$-subgroup posets that we will work with;
along with some background results from the literature on Quillen's conjecture.
The main posets are:

\begin{align*}
\Sp(G) & = \text{ Brown poset }  = \text{ poset of all nontrivial $p$-subgroups of $G$}, \\
\Ap(G) & = \text{ Quillen poset }  = \{\ A\in \BrownPoset_p(G)\tq A  \text{ is elementary abelian}\ \},\\
\Bp(G) & = \text{ Bouc poset } = \{\ P \in \BrownPoset_p(G) \tq P = O_p \bigl( N_G(P) \bigr)\ \} .
\end{align*}

\noindent
We also use:

\begin{definition}[The poset~$\mathfrak{i}(X)$]
\label{defn:i(X)}
Suppose next that~$X$ is finite poset, such that every non-empty and lower-bounded subset~$Y \subseteq X$ has an infimum in~$X$.
Denote by~$\mathfrak{i}(X) \subseteq X$ the subposet whose elements are the infimum of the maximal elements above them.
%That is, $ x \in \mathfrak{i}(X)$ if and only if~$x = \inf \{ y \in \Max(X) \tq y \geq x \}$.%
%\footnote{
%  "fake footnote":
%  (Decide later.)
%  Maybe want to NOT use capital-M Max notation?
%          }
Note that~$\mathfrak{i}(X)$ is a poset-strong deformation retract of~$X$,
via the retraction sending~$x \in X$ to the infimum of the set of maximal elements of $X$ above $x$.
We denote this retraction by~$\iret$.

Since the posets~$\Sp(G)$ and~$\Ap(G)$ satisfy this property
(that is, every non-empty lower bounded subset has an infimum---given by the intersection of that set),
we will also consider the posets~$\mathfrak{i} \bigl( \Sp(G) \bigr)$ and~$\mathfrak{i} \bigl( \Ap(G) \bigr)$.
Note that~$\mathfrak{i} \bigl( \Sp(G) \bigr)$ consists of the nontrivial intersections of Sylow~$p$-subgroups of~$G$.
On the other hand, it can be shown (see~\cite[Remark~4.5]{KP19})
that~$\mathfrak{i} \bigl( \Ap(G) \bigr)$ is the poset of elements~$E \in \Ap(G)$
such that~$E = \Omega_1 \bigl(\ Z(\ \Omega_1 \bigl( C_G(E)\ \bigr)\ )\ \bigr)$.
\donerk
\end{definition}

By the results of~\cite{Qui78,TW}, we have that:

\begin{proposition}
\label{homotopyEquivalenPosets}
Let~$G$ be a finite group and~$p$ a prime.

\noindent
Then the posets~$\Sp(G)$, $\Ap(G)$, $\Bp(G)$, $\mathfrak{i}(\Sp(G))$ and~$\mathfrak{i}(\Ap(G))$ are all~$G$-homotopy equivalent.
\hfill $\Box$
\end{proposition}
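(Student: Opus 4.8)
The plan is to connect each of the five posets to~$\Sp(G)$ by a~$G$-homotopy equivalence and then invoke transitivity (and symmetry) of~$G$-homotopy equivalence of finite~$G$-posets; since the relation is an equivalence relation, this yields that all five are mutually~$G$-homotopy equivalent. Two of the four needed links are the deformation retractions already recorded in Definition~\ref{defn:i(X)}, one is Quillen's fiber-theorem argument, and one is Bouc's theorem on radical~$p$-subgroups.

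\emph{Step 1: $\Ap(G)$ and $\Sp(G)$.} I would apply Theorem~\ref{variantQuillenFiber} to the inclusion~$f\colon\Ap(G)\hookrightarrow\Sp(G)$, which is visibly~$G$-equivariant. For~$P\in\Sp(G)$ the fiber is~$f^{-1}\!\left(\Sp(G)_{\leq P}\right)=\Ap(P)$, the poset of nontrivial elementary abelian subgroups of~$P$. Put~$Z:=\Omega_1\!\left(Z(P)\right)>1$; for every~$A\in\Ap(P)$ the product~$AZ$ is again elementary abelian (both factors have exponent~$p$, and~$Z$ is central in~$P$), and~$A\mapsto AZ$ is order-preserving, so the chain of poset endomorphisms~$\Id\leq(A\mapsto AZ)\geq(A\mapsto Z)$ shows, via Lemma~\ref{lm:increndo}(1), that~$\Ap(P)$ is contractible to~$\{Z\}$. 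As~$N_G(P)=G_P$ fixes~$Z$, all these endomorphisms are~$N_G(P)$-equivariant, so~$f^{-1}\!\left(\Sp(G)_{\leq P}\right)*\Sp(G)_{>P}$ is~$G_P$-contractible (the join of a~$G_P$-contractible poset with anything is~$G_P$-contractible); the equivariant form of Theorem~\ref{variantQuillenFiber} then makes~$f$ a~$G$-homotopy equivalence. (Alternatively one may simply cite~\cite[Prop.~2.1]{Qui78} for the non-equivariant statement, then upgrade.)

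\emph{Step 2: the~$\mathfrak{i}(-)$ posets.} Both~$\Sp(G)$ and~$\Ap(G)$ have the property that every non-empty lower-bounded subset has an infimum, namely the intersection of that subset (for~$\Ap(G)$ one also uses that an intersection of elementary abelian subgroups is elementary abelian). Hence Definition~\ref{defn:i(X)} applies, and the retraction~$\iret$ is a poset-strong deformation retraction of~$\Sp(G)$ onto~$\mathfrak{i}\!\left(\Sp(G)\right)$, and of~$\Ap(G)$ onto~$\mathfrak{i}\!\left(\Ap(G)\right)$. Moreover~$\iret$ is~$G$-equivariant: $G$ acts by poset automorphisms, so it permutes the set of maximal elements lying above any given point and commutes with the formation of infima. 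Thus each of these retractions is a~$G$-homotopy equivalence.

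\emph{Step 3: $\Bp(G)$ and $\Sp(G)$.} This is Bouc's theorem on the poset of~$p$-radical subgroups; I would quote it from~\cite{TW}, whose framework also furnishes the~$G$-equivariant refinement. Combining Steps~1--3 with transitivity of~$G$-homotopy equivalence gives the Proposition. The main obstacle is Step~3: unlike Steps~1 and~2 it is not settled by a one-line conical contraction, and reproving it would require the more delicate analysis of the subposet of radical subgroups lying above a given~$p$-subgroup~$Q$ (in particular, that~$O_p(N_G(Q))$ is itself radical and dominates that fiber), so invoking~\cite{TW} is the economical route---consistent with the statement of the Proposition attributing it to~\cite{Qui78,TW}.
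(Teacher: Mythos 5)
Your proposal is correct and fleshes out exactly the argument from the references~\cite{Qui78,TW} that the paper cites in place of a proof (the~$\Box$ after the statement is pointing to those references, not to an in-text argument). Each of your three steps --- the equivariant conical contraction of the fibers~$\Ap(P)$, the~$\iret$-retractions supplied by Definition~\ref{defn:i(X)}, and the appeal to Bouc/Th\'evenaz--Webb for~$\Bp(G)$ --- is the standard route, so your approach coincides with the paper's.
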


\noindent
For example, if~$G$ is a~$p$-group~$P$, then~$\Sp(P)$ is contractible to its maximal element~$P$;
hence the other posets above such as~$\Ap(P)$ are also contractible.

\medskip

In view of the above equivalences, we could equally well use these other posets in the statement of Quillen's Conjecture;
though of course it is customary to use~$\Ap(G)$ there.

\bigskip

At the start of this paper, we had mentioned Quillen's result~\cite[Prop~2.4]{Qui78} on ``conical contractibility'';
here we state it in the form:

\centerline{
  If~$O_p(G) > 1$, then~$\Sp(G)$ is contractible. 
            }

\noindent
In order to illustrate how we will later be  using Lemma~\ref{lm:increndo}(1) in contractibility proofs,
we now use that lemma to prove Quillen's proposition---expanding on
his homotopy-of-maps approach in~\cite[1.5]{Qui78}: 

\begin{example}[Quillen's conical contractibility via the endomorphism-Lemma~\ref{lm:increndo}]
\label{ex:zigzagconical}
Recall the notation of Lemma~\ref{lm:increndo}(1), including~(i$^{\prime}$) in our  discussion thereafter.

Let's begin in a somewhat more general context than the~$p$-subgroup posets above;
namely as our overall-context poset ``$X$'',
we take~$S(G)$---the poset of {\em all\/} subgroups~$H \leq G$ (including the identity~$1 = 1_G$). 
And let's {\em assume\/} that~$N$ denotes some normal subgroup of~$G$ (possibly~$N = 1$). 
Then we have a zigzag of group-inclusions determined by any~$H \in S(G)$: 
  \[  H \leq HN \geq N . \]
In the viewpoint of~(i$^{\prime}$), this defines maps $f_0(H) = H$, $f_1(H) = HN$, and~$f_2(H) = N$;
and we have the $<$-comparability relations $f_0 \leq f_1 \geq f_2$. 
Furthermore the identity map~$f_0 = \Id_{S(G)}$, and the constant map~$f_2$ to~$\{ N \}$, are easily seen to be poset endomorphisms;
but for~$f_1$ we further need to make the following observations:
First, normality of~$N$ has the group-theoretic consequence that~$HN$ is also a subgroup---that is, $f_1(H) = HN \in S(G)$, 
so that~$f_1$ is at least a set-endomorphism. 
Second, properties of group-inclusion show that $H \leq K$ implies $HN \leq KN$---so that~$f_2$ is in fact a poset map.
We have now completed the verification of the hypotheses of Lemma~\ref{lm:increndo}(1);
so we conclude by that result that~$S(G)$ is (conically) contractible to~$\{ N \}$---for any normal subgroup~$N$.

Now we turn to the actual result of Quillen on~$p$-subgroups:
For our subposet~``$Z$'', we take the Brown poset~$\Sp(G)$, of nontrivial~$p$-subgroups;
and we now further assume that our normal subgroup~$N$ is a nontrivial~$p$-group (so that~$N \in \Sp(G)$). 
This time we take~$H \in \Sp(G)$:
We still get the inclusion-zigzag as above;
and also the indicated properties for the restrictions~$f'_i$ of the~$f_i$ to~$\Sp(G)$---{\em except\/}
possibly for the endomorphism-property on~$\Sp(G)$.
That is, here we must further show that each~$f'_i(H)$ really does lie in~$\Sp(G)$.
For the identity map~$\Id_{\Sp(G)}$, we have~$f'_0(H) = H \in \Sp(G)$ by our original choice of~$H$. 
For the constant map to~$\{ N \}$, we saw~$f'_2(H) = N \in \Sp(G)$ above---crucially using
our hypothesis that~$N$ is a {\em nontrivial\/}~$p$-group. 
Finally we see~$f'_2(H) = HN$ is a nontrivial~$p$-group, using the group-theoretic product-order formula. 
This completes the endomorphism-proof, and hence the hypotheses of Lemma~\ref{lm:increndo}(1), applied to~$\Sp(G)$.
So by that result, $\Sp(G)$ is conically contractible to~$\{ N \}$ (and indeed more generally to~$\{ O_p(G) \}$). 

Notice this proof does not work, if for~$Z$ we take the Quillen poset~$\Ap(G)$:
for~$N$ might not be elementary abelian;
and even if so (e.g.~using $\Omega_1 Z(N)$), $f_1(H) = HN$ might not be elementary abelian
(though both~$H$ and~$N$ are, they need not commute). 
Thus to conclude that $\Ap(G)$ is contractible when~$O_p(G) > 1$,
we have to first quote the result for $\Sp(G)$---and then apply the homotopy equivalence of~$\Sp(G)$ with~$\Ap(G)$ for general~$G$.
\donerk
\end{example}

\bigskip

Note that~$m_p(G) - 1$ equals the topological dimension of~$\Ap(G)$; this dimension is the focus of:

\begin{definition}[Quillen Dimension and $p$-extensions]
\label{defn:QD}
We say~$G$ has {\em Quillen dimension\/} at~$p$ if:

  \quad \quad $\QD_p$ \quad If~$O_p(G) = 1$, then~$\tilde{H}_{m_p(G)-1} \bigl( \Ap(G) \bigr) \neq 0$.

\noindent
In particular, $\QD_p$ implies (H-QC). 

The condition~$\QD_p$ is often studied for a~{\em $p$-extension\/}~$LB$ of a component~$L$ of~$G$:
following~\cite[p~474]{AS93}, this means a split extension of~$L$ by an elementary abelian~$p$-group~$B$ of {\em outer\/} automorphisms.
(From now on, we usually reserve the notation~``$LB$'' for this~$p$-extension situation.)
\donerk
\end{definition}

For solvable~$G$ with~$O_p(G) = 1$, Quillen obtained~(H-QC) using the above property (see~\cite[Cor~12.2]{Qui78}).
The term ``Quillen dimension" for the property was introduced later by Aschbacher-Smith in~\cite{AS93}:
they showed in their Proposition~1.7 (roughly) that for a simple component~$L$ of a general~$G$ with~$O_p(G) = 1$, 
having the~$\QD_p$-property for~$p$-extensions~$LB$ guarantees that nonzero reduced homology for~$\Ap(LB)$
(indeed extended to~$LB C_G(LB)$, in the ``nonconical'' case~$O_p \bigl( C_G(LB) ) = 1$)
propagates to~$\Ap(G)$, and hence gives~(H-QC).
The proof of this result uses their original homology propagation Lemma~0.27
(we provide a full statement later, as Lemma~\ref{lemmaHomologyPropagationAS}),
which exploits the interaction of the~$\QD_p$-property with the calculation of the boundary operator on poset-chains.
So in order to extend their ideas,
we will be studying that boundary-calculation in more detail---e.g.~at later Remark~\ref{rk:homopropviajoinAP}.

In order to maximize the applicability of~\cite[Prop~1.7]{AS93} in their main proof, 
Aschbacher and Smith were able to show, using the~CFSG,
that~$p$-extensions of ``most'' simple groups~$L$ do indeed have the~$\QD_p$-property.
That result also will be relevant to our extensions, so for convenience we recall it
in Theorem~\ref{theoremQDList} below.

If~$p \nmid q$, denote by~$|q|$ the multiplicative order of~$q$ mod~$p$.

\begin{theorem}[{$\QD_p$-List, \cite[Theorem 3.1]{AS93}}]
\label{theoremQDList}
Assume that~$p$ is odd, and~$L$ is simple.
Then the~$p$-extensions~$LB$ satisfy~$\QD_p$, except possibly when~$L$ is:
\begin{enumerate}
\item of Lie type in the same characteristic~$p$.
\item $\U_n(q)$ with $q \equiv -1 \pmod{p}$;
      and either $n \geq q(q-1)$ or~$n \geq q^{1/p}(q^{1/p}-1)$, with field automorphisms $B > 1$.
\item For~$q = r^p$, either~$^3D_4(q)$ for~$|q| = 3,6$, or~$E_8(q)$ for~$|q| = 8,12$, with field automorphisms~$B > 1$.
\item $\Monster$ with $p = 7$, and $\Janko_4$ with $p = 11$.

\begin{flushleft}
(Cases with $p = 5$)
\end{flushleft}
\item $E_8(q)$ with~$|q| = 4$.
\item $^2F_4(2)'$; $\Sz(32)$ or~$^2F_4(32)$, with field automorphisms~$B > 1$.
\item $\HN$, $\Baby$, $\Monster$, $\McL$, $\Ly$.

\begin{flushleft}
(Cases with $p = 3$)
\end{flushleft}
\item Alternating~$\Alt_6$ or~$\Alt_n$ ($n \geq 9$).
\item Of nonlinear type over~$\GF{2}$.
\item Of nonlinear type over~$\GF{8}$, with field automorphisms~$B > 1$.
\item $\Ln_3(4)$; $\Ln_2(8)$, $^2F_4(r^3)$ for~$|q| = 2$, with field automorphisms~$B > 1$.
\item $\Mathieu_{11}$, $\Janko_3$, $\Conway{1}$, $\Conway{2}$, $\Conway{3}$, $\Fi{22}$, $\Fi{23}$, $\Fi{24}'$,
      $\HN$, $\Th$, $\Baby$, $\Monster$, $\McL$, $\Suz$, $\Ly$, $\ON$. \hfill $\Box$
\end{enumerate}
\end{theorem}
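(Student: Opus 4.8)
The statement is quoted verbatim from \cite[Theorem~3.1]{AS93}, so one option is simply to cite it; but here is how I would approach an actual proof. The assertion is that, outside the listed families, every $p$-extension $LB$ has nonzero reduced homology in the top degree $m_p(LB)-1$, and the argument is unavoidably a case analysis over the CFSG. The positive engine has two ingredients: Quillen's theorem that a $p$-solvable group with trivial $p$-core satisfies $\QD_p$ (\cite[Cor~12.2]{Qui78}), and a propagation-type reduction: if $LB$ contains a subgroup $H$ with $O_p(H)=1$ and $m_p(H)=m_p(LB)$ which is already known to have $\QD_p$ (for instance $p$-solvable, or an inductively smaller case), and if the configuration around a maximal-rank elementary abelian $A\le H$ is sufficiently controlled --- so that the top chains of $\Ap(LB)$ not supported on $\Ap(H)$ assemble into a contractible contribution --- then $\QD_p$ transfers from $H$ to $LB$. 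Thus for each simple $L$ off the list the task reduces to producing such an $H$, typically the normalizer in $LB$ of a well-chosen elementary abelian $p$-subgroup, which in the $p$-solvable situation already supplies the needed homology.

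I would then run through the CFSG, using the structure of $\Out(L)$ recalled in Remark~\ref{rk:outerautsofsimple} to enumerate which $B$ (assembled from inner-diagonal, field and graph automorphisms) can occur. For $\Alt_n$ the rank $m_p=\floor{n/p}$ is realized inside an iterated wreath product $\Sym_p\wr\cdots\wr\Sym_p$, which is $p$-solvable; for $p\geq 5$, and for $p=3$ only when $n\leq 8$, $n\neq 6$, this subgroup and its normalizer deliver $\QD_p$, while for $p=3$ with $n\geq 9$ and for $\Alt_6$ the rank is no longer matched, giving item~(8). For $L$ of Lie type in characteristic $\neq p$, the $p$-rank and the maximal elementary abelian $p$-subgroups are governed by semisimple-class data controlled by $|q|=\ord_p(q)$; generically such an $A$ lies in the normalizer of a suitable maximal torus, a $p$-solvable subgroup of matching $p$-rank, and $\QD_p$ follows --- the failures being exactly the unitary groups $\U_n(q)$ with $p\mid q+1$ once $n$ is large (item~(2)), together with the small exceptional-type and small-field entries of items~(3),(5),(6),(9),(10),(11), many of which fail only once one also allows field automorphisms $B>1$. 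For $L$ of Lie type in characteristic $p$ no claim is made (item~(1)): there $\Ap(L)$ is homotopy equivalent to the Tits building, a wedge of spheres of dimension $(\text{Lie rank})-1$, which is in general strictly below $m_p(L)-1$. Finally the $26$ sporadic groups are settled one at a time, for each admissible odd $p$, by inspecting the known $p$-local structure; the residue forms items~(4),(7),(12).

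The hard part will be the cross-characteristic Lie-type analysis. One must compute $m_p(L)$ and, more delicately, determine the maximal elementary abelian $p$-subgroups up to conjugacy together with the structure of their normalizers, all as functions of $q \bmod p$, of $|q|$, of the twisted-or-untwisted type, and of the rank, and then verify top-homology non-vanishing through the reduction lemma in each sub-case. The unitary thresholds $n\geq q(q-1)$ and $n\geq q^{1/p}(q^{1/p}-1)$ are precisely where the $p$-rank contributed by the ``outer'' $\GU_1$-type factors overtakes what a torus normalizer can realize, so that no $p$-solvable subgroup of full $p$-rank is available; pinning down exactly this threshold, and checking that strictly below it the configuration really does succeed in all the twisted and untwisted sub-cases and for all admissible field- and graph-automorphism extensions $B$, is where the bulk of the effort lies. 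By comparison the sporadic cases are finite bookkeeping, and the characteristic-$p$ case is immediate from the building description.
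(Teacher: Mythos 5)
The paper does not prove this result: it is quoted verbatim from~\cite[Theorem~3.1]{AS93} and stated with no argument, so your opening move of simply citing that theorem is exactly what the paper does. Your accompanying sketch of the underlying CFSG case analysis (Quillen's $p$-solvable case plus propagation from a full-rank $p$-solvable subgroup such as a torus normalizer or wreath product, with the listed families as the residue) is a fair outline of the Aschbacher--Smith strategy, but it is supplementary to, not part of, the paper's treatment.
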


\noindent
In view of the above list, and the role of~$p$-extensions in many results on Quillen's conjecture,
we will be interested in understanding, for a given subgroup~$L$ of~$G$ (usually a component),
the outer automorphisms of order~$p$ of $L$~which in fact arise in~$N_G(L)$.
Correspondingly we define two related posets (cf.~\cite[4.2 \& 6.1]{PS}):

\begin{definition}
\label{outerPosetImagePoset}
Let $L \leq G$ and $p$ a fixed prime.
Define the~{\em $p$-outer poset\/} of~$L$ to be the poset:
  \[ \Outposet_G(L) := \{ B \in \Ap \bigl( N_G(L) \bigr) \tq B \cap  \bigl( L C_G(L) \bigr) = 1 \} . \]
We usually call an element~$B$ of~$\Outposet_G(L)$ a~{\em $p$-outer\/} of~$L$ in~$G$,
since it induces only outer automorphisms (and hence $LB$ is a~$p$-extension as in Definition~\ref{defn:QD}).
We also write:
  \[ \hat{\Outposet}_G(L) := \Outposet_G(L) \cup \{ 1 \} . \]
The {\em image poset\/} of~$L$ in~$G$ is:

  \hfill $\Imageposet_{G,L} := \{ B C_{G}(L) / C_G(L) \tq B \in \Ap \bigl( N_G(L) \bigr)\ ,\ C_B(L) = 1 \}$ \donerk 
\end{definition}

\begin{remark}
\label{remarkImagePoset}
Note that~$\Ap \bigl( L/Z(L) \bigr) \subseteq \Imageposet_{G,L} \subseteq \Ap \bigl( \Aut_G(L) \bigr)$.
In particular, if~$Z(L) = 1$, then~$\Ap(L) \subseteq \Imageposet_{G,L}$ and:

 \hfill   $\Imageposet_{G,L} - \Ap(L)
        = \{ \bigl( B C_G(L) / C_G(L) \bigr) A \tq A \in \Ap(L) \cup\{ 1 \}\ ,\ B \in \Outposet_G(L) \} $ .  \donerk
\end{remark}

\bigskip

To close this section, we summarize some main results on~(H-QC) in the earlier literature.

\begin{theorem}
\label{psolvableQC}
If~$G$ is a~$p$-solvable group, then it satisfies~$\QD_p$ and in particular~(H-QC).
\hfill $\Box$
\end{theorem}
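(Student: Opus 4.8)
The plan is to reduce the statement for a general $p$-solvable group to the case $O_{p'}(G)=1$ already handled (in spirit) by Quillen's solvable argument, via the two standard reduction tools the paper has set up: passing to the quotient $G/O_{p'}(G)$, and then peeling off $O_p(G)$.

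First I would dispose of the case $O_p(G) > 1$: here Quillen's conical contractibility (the result proved in Example~\ref{ex:zigzagconical}, that $O_p(G)>1$ implies $\Sp(G)$, hence $\Ap(G)$, is contractible) means the hypothesis $O_p(G)=1$ in $\QD_p$ is vacuously not triggered, so $\QD_p$ holds trivially, and (H-QC) with it. So from now on assume $O_p(G)=1$; we must exhibit nonzero top-dimensional reduced homology of $\Ap(G)$.

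Next I would reduce to $O_{p'}(G)=1$. Write $N = O_{p'}(G)$ and $\bar G = G/N$. Since $G$ is $p$-solvable and $O_p(G)=1$, the Hall--Higman/Fitting theory gives $O_p(\bar G) = F^*(\bar G)$ self-centralizing and nontrivial — equivalently, in $G$ the subgroup $O_{p',p}(G)/N$ is a nontrivial normal $p$-subgroup of $\bar G$; and $m_p(G) = m_p(\bar G)$ because $N$ is a $p'$-group (a Sylow $p$-subgroup of $G$ maps isomorphically to one of $\bar G$). The key geometric input is that the quotient map induces a $G$-homotopy equivalence $\Ap(G) \simeq \Ap(\bar G)$ when $N = O_{p'}(G)$ — this is exactly the "$p'$-central quotient" type statement underlying hypothesis (H1), provable by Quillen's fiber theorem (Theorem~\ref{variantQuillenFiber}) applied to the map $A \mapsto AN/N$, whose fibers $\{A \in \Ap(G) : AN/N \le \bar E\}$ are the $p$-subgroups of the $p$-solvable group $\bar E N$ sitting over $\bar E$ and are contractible because they have a nontrivial normal $p$-subgroup or are joined appropriately. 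Since homotopy equivalence preserves reduced homology in every degree, and the top degree $m_p(G)-1 = m_p(\bar G)-1$ matches, it suffices to prove $\QD_p$ for $\bar G$, which satisfies $O_p(\bar G) \ne 1$ — but wait, that lands in the contractible case, which would make the statement \emph{false}; so the correct route is instead: reduce to $O_{p'}(G)=1$ first (this uses a \emph{different} quotient argument, namely that if $O_{p'}(G) \ne 1$ one may quotient and $\Ap$ is unchanged up to homotopy), arriving at a group with $O_{p'}(G)=1$ AND $O_p(G)=1$, hence $F(G)=1$; but $G$ $p$-solvable with $F(G)=1$ forces $G=1$, contradiction. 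The resolution is that a $p$-solvable group with $O_p(G)=1$ must have $O_{p'}(G) \ne 1$, and one does \emph{not} quotient it away; instead one uses $O_{p'}(G)$ as the base of a join decomposition.

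So the real argument, following Quillen~\cite[Cor~12.2, Cor~12.2]{Qui78}, is the inductive computation: let $N = O_{p'}(G) \ne 1$, and consider a nontrivial characteristic subgroup of $F(G) = O_{p'}(G)$, inducting on the $p$-solvable length. Quillen's method expresses $\Ap(G)$, up to homotopy, as (iterated) joins built from $\Ap$ of sections of the form $V \rtimes H$ with $V$ an $\FF_p$-module for $H = G/C_G(V)$, where the relevant $\Ap$ has the homotopy type of a wedge of spheres in the exact top dimension — this is the Solomon--Tits / buildings-flavored computation that for $V \rtimes H$ with $V$ faithful, $\Ap(V\rtimes H)$ is spherical of dimension $\dim_{\FF_p} V - 1 + (\text{contribution of }H)$ — and the product-homology-for-joins formula (Remark~\ref{rk:prodhomolforjoins}) then shows $\tilde H_{m_p(G)-1}(\Ap(G)) \ne 0$ because it is a tensor product of nonzero top-degree groups. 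I would run this as an induction on $|G|$: write $M/N$ for a minimal normal subgroup of $G/N$ (a $p$-group since $O_{p'}(G/N)$ might be trivial or, if not, absorb it into $N$ first), split off its "radical" layer, apply the fiber theorem to identify $\Ap(G)$ with the appropriate homotopy colimit/join, and invoke the inductive hypothesis on the smaller centralizer-type sections. The main obstacle — and the only genuinely substantive point — is establishing the spherical (wedge-of-spheres in top dimension) conclusion for the building-like posets $\Ap(V \rtimes H)$ that arise as the base cases, i.e. reproving the content of Quillen~\cite[Section 11--12]{Qui78}; everything else is bookkeeping with the fiber theorem, the homotopy invariance of homology, and the Künneth formula for joins, all of which are quoted results above. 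Since the paper states this as a known theorem with a boxed QED, I would in fact simply cite Quillen~\cite[Cor~12.2]{Qui78} for the solvable case and note the standard reduction from $p$-solvable to solvable (replacing $G$ by $G/O_{p'}(G)$ changes neither $m_p$ nor the homotopy type of $\Ap$, and the resulting group is solvable-by-nothing in the relevant range), with $\QD_p \Rightarrow$ (H-QC) immediate from Definition~\ref{defn:QD}.
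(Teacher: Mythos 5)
Your final paragraph contains a genuine error that you in fact flagged yourself two paragraphs earlier and then failed to act on: the claim that ``replacing $G$ by $G/O_{p'}(G)$ changes neither $m_p$ nor the homotopy type of $\Ap$'' is false. The $p$-rank is indeed preserved (Sylow $p$-subgroups lift), but the homotopy type is not. You already observed the contradiction: if $G$ is a nontrivial $p$-solvable group with $O_p(G)=1$, then $O_{p'}(G)>1$ and $O_p\bigl(G/O_{p'}(G)\bigr)>1$, so $\Ap\bigl(G/O_{p'}(G)\bigr)$ is contractible, while $\Ap(G)$ must have nonzero top reduced homology for the theorem to hold. The concrete counterexample is $G = \Sym_3$ with $p=2$: here $\Ap(G)$ is a three-point discrete poset with $\tilde H_0 \neq 0$, while $\Ap\bigl(G/O_{2'}(G)\bigr) = \Ap(\Cyclic_2)$ is a single point. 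The fibers of the map $A\mapsto AO_{p'}(G)/O_{p'}(G)$ are not contractible — the preservation statement built into (H1) applies only to $p'$-\emph{central} quotients, and $O_{p'}(G)$ is rarely central. So there is no one-step reduction of the $p$-solvable case to the solvable case via this quotient, and your concluding sentence does not close the proof.

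The paper itself does not give a proof; it cites the literature, noting that the $p$-solvable case was established by several authors by carrying out Quillen's solvable-case argument (\cite[Cor~12.2]{Qui78}) in the $p$-solvable setting, with details referenced to \cite[Ch~8]{Smi11}. The middle portion of your proposal — treating $O_{p'}(G)$ as the base of a join decomposition, peeling off elementary abelian $p$-sections, using the sphericity of $\Ap(V\rtimes H)$ for a faithful $\FF_p$-module $V$, and combining via the K\"unneth formula for joins as in Remark~\ref{rk:prodhomolforjoins} — is the correct skeleton, and is indeed what that literature does. The point is that Quillen's argument goes through with ``solvable'' replaced by ``$p$-solvable'' throughout, because its structural inputs (the self-centralizing property of $O_{p',p}(G)/O_{p'}(G)$ and the chain of $p$-elementary sections) are available for $p$-solvable groups; one reruns the argument rather than reducing to the solvable case by a quotient. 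If you wish to cite rather than reprove, cite the $p$-solvable case directly, as the paper does.
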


\noindent
The $p$-solvable case was established by various authors, based on the ideas of Quillen on the solvable case~\cite{Qui78};
see~\cite[Ch~8]{Smi11} for further details.

\begin{theorem}[{Aschbacher-Kleidman~\cite{AK90}}]
\label{almostSimpleQC}
An almost-simple group~$G$ satisfies~(H-QC).
\hfill $\Box$
\end{theorem}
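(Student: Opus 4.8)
The plan is first to reduce to the essential situation and then to run a case analysis organized by the CFSG. For an almost simple group $G$ with socle $L$ one has $C_G(L) = 1$, and $O_p(G) \cap L$ is a normal $p$-subgroup of the simple group $L$, hence trivial; therefore $[O_p(G),L] = 1$ and $O_p(G) \le C_G(L) = 1$. So the hypothesis $O_p(G) = 1$ of~(H-QC) is automatic, and the assertion to prove is simply that $\tilde{H}_*(\Ap(G),\QQ) \neq 0$; we may of course assume $p \mid |L|$. By Proposition~\ref{homotopyEquivalenPosets} we are free to replace $\Ap(G)$ by $\Sp(G)$ or $\Bp(G)$ whenever convenient. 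Two cheap reductions then cut the problem down: if $p \nmid [G:L]$ then $\Sp(G) = \Sp(L)$ as posets, so it suffices to treat $G = L$; and for odd $p$, the $\QD_p$-List (Theorem~\ref{theoremQDList}) shows that the $p$-extensions $LB$ already satisfy $\QD_p$ --- and hence~(H-QC) --- for every simple $L$ outside an explicit short list (Lie type in the defining characteristic $p$; certain unitary groups $\U_n(q)$; the alternating groups $\Alt_6$ and $\Alt_n$, $n \ge 9$; and a handful of small Lie-type and sporadic groups). The remaining work is therefore: the groups of Lie type in defining characteristic; the alternating groups; the listed cross-characteristic and small exceptional groups; the listed sporadics; and, for $p = 2$, all of the above without the benefit of a $\QD$-List.

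For $L$ of Lie type in its defining characteristic $p$, when $G$ induces on $L$ only inner-diagonal automorphisms together with graph automorphisms of order prime to $p$, the group $G$ carries a split $BN$-pair of characteristic $p$; Quillen's building theorem in~\cite{Qui78} then identifies $\Sp(G)$, up to homotopy, with the Tits building of $G$, which by Solomon--Tits is homotopy equivalent to a wedge of spheres of dimension $r - 1$, where $r \ge 1$ is the Lie rank --- so the reduced homology is nonzero in degree $r-1$. When $G$ additionally induces a field automorphism of order $p$ on $L$, or --- only possible for $p \in \{2,3\}$ --- a graph automorphism of order $p$, there are $p$-elements of $G$ outside $L$; one then argues that $\Sp(G)$ still has the homotopy type of a building, for instance by realizing the relevant $p$-local geometry inside a building in characteristic $p$, or by quoting the known homotopy type of the $p$-subgroup complex of such an extension. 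Either way the homology is nonzero.

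For the remaining families one produces an explicit nonzero homology class. If $m_p(G) = 1$ then $\Ap(G)$ is $0$-dimensional, and since $O_p(G) = 1$ forbids a unique subgroup of order $p$ it has at least two points, so $\tilde{H}_0 \neq 0$. If $L = \Alt_n$, so $G \in \{\Alt_n, \Sym_n\}$, one exploits the natural permutation action together with the known identification of the relevant reduced homology with that of partition-type complexes --- the Lefschetz (Steinberg) module of $\Sym_n$ --- which is nonzero for $p \mid n!$. If $L$ is of Lie type in characteristic $r \neq p$, one chooses a maximal torus $T$ of $L$ whose order has the largest possible $p$-part --- equivalently a subgroup $\GL_d(r^f)$ with $p \mid r^{df} - 1$ --- obtaining an elementary abelian $A$ of rank $m_p(L)$, and then transports Quillen's computation for $\GL_n(q)$ in~\cite{Qui78}, via the Quillen fiber theorem comparing $\Ap(L)$ with the simpler subspace-type geometry carrying $A$, to conclude that $A$ supports a homology class surviving in $\Ap(L)$, hence in $\Ap(G)$. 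Finally, for $L$ sporadic the statement is a finite verification from the known maximal $p$-local structure. For $p = 2$ the same four techniques apply, with more of the Lie-type and sporadic cases handled individually.

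The main obstacle is uniformity in the cross-characteristic case, and in particular the unitary groups $\U_n(q)$ --- which are exactly the persistent obstruction to the entire program. There one must control the action of the Weyl-group normalizer of the chosen torus $T$ finely enough to guarantee that no cancellation occurs in $\Ap(L)$, i.e.\ that the class built on $A$ genuinely contributes to some fixed homology degree; establishing this non-cancellation, group by group, is where the bulk of the argument of~\cite{AK90} lies. A secondary difficulty is that the defining-characteristic case for almost simple (rather than merely simple) $G$ needs the non-trivial input, mentioned above, that an extension of $L$ by a field or graph automorphism of order $p$ still has a $p$-subgroup complex with the homotopy type of a building.
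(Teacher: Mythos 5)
The paper does not prove this theorem; it quotes it from \cite{AK90}, so the relevant comparison is with the Aschbacher--Kleidman argument itself, which the paper summarizes in Section~\ref{sectionLefschetz}. That argument is not a degree-by-degree homology construction at all: it establishes the stronger Lefschetz-module statement (L-QC) by exhibiting, for each almost-simple $G$, a $q$-elementary $p'$-subgroup $Q = \langle g \rangle \times O_q(Q)$ with $\tilde{\chi} \bigl( \Ap(G)^Q \bigr) \not\equiv 0 \pmod{q}$ --- most often by arranging $\Ap \bigl( \Aut(L) \bigr)^Q = \emptyset$, in the spirit of Propositions~\ref{propRobinson} and~\ref{propRobinsonEvenChar} here. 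A nonzero virtual character forces nonzero homology in \emph{some} degree, so this sidesteps entirely the question of which degree carries a class, and in particular sidesteps the unitary groups. Your sketch instead routes everything through explicit cycles, and its hardest step is misattributed: the ``non-cancellation, group by group'' for cross-characteristic tori is \emph{not} where the bulk of \cite{AK90} lies --- it is precisely the kind of computation (nonvanishing in a prescribed degree for $\U_n(q)$ with $p \mid q+1$) that remains open (cf.\ the footnote to Remark~\ref{rk:stratelimRob} on Conjecture~4.1 of \cite{AS93}) and that the Robinson method was introduced to avoid. As written, that step of your proposal has no method behind it and would stall exactly where the subject is stuck.

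A second concrete gap: in defining characteristic, your claim that an extension $LB$ by a field or graph automorphism of order $p$ ``still has a $p$-subgroup complex with the homotopy type of a building'' is false. Theorem~\ref{theoremHomologyCharP} of this paper shows $\tilde{H}_m \bigl( \Ap(LB) \bigr) \neq 0$ where $m$ is the Lie rank --- one degree \emph{above} the Solomon--Tits dimension $m-1$ --- and the computation goes through the Segev--Webb long exact sequence for $\Bp(L) \cup \Outposet_{LB}(L)$, not through any building identification; the conclusion you want is true, but not for the reason you give. Your opening reductions ($O_p(G)=1$ automatic, the case $p \nmid [G:L]$, the $m_p(G)=1$ observation, the use of the $\QD$-List for odd $p$) are sound, but the two gaps above mean the proposal is not a proof, and in any case its overall strategy is essentially orthogonal to the fixed-point/Euler-characteristic argument actually used in \cite{AK90}.
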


  Here is the original Aschbacher-Smith result for~$p > 5$, motivating our extension in Theorem~\ref{theoremASExtension}:

\begin{theorem}[{Aschbacher-Smith~\cite{AS93}}]
\label{aschbachersmithQC}
Let~$G$ be a finite group and~$p$ a prime.
Assume that:
\begin{enumerate}[label=(\roman*)]
\item $p>5$; and
\item whenever~$G$ has a unitary component~$\U_n(q)$ with $q \equiv -1 \pmod{p}$, \\
      then $\QD_p$ holds for all~$p$-extensions~$\U_m(q^{p^e})$ with~$m \leq n$ and~$e \in \ZZ$.
\end{enumerate}
Then~$G$ satisfies~(H-QC) for~$p$.
\hfill $\Box$
\end{theorem}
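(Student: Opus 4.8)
The plan is to argue by induction on $|G|$, along the lines of the strategy recalled in Remark~\ref{rk:stratelimRob}. So I take $G$ to be a counterexample to~(H-QC) of minimal order subject to~(i) and~(ii); since~(ii) is inherited by subgroups and by central $p'$-quotients, every proper subgroup and every proper $p'$-central quotient of $G$ then satisfies~(H-QC), and I will use this freely. The first task is the standard structural cleanup. By Quillen's conical contractibility (cf.~Example~\ref{ex:zigzagconical}) I may assume $O_p(G)=1$; and by~\cite[Prop~1.6]{AS93}---whose proof reduces to the $p$-solvable case Theorem~\ref{psolvableQC} via a quotient argument on $O_{p'}(G)$, and rests on~\cite[Thm~2.3]{AS93}---I may further assume $O_{p'}(G)=1$. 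Then $F(G)=1$, so Lemma~\ref{lemmaOpandp}(1) gives $F^*(G)=E(G)=L_1\cdots L_t$, a direct product of \emph{simple} components, with $F^*(G)\le G\le\Aut(F^*(G))$ and $p$ dividing each $|L_i|$. If $G$ is almost simple, Theorem~\ref{almostSimpleQC} already yields~(H-QC), a contradiction; so the component structure is nontrivial.

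The second step is the elimination of $\QD_p$-components, i.e.~the step (elimQD) of Remark~\ref{rk:stratelimRob}. By~\cite[Prop~1.7]{AS93}---whose proof uses the Aschbacher--Smith homology-propagation lemma~\cite[0.27]{AS93}, the poset equivalences of Proposition~\ref{homotopyEquivalenPosets}, and Lemma~\ref{lemmaTrivialOpPropagationCentralizer} to supply the nonconical condition $O_p\bigl(C_G(LB)\bigr)=1$---if some component $L$ of $G$ has $\QD_p$ for every $p$-extension $LB$ occurring in $G$, then nonzero reduced homology of $\Ap(LB)$, or of $\Ap\bigl(LB\,C_G(LB)\bigr)$ in the nonconical case, propagates up to $\Ap(G)$, contradicting the choice of $G$. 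Hence every component of $G$ fails $\QD_p$ for some occurring $p$-extension, so Theorem~\ref{theoremQDList} pins down the possibilities: since $p>5$, the items~(5)--(12) peculiar to $p=3,5$ are vacuous, and the large-unitary case~(2)---together with the field-automorphism extensions of $\U_m(q^{p^e})$---is exactly what hypothesis~(ii) removes, since it forces $\QD_p$ for those groups. What remains are: Lie-type components in characteristic $p$ (case~(1)); ${}^3D_4(q)$ and $E_8(q)$ with field automorphisms (case~(3)); and $\Monster$ at $p=7$ or $\Janko_4$ at $p=11$ (case~(4)).

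The third and final step is the Robinson-subgroup step (Rob-nonQD) of Remark~\ref{rk:stratelimRob}. Here~\cite[Thm~5.3]{AS93} shows that every component $L$ surviving from step two---sitting inside an ambient $G$ with $F(G)=1$ and no $\QD_p$-component---contains a ``Robinson subgroup'', and the final argument of~\cite[pp.~490--492]{AS93} uses such subgroups to exhibit an element $g\in G$ with $\tilde{\chi}\bigl(\Ap(G)^g\bigr)\neq0$; since a reduced-Lefschetz-module argument (using, among other things, the product formula for reduced Euler characteristics of joins from Remark~\ref{rk:prodhomolforjoins}) shows that $\tilde{H}_*\bigl(\Ap(G)\bigr)=0$ would force all such $\tilde{\chi}\bigl(\Ap(G)^g\bigr)$ to vanish, we conclude $\tilde{H}_*\bigl(\Ap(G)\bigr)\neq0$, contradicting that $G$ is a counterexample.

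I expect the third step to be the main obstacle: constructing Robinson subgroups for the characteristic-$p$ Lie-type components (which by definition fail $\QD_p$) is delicate, since one must locate, inside $N_G(L)$, a configuration whose contribution to $\tilde{\chi}\bigl(\Ap(G)^g\bigr)$ can be shown nonzero---and this forces careful bookkeeping of the interaction between $m_p(G)$, the Lie rank, and the extension structure above $L$. A secondary difficulty, built into the statement, is that certain unitary components can fail \emph{both} (elimQD) and (Rob-nonQD), which is why hypothesis~(ii) is needed at all.
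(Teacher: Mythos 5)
The paper does not prove this theorem itself: it is quoted from~\cite{AS93} with only a citation, and your outline reproduces precisely the strategy the paper summarizes in Remark~\ref{rk:stratelimRob} --- reduce to $O_p(G)=O_{p'}(G)=1$ via~\cite[Props~1.3--1.6]{AS93}, eliminate $\QD_p$-components via~\cite[Prop~1.7]{AS93}, consult the $\QD$-List to constrain the surviving components, and finish with Robinson subgroups and the reduced-Lefschetz-module argument. One caution on your opening move: hypothesis~(ii) is \emph{not} inherited by arbitrary proper subgroups (a subgroup of $G$ may have unitary components unrelated to those of $G$), so minimality only yields the conditional hypothesis the paper calls~(H1u), and each application of induction --- e.g.\ to $C_G(LB)$ --- requires separately verifying~(ii) for that subgroup, which is part of the content of~\cite[Thm~2.4]{AS93}; your blanket ``I will use this freely'' elides exactly the (H1)/(H1u) subtlety that the paper spends considerable effort on, and which underlies the small gap in~\cite{KP20} that Theorem~\ref{alternativeParticularComponentes} repairs.
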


Recall that in our discussion in the Introduction leading up through Remark~\ref{rk:contextofH1results},
we indicated in~(MOC) that a minimal-order counterexample to~(H-QC) satisfies the following induction-replacement hypothesis:

\vspace{0.1cm}
\begin{flushleft}
(H1) \quad Proper subgroups and proper~$p'$-central quotients of~$G$ satisfy~(H-QC).
\end{flushleft}
\vspace{0.1cm}

\noindent
Below we summarize various results obtained under~(H1) in the articles~\cite{KP20, PS}:

\begin{theorem}
[{\cite{KP20}}]
\label{generalReduction}
Let~$G$ be a group and~$p$ a prime.
Suppose that~$G$ satisfies~(H1), and that one of the following holds:
\begin{enumerate}
\item $Z(G) \neq 1$ or~$\Omega_1(G) < G$;
\item $O_{p'}(G) \neq 1$;
\item $\Ap(G)$ is not simply connected;
\item $G$ has~$p$-rank at most~$4$;
\item $G$ has a component~$L$ such that~$L/Z(L)$ has~$p$-rank~$1$;
\item $p=3$, and~$G$ has a component~$L$ such that~$L/Z(L) \groupiso \U_3(8)$.
\end{enumerate}
Then~$G$ satisfies~(H-QC).
\hfill $\Box$
\end{theorem}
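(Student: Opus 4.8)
Since this statement assembles several results of \cite{KP20} (with inputs from \cite{PS}), the plan is to handle the six hypotheses essentially separately, all via one pattern: as (H-QC) is vacuous once $O_p(G)>1$, I would assume $O_p(G)=1$ throughout, and in each case exhibit either a proper subgroup $H<G$ or a proper $p'$-central quotient $\bar G$ of $G$ to which (H1) applies, and then transfer the resulting nonzero reduced rational homology to $\Ap(G)$---either because the posets coincide, or are homotopy equivalent, or through the homology-propagation machinery (the Aschbacher--Smith lemma recorded later as Lemma~\ref{lemmaHomologyPropagationAS}, together with the product-homology formula for joins of Remark~\ref{rk:prodhomolforjoins}).

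Case (1) is immediate. If $\Omega_1(G)<G$, every nontrivial elementary abelian $p$-subgroup lies in $\Omega_1(G)$, so $\Ap(G)=\Ap(\Omega_1(G))$; since $\Omega_1(G)$ is characteristic in $G$, its $p$-core is normal in $G$ and hence trivial, so (H1) applied to the proper subgroup $\Omega_1(G)$ finishes it. If $Z(G)\neq 1$ and has nontrivial $p$-part, then $O_p(G)>1$, which is excluded; so $Z:=Z(G)$ is a nontrivial $p'$-group, $\Ap(G)\simeq\Ap(G/Z)$ by the standard central-$p'$-quotient equivalence, one checks $O_p(G/Z)=1$ (a normal $p$-subgroup of $G/Z$ has preimage $Z\times P$ with $P\normal G$, so $P=1$), and $G/Z$ is a proper $p'$-central quotient, so (H1) applies.

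Cases (2) and (4) are the ones I expect to carry the real difficulty. In case (2) the point is precisely that $\bar G=G/O_{p'}(G)$ need not be a $p'$-\emph{central} quotient, so (H1) does not apply to it directly; this is \cite[Theorem~4.1]{KP20}, and I would reduce it to the $p$-solvable conjecture by peeling off the normal $p'$-structure: if $G/O_{p'}(G)$ is a $p$-group then $G$ is $p$-solvable and Theorem~\ref{psolvableQC} applies, and otherwise one works with the normal $p$-solvable subgroup of $G$ lying over $O_{p'}(G)$ (which has trivial $p$-core, hence Quillen dimension by Theorem~\ref{psolvableQC}) and with subgroups of the form $O_{p'}(G)\cdot P_0$, propagating; the only use of the CFSG here is through Theorem~\ref{psolvableQC}. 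In case (4), with $m_p(G)\leq 4$, I would first invoke cases (1) and (2) to arrange $O_p(G)=O_{p'}(G)=1$, so that by Lemma~\ref{lemmaOpandp}(1) $F^*(G)$ is a direct product of simple components, each of $p$-rank at most $4$; the CFSG then restricts these components to an explicit short list, and one works through the resulting configurations---using Theorem~\ref{almostSimpleQC} when the reduced group is almost simple, case (5) below to remove $p$-rank-$1$ components, and direct homology or $\QD_p$ computations for the finitely many small remaining cases. This classification-driven case analysis is where I expect the bulk of the work to lie.

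For the component-based case (5): having reduced (by case (2)) to $O_{p'}(G)=1$, the component $L$ is simple with $m_p(L)=1$, so $\Ap(L)$ is a discrete set with at least two points (a nonabelian simple group has no unique subgroup of order $p$); writing $\hat{L}=L_1\cdots L_r$ for the product of the $G$-orbit of $L$, I would use the join decomposition $\Ap(\hat{L})\simeq\Ap(L_1)\join\cdots\join\Ap(L_r)$ together with the product-homology formula to get $\tilde{H}_{r-1}(\Ap(\hat{L}))\neq 0$, then propagate through $C_G(\hat{L})$ (proper, with trivial $p$-core, hence covered by (H1)) and up from $\hat{L}\,C_G(\hat{L})\normal G$ to $G$, whose quotient lies inside the solvable group $\Out(\hat{L})$. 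In case (3): if $\Ap(G)$ is disconnected then $\tilde{H}_0\neq 0$ already; if it is connected with nontrivial $\pi_1$ and nonzero $H_1$ we are again done; the delicate sub-case is $\Ap(G)$ connected and $\QQ$-acyclic yet with nontrivial fundamental group, which I would rule out using the interplay between $\pi_1(\Ap(G))$ and the subgroup structure of $G$ (in the spirit of Aschbacher's study of simple connectivity of $p$-group complexes) together with (H1). Finally case (6), $p=3$ with $L/Z(L)\cong\U_3(8)$, is an exceptional unitary configuration---it sits on the $\QD_p$-exception list of Theorem~\ref{theoremQDList}---and must be settled by explicit analysis of the $3$-extensions $\U_3(8)B\leq G$ and their Quillen posets, either verifying $\QD_3$ for them or computing the homology and propagating.
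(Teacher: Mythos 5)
The paper does not prove this theorem at all: it is quoted verbatim from \cite{KP20} (hence the terminal~$\Box$), and the text around it (e.g.~Remarks~\ref{rk:contextofH1results} and~\ref{rk:compareCFSGinASExtandAlt}) points to~\cite[Thm.~4, Thm.~4.1, Thm.~5.1, Thm.~6.1]{KP20} as the sources for the individual items. So there is no in-paper proof to compare against, and your proposal must be judged as a free-standing reconstruction of~\cite{KP20}.

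As such a reconstruction it is sound in outline for the easy parts and honest about what it is not doing, but several items are left as genuine gaps rather than proofs. Case~(1) is essentially complete and correct: $\Ap(G)=\Ap(\Omega_1(G))$ with $O_p(\Omega_1(G))\leq O_p(G)=1$, and the $p'$-central-quotient equivalence with the computation that $O_p(G/Z)=1$ are both fine. Case~(5) is plausible (join of discrete posets gives top homology for $\Ap(\hat L)$), but the actual propagation up to~$\Ap(G)$ is not carried out; note in particular that this step needs something like Lemma~\ref{lemmaHomologyPropagationAS} (so one must verify the condition $\Ap(G)_{>A}\subseteq A\times K$), or the elimination machinery of Theorem~\ref{PStheorem}, and you only gesture at this. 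In case~(3) the real work is exactly the sub-case you call ``delicate'' --- $\Ap(G)$ connected, $\pi_1\neq 1$, but $\pi_1^{\mathrm{ab}}\otimes\QQ=0$ (note this is broader than ``perfect'', since finite abelianization also kills rational~$H_1$); saying this ``would be ruled out using the interplay between $\pi_1$ and the subgroup structure'' is not an argument, and this is where~\cite{KP20} does nontrivial work. Cases~(2), (4), and~(6) are whole theorems with substantial proofs (\cite[Thm.~4.1]{KP20}, \cite[Thm.~4]{KP20}, and the $\U_3(8)$ analysis respectively); your description of the strategy for~(2) --- peel off the $p$-solvable piece over $O_{p'}(G)$, with the CFSG entering only through Theorem~\ref{psolvableQC} --- does match what the present paper asserts in Remark~\ref{rk:compareCFSGinASExtandAlt}, but it remains a description, not a proof, and the CFSG-driven case analysis for~(4) and the explicit $\QD_3$ verification for~(6) are likewise placeholders. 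In short: no wrong turns, the right tools are named, case~(1) is proved, but items~(2)--(6) are programmes rather than arguments, and the load-bearing case~(3) sub-case is punted on.
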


The result~\cite[Corollary~3]{KP20} first showed that Theorem~\ref{aschbachersmithQC} above extends to~$p = 5$.
(So our extension Theorem~\ref{theoremASExtension}, which we prove as later Theorem~\ref{proofQCOdd},
in effect adds just the ``new'' prime~$p=3$ in the proof.)

\begin{corollary}
\label{extensionsASQC5}
Theorem~\ref{aschbachersmithQC} extends to~$p=5$.
\hfill $\Box$
\end{corollary}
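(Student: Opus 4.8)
The plan is to derive Corollary~\ref{extensionsASQC5} as a formal consequence of Theorem~\ref{aschbachersmithQC} (the original Aschbacher-Smith result, valid for $p>5$) together with the case $p=5$ of Theorem~\ref{generalReduction}, exactly as the remark ``The result \cite[Corollary~3]{KP20} first showed that Theorem~\ref{aschbachersmithQC} above extends to $p=5$'' indicates. Since the assertion is that ``Theorem~\ref{aschbachersmithQC} extends to $p=5$,'' I read the claim as: for $p=5$, any finite group $G$ satisfying hypothesis~(ii) of Theorem~\ref{aschbachersmithQC} satisfies~(H-QC). So the first step is to fix $p=5$ and a finite group $G$ satisfying that unitary hypothesis~(ii), and to argue by induction on $|G|$ (equivalently, pass to a minimal counterexample). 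The point of the induction is precisely to upgrade hypothesis~(ii) to the induction-replacement hypothesis~(H1): one checks that hypothesis~(ii) is inherited by proper subgroups $H<G$ and by proper $p'$-central quotients $G/N$ (with $N$ a central $p'$-subgroup), because the unitary components of such $H$ or $G/N$ are among, or are quotients of, the unitary components of $G$, and their $p$-extensions $\U_m(q^{p^e})$ for $m\le n$, $e\in\ZZ$ are exactly the groups already constrained by~(ii) for $G$. This is the ``(Inh)'' mechanism described in Remark~\ref{rk:contextofH1results}: if the auxiliary hypothesis is inherited, then in a minimal counterexample all proper subgroups and proper $p'$-central quotients satisfy~(H-QC), which is exactly~(H1).

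Next, with $G$ a minimal counterexample for $p=5$, we may assume $G$ satisfies~(H1), and we run through the reductions of Theorem~\ref{generalReduction}. Its conclusions (1)--(4) force $Z(G)=1$, $\Omega_1(G)=G$, $O_{p'}(G)=1$, $\A_5(G)$ simply connected, and $m_5(G)\ge 5$; conclusion~(5) says no component of $G$ has $p$-rank $1$ mod its center. Together with $O_p(G)=1$ (which we may assume, since otherwise $\A_p(G)$ is contractible but then $O_p(G)>1$ contradicts being a counterexample to the converse — more precisely, for~(H-QC) we only care about the case $O_p(G)=1$), Lemma~\ref{lemmaOpandp}(1) gives that the components of $G$ are simple and $F^*(G)=E(G)=L_1\cdots L_t$ with $G\le\Aut(F^*(G))$. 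At this point the structural situation of $G$ for $p=5$ is essentially the same as in the Aschbacher-Smith setup for $p>5$: their Theorem~\ref{aschbachersmithQC} is proved by analysing exactly such a configuration via the $\QD_p$-List (Theorem~\ref{theoremQDList}) and Robinson subgroups. The strategy is to observe that the only place in the Aschbacher-Smith argument where $p>5$ is genuinely used is in handling certain components that, for $p=5$, produce additional entries in the $\QD_5$-List — namely the cases listed under ``($p=5$)'' in Theorem~\ref{theoremQDList} (items (5)--(7): $E_8(q)$ with $|q|=4$; ${}^2F_4(2)'$, $\Sz(32)$, ${}^2F_4(32)$ with field automorphisms; and the sporadics $\HN,\Baby,\Monster,\McL,\Ly$) — but all of these still \emph{have} $\QD_5$ for their $p$-extensions, so they are eliminated by the $\QD_p$-elimination step exactly as in the $p>5$ case.

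The remaining work, then, is to verify that with these extra $\QD_5$-components available for elimination, the downstream parts of the Aschbacher-Smith proof (the analogue of \cite[Thm~2.3]{AS93}, the $\QD$-reduction \cite[Prop~1.7]{AS93}, and the Robinson-subgroup final argument at \cite[pp~490--492]{AS93}) go through verbatim, with hypothesis~(ii) covering precisely the recalcitrant unitary groups $\U_n(q)$ with $q\equiv-1\pmod 5$. In other words: under~(H1), the $\QD_5$-List plus the unitary hypothesis~(ii) exhausts the possible components of a counterexample, and each surviving component acquires a Robinson subgroup, forcing $\tilde\chi(\A_5(G)^g)\ne 0$ for some $g\in G$ and hence $\tilde H_*(\A_5(G))\ne 0$, contradicting that $G$ is a counterexample. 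I expect the main obstacle — and the reason this is only stated as a corollary here, with the real content deferred to \cite{KP20} — to be the careful bookkeeping showing that nothing in the Aschbacher-Smith induction for $p>5$ secretly needs $p\ge 7$ beyond the $\QD_p$-List bookkeeping: one must confirm that the exceptional small-rank estimates, the unitary exclusion, and the Robinson-subgroup constructions are all valid at $p=5$, and that the new $\QD_5$ entries are correctly certified as having Quillen dimension. Since all of this is exactly the content of \cite[Corollary~3]{KP20}, the proof here can legitimately consist of invoking that result together with Theorem~\ref{generalReduction} to supply the~(H1)-reductions, and noting the inheritance of hypothesis~(ii) that licenses the induction.
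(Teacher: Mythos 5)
There is a genuine gap — in fact several — in your reconstruction of the underlying argument, even though your closing observation (that the corollary can be proved by citing~\cite[Corollary~3]{KP20}) matches what the paper literally does.

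First, your induction step is flawed. You claim hypothesis~(ii) is inherited by proper subgroups $H<G$ ``because the unitary components of such $H$ \ldots are among \ldots the unitary components of $G$.'' That is false: a proper subgroup of $G$ can have unitary components that are not components of $G$ at all, so~(ii) for $G$ says nothing about them. This is exactly why~\cite{AS93}, \cite{KP20}, and this paper run the induction under~(H1u) (proper subgroups \emph{satisfying~(ii)} satisfy~(H-QC)) rather than~(H1), and it is the source of the ``small gap'' the paper flags in~\cite{KP20}: Theorem~5.1 there is stated under~(H1), which the minimal-counterexample setup does not deliver. The paper repairs this by restating that elimination under~(H1u) as Theorem~\ref{alternativeParticularComponentes}. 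Your~(Inh)-based route to~(H1) does not go through.

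Second, you have the role of the $\QD$-List backwards. Items~(5)--(7) of Theorem~\ref{theoremQDList} list the groups whose $5$-extensions may \emph{fail} $\QD_5$; they therefore \emph{survive} the $\QD$-elimination step of~\cite[Prop~1.7]{AS93} and must instead be handled by the Robinson-subgroup argument of~\cite[Thm~5.3]{AS93} (which, as the paper notes, already covers $p=5$). Your assertion that these entries ``still have $\QD_5$ \ldots so they are eliminated by the $\QD_p$-elimination step'' inverts the logic.

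Third, and most importantly, you never identify the actual obstruction at $p=5$: a component of type $\Sz(2^5)$ blocks the proof of the nonconical-complement result~\cite[Thm~2.3]{AS93}, on which Propositions~1.6 and~1.7 of~\cite{AS93} (the reduction to $O_{p'}(G)=1$ and the $\QD$-elimination) depend. The whole content of the extension to $p=5$ is the separate elimination of $\Sz(2^5)$ as a component of a minimal counterexample (Theorem~\ref{alternativeParticularComponentes} here, adjusting~\cite[Thm~5.1]{KP20} to~(H1u)); once that is done, the rest of the Aschbacher--Smith argument runs unchanged. Your proposal omits this step entirely.
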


\noindent
However, there is a small gap in the explanation given in~\cite{KP20};
it can be fixed without requiring the results of this paper---and we indicate that adjustment during the proof of Theorem~\ref{proofQCOdd}.

Next, we recall a recent result that establishes~(H-QC) for groups~$G$ containing components~$L$ with~$\Out_G(L)$ a~$p'$-group.
The proof of the following theorem does not depend on the~CFSG.
Moreover, in the language of~\cite{PS} (which we had described in earlier Remark~\ref{rk:stratelimRob}), 
this result is an example of an ``elimination-result":
it allows us to eliminate possible components from a minimal-order counterexample to~(H-QC).

\begin{theorem}[{cf.~\cite[Corollary~5.1]{PS}}]
\label{PStheorem}
Let $p$ be a prime and $L$ a component of a group~$G$.
Let~$H = \hat{N}_G(L)$.
Suppose that:
\begin{enumerate}
\item $p$ divides the order of~$L$, and $C_G(\hat{L})$ satisfies~(H-QC);
\item The induced map~$\Ap(L) \to \A$ is not the zero map in homology, where~$\A$ is one of the following posets:
       \[ \Imageposet_{H,L}, \,\, \Imageposet_{G,L}, \,\, \Ap \bigl( \Aut_H(L) \bigr), \,\,
           \Ap \bigl( \Aut_G(L) \bigr), \,\, \Ap \bigl( \Aut(L)
       \bigr) . \]
\end{enumerate}
Then~$G$ satisfies~(H-QC).

In particular, if~$G$ is a counterexample of minimal order to~(H-QC), then~$\Aut_G(L)$ must contain~$p$-outers;
for odd~$p$, this eliminates alternating and sporadic components.
\hfill $\Box$
\end{theorem}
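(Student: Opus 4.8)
The plan is to reduce to the homology-propagation machinery of Aschbacher-Smith (Lemma~\ref{lemmaHomologyPropagationAS}, applied with the subgroup $H = \hat N_G(L)$) and then verify its hypotheses using the assumptions (1) and (2). First I would set up the relevant subgroups: $L$ is a component of $G$, $\hat L = L_1 \cdots L_r$ is the product of its $G$-conjugates, and $H = \hat N_G(L)$ is the kernel of the permutation action on those conjugates, so that $\hat L \leq H \trianglelefteq G$ and each $L_i$ is a component of $H$. The idea is that nonzero reduced homology coming from $\A_p(L)$ should propagate first to $\A_p(\hat L)$, then to $\A_p(H)$, and finally, because $H \trianglelefteq G$ and $C_G(\hat L)$ satisfies (H-QC) by hypothesis~(1), to $\A_p(G)$ itself. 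The key propagation step is the interaction of the map $\A_p(L) \to \A$ in hypothesis~(2) with the boundary operator on poset-chains in $\A_p(H)$: one wants to exhibit a nonzero class in $\tilde H_*(\A_p(H))$ built as a ``join-type'' product of a nonzero class pushed forward from $\A_p(L)$ (nonzero by hypothesis~(2)) with a nonzero class from $\A_p(C_G(\hat L))$ or from the remaining components, the latter existing by hypothesis~(1). Hypothesis~(2) is precisely what guarantees the $\A_p(L)$-factor does not die when we enlarge to $H$ via $\Aut_H(L)$ or $\Imageposet_{H,L}$, etc.

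The main structural input is that $\A_p(H)$ decomposes, up to homotopy, in terms of the posets attached to the component-orbit $\{L_1,\dots,L_r\}$ together with $C_H(\hat L) = C_G(\hat L)$; more precisely one uses a Quillen-fiber argument (Theorem~\ref{variantQuillenFiber}) or the inductive homotopy-type results to relate $\A_p(H)$ to a join/product built from $\A_p(\hat L)$ and $\A_p(C_G(\hat L))$, and then $\A_p(\hat L)$ in turn to the $\A_p(L_i)$. Since $p$ divides $|L|$ by hypothesis~(1), $\A_p(L) \neq \emptyset$, so there is something to propagate. The condition that the induced map to $\A$ (one of $\Imageposet_{H,L}$, $\Imageposet_{G,L}$, $\A_p(\Aut_H(L))$, $\A_p(\Aut_G(L))$, $\A_p(\Aut(L))$) is nonzero in homology is exactly the statement that the relevant ``quotient by the centralizer/diagonal'' map does not annihilate our chosen homology class; the five choices of $\A$ correspond to progressively larger ambient automorphism posets, and nonvanishing for the smaller ones implies it for the larger ones, so it suffices to treat the weakest hypothesis. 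Then feeding the surviving class through the propagation lemma, with the $C_G(\hat L)$-factor supplied by hypothesis~(1), yields $\tilde H_*(\A_p(G)) \neq 0$, i.e.\ (H-QC).

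For the final ``in particular'' assertion: if $G$ is a counterexample to (H-QC) of minimal order, then (H1) holds for $G$, so in particular every proper subgroup and proper $p'$-central quotient satisfies (H-QC); applying this to $C_G(\hat L)$ (which is proper, since $L \leq G$ is a component and hence nontrivial) gives hypothesis~(1). If moreover $\Aut_G(L)$ contained no $p$-outers, then every element of $\A_p(\Aut_G(L))$ would already lie in the image of $\A_p(L/Z(L))$, forcing $\A_p(\Aut_G(L))$ to be homotopy equivalent to (a quotient related to) $\A_p(L)$ in a way that makes the induced map an isomorphism on homology — in particular nonzero, since $\tilde H_*(\A_p(L)) \neq 0$ for the relevant simple $L$ by Theorem~\ref{almostSimpleQC} — so hypothesis~(2) would hold and Theorem~\ref{PStheorem} would give (H-QC), a contradiction. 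Applying this to alternating and sporadic $L$: by Remark~\ref{rk:outerautsofsimple}, $\Out(L)$ is a $2$-group (indeed $C_2$ or $C_2 \times C_2$) in those cases, so for odd $p$ there are no $p$-outers at all, and such $L$ cannot be a component of a minimal counterexample.

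I expect the main obstacle to be the bookkeeping in the propagation step — namely making precise the decomposition of $\A_p(H)$ in terms of the component-orbit data and $C_G(\hat L)$, and checking that the homology class we want really does survive the sequence of maps $\A_p(L) \to \A_p(\hat L) \to \A_p(H) \to \A_p(G)$ without the boundary operator killing it; hypothesis~(2) is tailored to exactly this, but verifying it interacts correctly with the Aschbacher-Smith boundary calculation (Lemma~\ref{lemmaHomologyPropagationAS}) when $L$ has several $G$-conjugates, rather than just one, is where the care is needed.
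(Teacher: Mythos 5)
The paper does not prove this statement---it is quoted from \cite{PS} (their Corollary~5.1) with a terminal $\Box$, so there is no internal proof to compare against. That said, your plan has a genuine gap which can be identified from what the present paper says about the cited result. You announce that you will ``reduce to the homology-propagation machinery of Aschbacher--Smith (Lemma~\ref{lemmaHomologyPropagationAS})''; but that lemma's hypothesis~(i) requires a specific element $A$ \emph{exhibiting $\QD_p$} for the chosen subgroup, together with the upper-link containment~$\Ap(G)_{>A} \subseteq A \times K$. Hypothesis~(2) of Theorem~\ref{PStheorem} is strictly weaker: it asserts only that $i_* : \tilde{H}_*\bigl(\Ap(L)\bigr) \to \tilde{H}_*(\A)$ is nonzero \emph{in some degree}, with no control over which chain carries the surviving class, no maximal-dimension condition, and no upper-link information. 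None of the propagation lemmas available in this paper (Lemma~\ref{lemmaHomologyPropagationAS}, Lemma~\ref{lemmaHomologyPropagationKP20}, or Theorem~\ref{theoremNewHomologyPropagation}) can be fed from hypothesis~(2) alone. Indeed Remark~\ref{rk:overviewhomolpropag} explicitly singles out Theorem~\ref{PStheorem} as the situation where ``it suffices to have just the (non-constructive) existence of a nonzero image $i_*$ in homology'', in deliberate contrast with the constructive $\QD$-style propagation. The missing idea is precisely the mechanism that propagates a merely-nonzero class without a $\QD$-type handle on it; this is the technical heart of \cite[Thm~1.6 and Cor~5.1]{PS}, and you gesture at it as ``bookkeeping'', but the obstacle is the absence of an applicable propagation lemma, not its bookkeeping.

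Two smaller points. Your remark that ``nonvanishing for the smaller ones implies it for the larger ones'' runs the implication the wrong way: if $\A_1 \subseteq \A_2$ and the composite $\Ap(L) \to \A_1 \hookrightarrow \A_2$ is nonzero in homology, then the map to $\A_1$ is nonzero; so nonvanishing for the \emph{larger} $\A$ implies nonvanishing for the \emph{smaller}, and the weakest useful hypothesis is the one with $\A = \Imageposet_{H,L}$. Also, in the ``in particular'' part you verify only the second half of hypothesis~(1): for a minimal-order counterexample you also need $p \mid |L|$, which follows from the reduction to $O_{p'}(G) = 1$ (Theorem~\ref{generalReduction}(2) under~(H1)) together with Lemma~\ref{lemmaOpandp}(1); and it is worth noting via Lemma~\ref{lemmaOpandp}(4) that~$O_p\bigl(C_G(\hat L)\bigr) = 1$, so that ``$C_G(\hat L)$ satisfies (H-QC)'' actually yields nonzero homology rather than holding vacuously.
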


\noindent
The final statement about odd~$p$ uses details about outer automorphisms in Remark~\ref{rk:outerautsofsimple}.

\bigskip

Using the above results:
If $G$ is a counterexample of minimal order to~(H-QC),
then by~(MOC) in Remark~\ref{rk:stratelimRob}, $G$ satisfies~(H1)---so it fails conditions~(1--6) of Theorem~\ref{generalReduction}.
Therefore, every component~$L$ of~$G$ has order divisible by~$p$ (by Lemma~\ref{lemmaOpandp});
the map~$\Ap(L) \to \Imageposet_{\hat{N}_G(L),L}$ is the zero map in homology (by Theorem~\ref{PStheorem});
$G$ is not~$p$-solvable nor almost-simple (by Theorems~\ref{psolvableQC} and~\ref{almostSimpleQC});
and if~$p \geq 5$, then it contains some unitary component for which some~$p$-extension does not satisfy~$\QD_p$
(by Theorem~\ref{aschbachersmithQC}).

\bigskip
\bigskip

\part{Techniques for homology propagation}

\section{Overview: Using replacement-posets homotopy equivalent to~\texorpdfstring{$\Ap(G)$}{Ap(G)}}
\label{sec:overviewequivApG}

In this section, we implement a theme we had indicated in earlier Remark~\ref{rk:equivvisualbdry}: 
namely we recall from the literature---and extend---some methods
for replacing~$\Ap(G)$ with more convenient (and typically smaller) homotopy-equivalent posets.

First in Definition~\ref{definitionXBHposet},
we will review (and slightly generalize) an earlier construction of an equivalent poset~$X_G(H)$,
obtained with respect to a subgroup~$H \leq G$.
We will indicate a variant in Proposition~\ref{propInductiveIntersectionPoset},
namely a further-reduced equivalent subposet~$X_G \bigl( \mathfrak{i}(H) \bigr)$.

Finally in the latter part of the section, we will review in Remark~\ref{remarkClosingSection}
a different notion of reducing~$\Ap(G)$ to an equivalent poset---by ``removing points with conical centralizer''.
Then in Proposition~\ref{propositionContractibleCentralizers}, 
we present a result for implementing these removals in some relevant situations.

\bigskip

As motivation for our main work on equivalences, we first give some relevant general background.
Recall first that for~(H-QC), we of course need to show under~$O_p(G) = 1$ that~$\tilde{H}_* \bigl( \Ap(G) \bigr) \neq 0$.

Indeed in some situations, we can establish the nonzero homology ``immediately'' for~$\Ap(G)$ itself: 
Sometimes this proceeds by direct construction of reduced homology;
see for example the methods in~\cite[Sec~8.1]{Smi11}
(which in particular are used in obtaining the Aschbacher-Smith~$\QD$-List in Theorem~\ref{theoremQDList}).
But the procedure can also be indirect, namely just establishing {\em existence\/} of nonzero homology; 
this is the case for example in the Aschbacher-Kleidman result Theorem~\ref{almostSimpleQC} for almost-simple~$G$.
(We will examine the underlying Robinson-subgroup method further in our later Section~\ref{sectionLefschetz}.)

But in studying~(H-QC) for more general situations, we more typically proceed via some intermediate proper subgroup~$H < G$:

\begin{remark}[Equivalences in the broad context of homology propagation]
\label{rk:contextequivandpropag}
That is, often we begin just with knowledge of nonzero reduced homology of~$\Ap(H)$ for some proper subgroup~$H < G$. 
(Sometimes via direct construction;
or also possibly via induction---or induction-replacement e.g.~as in~(H1) in earlier Remark~\ref{rk:contextofH1results}.)

And then (as we had briefly mentioned after Remark~\ref{rk:compareCFSGinASExtandAlt}), 
we wish to show that the nonzero homology for~$\Ap(H)$ leads to nonzero homology also for~$\Ap(G)$.
This rough notion of ``homology propagation'' is probably broader than the more conventional meaning in the literature;
but it seems appropriate for preliminary exposition.
Stated more precisely,
we mean here that the natural poset inclusion~$i : \Ap(H) \to \Ap(G)$ should induce a nonzero map~$i_*$ in homology.

Of course the simplest case of such a nonzero map arises when~$i$ is a homotopy equivalence---so that
the induced map~$i_*$ is actually a homology isomorphism. 
This simplest situation arises for example in Claim~4 in the proof of~\cite[Thm~4.1]{KP20},
as well as in Lemma~\ref{lemmaReconstructionFromSubposet}(3) in this paper.
Such results already begin to motivate our study of equivalences in this section.

And this equivalence-theme continues,
when we consider the extension in~\cite[Lm~3.14]{KP20} of the naive propagation above---for convenience,
we have given a full statement of this result as later Lemma~\ref{lemmaHomologyPropagationKP20}.
There the propagation proceeds from~$\Ap(H)$, not to~$\Ap(G)$ but instead to some possibly-proper subset~$X \subseteq \Ap(H)$; 
and then it remains to show independently that this~$X$ is homotopy equivalent to~$\Ap(G)$. 
This type of equivalence arises for example in Claim~5 in the proof of~\cite[Thm~4.1]{KP20};
as well as in various cases in this paper, such as Theorem~\ref{theoremChangeABSPosets}---where~$X$ is given 
by a replacement-poset such as~$W^{\QuillenPoset}_G(H,K)$ defined there.
\donerk
\end{remark}

\subsection*{Equivalences via shrinking a subposet inside a poset union}

We now begin the main work of the section,
with our review of the construction of the ``basic'' replacement-poset~$X_G(H)$.

\bigskip

\noindent
As a rough preliminary overview:

We want to understand how~$\Ap(G)$ is built up from suitable subposets of~$\Ap(H)$.
To that end, we first consider the ``inflation'' ~$\N_G(H)$:
namely a certain larger subposet, which retracts to~$H$;
and then we recover~$\Ap(G)$, by adding the remaining elements~$E \in \Ap(G) - \N_G(H)$---which
``glue'' to the poset~$\N_G(H)$, via connections within the subposets~$\Ap \bigl( C_G(E) \bigr)$.
In particular, we are viewing~$\Ap(G)$ as the poset union~$\N_G(H) \cup \bigl( \Ap(G) - \N_G(H) \bigr)$. 

The  smaller replacement-poset~$X_G(H)$ (equivalent to~$\Ap(G)$) is then obtained
by ``deflating'' the inflation-term~$\N_G(H)$ inside that union:
basically, collapsing it down to~$\Ap(H)$---so that the added-$E$ terms indicated above
now remain glued just via the corresponding smaller centralizer-posets~$\Ap \bigl( C_H(E) \bigr)$.
But this process requires replacing the original inclusion-ordering~$<$ on~$X_G(H)$ by a new ordering~$\prec$.
(As a result, $X_G(H)$ is just a subset, but not a~$<$-subposet, of~$\Ap(G)$.)

\bigskip

Thus we start by recalling from the literature the underlying poset~$\N_G(H)$, which is the inflation of~$\Ap(H)$ 
(indeed slightly generalizing its context from~$\Ap(G)$ to a suitable subposet~$\B$):

\begin{definition}[The~$\N \cup \F$-decomposition]
\label{defInflated}
For~$H$ a subgroup of~$G$, with~$\Ap(H) \subseteq \B \subseteq \Ap(G)$ a subposet, set:
  \[ \N_\B(H) := \{ E \in \B \tq E \cap H \neq 1 \} , \text{ and } \]
  \[ \F_\B(H) := \{ E \in \B \tq E \cap H = 1    \} . \]

\centerline{
   Note that~$\B$ is the {\em disjoint\/} union of~$\N_\B(H)$ and~$\F_B(H)$.
            }

\noindent
We focus on one feature of this union, with respect to the~$<$-ordering:
Note that if~$F \in \F_\B(H)$ and $A \in \N_\B(H)$, then we cannot have~$F > A$---as that would violate~$F \cap H = 1$. 
That is:

\centerline{
  For~$A \in \N_\B(H)$, we have~$\B_{>A} \cap \F_\B(H) \ = \emptyset$; so $\B_{>A} = \N_\B(H)_{>A}$.
            }
\noindent
Thus members of~$\F_\B(H)$ only appear as the {\em left\/} segment of inclusion-chains:

  \quad \quad (i) \quad A $<$-chain begins with any terms from~$\F_\B(H)$, followed by any terms from~$\N_\B(H)$.

\noindent
This type of property will be relevant frequently, as our development continues;
for example in later Remark~\ref{rk:XGHlowerlinkformbdry}, 
we will at least begin to see its significance for our viewpoint on boundary calculations, for homology propagation.

When~$\B = \Ap(K)$ for some~$K \leq G$, write~$\N_K(H) := \N_{\Ap(K)}(H)$ and~$ \F_K(H) := \F_{\Ap(K)}(H) $.
In particular for~$K = G$, we get:

  \hfill $\Ap(G) = \N_G(H) \cup \F_G(H)$. \donerk
\end{definition}

\noindent
We have the following immediate results (cf.~\cite{KP20,PSV,SW}):

\begin{lemma}
\label{lemmaReconstructionFromSubposet}
Assume~$H \leq G$, with~$\Ap(H) \subseteq \B \subseteq \Ap(G)$ as in Definition~\ref{defInflated}.
Then we get:

\smallskip 
(1)   $\N_\B(H)$ poset-strong deformation-retracts onto~$\Ap(H)$---via~$r : \N_\B(H) \to \Ap(H)$,
      which we define by~$r(E) := E \cap H$ (we may call this the ``deflation'' map). 

\smallskip 
(2)   If~$\B = \Ap(K)$ for~$K \leq G$, and~$E \in \F_K(H)$, then~$\N_K(H)_{>E} \simeq \Ap \bigl( C_H(E) \bigr)$.

\smallskip 
(3)   If~$\B = \Ap(K)$ as in (2), and further~$O_p \bigl( C_H(E) \bigr) > 1$ for all~$ E \in \F_K(H)$,
      then~$\Ap(H) \hookrightarrow \B$ is a homotopy equivalence.
      So if this further condition holds for~$K = G$, we get~$\Ap(H) \simeq \Ap(G)$.
\end{lemma}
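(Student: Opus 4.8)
The plan is to prove the three parts in order, since each builds on the previous. For part~(1), I would verify that the deflation map $r : \N_\B(H) \to \Ap(H)$, $r(E) := E \cap H$, is a well-defined poset endomorphism of $\N_\B(H)$ in the following sense: it is order-preserving (if $E \leq E'$ then $E \cap H \leq E' \cap H$), it lands in $\Ap(H)$ (since $E \cap H \neq 1$ is exactly the defining condition of $\N_\B(H)$, and $E\cap H$ is elementary abelian), and it is the identity on $\Ap(H) \subseteq \N_\B(H)$. Moreover $r(E) = E \cap H \leq E$, so $r \leq \Id_{\N_\B(H)}$; hence $r$ is monotone. By Lemma~\ref{lm:increndo}(2) applied to $\N_\B(H)$ with image $\Ap(H)$, $r$ induces a poset-strong deformation retraction onto $\Ap(H)$.

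For part~(2), fix $E \in \F_K(H)$, so $E \cap H = 1$. The claim is $\N_K(H)_{>E} \simeq \Ap(C_H(E))$. The natural map to use is again deflation restricted to this upper link: send $A \in \N_K(H)_{>E}$ to $A \cap H$. First note that any $A > E$ in $\Ap(K)$ is abelian and contains $E$, so $E$ centralizes $A$, hence $A \cap H \leq C_K(E) \cap H = C_H(E)$; also $A \cap H \neq 1$ since $A \in \N_K(H)$, so $A \cap H \in \Ap(C_H(E))$. Conversely, for $D \in \Ap(C_H(E))$ we have $D$ and $E$ commuting elementary abelian subgroups with $D \cap E \leq H \cap E = 1$, so $DE$ is an elementary abelian $p$-subgroup of $K$ with $DE > E$ and $DE \cap H \supseteq D \neq 1$, giving $DE \in \N_K(H)_{>E}$. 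I would then apply Quillen's fiber theorem (Theorem~\ref{variantQuillenFiber}) — or more simply a zigzag via Lemma~\ref{lm:increndo} on $\N_K(H)_{>E}$ using $A \mapsto (A \cap H)E \geq E$ and comparing with $A$ — to conclude the homotopy equivalence. (The cleanest route is: $A \geq (A\cap H)E$ need not hold as inclusion in general, so I would instead check the fiber condition of Theorem~\ref{variantQuillenFiber} directly, or use that the map $A \mapsto A \cap H$ has contractible fibers $(\N_K(H)_{>E})_{\geq D}$ because each such fiber has the minimum $DE$.) The last observation — each fiber has a minimum $DE$, hence is contractible — is the fastest justification and is the one I would write.

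For part~(3): assume $B = \Ap(K)$ and $O_p(C_H(E)) > 1$ for all $E \in \F_K(H)$. I want the inclusion $\Ap(H) \hookrightarrow \B = \Ap(K)$ to be a homotopy equivalence. By part~(1), $\Ap(H) \simeq \N_\B(H)$, so it suffices to show $\N_K(H) \hookrightarrow \Ap(K)$ is a homotopy equivalence. Since $\Ap(K) = \N_K(H) \cup \F_K(H)$ with the elements of $\F_K(H)$ appearing only as left-segments of chains (property~(i) of Definition~\ref{defInflated}), I would apply the inclusion form of Quillen's fiber theorem (Theorem~\ref{variantQuillenFiber}, second paragraph): for each $E \in \Ap(K) - \N_K(H) = \F_K(H)$, the relevant link to check is $(\N_K(H))_{>E} * (\Ap(K))_{<E}$ — wait, I must use the correct variant. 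Because $\F_K(H)$ elements sit below $\N_K(H)$ elements, the appropriate condition is that $(\N_K(H))_{>E}$ is contractible for each such $E$ (the "upper link" version, with the lower link taken in the ambient poset being irrelevant since we are adding $E$ as a minimal-type element relative to $\N_K(H)$). By part~(2), $(\N_K(H))_{>E} \simeq \Ap(C_H(E))$, and $O_p(C_H(E)) > 1$ forces $\Ap(C_H(E))$ to be contractible by Quillen's conical contractibility (stated after Proposition~\ref{homotopyEquivalenPosets}, via Example~\ref{ex:zigzagconical}). Hence the inclusion is a homotopy equivalence, and composing with part~(1) gives $\Ap(H) \simeq \Ap(K)$; the $K = G$ case is the final assertion.

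The main obstacle, and the step I would be most careful about, is getting the exact form of Quillen's fiber theorem right in part~(3): one must correctly identify that the elements being "added" to $\N_K(H)$ to recover $\Ap(K)$ lie \emph{below} the rest, so the contractibility hypothesis needed is on the upper links $(\N_K(H))_{>E}$ (not links in the full poset), and then verify property~(i) of Definition~\ref{defInflated} is exactly what licenses this. Everything else — well-definedness of the deflation maps, the fiber-has-a-minimum argument in part~(2), and invoking conical contractibility — is routine once the framing is fixed.
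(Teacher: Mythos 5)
Your proposal is correct and follows essentially the same decomposition as the paper: part~(1) via Lemma~\ref{lm:increndo}(2) applied to the deflation map, part~(2) via deflation restricted to the upper link, and part~(3) by combining parts~(1)--(2) with the Quillen fiber theorem. One small correction to your reasoning in part~(2): the inclusion $(A \cap H)E \leq A$ does in fact hold whenever $A \in \N_K(H)_{>E}$, since $A$ is abelian and contains both $E$ (because $A > E$) and $A \cap H$ as subgroups, so their product is a subgroup of $A$. Your hedge that this "need not hold in general" is unfounded, and this inclusion is precisely what the paper uses: it takes $f(B) = B \cap H$ with explicit homotopy inverse $g(C) = EC$, checking $gf(B) = E(B \cap H) \leq B$ and $fg(C) = EC \cap H \geq C$, so $gf \leq \Id$ and $fg \geq \Id$. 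That two-line zigzag is marginally quicker than your fibers-have-a-minimum argument, though the latter is also valid and leads to the same conclusion.
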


\begin{proof}
For part~(1),
we apply Lemma~\ref{lm:increndo}(2), with~$r,\N_\B(H)$ in the roles of~``$f,X$'':
Notice that we have~$r(E) \leq E$, with~$r$ the identity when restricted to~$r \bigl( \N_\B(H) \bigr) = \Ap(H)$.
We emphasize that for the endomorphism-hypothesis in that Lemma,
we further need to check that~$r(E) = E \cap H$ lies in~$\N_\B(H)$:
this holds here, since we have~$E \cap H \in \Ap(H) \subseteq \B$, 
while~$(E \cap H) \cap H = E \cap H > 1$---so that~$E \cap H \in \N_\B(H)$. 
For part~(2),
we use the poset map~$f : \N_K(H)_{>E} \to \Ap \bigl( C_H(E) \bigr)$ defined by~$f(B) := B \cap H$,
with homotopy inverse~$g(C) := EC$;
for the homotopies, we observe that~$gf(B) = E(B \cap H) \leq B$, along with~$fg(C) = EC \cap H \geq C$---that is,
$gf \leq \Id_{\N_K(H)_{>E}}$, and~$fg \geq \Id_{\Ap( C_H(E)) }$.
Part~(3) is a consequence of~(2), via the Quillen fiber-Theorem~\ref{variantQuillenFiber}.
\end{proof}

Next, in Definition~\ref{definitionXBHposet}, we will introduce~$X_G(H)$---the first of our replacement-posets for~$\Ap(G)$. 
Similar constructions had been considered earlier in~\cite{SW}, and also at~\cite[p~134]{Thevenaz};
compare also with~\cite[(6.8)]{Asc93}.
The related construction in~\cite{KP20} shrank~$\Ap(G)$ to a smaller equivalent poset, but did not change the order-relation. 
In this paper, we change the ordering---obtaining more general equivalent posets,
which will help to make the later homology-propagation clearer and more explicit.

In fact, we will see in Definition~\ref{definitionXBHposet} how we can use the deflation-retraction~$r$ above, 
to shrink~$\N_\B(H)$ to~$\Ap(H)$---within the decomposition given by the poset union in Definition~\ref{defInflated};
though this comes at the cost of defining a new poset-ordering on the resulting subset.
We will denote the new order-relation by~$\prec$:
since it agrees with inclusion~$<$,
except at certain specified cross-terms between~$\F$ and~$\N$, in the language of the decomposition in Definition~\ref{defInflated}. 
The result is a ``replacement-poset''~$X_\B(H)$:
that is, we will see in Theorem~\ref{theoremInductiveHomotopyType}
that it will be equivalent to~$\B$ in our standard situations:
where it will often be more convenient for us to work with, for our later propagation-applications.

\begin{definition}[The replacement-poset~$X_\B(H)$, with its ordering~$\prec$] 
\label{definitionXBHposet}
Assume a subgroup~$H \leq G$;
and an intermediate poset~$\B$ with~$\Ap(H) \subseteq \B \subseteq \Ap(G)$, as in Definition~\ref{defInflated}.
Define~$X_\B(H)$ to be the poset which---just as a set, rather than a poset---is the disjoint union:

\centerline{
  $X_\B(H) = \Ap(H) \cup \F_\B(H)$;
            }

\noindent
and with a new poset order-relation~$\prec\ :=\ \prec_{X_\B(H)}$ given as follows:

  (1) Inside~$\Ap(H)$, and also inside~$\F_\B(H)$, as $\prec$ we keep the group-inclusion ordering~$<$.

  (2) When~$F \in \F_\B(H)$, and~$A \in \Ap(H)$ with~$C_A(F) > 1$, we define~$F \prec A$.

\noindent
We need to check transitivity for this new relation~$\prec$:
Since~$\prec$ agrees with the old ordering~$<$ inside each of the two factors in the union for~$X_\B(H)$, 
we only need to check transitivity of extensions at either end of cross-terms%
\footnote{
  Notational comment:  We will typically reserve the letter-pairs~$F,A$ and~$F,B$ for such cross-terms.
          }
between the two factors;
and by the~$\F$-left property of~$\prec$ in~(2) above (see also (3) below),
these have form~$F \prec A$, for~$F \in \F_\B(H)$ and~$A \in \Ap(H)$---where we have~$C_A(F) > 1$. 
And such extensions must then have the forms~$E < F$ for~$E \in \F_\B(H)$, or~$A < B$ for~$B \in \Ap(H)$. 
When~$E < F$, we get~$1 < C_A(F) \leq C_A(E)$, so that also~$E \prec A$. 
And when~$A < B$, we get~$1 < C_A(F) \leq C_B(F)$, so that also~$F \prec B$.
This completes the transitivity proof.
Hence~$X_\B(H)$ is indeed a poset.

\medskip

\noindent
Note that~(2) gives the analogue, for~$\prec$, of the $\F$-left property~(i) in Definition~\ref{defInflated}:

 (3)  A~$\prec$-chain begins with any terms from~$\F_\B(H)$, followed by any terms from~$\Ap(H)$. 

\noindent
We typically write out such a~$\prec$-chain~$a$ in the form:

\centerline{
    $ a = ( F_1 < F_2 < \cdots < F_r \prec A_1 < A_2 < \cdots < A_s )$, for~$F_i \in \F_\B(H)$ and~$A_i \in \Ap(H)$ ;
	     }

\noindent
where we have in fact written the new notation~$\prec$ only at the cross-term case (where it is definitely required);
but used the old notation~$<$ at all the other adjacencies, where~$\prec$ in fact agrees with~$<$.
This viewpoint has another alternative form, again analogous to a property in Definition~\ref{defInflated}:

      If~$A \in \Ap(H)$, then~$X_\B(H)_{\succ A} \cap \F_\B(H) = \emptyset$;
      that is, $\Ap(H) \supseteq X_\B(H)_{\succ A} = \Ap(H)_{> A}$.

\bigskip

\noindent
If~$\B = \Ap(K) \supseteq \Ap(H)$, for~$K \leq G$, we write just~$X_K(H)$ for~$X_\B(H)$.
When~$K = G$, we get:

\centerline{
  $X_G(H) = \Ap(H) \cup \F_G(H)$,
             }

\noindent
as a subset of~$\Ap(G)$ (though not a subposet under group-inclusion~$<$).
\donerk
%\donerk
\end{definition}

\begin{remark}[A left-focused format for poset-chains involved in boundary calculations]
\label{rk:XGHlowerlinkformbdry}
A new feature of our use of~$X_G(H)$ in this paper is that
(as we had briefly suggested in earlier Remark~\ref{rk:equivvisualbdry})
we can exploit the~$\F$-left condition in Remark~\ref{definitionXBHposet}(3),
to get a more ``visual'' format for its poset-chains---which makes analysis of their boundaries easier than in~$\Ap(G)$:

Very roughly, for a suitable cycle from~$\Ap(H)$ involving a~$<$-chain~$a$,
the main preliminary calculation in homology propagation shows that chains from~$\Ap(G)$ properly containing~$a$,
which are built just from further members of~$\N_G(H)$, do not contribute to the boundary of~$a$ in~$\Ap(G)$;
so that the only problem-members must come from the other factor~$\F_G(H)$ in the union.

And after applying the homotopy equivalence of~$X_G(H)$ with~$\Ap(G)$ in Theorem~\ref{theoremInductiveHomotopyType} below, 
it will be an advantage of the~$X_G(H)$-formulation for the boundary-calculation
that any such problem-members of~$\F_G(H)$ appearing in~$\prec$-chains containing~$a$
are concentrated together at the {\em left\/} end of those chains.  
We'll flesh out this vague description later; notably in Remark~\ref{rk:QDfullbdrycalc}.

This left-focused format for the chains in~$X_G(H)$ (and in other posets later in the paper)
will help simplify the proofs of some new homology-propagation results such as later Theorem~\ref{theoremHomologyCharP}.
\donerk
\end{remark}

As promised earlier, we next get an explicit equivariant homotopy equivalence between~$\B$ and~$X_\B(H)$---under
suitable extra conditions on~$\B$: which hold, for example, when~$\B$ is of the usual form~$\Ap(K)$:

\begin{theorem}
\label{theoremInductiveHomotopyType}
Assume~$H \leq G$, with~$\Ap(H) \subseteq \B \subseteq \Ap(G)$ as in Definition \ref{defInflated};
and write $N_G(H,\B)$ for the largest subgroup of $N_G(H)$ acting on $\B$.
Recall the poset $X_\B(H) = \N_\B(H) \cup \F_\B(H)$ under $\prec$ from Definition \ref{definitionXBHposet}.
Assume also the further condition:

  (i) When~$F \in \F_\B(H)$ and~$B \in \N_\B(H)$ satisfy~$F \prec (B \cap H)$ (that is, $C_{B \cap H}(F) > 1$),

  \quad $\phantom{0}$ then also~$C_B(F), C_B(F)F \in \B$.

\noindent
Then we have an~$N_G(H,\B)$-homotopy equivalence~$\alpha_{\B,H} : \B \to X_\B(H)$,
defined using the deflation map on the~$\N_\B(H)$-term:
  \[ \alpha_{\B,H}(D) =
       \begin{cases}
         D \cap H & \tq D \in \N_\B(H) \\
         D        & \tq D \in \F_\B(H).
       \end{cases}
  \]
Moreover, the following hold:

\smallskip 
 (1) The homotopy equivalence is the identity when restricted to~$\Ap(H)$.

\smallskip 
 (2)  For~$F \in \F_\B(H)$, $X_\B(H)_{\succ F} - \F_\B(H) = \Ap(H)_{\succ F} = \N_H \bigl( C_H(F) \bigr)$;
      this has a poset-strong

      \quad \quad deformation retraction to~$\Ap \bigl( C_H(F) \bigr)$
      via the $C_H(F)$-deflation map 

      \quad \quad \quad \quad $B \mapsto B \cap C_H(F) = C_{B \cap H}(F) = C_B(F)$. 

\smallskip 
 (3)  If~$\B \subseteq \N_G(H)$, then~$\B = \N_\B(H) \simeq \Ap(H)$;
      and~$\alpha_{\B,H}$ is the restriction to~$\B$ of the deflation, 

      \quad \quad i.e.~the retraction~$B \mapsto B \cap H$.

\smallskip \noindent
In particular, if~$\B = \Ap(K) \supseteq \Ap(H)$ for~$K \leq G$, the above hypotheses (notably~(i)) hold;
and we write~$\alpha_{K,H}$ for the above map, which is an~$N_G \bigl( H , \Omega_1(K) \bigr)$-homotopy equivalence.

Thus for~$K = G$, we get an~$N_G(H)$-homotopy equivalence of~$\Ap(G)$ with~$X_G(H)$.%
\footnote{
  Notice we do not say ``retraction'': because~$X_G(H)$ is just a subset---not a subposet under~$<$---of~$\Ap(G)$.
          }
\end{theorem}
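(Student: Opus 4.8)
The plan is to apply the Quillen fiber-Theorem~\ref{variantQuillenFiber} to $\alpha_{\B,H}$, in the second (``resp.'') form: I will show that for every $y\in X_\B(H)$ the set $\alpha_{\B,H}^{-1}\bigl(X_\B(H)_{\succeq y}\bigr)$ is contractible (indeed $N_G(H,\B)_y$-contractible), so that its join with $X_\B(H)_{\prec y}$ is automatically contractible. First, though, I would check that $\alpha_{\B,H}$ is a well-defined order-preserving map $(\B,<)\to(X_\B(H),\prec)$ and is $N_G(H,\B)$-equivariant. Equivariance is clear since deflation $D\mapsto D\cap H$ commutes with conjugation. For order-preservation: inside $\Ap(H)$ and inside $\F_\B(H)$ the relation $\prec$ agrees with $<$, so the only comparison to check is a cross-term $D_1<D_2$ with $D_1\in\F_\B(H)$, $D_2\in\N_\B(H)$ (the $\F$-left property of Definition~\ref{defInflated}(i) excludes the reverse mixed case); there $D_1$ centralizes the abelian group $D_2$, hence centralizes $D_2\cap H\ne 1$, so $\alpha_{\B,H}(D_1)=D_1\prec D_2\cap H=\alpha_{\B,H}(D_2)$ by Definition~\ref{definitionXBHposet}(2).

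There are two cases for the fiber. If $y=A\in\Ap(H)$, then by the $\F$-left form of $\prec$ in Definition~\ref{definitionXBHposet}(3) we have $X_\B(H)_{\succeq A}=\Ap(H)_{\geq A}$, and unwinding $\alpha_{\B,H}$ gives $\alpha_{\B,H}^{-1}\bigl(X_\B(H)_{\succeq A}\bigr)=\{D\in\B : A\leq D\}=\B_{\geq A}$ (if $A\leq D$ then $D\in\N_\B(H)$ automatically); this has minimum $A$, hence is conically contractible to the $N_G(H,\B)_A$-fixed point $A$. The interesting case is $y=F\in\F_\B(H)$, where $X_\B(H)_{\succeq F}=\F_\B(H)_{\geq F}\cup\{A\in\Ap(H):C_A(F)>1\}$ and its preimage is
\[
  W_F=\{D\in\F_\B(H):F\leq D\}\cup\{D\in\N_\B(H):C_{D\cap H}(F)>1\}.
\]
This need not have a minimum, so I would contract it via the zigzag $D\ \geq\ C_D(F)\ \leq\ C_D(F)F\ \geq\ F$ and invoke Lemma~\ref{lm:increndo}(1): one verifies $C_D(F)\geq C_{D\cap H}(F)>1$, that $D\mapsto C_D(F)$ and $D\mapsto C_D(F)F$ are order-preserving, that the constant map to $F$ is legitimate since $F\in W_F$, and — the subtle point flagged in the text — that each of these maps sends $W_F$ into $W_F$ (noting that any member of $\B$ containing $F$ lies in $W_F$, since $F$ then centralizes its intersection with $H$; and that on the $\F_\B(H)$-part $F\leq D$ forces $C_D(F)=D=C_D(F)F$). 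All maps are built from $F$ and group operations, so $W_F$ is $N_G(H,\B)_F$-contractible.

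The place where hypothesis (i) is used, and what I expect to be the main obstacle to watch, is precisely the step in the previous paragraph asserting that the intermediate subgroups $C_D(F)$ and $C_D(F)F$ actually lie in $\B$. For $D$ in the $\N_\B(H)$-part of $W_F$ we have $F\in\F_\B(H)$, $D\in\N_\B(H)$ and $C_{D\cap H}(F)>1$, i.e.\ $F\prec D\cap H$ — exactly the configuration in which (i) guarantees $C_D(F),C_D(F)F\in\B$; for $D$ in the $\F_\B(H)$-part there is nothing to check. Granting this, both cases give contractibility of all fibers (equivariantly), so Theorem~\ref{variantQuillenFiber} delivers that $\alpha_{\B,H}$ is an $N_G(H,\B)$-homotopy equivalence.

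The ``Moreover'' items are then short. Item~(1) is immediate, since $\alpha_{\B,H}$ on $\Ap(H)\subseteq\N_\B(H)$ is $A\mapsto A\cap H=A$. For~(2), the $\F$-left property gives $X_\B(H)_{\succ F}-\F_\B(H)=\{A\in\Ap(H):C_A(F)>1\}=\N_H\bigl(C_H(F)\bigr)$ (using $C_A(F)=A\cap C_H(F)$ for $A\leq H$), and the stated deformation retraction onto $\Ap\bigl(C_H(F)\bigr)$ is Lemma~\ref{lemmaReconstructionFromSubposet}(1) applied with $C_H(F)\leq H$ in place of $H\leq G$. For~(3), if $\B\subseteq\N_G(H)$ then every $E\in\B$ meets $H$ nontrivially, so $\F_\B(H)=\emptyset$, whence $\B=\N_\B(H)$, $\prec$ reduces to $<$, $X_\B(H)=\Ap(H)$, and $\alpha_{\B,H}$ is the deflation retraction, shown to be a deformation retract in Lemma~\ref{lemmaReconstructionFromSubposet}(1). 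Finally, when $\B=\Ap(K)$ condition~(i) holds trivially, since $C_B(F)$ is a subgroup of $B$ and $C_B(F)F$ is a product of commuting elementary abelian $p$-subgroups of $K$, hence both lie in $\Ap(K)$ once nontrivial; and $N_G(H,\Ap(K))=N_G\bigl(H,\Omega_1(K)\bigr)$ because $\Ap(K)=\Ap\bigl(\Omega_1(K)\bigr)$ with $\Omega_1(K)=\langle A:A\in\Ap(K)\rangle$. Taking $K=G$, the characteristic subgroup $\Omega_1(G)$ is normal, so $N_G\bigl(H,\Omega_1(G)\bigr)=N_G(H)$, giving the asserted $N_G(H)$-homotopy equivalence $\Ap(G)\simeq X_G(H)$.
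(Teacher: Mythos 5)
Your proposal follows the paper's proof essentially verbatim: the same map $\alpha_{\B,H}$, the same order-preservation check on cross-terms, the same application of the Quillen fiber theorem to preimages of upper sets (with $\B_{\geq A}$ contractible to its minimum in the first case), the same zigzag $D \geq C_D(F) \leq C_D(F)F \geq F$ contracted via Lemma~\ref{lm:increndo}(1) in the second case, and hypothesis~(i) invoked at exactly the same point to place $C_D(F)$ and $C_D(F)F$ in $\B$. The only step you leave implicit --- that $C_D(F)$ itself lands back in the fiber $W_F$ when $D$ is in the $\N_\B(H)$-part, via $C_{C_D(F)\cap H}(F) \geq C_{D\cap H}(F) > 1$ --- is precisely the step the paper also labels routine, so this is a complete match.
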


\begin{proof}
First we check that~$\alpha_{\B,H}$ is order-preserving---from~$<$ to~$\prec$:
Since~$<$ agrees with~$\prec$ inside the images of each of the two factors in the union giving the domain~$\B$,
we only need to check images of cross-terms between those two factors;
which by the~$\F$-left property~(i) in Definition~\ref{defInflated} must be of the form~$F < A$,
for~$F \in \F_\B(H)$, and~$A \in \N_\B(H)$ (so that~$A \cap H > 1$).  
Here~$F$ centralizes~$A$ just since~$A$ is abelian, so that~$C_{A \cap H}(F) = A \cap H > 1$---and so
we do indeed get~$F \prec A \cap H$ for the images, as needed.

\bigskip

\noindent
To show~$\alpha_{\B,H}$ is an equivariant homotopy equivalence, we will use the Quillen fiber-Theorem~\ref{variantQuillenFiber}:

\medskip

The first step is the equivalence (with equivariance indicated later):
For~$x \in X_\B(H)$, we will show below
that the preimage~$Z_x := \alpha_{\B,H}^{-1}({X_\B(H)}_{\succeq x})$ is contractible.

\bigskip

Consider first the case~$x = A \in \Ap(H)$:
Then using the~$\F$-left property in Definition~\ref{definitionXBHposet}(3),
and the hypothesis~$\Ap(H)~\subseteq \B$,
we have~$X_\B(H)_{\succeq A} = \Ap(H)_{\geq A}$, so that our preimage is:

\centerline{
  $Z_A = \alpha^{-1}_{\B,H}({X_\B(H)}_{\succeq A}) = \alpha^{-1}_{\B,H} \bigl( \Ap(H)_{\geq A}  \bigr) 
                                             = \{ B \in \N_\B(H) \tq B \cap H \geq A \} = \N_\B(H)_{\geq A}$ ; 
             }

\noindent
which is contractible---since it has minimal element~$A > 1$.

\bigskip

Now consider the remaining case~$x = F \in \F_\B(H)$; this argument will be lengthier.
Note that:

\centerline{
  $  X_\B(H)_{\succeq F} = \Ap(H)_{\succeq F} \cup \F_\B(H)_{\succeq F} = \N_H \bigl( C_H(F) \bigr) \cup \F_\B(H)_{\geq F}$ ,
             }

\noindent
and in particular we get the first part of~(2). 
Furthermore our preimage then has the form:

   (ii) $Z := Z_F = \alpha_{\B,H}^{-1}({X_\B(H)}_{\succeq F}) = \N_\B \bigl( C_H(F) \bigr) \cup \F_\B(H)_{\geq F}$ . 

\noindent
We will show using Lemma~\ref{lm:increndo}(1) that~$Z$ is contractible (to~$\{ F \}$).
As in~(i$^{\prime}$) in our discussion after that lemma
(compare also the worked-out details of the application in earlier Example~\ref{ex:zigzagconical}), 
the arguments are summarized by the following zigzag of group-inclusions for~$D \in Z$:

\centerline{
  $D \geq C_D(F) \leq C_D(F)F \geq F$.
            }

\noindent
This abbreviated form expands to defining maps~$j_0, j_1, j_2, j_3$ on~$Z$, 
with images at~$D$  given by~$D, C_D(F), C_D(F)F, F$. 
In particular, $j_0 = \Id_Z$, and $j_3$ is the constant map to $\{ F \}$. 
We see via elementary group-theoretic properties that each~$j_i$ is a poset map;
with the $<$-comparability conditions~$j_0 \geq j_1 \leq j_2 \geq j_3$. 

\smallskip

So to complete the hypotheses of Lemma~\ref{lm:increndo}(1), it remains to check the endomorphism-condition: 
namely that~$j_i(D) \in Z$ for all~$i$.  
For~$j_0 = \Id_Z$ this is clear (by choice of~$D = j_0(D)$ in~$Z$);
and for~$j_3$ also, since~$j_3(D) = F \in \F_\B(H)_{\geq F} \subseteq Z$ by~(ii).
Thus it remains to consider~$j_1,j_2$---that is, to show that the corresponding images~$C_D(F), C_D(F)F$ lie in~$Z$. 

Now since our~$Z$ is defined in the context of a subposet~$\B$---which is not necessarily all of~$\Ap(G)$---as a preliminary,
we first need to see that these images fall into~$\B$:  

  (a) \quad The main complication is to first show that the images~$C_D(F)$ and~$C_D(F)F$ lie in~$\B$:

\noindent
In the case of~$D$ in the right-factor for~$Z$ in~(ii) above, i.e.~$D = E \in \F_B(H)_{\geq F}$, this is no problem:
for then~$C_E(F) = E = C_E(F)F$ where~$E \in \F_\B(H)_{\geq F} \subseteq \B$---giving~(a) for both images.
Indeed since these images lie in~$\F_\B(H)_{\geq F} \subseteq Z$ in~(ii), this even gives~(b) and~(c) below for such~$D = E$. 
The remaining case, that is, the left-factor for~$Z$ in~(ii), has~$D = B \in \N_\B \bigl( C_H(F) \bigr)$, so that~$B \cap C_H(F) > 1$. 
Observe that we have several ways of viewing this intersection,
so that we can write this condition in the expanded form:
  \[ B \cap C_H(F) = C_{B \cap H}(F) = C_B(F) \cap C_H(F) > 1 . \]
In particular, we have~$C_{B \cap H}(F)  > 1$, so that we are in the situation of hypothesis~(i) of the Theorem;
and we now crucially quote that hypothesis, to get~$C_B(F),C_B(F)F$ in~$\B$, as desired.
This completes~(a) in both cases~$D = E,B$ for a member of~$Z$.

  (b) \quad After~(a), it is routine to check that~$j_1(D) = C_D(F) \in Z$:

\noindent
For we covered the right-factor case~$D = E \in \F_\B(H)_{\geq F}$ in the proof of~(a) above.
The left-factor case, where~$D = B \in \N_\B \bigl( C_H(F) \bigr)$, has~$C_B(F) \cap C_H(F) > 1$ as in that proof;
and so now using the conclusion of~(a), we have that~$C_B(F) \in \N_\B \bigl( C_H(F) \bigr) \subseteq Z$ in~(ii), as desired.

  (c) \quad After~(a), it is routine to check that~$j_2(D) = C_D(F)F \in Z$:

\noindent
For again the right-factor case~$D = E \in \F_\B(H)_{\geq F}$ was covered in the proof of~(a).
In the left-factor case given by~$D = B \in \N_\B \bigl( C_H(F) \bigr)$,
we have~$C_B(F)F \cap C_H(F) \geq  B \cap C_H(F) > 1$ as in that proof;
so using the conclusion of~(a), we get~$C_B(F)F \in \N_\B \bigl( C_H(F) \bigr) \subseteq Z$ in~(ii), as desired. 

\smallskip

This completes the proof that~$j_1(D),j_2(D) \in Z$---and hence of the hypotheses of Lemma~\ref{lm:increndo}(1);
so we conclude by that lemma that $Z = Z_F$ is contractible, in this case for~$x = F$. 

\medskip

In conjunction with the contractibility proof for the preimage in the previous case~$x = A$, 
we have completed the proof that~$\alpha_{\B,H}$ is a homotopy equivalence.

\bigskip

For the equivariance of the equivalence,
we need to note that the contractibility-homotopies we produced above
preserve the action of the stabilizer of~$x$.

\medskip

Furthermore the homotopies above are the identity when we restrict them to elements of $\Ap(H)$---additionally giving (1).
Finally~(3) follows from Lemma~\ref{lemmaReconstructionFromSubposet}(1);
and this also gives the remaining part of~(2), when we take~$H$,$C_H(F)$ in the roles of~``$G$,$H$''. 
\end{proof}

\noindent
In our later equivalence-result Proposition~\ref{propApGsimeqW0},
we will adjust the~$X_G(H)$-construction of Definition~\ref{definitionXBHposet}:
First, the role of~``$H$'' will in fact be played by a central product~$HK = H*K$;
and second, we will replace the customary corresponding poset-join~$\Ap(H) * \Ap(K) \simeq \Ap(HK)$
by its ``pre-join'' variant---which we describe in Section~\ref{sec:prejoinandreplposets}.
Indeed in the subsequent equivalence-result Theorem~\ref{theoremChangeABSPosets}, 
we will even generalize our notion of shrinking to actual replacement of the pre-join factor~$\Ap(H)$
by related posets such as~$\Sp(H)$ and~$\Bp(H)$---resulting in replacement-posets which are more general than just~$\Ap$-posets.

%\begin{remark}[The upward-closed property for~$\Ap(H)$ in~$X_{\B}(H)$]
%\label{remarkClosedSubposet}
%We emphasize here that one of the key properties of the poset~$X_\B(H)$ is that, by the definition of the ordering,
%the members of~$\F_\B(H)$ can appear only {\em below\/} elements of~$\Ap(H)$---that is, not above.  
%So~$\Ap(H)$ is an upward-closed subposet of $X_\B(H)$, in the sense of Definition~\ref{defOpenClosed}:
%if~$A \in X_\B(H)$, and we have~$A > B \in \Ap(H)$, then~$A \in \Ap(H)$ also.
%
%This upward-closure, for more general posets related to~$X_{\B}(H)$,
%will be fundamental in our generalized approach to homology propagation in Theorem~\ref{theoremHomologyPropagation};
%see the discussion in Remark~\ref{rk:upclosedandhomolopropag}.
%\donerk
%\end{remark}

\bigskip

\noindent
For the moment, we'll indicate several variations on~$X_{\B}(H)$, using just naive-shrinking:

First, inside~$X_{\B}(H)$, we can shrink~$\Ap(H)$ to the subposet~$\mathfrak{i} \bigl( \Ap(H) \bigr)$ of Definition~\ref{defn:i(X)}:

\begin{proposition}
\label{propInductiveIntersectionPoset}
Under the hypotheses of Theorem~\ref{theoremInductiveHomotopyType}, set:

\centerline{
  $X_{\B} \bigl( \mathfrak{i}(H) \bigr) := \mathfrak{i} \bigl( \Ap(H) \bigr) \cup \F{_\B}(H) \subseteq X_{\B}(H)$.
            }

\noindent
Then the retraction map~$\iret : \Ap(H) \to \mathfrak{i} \bigl( \Ap(H) \bigr)$
extends to an~$N_G(H,\B)$-equivariant retraction map~$\hat{\iret} : X_{\B}(H) \to X_{\B} \bigl( \mathfrak{i}(H) \bigr) . $
In particular, we have~$N_G(H,\B)$-homotopy equivalences:
  $$ \B \simeq X_{\B}(H) \simeq X_{\B} \bigl( \mathfrak{i}(H) \bigr) = \mathfrak{i} \bigl( \Ap(H) \bigr) \cup \F_{\B}(H) , $$
which are the identity when restricted to~$\mathfrak{i} \bigl( \Ap(H) \bigr)$.

In the subcase for~$\B = \Ap(G)$, we get~$N_G(H)$-homotopy equivalences:

\centerline{
  $\Ap(G) \simeq X_G(H) \simeq X_G \bigl( \mathfrak{i}(H) \bigr) = \mathfrak{i} \bigl( \Ap(H) \bigr) \cup \F_G(H)$.
            }
\end{proposition}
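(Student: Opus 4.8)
The plan is to realize $\hat{\iret}$ by the obvious ``naive shrinking'': keep $\iret$ on the $\Ap(H)$-factor of $X_{\B}(H)$ and the identity on the $\F_{\B}(H)$-factor, then check that the resulting set-map is a monotone poset endomorphism of $(X_{\B}(H),\prec)$ which fixes its image pointwise, and finally apply Lemma~\ref{lm:increndo}(2). Concretely, set $\hat{\iret}(A) := \iret(A)$ for $A \in \Ap(H)$ and $\hat{\iret}(F) := F$ for $F \in \F_{\B}(H)$. From Definition~\ref{defn:i(X)} we have that $\iret$ is a poset endomorphism of $\Ap(H)$ with $\iret(A) \geq A$ for every $A$, whose image is $\mathfrak{i} \bigl( \Ap(H) \bigr)$ and which restricts to the identity on $\mathfrak{i} \bigl( \Ap(H) \bigr)$; hence $\hat{\iret}$ has image inside $\mathfrak{i} \bigl( \Ap(H) \bigr) \cup \F_{\B}(H) = X_{\B} \bigl( \mathfrak{i}(H) \bigr)$, and it is the identity on $X_{\B} \bigl( \mathfrak{i}(H) \bigr)$ (on the $\mathfrak{i}(\Ap(H))$-part because $\iret$ is, on the $\F_{\B}(H)$-part by definition).

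The substantive point is that $\hat{\iret}$ is $\prec$-order-preserving. Since $\prec$ agrees with group-inclusion inside $\Ap(H)$ and inside $\F_{\B}(H)$, and $\hat{\iret}$ acts there by the poset map $\iret$ and by the identity respectively, the $\F$-left property of Definition~\ref{definitionXBHposet}(3) reduces the check to the cross-terms $F \prec A$ with $F \in \F_{\B}(H)$, $A \in \Ap(H)$ and $C_A(F) > 1$; there, $\iret(A) \geq A$ gives $C_{\iret(A)}(F) \geq C_A(F) > 1$, so indeed $\hat{\iret}(F) = F \prec \iret(A) = \hat{\iret}(A)$. The same inequality $\iret(A) \geq A$, together with $\hat{\iret}(F) = F$, shows $\hat{\iret} \geq \Id_{X_{\B}(H)}$, so $\hat{\iret}$ is monotone; Lemma~\ref{lm:increndo}(2) then gives that $\hat{\iret}$ is a poset-strong deformation retraction of $X_{\B}(H)$ onto $X_{\B} \bigl( \mathfrak{i}(H) \bigr)$. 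Equivariance comes for free: $N_G(H,\B)$ normalizes $H$ and acts on $\B$, hence acts on $\Ap(H)$ and on $\F_{\B}(H)$, leaves $\mathfrak{i} \bigl( \Ap(H) \bigr)$ invariant (it is defined purely from the order on $\Ap(H)$), and commutes with $\iret$, so $\hat{\iret}$ is $N_G(H,\B)$-equivariant. Composing with the $N_G(H,\B)$-equivariant equivalence $\alpha_{\B,H}$ of Theorem~\ref{theoremInductiveHomotopyType} yields the claimed homotopy equivalences $\B \simeq X_{\B}(H) \simeq X_{\B} \bigl( \mathfrak{i}(H) \bigr)$, which are the identity on $\mathfrak{i} \bigl( \Ap(H) \bigr)$ since $\alpha_{\B,H}$ is the identity on all of $\Ap(H)$ by Theorem~\ref{theoremInductiveHomotopyType}(1) and $\hat{\iret}$ fixes $\mathfrak{i} \bigl( \Ap(H) \bigr)$ pointwise. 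The subcase $\B = \Ap(G)$ is just the specialization, using $N_G \bigl( H , \Ap(G) \bigr) = N_G(H)$.

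I do not expect any real obstacle here: the argument is entirely a matter of transporting the already-established retraction $\iret$ through the $X_{\B}(H)$-construction. The only spot that needs a moment's attention is confirming that it is the \emph{new} relation $\prec$ --- rather than the ambient inclusion $<$ on $\Ap(G)$ --- that is preserved at the cross-terms $F \prec A$, and that is precisely the one-line computation $C_{\iret(A)}(F) \geq C_A(F) > 1$ used above.
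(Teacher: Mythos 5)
Your proposal is correct and follows essentially the same route as the paper: define $\hat{\iret}$ by extending $\iret$ by the identity on $\F_\B(H)$, reduce the order-preserving check to the cross-terms via the $\F$-left property, observe $\hat{\iret} \geq \Id$, and conclude by Lemma~\ref{lm:increndo}(2). The only cosmetic difference is that at the cross-term you verify $F \prec \iret(A)$ directly from Definition~\ref{definitionXBHposet}(2) (via $C_{\iret(A)}(F) \geq C_A(F) > 1$), whereas the paper gets it from $F \prec A \leq \iret(A)$ and transitivity of $\prec$; these are equivalent one-line arguments.
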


%More generally, suppose that $Y\subseteq \Sp(H)$ is a $\N_G(H,\B)$-invariant subposet
%and $r:\Ap(H)\to Y$ is a $\N_G(H,\B)$-homotopy equivalence such that $A\leq r(A)$.
%Then $r$ extends to a $\N_G(H,\B)$-homotopy equivalence:
%\[\B\simeq X_\B(H)\simeq Y\cup \F_\B(H)=: X_\B(Y),\]
%where the order of $X_\B(H)$ is as follows: for $A\in \F_\B(H)$ and $B\in Y$, put $A<B$ if $C_B(A)\neq 1$.

\begin{proof}
Let~$\hat{\iret} : X_{\B}(H) \to \mathfrak{i} \bigl( \Ap(H) \bigr) \cup \F_{\B}(H)$
be the set-map extending~$\iret$ by the identity on~$\F_{\B}(H)$.
We check that it is a poset-map, for the corresponding extension of the ordering~$<$ to~$\prec$:
Recall that~$\iret(B) \geq B$ for all~$B \in \Ap(H)$; and the identity is a poset map on the~$\F_\B(H)$-part of the union.
So it remains to consider cross-terms, which by the $\F$-left property in Definition~\ref{definitionXBHposet}(3)
must be of form~$F \prec A$, for~$F \in \F_\B(H)$ and~$A \in \mathfrak{i} \bigl( \Ap(H) \bigr)$.
Then we have~$\hat{\iret}(F) = F \prec A \leq \iret(A) = \hat{\iret}(A)$.
Therefore using transitivity, $\hat{\iret}$ is order-preserving. 
Further~$\hat{\iret}(A) \geq A$, for all~$A \in X_\B(H)$: that is, we have~$\hat{\iret} \geq \Id_{X_\B(H)}$; 
the equivariant-retraction follows from this (cf.~Lemma~\ref{lm:increndo}(2)).
\end{proof}

Here is one sample use of the above further-shrinking Proposition:

\begin{example}[Constructing~$X_{\Sym_5} \bigl(\mathfrak{i}(\Alt_5) \bigr)$]
\label{exampleA5S5}
Let~$G = \Sym_5$ and~$H = \Alt_5$, with~$p = 2$.
The choice~$\B = \A_2(G)$ satisfies the ``In particular'' hypotheses of Theorem~\ref{theoremInductiveHomotopyType}, 
and hence also of Proposition~\ref{propInductiveIntersectionPoset}---from which
(using normality of~$H$ in~$G$) we obtain $G$-homotopy equivalences:

\centerline{
   $\A_2(G) \simeq X_G \bigl(\mathfrak{i}(H) \bigr) \simeq \mathfrak{i} \bigl( \A_2(H) \bigr) \cup \F_G(H)$.
            }   

\noindent
We can use this equivalent form to explicitly determine the~$G$-homotopy type of~$\A_2(G)$:
Note that~$\A_2(G)$ is obtained from~$\A_2(H)$ by adding~$\binom{5}{2} = 10$ involutions.
That is, $|\F_G(H)| = 10$, and the elements of this set correspond to the involutions of~$G-H$ (that is, the transpositions).
Now the Sylow~$2$-groups of $\Alt_5$ are elementary, so here we get~$\A_2(H) = \mathcal{S}_2(H)$;
and recall from~Definition~\ref{defn:i(X)} that~$\mathfrak{i} \bigl( \mathcal{S}_2(H) \bigr)$
is the poset of~$2$-Sylow intersections in~$H$.
So since the distinct conjugates of the~$5$ Sylow $2$-subgroups of~$\Alt_5$ intersect trivially,
the poset~$\mathfrak{i} \bigl( \A_2(H) \bigr)$ is here just a discrete set with those~$5$ points
(and indeed it coincides with~$\B_2(H)$).
For each~$A \in \F_G(H)$, there are exactly three maximal elementary abelian groups containing some involution commuting with~$A$,
since~$C_H(A) \groupiso \Sym_3$.
Therefore~$X_G \bigl( \mathfrak{i}(H) \bigr)$ is in fact a connected bipartite graph with~$10+5$ points and~$3 \cdot 10 = 30$ edges.
Hence computing with the~$G$-equivalent~$X_G \bigl( \mathfrak{i}(H) \bigr)$ in place of~$\A_2(G)$: 
we see~$\tilde{H}_1 \bigl( \A_2(G) \bigr)$ has dimension~$30 - 15 + 1 = 16$,
and~$\A_2(G)$ is $G$-homotopy equivalent to a bouquet of~$16$~$1$-spheres.

We mention that this graph~$X_G \bigl( \mathfrak{i}(H) \bigr)$ is in fact the ``triples-geometry" for~$\Sym_5$,
as described in~\cite[Example~2.3.5]{Smi11}, which has been used in various places in the geometric literature.

Finally we note that this~$X_G \bigl( \mathfrak{i}(H) \bigr)$ can also be realized 
as~$W_G^{\BoucPoset}(H,1)$ in our later construction in Theorem~\ref{theoremChangeABSPosets}. 
\donerk
\end{example}

Another interesting variant of~$X_G(H)$ uses the context of the poset introduced at~p.134 of~\cite{Thevenaz}; notice it does {\em not\/} force members of~$\F_G(H)$ to the left-end of~$\prec$-chains, but instead leaves them on the right
(that is, the left end of~$\succ$-chains).
We get the relevant analogue of Theorem~\ref{theoremInductiveHomotopyType}:

\begin{proposition}[Th\'{e}venaz poset]
\label{propHomotEquivPosetReverseOrdering}
Let~$H \leq G$.
Define the poset~$\widehat{X}_G(H)$ as the disjoint union of~$\Ap(H)$ and ~$\F_G(H)$,
and with the following ordering~$\prec\ :=\ \prec_{\widehat{X}_G(H)}$:

Inside~$\Ap(H)$, as~$\prec$, we keep the subgroup-inclusion ordering~$<$; 

inside~$\F_G(H)$, as~$\prec$, we use the {\em opposite\/} of~$<$---that is, we use group-{\em containment\/} ordering~$>$;

and when~$F \in \F_G(H)$, we set~$A \prec F$ for all~$A \in \Ap \bigl( C_H(F) \bigr)$.

\noindent
Then~$\Ap(G) \simeq \widehat{X}_G(H)$ (and indeed this is an~$N_G(H)$-equivalence).%
%\footnote{
%  Details for equivariance will be supplied,
%  once Appendix Theorem~\ref{prop:QuilFiberConn} has been correspondingly updated.
%          }
\end{proposition}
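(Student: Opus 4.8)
The plan is to deduce this from the Quillen fiber-Theorem~\ref{variantQuillenFiber}, running an argument parallel to the proof of Theorem~\ref{theoremInductiveHomotopyType} but in the ``dual'' set-up where members of~$\F_G(H)$ sit at the \emph{top} of~$\prec$-chains (the left end of~$\succ$-chains). First I would record the shape of a~$\prec$-chain in~$\widehat{X}_G(H)$: using the defining relations together with a transitivity check as in Definition~\ref{definitionXBHposet}, every such chain has the form~$(A_1 < \dots < A_s \prec F_1 \prec \dots \prec F_t)$ where the~$A_i \in \Ap(H)$ are group-increasing, the~$F_j \in \F_G(H)$ are group-\emph{decreasing} (since~$\prec$ restricts to group-containment on~$\F_G(H)$), and~$A_s \leq C_H(F_1)$---which then forces~$A_i \leq C_H(F_j)$, i.e.~$A_i \prec F_j$, for all~$i,j$. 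Dually to Theorem~\ref{theoremInductiveHomotopyType}(2), for~$F \in \F_G(H)$ the upper link~$\widehat{X}_G(H)_{\succ F}$ is just~$\{ F' \in \F_G(H) : F' < F \}$, while its lower link consists of~$\Ap \bigl( C_H(F) \bigr)$ together with~$\{ F' \in \F_G(H) : F < F' \}$.

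The comparison map should be the deflation-type assignment~$\alpha$ sending~$E \mapsto E \cap H$ for~$E \in \N_G(H)$ and~$E \mapsto E$ for~$E \in \F_G(H)$. Here lies the main obstacle: because the ordering inside~$\F_G(H)$ has been reversed relative to group-inclusion, $\alpha$ is \emph{not} order-preserving as a map~$\Ap(G) \to \widehat{X}_G(H)$ (it preserves order on the~$\N_G(H)$-part but reverses it on~$\F_G(H)$ and on the~$\F/\N$ cross-terms), so the proof of Theorem~\ref{theoremInductiveHomotopyType} does not transfer verbatim. The point that rescues the argument is that, by the~$\F$-left property~(i) of Definition~\ref{defInflated}, a group-inclusion chain of~$\Ap(G)$ is an~$\F_G(H)$-segment followed by an~$\N_G(H)$-segment; applying~$\alpha$ turns it into an~$\Ap(H)$-segment followed, after reversal, by an~$\F_G(H)$-segment, and this really is a~$\prec$-chain of~$\widehat{X}_G(H)$ (the needed cross-relations~$(E \cap H) \prec F$ hold because~$F \leq E$ forces~$E \cap H \leq C_H(F)$). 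Hence~$\alpha$ induces an order-preserving map~$\alpha' \colon \Ap(G)' \to \widehat{X}_G(H)'$ of face posets, and since~$X \simeq X'$ for every poset~$X$ it suffices to prove~$\alpha'$ is a homotopy equivalence; an alternative route is to factor the equivalence through the already-established~$X_G(H)$ of Theorem~\ref{theoremInductiveHomotopyType} by interpolating an auxiliary ordering on~$\Ap(H) \cup \F_G(H)$.

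Either way the technical heart is the contractibility of the relevant fibers, which I would establish with the zigzag-of-endomorphisms technique of Lemma~\ref{lm:increndo}(1), exactly as in the~$x = F$ case of Theorem~\ref{theoremInductiveHomotopyType}: over an element (or a chain with top term) involving~$F \in \F_G(H)$ one contracts along~$D \geq C_D(F) \leq C_D(F)F \geq F$, now reading the membership tests inside~$\widehat{X}_G(H)$; and over an element of~$\Ap(H)$ the fiber has a minimal element and is a cone. Equivariance under~$N_G(H)$ is then automatic, since every contraction-homotopy here is built from group-theoretic operations commuting with the~$N_G(H)$-action, just as in Theorem~\ref{theoremInductiveHomotopyType}. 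The main obstacle, to reiterate, is precisely the order-reversal on~$\F_G(H)$: it forces one either to descend to face posets or to insert the interpolating poset, and throughout the fiber computations one must track carefully the cross-terms~$A \prec F$ (for~$A \in \Ap(C_H(F))$) at the junction between the~$\Ap(H)$-part and the~$\F_G(H)$-part of a chain.
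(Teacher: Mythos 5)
Your proposal is correct and follows essentially the same path as the paper: both recognize that the order-reversal on~$\F_G(H)$ prevents the deflation assignment from being order-preserving at the poset level, pass to face-posets where the induced map does become order-preserving, and establish contractibility of the preimages via the zigzag~$D \geq C_D(F) \leq C_D(F)F \geq F$. Two small imprecisions worth noting: the fiber over~$A \in \Ap(H)$ is contracted by the Quillen-type zigzag~$B \geq B \cap H \leq A$ rather than by having a minimal element (you may be thinking of the upper-link preimage in Theorem~\ref{theoremInductiveHomotopyType}), and the paper closes the argument by explicitly invoking McCord's basis-like-open-cover theorem and then the Equivariant Whitehead Theorem, whereas you leave the passage from fiber contractibility to a (equivariant) homotopy equivalence implicit.
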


\begin{proof}
There will be some limited analogies with arguments from the proof of Theorem~\ref{theoremInductiveHomotopyType}.
Indeed since here we are in effect using the full~$\Ap(G)$ in the role of~``$\B$'' there, the arguments will be easier.
In particular, we mentioned there that its hypothesis~(i) is automatic for~$\Ap(G)$.

\medskip

The proof of transitivity of~$\prec_{\widehat{X}_G(H)}$ here is roughly dual
to that for~$\prec_{X_G(H)}$ in Definition~\ref{definitionXBHposet}---due to the reversal of the usual ordering in~$\F_G(H)$:
As before, we only need to check cross-terms; so suppose $A \prec F$, so that~$A \leq C_H(F)$. 
Given an extension~$B < A$, we have~$B < A \leq C_H(F)$, so that also~$B \prec F$:
and similarly given~$F > E$, we have~$A \leq C_H(F) \leq C_H(E)$, so that also~$A \prec E$. 
And then by the transitivity, $\widehat{X}_G(H)$ is indeed a poset with this order-relation.

\medskip

Next, in order to construct a homotopy equivalence between~$\Ap(G)$ and~$\widehat{X}_G(H)$,
we are going to work with face-posets, that is, the poset of chains from our original posets;
this should help avoid confusion, since we have reversed some orders in the new poset~$\widehat{X}_G(H)$.
Concretely:
We first abbreviate our original poset by~$Y := \Ap(G)$, with the usual ordering~$<$;
and set~$X := \widehat{X}_G(H)$, with the new ordering~$\prec$ above.
The set~$\F := \F_G(H)$ is a subset of both~$Y$ and~$X$---and indeed a subposet of each,
though with opposite orderings~$<$ and~$\prec_{\F_G(H)}$ (namely $>$).
We now consider chains in these posets:
For a~$<$-chain~$a \in Y'$, we can write~$a = (a \cap H) \cup (a \cap \F)$:
where we define~$a \cap H := \{ A \cap H \tq A \in \N_G(H) \cap a \}$ and~$a \cap \F := \{ A \in a \tq A \in \F \}$;
and we recall that the ordering from~$a$ on the set~$a \cap \F$ is given by group-inclusion~$<$.

Now for our homotopy equivalence, define the~map:
  \[ \beta : Y' \to X' , \quad \beta(a) := (a \cap H)\ \cup\ (a \cap \F)_{\prec} \ , \]
where the subscript~$\prec$ indicates reversing the~$<$-ordering on~$a \cap \F$ to~$>$, to get the~$\prec$-ordering.
Notice that this map is essentially the deflation-map,
as in the equivalence-map~$\alpha_{\B,H}$ in the proof of Theorem~\ref{theoremInductiveHomotopyType}.

\medskip

To see that~$\beta$ is an order-preserving map (from~$<$ to~$\prec$):
Since~$\prec$ is either equal, or dual, to~$\prec$ on the two factors in the union defining the domain~$Y'$, 
we only need to check images from cross-terms~$F < A$ between the two factors;
here~$A$ centralizes~$F$ just by abelian-ness, so~$\beta(A) = A \cap H \leq C_H(F)$, 
and hence we get $\beta(A) \prec F$ for the images.
It follows at the level of face-posets that~$\beta(a)$ is indeed a~$\prec$-chain in~$X'$.

\bigskip

Now we will show that~$\beta$ is a homotopy equivalence.
To this end, we will show that the preimage~$\beta^{-1} \bigl( (X_{\preceq x} \bigr)' )$ is contractible for all~$x \in X$.
%(By contrast, in the proof of Theorem~\ref{theoremInductiveHomotopyType}, we considered preimages of $\succeq x$-posets.)
And then in summary, in the language used by McCord in~\cite{McCord},
the~$(X_{\preceq x})'$ as~$x$ varies give a basis-like open cover,
and by Theorem~6 of~\cite{McCord} we will be able to conclude that~$\beta$ is a homotopy equivalence.

We split the contractibility proof into the two usual cases for~$x$.
Because we use lower links (with~$\preceq x$), 
rather than upper links (with~$\succeq x$) as in the proof of Theorem~\ref{theoremInductiveHomotopyType},
the preimages here are slightly different in form:
\begin{itemize}

\item $x = A \in \Ap(H)$: In this case we have:
  \[ X_{\preceq A} = \Ap(H)_{\leq A} = \Ap(A) , \]
so that for the preimage we get:
  \[ \beta^{-1} \bigl( (X_{\preceq A})' \bigr) = Z'  \text{ , where }
      Z := Z_A = \{ B \in \N_G(H) \tq 1 < B \cap H \leq A \}  .  \]
Then~$Z$ is conically contractible, essentially via Quillen's homotopy:
  \[ B  \geq B \cap H \leq A ; \]
the proof is basically dual to that in Example~\ref{ex:zigzagconical}:
Namely we apply Lemma~\ref{lm:increndo}(1) to the corresponding maps $j_0, j_1, j_2$;
they are visibly poset maps, with $j_0 \geq j_1 \leq j_2$. 
For the endomorphism-requirement: 
Note using the description of~$Z$ displayed above that we get~$1 < B \cap H \leq A$;
and then as~$(B \cap H) \cap H = B \cap H$,
we see again using that description of~$Z$ that we also get~$j_1(B) = B \cap H \in Z$ for the middle term~$B \cap H$ above.
This completes the hypotheses of Lemma~\ref{lm:increndo}(1); so by that result, we get contractibility of~$Z = Z_A$.

\item $x = F \in \F$: Here we have:
  \[ X_{\preceq F} = \Ap \bigl( C_H(F) \bigr)\ \cup\ (\F_{<}^{\op})_{\leq F} 
                   = \Ap \bigl( C_H(F) \bigr)\ \cup\ \F_{\geq F} .  \]

\noindent
Therefore for the preimage we get an analogue of~(ii) in the proof of Theorem~\ref{theoremInductiveHomotopyType};
namely we have~$\beta^{-1} \bigl( (X_{\preceq F})' \bigr) = Z'$, where:

  (ii)$^{\prime}$ \quad $Z := Z_F =\ \{ B \in \N_G(H) \tq 1 < B \cap H \leq C_H(F) \}\ \cup\ \F_{\geq F}$.
			 
\noindent
We claim~$Z$ is contractible, via the homotopy zigzag abbreviated by the group-inclusions:
  \[ D \geq C_D(F) \leq C_D(F) F \geq F , \]
as~$D$ runs over~$Z$.
Recall this is the sequence that arose in the case~``$x = F$'' in the proof of Theorem~\ref{theoremInductiveHomotopyType};
and indeed the proof now parallels that earlier one: 
we show using Lemma~\ref{lm:increndo}(1) that~$Z$ is contractible:

Again~$j_0, j_1, j_2, j_3$ are clearly poset maps, with~$j_0 \geq j_1 \leq j_2 \geq j_3$. 
And again~$j_0 = \Id_Z$, and the constant map~$j_3$ to~$F \in Z$, visibly satisfy the endomorphism-requirement. 

So it remains to establish the endomorphism requirement for~$j_1,j_2$; 
that is, we must show~that~$j_1(D) = C_D(F)$ and~$j_2(D) = C_D(F)F$ lie in~$Z$.
This time, since here~$\Ap(G)$ plays the role of~``$\B$'' there, we automatically get step~(a) of the earlier argument; 
so it remains to obtain steps~(b) and~(c) there.
In the subcase for~$Z$ in~(ii$^{\prime}$) where~$D = E \in \F_{\geq F}$,
we again have~$C_E(F) = E = C_E(F)F$---so again the earlier argument goes through.
In the other subcase of~(ii$^{\prime}$), we have~$D = B \in \N_G(H)$ with~$1 < B \cap H \leq C_H(F)$.
Then:
  \[ 1 < B \cap H \leq \bigl( C_B(F) \cap H \bigr) \cap C_H(F) , \]
which shows using~(ii$^{\prime}$) that also~$j_1(B) = C_B(F) \in Z$---giving step~(b).
Step~(c) follows, on replacing~$C_B(F)$ in the above display by~$C_B(F)F = j_2(B)$.
This completes the proof of contractibility of~$Z$ in the case~$x = F$. 
%Note that all the terms of the above homotopy lie in $\F_{\geq F} \cup \{B\in \N_G(H) \tq B\cap H\leq C_H(F)\}$.
\end{itemize}

\medskip

\noindent
We have now shown that for every set~$U \in \mathcal{U} := \{ (X_{\preceq x})' \tq x \in X \}$, $\beta^{-1}(U)$ is contractible.
Here~$\mathcal{U}$ is a basis-like open cover: this means that it is a basis for a topology.
That is, if~$U_1,U_2 \in \mathcal{U}$ and we have~$x \in U_1 \cap U_2$,
then there exists~$U_3 \in \mathcal{U}$ such that~$x \in U_3 \subseteq U_1 \cap U_2$.
To illustrate this situation in our present case:
If~$U_1 = (X_{\preceq B_1})'$, with~$U_2 = (X_{\preceq B_2})'$ and~$x \in U_1 \cap U_2$,
then for the~$\prec$-maximal element of~$X$, we have~$\max(x) \preceq B_1,B_2$,
so that we get~$x \in (X_{\preceq \max(x)})' \subseteq (X_{\preceq B_1})' \cap (X_{\preceq B_2})'$,
and further that~$U_3 :=  (X_{\preceq \max(x)})' \in \mathcal{U}$.

So since we have contractibility for this basis-like cover,
\cite[Theorem 6]{McCord} applies, and we conclude that~$\beta$ is a homotopy equivalence.

Finally, to show that $\beta$ is an $N_G(H)$-homotopy equivalence, note first that $\beta$ is a $N_G(H)$-equivariant poset map.
Second, for $K\leq N_G(H)$, we have that
\[ \mathcal{U}^K:=\{ (X_{\preceq x}^K)' \tq x\in X^K\},\]
is a basis-like open cover of the $K$-fixed subposet $(X')^K$ (notice that $|X'|^K = |(X')^K|$).
Then, the same proof as above shows indeed that the restriction $\beta^K: (Y')^K\to (X')^K$ verifies that $\beta^K( (X_{\preceq x}^K)' )$ is contractible for all $x\in X$: since if $K$ fixes $B,D,F$ as above, then $K$ fixes $B\cap H$, $C_D(F)$ and $C_D(F)F$.
Again, by Theorem 6 of \cite{McCord}, we see that $\beta^K$ is a homotopy equivalence.
Since $K\leq N_G(H)$ is arbitrary, an application of the Equivariant Whitehead's Theorem allows us to conclude that $\beta$ is an $N_G(H)$-homotopy equivalence.
\end{proof}

\subsection*{Equivalences via removal of points with conical centralizers}

In the final part of the section, we turn to a different kind of replacement-poset for~$\Ap(G)$.
As in the later propagation-analysis (cf.~Hypothesis~\ref{hyp:LB}),
our background aim is to analyze the homotopy type of~$\Ap(G)$ with respect to a fixed component~$L$ of~$G$.
This time, however, our equivalent subposet is obtained by using the Quillen fiber-Theorem~\ref{variantQuillenFiber};
to ``homotopically remove'' only certain members~$\N$ of the inflation~$\N_G(L)$ of~$L$
(rather than applying deflation, which was the method of the previous subsection)---namely 
those which are faithful on~$L$, and are suitably associated with ``conical centralizers''. 
Here is some further background from the literature, regarding this distinction:

\begin{remark}[Removing points via conical centralizers]
\label{remarkClosingSection}
We quickly preview a particular feature in our further discussion of homology propagation
in the early part of Section~\ref{sec:prejoinandreplposets}:
Namely we often have the situation in Hypothesis~\ref{hyp:LB},
of a component~$L$ of~$G$, with a~$p$-outer~$B \in \Outposet_G(L)$; 
and we study the resulting~$p'$-central product~$H * K$, where~$H := LB$ and~$K : = C_G(LB)$ as in Hypothesis~\ref{hyp:HKcentralprod}.
If~$\Ap(K) = \emptyset$, then~$\Ap(HK)$ in fact just reproduces~$\Ap(H)$, so that we need no further constructions for~$HK$;
so our development below is really for the case that~$\Ap(K) \neq \emptyset$.% 
%\footnote{
%    "fake footnote":
%    (Decide later)
%    (S) Here I'm trying to implement your recommendation re conification; but am far from certain that I have what you intend.
%    It might be best for you to just fix it, when the tex files  come back to you?
%          }

Now in view of results like the almost-simple case in Theorem~\ref{aschbachersmithQC}, 
we can usually begin with knowledge of nonzero homology~$\tilde{H}_* \bigl( \Ap(H) \bigr)$ for our~$H = LB$. 
However, we need to deal with the possibility that this homology could become zero in~$\Ap(G)$,
from calculating in cones over those cycles---arising from points of~$\Ap(K)$ for our~$K = C_G(LB)$. 
(Instead, we would like to preserve nonzero homology, by having at least double-cones, i.e.~suspensions, over those cycles.)
One obvious special case of this cone-problem arises when~$O_p(K) > 1$:
so that~$\Ap(K)$ is contractible to a point~$*_K$ by Quillen's theorem, 
and then we cannot produce nonzero homology cycle by joining cycles of $\Ap(LB)$ and $\Ap(K)$.
%and this point then gives a common cone-point over all the cycles of~$\Ap(LB)$.
However when~$O_p(K) = 1$, we can hope instead for nonzero cycles for~$\Ap(K)$---giving
nonzero product-homology cycles for~$\Ap(HK)$, which we can then hope to propagate to nonzero homology for~$\Ap(G)$. 
Correspondingly---adapting the terminology of~\cite{AS93}---we
say~$K$ is {\em conical\/} if~$O_p(K) > 1$; and~$K$ is {\em nonconical\/} otherwise, i.e.~if~$O_p(K) = 1$. 

Thus for classical homology propagation,
we need to start (as just indicated) with~$O_p(K) = 1$ and indeed~$\tilde{H}_* \bigl( \Ap(K) \bigr) \neq 0$: 
see e.g.~hypothesis~(2) of Lemma~\ref{lemmaHomologyPropagationAS} (quoted from~\cite{AS93}),
and hypothesis~(v) of Lemma~\ref{lemmaHomologyPropagationKP20} (quoted from~\cite{KP20});
and compare also hypothesis~(3) in our new pre-join propagation Theorem~\ref{theoremNewHomologyPropagation}.
Ideally we can obtain this nonzero homology by induction, when we have the above nonconical-situation~$O_p(K) = 1$.
So we often have a natural case-division, where the more propagation-friendly branch corresponds to:

\smallskip

\begin{flushleft}
\begin{tabular}{lcl}
(someNC) & & \pbox{13cm}{There is some~$B \in \Outposet_G(L)$ with~$O_p \bigl( C_G(LB) \bigr) = 1$.}
\end{tabular}
\end{flushleft}

\smallskip

\noindent
For example, Theorem~2.4 of~\cite{AS93} (existence of ``nonconical complements'')
is a crucial tool, which in effect establishes~(someNC)---allowing for the homology propagation required in the proof.
Furthermore earlier Lemma~\ref{lemmaTrivialOpPropagationCentralizer} in this paper
helps to extend nonconicality to further subgroups.

The other, less propagation-friendly, branch (which we instead might call ``removal-friendly'') of the case-division is: 

\smallskip

\begin{flushleft}
\begin{tabular}{lcl}
(allC) & & \pbox{13cm}{For all~$B \in \Outposet_G(L)$, we have~$O_p \bigl( C_G(LB) \bigr) > 1$.}
\end{tabular}
\end{flushleft}

\smallskip

\noindent
Some version of~(allC) arises in various results in the literature; 
and we could say roughly that it  is more relevant to homotopy-equivalence methods,
typically based on using the Quillen fiber-Theorem~\ref{variantQuillenFiber};
the equivalence is with a subset of~$\Ap(G)$, obtained by removing points corresponding to the conical centralizers.  
Suitable such equivalences are implemented in a number of ways in the literature---cf.~Theorems~4.1, 5.1, and~6.1 of~\cite{KP20}.
And Proposition~\ref{propositionContractibleCentralizers} below
produces such equivalences for situations satisfying an upper-link version of~(allC).
\donerk
\end{remark}

\noindent
In Proposition~\ref{propositionContractibleCentralizers} below, we provide a somewhat-flexible homotopy equivalence result:
where the set~$\F_1$ corresponds to members of~$\Ap(G)$ with the above condition~(allC);
and the conclusion allows for some leeway in choosing the set~$\N$ of points to be removed. 
This equivalence is used in later Proposition~\ref{propHomotEquivTildePosets}, with~$\N = \N_1$ itself;
and in Theorem~\ref{QDpReductionPS}, with~$\N$ a possibly-proper subset of~$\N_1$.

We mention that the first condition~$E \in \Ap \bigl( N_G(L) \bigr)$ in the definition of~$\N_1$ below is actually redundant:
for in view of Lemma~\ref{lemmaNormalizeComponent}, we get~$E$ normalizing the component~$L$
from the second condition~$E \cap L > 1$---once we use the third (``faithful'') condition~$C_E(L) = 1$,
to guarantee that~$E \cap L \nleq Z(L)$.
However we keep that condition about~$N_G(L)$ in the definition,
as it provides some natural context for those later conditions.

\begin{proposition}
\label{propositionContractibleCentralizers}
Let~$G$ be a group, with a component~$L$ of order divisible by the prime~$p$.
Define~$\F_1$ as the subset of~$p$-outers of~$L$ with~$C_G(LB)$ conical:
  \[ \F_1 := \{\ B \in \Outposet_G(L)  \tq O_p \bigl( C_G(LB) \bigr) > 1\  \} , \]
Note that~$\F_1$ is upward-closed in~$\Outposet_G(L)$,
by the contrapositive of Lemma~\ref{lemmaTrivialOpPropagationCentralizer}. 

Define the subset~$\N_1$ of members of the inflation~$\N_G(L)$ acting faithfully on~$L$, and containing some member of~$\F_1$:

$\begin{array}{rcl}
  \N_1 & :=        & \{\ E \in \Ap \bigl( N_G(L) \bigr)\ \tq\ E \cap L > 1 ,\ C_E(L) = 1 ,\ \exists B \in \F_1 \tq B < E\ \} \\
       & \subseteq & \N_G(L) \cap \Ap \bigl( N_G(L) \bigr) .
\end{array}$

\noindent
Note that the condition~$B < E$ guarantees that~$\N_1 \cap \Ap \bigl( L C_G(L) \bigr) = \emptyset$.

Assume that~$\N$ is an upward-closed subset of~$\N_1$.

\noindent
Then~$\Ap(G) - \N \hookrightarrow \Ap(G)$ is a homotopy equivalence.
Furthermore~$\Ap \bigl( L C_G(L) \bigr) \subseteq \Ap(G) - \N$.

In particular, this holds for~$\N = \N_1$: so~$\Ap(G) - \N_1 \hookrightarrow \Ap(G)$ is a homotopy equivalence.
\end{proposition}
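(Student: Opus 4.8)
The plan is to apply the Quillen fiber-Theorem~\ref{variantQuillenFiber} to the inclusion $\Ap(G) - \N \hookrightarrow \Ap(G)$, using the ``$X_{>y} * Y_{<y}$''-form of its contractibility hypothesis. First I would dispatch the auxiliary assertions. For $B \in \F_1$ and $B' \in \Outposet_G(L)$ with $B < B'$ we have $LB \leq LB'$ (as $B \leq B'$ and both normalize $L$), so $O_p \bigl( C_G(LB) \bigr) > 1$ forces $O_p \bigl( C_G(LB') \bigr) > 1$ by the contrapositive of Lemma~\ref{lemmaTrivialOpPropagationCentralizer}; hence $\F_1$ is upward-closed in $\Outposet_G(L)$. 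Any $E \in \N_1$ contains some $B \in \F_1 \subseteq \Outposet_G(L)$, so $B \cap \bigl( L C_G(L) \bigr) = 1$ with $B \neq 1$, whence $E \nleq L C_G(L)$; this gives $\N_1 \cap \Ap \bigl( L C_G(L) \bigr) = \emptyset$, and since $\N \subseteq \N_1$ also the asserted $\Ap \bigl( L C_G(L) \bigr) \subseteq \Ap(G) - \N$. Finally the ``in particular'' case is just $\N = \N_1$, which is vacuously upward-closed in itself.

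So fix $E \in \N$; by Theorem~\ref{variantQuillenFiber} it suffices to show that $W * \Ap(G)_{<E}$ is contractible, where $W := \bigl( \Ap(G) - \N \bigr)_{>E}$, and for this it is enough that $W$ be contractible. The crucial step is to identify $W$. Since $E \in \N \subseteq \N_1$ we have $E \cap L > 1$ and $C_E(L) = 1$, so $E \cap L$ is a non-central subgroup of $L$; hence any $E' \in \Ap(G)$ with $E' > E$ centralizes --- so normalizes --- $E \cap L$, and therefore normalizes $L$ by Lemma~\ref{lemmaNormalizeComponent}. Such an $E'$ then satisfies every defining condition of $\N_1$ except possibly $C_{E'}(L) = 1$ (for the required $\F_1$-member one can reuse the same $B \in \F_1$, since $B < E < E'$). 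Thus, for $E' > E$, we have $E' \in \N_1 \iff C_{E'}(L) = 1$; and since $\N$ is upward-closed in $\N_1$ and $E \in \N$, such an $E'$ lies outside $\N$ exactly when it lies outside $\N_1$, i.e.\ exactly when $C_{E'}(L) \neq 1$. Hence $W = \{\, E' \in \Ap(G) \tq E' > E,\ C_{E'}(L) \neq 1 \,\}$.

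It remains to contract $W$. Because $X \mapsto X \cap C_G(L)$ is monotone, the map $h(E') := E \cdot C_{E'}(L)$ is an order-preserving endomorphism of $W$ with $h \leq \Id_W$: indeed $h(E') \in \Ap(G)$ since it is a subgroup of the elementary abelian $E'$, $h(E') > E$ since $C_{E'}(L) \cap E \leq C_E(L) = 1$ while $C_{E'}(L) \neq 1$, and $C_{h(E')}(L) \supseteq C_{E'}(L) \neq 1$, so $h(E') \in W$. Its image is $W_0 := \{\, E A \tq A \in \Ap \bigl( C_G(LE) \bigr) \,\}$: for $E' \in W$ the group $A := C_{E'}(L) = E' \cap C_G(L)$ is elementary abelian, lies in $C_G(L)$, and is centralized by $E$ (as $E \leq E'$ is abelian), hence $A \in \Ap \bigl( C_G(LE) \bigr)$ and $h(E') = EA$; conversely each such $EA$ lies in $W$ and is fixed by $h$, because $EA \cap C_G(L) = A$. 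So by Lemma~\ref{lm:increndo}(2), $h$ retracts $W$ onto $W_0$, and $A \mapsto EA$ is a poset isomorphism $\Ap \bigl( C_G(LE) \bigr) \cong W_0$ (injectivity coming again from $EA \cap C_G(L) = A$). Applying the contrapositive of Lemma~\ref{lemmaTrivialOpPropagationCentralizer} to some $B \in \F_1$ with $B < E$ gives $O_p \bigl( C_G(LE) \bigr) > 1$, so $\Sp \bigl( C_G(LE) \bigr)$ --- hence the equivalent $\Ap \bigl( C_G(LE) \bigr)$, by Proposition~\ref{homotopyEquivalenPosets} --- is conically contractible. Therefore $W \simeq W_0 \cong \Ap \bigl( C_G(LE) \bigr)$ is contractible, the join $W * \Ap(G)_{<E}$ is contractible, and Theorem~\ref{variantQuillenFiber} yields the homotopy equivalence $\Ap(G) - \N \hookrightarrow \Ap(G)$. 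I expect the main obstacle to be the middle step --- using the ``upward-closed in $\N_1$'' hypothesis together with Lemma~\ref{lemmaNormalizeComponent} to pin down the upper link $W$ precisely; once $W$ is identified, the retraction onto a conically contractible $\Ap$-poset is routine.
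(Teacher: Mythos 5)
Your proof is correct and follows the same overall route as the paper's: apply the Quillen fiber-Theorem~\ref{variantQuillenFiber} to each~$E \in \N$, identify the upper link in~$\Ap(G) - \N$ as (equivalent to)~$\Ap \bigl( C_G(LE) \bigr)$ using Lemma~\ref{lemmaNormalizeComponent} together with the upward-closedness of~$\N$ in~$\N_1$, and then show~$O_p \bigl( C_G(LE) \bigr) > 1$. Your retraction~$h(E') = E \cdot C_{E'}(L)$ is just the composite of the paper's pair of homotopy-inverse maps~$a : A \mapsto C_A(LE)$ and~$b : C \mapsto CE$, so that portion is a repackaging rather than a new idea. The one genuine divergence is the conicality step at the end: you obtain~$O_p \bigl( C_G(LE) \bigr) > 1$ in one line by applying the contrapositive of Lemma~\ref{lemmaTrivialOpPropagationCentralizer} directly to~$LB \leq LE$ for the witness~$B \in \F_1$ with~$B < E$. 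This is legitimate --- that lemma only requires~$B,E$ to be abelian~$p$-subgroups of~$N_G(L)$ with~$LB \leq LE$, all of which you have --- whereas the paper instead runs a two-case analysis (Case~1: $E$ meets~$L C_G(L) - L$, handled by exhibiting an explicit central~$p$-element of~$C_G(LE)$; Case~2: $E = (E \cap L)C$ with~$C \in \Outposet_G(L)_{\geq B}$, handled by the upward-closedness of~$\F_1$), apparently so as to reduce to a genuine~$p$-outer before invoking conicality. Your shortcut avoids that decomposition entirely and is the cleaner argument; the two approaches are otherwise interchangeable.
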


\begin{proof}
We abbreviate~$Y := \Ap(G) - \N$.
Note since~$\N_1 \cap \Ap \bigl( L C_G(L) \bigr) = \emptyset$ that~$\Ap\bigl( L C_G(L) \bigr) \subseteq Y$, as required.

We will obtain the desired equivalence using the Quillen fiber-Theorem~\ref{variantQuillenFiber}:
namely we will show for any~$E \in \Ap(G) - Y = \N$ that~$Y_{>E}$ is contractible.

Recall since~$E \in \N \subseteq \N_1$ that we have:

\centerline{
    $E \leq N_G(L)$, $E \cap L >  1$, $C_E(L) = 1$, and there is some~$B \in \F_1$ such that~$B < E$.
            }

\bigskip

\noindent
We will begin by showing that~$Y_{>E}$ is homotopy equivalent to~$\Ap \bigl( C_G(LE) \bigr)$: 

For this, we will directly construct poset maps in each direction, which are homotopy inverses.
So consider any $A$ in the domain-space~$Y_{>E}$.
Then~$A \cap L \geq E \cap L > 1$.
But~$A \cap L \nleq Z(L)$, since~$E \cap Z(L) \leq C_E(L) = 1$; so~$A \leq N_G(L)$ by Lemma~\ref{lemmaNormalizeComponent}.
Further~$B < E < A$.
Thus we have for~$A$ three of the four defining properties for membership in~$\N_1$.
So if we had the remaining faithful-property~$C_A(L) = 1$, we would then get~$A \in \N_1$;
but then since~$E \in \N \subseteq \N_1$ with~$E < A$, and~$\N$ is upward-closed in~$\N_1$ by hypothesis,
we would get~$A \in \N$---contrary to its choice in~$Y = \Ap(G) - \N$.
This contradiction shows that~$C_A(L) > 1$.
Now~$A = C_A(E)$ since~$A > E$; so~$ C_A(LE) = C_A(L) \cap C_A(E) = C_A(L) > 1 $. 
Thus~$a : A \mapsto C_A(LE)$ is a poset map from~$Y_{>E}$ to~$\Ap \bigl( C_G(LE) \bigr)$.
For the other direction, consider now some~$C \in \Ap \bigl( C_G(LE) \bigr)$.
Since~$C_E(L) = 1$, we have~$CE > E$;
and also~$C_{CE}(L) = C > 1$, so that~$CE \not \in \N_1$---and hence~$CE \not \in \N$;
so we conclude that~$CE \in Y_{>E}$.
Then~$b : C \mapsto CE$ is a poset map from~$\Ap \bigl( C_G(LE) \bigr)$ to~$Y_{>E}$.
Note that~$ba(A) = C_A(LE)E \leq A$ and~$ab(C) = C_{CE}(LE) = C$;
so that the two maps are homotopy inverses, giving the desired homotopy equivalence.

Using this equivalence,
the contractibility of~$Y_{>E}$ will follow once we show~$O_p \bigl( C_G(LE) \bigr) > 1$.

\bigskip

\textbf{Case 1.} Assume~$E$ contains an element~$e \in L C_G(L) - L$.
                 Then~$O_p \bigl( C_G(LE) \bigr) > 1$:

\noindent
Here we write~$e = lc$, where~$l \in L$ and~$c \in C_G(L)$.
Since~$e$ has order~$p$, but~$e \notin L$, we see that~$c$ has nontrivial~$p$-power order.
We will show that~$c \in Z \bigl( C_G(LE) \bigr)$; so consider any~$x \in C_G(LE)$:
  \[ lc = e = e^x = l^x c^x = l c^x , \]
which forces~$c = c^x$.
This shows~$c \in Z \bigl( C_G(LE) \bigr)$.
So~$O_p \bigl( C_G(LE) \bigr) \geq O_p(\ Z \bigl( C_G(LE) \bigr)\ ) > 1$,
since we saw~$c$ is a nontrivial $p$-element.
This finishes the proof of Case~1.

\bigskip

\textbf{Case 2.} Otherwise~$E \cap (L C_G(L) - L) = \emptyset$. And again we get~$O_p \bigl( C_G(LE) \bigr) > 1$:

\noindent
Here the hypothesis implies that when~$e \in E$ induces an inner automorphism of~$L$, then~$e \in L$.
And so if we write~$E = (\ E \cap  \bigl( L C_G(L) \bigr)\ )\ C$ where~$C \in \Outposet_G(L)$,
we see that~$E \cap \bigl( L C_G(L) \bigr) = E \cap L$; hence~$LE = LC$.
Moreover since~$B < E$, we can choose~$C$ so that~$C \geq B$.
Since~$B \in \F_1 \subseteq \Outposet_G(L)$, and~$C \in \Outposet_G(L)_{\geq B}$,
we conclude that~$O_p \bigl( C_G(LE) \bigr) = O_p \bigl( C_G(LC) \bigr)) > 1$,
by the upward-closed property of~$\F_1$.
This finishes the proof of Case 2.

\bigskip

In summary:
We have shown that~$Y_{>E} \simeq \Ap(C_G(LE)) \simeq *$, since~$O_p \bigl( C_G(LE) \bigr) > 1$.
To complete the proof, apply the Quillen fiber-Theorem~\ref{variantQuillenFiber}
to the inclusion~$Y \hookrightarrow \Ap(G)$.
\end{proof}

\bigskip
\bigskip

\section{The pre-join construction and some corresponding replacement-posets}
\label{sec:prejoinandreplposets}

In the section following this one, we will prove Theorem~\ref{theoremNewHomologyPropagation}:
which is essentially a variant of Lemma~\ref{lemmaHomologyPropagationAS},
namely the original homology-propagation result~\cite[Lemma~0.27]{AS93} of Aschbacher-Smith---but now using,
in place of the usual join of posets, 
the context of the ``pre-join'', which we develop in this section.

Furthermore, Theorem~\ref{theoremNewHomologyPropagation} is stated in the language just of posets---which
now do not have to be~$\Ap$-posets, in contrast to ``classical'' propagation of~\cite{AS93}.
In our context of~(H-QC), this allows us to use more general replacement-posets~``$W$''
equivalent to~$\Ap(G)$---in the spirit of
the various equivalent-constructions~``$X$'' in the previous Section~\ref{sec:overviewequivApG}.
So the present section in particular develops some replacement-posets which use the pre-join;
notably in Theorems~\ref{theoremChangeABSPosets} and~\ref{propHomotEquivTildePosets}. 

\subsection*{Background: the classical poset join in homology propagation}

Before introducing the pre-join, 
we will first review the more standard context of the usual poset-join in homology propagation.

We start with a brief continuation of our earlier more general viewpoint just on propagation:

\begin{remark}[Some further broad context on homology propagation]
\label{rk:overviewhomolpropag}
Recall from Remark~\ref{rk:contextequivandpropag}
that we typically have some proper subgroup~$H < G$ with $\tilde{H_*} \bigl( \Ap(H) \bigr) \neq 0$;
and we wish to show that this nonzero reduced homology ``propagates'' to nonzero homology also for~$\Ap(G)$;
which means more precisely that the natural inclusion~$i : \Ap(H) \to \Ap(G)$ should induce a nonzero map~$i_*$ in homology.
In that earlier Remark, we focused on the simplest case, where~$i$ is a homotopy equivalence, and~$i_*$ an isomorphism;
but now we wish to examine more general situations for~$i_*$.

\smallskip

Note first that~$i_*$ can be nonzero, without even needing to be injective;
that is, it suffices to have just the (non-constructive) {\em existence\/} of a nonzero image~$i_*$ in homology.
For example, this is the case in Theorem~\ref{PStheorem} quoted from~\cite{PS}.
(We mention also that the knowledge of nonzero homology for~$\Ap(H)$ in that result is also non-constructive: 
it relies on the existential result Theorem~\ref{almostSimpleQC} quoted from \cite{AK90}---applied
to~$H = \Aut_G(L)$, for~$L$ a component of~$G$.)

\smallskip

However, it is in fact more common in the literature to have the propagation%
\footnote{
   And indeed the nonzero homology of~$\Ap(H)$.
          }
be more constructive:
by showing~$i_*$ is essentially a monomorphism in some particular nonzero degree.

For historical background, we illustrate this via Quillen's Corollary~12.2 in~\cite{Qui78}
(which in fact essentially inspired the study of homology propagation):
There we have~$G$ solvable, with~$H := LA$; where in fact~$L := O_{p'}(G)$,
and~$A$ is of maximal rank~$m$ in~$\Ap(H)$ (which is also the maximal rank for~$\Ap(G)$).
In particular, we get the following description of the upper link of~$A$:

  \quad \quad (i) \quad  $ \Ap(G)_{>A}\ =\ \emptyset $ ,

\noindent
just from the particular maximality-condition that we have assumed for~$A$.

Next, the construction there of nonzero reduced homology for~$\Ap(H)$ (basically the property~$\QD_p$)
gives us a nonzero cycle~$\alpha$ in the topological dimension~$m-1$ of~$\Ap(H)$:
this must involve some poset-chain~$a$ with~$m$ members, so we may take our~$A := \max(a)$.
So suppose, by way of contradiction to~(H-QC), that~$\alpha$ is zero in $\tilde{H}_* \bigl( \Ap(G) \bigr)$
(that is, in the kernel of~$i_*$), 
Then~$\alpha$ would appear in the image under the boundary operator of some longer chain~$b$---with $(m+1)$ members from~$\Ap(G)$,
and with~$b \in \Lk_{\Ap(G)}(a)$.

We examine this situation using the later viewpoint of \cite{AS93}:
Note since~$a$ has $m$~members, with~$\max(a) = A$ of rank~$m$,
those members have consecutive orders~$p, p^2, \dots , p^m$;
so  that naively, there is ``no room'' to insert further members of~$\Ap(G)$ either among, or below, those of~$a$---only above.
We state this condition in the language of ``full chain'' at later Definition~\ref{defn:fullandinitialchains}(2):

  \quad \quad (ii) \quad $\Lk_{\Ap(G)}(a)\ \subseteq\ \Ap(G)_{> A}$, where~$A = \max(a)$.
                         (That is, $a$ is a full chain in~$\Ap(G)$.)

\noindent
And now applying~(i) in~(ii), we see that no such longer chain~$b$ is possible. 
This contradiction shows that~$\alpha$ is nonzero for~$\Ap(G)$, giving~(H-QC);
and in particular shows that~$i_*$ is injective in this degree.
(Even in all degrees, since here $\Ap(LA)$ is a wedge of spheres.)

\smallskip
 
The above interaction of the maximality-condition on~$A$ from~$\QD_p$, giving the upper-link condition in~(i), 
with a relevant restriction in~(ii) on the structure of links of simplices, 
is essentially the starting point for later extensions of the notion of this elementary propagation.
So our continuing exposition below will follow this interaction-theme;
up through the analysis in Remark~\ref{rk:QDfullbdrycalc},
which underlies our later main propagation-result Theorem~\ref{theoremNewHomologyPropagation}.
\donerk
\end{remark}

\bigskip

The use of the usual poset-join in propagation originates with Aschbacher-Smith~\cite{AS93},
where the argument indicated above is adapted
to cover the case where Quillen's~$L = O_{p'}(G)$ is replaced by a component~$L$ of~$G$.
The adjustment in particular replaces the single subgroup~``$H$'' above, with a~$p'$-central product~$H * K$;
and then the corresponding poset~$\Ap(H*K)$ is homotopy equivalent to the usual poset-join~$\Ap(H) * \Ap(K)$.  
So as further background for our later pre-join variants,
we now indicate some relevant details in their development:

Thus we  consider a component~$L$ of a counterexample~$G$ to~(H-QC).
We'll need to consider possible~$p$-outers of~$L$;
so we replace~``$A$'' above by some~$B \in \hat{\Outposet}_G(L)$ as in Definition~\ref{outerPosetImagePoset}
(which covers the~$p$-outers~$B \in \Outposet_G(L)$, as well as the identity case~$B=1$).
Hence the basic context for propagation in~\cite{AS93} is the component-hypothesis:

\begin{hypothesis}
\label{hyp:LB}
$L$ is a component of a counterexample~$G$ to~(H-QC), with~$B \in \hat{\Outposet}_G(L)$.
\donerk
\end{hypothesis}

\noindent
Recall that in Remark~\ref{remarkClosingSection},
we had also indicated the relevance, for propagation, of the product of $LB$ with~$C_G(LB)$. 
In Lemma~\ref{lm:HypLBimpliesHypHKcentralprod} below, we will see (postponing momentarily the easy proof)
that the above component-Hypothesis~\ref{hyp:LB}, via the product of~$LB$ with~$C_G(LB)$, 
gives our standard case of a somewhat more general~$\Ap$-poset Hypothesis~\ref{hyp:HKcentralprod}.
This latter product-hypothesis is what is actually assumed (as~\cite[Hyp~0.15]{AS93})
in the propagation lemma~\cite[0.27]{AS93} (which we have quoted as later Lemma~\ref{lemmaHomologyPropagationAS}).

\begin{lemma}
\label{lm:HypLBimpliesHypHKcentralprod}
Assume the component-Hypothesis~\ref{hyp:LB} for~$L,B$.
As~$H$, choose either~$LB$---or possibly just~$L$ when $B>1$;
with~$K := C_G(LB)$ for either choice. 
Then we also have the following central-product Hypothesis~\ref{hyp:HKcentralprod} for~$H,K$:
\end{lemma}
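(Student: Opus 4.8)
The plan is to verify the clauses of Hypothesis~\ref{hyp:HKcentralprod} one at a time, for the stated choices $H\in\{LB,L\}$ and $K:=C_G(LB)$. The clause asserting that $G$ is a counterexample to~(H-QC) concerns $G$ alone, so it is simply inherited from Hypothesis~\ref{hyp:LB}; the actual content is to exhibit $HK$ as a $p'$-central product $H*K$ in which $H$ carries the component $L$. Throughout I would use that $H\le LB$ in both cases.

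First I would check that $HK=H*K$ is a central product. Since $H\le LB$ and $K=C_G(LB)$, the group $K$ centralizes $H$, so $[H,K]=1$. To upgrade this to a $p'$-central product I must see that $H\cap K$ is a $p'$-group; and since $H\cap K\le K\le C_G(H)$ we have $H\cap K\le Z(H)$, so it suffices to show that $Z(H)$ is a $p'$-group. For $H=L$ this is Remark~\ref{rk:compsandF*G}: $Z(L)\le Z(E(G))\le Z(F(G))\le F(G)$, and because $G$ is a counterexample to~(H-QC) we have $O_p(G)=1$, so $F(G)$ has trivial $p$-part and $Z(L)$ is a $p'$-group. For $H=LB$ with $B>1$: any $z\in Z(LB)$ can be written $z=\ell b$ with $\ell\in L$, $b\in B$ (possible since $L\normal LB$), and $z$ centralizes $L$; this forces $b$ to induce on $L$ the inner automorphism $x\mapsto \ell^{-1}x\ell$, so $b\in B\cap\bigl(LC_G(L)\bigr)=1$, whence $z=\ell\in Z(L)$. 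Thus $Z(LB)\le Z(L)$ is again a $p'$-group, and in either case $H\cap K\le Z(H)$ is a $p'$-group, as needed.

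It then remains to dispose of the remaining structural clauses of Hypothesis~\ref{hyp:HKcentralprod} recording how $H$ and $K$ relate to the component $L$ — e.g.\ that $L$ is a component of $H$ (immediate when $H=L$; when $H=LB$ it is normal and quasisimple in $LB$, hence a component) and that $K\le C_G(H)$ (true since $H\le LB$ and $K=C_G(LB)$); these follow directly from the definitions of $H$ and $K$, from $L$ being a component of $G$, and from $O_p(G)=1$, so each costs only a line. I do not anticipate a genuine obstacle here: the lemma is formal bookkeeping, and the only step deserving any care is the identification of $Z(H)$ as a $p'$-group, where the outer-automorphism property of $B$ and the hypothesis $O_p(G)=1$ have to be combined as above.
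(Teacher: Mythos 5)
Your proof is correct and follows essentially the same route as the paper: $[H,K]=1$ is immediate from $H\le LB$ and $K=C_G(LB)$, and $H\cap K\le Z(H)\le Z(L)$ is a $p'$-group because $O_p(G)=1$ in a counterexample forces $O_p(L)=1$ (your spelled-out argument that $Z(LB)\le Z(L)$ via faithfulness of the $p$-outer $B$ matches the paper's ``since $B$ is faithful''). Note only that Hypothesis~\ref{hyp:HKcentralprod} consists solely of the clauses $[H,K]=1$ and $H\cap K$ a $p'$-group, so the additional structural clauses you anticipate verifying at the end do not actually appear there.
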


\begin{hypothesis}[$p'$-central product]
\label{hyp:HKcentralprod}
We have~$H,K \leq G$, with~$[H,K] = 1$ and~$H \cap K$ a~$p'$-group.
In particular, we have a~$p'$-central product~$HK = H * K$.
\donerk
\end{hypothesis}

\begin{proof}[Proof of Lemma~\ref{lm:HypLBimpliesHypHKcentralprod}]
Notice~$[H,K] = 1$ from the hypothesis~$K = C_G(LB)$ of the Lemma.

We work first with the ``classical'' choice of~$H = LB$, as in~\cite{AS93}; we have:

\centerline{
   $H \cap K = LB \cap C_G(LB) = Z(LB) \leq Z(L)$, since~$B$ is faithful (including when~$B=1$);
            }

\noindent
and this intersection is a~$p'$-group since~$O_p(L) \leq O_p(G) = 1$ in our counterexample to~(H-QC).

The case of~$H=L$ (even when~$B>1$) is easier,
since then~$H \cap K = L \cap C_G(LB) \leq Z(L)$ which is a $p'$-group as above.
We will use this new case, in our later extensions of the classical case.
\end{proof}

\noindent
In fact in the proof of~\cite[Prop~1.7]{AS93}, we also have~$O_{p'}(G) = 1$---so as in Lemma~\ref{lemmaOpandp}(1),
in this case the product~$HK = H \times K$ is even direct.

\bigskip

Since we will be using such central products within the Cartesian-product context of the pre-join below,
we will frequently make use of the property:
\begin{equation}
\label{eq:HKprojs}
  \text{Under Hypothesis~\ref{hyp:HKcentralprod}, for $A \in \Ap(HK)$, we have $A \leq p_H(A) \times p_K(A)$,}
\end{equation}
where~$p_H$ and~$p_K$ denote the respective projections.

\bigskip

This central-product situation of~$H * K$ leads us to analyze some features of the usual join:

\begin{remark}[Homology propagation using the classical join of~$\Ap$-posets]
\label{rk:homopropviajoinAP}
We continue to study certain aspects of the proof of~\cite[Prop~1.7]{AS93}---the elimination of $\QD_p$-components;
under Hypothesis~\ref{hyp:LB} (and hence~\ref{hyp:HKcentralprod}, in view of Lemma~\ref{lm:HypLBimpliesHypHKcentralprod}).
Recall in that result we assume also that~$H = LB$ has~$\QD_p$. 
We follow the interaction-viewpoint of Remark~\ref{rk:overviewhomolpropag}:

Here~``Step~iii'' at~\cite[p~489]{AS93} shows that we may choose an~$A \in \Ap(LB)$ with~$LA = LB = H$, 
so that~$A$ exhibits~$\QD_p$ for~$H$.
And then with~$\alpha$ and~$a$ as in the Quillen-case in Remark~\ref{rk:overviewhomolpropag}, 
as before we get condition~(ii) of that Remark:

\quad \quad (ii$^{\prime}$) \quad Again~$\Lk_{\Ap(G)}(a) \subseteq \Ap(G)_{>A}$ (that is, $a$ is a full chain in~$\Ap(G)$).

\noindent
However, we can no longer expect to get condition~(i) of Remark~\ref{rk:overviewhomolpropag}, 
that~$\Ap(G)_{>A} = \emptyset$:
for example, consider larger products of~$A$ with any~$C \in \Ap(K)$, where~$K = C_G(LB)$. 
Instead, the argument at~\cite[p~490]{AS93} (referring back to~\cite[p~488]{AS93})
obtains a suitable replacement for~(i):

  \quad \quad (i$^{\prime}$) \quad $ \Ap(G)_{>A}\ \subseteq\ A \times K . $

\noindent
We mention that in the replacement-poset language of earlier Definition ~\ref{definitionXBHposet}, 
with~$HK$ in the role of~``$H$'', this amounts to showing that:%
\footnote{
  Notice the similarity of this condition, with the displayed condition before Definition~\ref{defInflated}(i).
  This starts to link our earlier equivalence methods with propagation-conditions;
  the theme continues, e.g.~at~(i$^{\prime\prime}$) below.
          }

\centerline{
  $\Ap(G)_{> A} \cap \F(HK) = \emptyset$, so that $\Ap(G)_{>A} \subseteq \Ap(H * K) \simeq \Ap(H) * \Ap(K)$ .
            }

\noindent
And this suffices:  
For the original propagation-lemma~\cite[0.27]{AS93} (which we have quoted as Lemma~\ref{lemmaHomologyPropagationAS})
assumes~(i$^{\prime}$) as its first hypothesis.
That lemma shows roughly that~$A * K$-terms in~(i$^{\prime}$) do not obstruct propagation:
Namely if a shuffle product%
\footnote{
  The classical shuffle product (see~\cite[Defn~0.21]{AS93})
  implements product-homology for the join as in Remark~\ref{rk:prodhomolforjoins};
  which amounts to recovering the K\"{u}nneth formula for the join of spaces---cf.~\cite{eilenbergzilber} and~\cite{milnor}.
  %This was first observed by Aschbacher-Smith in \cite{AS93}.
          }
$\alpha \times \beta$ (with~$\beta$ nonzero in~$\tilde{H}_* \bigl( \Ap(K) \bigr)$\thinspace)
is a boundary for~$\Ap(G)$, then the ``global'' boundary-calculation for~$\alpha \times \beta$ from~$\Ap(G)$
reduces via the poset-join $\Ap(H) * \Ap(K)$ to a parallel calculation from~$\Ap(K)$---showing that~$\beta$ a boundary,
contrary to its nonzero choice.
This contradiction establishes~(H-QC).

\bigskip

We mention that Lemma~\ref{lemmaHomologyPropagationKP20} (the quoted result~\cite[Lm~3.14]{KP20})
generalizes the arguments above,
in several ways relevant to our exposition so far:
First, the propagation from~$\Ap(H) * \Ap(K)$ is not necessarily to~$\Ap(G)$, 
but instead to a suitable subposet~$X$---which 
must then be shown to be homotopy equivalent to~$\Ap(G)$ (as in the~$X$-viewpoint in Remark~\ref{rk:contextequivandpropag}).
Further, the sufficient condition for propagation now takes the related form:

  \quad \quad (i$^{\prime\prime}$) \quad  $ X_{>A}\ \subseteq\ \N_G(K) , $

\noindent
in the language of Definition~\ref{defInflated}.
Finally, the assumption of~$\QD_p$ for~$H$ is weakened
essentially to its consequence of~$a$ being a full chain with~$A = \max(a)$;
that is, we have the relevant analogue of Remark~\ref{rk:overviewhomolpropag}(ii):

  \quad \quad (ii$^{\prime\prime}$) \quad $\Lk_X(a) \subseteq X_{>A}$, where~$A = \max(a)$ ($a$ is a full chain in~$X$). 

\noindent
The proof of propagation in this situation then proceeds
via  a similar reduction of the boundary calculation, via the shuffle product for the link, to~$\Ap(K)$.
\donerk
\end{remark}

\subsection*{The pre-join and some basic properties using the Cartesian product}

With the above classical context in place, we now turn to the extended viewpoint of the present paper.
For applications in the next section (cf.~Hypothesis~\ref{hyp:LBforW} there), 
we will start with the basic situation of Hypothesis~\ref{hyp:LB} for a component~$L$,
along with a nontrivial~$p$-outer~$B \in \Outposet_G(L)$---but in Lemma~\ref{lm:HypLBimpliesHypHKcentralprod},
we will now usually make the non-classical choice of~$H := L$ and~$K := C_G(LB)$.
We emphasize that by that Lemma,
{\em we will still get the central-product Hypothesis~\ref{hyp:HKcentralprod} for this choice of~$H,K$\/}---as needed
for the hypothesis of the results given later in this section. 

\bigskip

In fact our main new propagation-result Theorem~\ref{theoremNewHomologyPropagation} will use the pre-join,
in generalizing the classical context:
% replacing old~\ref{theoremHomologyPropagation} by new~\ref{theoremNewHomologyPropagation}
We will replace~$\Ap(G)$ with non-Quillen equivalent posets 
such as~$W_G^{\BoucPoset}(H,K)$ in Theorem~\ref{theoremChangeABSPosets}
(a further development beyond~$X_G(HK)$ in earlier Definition~\ref{definitionXBHposet}).
This will give us more flexibility for propagating nonzero homology,
in the sense that we can correspondingly relax
certain hypotheses that had been required in the classical Lemma~\ref{lemmaHomologyPropagationAS} (i.e.~\cite[0.27]{AS93}).
We can also pursue further analogues
of the interaction of~(i$^{\prime\prime}$) and~(ii$^{\prime\prime}$) in Remark~\ref{rk:homopropviajoinAP}:
using a more natural visual format for simplices and their links---in which
the Cartesian-product form of the pre-join will be more convenient for our calculations than the usual poset-join.
These extensions to propagation will allow us to establish new elimination results, such as Theorem~\ref{theoremLiep}---in
whose proof we apply Theorem~\ref{theoremNewHomologyPropagation},
% replacing old~\ref{theoremHomologyPropagation} by new~\ref{theoremNewHomologyPropagation}
replacing the Quillen-poset for a~$p$-extension of a Lie-type group (in the same characteristic~$p$)
with the homotopy equivalent non-Quillen poset~$W := W_G^{\BoucPoset}(H,K)$ of typically-smaller topological dimension.
Indeed the climax of the argument occurs in the proof of Lemma~\ref{lemmaCharPandFG-typeNC},
%Lemma~\ref{lemmaCharPAndNCFields}, 
where~$a$ is shown to be a full chain in~$W$---i.e.~the analogue of Remark~\ref{rk:homopropviajoinAP}(ii$^{\prime\prime}$).

%In particular, an  ``upward-closed'' hypothesis in Theorem~\ref{theoremHomologyPropagation}
%is used in place of the hypothesis on~$\Ap(G)_{>A}$ in~\cite[0.27]{AS93}, indicated above in Remark~\ref{rk:homopropviajoinAP}:
%since the proof can focus more directly on the technical requirements for not being a boundary.
%(In effect, any candidates for an offending~$B>A$ are shown to ``not matter''
%and hence are removed during the proof of the homotopy equivalence of~$X_G(H)$ with~$\Ap(G)$;
%this removal then provides the upward-closed condition which can now be used for the pre-join hypothesis.)

\bigskip

\noindent
We now begin the main work of the section, primarily adapting results to the pre-join viewpoint:

Inspired by the constructions of~\cite{Qui78} and \cite{AS93} using the poset-join,
we generalize the context by further defining the {\em pre-join\/}
of two general posets $X,Y$---which are of course inspired by the classical $\Ap(H),\Ap(K)$; 
but for us will not always be~$\Ap$-posets. 
The pre-join turns out to be homotopy equivalent to the classical join of the posets;
but the pre-join will be more convenient for our later purposes.

\begin{definition}[The pre-join of posets]
\label{defPreJoin}
Let~$X$ and~$Y$ be finite posets.
Denote by~$C^-X$ the poset obtained from~$X$ by adjoining a minimum element~$1$.
The \textit{pre-join} of~$X$ and~$Y$ is the subposet:
  \[X \ujoin Y := C^-X \times C^-Y - \{ (1,1) \}   \]
of the Cartesian-product poset (cf.~Definition~\ref{defn:joinCartprodofposets}).
Regard~$X$ and~$Y$ as subposets of~$X \ujoin Y$ via~$x \mapsto (x,1)$ and $y \mapsto (1,y)$ respectively.
\donerk
\end{definition}

\noindent
Note in particular that in the format for poset-chains in the pre-join,
any terms from~$X$ itself will appear to the {\em left\/} of ``mixed'' terms---that is, of $(x,y)$ with~$y \neq 1$.
(This continues the theme of left-focusing that we had mentioned in earlier Remark~\ref{rk:XGHlowerlinkformbdry}.)

\bigskip

%In~\cite{Qui78}, it is shown that the pre-join~$X \ujoin Y$
%is homotopy equivalent to the classical join~$X*Y$ of finite posets~$X$ and~$Y$ (cf.~Definition~\ref{defn:joinCartprodofposets}).
%Moreover, this equivalence is~$G$-equivariant if both~$X$ and~$Y$ are~$G$-posets.
The pre-join of posets~$X \ujoin Y$ was used by Quillen to show that the join of simplicial complexes (or posets)
is homotopy equivalent to the classical join of topological spaces of their geometric realizations (see~\cite[p 104]{Qui78}).
We record the~$G$-homotopy equivalence as:

\begin{proposition}
\label{prejoinEquivalence}
If~$X$, $Y$ are~$G$-posets, then we have a~$G$-homotopy equivalence~$\Gamma: X \ujoin Y \to X\join Y$ defined by:
  \[ \Gamma(x,y) :=
       \begin{cases}
         x \in X & \text{ when } y = 1,\\
         y \in Y & \text{ when } y \neq 1.
       \end{cases}
  \]
\end{proposition}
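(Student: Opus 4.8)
The plan is to verify that $\Gamma$ is an order-preserving map, and then apply the Quillen fiber-Theorem~\ref{variantQuillenFiber} to conclude it is a $G$-homotopy equivalence. First I would check that $\Gamma$ is a well-defined poset map from $X \ujoin Y = C^-X \times C^-Y - \{(1,1)\}$ to $X \join Y$. Given $(x,y) \le (x',y')$ in the pre-join, I split into cases according to whether the second coordinates are trivial: if $y = y' = 1$ then $\Gamma$ sends the relation to $x \le x'$ in $X \subseteq X \join Y$; if $y,y' \neq 1$ then we get $y \le y'$ in $Y \subseteq X \join Y$; and if $y = 1$ but $y' \neq 1$ then $\Gamma(x,y) = x \in X$ and $\Gamma(x',y') = y' \in Y$, and by the definition of the join $x < y'$, so the relation is preserved. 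This also makes clear $\Gamma$ is $G$-equivariant, since the $G$-action on the Cartesian product is coordinatewise and fixes the adjoined minima, matching the action on $X \join Y$.

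Next I would compute the fibers needed for Theorem~\ref{variantQuillenFiber}, using the version with lower links: for each $z \in X \join Y$ I must show $\Gamma^{-1}\bigl((X\join Y)_{\le z}\bigr) \join (X \join Y)_{>z}$ is contractible (and $G_z$-contractible for equivariance). There are two cases. If $z = x_0 \in X$, then $(X \join Y)_{\le x_0} = X_{\le x_0}$, so $\Gamma^{-1}\bigl(X_{\le x_0}\bigr) = X_{\le x_0} \times \{1\}$, which is identified with $X_{\le x_0} \subseteq X$; this has the maximum $x_0$, hence is contractible, and the join with anything is contractible. If $z = y_0 \in Y$, then $(X \join Y)_{\le y_0} = X \join Y_{\le y_0} = X \ujoin Y_{\le y_0}$ at the level of underlying sets, and one checks $\Gamma^{-1}\bigl(X \join Y_{\le y_0}\bigr) = \{(x,y) \in X \ujoin Y \tq y \le y_0 \text{ or } y = 1\} = C^-X \times C^-(Y_{\le y_0}) - \{(1,1)\} = X \ujoin Y_{\le y_0}$, which contains $(1,y_0)$ as a maximum and is therefore contractible. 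In both cases the fiber is contractible regardless of the link factor, and the $G_z$-action fixes the relevant maximal element, giving $G_z$-contractibility.

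The step I expect to require the most care is the bookkeeping in the case $z = y_0 \in Y$: correctly identifying $\Gamma^{-1}$ of the down-set, checking it equals $X \ujoin Y_{\le y_0}$ as a subposet of the Cartesian product (not merely as a set), and confirming $(1,y_0)$ is indeed its maximum—in particular that every element $(x,y)$ with $y = 1$ satisfies $(x,1) \le (1,y_0)$, which holds precisely because $1 \le x$ fails in $C^-X$... so one must instead argue via contractibility of $C^-X \times C^-(Y_{\le y_0})$ minus a point, or better, observe directly that the map $(x,y) \mapsto (1, y_0)$ through the intermediate $(1,y)$ (valid since $(x,y) \ge (1,y)$ and $(1,y) \le (1,y_0)$) gives a conical contractibility via Lemma~\ref{lm:increndo}(1). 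Once that contractibility is pinned down, the remaining verifications are routine, and Theorem~\ref{variantQuillenFiber} delivers the $G$-homotopy equivalence. Finally, I would remark that $\Gamma$ restricts to the identity on the copies of $X$ and $Y$ sitting inside both posets, which is immediate from the definition.
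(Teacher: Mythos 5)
Your overall strategy---verify $\Gamma$ is an equivariant poset map, then apply the Quillen fiber-Theorem~\ref{variantQuillenFiber}---is sound, and matches the paper's (extremely terse) proof in spirit; the paper says ``upper links'' whereas you use lower links, but both versions of the fiber theorem work here, and the lower-link computation is in fact a bit cleaner. Your treatment of the case $z=x_0\in X$ is correct: the preimage is $X_{\leq x_0}\times\{1\}$ with maximum $(x_0,1)$.

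The genuine gap is in the case $z=y_0\in Y$, precisely at the step you flagged. Your proposed zigzag
\[
   (x,y)\ \geq\ (1,y)\ \leq\ (1,y_0)
\]
is not a valid sequence of endomorphisms of the preimage $X\ujoin Y_{\leq y_0}=C^-X\times C^-(Y_{\leq y_0})-\{(1,1)\}$: when $y=1$ (i.e.\ for elements of the form $(x,1)$ with $x\in X$), the intermediate value $(1,y)=(1,1)$ falls outside the poset, so the map $j_1\colon(x,y)\mapsto(1,y)$ fails the endomorphism requirement of Lemma~\ref{lm:increndo}(1). Your alternative suggestion---``contractibility of $C^-X\times C^-(Y_{\leq y_0})$ minus a point''---also doesn't close the gap as stated, since deleting the \emph{minimum} from a conically contractible poset need not preserve contractibility. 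The correct fix is to run the zigzag in the other coordinate: take
\[
   (x,y)\ \leq\ (x,y_0)\ \geq\ (1,y_0),
\]
using that $Y_{\leq y_0}$ has maximum $y_0\neq 1$. Here the intermediate $(x,y_0)$ always has nontrivial second coordinate, so it never equals $(1,1)$; the first map satisfies $j_1\geq\Id$, the second $j_2\leq j_1$, both are order-preserving and land in the preimage, and $(1,y_0)$ is fixed by $G_{y_0}$, giving $G_{y_0}$-contractibility. With this one-line repair your argument is complete and correct.
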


\begin{proof}
This follows by applying (to the preimages of upper links)
the equivariant version of the Quillen fiber-Theorem~\ref{variantQuillenFiber}.
\end{proof}

Next we examine the pre-join in the context of subgroup posets of products:
When~$X$ is a poset of subgroups, regard the minimum element~$1 \in C^ -X$ as the trivial subgroup.
(Of course~$1$ lies outside our usual posets of nontrivial~$p$-subgroups, such as~$\Ap$-posets.)

Then in Quillen's application of the usual poset-join to the~$\Ap$-poset of a product,
we can now also add the pre-join:

\begin{corollary}
\label{joinApPosets}
Assume the~$p'$-central product Hypothesis~\ref{hyp:HKcentralprod} for~$H,K$.

\noindent
Then we have~$N_G(H,K)$-homotopy equivalences:
  \[ \Ap(HK)\  \simeq\  \Ap(H) \ujoin \Ap(K)\  \simeq\ \Ap(H) * \Ap(K) . \]
Here~$\Ap(HK)$ poset-strongly retracts to its subposet of members which are the product of their projections;
and we may in turn identify that subposet with~$\Ap(H) \ujoin \Ap(K)$. 

In particular for a direct product~$G = HK = H \times K$, normality gives~$G$-homotopy equivalences.
\end{corollary}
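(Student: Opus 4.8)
The plan is to establish the two homotopy equivalences separately, the second being essentially immediate.

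For the second equivalence $\Ap(H)\ujoin\Ap(K) \simeq \Ap(H)\join\Ap(K)$, I would simply apply Proposition~\ref{prejoinEquivalence} with $X := \Ap(H)$ and $Y := \Ap(K)$: the map $\Gamma$ there is a homotopy equivalence, and it is $N_G(H,K)$-equivariant because every $g \in N_G(H,K) = N_G(H)\cap N_G(K)$ acts on each factor and $\Gamma$ respects coordinates. So all the remaining content is in the first equivalence $\Ap(HK)\simeq\Ap(H)\ujoin\Ap(K)$ and the assertion that $\Ap(HK)$ poset-strongly retracts onto its ``product-of-projections'' subposet.

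The first step there is to set up the projections carefully. Using $[H,K]=1$ (Hypothesis~\ref{hyp:HKcentralprod}) there is a surjection $\pi : H\times K \to HK$ with kernel $Z := \ker\pi \cong H\cap K$, a central $p'$-subgroup of $H\times K$. For $A \in \Ap(HK)$ the preimage $\pi^{-1}(A)$ has order $|Z|\cdot|A|$ with $|Z|$ prime to $p$; since $Z$ is central, $\pi^{-1}(A)$ has a \emph{unique} Sylow $p$-subgroup $A_0$, and $\pi|_{A_0}$ is an isomorphism onto $A$ (as $A_0\cap Z = 1$ and $|A_0|=|A|$). I would then define $p_H(A), p_K(A)$ as the images of $A_0$ under the two coordinate projections of $H\times K$, so that $A = \pi(A_0) \le \pi\bigl(p_H(A)\times p_K(A)\bigr) = p_H(A)\,p_K(A)$ inside $HK$, realizing \eqref{eq:HKprojs}; and $p_H(A)\,p_K(A)$ is again elementary abelian since $p_H(A)\le H$ and $p_K(A)\le K$ commute and are each elementary abelian. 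A short check (for $P\le H$, $Q\le K$ both $p$-groups, $P\cap Q \le H\cap K$ is a $p$-group in a $p'$-group, hence trivial, so $\pi|_{P\times Q}$ is an isomorphism onto $PQ$) shows $p_H(PQ)=P$ and $p_K(PQ)=Q$; hence the set-map $\rho : \Ap(HK)\to\Ap(HK)$, $\rho(A):=p_H(A)\,p_K(A)$, has image exactly $\mathcal{D} := \{\,A\in\Ap(HK) : A = p_H(A)\,p_K(A)\,\}$ and is the identity on $\mathcal{D}$.

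Next I would verify $\rho$ is a monotone poset endomorphism: $\rho(A)\ge A$ was shown, and $A\le A'$ forces $A_0\le A'_0$ (by uniqueness of Sylow $p$-subgroups), hence $p_H(A)\le p_H(A')$, $p_K(A)\le p_K(A')$, hence $\rho(A)\le\rho(A')$. By Lemma~\ref{lm:increndo}(2), $\rho$ induces a poset-strong deformation retraction $\Ap(HK)\simeq\mathcal{D}$, and it is $N_G(H,K)$-equivariant because conjugation by $N_G(H,K)$ preserves $Z$ and commutes with $\pi$ and with the coordinate projections. Finally $\mathcal{D}\cong\Ap(H)\ujoin\Ap(K)$ via $A\mapsto\bigl(p_H(A),p_K(A)\bigr)$: this lands in $C^-\Ap(H)\times C^-\Ap(K)-\{(1,1)\}$ (the pair is never $(1,1)$ since $A\ne 1$), has two-sided inverse $(P,Q)\mapsto PQ$, and is order-preserving in both directions since for $A,A'\in\mathcal{D}$ one has $A\le A'$ iff $p_H(A)\le p_H(A')$ and $p_K(A)\le p_K(A')$ (using $A=p_H(A)p_K(A)$); equivariance is clear. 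Composing the equivalences, and noting that $N_G(H,K)=G$ when $G=H\times K$ (both factors normal), gives all the asserted $N_G(H,K)$- and $G$-homotopy equivalences. The only delicate point I anticipate is the bookkeeping ensuring $p_H,p_K$ are genuinely well defined on $\Ap(HK)$ in the central-product (not merely direct-product) setting — that the $p'$-kernel $Z$ splits off canonically so that $A_0$, and hence $p_H(A)$ and $p_K(A)$, depend only on $A$ and transform correctly under $N_G(H,K)$; everything past that is a routine application of Lemma~\ref{lm:increndo}(2) and Proposition~\ref{prejoinEquivalence}.
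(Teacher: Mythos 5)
Your proof is correct and takes essentially the same route as the paper's: both define the coordinate map $a:A\mapsto(p_H(A),p_K(A))$ and the multiplication map $b:(C,D)\mapsto CD$, observe $ba \geq \Id_{\Ap(HK)}$ and $ab = \Id$, invoke Lemma~\ref{lm:increndo}(2) for the poset-strong retraction, and cite Proposition~\ref{prejoinEquivalence} (Quillen) for $\ujoin\simeq\join$. The only cosmetic difference is that you set up $p_H,p_K$ by lifting $A$ to the unique Sylow $p$-subgroup of its preimage in $H\times K$, whereas the paper works in the quotient $HK/K$ and uses the isomorphism $\Ap(H)\cong\Ap(H/H\cap K)$ afforded by the central $p'$-kernel — two equivalent bookkeepings of the same construction.
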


\begin{proof}
The first homotopy equivalence is given by the co-ordinate map~$a: A \mapsto \bigl( p_H(A) , p_K(A) \bigr)$,
where the projection~$p_H : \Ap(HK) \to C^-\Ap(H)$ is the map obtained from the quotient by~$K$.
Observe, since~$H \cap K$ is a central~$p'$-subgroup of~$H$, that~$\Ap(H) \cong \Ap(H / H \cap K)$.
Define~$p_K$ analogously.
The map~$b$ in the reverse direction is obtained by multiplying the two coordinates;
observe that~$ba(A) = p_H(A) p_K(A) \geq A$ using~(\ref{eq:HKprojs}), 
and~$ab(C,D) = \bigl( p_H(CD) , p_K(CD) \bigr) = (C,D)$, so that the maps are homotopy inverses.
Note using Lemma~\ref{lm:increndo}(2) that from~$ba \geq \Id_{\Ap(HK)}$ above,
we get the retraction conclusion.

The second equivalence is that of Quillen~\cite[Prop~2.6]{Qui78}, in earlier Proposition~\ref{prejoinEquivalence}.
The assertions about the case~$G = H \times K$ are then straightforward.
\end{proof}

We mention that as our development proceeds,
we will be seeing a number of further variations on the projection/product equivalence above.

\subsection*{Some replacement-posets based on the pre-join}

We just gave in Corollary~\ref{joinApPosets}
the relationship of~$\Ap(HK)$, for the central product~$HK$ of Hypothesis~\ref{hyp:HKcentralprod},
with the pre-join~$\Ap(H) \ujoin \Ap(K)$.

We will wish to apply this to the context of the replacement-poset~$X_G(HK)$, as defined in the previous section;
note that from Theorem~\ref{theoremInductiveHomotopyType}, with~$HK$ in the role of~``$H$'', we get:

\centerline{
  $X_G(HK) = \Ap(HK) \cup \F_G(HK)$ is~$N_G(H,K)$-homotopy equivalent to~$\Ap(G)$;
            }   

\noindent
where we will write~$\prec_X$ for the ordering in~$X_G(HK)$.

Next we indicate how to substitute the corresponding pre-join for the term~$\Ap(HK)$ above;
so as to get a replacement-subposet~$W_G(H,K)$ of~$X_G(HK)$, adapted now to our pre-join viewpoint: 

\begin{proposition}
\label{propApGsimeqW0}
Assume the central-product Hypothesis~\ref{hyp:HKcentralprod} for~$H,K$. 
Regard~$\Ap(H) \ujoin \Ap(K)$ as the direct-product subposet of $\Ap(HK)$,
by the identification in Corollary~\ref{joinApPosets}.

Then we may similarly regard as a subposet of~$X_G(HK)$ the following poset:
  \[ W_G(H,K) := \bigl( \Ap(H) \ujoin \Ap(K) \bigr)\ \cup\ \F_G(HK)\ ; \]
whose order-relation~$\prec_W$ is the obvious corresponding re-interpretation of~$\prec_X$:

  (1) Inside~$\Ap(H) \ujoin \Ap(K)$ and~$\F_G(HK)$, $\prec_W$ agrees with $\prec_X$ (i.e.~with inclusion~$<$).

  (2) For~$F \in \F_G(HK)$ and~$(B,C) \in \Ap(H) \ujoin \Ap(K)$, we set~$F \prec_W (B,C)$ iff~$C_{BC}(F) > 1$.%
\footnote{
  This definition begins to illustrate how we exploit the Cartesian-product format of the pre-join;
  we'll be seeing more such definitions as we proceed.
          }

\noindent
Using the retraction in Corollary~\ref{joinApPosets}, we see that $W_G(H,K)$ is $N_G(H,K)$-homotopy equivalent to $\Ap(G)$. 
\end{proposition}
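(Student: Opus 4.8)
The plan is to realize $W_G(H,K)$ as an induced subposet of $X_G(HK)$ onto which the projection--product map of Corollary~\ref{joinApPosets} deformation-retracts, and then to compose with the equivalence $\Ap(G) \simeq X_G(HK)$ supplied by Theorem~\ref{theoremInductiveHomotopyType} (whose hypothesis~(i) is automatic here, since we take the role of ``$H$'' to be $HK \leq G$ and the role of ``$\B$'' to be $\Ap(G)$). First I would check that, under the identification of $\Ap(H) \ujoin \Ap(K)$ with the subposet of product-elements $p_H(A)p_K(A)$ inside $\Ap(HK)$ afforded by Corollary~\ref{joinApPosets}, the order $\prec_W$ described in the statement is exactly the restriction of $\prec_X$: inside $\Ap(H)\ujoin\Ap(K)$ and inside $\F_G(HK)$ both orders coincide with group-inclusion $<$, while for a cross-term $F \in \F_G(HK)$ and a product-element $BC$ (with $B = p_H(BC)$, $C = p_K(BC)$), the defining condition $F \prec_X BC$ of Definition~\ref{definitionXBHposet}(2) is precisely $C_{BC}(F) > 1$, which is condition~(2) for $\prec_W$. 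Hence transitivity of $\prec_W$ is inherited from that of $\prec_X$, and $W_G(H,K)$ is a genuine induced subposet of $X_G(HK)$; the only genuinely delicate bookkeeping in the whole argument is this matching of orders, i.e.\ confirming that passing from product-elements of $\Ap(HK)$ to the Cartesian-product presentation $\Ap(H) \ujoin \Ap(K)$ neither creates nor destroys comparabilities.

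Next I would extend the retraction $\rho \colon \Ap(HK) \to \Ap(H)\ujoin\Ap(K)$, $\rho(A) := p_H(A)p_K(A)$, of Corollary~\ref{joinApPosets} to a map $\hat\rho \colon X_G(HK) \to W_G(H,K)$ by declaring $\hat\rho$ to be the identity on $\F_G(HK)$ --- exactly as $\iret$ was extended to $\hat\iret$ in the proof of Proposition~\ref{propInductiveIntersectionPoset}. I must verify that $\hat\rho$ is order-preserving from $\prec_X$ to $\prec_W$. Inside each of the two pieces of the union this is clear, since $\rho$ is a poset endomorphism of $\Ap(HK)$ (being the composite $ba$ from Corollary~\ref{joinApPosets}) and the identity is a poset map on $\F_G(HK)$; so only cross-terms $F \prec_X A$ with $F \in \F_G(HK)$, $A \in \Ap(HK)$, $C_A(F) > 1$, need attention. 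Here $A \leq p_H(A)p_K(A) = \rho(A)$ by~\eqref{eq:HKprojs}, whence $1 < C_A(F) \leq C_{\rho(A)}(F)$, and therefore $F = \hat\rho(F) \prec_W \rho(A) = \hat\rho(A)$, as required. Moreover $\hat\rho \geq \Id_{X_G(HK)}$, since $\rho(A) \geq A$ by~\eqref{eq:HKprojs} for $A \in \Ap(HK)$ and $\hat\rho$ fixes $\F_G(HK)$ pointwise; and $\hat\rho$ restricts to the identity on its image $W_G(H,K)$, because a product-element $BC$ satisfies $\rho(BC) = p_H(BC)p_K(BC) = BC$. By Lemma~\ref{lm:increndo}(2), $\hat\rho$ is thus a poset-strong deformation retraction of $X_G(HK)$ onto $W_G(H,K)$.

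Finally I would record equivariance and conclude. The maps $p_H, p_K$ are $N_G(H,K)$-equivariant and $\F_G(HK)$ is $N_G(H,K)$-invariant, so $\hat\rho$ is $N_G(H,K)$-equivariant, and the homotopy furnished by Lemma~\ref{lm:increndo}(2) (through the comparison $\hat\rho \geq \Id$) is $N_G(H,K)$-equivariant; hence $X_G(HK) \simeq W_G(H,K)$ as $N_G(H,K)$-spaces. Since $N_G(H,K) = N_G(H) \cap N_G(K) \leq N_G(HK)$, the $N_G(HK)$-homotopy equivalence $\alpha_{G,HK} \colon \Ap(G) \to X_G(HK)$ of Theorem~\ref{theoremInductiveHomotopyType} (case $\B = \Ap(G)$) restricts to an $N_G(H,K)$-homotopy equivalence; composing it with $\hat\rho$ yields the asserted $N_G(H,K)$-homotopy equivalence $\Ap(G) \simeq W_G(H,K)$. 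I expect the main obstacle, such as it is, to be the first step --- the precise identification of $\prec_W$ with the restriction of $\prec_X$ --- rather than anything topological, since once that is settled the remainder is a routine application of the endomorphism-Lemma~\ref{lm:increndo}(2) together with results already quoted.
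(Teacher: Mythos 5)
Your proposal is correct and follows essentially the same route as the paper's proof, which (tersely) says that the projection/product retraction of Corollary~\ref{joinApPosets} extends by the identity on $\F_G(HK)$ to a poset-strong retraction from $X_G(HK)$ onto $W_G(H,K)$, and then composes with the equivalence of Theorem~\ref{theoremInductiveHomotopyType}. You have merely fleshed out the order-preservation and equivariance checks that the paper leaves implicit.
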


\begin{proof}
Here we are essentially observing that the poset retraction $\Ap(HK)$ to~$\Ap(H) \ujoin \Ap(K)$
in Corollary~\ref{joinApPosets} extends, via the identity on~$\F_G(HK)$, 
to a poset-strong retraction from~$X_G(HK)$ to~$W_G(H,K)$. 
The retraction again proceeds via the projection/product equivalence. 
\end{proof}

We next further modify~$W_G(H,K)$ from Proposition~\ref{propApGsimeqW0}, 
in the spirit of our earlier modification Proposition~\ref{propInductiveIntersectionPoset}
of Theorem~\ref{theoremInductiveHomotopyType}:
this time replacing the term~$\Ap(H)$ in the pre-join by~$\Sp(H)$ or~$\Bp(H)$.
Note that the resulting equivalent replacement-posets are typically not~$\Ap$-posets:
since they include pre-joins such as~$\Sp(H) \ujoin \Ap(K)$,
which do not have a natural~$HK$-interpretation like Corollary~\ref{joinApPosets}---except possibly
via subposets of~$\Sp(H) \ujoin \Sp(K) \simeq \Sp(HK)$.
In fact the new poset~$W_G^{\BoucPoset}(H,K)$ will be the~``$W$'' used
to fit the hypotheses of our new propagation-result Theorem~\ref{theoremNewHomologyPropagation},
in the proof of later Theorem~\ref{theoremLiep}---more specifically
in the branch of that Theorem given by Lemma~\ref{lemmaCharPandFG-typeNC}.
(We had also mentioned, in earlier Example~\ref{exampleA5S5}, 
that the construction there via Proposition~\ref{propInductiveIntersectionPoset}
could also be realized in the language here of~$W_{\Sym_5}^{\BoucPoset}(\Alt_5,1)$.)

\begin{theorem}
\label{theoremChangeABSPosets}
Assume the~$p'$-central product Hypothesis~\ref{hyp:HKcentralprod} for $H,K$.
Consider the posets:
  \[ W_G^{\QuillenPoset}(H,K) = \bigl(\ \Ap(H) \ujoin \Ap(K)\ \bigr)\ \cup\ \F_G(HK) , \]
  \[ W_G^{\BrownPoset}(H,K)   = \bigl(\ \Sp(H) \ujoin \Ap(K)\ \bigr)\ \cup\ \F_G(HK) , \]
  \[ W_G^{\BoucPoset}(H,K)    = \bigl(\ \Bp(H) \ujoin \Ap(K)\ \bigr)\ \cup\ \F_G(HK) . \]
The order-relation~$\prec$ in each of these posets
is the obvious extension of~$\prec_W$ in Proposition~\ref{propApGsimeqW0}:

  (1) Inside~$\Sp(H) \ujoin \Ap(K)$ and~$\F_G(HK)$, as~$\prec$ we keep the ordering induced by inclusion~$<$.

  (2) For~$F \in \F_G(HK)$ and~$(B,C) \in \Sp(H) \ujoin \Ap(K)$, we set~$F \prec (B,C)$ iff~$C_{BC}(F) > 1$.

\noindent
Thus~$W_G^{\QuillenPoset}(H,K)$ and~$W_G^{\BoucPoset}(H,K)$ are subposets of~$W_G^{\BrownPoset}(H,K)$;
and from Proposition~\ref{propApGsimeqW0} we have:

\centerline{
   $W_G^{\QuillenPoset}(H,K) = W_G(H,K) \simeq \Ap(G)$ .
            }

\noindent
Then the three posets displayed above are homotopy equivalent; thus we have:
  \[ \Ap(G) \simeq W_G^{\QuillenPoset}(H,K) \simeq W_G^{\BrownPoset}(H,K) \simeq W_G^{\BoucPoset}(H,K) . \]
\end{theorem}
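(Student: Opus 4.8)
The plan is to prove the three homotopy equivalences by the Quillen fiber-Theorem~\ref{variantQuillenFiber}, applied to the two natural inclusions
\[
W_G^{\BoucPoset}(H,K) \hookrightarrow W_G^{\BrownPoset}(H,K) \hookleftarrow W_G^{\QuillenPoset}(H,K).
\]
Since $W_G^{\QuillenPoset}(H,K) = W_G(H,K) \simeq \Ap(G)$ is already recorded (via Proposition~\ref{propApGsimeqW0}), establishing these two inclusions as homotopy equivalences yields the displayed chain. The key point is that in each of the three posets, the ``$\F$-part'' $\F_G(HK)$ is common, the ordering $\prec$ agrees on it and on the cross-terms, and the only difference lies in which pre-join factor — $\Bp(H)$, $\Ap(H)$, or $\Sp(H)$ — is used on the ``$H$-side''. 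So the complementary sets consist entirely of points $(B,C)$ with $B \in \Sp(H) - \Bp(H)$ (respectively $B \in \Sp(H) - \Ap(H)$) and $C \in C^-\Ap(K)$, i.e.\ genuinely ``mixed'' or pure-$H$ pre-join points; we never remove or add any cross-terms or $\F$-terms.

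\textbf{Step 1: the inclusion $W_G^{\BoucPoset}(H,K) \hookrightarrow W_G^{\BrownPoset}(H,K)$.} For $x = (B,C) \in W_G^{\BrownPoset}(H,K) - W_G^{\BoucPoset}(H,K)$ — so $B \in \Sp(H)$ with $B \notin \Bp(H) \cup \{1\}$ (if $B = 1$ then $C \neq 1$ and $x \in \Ap(K) \subseteq \Bp(H) \ujoin \Ap(K)$, excluded) — I would compute the lower link in the smaller poset. By the left-focusing structure of the pre-join (Definition~\ref{defPreJoin}) together with property~(3) of Definition~\ref{definitionXBHposet} carried over via~$\prec$, the set $\bigl(W_G^{\BoucPoset}(H,K)\bigr)_{\prec x}$ consists of: all $F \in \F_G(HK)$ with $F \prec (B,C)$ (i.e.\ $C_{BC}(F) > 1$), together with all $(B',C') \in \Bp(H) \ujoin \Ap(K)$ with $(B',C') < (B,C)$ in the Cartesian product. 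The $F$-part and the $(B',C')$-part have no relations among each other other than $F \prec (B',C')$-type cross-terms, so by the join-decomposition (i) of Definition~\ref{defInflated}, the lower link in $W_G^{\BoucPoset}(H,K)$ of $x$ is a join of the form $\bigl(\F_G(HK)_{\prec x}\bigr) * \bigl((\Bp(H) \ujoin \Ap(K))_{<(B,C)}\bigr)$. I need to show one of the two upper/lower-link conditions of Theorem~\ref{variantQuillenFiber} holds; the cleanest route is to show that the relevant fiber is contractible because the $\Bp(H)$-factor side contracts: concretely, $(\Bp(H) \ujoin \Ap(K))_{<(B,C)}$ equals (by the Cartesian-product structure) $(C^-\Bp(H))_{\le B} \times (C^-\Ap(K))_{\le C}$ minus the bottom point, and since $B \in \Sp(H)$ with $B > 1$ but $B \notin \Bp(H)$, the subposet $\Bp(H)_{<B} = \Bp(H)_{\le B}$ already retracts onto the Bouc poset of $B$ itself, which is contractible (it has $O_p(B) = B$ as a maximum under the usual Bouc-poset identification for $p$-groups). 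Thus the lower link is a join with a contractible factor, hence contractible, and Theorem~\ref{variantQuillenFiber} applies. The same argument — with $\Bp$ replaced by $\Ap$ or $\Sp$ — gives the other two inclusions; note $\Sp(H) \hookleftarrow \Ap(H) \hookleftarrow \Bp(H)$ all induce equivalences between the $p$-subgroup posets of $H$ by Proposition~\ref{homotopyEquivalenPosets}, which is really the engine here, promoted to the pre-join-plus-$\F$ level by the fiber theorem.

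\textbf{Step 2: assembling and equivariance.} Chaining the equivalences from Step~1 with $W_G^{\QuillenPoset}(H,K) = W_G(H,K) \simeq \Ap(G)$ from Proposition~\ref{propApGsimeqW0} gives the asserted
\[
\Ap(G) \simeq W_G^{\QuillenPoset}(H,K) \simeq W_G^{\BrownPoset}(H,K) \simeq W_G^{\BoucPoset}(H,K).
\]
Since all maps involved are $N_G(H,K)$-equivariant (the pre-join factors $\Sp(H), \Ap(H), \Bp(H)$ are all $N_G(H)$-invariant, $\F_G(HK)$ is $N_G(H,K)$-invariant, and the contracting homotopies in Step~1 are built from canonical $p$-subgroup operations that commute with the $N_G(H,K)$-action), one gets the equivariant strengthening by the same stabilizer-fixed-point bookkeeping used in the proof of Proposition~\ref{propHomotEquivPosetReverseOrdering} (invoke the equivariant fiber theorem, then the Equivariant Whitehead Theorem).

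\textbf{Main obstacle.} The main subtlety I anticipate is not the homotopy-theory but the careful verification that the lower links decompose as honest \emph{joins} in these $\prec$-ordered posets, i.e.\ that a point $(B',C')$ from the pre-join factor and a point $F \in \F_G(HK)$ with $F \prec x$ are \emph{always} $\prec$-comparable when both lie below $x$, with $F \prec (B',C')$, and never the reverse. This is exactly the transitivity/$\F$-left bookkeeping already done for $\prec_X$ in Definition~\ref{definitionXBHposet} and for $\prec_W$ in Proposition~\ref{propApGsimeqW0}, but it must be re-checked once the $\Ap(H)$-factor is swapped for $\Sp(H)$ or $\Bp(H)$ — in particular one must confirm that the cross-term rule ``$F \prec (B,C)$ iff $C_{BC}(F) > 1$'' still interacts transitively with descent inside $\Sp(H) \ujoin \Ap(K)$, which follows because $C_{B'C'}(F) \ge C_{BC}(F) > 1$ whenever $(B',C') \ge (B,C)$ — and, the other direction, that if $F' < F$ in $\F_G(HK)$ then $C_{BC}(F') \ge C_{BC}(F) > 1$. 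Once that is pinned down, the fiber-theorem computation is routine.
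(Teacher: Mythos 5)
Your overall plan — treat the two natural inclusions into $W_G^{\BrownPoset}(H,K)$ by the Quillen fiber-Theorem~\ref{variantQuillenFiber} and chain with Proposition~\ref{propApGsimeqW0} — is the same framework as the paper, but the specific argument in Step~1 has a genuine gap, and it is located exactly where you flag the "main obstacle." For the inclusion $W_G^{\BoucPoset}(H,K) \hookrightarrow W_G^{\BrownPoset}(H,K)$ you pass to the \emph{lower} link of $x = (B,C)$ and claim it decomposes as the join $\bigl(\F_G(HK)_{\prec x}\bigr) * \bigl((\Bp(H) \ujoin \Ap(K))_{<(B,C)}\bigr)$. This is false: a join requires every $F \in \F_G(HK)_{\prec x}$ to be $\prec$-below every $(B',C') < (B,C)$, i.e.\ $C_{B'C'}(F) > 1$; but $B'C' \leq BC$ gives $C_{B'C'}(F) \leq C_{BC}(F)$, so the inequality runs the wrong way and $F$ may be incomparable to $(B',C')$. (The transitivity check you carry out, which uses $(B',C') \geq (B,C)$, is correct but is a different statement from the one you need; and Definition~\ref{defInflated}(i) only says $\F$-terms sit at the left of chains — it does not give a join decomposition of the whole link.) Even modulo this, your contractibility argument for the pre-join factor is also unsound: $\Bp(H)_{\leq B}$ for $B \in \Sp(H) - \Bp(H)$ is not the Bouc poset of the $p$-group $B$ and is not generally contractible — it can even be empty, e.g.~for $H = \Alt_5$, $p=2$, and $B$ a $\Cyclic_2$, since then $\Bp(H)$ consists only of Sylow $2$-subgroups.

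The paper's proof sidesteps both problems. For the $\BoucPoset$-vs-$\BrownPoset$ comparison it uses the \emph{upper} link $W_G^{\BrownPoset}(H,K)_{\succ(C,D)}$, which by the $\F$-left property of $\prec$ contains no $\F$-terms at all and equals $\Sp(H)_{>C} \ujoin \Ap(K)_{>D}$; this is contractible because $\Sp(H)_{>C}$ is contractible for non-$p$-radical $C$ (the standard property underlying $\Sp \simeq \Bp$). For the $\QuillenPoset$-vs-$\BrownPoset$ comparison the upper-link trick is unavailable (there $C$ may well be $p$-radical), so the paper does work with the lower link $W_G^{\QuillenPoset}(H,K)_{\prec(C,D)}$ — but it never asserts a join decomposition. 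Instead it shows by a \emph{nested} application of the fiber theorem that the $\F$-terms can be homotopically deleted from that lower link: for each $F$ it identifies $\bigl(\Ap(C) \ujoin \Ap(D)\bigr)_{\succ F}$ with $\Ap\bigl(C_{CD}(F)\bigr) \simeq *$ via a centralizer-adjusted projection/product pair, and separately notes $\Ap(C) \ujoin \Ap(D) \simeq *$ because $\Ap(C)$ is contractible for a nontrivial $p$-group $C$. To repair your proposal you should swap to the upper link for $\BoucPoset$-vs-$\BrownPoset$, and for $\QuillenPoset$-vs-$\BrownPoset$ replace the join claim with that nested removal-of-$\F$-terms argument.
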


\begin{proof}
Note that in our construction, we do not get~$W_G^{\BrownPoset}(H,K)$ as a natural subposet of~$X_G(HK)$---though 
we could in fact construct a suitable over-poset of~$X_G(HK)$, which uses~$\Sp$-posets rather than~$\Ap$-posets.
However, here we really just need a poset-inclusion~$i : W_G^{\QuillenPoset}(H,K) \to W_G^{\BrownPoset}(H,K)$.

So we can begin the proof by checking transitivity of our extension of the order-relation~$\prec_W$
from~$W_G^{\QuillenPoset}(H,K) = W_G(H,K)$, to~$\prec$ for~$W_G^{\BrownPoset}(H,K)$.
And here we can just follow the transitivity proof for~$\prec_X$ in Definition~\ref{definitionXBHposet}---since
that argument depends just on standard properties of group-inclusion, and not on elementary-abelianness in~$\Ap$-posets.

Now let~$i$ above and~$j : W_G^{\BoucPoset}(H,K) \to W_G^{\BrownPoset}(H,K)$ be the natural inclusion maps.
To complete the proof of the Theorem, we will show that~$i,j$ are homotopy equivalences.

\vspace{0.2cm}
\textbf{Case~$i$:} $i$ is a homotopy equivalence.
\begin{proof}
We will apply the Quillen fiber-Theorem~\ref{variantQuillenFiber} to preimages for the map~$i$:
namely we will show that for any~$x \in W_G^{\BrownPoset}(H,K) -  W_G^{\QuillenPoset}(H,K)$,
we get contractibility for the corresponding
preimage~$i^{-1} \bigl( W_G^{\BrownPoset}(H,K)_{\preceq x} \bigl) = W_G^{\QuillenPoset}(H,K)_{\prec x}$.
(Notice that this preimage is the left-hand term of the join of form~``$f^{-1}(Y_{\leq y}) * Y_{>y}$''
in our statement of Theorem~\ref{variantQuillenFiber}.)

Such an~$x$ has the form~$(C,D)$, with~$C \in \Sp(H) - \Ap(H)$ (so that~$C$ is not elementary abelian), 
and~$D \in \Ap(K) \cup \{ 1 \}$.
Then:
 \begin{align*}
%   i^{-1}\left( W_G^{\BrownPoset}(H,K)_{\preceq x} \right)
    \text{(i)} \quad \quad W_G^{\QuillenPoset}(H,K)_{\prec (C,D)} 
      & = \bigl( \Ap(H)_{\leq C} \bigr) \ujoin \bigl( \Ap(K)_{\leq D} \bigr)\ \cup\ \F_G(HK)_{\prec (C,D)} \\
      & = \bigl(\ \Ap(C) \ujoin \Ap(D)\ \bigr) \ \cup\ \{ F \in \F_G(HK) \tq C_{CD}(F) > 1 \}  .
 \end{align*}
Now even though~$C$ is not elementary abelian,
it is a standard property of the~$\Ap$-poset in the case of a~$p$-group that~$\Ap(C) \simeq \Sp(C) \simeq *$;
on the other hand, we could get~$\Ap(D) = \emptyset$, when we have~$D = 1$.
But in any case for~$D$, we get~$\Ap(C) \ujoin \Ap(D) \simeq *$.

So it suffices to show
the inclusion~$k : \Ap(C) \ujoin \Ap(D) \hookrightarrow \bigl(\ \Ap(C) \ujoin \Ap(D)\ \bigr)\ \cup\ \F_G(HK)_{\prec (C,D)}$
is a homotopy equivalence---as this gives~$W_G^{\QuillenPoset}(H,K)_{\prec (C,D)} \simeq \Ap(C) \ujoin \Ap(D) \simeq *$,
as desired.
% i^{-1} \left( W_G^{\BrownPoset}(H,K)_{\preceq x} \right)

To establish the equivalence for~$k$, we will again use the Quillen fiber-Theorem~\ref{variantQuillenFiber},
to remove the~$F \in \F_G(HK)_{\prec (C,D)}$ from the union:
by showing that~$\bigl( \Ap(C) \ujoin \Ap(D) \bigr)_{\succ F}$ is contractible.
Recall that such an~$F$ satisfies~$C_{CD}(F) > 1$;
and~$\Ap \bigl( C_{CD}(F) \bigr) \simeq \Sp \bigl( C_{CD}(F) \bigr)$ is contractible.
So it will even suffice to show for each such~$F$ that:

\centerline{
   (ii) \quad \quad $\left( \Ap(C) \ujoin \Ap(D) \right)_{\succ F}$
                    is homotopy equivalent to~$\Ap \bigl( C_{CD}(F) \bigr) \simeq *$.
            }

\noindent
To obtain~(ii):
For any~$(A,B) \in \left( \Ap(C) \ujoin \Ap(D) \right)_{\succ F}$, we get~$1 < C_{AB}(F) \leq C_{CD}(F)$.
Then our desired homotopy equivalence will be given
by a variation on the projection/product equivalence in Corollary~\ref{joinApPosets}:
Here we replace the earlier product-map $b$ with
the centralizer-adjusted variant given by~$b' : (A,B) \mapsto C_{AB}(F) \in \Ap \bigl( C_{CD}(F) \bigr)$;
while the homotopy inverse for~$b'$ will still be given
by the earlier projections-map~$a : E \mapsto \bigl( p_C(E) , p_D(E) \bigr)$ for~$E \in \Ap \bigl( C_{CD}(F) \bigr)$.
For observe using~(\ref{eq:HKprojs}) that we have~$1 < E \leq p_C(E) p_D(E)$,
so that we get~$C_{p_C(E) p_D(E)}(F) > 1$---and hence we obtain that~$F \prec \bigl( p_C(E) , p_D(E) \bigr)$,
giving the image~$a(E) = \bigl( p_C(E) , p_D(E) \bigr) \in \left( \Ap(C) \ujoin \Ap(D) \right)_{\succ F}$. 
Finally 
note~$ab'(A,B) =    ( p_C \bigl( C_{AB}(F) \bigr) , p_D \bigl( C_{AB}(F) \bigr) )
               =    ( p_A \bigl( C_{AB}(F) \bigr) , p_B \bigl( C_{AB}(F) \bigr)
	       \leq (A,B)$;
and for the inverse correspondingly we have~$b'a(E) = C_{p_C(E) p_D(E)}(F) = p_C(E) p_D(E) \geq E$.
These homotopies complete the proof of the contractibility in~(ii),
and hence of the contractibility of~$W_G^{\QuillenPoset}(H,K)_{\prec (C,D)}$ in~(i). 
This in turn establishes the homotopy equivalence needed for Case~$i$.
\end{proof}

\textbf{Case~$j$:} $j$ is a homotopy equivalence.

\begin{proof}
As in Case~$i$, we will apply the Quillen fiber-Theorem~\ref{variantQuillenFiber}:
but this time not to the left-term,
but instead to the right-term of the join of form~``$f^{-1}(Y_{\leq y}) * Y_{>y}$'' in the Theorem:  
That is,
we will show that if~$x \in W_G^{\BrownPoset}(H,K) -  W_G^{\BoucPoset}(H,K)$,
then~$ W_G^{\BrownPoset}(H,K)_{\succ x}$ is contractible.
Such an~$x$ has the form~$x = (C,D)$: where we have~$D \in \Ap(K) \cup \{ 1 \}$,
and~$C \in \Sp(H) - \Bp(H)$ (so that~$C$ is not a~``$p$-radical'' subgroup of~$H$).
We get:
\begin{align*}
  W_G^{\BrownPoset}(H,K)_{\succ x} & = \{ (A,B)\in \Sp(H)\ujoin \Ap(K) \tq (A,B) > (C,D)\} \\
                   & = \bigl( \Sp(H)_{\geq C} \times \Ap(K)_{>D} \bigr)\
		       \cup\ \bigl( \Sp(H)_{>C} \times \left( \Ap(K) \cup \{ 1 \} \right)_{\geq D}\ \bigr)  \\
                   & = \Sp(H)_{>C} \ujoin \Ap(K)_{>D} \simeq * ;
\end{align*}
where the contractibility holds because we have~$\Sp(H)_{>C}$ contractible:
this is a standard property (cf.~\cite[p~152]{Smi11}) of non-$p$-radical subgroups~$C \in \Sp(H) - \Bp(H)$.
Then~$j$ is a homotopy equivalence by the Quillen fiber-Theorem~\ref{variantQuillenFiber}.
This concludes the proof of Case~$j$.
\end{proof}

\medskip

\noindent
And that in turn completes the proof of the equivalences in Theorem~\ref{theoremChangeABSPosets}.
\end{proof}

It turns out that in proving Theorem~\ref{theoremLiep}---in
the~(allC)-branch covered by Lemma~\ref{lemmaCharPAndConicalFields}---we
need to apply the propagation-Theorem~\ref{theoremNewHomologyPropagation}
to a variant of the replacement-poset~$W = W_G^{\BoucPoset}(H,K)$ indicated in Theorem~\ref{theoremChangeABSPosets} above. 
We denote this variant using a tilde, in Proposition~\ref{propTildePosetsXandW} below.

So to finish the section, we now describe a further modification 
of the posets used in Theorems~\ref{theoremInductiveHomotopyType} and~\ref{theoremChangeABSPosets};
recall these are based on the method of ``shrinking a subposet inside a union''.
in the first subsection of Section~\ref{sec:overviewequivApG}.
The modification roughly allows us to take account of the method of removals 
in the second subsection of Section~\ref{sec:overviewequivApG}:
namely the~(allC)-removals indicated in Proposition~\ref{propositionContractibleCentralizers} there.
We won't in fact make any further removals of elements from our poset union;
we work instead at the level of poset-orderings, where we will adjust the earlier~$\prec$ to a new~$\sqsubset$ ;
by in effect removing certain order-{\em relations\/} (i.e.~pairs) from~$\prec$, rather than removing actual elements from the poset.
\footnote{
  However, when we later consider subposets of the form~$Z_{\sqsupseteq x}$,
  there {\em will\/} be corresponding exclusions of elements.
          }
We will thus obtain suitable new replacement-posets, now denoted using a tilde,  which are homotopy equivalent to~$\Ap(G)$.

We will first {\em construct\/} the posets, in Proposition~\ref{propTildePosetsXandW} below;
with the equivalences given afterwards, in Proposition~\ref{propHomotEquivTildePosets}.
Recall that earlier Proposition~\ref{propositionContractibleCentralizers} is for the analysis of a fixed component~$L$ of~$G$.
But in these two upcoming Propositions, we will not actually require~$G$ to be a counterexample to~(H-QC);
and correspondingly, instead of assuming the component-Hypothesis~\ref{hyp:LB} for~$L$ (with~$B = 1$),

\centerline{
  {\em In the remainder of the section, we assume~$Z(L)$ is a~$p'$-group, and set~$H := L C_G(L)$.}
            }

\noindent
This will in effect still provide the~$p'$-central product Hypothesis~\ref{hyp:HKcentralprod} for~$H$;
in particular we still have the analogue of~(\ref{eq:HKprojs}):
\begin{equation}
\label{eq:ApHindirprodofprojs}
  \text{For~$A \in \Ap(H)$, we have~$A \leq p_L(A) \times p_{C_G(L)}(A)$}, 
\end{equation}
where~$p_L$ and~$p_{C_G(L)}$ denote the respective projections.
And we also get from Lemma~\ref{lemmaNormalizeComponent}:

\begin{equation}
\label{eq:ZLp'thenCLA>1soAnormL}
  \text{As~$Z(L)$ is a~$p'$-group, for any~$A \in \Ap(G)$ with~$C_L(A) > 1$ we get~$A \leq N_G(L)$.}
\end{equation}

We will use such properties in our results in the remainder of the section, beginning with:

\begin{proposition}
\label{propTildePosetsXandW}
Assume~$G$ has a component~$L$, with~$Z(L)$ a~$p'$-group. 
Set~$H := L C_G(L)$; so:
  \[ \F_G(H) \cap \Ap \bigl( N_G(L) \bigr) = \Outposet_G(L) . \]
Recall also from Proposition~\ref{propositionContractibleCentralizers} the subposet:
% \[ \F_1 := \{ B \in \Outposet_G(L)  \tq O_p \bigl( C_G(LC) \bigr) > 1 , \forall C \in \Outposet_G(L)_{\geq B} \} , \]
  \[ \F_1 := \{\ B \in \Outposet_G(L)  \tq O_p \bigl( C_G(LB) \bigr) > 1\  \} , \]
which we saw there is upward-closed in~$\Outposet_G(L) \subseteq \F_G(H)$. 

Then the definitions below give transitive order-relations---so the given set-unions afford posets:
\begin{enumerate}

\item $\tilde{X}_G(H) := \Ap(H)\ \cup\ \F_{G}(H)$,
      with order~$\sqsubset_X$ ``mostly'' given by~$\prec_X$ from~$X = X_G(H)$:

        Inside~$\Ap(H)$, and inside~$\F_G(H)$,  $\sqsubset_X$ agrees with~$\prec_X$ (hence with inclusion~$<$).

        For~$A \in \Ap(H)$ and~$F \in \F_G(H)$, $\sqsubset_X$ agrees with~$\prec_X$---{\em except\/}
	we exclude certain pairs:

        For~$F \in \F_1$, we require~$F  \not \sqsubset_X A$, when~$F \prec_W A$ but~$C_A(F) \leq L$.
	We denote this by:

\centerline{
  $F \sqsubset_X A$ iff~$1 < C_A(F)$ ($\nleq L$ when~$F \in \F_1$). 
            }

\item $\tilde{W}^{\BrownPoset}_G \bigl( L , C_G(L) \bigr) := \Sp(L) \ujoin \Ap \bigl( C_G(L) \bigr)\ \cup\ \F_{G}(H)$,

      with order~$\sqsubset_W$ ``mostly'' given by~$\prec_W$ from~$W = W^{\BrownPoset}_G \bigl( L , C_G(L) \bigr)$:

        Inside~$\Sp(L) \ujoin \Ap \bigl( C_G(L) \bigr)$, and inside~$\F_G(H)$,
	$\sqsubset_W$ agrees with~$\prec_W$ (hence with~$<$).

        For~$(A,C) \in \Sp(L) \ujoin \Ap \bigl( C_G(L) \bigr)$ and~$F \in \F_G(H)$,
	$\sqsubset$ agrees with~$\prec_W$---{\em except\/}

	\quad that we exclude certain pairs:

	For~$F \in \F_1$, we require~$F \not \sqsubset_W (A,C)$, when~$F \prec_W (A,C)$ but~$C_{AC}(F) \leq L$.
	We write:

	\centerline{
           $F \sqsubset_W (A,C)$ iff~$1 < C_{AC}(F)$ ($\nleq L$ when~$F \in \F_1$).  
	            }

\item We also define the following subposets of~$\tilde{W}^{\BrownPoset}_G \bigl( L , C_G(L) \bigr)$:
  \[ \tilde{W}^{\BoucPoset}_G \bigl( L , C_G(L) \bigr) := \Bp(L) \ujoin \Ap \bigl( C_G(L) \bigr)\ \cup\ \F_{G}(H) , \]
  \[ \tilde{W}^{\QuillenPoset}_G \bigl( L , C_G(L) \bigr) := \Ap(L) \ujoin \Ap \bigl( C_G(L) \bigr)\ \cup\ \F_{G}(H) . \]
\end{enumerate}

Using the identification of~$\Ap(L) \ujoin \Ap \bigl( C_G(L) \bigr)$ as a subposet of~$\Ap \bigl( L C_G(L) \bigr)$ in Corollary~\ref{joinApPosets},
we may regard~$\tilde{W}^{\QuillenPoset}_G \bigl( L , C_G(L) \bigr)$
as a subposet of~ $\tilde{X}_G \bigl( L C_G(L) \bigr) = \tilde{X}_G(H)$.

Note that each~$\sqsubset$ has the~$\F$-left property:
that members of~$\F_G(H)$ can appear only {\em below\/} members of the left-factor in the relevant union.
\end{proposition}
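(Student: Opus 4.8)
The plan is to establish transitivity for each of the three order-relations $\sqsubset_X$, $\sqsubset_W$, and the two subposet orderings in (3), and then to confirm the $\F$-left property. Since the relations $\sqsubset_X$ and $\sqsubset_W$ agree with the already-established transitive relations $\prec_X$ (from Definition~\ref{definitionXBHposet}, with $HK$ replaced by $H = LC_G(L)$) and $\prec_W$ (from Proposition~\ref{propApGsimeqW0} and Theorem~\ref{theoremChangeABSPosets}) \emph{except} that certain cross-term pairs have been deleted, the only new work is to check that deleting those pairs does not destroy transitivity. I would argue this uniformly: transitivity of a sub-relation of a transitive relation can only fail at a ``three-term chain'' $u \sqsubset v \sqsubset w$ where the composite pair $u \sqsubset w$ was one of the deleted pairs while both $u \sqsubset v$ and $v \sqsubset w$ survived. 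By the $\F$-left shape of $\prec_X$ (Definition~\ref{definitionXBHposet}(3)) and $\prec_W$ (the analogous property), any such problematic composite $u \sqsubset w$ must be a cross-term $F \sqsubset_X A$ (resp.\ $F \sqsubset_W (A,C)$) with $F \in \F_1$, and the intermediate term $v$ must be either an element $F' \in \F_G(H)$ with $F < F' \prec A$, or an element $A' \in \Ap(H)$ with $F \prec A' < A$ (resp.\ the pre-join analogues). So the task reduces to showing: in each such configuration, if the outer pair was deleted then one of the two inner pairs was also deleted — equivalently, the contrapositive, that if both inner pairs survive then the outer pair survives.

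First I would handle $\sqsubset_X$. Recall the deletion rule: for $F \in \F_1$, the pair $F \prec_X A$ is deleted exactly when $C_A(F) \leq L$. Consider the configuration $F < F' $ with $F' \sqsubset_X A$ surviving, where $F, F' \in \F_G(H)$. If $F' \notin \F_1$ then $F'$-pairs are never deleted, so $C_A(F') > 1$; but I also need information about $C_A(F)$, and here the key point is that $F < F'$ forces $C_A(F) \geq C_A(F')$, so $C_A(F) > 1$; the question is whether $C_A(F) \leq L$. This is where one uses the upward-closedness of $\F_1$ in $\Outposet_G(L)$ (stated in Proposition~\ref{propositionContractibleCentralizers}, via the contrapositive of Lemma~\ref{lemmaTrivialOpPropagationCentralizer}): if $F \in \F_1$ and $F < F'$ with $F'$ a $p$-outer, then $F' \in \F_1$ as well. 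The remaining subtle point is the relation between ``$C_A(F) \leq L$'' and membership in $\F_1$; I would show that if $F \in \F_1$ and $C_A(F) \leq L$, then $C_A(F)$ centralizes $LF$ and lies in $O_p(C_G(LF)) \cdot \ldots$—more precisely, use that $C_A(F)$ is an abelian $p$-subgroup of $L$ centralized by $F$, so $C_A(F) \leq C_L(F) \leq O_p(C_L(F))$-type arguments combine with the hypothesis $O_p(C_G(LF)) > 1$ to keep everything consistent. The cleanest route: show directly that the surviving inner pairs force $C_A(F) \nleq L$ by a containment chain, so the outer pair survives too. The configuration $F \sqsubset_X A' < A$ with both inner pairs surviving is easier: $C_{A'}(F) \nleq L$ (if $F \in \F_1$, by survival of $F \sqsubset_X A'$) and $C_{A'}(F) \leq C_A(F)$, so $C_A(F) \nleq L$ and the outer pair $F \sqsubset_X A$ survives. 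So the main obstacle is the first configuration, and specifically the interplay of $F \in \F_1$ with the condition $C_A(F) \leq L$.

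Next I would treat $\sqsubset_W$ by the same bookkeeping, using $(A,C) \in \Sp(L) \ujoin \Ap(C_G(L))$ in place of $A$, the subgroup $AC \leq p_L(AC) \times p_{C_G(L)}(AC)$ from~(\ref{eq:ApHindirprodofprojs}), and the centralizer $C_{AC}(F)$ in place of $C_A(F)$; the role of ``$\leq L$'' is played again by ``$C_{AC}(F) \leq L$''. The extra ingredient here is that $\Sp(L)$ (not just $\Ap(L)$) appears in the left factor, but since $\prec_W$ was already shown transitive in Theorem~\ref{theoremChangeABSPosets} without using elementary-abelianness of the first coordinate, none of that causes trouble, and the deletion-analysis is verbatim. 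For part (3), the posets $\tilde W^{\BoucPoset}_G(L,C_G(L))$ and $\tilde W^{\QuillenPoset}_G(L,C_G(L))$ are simply the full subposets of $\tilde W^{\BrownPoset}_G(L,C_G(L))$ on the indicated element sets (replacing $\Sp(L)$ by $\Bp(L)$ or $\Ap(L)$ in the left factor), so transitivity is inherited immediately, and the identification of $\tilde W^{\QuillenPoset}_G(L,C_G(L))$ as a subposet of $\tilde X_G(H)$ is exactly the identification of $\Ap(L)\ujoin\Ap(C_G(L))$ inside $\Ap(LC_G(L)) = \Ap(H)$ from Corollary~\ref{joinApPosets}, under which $\prec_W$ restricts to $\prec_X$ and the deletion rule for $\sqsubset_W$ restricts to that for $\sqsubset_X$ (both being ``$C_{(-)}(F) \nleq L$ when $F \in \F_1$''). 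Finally, the $\F$-left property is immediate: in every one of these relations, a cross-term comparison always has the shape $F \sqsubset (\text{left-factor element})$ with $F \in \F_G(H)$, never the reverse, since deleting pairs from $\prec_X$, $\prec_W$ only removes relations of that shape and never adds new ones—so the chains in each $\tilde X$ or $\tilde W$ begin with a (possibly empty) block from $\F_G(H)$ followed by a block from the left factor of the relevant pre-join, then the $\Ap(C_G(L))$-part, exactly as for the untilded posets. I expect the write-up to be short modulo the one genuinely delicate lemma about $C_A(F) \leq L$ versus $\F_1$-membership, which I would isolate as a small claim at the start.
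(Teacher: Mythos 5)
Your overall strategy is the right one and matches the paper's: transitivity of a sub-relation of a known transitive relation can only fail at a deleted composite pair whose two inner pairs both survived, the $\F$-left shape forces such a composite to have the form $F \sqsubset A$ (or $F \sqsubset_W (A,C)$), and you correctly split into the two extension-configurations. Your treatment of the ``$F \sqsubset_X A' < A$'' configuration is complete, and your observations about part~(2) being verbatim and part~(3) being definitional are all right.

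However, the one step you flag as ``the genuinely delicate lemma'' is exactly where the proposal has a gap, and the direction you sketch for filling it does not work. In the configuration $F < F'$ with $F' \sqsubset_X A$ surviving, the case requiring work is $F \in \F_1$ and $F' \notin \F_1$ (if $F' \in \F_1$ the containment $1 < C_A(F') \nleq L$ gives $C_A(F) \nleq L$ immediately; if $F \notin \F_1$ the outer pair is never deleted). In that remaining case one only knows $1 < C_A(F') \leq C_A(F)$, and the claim to prove is $C_A(F) \nleq L$. Your suggestion of relating $C_A(F) \leq L$ to $O_p(C_G(LF))$ leads nowhere: $O_p(C_G(LF)) > 1$ is just the definition of $F \in \F_1$, and it does not directly constrain $C_A(F)$. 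The argument that actually closes the case is: assume by contradiction $C_A(F) \leq L$; then $1 < C_A(F') \leq C_A(F) \leq L$, so $F'$ centralizes a nontrivial $p$-subgroup of $L$, and since $Z(L)$ is a $p'$-group that subgroup is not central in $L$; by~(\ref{eq:ZLp'thenCLA>1soAnormL}) (equivalently Lemma~\ref{lemmaNormalizeComponent}) this forces $F' \leq N_G(L)$, and combined with $F' \in \F_G(H)$ (so $F' \cap LC_G(L) = 1$) it gives $F' \in \Outposet_G(L)$; only now is upward-closedness of $\F_1$ in $\Outposet_G(L)$ applicable, yielding $F' \in \F_1$ and contradicting the case hypothesis. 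The point you are missing is that $F'$ is not a $p$-outer a priori, so upward-closedness cannot be invoked directly; it becomes a $p$-outer precisely as a consequence of the contradiction hypothesis $C_A(F) \leq L$. The parallel step is needed (verbatim) in your treatment of $\sqsubset_W$, where $C_{AC}(F)$ replaces $C_A(F)$ and $C_{AC}(F') \leq L$ again forces $F'$ into $N_G(L)$.
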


\begin{proof}
We must show the restricted-relations above
for the sets~$\tilde{X}_G(H)$ in~(1) and~$\tilde{W}_G^{\BrownPoset} \bigl( L , C_G(L) \bigr)$ in~(2)
are indeed transitive order-relations.
Since the new~$\sqsubset$ agrees with our known ordering~$\prec$ within the factors in the union,
as well as on cross-terms between the factors when the~$\F$-term lies outside~$\F_1$, 
the work amounts to checking transitivity of extensions at either end of a cross-term relation
(of the form~``$F \sqsubset A$'' by the~$\F$-left property), 
where the extension involves at least one term from~$\F_1$.

\begin{enumerate}
\item To see transitivity, so that~$\tilde{X}_G(H)$ is a poset:

\noindent
      Assume~$A,B \in \Ap(H)$, with~$A < B$; and~$F,F' \in \F_G(H)$, with~$F < F'$; and also:

          \quad (i) $F' \sqsubset A$. Hence~$1 < C_A(F')$ ($\nleq L$ when~$F' \in \F_1$).

\noindent
      We must show that~$F \sqsubset A$ and~$F' \sqsubset B$; we may assume at least one of~$F,F'$ lies in~$\F_1$.

\noindent
      We begin by adding some easy subgroup-inclusions to~(i):

          \quad (ii) $1 < C_A(F') \leq C_A(F) \cap C_B(F')$.

\noindent
Then we consider the cases where at least one of~$F,F'$ lies in~$\F_1$:

\begin{itemize}
\item $F' \in \F_1$: Here~(i) includes also~$C_A(F') \nleq L$. 
      So from~(ii) we get~$1 < C_A(F) \nleq L$, and similarly ~$1 < C_B(F') \nleq L$;
      and hence~$F \sqsubset A$ and~$F' \sqsubset B$.

\item $F' \notin \F_1$, $F \in \F_1$:
      Here~(ii) gives $C_B(F') >1$, so we get~$F' \sqsubset B$;
      while for~$F \in \F_1$, we similarly get~$C_A(F) >  1$---but we still need to show~$C_A(F) \nleq L$. 
      So assume by way of contradiction that~$C_A(F) \leq L$.
      Then using~(ii), $F'$ centralizes the nontrivial subgroup~$C_A(F') \leq C_A(F) \leq L$,
      so that~$F'$ normalizes~$L$ by~(\ref{eq:ZLp'thenCLA>1soAnormL}). 
      So we obtain that~$F' \in \F_G(H) \cap \Ap \bigl( N_G(L) \bigr) = \Outposet_G(L)$.  
      Now~$F < F'$, and we have~$F \in \F_1 \subseteq \Outposet_G(L)$;
      and we saw in Proposition~\ref{propositionContractibleCentralizers}
      that~$\F_1$ is by its construction upward-closed in~$\Outposet_G(L)$.
      We conclude that~$F' \in \F_1$---contrary to our choice in this case of $F' \notin \F_1$.
      This contradiction shows that we have~$C_A(F) \nleq L$, and so completes the proof that~$F \sqsubset A$
      in this final case.
\end{itemize}

This establishes transitivity of the order-relation; so that~$\tilde{X}_G(H)$ is indeed a poset.

\item To see transitivity, so that~$\tilde{W}_G^{\BrownPoset} \bigl( L , C_G(L) \bigr)$ is a poset:

\noindent
      The proof is parallel to that above, as follows:

\noindent
      We replace~``$A < B$'' above by~$(A,C) < (B,C')$ from the pre-join~$\Sp(L) \ujoin \Ap \bigl( C_G(L) \bigr)$,
      where of course~$C \leq C'$; with members~$F < F'$ of~$\F_G(H)$. 
      Assume the analogue of~(i):

          \quad (i$^{\prime}$) $F' \sqsubset_W (A,C)$. Hence~$1 < C_{AC}(F')$ ($\nleq L$ when~$F' \in \F_1$).

\noindent
      We must show~$F \sqsubset_W (A,C)$ and~$F' \sqsubset_W (B,C')$; we may assume one of~$F,F'$ lies in~$\F_1$.

\noindent
      We get the analogue of~(ii):

          \quad (ii$^{\prime}$) $1 < C_{AC}(F') \leq C_{AC}(F) \cap C_{BC'}(F')$.

\noindent
      And we have a similar case division according to membership of~$F,F'$ in~$\F_1$: 

      In the first case, where~$F' \in \F_1$, (i$^{\prime}$) includes also~$C_{AC}(F') \nleq L$;
      which is what is further needed to complete the proof, using (ii$^{\prime}$),
      that~$F \sqsubset_W (A,C)$ and~$F' \sqsubset_W (B,C')$.

      So the only substantial argument required is for the second case, where~$F \in \F_1$, $F' \notin \F_1$:
      Again~$F' \sqsubset (B,C')$ follows directly from~$1 < C_{BC'}(F')$ in~(ii$^{\prime}$);
      but for~$(A,C)$ we only get~$C_{AC}(F) > 1$---so we still need to show~$C_{AC}(F) \nleq L$.
      So suppose by way of contradiction that~$C_{AC}(F) \leq L$.
      Then~$F'$~centralizes the nontrivial subgroup~$C_{AC}(F') \leq C_{AC}(F) \leq L$,
      so again~$F'$ normalizes~$L$ by~(\ref{eq:ZLp'thenCLA>1soAnormL}),
      so that we get~$F' \in \F_G(H) \cap \Ap \bigl( N_G(L) \bigr) = \Outposet_G(L)$. 
      And again we have~$F < F'$ with~$F \in \F_1 \subseteq \Outposet_G(L)$,
      so the upward-closed property of~$\F_1$ gives~$F' \in \F_1$---contrary to its choice in this case.
      This contradiction shows that~$C_{AC}(F) \nleq L$---which gives~$F' \sqsubset_W (A,C)$,
      to complete the proof for the order-relation;
      so that~$\tilde{W}_G^{\BrownPoset} \bigl( L , C_G(L) \bigr)$ is indeed a poset, as desired.
\end{enumerate}

\noindent
Of course conclusion~(3) is just a definition of subsets giving subposets, and so does not require further proof.
\end{proof}

Now we prove our equivalence-result analogous to Theorem~\ref{theoremChangeABSPosets}, 
showing that the new tilde-posets can be used as replacement-posets for~$\Ap(G)$---notably 
in applying our new propagation-Theorem~\ref{theoremNewHomologyPropagation}
in situations related to Proposition~\ref{propositionContractibleCentralizers};
as we will do in the proof of the branch of Theorem~\ref{theoremLiep} given by Theorem~\ref{lemmaCharPAndConicalFields}.

\begin{proposition}
\label{propHomotEquivTildePosets}
Continue the hypotheses of Proposition~\ref{propTildePosetsXandW}:
Thus~$G$ has a component~$L$, with~$Z(L)$ a~$p'$-group;
and we set~$H := L C_G(L)$ for the~$p'$-central product of these groups, so that:
  \[ \F_G(H) \cap \Ap \bigl( N_G(L) \bigr) = \Outposet_G(L) . \]
Recall also from Proposition~\ref{propositionContractibleCentralizers} the poset:
%  \[ \F_1 := \{ B \in \Outposet_G(L)  \tq O_p \bigl( C_G(LC) \bigr) > 1 , \forall C \in \Outposet_G(L)_{\geq B} \} , \]
  \[ \F_1 := \{\ B \in \Outposet_G(L)  \tq O_p \bigl( C_G(LB) \bigr) > 1\  \} , \]

\noindent
which we saw there is upward-closed in~$\Outposet_G(L)$.

Then we have homotopy equivalences relating the new posets in Proposition~\ref{propTildePosetsXandW}:
  \[ \Ap(G) \simeq \tilde{X}_G(H) \simeq \tilde{W}^{\BrownPoset}_G \bigl( L , C_G(L) \bigr)
                                  \simeq \tilde{W}^{\BoucPoset}_G \bigl( L , C_G(L) \bigr)
                                  \simeq \tilde{W}^{\QuillenPoset}_G \bigl( L , C_G(L) \bigr) . \]
%If in addition $\F_1 = \{B\in \Outposet_G(L) \tq O_p(C_G(LB))\neq 1\}$ contains at most cyclic $p$-outers,
%then $\Ap(G)\simeq \tilde{X}_G(H)$.
%\footnote{
%  ``fake footnote": I would like to get rid of the ``only cyclic $p$-outers", but so far I couldn't find a proof without using that.
%          }
\end{proposition}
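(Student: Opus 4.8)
The plan is to establish the four homotopy equivalences in sequence, following the pattern used for Theorem~\ref{theoremChangeABSPosets} but now accounting for the modified orderings $\sqsubset$ in place of $\prec$. The first equivalence $\Ap(G) \simeq \tilde{X}_G(H)$ is the key place where the ``removal of relations'' must be reconciled with the genuine point-removals of Proposition~\ref{propositionContractibleCentralizers}. Concretely, I would start from the known $N_G(H)$-homotopy equivalence $\Ap(G) \simeq X_G(H)$ of Theorem~\ref{theoremInductiveHomotopyType} (with $H = LC_G(L)$ in the role of ``$H$''), and recall from Proposition~\ref{propositionContractibleCentralizers} that the inclusion $\Ap(G) - \N_1 \hookrightarrow \Ap(G)$ is a homotopy equivalence, where $\N_1$ consists of the members of $\N_G(L)$ that are faithful on $L$ and strictly contain some $B \in \F_1$. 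The idea is that ``deleting the pairs $F \sqsubset_X A$ with $C_A(F) \le L$ for $F \in \F_1$'' is exactly the combinatorial shadow, inside the replacement-poset $X_G(H)$, of deleting the points of $\N_1$ from $\Ap(G)$: a chain in $X_G(H)$ that would route through such a forbidden cross-term corresponds, under the deflation-equivalence $\alpha_{G,H}$, to a chain in $\Ap(G)$ passing through a point $E \in \N_1$ (namely $E = AF$, which is faithful on $L$ since $C_A(F) \le L$ forces the ``outer part'' to be exactly $F$ together with an inner contribution inside $L$). So I would construct the comparison map $\tilde{X}_G(H) \to X_G(H)$ as the identity on underlying sets, show it is order-preserving (immediate, since $\sqsubset_X$ has fewer relations than $\prec_X$), and then apply the Quillen fiber-Theorem~\ref{variantQuillenFiber}: for $x = A \in \Ap(H)$ the preimage is the same as before and is conically contractible, while for $x = F \in \F_G(H)$ one analyzes $(\tilde{X}_G(H))_{\sqsupseteq F}$ and shows it is contractible, using the zigzag $D \ge C_D(F) \le C_D(F)F \ge F$ exactly as in the proof of Theorem~\ref{theoremInductiveHomotopyType}, with the caveat that when $F \in \F_1$ the forbidden pairs have been removed precisely so that $C_D(F)$ and $C_D(F)F$ either stay in the upper set or route through the conical centralizer $O_p(C_G(LF)) > 1$, which lets Lemma~\ref{lemmaTrivialOpPropagationCentralizer} and Quillen's conical contractibility close the argument. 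Alternatively, and perhaps more cleanly, I would realize $\tilde{X}_G(H)$ as a retract: show the deletion of relations can be undone up to homotopy by the same deflation-style retraction $r(E) = E \cap H$ composed with a map that absorbs the $\F_1$-points with conical centralizers into cones, mirroring Proposition~\ref{propositionContractibleCentralizers}'s Case~1/Case~2 dichotomy.

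Once $\Ap(G) \simeq \tilde{X}_G(H)$ is in hand, the remaining three equivalences
\[
  \tilde{X}_G(H) \simeq \tilde{W}^{\BrownPoset}_G \bigl( L , C_G(L) \bigr)
                 \simeq \tilde{W}^{\BoucPoset}_G \bigl( L , C_G(L) \bigr)
                 \simeq \tilde{W}^{\QuillenPoset}_G \bigl( L , C_G(L) \bigr)
\]
should follow by re-running, almost verbatim, the two cases in the proof of Theorem~\ref{theoremChangeABSPosets}. For $\tilde{W}^{\QuillenPoset}_G \hookrightarrow \tilde{W}^{\BrownPoset}_G$ I would apply the Quillen fiber-Theorem to the \emph{left}-term of the join $f^{-1}(Y_{\preceq y}) * Y_{\succ y}$: for $y = (C,D)$ with $C \in \Sp(L) - \Ap(L)$ one shows the preimage is $\bigl(\Ap(C) \ujoin \Ap(D)\bigr)$ together with those $F \in \F_G(H)$ below it, and this is contractible since $\Ap(C) \ujoin \Ap(D) \simeq *$ (as $C$ is a $p$-group) and the extra $\F$-points can be removed by a second application of the fiber-Theorem, noting that $C_{CD}(F) > 1$ makes $\Ap(C_{CD}(F)) \simeq *$, exactly as in Case~$i$ there — and the restriction $C_{CD}(F) \nleq L$ imposed when $F \in \F_1$ is harmless because we are only asserting contractibility of an upper set, not preserving any particular point. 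For $\tilde{W}^{\BoucPoset}_G \hookrightarrow \tilde{W}^{\BrownPoset}_G$ I would apply the fiber-Theorem to the \emph{right}-term: for $y = (C,D)$ with $C \in \Sp(L) - \Bp(L)$ not $p$-radical, the upper set is $\Sp(L)_{>C} \ujoin \Ap(C_G(L))_{>D}$, which is contractible because $\Sp(L)_{>C}$ is contractible (standard property of non-$p$-radical subgroups); here the $\F$-points play no role at all in the upper set since the $\F$-left property forces them below everything. The $N_G(H)$-equivariance is maintained throughout because every homotopy produced is via Lemma~\ref{lm:increndo} or Quillen fibers with stabilizer-preserving contractions, so an application of the Equivariant Whitehead Theorem upgrades each equivalence, as in the proof of Proposition~\ref{propHomotEquivPosetReverseOrdering}.

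The main obstacle I anticipate is the very first equivalence: specifically, verifying carefully that the set of order-relations deleted from $\prec_X$ to form $\sqsubset_X$ is \emph{exactly} what is needed so that the Quillen-fiber preimages $(\tilde{X}_G(H))_{\sqsupseteq F}$ remain contractible for every $F \in \F_G(H)$, including the delicate case $F \in \F_1$. The subtlety is that removing relations can in principle either \emph{help} (by detaching a problematic cone-point) or \emph{hurt} (by disconnecting the upper set); one must show it only helps. I expect the correct bookkeeping is: for $F \in \F_1$, the zigzag $D \ge C_D(F) \le C_D(F)F \ge F$ still lands inside $(\tilde{X}_G(H))_{\sqsupseteq F}$ because whenever $C_D(F) \le L$ would be forced (the case that the relation $F \sqsubset_X C_D(F)$ has been deleted), one instead has $D$ itself lying in $\N_1$-like territory, and the relevant subposet is handled by the conical centralizer $O_p(C_G(LF)) > 1$ that defines membership in $\F_1$ — so $\Ap(C_G(LF))$ is contractible and one gets contractibility of the fiber directly rather than via the zigzag. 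Making this dichotomy precise, and checking it interacts correctly with the upward-closed property of $\F_1$ (via Lemma~\ref{lemmaTrivialOpPropagationCentralizer}), is where the real work lies; the rest is a routine adaptation of already-established arguments.
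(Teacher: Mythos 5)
Your overall architecture is the right one---route the conical $p$-outers through Proposition~\ref{propositionContractibleCentralizers} and then adapt the proofs of Theorems~\ref{theoremInductiveHomotopyType} and~\ref{theoremChangeABSPosets}---and you correctly locate the difficulty in the first equivalence. But the step you flag as ``where the real work lies'' is left undone, and the sketch you give for it would fail. The paper does not compare $\tilde{X}_G(H)$ with $X_G(H)$ via an identity-on-sets map; it builds a deflation map $\tilde{\alpha}$ from $Y := \Ap(G) - \N_1$ directly onto $\tilde{X}_G(H)$ and checks the Quillen fibers of \emph{that} map. The crux is the endomorphism requirement for the contracting zigzag on the fiber $Z_F$ when $F \leq N_G(L)$: because the ambient poset is now $Y$ rather than $\Ap(G)$, every intermediate term of the homotopy must be shown to avoid $\N_1$, and the short zigzag $D \geq C_D(F) \leq C_D(F)F \geq F$ does \emph{not} guarantee this---for instance $C_D(F)F$ can contain a member of $\F_1$, meet $L$ nontrivially, and act faithfully on $L$, hence lie in $\N_1$ and outside $Y$. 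The paper replaces it by the five-step zigzag~(\ref{homotopyYXtilde}) through the projections $p_L$ and $p_{C_G(L)}$, with a case analysis that again invokes Lemma~\ref{lemmaTrivialOpPropagationCentralizer} to rule out $\N_1$-membership of terms such as $(B\cap F)\,C_{B\cap L}(F)$. Your fallback---that for $F \in \F_1$ ``one gets contractibility of the fiber directly'' from $O_p \bigl( C_G(LF) \bigr) > 1$---is not substantiated: the fiber is not $\Ap \bigl( C_G(LF) \bigr)$, and the deleted relations $F \sqsubset A$ (those with $C_A(F) \leq L$) alter its structure in a way that must be tracked explicitly.

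Two further gaps. First, in the step $\tilde{W}^{\QuillenPoset} \hookrightarrow \tilde{W}^{\BrownPoset}$ your claim that the restriction $C_{CD}(F) \nleq L$ for $F \in \F_1$ is ``harmless'' is wrong: for such $F$ the upper set $\left( \Ap(C)\ujoin\Ap(D) \right)_{\sqsupset F}$ is no longer equivalent to the contractible $\Ap \bigl( C_{CD}(F) \bigr)$ but to the difference $\Ap \bigl( C_{CD}(F) \bigr) - \Ap \bigl( C_C(F) \bigr)$, whose contractibility needs its own argument (the paper contracts it conically onto $\Omega_1 Z \bigl( C_{CD}(F) \bigr)$, using $C_D(F) > 1$ to see that the cone point survives the exclusion). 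Second, your chain of equivalences silently skips the link between $\tilde{X}_G(H)$ and the pre-join posets: the analogue of Proposition~\ref{propApGsimeqW0} for the tilde-orderings (the paper's Claim~2) must be proved, since the projection/product maps $\tilde{a},\tilde{b}$ have to be checked order-preserving for $\sqsubset$---which again uses the implication $C_A(F)\nleq L \Rightarrow p_{C_G(L)} \bigl( C_A(F) \bigr) > 1$. The remaining inclusion $\tilde{W}^{\BoucPoset} \hookrightarrow \tilde{W}^{\BrownPoset}$ you handle correctly, since there the $\F$-left property keeps the excluded relations out of the relevant upper sets.
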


\begin{proof}
Recall the further notation of Proposition~\ref{propositionContractibleCentralizers}, including:
  \[ \N_1 = \{ E \in \Ap \bigl( N_G(L) \bigr) \tq E \cap L > 1, C_E(L) = 1, \exists D \in \F_1 \text{ with }  D < E \}\
            \subseteq\ \N_G(L) \subseteq \Ap \bigl( N_G(L) \bigr) , \]
where the last inclusion holds using~(\ref{eq:ZLp'thenCLA>1soAnormL}).
It will be convenient to begin by replacing~$\Ap(G)$ with:

\centerline{
    $Y := \Ap(G) - \N_1$;
    recall by Proposition~\ref{propositionContractibleCentralizers} that~$\Ap(H) \subseteq Y \simeq \Ap(G)$.
            }

\noindent
In fact we will show first that~$Y \simeq \tilde{X}_G(H)$.
% if $\F_1$ contains at most cyclic $p$-outers.
Then we will prove that~$\tilde{X}_G(H) \simeq \tilde{W}_G^{\QuillenPoset}(L,C_G(L))$;
and finally that the 
inclusions~$\tilde{W}_G^{\QuillenPoset} \bigl( L , C_G(L) \bigr) \hookrightarrow \tilde{W}_G^{\BrownPoset} \bigl( L , C_G(L) \bigr)$
and~$\tilde{W}_G^{\BoucPoset} \bigl( L , C_G(L) \bigr) \hookrightarrow \tilde{W}_G^{\BrownPoset} \bigl( L , C_G(L) \bigr)$
are homotopy equivalences.

\vspace{0.2cm}

\textbf{Claim~1.} $Y \simeq \tilde{X}_G(H)$.
% if $\F_1$ contains at most cyclic $p$-outers.

\begin{proof}
%Recall that $\F_1$ is an upward-closed discrete subposet of $\Outposet_G(L)$.
%Since it contains only cyclic $p$-outers, if $D\in \F_1$ and $D'\in \F_G(H)_{>D}$ then $D'\notin \Outposet_G(L)$.
The proof of Claim~1 is somewhat intricate, and will require several pages.

\medskip

We will mimic certain parts of
the proof of Theorem~\ref{theoremInductiveHomotopyType}:
Let~$\tilde{\alpha}:Y \to \tilde{X}_G(H)$ be the analogue of the map $\alpha$ there;
defined by using the deflation map on the~$\N_Y(H)$-term:
\begin{align*}
 A \in \N_Y(H) & \Rightarrow \tilde{\alpha}(A) = A \cap H , \\
 F \in \F_Y(H) & \Rightarrow \tilde{\alpha}(F) = F.
\end{align*}
Note since~$\N_1 \subseteq \N_G(L) \subseteq \N_G(H)$
that~$\N_Y(H) = \N_G(H) - \N_1 \supseteq \Ap(H)$ and~$\F_Y(H) = \F_G(H)$.

\bigskip

\noindent
We first check that~$\tilde{\alpha}$ is order-preserving---from~$<$ to~$\sqsubset$:

The argument begins by paralleling the corresponding segment of the proof of Theorem~\ref{theoremInductiveHomotopyType}:
On pairs~$E < F$ within~$\F_Y(H)$, for the images we get~$E \sqsubset F$, since~$\sqsubset$ coincides with~$<$ here.
On pairs~$A < B$ within~$\N_Y(H)$, for the images we get $A \cap H \sqsubset B \cap H$:
since again~$\sqsubset$ coincides with~$<$ here---and we checked at Lemma~\ref{lemmaReconstructionFromSubposet}(1)
that the deflation map preserves~$<$.
So it remains to consider cross-terms---which by the $\F$-left property~(i) in Definition~\ref{defInflated}
must be of the form~$F < A$, where~$F \in \F_Y(H)$, and~$A \in \N_Y(H)$ (so~$A \cap H > 1$).  
Here~$[F,A] = 1$, which implies that:
  \[ C_{A \cap H}(F) = A \cap H > 1 . \]
When~$F \notin \F_1$, this already gives~$F \sqsubset A \cap H$ for the images under $\tilde{\alpha}$.
So assume~$F \in \F_1$: where it remains to show that~$C_{A \cap H}(F) \nleq L$.
(That is, now we must go beyond the earlier argument in the proof of Theorem~\ref{theoremInductiveHomotopyType}.)
Assume by way of contradiction that~$C_{A \cap H}(F) \leq L$---that is, that~$A \cap H \leq L$;
we will show~$A \in \N_1$:
We see since~$H = L C_G(L)$ that~$A \cap L = A \cap H > 1$;
thus since~$F$ centralizes~$A$, it centralizes~$A \cap L > 1$,
and so~$A$ normalizes the component~$L$ by~(\ref{eq:ZLp'thenCLA>1soAnormL}).
Furthermore:
  \[ C_A(L) = A \cap C_G(L) = (A \cap H) \cap C_G(L) = (A \cap L) \cap C_G(L) \leq Z(L) , \]
which is a~$p'$-group by hypothesis---so that~$C_A(L) = 1$, and hence~$A$ is faithful on~$L$.
Recall~$F < A$ with~$F \in \F_1$;
so we have completed the requirements for~$A \in \N_1$---contrary to~$A \in Y = \Ap(G) - \N_1$.
This contradiction shows that~$C_{A \cap H}(F) \nleq L$ when~$F \in \F_1$;
so we conclude in this final case also that $\tilde{\alpha}(F) = F \sqsubset A \cap H = \tilde{\alpha}(A)$.
Thus we have shown that~$\tilde{\alpha}$ is an order-preserving map.

\bigskip

We will now prove that~$\tilde{\alpha}$ is a homotopy equivalence, by applying the Quillen fiber-Theorem~\ref{variantQuillenFiber}:
This reduces the rest of the proof of Claim~1
to showing for all~$x \in \tilde{X}_G(H)$ that the preimage:
  \[ Z_x := \tilde{\alpha}^{-1}(\tilde{X}_G(H)_{\sqsupseteq x}) \]
is contractible.
Recall that~$\F_Y(H) = \F_G(H)$.

\medskip

The case~$x = A \in \Ap(H)$ is quickly treated;
we can just parallel the case~``$x = A$'' in proof of Theorem~\ref{theoremInductiveHomotopyType}:
Recall that~$\sqsubset$ agrees with group-inclusion~$<$ inside~$\Ap(H)$.
Hence using the~$\F$-left property of~$\sqsubset$ from the definition in Proposition~\ref{propTildePosetsXandW},
we get~$\tilde{X}_G(H)_{\sqsupseteq A} = \Ap(H)_{\sqsupseteq A} = \Ap(H)_{\geq A}$;
so that:
  \[ Z_A = \tilde{\alpha}^{-1}(\tilde{X}_G(H)_{\sqsupseteq A}) = \tilde{\alpha}^{-1} \bigl( \Ap(H)_{\geq A} \bigr)
                                                               = \N_Y(H)_{\geq A} . \] 
And now, as in the proof of Theorem~\ref{theoremInductiveHomotopyType},
contractibility follows from the minimum~$A$. 

\medskip

This reduces the proof of Claim~1 to contractibility of~$Z := Z_x = Z_F$, for the remaining case~$x = F \in \F_Y(H) = \F_G(H)$.

Recall~$\sqsubset$ agrees with~$<$ also on pairs inside~$\F_G(H)$;
and~$\F_G(H) \cap \Ap \bigl( N_G(L) \bigr) = \Outposet_G(L) \supseteq \F_1$. 
Here the parallel with the case~``$x = F$'' in the proof of Theorem~\ref{theoremInductiveHomotopyType}
will require more substantial adaptations:  

For example, here for~$\sqsubset$ we get the exclusion-modified form:
\begin{align*}
  \tilde{X}_G(H)_{\sqsupseteq F} & = \Ap(H)_{\sqsupseteq F} \cup \F_G(H)_{\sqsupseteq F} \\
   & = \{ A \in \N_H \bigl( C_H(F) \bigr) \tq 1 < C_A(F) \text{ ($\nleq L$ when $F \in \F_1$)} \}\
				     \cup\ \F_G(H)_{\geq F} .
\end{align*}
Hence for the preimage we get a correspondingly exclusion-modified form
(for which we choose the numbering~(ii) parallel to that in the proof of Theorem~\ref{theoremInductiveHomotopyType}):

\smallskip

\begin{align*}
   \text{(ii) } \quad \quad Z  &  \phantom{:}= \tilde{\alpha}^{-1}(\tilde{X}_G(H)_{\sqsupseteq F})
                                    = Z_{\N}\  \cup\ \F_G(H)_{\geq F} , \text{ where }  \\
                         Z_\N  &  := \{ B \in \N_Y \bigl( C_H(F) \bigr) \tq 1 < B \cap C_H(F) \text{ ($\nleq L$ when~$F \in \F_1$)} \} . 
\end{align*}

\smallskip

\noindent
Thus for this~$F$, we have~$Z_\N \subseteq \N_Y \bigl( C_H(F) \bigr)$---with equality except possibly when~$F \in \F_1$. 

\medskip

\noindent
As usual, we begin with an easier subcase---namely~$F \nleq N_G(L)$; that is, $F \notin \Outposet_G(L)$:

Then in particular~$F \notin \F_1$;
so for~$Z$ in~(ii) above, we have~$Z_{\N} = \N_Y \bigl( C_H(F) \bigr)$---that is, there are no exclusions required.
Hence~$Z$ is the analogue, with~$Y$ in the role of~``$\B$'',
of the preimage~$Z$ in the case~``$x = F$'' in the proof of Theorem~\ref{theoremInductiveHomotopyType}.
So to continue our theme of adapting that proof:
For any~$D \in Z$, we have the same group-inclusion relations:
  \[ D \geq C_D(F) \leq C_D(F)F \geq F\ . \]
So here, as there, we will again show via Lemma~\ref{lm:increndo}(1) that~$Z$ is contractible to~$\{ F \}$. 
Along the way, we will need to adapt the proof of~(a) there---now with~$Y$ in the role of~``$\B$'':
Again the right-factor case, where~$D = E \in \F_G(H)_{\geq F}$, is no problem:
since as there, we get that both~$C_E(F)$ and~$C_E(F)F$ are equal to~$E \in \F_G(H)_{\geq F} \subseteq Y$.  
But in the left-factor case, where~$D = B \in \N_Y(H)$ so that~$B \cap C_H(F) > 1$, 
we do {\em not\/} try to prove hypothesis~(i) of Theorem~\ref{theoremInductiveHomotopyType},
with~$Y$ in the role of~``$\B$'', for a general $F \in \N_Y(H)$: 
instead, we argue for this particular~$F$---using~$F \nleq N_G(L)$---that we must have~$C_B(F)$ and~$C_B(F)F$ in~$Y$ for~(a):
Consider~$C_B(F)$ first:
Assume by way of contradiction that~$C_B(F) \in \N_1 \subseteq \N_G(L)$.
Then we have~$C_B(F) \cap L > 1$;
so we conclude by~(\ref{eq:ZLp'thenCLA>1soAnormL}) that~$F \leq N_G(L)$---contrary to the choice of~$F$ in this case.
This contradiction shows~$C_B(F) \in Y$, as desired. 
Similarly, assuming $C_B(F)F \in \N_1$ would give~$1 < C_B(F)F \cap L$ centralized by~$F$---so
that~$F$ normalizes~$L$ by~(\ref{eq:ZLp'thenCLA>1soAnormL}), again contrary to the choice of~$F$.
This contradiction gives~$C_B(F)F \in Y$, as desired---completing the proof of~(a).
With~(a) in hand, steps~(b) and~(c) go through as in the proof of Theorem~\ref{theoremInductiveHomotopyType}.
This completes the proof of contractibility of the preimage $Z$,~for this case where~$F \nleq N_G(L)$. 

\medskip

\noindent
We have now reduced the proof of Claim~1 to contractibility of~$Z$ for~$F \leq N_G(L)$. 

For such~$F$, we will show that the preimage~$Z$
(where now we might have~$F \in \F_1$, so that we could get~$Z_\N \subsetneq \N_Y \bigl( C_H(F) \bigr)$ in~(ii)),
we will obtain contractibility via a somewhat more elaborate argument using the general method above,
namely Lemma~\ref{lm:increndo}(1):
That is, we will use the zigzag of homotopies abbreviated by the following longer sequence of group-inclusions (for~$D \in Z$):
\begin{equation}
\label{homotopyYXtilde}
\begin{array}{ll}
 D & \geq (D \cap F) (D \cap H)                                                      \\
   & \leq (D \cap F)\  p_L(D \cap H)\ p_{C_G(L)}(D \cap H)                           \\
   & \geq (D \cap F)\  p_L \bigl( C_{D \cap H}(F) \bigr)\ p_{C_G(L)} \bigl( C_{D \cap H}(F) \bigr)  \\
   & \leq F\ p_L \bigl( C_{D \cap H}(F) \bigr)\ p_{C_G(L)} \bigl( C_{D \cap H}(F) )  \\
   & \geq F                                                                          ,
\end{array}
\end{equation}
Here we use~$j_1, \cdots ,j_5$ to denote the maps
where the image of~$D$ is given in each successive line of~(\ref{homotopyYXtilde}), with~$j_0 := \Id_Z$.
Note that these~$j_i$ are order-preserving,
with~$<$-comparability relations given by~$j_0 \geq j_1 \leq j_2 \geq j_3 \leq j_4 \geq j_5$.

Therefore we have now reduced the proof of Claim~1
to verifying the endomorphism-requirement for that Lemma: namely that each~$j_i(D) \in Z$. 
Of course this holds already for~$j_0 = \Id_Z$, and the constant map~$j_5$ to~$F \in Z$;
so we only need to work with~$j_i$ for~$1 \leq i \leq 4$. 

Since~$F \in \Outposet_G(L)$, so that~$F$ acts on~$L$ and on~$C_G(L)$,
we see that if further~$[F,D]=1$, then also~$[F,p_L(D)] = 1 = [F, p_{C_G(L)}(D)]$.

\bigskip

As usual in such zigzags, the right-factor for~$Z$ in~(ii), where~$D = E \in \F_G(H)_{\geq F}$, is no problem:
For note that~$E \cap F = F$ and~$E \cap H = 1$; so we see that~$j_1(E) = F$. 
Also~$j_i(F) = F$ for all~$i$;
so that for all~$i \geq 1$, we get~$j_i(E) = F \in Z$, as desired.

\medskip

Since we dealt with~$D = E \in \F_G(H)_{\geq F}$ just above, 
we have in fact made yet another reduction:
namely to~$D$ lying in the left-factor for~$Z$ in~(ii):
that is, to $D = B \in Z_\N \subseteq Z$.
So in particular from~(ii) we have for this~$B$ that:

\medskip

 (iii)  $1 < B \cap C_H(F)$ ($\nleq L$ if~$F \in \F_1$).

\noindent
As in the proof of Theorem~\ref{theoremInductiveHomotopyType},
the first part of~(iii) can be viewed in several ways:

  (iii$^{\prime}$) $1 < B \cap C_H(F) = C_{B \cap H}(F) = C_B(F) \cap C_B(H)$. 

\vspace{0.2cm}

\noindent
We begin our treatment of such~$B$ with:

\medskip

\noindent
\textbf{Case~$i = 1$:} We have~$j_1(B) = (B \cap F) (B \cap H) \in Z$.

\begin{proof}
We will show~$(B \cap F)(B \cap H) \in Z_\N$---where of course~$Z_\N \subseteq Z$ in~(ii).

We begin with the case where~$B \cap F = 1$:
Here~$(B \cap F)(B \cap H) = B \cap H \in \Ap(H) \subseteq Y$;
and we see using~(iii/iii$^{\prime}$) that~$1 < B \cap C_H(F) = (B \cap H) \cap C_H(F)$ ($\nleq L$ when $F \in \F_1$)---so
by~(ii) we have~$B \cap H \in Z_\N$---as desired. 

\medskip

\noindent 
We now turn to the case where~$B \cap F > 1$:

We will show first that~$(B \cap F)(B \cap H) \in Y$:
So we assume by way of contradiction that we have~$(B \cap F)(B \cap H) \in \N_1$;
we will show this leads to~$B \in \N_1$.
For we observe, using the definition of~$(B \cap F)(B \cap H) \in \N_1$,
that:~$1 < C_{(B \cap F)(B \cap H)}(F) \leq C_B(F)$;
that~$1 < \left( (B \cap F)(B \cap H)  \right) \cap L = B \cap L$ (since~$L \leq L C_G(L) = H$);
and finally, that there is~$E \in \F_1$ with~$E \leq (B \cap F)(B \cap H) \leq B$.
Thus we have $B \in \N_1$---contrary to the choice of~$B \in Y = \Ap(G) - \N_1$.
This contradiction shows that~$(B \cap F)(B \cap H) \in Y$.

Using~(iii/iii$^{\prime}$) as before now gives:
  \[ 1 < B \cap C_H(F) = (B \cap H) \cap C_H(F) \leq (B \cap F)(B \cap H) \cap C_H(F) \text{ ($\nleq L$ when~$F \notin \F_1$) } , \]
so that by~(ii) we obtain~$(B \cap F)(B \cap H) \in Z_\N$, as desired---completing Case~$i = 1$.
\end{proof}

\vspace{0.2cm}

Thus to complete the proof of the endomorphism-requirement, it now remains to show:

\medskip

\noindent
\textbf{Cases~$i = 2,3,4$:} For~$2 \leq i \leq 4$ we have~$j_i(B) \in Z$.  

\medskip

\noindent
Using~(iii$^{\prime}$) and~(\ref{eq:ApHindirprodofprojs}),
we get a consequence for the~$H$-projection factors in the~$j_i(B)$:

\medskip

  (iv) $1 < C_{B \cap H}(F) \leq p_L \bigl( C_{B \cap H}(F) \bigr)\ \times\ p_{C_G(L)} \big( C_{B \cap H}(F) \bigr)
                            \leq p_L(B \cap H) \times\ p_{C_G(L)}(B \cap H)$ .

\medskip

\noindent
\textbf{Initial Reduction:}  The endomorphism-result for~$2 \leq i \leq 4$ holds: if~$F \in \F_1$; or if~$B \cap C_H(F) \nleq L$.

\begin{proof}
By~(iii), the first if-condition~$F \in \F_1$ implies the second; so we just assume~$B \cap C_H(F) \nleq L$:

\medskip

\noindent
This assumption shows via~(iii$^{\prime}$)
that~$C_{B \cap H}(F)$ does not lie in the~$p_L \bigl( C_{B \cap H}(F) \bigr)$-factor in~(iv)---so:

   (v) Since~$B \cap C_H(F) \nleq L$ here, we have~$p_{C_G(L)} \bigl( C_{B \cap H}(F) \bigr) > 1$. 

\noindent
In particular, $p$-overgroups of~$p_{C_G(L)} \bigl( C_{B \cap H}(F) \bigr)$ (such as the~$j_i(B)$ for~$i = 2,3,4$)
are not faithful on~$L$, and so do not lie in~$\N_1$---that is, they lie in~$Y$.

\medskip

\noindent
To complete the proof that the~$j_i(B) \in Z$, we consider first the case of~$j_4$.
Note using~(v) that:
  \[ j_4(B) \cap C_H(F) =    F\ p_L \bigl( C_{B \cap H}(F) \bigr)\ p_{C_G(L)} \bigl( C_{B \cap H}(F) \bigr)\ \cap\ C_H(F) 
                        \geq p_{C_G(L)} \bigl( C_{B \cap H}(F) \bigr) > 1 ; \]
so that~$j_4(B) \in \N_Y \bigl( C_H(F) \bigr)$---hence $j_4(B) \in Z_\N \subseteq Z$ by~(ii),
since we assume~$B \cap C_H(F) \nleq L$.

Now note that we in fact have~$1 < p_{C_G(L)} \bigl( C_{B \cap H}(F) \bigr) \leq p_{C_G(L)}(B \cap H)$;
and these two projections give the right-hand factors in~$j_3(B)$ and~$j_2(B)$.
Hence we can use this nontriviality, as we used~(v) in the argument above for~$j_4(B) \cap C_H(F)$,
to similarly get~$j_2(B), j_3(B) \in Z_\N \subseteq Z$.
\end{proof}

\noindent
By the Initial Reduction, in the remainder of the proof of Cases~$i = 2,3,4$ we have:

\centerline{
  $F \in \Outposet_G(L) - \F_1$---so that~$Z_\N = \N_Y(C_H(F))$ in~(ii); and further~$1 < C_{B \cap H}(F) \leq L$.
            }

\noindent
Hence~(iv) simplifies to:
\begin{equation}
\label{equalityCentralizers}
  1 < C_{B \cap H}(F) = p_L \bigl( C_{B \cap H}(F) \bigr) = C_{B \cap L}(F) \leq L ,
  \text{ and }  p_{C_G(L)} \bigl( C_{B \cap H}(F) \bigr) = 1 .
\end{equation}
We also get the correspondingly simplified expressions:
  \[ j_3(B) = (B \cap F) C_{B \cap L}(F) \text{ and } j_4(B) = F C_{B \cap L}(F) . \]
At this point,
we can use the nontriviality in~(\ref{equalityCentralizers}) of~$p_L \bigl( C_{B \cap H}(F) \bigr)$ (and hence of~$p_L(B \cap H)$),
as we used~(v) in the argument on~$j_4(B) \cap C_H(F)$ in the proof of the Initial Reduction,
to conclude that~$j_i(B) \in \N_G \bigl( C_H(F) \bigr)$. 

But this time, it will require some further work below, to get~$j_i(B) \in Y$:
which will complete the proof,
by giving~$j_i(B) \in \N_Y \bigl( C_H(F) \bigr) = Z_\N \subseteq Z$ via~(ii), since we are assuming~$F \notin \F_1$.

\medskip

\noindent
So we treat the successive cases for~$j_i(B) \in Y$:

\begin{itemize}
\item To see~$j_2(B) = (B \cap F)\ p_L(B \cap H)\ p_{C_G(L)}(B \cap H)\in Y$:
      If~$p_{C_G(L)}(B \cap H) > 1$, then~$j_2(B)$ is not faithful on~$L$, and so cannot lie in~$\N_1$---so it lies in~$Y$.
      Otherwise~$p_{C_G(L)}(B \cap H) = 1$, so that~$B \cap H = B \cap L = p_L(B \cap H)$.
      Hence~$j_2(B) = (B \cap F)(B \cap L) = (B \cap F) (B \cap H)$.
      And this equals~$j_1(B) \in Z$ using Case~$i = 1$ (so in particular, $j_2(B) \in Y$). 
\item We saw using~(\ref{equalityCentralizers}) that~$j_3(B)$ simplifies to~$(B \cap F)\ C_{B \cap L}(F)$---which
      is a subgroup of~$LF$.
      Assume by way of contradiction that $(B \cap F)\ C_{B \cap L}(F) \in \N_1$. 
      Then there is some~$E \in \F_1$ with~$E \leq (B \cap F)\ C_{B \cap L}(F)$; so in particular, $LE \leq LF$.
      Recall since~$F \in \Outposet_G(L) - \F_1$ that~$O_p \bigl( C_G(LF) \bigr) = 1$.
      So by Lemma~\ref{lemmaTrivialOpPropagationCentralizer}, we also get~$O_p \bigl( C_G(LE) \bigr) = 1$;
      that is, $E \notin \F_1$---contrary to its choice.
      This contradiction shows that~$j_3(B) = (B \cap F)\ C_{B \cap L}(F) \in Y$.
\item To see~$j_4(B) = F\ p_L \bigl( C_{B \cap H}(F) \bigr) \in Y$:
      The same proof as just above, with~$(B \cap F)$ replaced by~$F$, goes through for this case.
\end{itemize}

\noindent
This completes the proof that~$j_i(B) \in Y$ for~$i = 2,3,4$;
and hence (as we had mentioned) that~$j_i(B) \in \N_Y \bigl( C_H(F) \bigr) = Z_\N \subseteq Z$. 

\medskip

This in turn completes the proof for all~$1 \leq i \leq 5$
of the endomorphism-requirement for our zigzag of equivalences in~(\ref{homotopyYXtilde}); 
establishing contractibility of~$Z$ in this final case where~$F \leq N_G(L)$. 

\medskip

Now working back through our long sequence of reductions in the proof:
This case for~$F$ then completes the proof of contractibility of~$Z$ in all cases for~$F$, and hence in all cases for~$D \in Z$;
and thus also completes the proof that~$\tilde{\alpha}$ is a homotopy equivalence.
So we have finally completed the proof of Claim~1.
\end{proof}

\vspace{0.2cm}

\textbf{Claim 2.}~$\tilde{X}_G(H) \simeq \tilde{W}_G^{\QuillenPoset}(L,C_G(L))$.

\begin{proof}
Our equivalence arises essentially by taking the projection/product maps~$a,b$ of Corollary~\ref{joinApPosets}, 
retracting~$\Ap(H)$ to its product-subposet that we identify with~$\Ap(L) \ujoin \Ap \bigl( C_G(L) \bigr)$, 
and extending them via the identity on~$\F_G(H)$.

We define the forward-map~$\tilde{a} : \tilde{X}_G(H) \to \tilde{W}_G^{\QuillenPoset} \bigl( L,C_G(L) \bigr)$ by:
  \[ \tilde{a}(x)
       = \begin{cases}
            \bigl( p_L(A), p_{C_G(L)}(A) \bigr) & x = A\in \Ap(H),\\
            F                                   & x = F\in \F_G(H) .
         \end{cases}
    \]
To see that~$\tilde{a}$ is order-preserving, as usual it suffices to check cross-terms:
Assume~$F \sqsubset_X A$, so that~$1 < C_A(F)$ ($\nleq L$ when~$F \in \F_1$). 
So using~(\ref{eq:ApHindirprodofprojs}) we get:
  \[ 1 < C_A(F) \leq p_L \bigl( C_A(F) \bigr) \times p_{C_G(L)} \bigl( C_A(F) \bigr)
                \leq C_{p_L(A) p_{C_G(L)}(A)}(F) . \]
When~$F \not \in \F_1$, this completes the proof that~$F \sqsubset_W \bigl( p_L(A) , p_{C_G(L)}(A) \bigr)$
for the images under $\tilde{a}$.
So now assume further that~$F \in \F_1$. 
Then we have~$C_A(F) \nleq L$---so that~$C_A(F) \nleq p_L \bigl( C_A(F) \bigr)$, and hence~$p_{C_G(L)} \bigl( C_A(F) \bigr) > 1$. 
This shows that we cannot have~$C_{p_L(A) p_{C_G(L)}(A)}(F) \leq L$ in the displayed containments above.
That is, $C_{p_L(A) p_{C_G(L)}(A)}(F) \nleq L$,
so that~$F \sqsubset_W \bigl( p_L(A) , p_{C_G(L)}(A) \bigr)$ for the images.
Hence $\tilde{a}$ is indeed order-preserving.

For the reverse direction,
note that~$\tilde{W}_G^{\QuillenPoset}(L,C_G(L))$ embeds into~$\tilde{X}_G(H)$
via the map~$\tilde{b}$ such that~$\tilde{b}(A,C) = AC \in \Ap(H)$ for all~$(A,C) \in \Ap(L) \ujoin \Ap(C_G(L))$,
and~$\tilde{b} |_{\F_G(H)} = \Id_{\F_G(H)}$.
To see that~$\tilde{b}$ is order-preserving, assume~$F \sqsubset_W (A,C)$:
then~$1 < C_{AC}(F)$ ($\nleq L$ when~$F \in \F_1$). 
And this is just the definition of~$\sqsubset_X$, in the relation~$F \sqsubset_X AC$ for the images under~$\tilde{b}$, as desired.

Finally:
Note that~$\tilde{a}\tilde{b}$ is the identity on~$\tilde{W}_G^{\QuillenPoset} \bigl( L , C_G(L) \bigr)$.
Further~$\tilde{b}\tilde{a}(A) = p_L(A) p_{C_G(L)}(A) \geq A$ using~(\ref{eq:ApHindirprodofprojs}), 
and~$\tilde{b}\tilde{a}(F) = F$ for~$F \in \F_G(H)$; so that~$\tilde{b} \tilde{a} \geq \Id_{ \tilde{X}_G(H) }$.
Then~$\tilde{a}$ is a homotopy equivalence with homotopy inverse~$\tilde{b}$.
This completes the proof that~$\tilde{X}_G(H) \simeq \tilde{W}_G^{\QuillenPoset} \bigl( L , C_G(L) \bigr)$,
that is, of Claim~2.

In particular, assuming the identification in Corollary~\ref{joinApPosets} 
of the product-subposet of~$\Ap(H)$ with~$\Ap(L) \ujoin \Ap \bigl( C_G(L) \bigr)$, 
we see that our maps retract~$\tilde{X}_G(H)$ to a subposet we may identify with~$\tilde{W}_G^{\QuillenPoset} \bigl( L,C_G(L) \bigr)$
\end{proof}

\textbf{Claim 3.} The maps given by the natural
inclusions~$\tilde{i}:\tilde{W}_G^{\QuillenPoset} \bigl( L , C_G(L) \bigr)
            \hookrightarrow \tilde{W}_G^{\BrownPoset} \bigl( L , C_G(L) \bigr)$
and~$\tilde{j}:\tilde{W}_G^{\BoucPoset} \bigl( L , C_G(L) \bigr) \hookrightarrow \tilde{W}_G^{\BrownPoset} \bigl( L , C_G(L) \bigr)$
are homotopy equivalences.

\begin{proof}
We proceed by making relevant adjustments to the proof of Theorem~\ref{theoremChangeABSPosets}:

\medskip

We first observe that~$\tilde{j}$ is a homotopy equivalence, by the argument given in Case~$j$ there:
For we want to similarly remove
the elements~$x \in \tilde{W}_G^{\BrownPoset} \bigl( L , C_G(L) \bigr)\ -\ \tilde{W}_G^{\BoucPoset} \bigl( L , C_G(L) \bigr)$;
and the subposets~$\tilde{W}_G^{\BrownPoset} \bigl( L , C_G(L) \bigr)_{\sqsupset x}$
do not contain members of~$\F_G \bigl( L C_G(L) \bigr)$.
Hence~$\sqsupset$ agrees with~$\succ$ (that is, with~$>$) on that subposet, 
so the contractibility argument there using~$\prec$ and~$>$ goes through here also. 

\bigskip

So it remains to show that~$\tilde{i}$ is a homotopy equivalence.
We follow the general strategy
for Case~$i$ in the proof of Theorem~\ref{theoremChangeABSPosets}:
We will again show that for~$x \in \Sp(L) \ujoin \Ap \bigl( C_G(L) \bigr)$,
then~$\tilde{W}_G^{\QuillenPoset}(L,C_G(L))_{\sqsubset x}$ is contractible;
so that~$x$ can be homotopically removed from~$\tilde{W}_G^{\BrownPoset} \bigl( L , C_G(L) \bigr)$.
Again such an~$x$ has the form~$(C,D)$, with~$C \in \Sp(H) - \Ap(H)$ (so that~$C$ is not elementary abelian), 
and~$D \in \Ap(K) \cup \{ 1 \}$.
But this time, we must replace~(i) there with an exclusion-modified version
for~$\sqsubset$:

  (i$^{\prime}$) $\tilde{W}_G^{\QuillenPoset} \bigl( L , C_G(L) \bigr)_{\sqsubset(C,D)}$

  \quad \quad \quad \
     $= \bigl(\ \Ap(C) \ujoin \Ap(D)\ \bigr)\
      \cup\ \{ F \in \F_G \bigl( L C_G(L) \bigr) \tq  1 < C_{CD}(F) \text{ ($\nleq L$ when~$F \in \F_1$)}  \} $ .

\noindent
Again it will suffice for contractibility in~(i$^{\prime}$)
to homotopically remove the indicated elements of~$F \in \F_G \bigl( L C_G(L) \bigr)_{\sqsubset (C,D)}$,
by showing that~$\left( \Ap(C) \ujoin \Ap(D) \right)_{\sqsupset F}$ is contractible for each such~$F$. 

\medskip

So assume first that~$F \notin \F_1$.
Then for this~$F$, the exclusions specified by~$C_{CD}(F) \nleq L$ in~(i$^{\prime}$) are not required; 
so that~$\left( \Ap(C) \ujoin \Ap(D) \right)_{\sqsupset F}$ agrees with~$\left( \Ap(C) \ujoin \Ap(D) \right)_{\succ F}$ as in~(i), 
and of course~$\sqsubset$ agrees with~$\prec$ (that is, with~$<$) on this set.
So the argument for contractibility from the proof of Theorem~\ref{theoremChangeABSPosets},
namely via the equivalence in~(ii) there with $\Ap \bigl( C_{CD}(F) \bigr) \simeq *$, 
goes through here also---completing the case~$F \notin F_1$.

\bigskip

Now assume the remaining case where~$F \in \F_1$. 
Here to describe~$\left( \Ap(C) \ujoin \Ap(D) \right)_{\sqsupset F}$,
we now do require the exclusions specified in~(i$^{\prime}$)---namely~$1 < C_{CD}(F) \nleq L$. 
In this situation we have~$C_{CD}(F) \neq p_L \bigl( C_{CD}(F) \bigr) = p_C \bigl( C_{CD}(F) \bigr)$;
which in turn shows that:
  \[ 1 < p_{C_G(L)} \bigl( C_{CD}(F) \bigr) = p_D \bigl( C_{CD}(F) \bigr) \leq C_D(F) . \] 
These exclusions require a corresponding change to the codomain
for the homotopy equivalence in~(ii) of Theorem~\ref{theoremChangeABSPosets}:
This time, we will get our equivalence with the exclusion-modified poset
given by the difference~$\Ap \bigl( C_{CD}(F) \bigr) - \Ap \bigl( C_C(F) \bigr)$.
(And then contractibility of the latter poset
will be the corresponding variant below of the standard contractibility argument for~$\Ap \bigl( C_{CD}(F) \bigr)$.)

In particular, for the equivalence we use essentially the adjusted projection/product maps~$a,b'$
as in the analogous proof in~(ii) for Theorem~\ref{theoremChangeABSPosets};
the only new feature is that we must check that their images really do lie in our exclusion-modified subposets:
First take~$(A,B) \in \left( \Ap(C) \ujoin \Ap(D) \right)_{\sqsupset F}$,
so that we have the restriction~$1 < C_{AB}(F) \nleq L$;
then~$C_{AB}(F) \neq p_L \bigl( C_{AB}(F) \bigr) = p_C \bigl( C_{AB}(F) \bigr)$, so that~$C_{AB}(F) \nleq C$. 
So we conclude that~$b'(A,B) = C_{AB}(F) \in \Ap \bigl( C_{CD}(F) \bigr) - \Ap \bigl( C_C(F) \bigr)$, as required. 
Next consider some~$E \in \Ap \bigl( C_{CD}(F) \bigr) - \Ap \bigl( C_C(F) \bigr)$---here we see
using~(\ref{eq:ApHindirprodofprojs}) that we get~$1 < E \leq p_C(E) p_D(E) = C_{p_C(E) p_D(E)}(F) \nleq L$ since~$E \nleq C_C(F)$. 
Hence~$F \sqsubset \bigl( p_C(E) , p_D(E) \bigr)$,
so that $a(E) = \bigl( p_C(E) , p_D(E) \bigr) \in \left( \Ap(C) \ujoin \Ap(D) \right)_{\sqsupset F}$, as required.
We had already checked the usual poset-equivalence properties of the pair~$a,b'$, so we get the equivalence
of~$\left( \Ap(C) \ujoin \Ap(D) \right)_{\sqsupset F}$ with~$\Ap \bigl( C_{CD}(F) \bigr) - \Ap \bigl( C_C(F) \bigr)$.

The contractibility of the latter poset~$\Ap \bigl( C_{CD}(F) \bigr) - \Ap \bigl( C_C(F) \bigr)$
will follow using Lemma~\ref{lm:increndo}(1) via the standard Quillen conical-contractibility zigzag:
  \[ E \leq\  E \cdot  \Omega_1 Z \bigl( C_{CD}(F) \bigr)\ \geq\ \Omega_1 Z \bigl( C_{CD}(F) \bigr) , \]
once we show the endomorphism-requirement---namely that the two right-hand image-terms
in fact lie in the poset~$\Ap \bigl( C_{CD}(F) \bigr) - \Ap \bigl( C_C(F) \bigr)$.
But~$\Omega_1 Z \bigl( C_{CD}(F) \bigr) \geq \Omega_1 Z \bigl( C_C(F) \bigr) \times \Omega_1 Z \bigl( C_D(F) \bigr)$;
so since we saw above that~$C_D(F) > 1$, we conclude that the two terms indeed do not fall into~$C_C(F)$, as required.
This completes the proof of contractibility of~$\Ap \bigl( C_{CD}(F) \bigr) - \Ap \bigl( C_C(F) \bigr)$, 
and hence also of~$\left( \Ap(C) \ujoin \Ap(D) \right)_{\sqsupset F}$, for~$F \in \F_1$. 

\medskip

We have now established the contractibility of~$\left( \Ap(C) \ujoin \Ap(D) \right)_{\sqsupset F}$,
for all cases of~$F$.
So we conclude that~$\Ap(C) \ujoin \Ap(D) \hookrightarrow \tilde{W}_G^{\QuillenPoset}(L,C_G(L))_{\sqsubset (C,D)}$
is a homotopy equivalence.

Thus~$\tilde{W}_G^{\QuillenPoset} \bigl( L , C_G(L) \bigr)_{\sqsubset (C,D)} \simeq \Ap(C) \ujoin \Ap(D) \simeq *$;
where the latter contractibility follows from the standard contractibility of~$\Ap(C)$,
as in the argument after~(i) in the proof of Theorem~\ref{theoremChangeABSPosets}.
This contractibility for all~$(C,D)$ in turn completes the proof that~$\tilde{i}$ is a homotopy equivalence;
and hence completes the proof of Claim~3.
\end{proof}

And that in turn completes the proof of Proposition~\ref{propHomotEquivTildePosets}.
%\hfill $\Box$
\end{proof}

\bigskip
\bigskip

\section{Using pre-join replacement-posets for homology propagation}
\label{sec:prejoinposetsinHomologyPropagation}

In this section, we will study how to propagate nonzero homology,
using the more general context of the pre-join of posets (which might not be~$\Ap$-posets):
namely propagating (essentially) from a pre-join poset~$X \ujoin Y$, up to a larger poset~$W$.

In proving our new propagation result Theorem~\ref{theoremNewHomologyPropagation}, 
we will in effect give a pre-join generalization of the arguments
leading up to the original propagation result~\cite[0.27]{AS93} of Aschbacher and Smith
(we give a statement of their result as Lemma~\ref{lemmaHomologyPropagationAS}). 
Note that their overall central-product context at~\cite[0.15]{AS93}
is correspondingly generalized at Hypothesis~\ref{hyp:Zleft} below. 
So after that, we will be providing various associated definitions,
which roughly generalize those starting~0.9 and especially~0.19 of~\cite{AS93}.

\bigskip

First we recall some standard features of chain complexes and homology for general posets.
The following constructions work in any commutative ring with unit~$R$; but for simplicity we will work with~$R = \QQ$ or~$\ZZ$.
We also suppress the notation of the coefficient ring~$R$,
and we implicitly assume that all the computations are always carried out in~$R$.
For a finite poset~$X$ and $n \geq -1$, let~$C_n(X)$ denote the~$n$-th chain group of the augmented chain complex of~$X$.
Recall that~$C_n(X)$ is the free~$R$-module generated by the~$n$-chains of~$X$
(these are inclusion-chains of length~$n$---that is, with~$(n+1)$ members).
And~$Z_n(X)$ is the group of~$n$-cycles, the kernel of the boundary map.

\medskip
 
\noindent
We emphasize an important notational distinction:

\centerline{
  {\em For an abstract~$X$ in this section, we write the order-relation as~$<$---used ``unspecifically''\/}:
            }

\noindent
meaning that in later applications, we will specify~$<$---as either standard group-inclusion~$<$;
or else one of our variants from Sections~\ref{sec:overviewequivApG} and \ref{sec:prejoinandreplposets} such as~$\prec$.

We follow~\cite[Sec~0]{AS93} in using the following general terminology:

\begin{definition}[Full and~$a$-initial chains]
\label{defn:fullandinitialchains}
Let~$X$ be a finite poset, and~$a \subseteq X$ an~$n$-chain.
\begin{enumerate}
\item If $ \alpha \in Z_n(X)$ is a nonzero cycle,
      write~$a \in \alpha$ if~$a$ has nonzero coefficient when~$\alpha$ is written in the canonical basis of~$n$-chains.
      If in addition~$\alpha$ is nonzero in~$\tilde{H}_n(X)$,
      we say that~$a$ (or its right-end member~$\max(a)$) \textit{exhibits} homology for~$X$.

\item We say that~$a$ is a \textit{full} chain in~$X$, if for every~$x \in X$ such that~$a \cup \{ x \}$ is a longer chain,
      we have~$x > \max(a)$; that is, $\Lk_X(a) \subseteq X_{> \max(a)}$. 
      (That is, there is ``no room'' for extending~$a$ either below or within it.)

\item For~$a \neq \emptyset$, a chain~$b \in X'$ containing~$a$ is called an \textit{$a$-initial chain}
      if for every~$x \in b-a$, we have that $x> \max(a)$.  (That is, $b$ ``begins with'' $a$.)

\end{enumerate}
In particular, if~$a$ is a full chain, then every chain~$b$ containing~$a$ is~$a$-initial.

Furthermore~$a$ is a {\em maximal\/} chain in~$X$ if it cannot be lengthened---that is, $\Lk_X(a) = \emptyset$.
In particular, a maximal chain is full.  
\donerk
\end{definition}

Our further development below will continue the theme around the previous Remark \ref{rk:homopropviajoinAP}, 
of propagation-calculations which exploit a full chain~$a$ as in Definition~\ref{defn:fullandinitialchains}(2) above, 
in interaction with restrictions arising from~$\QD$-type conditions.

\bigskip

%formerly, \subsection{New homology propagation lemma}

Next, in the spirit of earlier Hypothesis~\ref{hyp:HKcentralprod}---which provided the more general group-theoretic context
for the original propagation in Lemma~\ref{lemmaHomologyPropagationAS} (i.e.~\cite[0.27]{AS93})---we give
in Hypothesis~\ref{hyp:Zleft} below a more general poset-context
for our new propagation in Theorem~\ref{theoremNewHomologyPropagation}.

Here is a brief, informal preview:
We will propagate up to a poset~$W$ as in Theorem~\ref{theoremChangeABSPosets}---where this~$W$
is the union of a pre-join with a related~$\F$-poset, for~$\F$ in the spirit of Definition~\ref{defInflated}. 
Now the naive analogue of the classical case would begin propagation just at that pre-join.
However, we saw after Remark~\ref{rk:homopropviajoinAP} that we wish to make the non-classical choice of~$H := L$ rather than~$LB$;
so in order to include treatment of the~$p$-outer~$B$ in the~$p$-extension~$LB$,
we will actually begin propagation at the union of the pre-join with~$\Outposet_{LB}(L)$---which is
a subposet of the~$\F$-poset in~$W$.
(Indeed in Hypothesis~\ref{hyp:LBforW} below, we see that~$\Outposet_{LB}(L)$  will be the model
for the more abstract~``$Z$'' in our upcoming Definition~\ref{hyp:Zleft}.)

\bigskip

\noindent
We now flesh out the above overview more formally.

First, here is our augmented version Hypothesis~\ref{hyp:LBforW} of the earlier component-hypothesis~\ref{hyp:LB}:

\begin{hypothesis}
\label{hyp:LBforW}
Assume Hypothesis~\ref{hyp:LB} for~$L,B$.
Choose~$K := C_G(LB)$ as in the classical case.

But now take~$H := L$ only (even though we will sometimes have~$B > 1$).
Recall from Lemma~\ref{lm:HypLBimpliesHypHKcentralprod} that we still get Hypothesis~\ref{hyp:HKcentralprod}---so that
we can apply the later results from Section~\ref{sec:prejoinandreplposets}.
 
In particular we can choose our replacement-poset as in Theorem~\ref{theoremChangeABSPosets}:

\centerline{
   $W := W_G^{\BoucPoset}(H,K)    = \Bp(H) \ujoin \Ap(K)\ \cup\ \F_G(HK) = X \ujoin Y\ \cup\ \F_G(HK)$;  
             }

\noindent
where we have used the abbreviations~$X := \Bp(H) = \Bp(L)$ and~$Y := \Ap(K) = \Ap \bigl( C_G(LB) \bigr)$.
Finally set~$Z := \Outposet_{LB}(L)$. 

Here we specify the abstract order-relation~``$<$'' as the relation~$\prec$ in the definition of~$W$.
\donerk
\end{hypothesis}

Next---mimicking the sequence after earlier Hypothesis~\ref{hyp:LB},  and again postponing briefly the proof---we see
in part~(b) of Lemma~\ref{lm:LBforWgivesZleft} below
that the above hypothesis is a case of a somewhat more general poset-Hypothesis~\ref{hyp:Zleft},
which is what is actually assumed in our new propagation result Theorem~\ref{theoremNewHomologyPropagation}.
We also add a further part~(a)---which we use in later Corollary~\ref{cor:newpropagextextends0.27AS},
in checking that Lemma~\ref{lemmaHomologyPropagationAS} (i.e.~our quoted statement of ``old'' Lemma~0.27 of~\cite{AS93})
is in fact a particular case of our new-Theorem~\ref{theoremNewHomologyPropagation}.

\begin{lemma}
\label{lm:LBforWgivesZleft}
Assume we have either:

  (a) Hypothesis~\ref{hyp:HKcentralprod} for~$H,K$---with~$X := \Ap(H)$, $Y := \Ap(K)$, $Z := \emptyset$,
      and with~$W$ given

      \quad \quad by~$\Ap(G)$ under group-inclusion~$<$;  or:

  (b) Hypothesis~\ref{hyp:LBforW} above for~$L,B$ (including the definitions of~$H,K,X,Y,Z$, and~$W$ under~$\prec$).

\noindent
Then we also get Hypothesis~\ref{hyp:Zleft} below for~$X,Y,Z,W$. 
\end{lemma}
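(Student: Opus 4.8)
This is purely a ``setup'' lemma: it asserts that the poset-data $X,Y,Z,W$ produced in case~(a) or case~(b) fits the ambient framework recorded in Hypothesis~\ref{hyp:Zleft}. From the discussion preceding that hypothesis I expect it to consist of a handful of structural clauses --- that $X,Y$ are finite posets; that $X\ujoin Y$ and $Z$ sit as (disjoint) subposets of $W$, with $X,Y$ identified with their standard copies inside the pre-join; that $Z$ enjoys a ``$Z$-left'' property, i.e.\ its members occur in $\prec$-chains of $W$ only \emph{below} members of the pre-join $X\ujoin Y$; perhaps that $W\simeq\Ap(G)$; and that, when $W$ arises from a group, the underlying subgroups satisfy the $p'$-central product Hypothesis~\ref{hyp:HKcentralprod}. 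The plan is to check each such clause in each of the two cases, case~(a) being essentially immediate and case~(b) requiring a little more.

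First I would dispose of case~(a). Here $Z=\emptyset$, so the $Z$-left clause is vacuous; Hypothesis~\ref{hyp:HKcentralprod} is assumed outright; $W=\Ap(G)$ is trivially equivalent to $\Ap(G)$; and the one substantive point is that $X\ujoin Y=\Ap(H)\ujoin\Ap(K)$ embeds in $W=\Ap(G)$ as a subposet. This is exactly the identification of Corollary~\ref{joinApPosets}: the mutually inverse maps $(x,y)\mapsto xy$ and $A\mapsto\bigl(p_H(A),p_K(A)\bigr)$ identify $\Ap(H)\ujoin\Ap(K)$ with the product-subposet of $\Ap(HK)$, the orders matching in both directions because $p_H,p_K$ are order-preserving and, since $A\cap C\le H\cap K$ is a $p'$-group and hence trivial, products of commuting elementary abelian $p$-subgroups stay elementary abelian and respect inclusion; and $\Ap(HK)\subseteq\Ap(G)$ because $HK\le G$. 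So all clauses hold in case~(a).

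For case~(b) I would proceed by quotation of earlier results plus one short group-theoretic computation. Hypothesis~\ref{hyp:HKcentralprod} for $H:=L$ and $K:=C_G(LB)$ is precisely the ``$H=L$'' clause of Lemma~\ref{lm:HypLBimpliesHypHKcentralprod}. By Theorem~\ref{theoremChangeABSPosets} one has $W=W_G^{\BoucPoset}(L,C_G(LB))=\bigl(\Bp(L)\ujoin\Ap(C_G(LB))\bigr)\cup\F_G(HK)=(X\ujoin Y)\cup\F_G(HK)$ with $W\simeq\Ap(G)$, so $X\ujoin Y$ is by construction a subposet of $W$. Next I would verify $Z=\Outposet_{LB}(L)\subseteq\F_G(HK)$: if $1\neq x$ lies in some $B'\in\Outposet_{LB}(L)$ and $x\in L\,C_G(LB)$, write $x=lc$ with $l\in L$ and $c\in C_G(LB)\le C_G(L)$; then $c=l^{-1}x\in LB$, so $c\in C_G(LB)\cap LB\le C_{LB}(L)$, whence $x=lc\in L\,C_{LB}(L)$, contradicting $B'\in\Outposet_{LB}(L)$; thus $B'\cap L\,C_G(LB)=1$, i.e.\ $B'\in\F_G(HK)$. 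Since on $\F_G(HK)$ the relation $\prec$ of $W$ restricts to group-inclusion $<$ (Theorem~\ref{theoremChangeABSPosets}(1)), $Z$ is a genuine subposet of $W$, disjoint from the pre-join part of $W$ (which is disjoint from $\F_G(HK)$ by construction). Finally, as $Z\subseteq\F_G(HK)$, the $Z$-left property is inherited from the $\F$-left property of $W$ --- the extension to $W$ of the property of Definition~\ref{definitionXBHposet}(3): for $z\in Z$ and $(A,C)\in X\ujoin Y$ the only possible $\prec$-relation between them is $z\prec(A,C)$ (holding iff $C_{AC}(z)>1$), never $(A,C)\prec z$.

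I expect the two delicate points to be the following. The first is the computation $Z\subseteq\F_G(HK)$: the split-extension and centralizer bookkeeping has to be carried out carefully, and it is exactly the non-classical choice $H=L$ (rather than $H=LB$) that makes $c=l^{-1}x\in C_{LB}(L)$ go through. The second is keeping straight the three order-relations involved --- the coordinatewise Cartesian order on $C^-X\times C^-Y$ underlying the pre-join, the relation $\prec$ on $W$, and group-inclusion $<$ on $\Ap(G)$ --- which coincide on the relevant subposets but must be matched up honestly, most notably on the cross-terms $z\prec(A,C)$. Beyond these, whatever exact clauses Hypothesis~\ref{hyp:Zleft} lists will be of this same ambient structural kind, and each will follow from one of Corollary~\ref{joinApPosets}, Lemma~\ref{lm:HypLBimpliesHypHKcentralprod}, Theorem~\ref{theoremChangeABSPosets}, together with the short computation above.
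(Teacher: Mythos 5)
Your proof sketch handles (Zleft)(i) and (Zleft)(ii) correctly, and your computation of $Z=\Outposet_{LB}(L)\subseteq\F_G(HK)$ is valid (though more roundabout than necessary: the paper simply uses $C_G(LB)\le C_G(L)$ to get $C\cap LC_G(LB)\le C\cap LC_G(L)=1$ directly for a $p$-outer $C$). However, there is a genuine gap: you nowhere address Hypothesis (Zleft)(iii), and your closing hedge --- ``whatever exact clauses Hypothesis~\ref{hyp:Zleft} lists ... will follow from one of \dots\ together with the short computation above'' --- does not supply it.

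Condition (iii) is substantively different from (ii). Condition (ii) only says that \emph{if} $z\in Z$ and $(x,y)\in X\ujoin Y$ happen to be $\prec$-comparable, then $z$ is below; this you do get for free from the $\F$-left structure of $W_G^{\BoucPoset}$. Condition (iii), by contrast, asserts that whenever $y\neq 1$, then \emph{every} $z\in Z$ is $\prec$-below $(x,y)$ --- i.e.\ the comparability always holds. Unlike (ii), this is not a formal consequence of the $\F$-left property and does require an argument. The point you are missing: because the non-classical choice $H=L$ has been made, $Z=\Outposet_{LB}(L)$ consists of subgroups of $LB$, and every such $p$-outer commutes with all of $K=C_G(LB)$. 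Hence for $(C,D)\in X\ujoin Y$ with $D>1$ and any $B'\in Z$, one has $1<D=C_D(B')\le C_{CD}(B')$, which by the definition of $\prec$ in $W_G^{\BoucPoset}$ is exactly $B'\prec(C,D)$. This is the only place in the lemma where the choice $K=C_G(LB)$ (rather than, say, $C_G(L)$) and $H=L$ genuinely interact, so it is precisely the ``delicate'' point the hedge cannot absorb. In case~(a), $Z=\emptyset$ and (iii) is vacuous, so your treatment of (a) is complete.
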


\begin{hypothesis}[The poset-context~(Zleft)]
\label{hyp:Zleft}
\vspace{0.2cm}
For our new propagation results, we will work in the following poset-context;
with the order-relation denoted unspecifically by~$<$:

\begin{tabular}{ll}
(Zleft) &
\begin{tabular}{rl}
(i)   & We have posets~$X,Y,Z$ (disjoint) and~$W$, with~$(X \ujoin Y) \cup Z \subseteq W$ . \\
(ii)  & If~$(x,y) \in X \ujoin Y$ and~$z \in Z$ are $<$-comparable in~$W$, then~$z \in W_{<(x,y)}$ . \\
(iii) & If~$(x,y) \in X \ujoin Y$ with~$y \neq 1$, then~$Z \subseteq W_{<(x,y)}$ .
%(iii) & If~$(x,y) \in X \ujoin Y$, with~$y \neq 1$, and~$z \in Z$, then~$z < (x,y)$ .
\end{tabular}
\end{tabular}

\noindent
We identify~$X$ and~$Y$ as subposets of~$W$ via~$X\times 1$ and~$1 \times Y$, respectively.
\donerk
\end{hypothesis}

\noindent
For propagation in the context of the (Zleft)-Hypothesis~\ref{hyp:Zleft},
we will be showing how to construct homology cycles for~$W$, from homology cycles of~$X \cup Z$ and~$Y$.
But first, we had postponed:

\begin{proof}[Proof of Lemma~\ref{lm:LBforWgivesZleft}]
First consider the~$X,Y,Z,W$ chosen in part~(a) of the Lemma:
Here we have~$X \ujoin Y = \Ap(H) \ujoin \Ap(K)$, which as in Corollary~\ref{joinApPosets} can be identified
with the subposet of~$\Ap(HK)$ whose elements are of form~$C \times D$, where~$(C,D) \in \Ap(H) \ujoin \Ap(K)$;
these~$\Ap$-posets are disjoint, since~$H \cap K$ is a~$p'$-group by Hypothesis~\ref{hyp:HKcentralprod}.
And since here we also have~$Z = \emptyset$, we now easily get the desired containment for~(i) of~(Zleft):

\centerline{
   $X \ujoin Y \cup Z = X \ujoin Y = \Ap(H) \ujoin \Ap(K) \subseteq \Ap(HK) \subseteq \Ap(G) = W$, as chosen in~(a).
            }

\noindent
And again using~$Z = \emptyset$, we get conditions~(ii) and~(iii) vacuously.
This completes~(a).

We turn to the posets~$X,Y,Z,W$ chosen in~(b), i.e.~in Hypothesis~\ref{hyp:LBforW}---where we recall~$H=L$ and~$K=C_G(LB)$. 
Here we observe that:

\centerline{
     $Z = \Outposet_{LB}(L) \subseteq \F_{G} \bigl( L C_G(LB) \bigr) = \F_G(HK)$:
            }

\noindent
since given a member~$C \in \Outposet_{LB}(L)$, we have~$C \cap L C_G(LB) \leq C \cap L C_G(L) = 1$,
for~$C$ a~$p$-outer on~$L$.
Further~$X,Y,Z$ are disjoint---since we have~$H \cap K$ a $p'$-group just as in part~(a);
while members of~$\F_G(HK)$ have trivial intersection with~$HK$.
Hence~$X \ujoin Y \cup Z \subseteq X \ujoin Y \cup \F_G(HK) = W$, as chosen in Hypothesis~\ref{hyp:LBforW}, giving~(i).
Now just as we had emphasized in Remark~\ref{rk:XGHlowerlinkformbdry} on the earlier~$X_G$-construction, 
the order-relation~$\prec$ in the construction of our present choice of:

\centerline{
   $W = W_G^{\BoucPoset}(H,K)  = \Bp(H) \ujoin \Ap(K)\ \cup\ \F_G(HK)$
            }

\noindent
shows elements~$z \in Z \subseteq \F_G(HK)$ 
appear only {\em below\/} elements of~$\Bp(H) \ujoin \Ap(K) = X \ujoin Y$---giving~(ii).
Finally, the~$p$-outer~$B \in Z = \Outposet_{LB}(L) \leq LB$ commutes with all of the members of~$\Ap \bigl( C_G(LB) \bigr) = Y$;
so we see that for any~$(C,D)$ with~$D > 1$ as assumed in~(iii),
we in fact get~$1 < D = C_{D}(B) \leq C_{CD}(B)$---so that~$B \prec (C,D)$ in the ordering on~$W$, as desired.
\end{proof}

We mention that conditions~(ii) and~(iii) for~$Z$, in the poset-context in our (Zleft)-Hypothesis~\ref{hyp:Zleft} above,
will tie in with our various~$\F$-left conditions in Sections~\ref{sec:overviewequivApG} and~\ref{sec:prejoinandreplposets};
and hence with the left-focusing theme of earlier Remark~\ref{rk:XGHlowerlinkformbdry};
this is of course the reason for the nickname~``(Zleft)''.
To continue with that viewpoint:

\begin{remark}[Simplex format under Hypothesis~\ref{hyp:LBforW}, for boundary calculations under~(Zleft)] 
\label{rk:QDfullbdrycalc}
We are now in a position to expand on our earlier preview of left-focused format in Remark~\ref{rk:XGHlowerlinkformbdry};
here is a brief (and still somewhat informal) overview of corresponding aspects in the upcoming arguments:

We will mainly describe applications of propagation based on~$LB$ under Hypothesis~\ref{hyp:LBforW};
and we had mentioned in introducing that Hypothesis that we expect~$B > 1$.
However, we will essentially avoid dealing with the full poset of elementary subgroups~$\Ap(LB)$ of~$LB$---by
working instead with the smaller poset~$\Bp(L) \cup \Outposet_{LB}(L) = X \cup Z$;
here we are using the language of the~(Zleft)-Hypothesis~\ref{hyp:Zleft}, as we may by Lemma~\ref{lm:LBforWgivesZleft}.
For this smaller poset, we will have a suitable maximal-dimension homology condition analogous to~$\QD_p$;
cf.~later Theorem~\ref{theoremHomologyCharP}.%
%\footnote{
%  fakenote:
%  Now using latest version of Bouc-dimension here.
%          }
As usual, we then study a chain~$a$---involved in a nonzero cycle~$\alpha$ in that appropriate maximal dimension.
Recall that by definition of the ordering in~$W_G^{\BoucPoset} \bigl( L , C_G(LB) \bigr) = W$,
we get the format:

  \quad  (i) A chain~$a$ of~$X \cup Z$
             has the format~$a = ( B_1 < B_2 < \cdots < B_r \prec A_1 < A_2 < \cdots < A_s ) $;

\noindent
where~$B_i \in \Outposet_{LB}(L) = Z$ and~$A_i \in \Bp(L) = X$.
In particular, the~$p$-outer terms~$B_i$ all appear at the {\em left\/} end of the sequence
(prior to the occurrence of the order-relation~$\prec$, which may differ from group-inclusion~$<$).

Next, just as in the classical case in earlier Remark~\ref{rk:homopropviajoinAP},
we must in fact consider products: namely of~$\alpha$, with~$\beta$ arising from~$Y = \Ap \bigl( C_G(LB) \bigr)$;
see Lemma~\ref{lemmaPropertiesInitialShuffleProduct} below for the appropriate generalization~$T_*(\alpha \otimes \beta)$
of the classical shuffle product.
We will wish to apply propagation to these products, to see they remain nonzero in the homology of~$W$, hence establishing~(H-QC).

Note that these usual~$\alpha$ and~$\beta$ appear in hypotheses~(1), (2)(a), and~(3) 
of our new propagation result Theorem~\ref{theoremNewHomologyPropagation}.
So in order to apply that propagation,
results assuming Hypothesis~\ref{hyp:LBforW} will mainly focus on establishing the other hypothesis~(2)(b);
note that this is the relevant analogue of~(i$^{\prime}$) and~(ii$^{\prime}$),
in the interaction-viewpoint of Remark~\ref{rk:homopropviajoinAP}.
The condition in~(2)(b) primarily involves eliminating the possibility
of any member of~$\F_G \bigl(L C_G(LB) \bigr)$ lying in~$\Lk_M(a)$ there;
note that this analysis takes place in the left-hand part of~(i) above---before~$A_1$, hence in the vicinity of the~$B_i$.  
After that, it remains to similarly eliminate members of~$(\ \Bp(L) \ujoin \Ap \bigl( C_G(LB) \bigr)\ )_{<A_s}$, 
and establish the precise structure in~(2)(b) for members above~$A_s$.
Note that this latter analysis instead takes place in the right-hand part of~(i) above---first
before or among the~$A_i$, and then after~$A_s$.
These remarks illustrate the ``visual'' convenience (mentioned earlier in the paper)
of the format in~(i), arising from the definition of the ordering.

Now the argument so far has basically reduced~$\Lk_W(a)$ to members of~$X \ujoin Y$ having the special form given in~(2)(b).
Applying the propagation result Theorem~\ref{theoremNewHomologyPropagation} will show in effect 
that these members are {\em not\/} an obstruction to propagation---by reducing to a boundary calculation just in~$Y$.
We had made a similar in-effect comment at Remark~\ref{rk:homopropviajoinAP},
in describing~0.27 of~\cite{AS93}(i.e.~ our quoted result Lemma~\ref{lemmaHomologyPropagationAS}); 
and indeed the new propagation proof below is essentially a pre-join version of that earlier proof.
\donerk
\end{remark}

\noindent
Now we begin the details for our pre-join variant of homology propagation:

The next few pages, through Lemma~\ref{lemmaPropertiesInitialShuffleProduct},
require the (Zleft)-Hypothesis~\ref{hyp:Zleft} only for~$X,Y,Z$;
we will not involve~$W$ until Theorem~\ref{theoremNewHomologyPropagation}.
More precisely, recall that we regard~$X$ and~$Y$ as subposets of~$W$, via~$X \times 1$ and~$1 \times Y$, respectively.
So before propagating to~$W$, we will first in Lemma~\ref{lemmaPropertiesInitialShuffleProduct} below
construct product-homology cycles for~$(X \ujoin Y) \cup Z$, from homology cycles of~$X \cup Z$ and~$Y$.

\smallskip
\smallskip

We begin by introducing some notions which will allow us to generalize the classical shuffle product;
recall that definition was  given at~\cite[p~483]{AS93}, and later extended in Section~3 of~\cite{KP20}.

Let~$a$ be a chain of~$X \cup Z$.
Then~$a = a_Z \cup a_X$, where~$a_Z = a \cap Z$, $a_X = a \cap X$.
Note by condition~(Zleft)(ii) that for all~$z \in a_Z$, and~$x \in a_X$, we get~$z < (x,1)$,
where of course we have identified~$(x,1)$ in the pre-join~$X \ujoin Y$ with~$x \in X$.
That is, (Zleft)(ii) in particular essentially reproduces the $\F$-left property in Hypothesis~\ref{hyp:LBforW};
so we still have the simplex-format of Remark~\ref{rk:QDfullbdrycalc}(i):

  \quad \quad (i$^{\prime}$) \quad  A chain~$a$ of~$X \cup Z$
                                    has the form~$a = a_Z \cup a_X = ( z_1 < \cdots < z_r < x_1 < \cdots < x_s ) $;

\noindent
where~$z_i \in Z$ and~$x_j \in X$. 

\begin{definition}[The notation~$a_X * b$, and classical shuffles]
\label{defn:ax*bandshuffles}
Now take a chain~$b \subseteq Y$.
Following~\cite[0.19]{AS93}, we write:

\centerline{
   $a_X * b := $ the chain~``$a_X < b$'', obtained by placing~$a_X$ to the left end of the chain~$b$.  
            }

\noindent
More formally, $a_X * b$ must be a~$<$-chain in the pre-join~$X \ujoin Y$;
so our discussion here is a variant of that in~\cite{AS93}, which takes place instead in the context of~$\Ap(HK)$. 
Namely we first ``naively'' regard~$a_X * b$ just as a sequence, from the {\em set\/} $X \cup Y$---and then 
we specify the requirements for the needed {\em poset\/}-relations,
in terms of the Cartesian-product coordinates of the pre-join~$X \ujoin Y$.
Note that the chain~$a_X * b$ is~$a_X$-initial, in the sense of Definition~\ref{defn:fullandinitialchains}(3).

As a simple example:
Suppose we have~$a = (z_1 < z_2 < x_1 < x_2)$ and~$b = (y_1 < y_2)$; 
then from the naive-sequence~$(x_1,x_2,y_1,y_2)$, 
we get the~$<$-chain~$a_X * b = \bigl(\ (x_1,1) < (x_2,1) < (x_2,y_1) < (x_2,y_2)\ \bigr)$ in the pre-join. 
In particular, the terms~$y_i \in b$ in the sequence for~$a_X * b$
are appearing in the pre-join element~$a_X * b$---but {\em not\/} in the form~$(1,y_i)$
(from our standard identification of~$Y$ with a subset of the pre-join)---they appear instead
in the two right-hand terms, in the form needed for the poset-relations in the pre-join.

\medskip

\noindent
This sequence-viewpoint also makes it easier to discuss the classical shuffle product:

Recall, essentially as in~\cite[p~483]{AS93}, that a {\em shuffle\/}
is a set-permutation~$\sigma$ of the sequence for~$a_X * b$,
whose image preserves the original order within the two subsets~$a_X$ and~$b$.
Thus it can ``shuffle'' members of~$b$ to the left, among those of~$a_X$
(and in particular, the result need not be~$a_X$-initial). 
For instance, if we apply the shuffle~$\sigma = (1 3)(2 4)$ to the sequence for~$a_X * b$ in the example above, 
we get the shuffled-sequence~$(y_1,y_2,x_1,x_2)$---resulting
in the pre-join element given by~$\bigl(\ (1,y_1) < (1,y_2) < (x_1,y_2)  < (x_2,y_2)\ \bigr)$. 

We will write~$(a_X \times b)_{\sigma}$ for the pre-join element
corresponding to the shuffled-sequence~$\sigma(a_X * b)$; and then the classical {\em shuffle product\/} of~$a_X$ and~$b$ is defined as the alternating sum of those elements:
  \[ a_X \times b := \sum_\sigma\ (-1)^\sigma (a_X \times b)_\sigma , \]
where~$\sigma$ runs through all the shuffle-permutations.
\donerk
\end{definition}

\noindent
We now examine a special feature of the shuffling process above---making crucial use of our~$Z$-left Hypothesis~\ref{hyp:Zleft}:
Consider some member~$(x,y)$ of~$(a_X * b)$ for some~$\sigma$.
First if~$y=1$, then~$x \in a_X$ (with no member of~$b$ shuffled before it);
and we see using~(Zleft)(ii) (compare~(i$^{\prime}$) above)
that each~$z \in a_Z$ has~$z < (x,1) = (x,y)$.
Otherwise~$y>1$: and here by~(Zleft)(iii), we see that each~$z \in a_Z$ has~$z < (x,y)$. 
Thus we obtain the fundamental property:

\centerline{
  Every element of~$a_Z$ is $<$-below every element of~$(a_X \times b)_\sigma$.
            }

\noindent
Therefore we can write~$a_Z \cup (a_X \times b)_\sigma$,
for the unique~$<$-chain obtained by adding the elements of~$a_Z$ to the bottom of the chain~$(a_X \times b)_\sigma$.
(This ability to add-before illustrates another visually-natural feature of the format in~(i$^{\prime}$) above.)
We can extend this notation by linearity, and define the operation:
  \[ (a,b) \mapsto \sum_\sigma\ (-1)^\sigma \left( a_Z \cup (a_X \times b)_\sigma \right) =: a_Z \cup (a_X \times b) . \]
We emphasize a notational point here:
our definition above is on {\em pairs~$(a,b)$ of chains\/} taken from~$(X \cup Z)$ and~$Y$; 
we are {\em not\/} claiming there is any subposet of form~$(X \cup Z) \ujoin Y$---the important point here
is that the {\em image\/} of our map is in fact a chain, in the target-poset~$(X \ujoin Y) \cup Z$.

\begin{definition}
\label{defInitialShuffleProduct}
Let~$a,b$ be chains in~$X \cup Z$ and~$Y$ as above, under the (Zleft)-Hypothesis~\ref{hyp:Zleft} for~$X,Y,Z$.
Then the \textit{initial shuffle product~$T$} of~$a$ and~$b$ (i.e.~of the pair~$(a,b)$ in~$(X \cup Z)' \times Y'$) is:

  \hfill $T(a,b) := a_Z \cup (a_X \times b)$ . \donerk
\end{definition}

\noindent
In Lemma~\ref{lemmaPropertiesInitialShuffleProduct} below, 
we record pre-join versions of the basic facts (cf.~\cite[pp 484--485]{AS93}) about the classical shuffle product---for use
with our generalized version of the product in Definition~\ref{defInitialShuffleProduct}.

To this end, we first introduce the following useful notation:
if~$a$ denotes a chain of a poset~$X$, which we write as a sequence~$a = (x_0 < \ldots < x_r)$, then for~$0 \leq i \leq r$,
we set:

\centerline{
  $a^{\wedge i} := a - \{x_i\} = (x_0 < \ldots < \widehat{x_i}< \ldots < x_r)$.
            }

\noindent
We let~$|a| = r+1$ denote the size of~$a$ as a set;
recall that this~$a$ has length~$r$ as a poset-chain---and defines an~$r$-chain, in the chain complex~$C_*(X)$.

We will also use the following straightforward extension to complexes 
of the notion of full chain in earlier Definition~\ref{defn:fullandinitialchains}(2):

\begin{definition}
\label{defn:fullchaininsubcomplex}
Let~$X$ be a finite poset.
For a subcomplex~$M \subseteq \K(X)$, and a chain~$a \in X'$ such that~$a \in M$,
we say that~$a$ is a {\em full chain in~$M$\/} if~$\Lk_M(a) \subseteq \K \bigl( X_{>\max(a)} \bigr)$.%
%\footnote{
%  "fake footnote":
%  I changed to lower-case "max(a)", as I've been using consistently (AFTER sec2)...
%          }
\end{definition}

We now recollect the basic facts needed for our generalization to the initial shuffle product:

\begin{lemma}
\label{lemmaPropertiesInitialShuffleProduct}
Let~$X,Y,Z$ be as in the~(Zleft)-Hypothesis~\ref{hyp:Zleft},
and let~$a,b$ denote chains of~$X \cup Z$ and~$Y$, respectively.
Write~$a_Z := a \cap Z$ and~$a_X := a \cap X$.
\begin{enumerate}
\item If $a_Z = \emptyset$, then~$T(a,b) = a \times b$ is the classical shuffle product.
\item $T$ extends by linearity to a map of chain complexes:
   \[ T_*: C_m(X \cup Z) \otimes C_n(Y) \to C_{m+n+1}( (X \ujoin Y) \cup Z) . \]
\item In particular, if~$\alpha$ is a cycle of~$X \cup Z$, and~$\beta$ is a cycle of~$Y$,
      then~$T_*(\alpha \otimes \beta)$ is a cycle of~$(X \ujoin Y) \cup Z$.
\item The $a$-initial part of~$T(a,b)$ is~$T(a,b)_a = a_Z \cup (a_X * b)$.  (Recall Definition~\ref{defn:ax*bandshuffles} for~$*$.)
\item If~$V$ is a finite poset, and~$c$ is a full chain of a subcomplex~$M \subseteq \K(V)$,
      then when~$\gamma \in C_t(M)$ for some~$t$, we have~$(\partial \gamma)_{c} = (\partial(\gamma_{c}))_{c}$.
\end{enumerate}
\end{lemma}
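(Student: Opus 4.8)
The statement to prove is purely combinatorial: for a full chain~$c$ in a subcomplex~$M \subseteq \K(V)$ and a chain~$\gamma \in C_t(M)$, one has~$(\partial\gamma)_c = (\partial(\gamma_c))_c$, where the subscript~$c$ denotes the ``$c$-initial part'' operator (the linear projection onto the span of those chains containing~$c$ as an initial segment, in the sense of Definition~\ref{defn:fullandinitialchains}(3)). The plan is to reduce immediately to the case~$\gamma = d$ a single~$t$-chain of~$M$, by linearity of both~$\partial$ and the~$c$-initial-part projection. Then I would split into two cases according to whether~$c$ is already an initial segment of~$d$.

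First I would treat the case where~$c$ is \emph{not} an initial part of~$d$. Here~$d_c = 0$, so the right-hand side is~$0$. For the left-hand side I must show no face~$d^{\wedge i}$ of~$d$ is~$c$-initial. Suppose some~$d^{\wedge i}$ were~$c$-initial, so~$c \subseteq d^{\wedge i} \subseteq d$ and every element of~$d^{\wedge i} - c$ lies above~$\max(c)$. Since~$d^{\wedge i} \subseteq d$, we have~$c \subseteq d$; and the only element of~$d$ possibly not above~$\max(c)$ is the deleted~$x_i$. But~$c$ a full chain in~$M$ forces~$\Lk_M(c) \subseteq \K(V_{>\max(c)})$, and since~$d \in M$ with~$c \subseteq d$, every element of~$d - c$ lies in~$\Lk_M(c)$, hence above~$\max(c)$. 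So in fact \emph{all} of~$d - c$ is above~$\max(c)$, making~$d$ itself~$c$-initial — contradiction. Thus~$(\partial d)_c = 0$ too. (The fullness of~$c$ is exactly what is used here.)

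Next the case where~$c$ \emph{is} an initial part of~$d$, say~$c = (x_0 < \dots < x_k)$ as an initial segment of~$d = (x_0 < \dots < x_t)$. Then~$d_c = d$, and I must compare~$(\partial d)_c$ with~$(\partial d)_c$ — wait, more precisely with~$(\partial(d_c))_c = (\partial d)_c$: so the identity is trivially true here. The only genuine content is in splitting~$\partial d = \sum_i (-1)^i d^{\wedge i}$ and observing: the faces~$d^{\wedge i}$ with~$i > k$ still contain~$c$ as an initial segment (they retain~$x_0,\dots,x_k$ and delete something later, which was already above~$\max(c)$), so they survive the~$c$-projection; the faces~$d^{\wedge i}$ with~$i \le k$ delete an element of~$c$ itself, so~$c \not\subseteq d^{\wedge i}$ and these are killed by the~$c$-projection. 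So~$(\partial d)_c = \sum_{i>k}(-1)^i d^{\wedge i}$, which is visibly the same whether computed from~$\partial d$ or~$\partial(d_c) = \partial d$.

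The main obstacle — really the only subtle point — is the first case: verifying that fullness of~$c$ in~$M$ genuinely rules out a face~$d^{\wedge i}$ being~$c$-initial when~$d$ is not. The argument above handles it, but one should be careful that~$d \in M$ and~$c \subseteq d$ together put~$d - c$ inside~$\Lk_M(c)$, so Definition~\ref{defn:fullchaininsubcomplex} applies verbatim. Everything else is bookkeeping with the boundary operator. I would present this as: reduce to basis chains by linearity; dispose of the~$c$-initial case (where both sides equal~$(\partial d)_c$ trivially); and spend the real effort on the non-$c$-initial case, invoking fullness of~$c$ to conclude both sides vanish.
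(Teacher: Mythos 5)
Your proposal addresses only item~(5); for the other items the paper gives an explicit (though routine) computation, which you do not attempt, so I will restrict attention to~(5). The paper itself does not prove~(5) at all---it simply writes ``holds in the context of general posets; cf.~\cite[Lm~0.24]{AS93}.'' Your argument therefore supplies detail the paper chooses to outsource, and it is correct.

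Your reduction to a single basis chain~$d$ by linearity is sound. In the case where~$d$ is not~$c$-initial, the key step is exactly the one you flag: if some face~$d^{\wedge i}$ were~$c$-initial, then~$c \subseteq d^{\wedge i} \subseteq d$, and since~$d \in M$ (being a simplex of~$M$, with~$d^{\wedge i}$ a face), the set~$d - c$ is a simplex in~$\Lk_M(c)$; fullness of~$c$ in~$M$ then forces every element of~$d - c$ to lie above~$\max(c)$, so~$d$ itself is~$c$-initial---contradiction. This is precisely the content of Definition~\ref{defn:fullchaininsubcomplex} and you invoke it correctly, so both sides of the identity vanish. In the complementary case where~$d$ is~$c$-initial, you observe~$d_c = d$, whence~$(\partial(d_c))_c = (\partial d)_c$ tautologically; the subsequent paragraph identifying which faces survive the~$c$-projection is harmless but not needed. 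The approach is almost certainly the same as in~\cite[Lm~0.24]{AS93}, so this is not a genuinely different route, just an unpacking of a citation. You might mention explicitly that because~$M$ is a simplicial complex (closed under faces), $d^{\wedge i} \in M$ automatically, which is what legitimizes passing from a face of~$d$ back to~$d$ inside~$M$; you use this implicitly when placing~$d - c$ in~$\Lk_M(c)$.
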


\begin{proof}
Item~(1) is clear from the definition of the initial shuffle product~$T$.

\smallskip 
\smallskip

We next show item~(2):
We need to prove that~$T_*$ commutes with the boundary operator; 
that is, that~$T_* \bigl( \partial(a \otimes b) \bigr) = \partial \bigl( T_*(a\otimes b) \bigr)$,
for every~$m$-chain~$a$ of~$X \cup Z$, and~$n$-chain~$b$ of~$Y$.

We first compute~$T_* \bigl( \partial(a \otimes b) \bigr)$.
Using the graded Leibniz product rule, we have: 

  \quad \quad $ T_* \bigl(\ \partial(a \otimes b)\ \bigr) 
                   = T_*\left(\ (\partial a) \otimes b\ +\ (-1)^{m}\ a \otimes (\partial b)\ \right)$. 

\noindent 
Decomposing~$(\partial a) \otimes b$ into its~$Z$- and~$X$-sums, we get:

  $= T_* \left(   \sum_{i=0}^{|a_Z|-1}\ (-1)^i\ (a_Z\cup a_X)^{\wedge_i} \otimes b\
               +\ \sum_{i=|a_Z|}^{m}\ (-1)^i\ (a_Z \cup a_X)^{\wedge_i} \otimes b\ 
               +\ (-1)^m\ a \otimes (\partial b) \right) .  $
%\end{align*}

\noindent
Moving~$T_*$ inside the sums by linearity, and re-indexing the second for the~$a_X$-ordering, we have:
\begin{align*}
   & = \sum_{i=0}^{|a_Z|-1}\  (-1)^i\ T_* \bigl( (a_Z^{\wedge_i} \cup a_X) \otimes b)\
            +\ (-1)^{|a_Z|}\ \sum_{i=0}^{m-|a_Z|}\ (-1)^i\ T_* \bigl( (a_Z \cup a_X^{\wedge_i}) \otimes b)\ \\
   & \quad \quad \quad \quad \quad
            +\ (-1)^{m}\ T_* \bigl( (a_Z \cup a_X) \otimes (\partial b) \bigr).
\end{align*}
Recall from Definition~\ref{defInitialShuffleProduct} that we defined~$T_*$
on  pairs~$(a,b)$ of chains from~$X \cup Z$ and~$Y$;
so identifying~$a \otimes b$ with the pair~$(a,b)$, and applying that definition (which has the effect of shifting relevant parentheses), we have:
\begin{align*}
        & = \sum_{i=0}^{|a_Z|-1}\ (-1)^i\ \bigl( a_Z^{\wedge_i} \cup (a_X\times b) \bigr)\
                         +\ (-1)^{|a_Z|}\ \sum_{i=0}^{m-|a_Z|}\ (-1)^i\ \bigl( a_Z \cup (a_X^{\wedge_i} \times b) \bigr)\ \\
	&   \quad \quad  \quad \quad +\ (-1)^{m}\ \bigl( a_Z \cup (a_X \times (\partial b) \bigr) . \\
\end{align*}
Now combining the last two summands, while extracting the common factor~$a_Z$, yields:
\begin{align*}
        & = \sum_{i=0}^{|a_Z|-1}\ (-1)^i\  \bigl( a_Z^{\wedge_i} \cup (a_X \times b) \bigr) \\
        &   \quad \quad \quad \quad +\ (-1)^{|a_Z|}\ a_Z\ \bigcup\ \left( \sum_{i=0}^{m-|a_Z|}\ (-1)^i\ (a_X^{\wedge_i} \times b)\
   	                +\ (-1)^{m - |a_Z|}\ \bigl( a_X \times (\partial b) \bigr) \right) \\
        & = \sum_{i=0}^{|a_Z|-1}\ (-1)^i\ \bigl( a_Z^{\wedge_i} \cup (a_X \times b) \bigr)\
   	                +\ (-1)^{|a_Z|}\ a_Z\ \bigcup\ \left( \partial(a_X \times b) \right) ;
\end{align*}
where the last equality holds by another application of the product rule
to $\partial$---since~$a_X \otimes b \mapsto a_X \times b$ defines a chain map
from~$C_{m-|a_Z|}(X) \otimes C_n(Y)$ to~$C_{m-|a_Z| + n +1}(X \ujoin Y)$ just using the classical shuffle product,
and we have linearity of the union-operation with the chain~$a_Z$ of~$Z$.

The above expression for~$T_* \bigl( \partial(a \otimes b) \bigr)$
will now turn out to be equal to our computation below of~$\partial \bigl (T_*(a \otimes b) \bigr)$:
\begin{align*}
   & = \partial \left( a_Z \cup (a_X \times b) \right) \\
   & = \sum_{i=0}^{|a_Z|-1} (-1)^i\ \left( a_Z^{\wedge_i}\ \cup\ (a_X \times b) \right)\
       +\ \sum_\sigma (-1)^\sigma\ \sum_{i=|a_Z|}^{m+n+1} (-1)^i\ (a_Z \cup \left( a_X \times b \right)_\sigma)^{\wedge i} \\
   & = \sum_{i=0}^{|a_Z|-1}\ (-1)^i\ \bigl( a_Z^{\wedge_i}\ \cup\ (a_X \times b) \bigr)\
       +\ (-1)^{|a_Z|}\ \sum_\sigma\ (-1)^\sigma\ \sum_{i=0}^{m-|a_Z|+n+1}\ (-1)^i\ a_Z \cup ((a_X\times b)_\sigma)^{\wedge i} \\
   & = \sum_{i=0}^{|a_Z|-1}\ (-1)^i\ \bigl( a_Z^{\wedge_i}\ \cup\ (a_X \times b) \bigr)\
       +\ (-1)^{|a_Z|}\ a_Z \bigcup\ \bigl[\ \sum_\sigma (-1)^\sigma\
                                             \sum_{i=0}^{m-|a_Z|+n+1}\ (-1)^i\ \bigl(( a_X \times b)_\sigma)^{\wedge i}\ \bigr] \\
   & = \sum_{i=0}^{|a_Z|-1}\ (-1)^i\ \bigl( a_Z^{\wedge_i}\ \cup\ (a_X\times b) \bigr)\
       +\ (-1)^{|a_Z|}\ a_Z\ \bigcup\ \left( \partial (a_X \times b) \right) . \\
\end{align*}
This is exactly the expression that we obtained above for~$T_* \bigl( \partial(a \otimes b) \bigr)$;
completing the proof of item~(2).
Note also that item~(3) follows from item~(2).

Item~(4) follows from the same property of the classical shuffle product since:
  \[ T(a,b)_a = \bigl( a_Z \cup (a_X \times b) \bigr)_a = a_Z \cup (a_X \times b)_{a_X} = a_Z \cup (a_X * b) . \]
Item~(5) holds in the context of general posets; cf.~\cite[Lm~0.24]{AS93}.
\end{proof}

\noindent
Now we state and prove our pre-join version of the Homology Propagation Lemma~\cite[0.27]{AS93}
(for comparison, we will state that earlier result afterward, as Lemma~\ref{lemmaHomologyPropagationAS}):

\begin{theorem}
\label{theoremNewHomologyPropagation}
Let~$X,Y,Z,W$ be as in the~(Zleft)-Hypothesis~\ref{hyp:Zleft}.
Assume~$M$ is some intermediate simplicial complex, namely one satisfying:
  \[ \K \bigl( (X \ujoin Y) \cup Z \bigr) \subseteq M \subseteq \K(W) . \]
Suppose that we further have the following:
\begin{enumerate}
\item A nonzero-homology cycle~$\alpha \in Z_m(X \cup Z)$, $m \geq 0$.
\item A chain~$a \in \alpha$ such that:
\begin{enumerate}[label=(\alph*)]
\item The coefficient~$q$ of~$a$ in~$\alpha$ is invertible,
\item $\Lk_M(a) \subseteq \K \bigl( \{ \max(a) \} \times Y)$.
      In particular, $a$ is a maximal chain of~$X \cup Z$.
%\item $a$ is a full chain in $W$,
%\item $W_{>\Max(a)} \subseteq \{\Max(a)\}\times Y$.
\end{enumerate}
\item A nonzero-homology cycle~$\beta \in Z_n(Y)$, $n \geq -1$.
\end{enumerate}
Then~$T_*(\alpha \otimes\beta)$ defines a nonzero-homology cycle in~$Z_{m+n+1}(M)$.
\end{theorem}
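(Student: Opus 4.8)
The strategy is to argue by contradiction, transporting the proof of the classical propagation Lemma~\ref{lemmaHomologyPropagationAS} into the pre-join setting via the initial shuffle product~$T$ of Definition~\ref{defInitialShuffleProduct}. First I would observe that $T_*(\alpha\otimes\beta)$ is a cycle of the required degree: it lies in $Z_{m+n+1}\bigl((X\ujoin Y)\cup Z\bigr)$ by Lemma~\ref{lemmaPropertiesInitialShuffleProduct}(2)--(3), and since $\K\bigl((X\ujoin Y)\cup Z\bigr)\subseteq M$ it is a cycle in $Z_{m+n+1}(M)$. So it remains to show it is nonzero in $\tilde{H}_{m+n+1}(M)$; accordingly I would \emph{suppose} that $T_*(\alpha\otimes\beta)=\partial\gamma$ for some $\gamma\in C_{m+n+2}(M)$, and aim to contradict the nonzero-homology of $\beta$.

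The heart of the argument is to pass to the $a$-initial part and then project onto~$Y$. First I would note that hypothesis~(2)(b) makes $a$ a full chain of $M$ in the sense of Definition~\ref{defn:fullchaininsubcomplex}: since $(\max(a),y)>(\max(a),1)=\max(a)$ in the pre-join, we have $\{\max(a)\}\times Y\subseteq W_{>\max(a)}$, hence $\Lk_M(a)\subseteq\K\bigl(\{\max(a)\}\times Y\bigr)\subseteq\K\bigl(W_{>\max(a)}\bigr)$; the same containment forces $a$ to be a maximal chain of $X\cup Z$, as any extension of $a$ inside $X\cup Z$ would produce a link vertex with nontrivial $Y$-coordinate, which is impossible. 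Thus Lemma~\ref{lemmaPropertiesInitialShuffleProduct}(5) applies and gives $(\partial\gamma)_a=(\partial(\gamma_a))_a$. Next I would compute the left-hand side $\bigl(T_*(\alpha\otimes\beta)\bigr)_a$. Using the $\F$-left conditions (Zleft)(ii)--(iii) of Hypothesis~\ref{hyp:Zleft} and the description of shuffles in Definition~\ref{defn:ax*bandshuffles}, one checks that among the basis chains occurring in $T_*(\alpha\otimes\beta)$ the only $a$-initial ones come from the term $b=a$ of $\alpha$ together with the identity shuffle: maximality of $a$ in $X\cup Z$ excludes any $b\neq a$ (matching the first $|a|$ vertices would force $b_Z=a_Z$ and then $b_X=a_X$, so $b=a$), while any element of a chain $c\in\beta$ shuffled to the left of $a_X$ carries a nontrivial $Y$-coordinate and so destroys $a$-initiality. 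Together with Lemma~\ref{lemmaPropertiesInitialShuffleProduct}(4) and the invertibility of the coefficient $q$ in hypothesis~(2)(a), this yields $\bigl(T_*(\alpha\otimes\beta)\bigr)_a=\varepsilon\,q\,\bigl(a_Z\cup(a_X*\beta)\bigr)$ for a suitable sign $\varepsilon$.

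To extract the contradiction I would then introduce the ``link-projection'' $\rho$ which sends an $a$-initial chain $a\cup e$, where $e=\bigl((\max(a),y_1)<\cdots<(\max(a),y_k)\bigr)\subseteq\{\max(a)\}\times Y$, to the chain $(y_1<\cdots<y_k)$ of $Y$ (and $a$ itself to the empty chain), extended linearly. On the one hand, $\rho$ carries $\varepsilon q\bigl(a_Z\cup(a_X*\beta)\bigr)$ to $\varepsilon q\,\beta$, since $a_Z\cup(a_X*c)$ is precisely $a$ followed by the lift of $c$ to $\{\max(a)\}\times Y$. On the other hand, every $a$-initial basis chain of $\gamma$ has the form $a\cup e$ with $e\subseteq\{\max(a)\}\times Y$ by the full-chain property, and in $\partial(a\cup e)$ only the deletions inside the $e$-part survive taking the $a$-initial part; hence $\rho$ carries $(\partial(\gamma_a))_a$ to $\pm\partial\bigl(\rho(\gamma_a)\bigr)$, a boundary in $C_n(Y)$. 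Equating the two via $(\partial\gamma)_a=(\partial(\gamma_a))_a$ and cancelling the invertible $q$ gives $\beta=\pm q^{-1}\partial\bigl(\rho(\gamma_a)\bigr)\in\partial C_{n+1}(Y)$, contradicting the assumption that $\beta$ is nonzero in $\tilde{H}_n(Y)$. This also covers the edge case $n=-1$, where $Y=\emptyset$ forces $\rho(\gamma_a)=0$ and hence $\beta=0$.

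The main obstacle will be the combinatorial bookkeeping in the $a$-initial computation together with the verification that $\rho$ intertwines the two boundary operators up to a controlled sign. Concretely: making precise, in the Cartesian-product language of the pre-join, exactly why no nontrivial shuffle of a chain $c\in\beta$ can leave $a$ as an initial segment (so that $a_Z\cup(a_X*\beta)$ is literally the $a$-initial part of $T_*(\alpha\otimes\beta)$), and tracking the signs coming from the positions of the deleted vertices in $\partial(a\cup e)$ versus $\partial(\rho(e))$. All of this is the pre-join analogue of the calculations at \cite[pp~483--490]{AS93}, with Hypothesis~\ref{hyp:Zleft} providing exactly the left-ness of $Z$ needed for $T$ and its initial parts to be well-behaved; once it is in place, the degenerate cases $m=0$ and $n=-1$ require only the obvious conventions.
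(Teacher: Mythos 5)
Your plan is correct and follows essentially the same route as the paper's proof: assume $T_*(\alpha\otimes\beta)=\partial\gamma$, use (2)(b) and Lemma~\ref{lemmaPropertiesInitialShuffleProduct}(5) to pass to $a$-initial parts, identify $T_*(\alpha\otimes\beta)_a = q\bigl(a_Z\cup(a_X*\beta)\bigr)$ via Lemma~\ref{lemmaPropertiesInitialShuffleProduct}(4) and maximality of $a$, and reduce the boundary computation in $M$ to one in $Y$ to contradict hypothesis~(3). Your ``link-projection'' $\rho$ is just a named version of the paper's extraction of the chains $\hat{c}_i$ from the $a$-initial chains $c_i = a_Z\cup(a_X*\hat{c}_i)$ of $\gamma_a$, and the sign bookkeeping you flag as the remaining obstacle is exactly what the paper's direct expansion of $\partial(a_X*\hat{c}_i)_a = (-1)^m(a_X*\partial\hat{c}_i)$ supplies.
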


\begin{proof}
We mimic the original proof of~\cite[Lemma~0.27]{AS93}---i.e.~our quoted result Lemma~\ref{lemmaHomologyPropagationAS} below. 
(But compare also the proof of~\cite[Lemma~3.14]{KP20}---i.e.~our quoted result Lemma~\ref{lemmaHomologyPropagationKP20}.) 

By Lemma~\ref{lemmaPropertiesInitialShuffleProduct}(3),
$T_*(\alpha \otimes \beta)$ is a cycle of~$(X \ujoin Y) \cup Z$.
So since~$\K \bigl( (X \ujoin Y) \cup Z \bigr) \subseteq M$ by hypothesis
(which implicitly involves the containment~$(X \ujoin Y) \cup Z \subseteq W$ in~(Zleft)(i)), 
it follows that~$T_*(\alpha \otimes \beta)$ is also a cycle of~$M$.
Suppose by way of contradiction that it is in fact a boundary:
\begin{equation}
\label{eq1a}
  T_*(\alpha \otimes \beta) = \partial\gamma ,
\end{equation}
for some~$\gamma \in C_{m+n+2}(M)$.
Note that hypothesis~(2)(b) implies that~$a$ is a full chain of~$M$.
Then we may apply Lemma~\ref{lemmaPropertiesInitialShuffleProduct}(5), to get: 
\begin{equation}
\label{eq2a}
  \bigl( \partial(\gamma_a) \bigr)_a  = (\partial\gamma)_a  =  T_*(\alpha \otimes \beta)_a  = q \bigl( a_Z \cup (a_X * \beta) \bigr) ,
\end{equation}
where the second equality follows using~(\ref{eq1a}), and the third using Lemma~\ref{lemmaPropertiesInitialShuffleProduct}(4).

We describe the chains involved in~$\gamma_a$; these must involve elements of~$\Lk_M(a)$.
We saw via our format-discussion---at~(i$^{\prime}$) after Remark~\ref{rk:QDfullbdrycalc}---that
we may write~$a = a_Z \cup a_X$: where~$a_X$ has the form~$\bigl( (x_{r+1},1) < \ldots < (x_m,1) \bigr)$
(with all members of~$a_Z$ below~$\min(a_X) = (x_{r+1},1)$).
Let~$c \in \gamma_a$ be~$a$-initial, and pick an element~$w \in c-a$.
By the description of~$\Lk_M(a)$ in hypothesis~(2)(b), it follows that~$w > \max(a) = (x_m,1)$;
so~$w = (x_m,y)$ for some element~$y \in Y$---by the definition of the ordering in the pre-join.
Hence we see~$c = a_Z \cup (a_X * \hat{c})$, where~$\hat{c} = \{ y \in Y : (x_m,y) \in c \}$.
Note that~$\hat{c}$ is a non-empty chain in~$Y$ since~$|c| = m+n+2 > m = |a|$.
If we write:
  \[ \gamma_a = \sum_{i \in I}\ q_i c_i = \sum_{i \in I}\ q_i \bigl( a_Z \cup (a_X * \hat{c}_i) \bigr) , \]
then working at the~$i$-th term, we have:
  \[ \partial(a_X * \hat{c}_i)   = \sum_j      (-1)^j (a_X * \hat{c}_i)^{\wedge j} ; \]
and so since the $a$-initial part of the boundary only removes members of the chain~$\hat{c}_i$, we see:
  \[ \partial(a_x * \hat{c}_i)_a = \sum_{j>m}\ (-1)^j (a_X * \hat{c}_i)^{\wedge j}
                                 = \bigl( a_X * (-1)^m\ \partial  (\hat{c}_i) \bigr) . \]
Thus we obtain:
\begin{equation}\label{eq3a}
   \begin{array}{ll}
     (\partial \gamma_a)_a &= \sum_i\ q_i (\ \partial \bigl( a_Z\ \cup\ (a_X *\hat{c}_i) \bigr)\ )_a
                            = \sum_i\ q_i (\ a_Z\ \cup\ \bigl(a_X * (-1)^m\ \partial(\hat{c}_i) \bigr)\ ) \\
                           &= a_Z\ \cup\ \left( a_X * (-1)^m\ \partial\ \bigl( \sum_i q_i \hat{c}_i \bigr)\ \right) .
    \end{array}
\end{equation}
Equating the final values for~$(\partial \gamma_a)_a$ in~(\ref{eq2a}) and~(\ref{eq3a}), we get:
  \[ q (a_Z \cup (a_X*\beta)) = a_Z \cup \left( a_X * (-1)^m\ \partial \left( \sum_i q_i \hat{c}_i\right) \right) , \]
and this holds if and only if:
  \[q \beta =  \partial \left( (-1)^m\ \sum_i\ q_i \hat{c}_i \right) . \]
But then~$\beta = \partial \left( q^{-1} (-1)^m\ \sum_i\ q_i \hat{c}_i \right)$ is zero in the homology~$\tilde{H}_n(Y)$,
contrary to hypothesis~(3).
This contradiction completes the proof.
\end{proof}

As a natural first application,
we deduce the original propagation result~\cite[0.27]{AS93} from Theorem~\ref{theoremNewHomologyPropagation}.
For reference, we first provide a statement of that earlier result:

\begin{lemma}[{original homology propagation, \cite[Lemma~0.27]{AS93}}]
\label{lemmaHomologyPropagationAS}
Assume the~$p'$-central product Hypothesis~\ref{hyp:HKcentralprod} for~$H,K \leq G$;
and that the following further conditions hold:
\begin{enumerate}
\item[(i)] for some~$A$ exhibiting~$\QD_p$ for~$H$, $\Ap(G)_{>A} \subseteq A \times K$,
\item[(ii)] $\tilde{H}_* \bigl( \Ap(K) , \QQ \bigr) \neq 0$.
\end{enumerate}
Then also~$\tilde{H}_* \bigl( \Ap(G),\QQ ) \neq 0$.
\end{lemma}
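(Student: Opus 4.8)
The plan is to derive Lemma~\ref{lemmaHomologyPropagationAS} as a direct special case of our new propagation result Theorem~\ref{theoremNewHomologyPropagation}, using part~(a) of Lemma~\ref{lm:LBforWgivesZleft} to set up the relevant instance of the (Zleft)-Hypothesis~\ref{hyp:Zleft}. Concretely, I would take $X := \Ap(H)$, $Y := \Ap(K)$, $Z := \emptyset$, and $W := \Ap(G)$ under group-inclusion~$<$; by Lemma~\ref{lm:LBforWgivesZleft}(a) these posets satisfy~(Zleft), and since $Z = \emptyset$, the conditions~(ii) and~(iii) there hold vacuously. For the intermediate complex $M$ I would simply take $M := \K(W) = \K \bigl( \Ap(G) \bigr)$; the required chain $\K \bigl( (X \ujoin Y) \cup Z \bigr) \subseteq M$ then holds because, by Corollary~\ref{joinApPosets}, $X \ujoin Y = \Ap(H) \ujoin \Ap(K)$ is identified with a subposet of $\Ap(HK) \subseteq \Ap(G) = W$.

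Next I would translate the two hypotheses of Lemma~\ref{lemmaHomologyPropagationAS} into the three hypotheses of Theorem~\ref{theoremNewHomologyPropagation}. The $\QD_p$-condition for $H$ provides a nonzero cycle $\alpha \in Z_m( \Ap(H) ) = Z_m(X \cup Z)$ in the top dimension $m = m_p(H) - 1$, giving hypothesis~(1); hypothesis~(2)(a) is satisfied by choosing a chain $a \in \alpha$ with invertible coefficient $q$ (over $\QQ$ every nonzero coefficient is invertible, so any $a \in \alpha$ works, and we may take $a$ so that $\max(a) = A$ exhibits $\QD_p$). For hypothesis~(3), condition~(ii) of Lemma~\ref{lemmaHomologyPropagationAS} gives a nonzero-homology cycle $\beta \in Z_n( \Ap(K) ) = Z_n(Y)$ for some $n$. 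The remaining, and only substantive, point is to check hypothesis~(2)(b), namely $\Lk_M(a) \subseteq \K \bigl( \{ \max(a) \} \times Y \bigr)$: here I would argue that since $a$ is a top-dimensional chain of $\Ap(H)$ with $\max(a) = A$ of maximal $p$-rank in $H$, its members have consecutive orders $p, p^2, \dots, p^{m_p(H)}$, so there is ``no room'' to insert elements of $\Ap(G)$ among or below $a$ — i.e.\ $a$ is a full chain of $\Ap(G)$; and then condition~(i) of Lemma~\ref{lemmaHomologyPropagationAS}, $\Ap(G)_{>A} \subseteq A \times K$, together with the identification of $A \times K$ (or rather of $\Ap(A \times K)$-type elements) with a subposet of the pre-join, forces every element strictly above $A$ to lie in $\{ A \} \times \Ap(K)$, giving exactly the link-condition in~(2)(b).

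With all four hypotheses verified, Theorem~\ref{theoremNewHomologyPropagation} yields that $T_*(\alpha \otimes \beta)$ is a nonzero-homology cycle in $Z_{m+n+1}(M) = Z_{m+n+1} \bigl( \Ap(G) \bigr)$; hence $\tilde{H}_* \bigl( \Ap(G), \QQ \bigr) \neq 0$, which is the conclusion of Lemma~\ref{lemmaHomologyPropagationAS}. I expect the main obstacle to be the careful bookkeeping in the verification of~(2)(b): one must make precise the passage between the ``global'' statement $\Ap(G)_{>A} \subseteq A \times K$ in the original Aschbacher--Smith hypothesis and the pre-join formulation $\Lk_M(a) \subseteq \K \bigl( \{ \max(a) \} \times Y \bigr)$ of our Theorem, using the projection/product retraction of Corollary~\ref{joinApPosets} and the fact that $a$ being full in $\Ap(G)$ means every chain extending $a$ is $a$-initial. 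The rest is routine translation. (One should also remark that the argument uses $\QQ$-coefficients throughout, as in the quoted statements, so invertibility of coefficients is automatic.)
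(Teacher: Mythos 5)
Your proposal is correct and follows essentially the same route as the paper's own derivation (Corollary~\ref{cor:newpropagextextends0.27AS}): take $X=\Ap(H)$, $Y=\Ap(K)$, $Z=\emptyset$, $W=M=\Ap(G)$, invoke Lemma~\ref{lm:LBforWgivesZleft}(a) for~(Zleft), read hypothesis~(1) and~(2)(a) off $\QD_p$ over $\QQ$, get~(2)(b) from fullness of the maximal-rank chain~$a$ together with $\Ap(G)_{>A}\subseteq A\times K$, and take~(3) from~(ii). The paper packages the verification of~(2)(b) more tersely (``$A$ has maximal rank in $\Ap(H)$, hence $\Lk_{\Ap(G)}(a)\subseteq\Ap(G)_{>A}$''), but the consecutive-orders argument you spell out is exactly what underlies it.
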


\begin{corollary}
\label{cor:newpropagextextends0.27AS}
Lemma~0.27 of~\cite{AS93} (i.e.~Lemma~\ref{lemmaHomologyPropagationAS})
is a particular case of Theorem~\ref{theoremNewHomologyPropagation}.
\end{corollary}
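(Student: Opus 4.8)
The plan is to verify that the data appearing in Lemma~\ref{lemmaHomologyPropagationAS} can be repackaged so as to match the hypotheses of Theorem~\ref{theoremNewHomologyPropagation}, using the ``case~(a)'' branch of Lemma~\ref{lm:LBforWgivesZleft}. So I begin with~$H,K \leq G$ satisfying the~$p'$-central product Hypothesis~\ref{hyp:HKcentralprod}, together with the two conditions (i) and (ii) of Lemma~\ref{lemmaHomologyPropagationAS}. As in Lemma~\ref{lm:LBforWgivesZleft}(a), I set~$X := \Ap(H)$, $Y := \Ap(K)$, $Z := \emptyset$, and take~$W := \Ap(G)$ under ordinary group-inclusion~$<$; that Lemma then tells me the~(Zleft)-Hypothesis~\ref{hyp:Zleft} holds for these~$X,Y,Z,W$. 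For the intermediate complex I simply take~$M := \K(W) = \K \bigl( \Ap(G) \bigr)$; since~$Z = \emptyset$, the pre-join~$X \ujoin Y = \Ap(H) \ujoin \Ap(K)$ is identified via Corollary~\ref{joinApPosets} with the product-subposet of~$\Ap(HK) \subseteq \Ap(G)$, so the required chain of inclusions~$\K \bigl( (X \ujoin Y) \cup Z \bigr) \subseteq M \subseteq \K(W)$ is immediate.

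Next I produce the three pieces of input data for Theorem~\ref{theoremNewHomologyPropagation}. For~(1) and~(2): condition~(i) of Lemma~\ref{lemmaHomologyPropagationAS} furnishes an~$A \in \Ap(H)$ exhibiting~$\QD_p$ for~$H$. By definition of~$\QD_p$ (Definition~\ref{defn:QD}) and the meaning of ``exhibits'' (Definition~\ref{defn:fullandinitialchains}(1)), there is a nonzero-homology cycle~$\alpha \in Z_{m_p(H)-1} \bigl( \Ap(H) \bigr)$ with~$\Ap(H) = \Ap(H) \cup Z$ here, and a chain~$a \in \alpha$ with~$\max(a) = A$; rescaling~$\alpha$ if necessary I may take the coefficient~$q$ of~$a$ to be~$1$, giving~(2)(a). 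Since~$A$ has maximal rank~$m_p(H)$ in~$\Ap(H)$, the chain~$a$ is a maximal chain of~$\Ap(H) = X \cup Z$. For~(2)(b) I must check~$\Lk_M(a) \subseteq \K \bigl( \{ A \} \times Y \bigr)$: any~$E \in \Ap(G)$ with~$a \cup \{E\}$ a chain must have~$E > A$ (as~$a$ is full in~$\Ap(G)$, exactly as in the Quillen/Aschbacher--Smith argument recalled in Remark~\ref{rk:homopropviajoinAP}, using that~$A$ has consecutive orders up to~$p^{m_p(H)}$ so there is ``no room'' below or within), and condition~(i) then forces~$E \leq A \times K$, hence~$E = A \times p_K(E)$ lies in the identified copy of~$\{A\} \times \Ap(K) = \{A\} \times Y$ inside~$\Ap(G)$. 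For~(3): condition~(ii) gives a nonzero-homology cycle~$\beta \in Z_n \bigl( \Ap(K) \bigr) = Z_n(Y)$ for some~$n \geq -1$. With~$Z = \emptyset$ we have~$a_Z = \emptyset$, so by Lemma~\ref{lemmaPropertiesInitialShuffleProduct}(1) the initial shuffle product~$T_*(\alpha \otimes \beta)$ is just the classical shuffle product~$\alpha \times \beta$. Theorem~\ref{theoremNewHomologyPropagation} now yields that~$T_*(\alpha \otimes \beta)$ is a nonzero-homology cycle of~$M = \K \bigl( \Ap(G) \bigr)$, so~$\tilde H_* \bigl( \Ap(G) , \QQ \bigr) \neq 0$, which is precisely the conclusion of Lemma~\ref{lemmaHomologyPropagationAS}.

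The step I expect to require the most care is the passage from condition~(i) of Lemma~\ref{lemmaHomologyPropagationAS}, phrased as~$\Ap(G)_{>A} \subseteq A \times K$, to the complex-theoretic condition~(2)(b) of Theorem~\ref{theoremNewHomologyPropagation}, phrased as~$\Lk_M(a) \subseteq \K \bigl( \{\max(a)\} \times Y \bigr)$: one has to be precise that (first) fullness of~$a$ in~$\Ap(G)$ confines all of~$\Lk_{\Ap(G)}(a)$ to~$\Ap(G)_{>A}$, and (second) that the set~$A \times K$, intersected with~$\Ap(G)$ and viewed through the identification of~$\Ap(H) \ujoin \Ap(K)$ with the product-subposet of~$\Ap(HK)$ from Corollary~\ref{joinApPosets}, is exactly the subposet~$\{A\} \times Y$ of the pre-join. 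Everything else is bookkeeping: matching the degree conventions (note~$m = m_p(H) - 1$, so the output cycle sits in degree~$m + n + 1$, consistent with the join/K\"unneth dimension count of Remark~\ref{rk:prodhomolforjoins}), and observing that the~$Z = \emptyset$ specialization collapses all the~$Z$-left machinery to the classical situation. I would close by remarking that this recovers~\cite[Lemma~0.27]{AS93} as announced, so Theorem~\ref{theoremNewHomologyPropagation} is a genuine generalization.
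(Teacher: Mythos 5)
Your proposal is correct and follows essentially the same route as the paper's own proof: both invoke Lemma~\ref{lm:LBforWgivesZleft}(a) with~$X = \Ap(H)$, $Y = \Ap(K)$, $Z = \emptyset$, $W = M = \Ap(G)$, and then read off conditions~(1), (2)(a), (2)(b), and~(3) of Theorem~\ref{theoremNewHomologyPropagation} from hypotheses~(i) and~(ii) of Lemma~\ref{lemmaHomologyPropagationAS} exactly as you do (the paper gets~(2)(a) from invertibility over~$\QQ$ rather than by rescaling, which is immaterial). Your extra care in identifying~$\Ap(G)_{>A} \cap (A \times K)$ with~$\{A\} \times Y$ via the projection/product identification is a welcome elaboration of a step the paper treats implicitly, but it is not a different argument.
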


\begin{proof}
Let~$A,H,K,G$ be as in the hypotheses of~\cite[0.27]{AS93}---that is, Lemma~\ref{lemmaHomologyPropagationAS} above.

We saw in Lemma~\ref{lm:LBforWgivesZleft}(a) that we get the~(Zleft) Hypothesis~\ref{hyp:Zleft},
when we make the indicated choices~$X = \Ap(H)$, $Y = \Ap(K)$, $Z = \emptyset$, and~$W = \Ap(G)$ under group-inclusion~$<$.

We now check the conditions required for our new propagation Theorem~\ref{theoremNewHomologyPropagation},
for the further choice~$M = W = \Ap(G)$. 
By hypothesis~(i) of Lemma~\ref{lemmaHomologyPropagationAS}, 
$H$ has~$\QD_p$ exhibited by~$A$; giving a nonzero homology cycle~$\alpha \in Z_{m_p(H)-1} \bigl( \Ap(H) \bigr)$,
and a chain~$a \in \alpha$ such that~$\max(a) = A$---yielding condition~(1) for Theorem~\ref{theoremNewHomologyPropagation}.
Moreover, since in~\cite{AS93} the authors work with rational homology,
we immediately get invertibility for the coefficient~$q$ of~$a$ in~$\alpha$---giving condition~(2)(a).
The latter part of hypothesis~(i) also gives~$\Ap(G)_{>\max(a)} = \Ap(G)_{>A} \subseteq \{ A \}\times \Ap(K)$. 
And since~$A$ has maximal rank in~$\Ap(H)$, we have~$\Lk_{\Ap(G)}(a) \subseteq \Ap(G)_{> A}$;
and this with the previous inclusion completes condition~(2)(b).
Finally, condition~(3) holds since~$\tilde{H}_* \bigl( \Ap(K) \bigr) \neq 0$ by hypothesis~(ii).

Thus all the conditions for Theorem~\ref{theoremNewHomologyPropagation} hold;
so we conclude~$\tilde{H}_* \bigl( \Ap(G) \bigr) = \tilde{H}_*(M) \neq 0$.
\end{proof}

\begin{remark}
We can alternatively prove the above corollary by invoking Theorem~\ref{theoremNewHomologyPropagation},
with the same posets~$X=\Ap(H)$, $Y=\Ap(K)$, $Z=\emptyset$,
but by taking~$M = W$ to be the Th\'{e}venaz poset~$\widehat{X}_G(HK)$ of Proposition~\ref{propHomotEquivPosetReverseOrdering}
(with its order-relation~$\prec$)---of course we saw there that this poset is homotopy equivalent to~$\Ap(G)$.
\end{remark}

We mention that for the purposes of this paper, the main applications of Theorem~\ref{theoremNewHomologyPropagation}
will come in the proof of Theorem~\ref{theoremLiep}---more precisely,
in the proofs of Lemmas~\ref{lemmaCharPandFG-typeNC} and~\ref{lemmaCharPAndConicalFields},
covering the two parts of that Theorem. 

\bigskip

We close the section by giving a statement below the extended propagation result Lemma~3.14 of~\cite{KP20};
which we will use several times later---but
which does {\em not\/} follow from Theorem~\ref{theoremNewHomologyPropagation} above.
% replacing old~\ref{theoremHomologyPropagation} by new~\ref{theoremNewHomologyPropagation}
(However, note the similarity of conditions~(iv)+(ii) in the Lemma 
with condition~(2)(b) in the Theorem.)

The lemma holds for homology with coefficients in any commutative ring~$R$ with unit.
For the purposes of this article, we can suppose that~$R = \QQ$ or~$\ZZ$.

\begin{lemma}[Lm~3.14 in~\cite{KP20}]
\label{lemmaHomologyPropagationKP20}
Assume the~$p'$-central product Hypothesis~\ref{hyp:HKcentralprod} for~$H,K \leq G$:
\begin{enumerate}[label=(\roman*)]
\item $[H,K]=1$, and~$H \cap K$ is a~$p'$-group.

\noindent
      Further assume there is~$X \subseteq \Ap(G)$ with:
\item $\N_G(K)\subseteq X$;
\item There exist a chain~$a \in \Ap(H)' \cap X'$ and a cycle~$\alpha \in C_m \bigl( \Ap(H) \bigr) \cap C_m(X)$
      such that the coefficient of~$a$ in~$\alpha$ is invertible,
      and~$\alpha \neq 0$ in~$\tilde{H}_m \bigl( \Ap(H) \bigr)$ (for some~$m \geq -1$);
\item In addition, $a$ is a full chain in~$X$, and also~$X_{>\max(a)} \subseteq \N_G(K)$;
\item $\tilde{H}_* \bigl( \Ap(K) ) \neq 0$.
\end{enumerate}
Then~$\tilde{H}_*(X)\neq 0$.
\hfill $\Box$
\end{lemma}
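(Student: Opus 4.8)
The plan is to deduce Lemma~\ref{lemmaHomologyPropagationKP20} as a special case of the new pre-join propagation Theorem~\ref{theoremNewHomologyPropagation}, by making the degenerate choice~$Z = \emptyset$ and taking~$Y := \Ap(K)$, but now allowing the role of the intermediate complex~$M$ to be played by (the order complex of) the given~$X \subseteq \Ap(G)$. First I would note that with~$Z = \emptyset$ the pre-join~$(X' \ujoin Y) \cup Z$ in the language of Hypothesis~\ref{hyp:Zleft} reduces to the ordinary pre-join~$\Ap(H) \ujoin \Ap(K)$, and that by Corollary~\ref{joinApPosets} this is identified with the direct-product subposet of~$\Ap(HK)$. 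Since~$\Ap(HK) \subseteq \Ap(G)$ and~$\N_G(K) \subseteq X$ by hypothesis~(ii), every member of~$\Ap(HK)$ lies in~$\N_G(K) \subseteq X$ (as it centralizes~$K$ up to the~$p'$-part, hence meets~$K$'s inflation); so~$\K\bigl( \Ap(H) \ujoin \Ap(K) \bigr) \subseteq \K(X) =: M \subseteq \K \bigl( \Ap(G) \bigr)$. Thus the ``intermediate complex'' hypothesis of Theorem~\ref{theoremNewHomologyPropagation} is met with~$W := \Ap(G)$ under group-inclusion (this is exactly Lemma~\ref{lm:LBforWgivesZleft}(a), giving us~(Zleft)).

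Next I would translate the remaining hypotheses. Condition~(1) of the Theorem---a nonzero-homology cycle~$\alpha \in Z_m(X \cup Z) = Z_m(X)$---follows from hypothesis~(iii) of the Lemma, since~$\alpha$ is a cycle lying in~$C_m(X)$ with~$\alpha \neq 0$ in~$\tilde H_m(\Ap(H))$, and the inclusion~$\Ap(H) \hookrightarrow X$ carries this to a nonzero class (one has to be slightly careful: the Theorem wants~$\alpha$ nonzero in~$\tilde H_m(X)$, whereas the Lemma only gives it nonzero in~$\tilde H_m(\Ap(H))$---but in fact for the propagation argument what is actually used in the proof of Theorem~\ref{theoremNewHomologyPropagation} is precisely that~$\alpha$ has a chain~$a$ with invertible coefficient and the link-condition~(2)(b), together with the contradiction reached at the level of~$Y$; so I should either check the Theorem's proof goes through verbatim under the weaker assumption, or---more cleanly---observe the Lemma is genuinely a separate statement and run the parallel argument directly). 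Condition~(2)(a) is hypothesis~(iii)'s invertibility of the coefficient of~$a$ in~$\alpha$. Condition~(2)(b), namely~$\Lk_M(a) \subseteq \K(\{\max(a)\} \times Y)$, follows from hypothesis~(iv): $a$ is a full chain in~$X$, so~$\Lk_X(a) \subseteq X_{>\max(a)}$, and the second half of~(iv) says~$X_{>\max(a)} \subseteq \N_G(K)$; one then checks that an element of~$\N_G(K)$ strictly above~$\max(a) \in \Ap(H)$ must be of the form~$\max(a) \cdot C$ with~$C \in \Ap(K)$, i.e.~lies in~$\{\max(a)\} \times Y$ in the pre-join identification (using that~$\max(a)$ already centralizes~$K$ and has maximal-among-$H$-part position). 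Condition~(3), a nonzero-homology cycle~$\beta \in Z_n(Y) = Z_n(\Ap(K))$, is hypothesis~(v). Then Theorem~\ref{theoremNewHomologyPropagation} yields that~$T_*(\alpha \otimes \beta)$ is a nonzero-homology cycle in~$Z_{m+n+1}(M) = Z_{m+n+1}(X)$, which gives~$\tilde H_*(X) \neq 0$, as required.

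The honest caveat---and the main obstacle---is that the Lemma, as quoted, is flagged in the excerpt as ``not following from Theorem~\ref{theoremNewHomologyPropagation}.'' The reason is the mismatch just noted: the Theorem assumes~$\alpha$ exhibits homology for the ambient~$X \cup Z$ (equivalently, in the pre-join context it is set up so that the ``$X$-side'' poset is literally the left factor), whereas the Lemma only assumes~$\alpha$ is nonzero in~$\tilde H_m(\Ap(H))$ with~$\Ap(H)$ possibly sitting nontrivially inside~$X$---and~$\alpha$ could conceivably become a boundary in~$X$. So the real plan is \emph{not} to cite the Theorem as a black box but to \textbf{re-run its proof} in this setting, which is legitimate because the proof never actually needs~$\alpha$ to be a non-bounding cycle of~$X$: it only uses (a) that~$T_*(\alpha \otimes \beta)$ is a cycle (Lemma~\ref{lemmaPropertiesInitialShuffleProduct}(3), with~$Z = \emptyset$ so~$T$ is the classical shuffle product by part~(1)); (b) that~$a$ is a full chain of~$M = \K(X)$ with~$\Lk_M(a) \subseteq \K(\{\max(a)\} \times Y)$, so that Lemma~\ref{lemmaPropertiesInitialShuffleProduct}(5) applies; and (c) the concluding identity forcing~$\beta = \partial(\ldots)$ in~$C_*(Y)$, contradicting~(v). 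In other words, I would state the proof as: ``the argument of Theorem~\ref{theoremNewHomologyPropagation}, with~$Z = \emptyset$, $Y = \Ap(K)$, $W = \Ap(G)$, and~$M = \K(X)$, applies verbatim; we only indicate the two points where hypotheses~(ii)--(v) are invoked in place of~(Zleft) and conditions~(1)--(3),'' and then fill in the identification~$\Lk_X(a) \subseteq X_{>\max(a)} \subseteq \N_G(K) \cap \Ap(G)_{>\max(a)} \subseteq \{\max(a)\} \times \Ap(K)$ and the observation that~$\alpha$ nonzero in~$\tilde H_m(\Ap(H))$ suffices because~$a \in \alpha$ has invertible coefficient and that is all the proof touches. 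The delicate check is precisely this last identification of the upper link, which is the component-free analogue of condition~(i$'$) in Remark~\ref{rk:homopropviajoinAP} and should go through using Hypothesis~\ref{hyp:HKcentralprod} (so that~$\max(a)$ commutes with~$K$ and the product~$\max(a)\cdot C$ is again elementary abelian of the right shape).
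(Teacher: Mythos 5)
Your proposal runs directly into an obstacle the paper itself flags explicitly: immediately before the statement, the paper writes that this Lemma ``does {\em not\/} follow from Theorem~\ref{theoremNewHomologyPropagation} above.'' The paper gives no proof of this Lemma at all---it is cited as a black box from~\cite{KP20} (hence the terminal~$\Box$); so there is no proof in the paper to match against. Your plan is to deduce it from Theorem~\ref{theoremNewHomologyPropagation}, which is exactly the route the authors rule out.

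The concrete gap is in your treatment of condition~(2)(b) of the Theorem. You need $\Lk_{\K(X)}(a) \subseteq \K\bigl(\{\max(a)\}\times\Ap(K)\bigr)$, i.e.\ every $E \in X$ with $E > \max(a)$ must have the form $\max(a)\cdot C$ for some $C \in \Ap(K)$. But the Lemma's hypothesis~(iv) gives only $X_{>\max(a)} \subseteq \N_G(K)$, which by Definition~\ref{defInflated} just means $E \cap K > 1$. Nothing in the Lemma prevents $E \cap H$ from strictly containing $\max(a)$: $\max(a)$ is not assumed maximal in $\Ap(H)$, only that $a$ is a full chain in $X$ (which constrains the link, not the rank of $\max(a)$ inside $\Ap(H)$). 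Your parenthetical justification --- ``using that $\max(a)$ \dots has maximal-among-$H$-part position'' --- imports an assumption absent from the statement. This is precisely the extra slack that distinguishes the KP20 lemma from Lemma~\ref{lemmaHomologyPropagationAS}: the latter's hypothesis~(i) does require $\Ap(G)_{>A} \subseteq A \times K$, so the derivation via Corollary~\ref{cor:newpropagextextends0.27AS} goes through there, but the KP20 version weakens that to mere $\N_G(K)$-membership. With that weaker hypothesis, an $a$-initial chain $c \in \gamma_a$ could contain elements $w$ whose $H$-projection strictly exceeds $\max(a)$, so the factorization $c = a_Z \cup (a_X * \hat c)$ in the Theorem's proof fails, and the boundary calculation does not reduce cleanly to a calculation in $\Ap(K)$. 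A correct proof (as in~\cite{KP20}) has to handle those extra elements differently, e.g.\ by a deflation argument to $E \cap K$ rather than by reading off a pre-join coordinate.
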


\bigskip
\bigskip

\part{Applying the techniques for cases of Quillen's Conjecture}

\section{Eliminating many simple components of Lie type in the same characteristic~\texorpdfstring{$p$}{p}}
\label{sec:elimLierk1samecharp}

When we prove later Theorem~\ref{proofQCOdd}
(i.e.,~our extension of the Aschbacher-Smith Main Theorem stated earlier as Theorem~\ref{theoremASExtension}), 
we will see toward the end of that proof that in the case~$p=3$ (cf.~Remark~\ref{remarkReeFailsRobinson}),
the argument for~\cite[Theorem 5.3]{AS93} breaks down for components~$L$ given by the simple Ree groups in characteristic~$3$.  
We will want to avoid this problem---by establishing an elimination-result:
to guarantee (earlier in the overall proof) that such a component could not arise in our counterexample.

In fact, it turns out to be no more difficult,
in our results Theorem~\ref{theoremLiep} and~\ref{thm:Liep(H1)} below,
to eliminate a wide array of simple Lie-type components~$L$
in the ``same'' characteristic~$p$ (i.e.~the~$p$ for which we study~(H-QC))---subject to
a restriction on the~$p$-outers~$\Outposet_G(L)$ arising in~$G$.
This class of possible components is significant:
for example, these same-characteristic groups give the main family~(1) of obstructions
in the Aschbacher-Smith~$\QD_p$-List (which we have quoted as Theorem~\ref{theoremQDList}).

To treat them, we will be able to use propagation techniques to show ``generically''
in Theorem~\ref{theoremLiep} that if further the~$p$-outers in~$G$ for such an~$L$ are of the ``field-like'' type~$\Phi$
(in the sense of Definition~\ref{defConventionFieldAndGraphsCharp} below),
then~(H-QC) holds for~$G$---under several suitable inductive hypotheses (notably~(H1)) on~$C_G(L)$.

\medskip

It will be convenient to abbreviate our overall generic-context by the hypothesis:

\begin{definition}
\label{defn:sLie-p}
We will say that:

\centerline{
  A group~$L$ is of {\em type (sLie-$p$)\/}, if it is of Lie-type in characteristic~$p$, and is simple. 
            }
            
\noindent
We expand on several technical issues, which may not be obvious, related to this definition:

\medskip

  (1) Our choice of~$L$ simple---as opposed to quasisimple (e.g.~for a component)---is primarily for convenience:
In fact our arguments will typically apply to~$L$ quasisimple of Lie-type;%  
\footnote{
   But note that we avoid the further generalization to just quasisimple with~$L/Z(L)$ of Lie-type:
   for then we might get an ``exceptional'' Schur multiplier term (cf.~Table~6.1.3 in \cite{GLS98}) in~$Z(L)$, 
   which would prevent us from applying the Lie theory to~$L$ itself. 
          }
and indeed we hope, in a future version, to adjust this section to proceed under that more general quasisimple hypothesis~(qLie-$p$).
But for now, we assume~$L$ is simple; and this guarantees for example that~$L$ is adjoint
(cf.~Theorems~2.2.6 and~2.2.7 of~\cite{GLS98}).

\medskip

  (2) Of course those adjoint Lie-type groups are almost always automatically simple;
and those simple~$L$ in~(sLie-$p$) correspondingly provide our generic-case.  
But we also need to indicate the status, for our arguments, of the few non-generic simple groups not satisfying~(sLie-$p$):
These arise from the four non-solvable members of the set called~${\mathcal Lie}_\text{exc}$ in Definition~2.2.8 of~\cite{GLS98};
namely the Lie-type groups~$L^+ = B_2(2) , {^2}F_4(2), G_2(2) , {^2}G_2(3) $. 
For these, it is the commutator subgroup~$(L^+)'$ (of index~$p = 2,2,2,3$) which is the actual simple group.
These simple commutators~$(L^+)'$ are considered (more or less honorarily) to be of Lie type---for the purposes of stating the~CFSG.
However, they are not themselves of Lie-type, in the strict sense of satisfying the Lie theory (though the overgroup~$L^+$ does).
As a result, these four commutator subgroups---namely~$\Symplectic_4(2)' \groupiso \Alt_6$, ${^2}F_4(2)'$ (the Tits group), 
$G_2(2)' \groupiso \U_3(3)$, and the smallest Ree group~${^2}G_2(3)' \groupiso \Ln_2(2^3)$---do not satisfy~(sLie-$p$)
for the relevant primes~$p = 2,2,2,3$.
Hence they must be excluded, when we apply results depending on the full Lie theory---notably our generic-analysis
of outer automorphisms in the set~$\Phi$ mentioned earlier, as in Theorem~\ref{theoremLiep}.
However, we are still able to treat these four non-generic cases via other arguments,
particularly under the inductive hypothesis~(H1)---as we will see in Theorem~\ref{thm:Liep(H1)} below.
Of course we will need to make a specific mention of this exceptional situation later, when any of these non-generic cases arises.
\donerk
\end{definition}

\noindent
Thus~(sLie-$p$) gives the main hypothesis for our generic-case Theorem~\ref{theoremLiep} below.

\medskip

Furthermore, now {\em assuming\/} characteristic-$p$ in~(sLie-$p$),
we can also provide some initial motivation for another main hypothesis used in part (2) of Theorem~\ref{theoremLiep}, namely:%
%\footnote{
%  fakenote:
%  Here I am starting to adjust the exposition, for the splitting of Theorem~\ref{theoremLiep} recommended in your 11/12/21 email.
%          }

\centerline{
  ($p$-cyclic): \quad the members of~$\Outposet_G(L)$ should be cyclic;
            }

\noindent
that is, the~$p$-outers arising in~$G$ should be of order exactly~$p$.
This means that~$\Outposet_G(L)$ will be a poset of dimension~$0$, which will be important for our applications.

\medskip

We can begin the motivation for frequently expecting~($p$-cyclic)
by using just the overview-description of~$\Out(L)$ in earlier Remark~\ref{rk:outerautsofsimple}:
Observe first that:

\centerline{
  Under~(sLie-$p$), we have~$p = r$ (namely the characteristic prime~``$r$'' for~$L$).
            }

\noindent
And then since~$p$ cannot divide the order~$p^a - 1$ of the multiplicative group of the field~${\mathbb F}_{p^a}$
in the description of diagonal automorphisms in~(d) there, we further see that:

   (i) $\Outdiag(L)$ is a~$p'$-group.

\noindent
It follows from~(i) that each member of~$\Outposet_G(L)$ maps faithfully into the quotient:

\centerline{
  $\Out(L)^* := \Out(L) / \Outdiag(L)$. 
            }
 
\noindent
We observe next that each~$\Outdiag(L)$-conjugacy class falls into a single coset of~$\Outdiag(L)$---which
might contain several such classes.
So to investigate the behavior of some particular individual~$p$-outer~$B \in \Outposet_G(L)$,
i.e.~defined at the group-element level in~$\Aut(L)$, 
we can view~$B$ up to such conjugacy via its image~$B^*$ in~$\Out(L)^*$,
i.e.~at the level of quotients modulo inner-diagonal elements.
In summary:

  (ii) It is convenient to study~$p$-outers via their faithful images
       in~$\Outposet_G(L)^* \subseteq \Ap \bigl( \Out(L)^* \bigr)$.

\medskip

\noindent
Now we move on to a description of~$\Out(L)$ more detailed than in Remark~\ref{rk:outerautsofsimple}:

{\em Caution\/}:
We now temporarily---up to the end of Remark \ref{rk:casesPhiGamma}---suspend our assumption that~$p = r$ (which came from~(sLie-$p$));
that is, this more-detailed description holds for simple~$L$ of Lie-type of general characteristic~$r$,
independent of our special prime~$p$ for~(H-QC).

The above quotient~$\Out(L)^*$ can be described via a standard normal series for~$\Aut(L)$,
which more generally gives a natural viewpoint on the~$idfg$-representation of automorphisms
that we quoted in Remark~\ref{rk:outerautsofsimple}.
One form of the series appears in~(7-1) of~\cite[Part~I]{GL83};
but we will use the more extensive treatment in~\cite[Sec~2.5]{GLS98}.
By Theorem~2.5.12(b) there, $\Out(L)$ is a split extension of~$\Outdiag(L)$%
\footnote{
   We mention also that~$\Outdiag(L)$ is cyclic---except
   when~$L \groupiso D_n(q)$ with~$n \geq 4$ even and~$q$ odd, where~$\Outdiag(L) = C_2 \times C_2$,
          }
by the product:
\begin{equation}
\label{eq:Out*=PhiGamma}
  \Out(L)^* = \Phi_L \Gamma_L , \text{ with~$\Phi_L$ cyclic and normal;}
\end{equation}
where~$\Phi_L$ and~$\Gamma_L$ are given in parts~(a) and~(b,c) of Definition~2.5.10 there---for
present purposes, we'll just informally single out a few overview-features:

First, elements of~$\Gamma_L$ require elements of the group~$\Delta$ of symmetries of the Dynkin diagram---and
we listed the possible Dynkin diagrams for~$L$ with~$\Delta > 1$, in~(i) of Remark~\ref{rk:outerautsofsimple}.
Second, the normal subgroup~$\Phi_L$ is the image of the natural field automorphisms of an overlying algebraic group~$\bar{L}$:
arising (cf.~1.15.5(a) of~\cite{GLS98}) from powers of the generator~$x \mapsto x^r$ of the Galois group 
of the algebraic closure~$\overline{ {\mathbb F}_r }$
of the characteristic-$r$ field~${\mathbb F}_q$ of definition of~$L$. 
This in particular shows that~$\Phi_L$ is cyclic.

Furthermore, the case of~$\Delta > 1$ for~$\Phi_L \Gamma_L$ splits into cyclic and ``nearly-cyclic'' subcases below:

\begin{remark}[The cases for~$\Phi_L \Gamma_L$]
\label{rk:casesPhiGamma}
From~(f,e) of~\cite[2.5.12]{GLS98} we get the following possibilities
for the product~$\Out(L)^* = \Phi_L \Gamma_L$ in (\ref{eq:Out*=PhiGamma}):

  (0) For twisted~$L$, or untwisted~$L$ with diagram having~$\Delta = 1$:
      $\Gamma_L = 1$; so~$\Phi_L \Gamma_L = \Phi_L$ is cyclic.

  Otherwise~$L$ is untwisted, with diagram having~$\Delta > 1$: and the possibilities are:

  (1) For untwisted~$L$ of type~$B_2,F_4,G_2$ with~$r=2,2,3$:
      $\Phi_L \Gamma_L$ is cyclic, and~$\Phi_L \Gamma_L / \Phi_L \groupiso \Delta \groupiso \Cyclic_2$.

  (2) For untwisted types~$A_{\geq 2},D_{\geq 4},E_6$:
      $\Phi_L \Gamma_L = \Phi_L \times \Gamma_L$, and~$\Gamma_L \groupiso \Delta \groupiso \Cyclic_2$, or~$\Sym_3$ for~$D_4$. 
\donerk
\end{remark}

\noindent
Now as case~(0) above suggests, we will often have the condition~$\Out_G(L)^* \leq \Phi_L$;
indeed under~(sLie-$p$) where~$p = r$, we will usually get the following stronger condition for~$p$-outers:

\quad ($p$-$\Phi$): \quad $\Outposet_G(L)^* \subseteq \Ap(\Phi_L)$. 

\noindent
Note since $\Phi_L$ is cyclic in~(\ref{eq:Out*=PhiGamma})
that~($p$-$\Phi$) implies~($p$-cyclic).

%\begin{hypothesis}[The alternative hypothesis~($p$-$\Phi$)]
%\label{hyp:pPhi}
%Sometimes under~(sLie-$p$) we will assume:
%
%    \quad ($p$-$\Phi$): \quad $\Outposet_G(L)^* \subseteq \Ap(\Phi_L)$. 
%
%\noindent
%Note since $\Phi_L$ is cyclic in~(\ref{eq:Out*=PhiGamma})
%that~($p$-$\Phi$) implies~($p$-cyclic) in~(2) of Theorem~\ref{theoremLiep} below.
%\donerk
%\end{hypothesis}

\bigskip

\noindent
Furthermore the definition of~$\Phi_L$ via powers of~$x \mapsto x^r$ suggests
that its members exhibit ``field-like enough'' behavior for our purposes in this section.
(Of course under~(sLie-$p$) we will have~$p=r$.)
consequently we define:

\begin{definition}
\label{defConventionFieldAndGraphsCharp}
Assume that~$L$ has type~(sLie-$p$), and that~$L \leq G$ for some group~$G$.
\begin{enumerate}
\item We say that a~$p$-outer~$B \in \Outposet_{G}(L)$ of order~$p$ is of \textit{type~$\Phi$} if~$B^* \leq \Phi_L$.

\item Write~$\Phiposet_{G}(L)$ for the subposet of~$\Outposet_G(L)$ whose elements are of type~$\Phi$.

\noindent
      Note by order-$p$ that this is a {\em discrete\/} poset of cyclic~$p$-outers (no proper~$<$-relations).
\end{enumerate}
\end{definition}

%\noindent
%From~(2) we again see the relevance of~($p$-$\Phi$) for hypothesis~(2)=(someNC+$\Phi$) of Theorem~\ref{theoremLiep}.

\bigskip

\noindent
In the following somewhat technical result, the two cases corresponding to Remark~\ref{remarkClosingSection}
will proceed under somewhat different hypotheses, for reasons to be indicated later.

So we state our generic-Theorem below.
We work with rational homology.
We will prove:

\begin{theorem}[Generic Lie-eliminations under~(sLie-$p$)]
\label{theoremLiep}
Assume~$p$ is a prime, and~$G$ a finite group satisfying:

\begin{tabular}{lcl}
(sLie-$p$)    & & \pbox{13cm}{$G$ has a component~$L$ of type (sLie-$p$) (simple~of Lie type in characteristic~$p$);} \\
\end{tabular}

\noindent
Assume further that one of the following conditions holds:

\begingroup
\renewcommand{\arraystretch}{2}
\begin{tabular}{lcl}
(1)=(allC+H)         & & \pbox{13cm}{For all~$B \in \Outposet_G(L)$, $O_p \bigl( C_G(LB) \bigr) > 1$, and~$C_G(L)$ satisfies~(H-QC).} \\
(2)=(someNC+$\Phi$)  & & \pbox{10cm}{($p$-cyclic) holds for $L$, and \hfill \\
there exists $B_0 \in \Phiposet_G(L)$ such that $\tilde{H}_*(\ \Ap \bigl( C_G(LB_0) \bigr)\ ) \neq 0$.}
\end{tabular}
\endgroup

\noindent
Then~$G$ satisfies~(H-QC).

\medskip

In particular, if~$G$ satisfies~(H1), and also contains a component~$L$ of type~(sLie-$p$)
with~($p$-$\Phi$) (that is, $\Outposet_G(L)^* \subseteq \Ap(\Phi_L)$), then~$G$ satisfies~(H-QC).
Notice that any~$L$ of Lie-rank~$1$ has~($p$-$\Phi$) by Remark~\ref{rk:casesPhiGamma}(0).%

Furthermore the above statement holds if~``(H1)'' is replaced by~``(H1u)+($p$ odd)''.
\end{theorem}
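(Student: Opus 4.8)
The plan is to prove the two alternatives~(1) and~(2) separately, in each case replacing~$\Ap(G)$ by a homotopy-equivalent pre-join replacement-poset~$W$ of the kind produced in Section~\ref{sec:prejoinandreplposets}, checking the~(Zleft)-Hypothesis~\ref{hyp:Zleft} for the associated poset-data, and then feeding everything to the new homology-propagation result Theorem~\ref{theoremNewHomologyPropagation}. First I would reduce to the case~$O_p(G) = 1$ (otherwise~$\Ap(G)$ is contractible by Quillen's theorem and~(H-QC) is automatic); then simplicity of~$L$ gives~$Z(L) = 1$, while Lemma~\ref{lemmaOpandp}(4) (with~$s=1$) gives~$O_p \bigl( C_G(L) \bigr) = O_p(G) = 1$, so the standing hypotheses of Propositions~\ref{propTildePosetsXandW} and~\ref{propHomotEquivTildePosets} are in force; and~$p \mid |L|$ since~$L$ has type~(sLie-$p$). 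In both alternatives the nonzero cycle~$\alpha$ fed to Theorem~\ref{theoremNewHomologyPropagation} will come from a~$\QD_p$-style statement for same-characteristic Lie-type groups, which I would prove separately (as Theorem~\ref{theoremHomologyCharP}): that the union~$\Bp(L) \cup \Outposet_{LB}(L)$---the building-like Bouc poset, together with a~$0$-dimensional poset of cyclic~$p$-outers---has nonzero homology in its top degree, exhibited by a maximal chain~$a$ there.

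\textbf{Alternative~(2)=(someNC+$\Phi$).} Here I would fix~$B_0 \in \Phiposet_G(L)$ with~$\tilde{H}_* \bigl( \Ap(C_G(LB_0)) \bigr) \neq 0$ and adopt Hypothesis~\ref{hyp:LBforW} with this~$B_0$, so that~$H := L$ and~$K := C_G(LB_0)$; by Lemma~\ref{lm:HypLBimpliesHypHKcentralprod} we get the~$p'$-central product Hypothesis~\ref{hyp:HKcentralprod}, by Theorem~\ref{theoremChangeABSPosets} the poset~$W := W_G^{\BoucPoset}(L,C_G(LB_0)) = \bigl( \Bp(L) \ujoin \Ap(C_G(LB_0)) \bigr) \cup \F_G \bigl( L C_G(LB_0) \bigr)$ is homotopy equivalent to~$\Ap(G)$, and by Lemma~\ref{lm:LBforWgivesZleft}(b) the data~$X := \Bp(L)$, $Y := \Ap(C_G(LB_0))$, $Z := \Outposet_{LB_0}(L)$, $W$ (with~$M := \K(W)$) satisfy~(Zleft). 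Then condition~(3) of Theorem~\ref{theoremNewHomologyPropagation} is met by any nonzero-homology cycle~$\beta$ of~$Y$; condition~(1), plus a chain~$a \in \alpha$ of invertible coefficient (automatic over~$\QQ$), comes from Theorem~\ref{theoremHomologyCharP}, using~($p$-cyclic) (so~$Z$ is discrete) and~$B_0$ of type~$\Phi$ (so~$B_0$ acts ``field-like'' on~$\Bp(L)$, which is what makes the homology survive). The remaining ingredient, condition~(2)(b)---that~$a$ is full in~$M$ in the strong sense~$\Lk_M(a) \subseteq \K \bigl( \{\max(a)\} \times Y \bigr)$---is the heart of the matter, to be carried out in the branch-lemma~\ref{lemmaCharPandFG-typeNC}. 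Writing~$a$ in left-focused form~$a = ( B_0' \prec A_1 < \cdots < A_s )$ with~$A_i \in \Bp(L)$, I must exclude three kinds of extension of~$a$ inside~$M$: extensions by members of~$\F_G \bigl( L C_G(LB_0) \bigr)$ lying below~$\max(a)$---ruled out using the fine structure of~$\Out(L)$ for~$L$ of type~(sLie-$p$) (that~$\Outdiag(L)$ is a~$p'$-group and that type-$\Phi$ $p$-outers lie in the cyclic normal subgroup~$\Phi_L$, via Remarks~\ref{rk:outerautsofsimple} and~\ref{rk:casesPhiGamma}); extensions below or among~$a$ by members of the pre-join---ruled out by maximality of~$a$ in~$X \cup Z$; and extensions above~$\max(a) = A_s$, which by the pre-join order must have the form~$(A_s, y)$ with~$y \in Y$, using that~$A_s$ is a maximal element of~$\Bp(L)$ (a Sylow~$p$-subgroup of~$L$, in defining characteristic). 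Granting this, Theorem~\ref{theoremNewHomologyPropagation} yields~$T_*(\alpha \otimes \beta) \neq 0$ in~$\tilde{H}_*(M) = \tilde{H}_*(W) \groupiso \tilde{H}_* \bigl( \Ap(G) \bigr)$, so~(H-QC) holds.

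\textbf{Alternative~(1)=(allC+H).} Now every~$C_G(LB)$ with~$B \in \Outposet_G(L)$ is conical, so~$\F_1 = \Outposet_G(L)$ in the notation of Proposition~\ref{propositionContractibleCentralizers}; combining that Proposition (with~$\N = \N_1$) with Proposition~\ref{propHomotEquivTildePosets} (taken with~$H := L C_G(L)$) gives~$\Ap(G) \simeq \tilde{W}_G^{\BoucPoset} \bigl( L , C_G(L) \bigr) = \bigl( \Bp(L) \ujoin \Ap(C_G(L)) \bigr) \cup \F_G \bigl( L C_G(L) \bigr)$ with its exclusion-modified order~$\sqsubset_W$. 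I would then apply Theorem~\ref{theoremNewHomologyPropagation} to this tilde-poset with~$X := \Bp(L)$, $Y := \Ap(C_G(L))$, $Z := \emptyset$, $W := M := \tilde{W}_G^{\BoucPoset}(L,C_G(L))$: the~(Zleft)-conditions on~$Z$ are vacuous; since~$O_p(C_G(L)) = 1$, the hypothesis that~$C_G(L)$ satisfies~(H-QC) gives the required nonzero cycle~$\beta$ of~$Y$; and for~$\alpha$ I take a top-degree homology cycle of the building~$\Bp(L)$ (its Solomon--Tits spherical homology), exhibited by a chamber~$a$, whose maximal element is again a Sylow~$p$-subgroup of~$L$. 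The full-chain condition~$\Lk_M(a) \subseteq \K \bigl( \{\max(a)\} \times Y \bigr)$ is then the branch-lemma~\ref{lemmaCharPAndConicalFields}, proved in parallel with~(2); the only new point is that one must track the order-pairs excluded in passing from~$\prec_W$ to~$\sqsubset_W$, but those exclusions only shrink links, and it is precisely the~(allC) hypothesis (so that~$\F_1 = \Outposet_G(L)$ and the exclusions are as large as possible) that lets one eliminate the~$\F$-members from~$\Lk_M(a)$. Again Theorem~\ref{theoremNewHomologyPropagation} gives~$\tilde{H}_* \bigl( \Ap(G) \bigr) \neq 0$, completing the main assertion.

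\textbf{The ``in particular'' clauses.} Suppose~$G$ satisfies~(H1) and has a component~$L$ of type~(sLie-$p$) with~($p$-$\Phi$); then every~$p$-outer of~$L$ in~$G$ has image inside the cyclic group~$\Phi_L$, so~($p$-cyclic) holds and~$\Phiposet_G(L) = \Outposet_G(L)$, and I distinguish whether all the~$C_G(LB)$ are conical. If so, I am in alternative~(1), and since~$L$ is nonabelian,~$C_G(L)$ is a proper subgroup of~$G$, so~(H1) supplies~(H-QC) for~$C_G(L)$. If not, some~$B_0 \in \Outposet_G(L) = \Phiposet_G(L)$ has~$O_p \bigl( C_G(LB_0) \bigr) = 1$; then~$C_G(LB_0) \leq C_G(L) < G$, so~(H1) gives~(H-QC) for~$C_G(LB_0)$, which with~$O_p \bigl( C_G(LB_0) \bigr) = 1$ yields~$\tilde{H}_* \bigl( \Ap(C_G(LB_0)) \bigr) \neq 0$ (the case~$p \nmid |C_G(LB_0)|$ being harmless, since one then propagates with the empty cycle~$\beta$ in dimension~$n = -1$), putting me in alternative~(2). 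A Lie-rank-$1$ group is either of untwisted type~$A_1$ or twisted, hence has~$\Gamma_L = 1$ by Remark~\ref{rk:casesPhiGamma}(0), so~$\Out(L)^* = \Phi_L$ and~($p$-$\Phi$) is automatic. For the~(H1u)$+$($p$ odd) version the argument is identical once one checks that~$C_G(L)$ (resp.~$C_G(LB_0)$) satisfies~(H2u) and has order~$< |G|$: the order condition is immediate, and~(H2u) transfers to~$C_G(L)$ because by Lemma~\ref{lemmaOpandp}(3) its components are among those of~$G$, so the purely-unitary~$\QD_p$-conclusion of~(H2u) for~$G$ applies to them verbatim---with an analogous, if slightly more involved, check for the components of~$C_G(LB_0)$. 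The genuine obstacle throughout is the Lie-theoretic elimination of~$\F$-members in the full-chain step of the branch-lemmas~\ref{lemmaCharPandFG-typeNC} and~\ref{lemmaCharPAndConicalFields}, together with the separate~$\QD_p$-type homology computation of Theorem~\ref{theoremHomologyCharP}.
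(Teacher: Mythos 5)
Your treatment of the main dichotomy matches the paper's proof in all essentials: alternative~(2) is exactly Lemma~\ref{lemmaCharPandFG-typeNC} (the poset~$W_G^{\BoucPoset}(L,C_G(LB_0))$, the (Zleft)-data~$X=\Bp(L)$, $Y=\Ap(C_G(LB_0))$, $Z=\Outposet_{LB_0}(L)$, the cycle from Theorem~\ref{theoremHomologyCharP}, and the full-chain claim via~($p$-cyclic)); alternative~(1) is exactly Lemma~\ref{lemmaCharPAndConicalFields} (the tilde-poset with~$\F_1=\Outposet_G(L)$ and a Solomon--Tits cycle of~$\Bp(L)$); and the~(H1) ``in particular'' clause is argued the same way. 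One small remark on~(2): the paper's exclusion of~$\F$-members below~$\min(a)$ is a direct rank argument --- such a~$D$ would be a~$p$-outer properly containing the order-$p$ group~$B$, hence of~$p$-rank at least~$2$, contradicting~($p$-cyclic) --- rather than an appeal to the structure of~$\Out(L)$; your sketch gestures at the right hypothesis but not quite the right mechanism.

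The genuine gap is in the final ``(H1u)$+$($p$ odd)'' clause, which you dismiss as ``identical'' modulo transferring~(H2u) to centralizers. For~$C_G(L)$ that transfer is fine (its components are components of~$G$ by Lemma~\ref{lemmaOpandp}(3)), but for~$C_G(LB_0)$ it is not: since~$B_0$ acts nontrivially on~$L$ (and possibly on other components), the components of~$C_G(LB_0)$ need not be components of~$G$, so~(H2u) for~$G$ says nothing about them, and there is no ``verbatim'' check. This is precisely where the paper must work: it first reduces to~$Z(G)=1=Z(E(G))$ and~$O_{p'}(G)=1$ via Propositions~1.4--1.6 of~\cite{AS93}, eliminates the components~$\Ln_2(2^3)$, $\U_3(2^3)$, $\Sz(2^5)$ via Theorem~\ref{alternativeParticularComponentes}, and then invokes Theorem~2.4 of~\cite{AS93} --- which depends on Theorem~2.3 there and hence on~$p$ being odd --- to replace the given nonconical~$B$ by a complement~$B'$ for which both~$O_p\bigl(C_G(LB')\bigr)=1$ \emph{and} the unitary components of~$C_G(LB')$ are certified to satisfy the conditions of~(H1u). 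Your version never uses the hypothesis that~$p$ is odd, which is the signal that something is missing: without the nonconical-complement machinery of~\cite{AS93} you cannot conclude~$\tilde{H}_*\bigl(\Ap(C_G(LB_0))\bigr)\neq 0$ from~(H1u) for the~$B_0$ you start with.
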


\bigskip

\noindent
We will postpone the proof of Theorem~\ref{theoremLiep}---while we first discuss two aspects of its hypotheses:

\medskip

\noindent
First aspect:

The further hypotheses~(1)=(allC+H) and~(2)=(someNC+$\Phi$) of Theorem~\ref{theoremLiep} are of course
extensions of the case-division we had described earlier in Remark~\ref{remarkClosingSection}, namely:

  \quad \quad (allC)---where for all~$B \in \Outposet_G(L)$ we have~$O_p \bigl( C_G(LB)\bigr) > 1$; and 

  \quad \quad (someNC)---where we have some~$B$ with~$O_p \bigl( C_G(LB) \bigr) = 1$.

\noindent
These extensions are meant (roughly) to provide purely propagation-theoretic hypotheses.
Note that~(someNC+$\Phi$) adds, to the propagation-friendly situation of~(someNC),
a~$p$-outer~$B_0$ which must specifically be of the type~$\Phi$, which behaves well in homology calculations;
here we will need the assumption~($p$-cyclic) to guarantee propagation.
By contrast, hypothesis~(allC+H) adds to~(allC) a propagation-friendly nonzero-homology condition on~$C_G(L)$;
in applications, we will typically obtain this condition essentially automatically,
via a suitable inductive hypothesis such as~(H1)---as in the ``In particular'' statement in the Theorem.

\bigskip

\noindent
Second aspect:

Here we argue roughly that under the main hypothesis~(sLie-$p$) of Theorem~\ref{theoremLiep},
the later hypothesis~($p$-cyclic) in part~(2), and indeed the stronger hypothesis~($p$-$\Phi$),
should hold ``most of the time''. 
Furthermore, the discussion below of when those cyclic~hypotheses might fail, under the generic hypothesis~(sLie-$p$),
will also lead essentially to the list of exceptional situations, 
that we must further treat in the non-generic Theorem~\ref{thm:Liep(H1)}---thus basically explaining
the list of excluded Lie-types (i.e., that we do not treat) in the statement of that Theorem below.

So during this second-aspect discussion, we continue to assume~(sLie-$p$); 
and argue to isolate those situations where~($p$-$\Phi$)---that is,
the condition that~$\Outposet_G(L)^* \subseteq \Ap(\Phi)$---might fail: 

We saw in earlier~(ii) that we may study individual members of~$\Outposet_G(L)$,
up to~$\Outdiag(L)$-conjugacy, via the cases for~$\Phi_L \Gamma_L$ in Remark~\ref{rk:casesPhiGamma}.
In case~(0) (i.e.~for twisted groups, and untwisted groups where~$\Delta = 1$)
we have~$\Out(L)^* = \Phi$, giving~($p$-$\Phi$);
so we are reduced to the untwisted groups~$L$ in cases~(1) and~(2) there (namely with~$\Delta > 1$).

First assume case~(1), with multiple-bonds, and characteristic~$r = p$ by~(sLie-$p$):
If~$L$ has type~$G_2$, we have~$p = r = 3$, with~$| \Phi_L \Gamma_L : \Phi_L | = 2$;
so in particular, $3$-outers lie in~$\Phi_L$.
Here~$\Outposet_G(L)^* \subseteq \A_3(\Phi)$, and again we have~($p$-$\Phi$).
So consider instead types~$B_2$ and~$F_4$, where we have~$p=r=2$.
We first single out the odd-power field cases: 
When~$O_2(\Phi_L) = 1$ (i.e.~$L$ is defined over some~${\mathbb F}_{2^{2a-1}}$),
then we have~$\Out(L)^* = \Phi_L \times \Gamma_L$ with~$\Phi_L$ of~$2'$-order, so~($p$-$\Phi$) fails; but~($p$-cyclic) does hold.
Here~$\Symplectic_4(2)$ is not simple and so does not arise under~(sLie-$p$); 
but we treat~$\Symplectic_4(2)'$ instead via independent arguments in the non-generic Theorem~\ref{thm:Liep(H1)}.
In fact, we are also able to treat~$F_4(2)$, and indeed the higher odd-powers for types~$B_2$ and~$F_4$, similarly there;
so these groups~$B_2$ and~$F_4$ with odd-power fields do not appear in the excluded-list in that Theorem  below.
Now we turn to groups of those types with even-power fields:
We have~$O_2(\Phi_L) > 1$; so since~$\Phi_L \Gamma_L$ is cyclic,
the~$2$-elements of~$\Phi_L \Gamma_L - \Phi_L$ must have order at least~$4$---hence~($p$-$\Phi$) holds here,
and the groups are treated via our generic-methods.

Finally assume case~(2), with single-bonds, and characteristic~$r = p$ by~(sLie-$p$):
Here we always have $\Out(L)^* = \Phi_L \times \Gamma_L$ with~$\Gamma_L \groupiso \Delta \groupiso \Cyclic_2$, or~$\Sym_3$ for~$D_4$.
Now it might happen that~($p$-$\Phi$) holds, when~$\Outposet_G(L)^*$ induces only field automorphisms on~$L$;
and then we can use our generic-methods.
But it may also happen that~$p$ divides the order of both~$\Phi_L$ and~$\Gamma_L$,
and then we see that~($p$-$\Phi$) and~($p$-cyclic) can definitely fail.
If~$\Outposet_G(L)^*$ induces only a graph- or graph-field automorphism, we could still have~($p$-cyclic).
However, we do not attempt to treat such automorphisms by our methods;
so these single-bond groups, in the case away from~($p$-$\Phi$), appear in the excluded-list in the non-generic Theorem below.  

Notice that we have now reduced to exactly the cases in that excluded-list.

\bigskip

In fact, it is now not difficult to establish our non-generic Theorem~\ref{thm:Liep(H1)}:
this time not constructively via propagation as in the generic-Theorem~\ref{theoremLiep},
but instead via the less-constructive elimination-methods of~\cite{PS}---which turn out to be suited
to the few, and suitably-``small'', cases that we in practice must treat.

\begin{theorem}[Lie-eliminations under~(H1)---including non-generic cases]
\label{thm:Liep(H1)}
Now assume that~$G$ satisfies~(H1),
and contains a component~$L$ which is simple of Lie-type in characteristic~$p$, in the wider CFSG-sense:
that is, either~$L$ satisfies~(sLie-$p$),
or~$L$ is one of the four~${\mathcal Lie}_\text{exc}$-commutator groups in Definition~\ref{defn:sLie-p}(2).

Then as long as~$L$ is not of one of the following untwisted Lie-types with the indicated characteristic~$r = p$:
  \[ \PSL_n(2^m) (n \geq 3)\ ; \, D_n(p^m)  (n \geq 4; p=2,3)\ ; \,  E_6(2^m) ; \]
with some~$B \in \Outposet_G(L)$ inducing a graph-automorphism or graph-field automorphism, 
we have that~$G$ satisfies~(H-QC).
%Furthermore the above statement holds if~``(H1)'' is replaced by~``(H1u)+($p$ odd)''---at least
%for the generic cases in Theorem~\ref{theoremLiep}.%
\end{theorem}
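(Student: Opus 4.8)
The plan is to reduce Theorem~\ref{thm:Liep(H1)} to the two tools already in hand: the generic propagation-result Theorem~\ref{theoremLiep} (under its ``In particular'' clause, which applies whenever $L$ satisfies (sLie-$p$) together with ($p$-$\Phi$)), and the elimination-machinery Theorem~\ref{PStheorem}. First I would observe that by (MOC) in Remark~\ref{rk:stratelimRob}, it suffices to derive a contradiction from a minimal-order counterexample $G$ to (H-QC); such a $G$ satisfies (H1), so all of Theorem~\ref{generalReduction} and Theorem~\ref{PStheorem} are available. In particular, by Theorem~\ref{generalReduction}(1--2) we may assume $O_p(G)=1=O_{p'}(G)$, so by Lemma~\ref{lemmaOpandp}(1) the component $L$ is simple and $p$ divides $|L|$; and Theorem~\ref{generalReduction}(5) rules out Lie-rank~$1$, so $L$ is not one of the four ${\mathcal Lie}_\text{exc}$-commutator groups (all of which have Lie rank~$1$ or are $\Alt_6$, already excluded by Theorem~\ref{generalReduction}(5) and Theorem~\ref{PStheorem}'s alternating-elimination). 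Hence $L$ genuinely satisfies (sLie-$p$), and we are in the setting where the ``second aspect'' discussion preceding the theorem applies.

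Next I would run the case-analysis of that ``second aspect'' discussion, organized by the cases for $\Phi_L\Gamma_L$ in Remark~\ref{rk:casesPhiGamma}. The point is that $\Outposet_G(L)$ is nonempty by the final clause of Theorem~\ref{PStheorem} (a minimal counterexample must have $p$-outers), so fix $B\in\Outposet_G(L)$ and consider its image $B^*$ in $\Out(L)^*=\Phi_L\Gamma_L$. In Remark~\ref{rk:casesPhiGamma}(0) (twisted groups, or untwisted with $\Delta=1$) and in case~(1) with $L$ of type $G_2$, as well as case~(1) of types $B_2,F_4$ over an \emph{even}-power field, one checks exactly as in the ``second aspect'' discussion that ($p$-$\Phi$) holds automatically: either $\Gamma_L=1$, or the relevant index is prime to $p$, or $\Phi_L\Gamma_L$ is cyclic with $O_p(\Phi_L)>1$ forcing the $p$-elements outside $\Phi_L$ to have order $\geq p^2$. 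In all these subcases Theorem~\ref{theoremLiep} (its ``In particular'' clause under (H1), noting ``(H1)'' may be replaced throughout or we simply use that a minimal counterexample satisfies (H1)) gives that $G$ satisfies (H-QC), contradicting our choice of $G$.

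The remaining subcases are: case~(1) of types $B_2,F_4$ (and $\Symplectic_4(2)'$, the Tits group, $F_4(2)$, and higher odd-power analogues) over an \emph{odd}-power field, where ($p$-$\Phi$) fails but ($p$-cyclic) still holds; and case~(2) of single-bond types $A_{n-1}$ ($n\geq 3$), $D_n$ ($n\geq4$), $E_6$ where $p$ may divide $|\Gamma_L|$ and $B^*$ may induce a graph- or graph-field automorphism. By the hypothesis of the theorem, the only cases we are \emph{not} asked to handle are precisely $\PSL_n(2^m)$ ($n\geq3$), $D_n(p^m)$ ($n\geq4$, $p=2,3$), $E_6(2^m)$ with some $B\in\Outposet_G(L)$ inducing a graph or graph-field automorphism; everything else in case~(2) has $\Outposet_G(L)^*$ inducing only field automorphisms, hence ($p$-$\Phi$), and is disposed of by Theorem~\ref{theoremLiep} as above. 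For the odd-power $B_2,F_4$ cases (and the Tits group $F_4(2)'$, etc.), I would invoke Theorem~\ref{PStheorem} directly: these are specific ``small'' quasisimple groups, and the hypotheses of Theorem~\ref{PStheorem}---namely $p\mid|L|$ (true, $p=2$ here), $C_G(\hat L)$ satisfies (H-QC) (automatic from (H1), since $C_G(\hat L)$ is a proper subgroup by Lemma~\ref{lemmaOpandp}), and the non-vanishing of $\Ap(L)\to\Imageposet_{H,L}$ in homology---can be verified case-by-case; since by our reductions the conclusion of Theorem~\ref{PStheorem} fails, we again contradict minimality. Thus every surviving case is eliminated, and $G$ satisfies (H-QC).

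The main obstacle I anticipate is the verification, for the finitely many odd-power-field cases of types $B_2$ and $F_4$ (and the Tits group), that the induced map $\Ap(L)\to\Imageposet_{\hat N_G(L),L}$ is nonzero in homology---equivalently, checking the ``elimination'' hypothesis of Theorem~\ref{PStheorem} for these specific groups with their specific $2$-outers. This requires concrete knowledge of the $2$-local structure and the module-theoretic behavior of the elementary abelian $2$-subgroup posets of $\Sz(2^{2n+1})$-related rank-$2$ groups and $^2F_4$-type extensions; it is the one place where the argument is genuinely case-specific rather than structural, and where one must be careful that the ``honorary'' Lie-type groups $\Symplectic_4(2)'$ and $^2F_4(2)'$ (which do not obey the full Lie theory, cf.\ Definition~\ref{defn:sLie-p}(2)) are handled by bare-hands computation rather than by the generic field-automorphism analysis.
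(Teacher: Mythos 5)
The generic half of your argument---passing to a minimal counterexample, extracting $\Outposet_G(L)\neq\emptyset$ from Theorem~\ref{PStheorem}, and feeding every case where ($p$-$\Phi$) holds into the ``In particular'' clause of Theorem~\ref{theoremLiep}---is exactly the paper's route, as is your use of the ``second aspect'' case-division to see that the only surviving single-bond cases are the ones excluded in the statement. The gap is in the non-generic cases, which you dispose of too quickly and in part incorrectly. The four ${\mathcal Lie}_\text{exc}$-commutator groups cannot be dismissed via Theorem~\ref{generalReduction}(5): that item concerns $p$-\emph{rank}~$1$, not Lie rank~$1$, and for the relevant prime $p=2$ three of the four ($\Symplectic_4(2)'\cong\Alt_6$, $G_2(2)'\cong\U_3(3)$, ${^2}F_4(2)'$) have $2$-rank at least~$2$; moreover the alternating/sporadic elimination in Theorem~\ref{PStheorem} holds only for \emph{odd} $p$, so it does not remove $\Alt_6$ at $p=2$. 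The paper must treat these individually: ${^2}F_4(2)'$ because its outer $2$-elements have order $4$ as group elements, so $\Outposet_G(L)=\emptyset$; $\Symplectic_4(2)'$ via Corollary~8.1 of~\cite{PS}; and $G_2(2)'$, ${^2}G_2(3)'$ via Proposition~6.9 of~\cite{PS}, comparing the homological dimension of $\Ap\bigl(C_L(B)\bigr)$ (whose reduced homology vanishes since $O_p\bigl(C_L(B)\bigr)>1$) with that of $\Ap(L)$.

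Your plan for the odd-power-field groups of types $B_2$ and $F_4$ has the same problem. These form two infinite families, not ``finitely many'' small groups, and the hypothesis you propose to verify---non-vanishing of $\Ap(L)\to\Imageposet_{G,L}$ in homology, i.e.\ hypothesis~(2) of Theorem~\ref{PStheorem}---is precisely the kind of condition that can fail when a graph automorphism is present: the paper's closing remark on $\PSL_3(2^2)$ exhibits a case where the analogous map is zero. The paper instead exploits the \emph{opposite} vanishing: $C_L(B)\cong{^2}B_2(q)$ resp.\ ${^2}F_4(q)$ has $\A_2$ homologically concentrated in degree $0$ resp.\ $1$, strictly below the degree $1$ resp.\ $3$ where $\A_2(L)$ lives, so $\A_2\bigl(C_L(B)\bigr)\to\A_2(L)$ is zero in homology, and Proposition~6.9 of~\cite{PS} (a dimension-gap elimination result not restated in this paper) then gives (H-QC). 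Without that tool, or an actual proof of the nonzero-map condition you assert, your argument does not close these cases.
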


We mention that there are further technical conditions related to~$B$ which could reduce the excluded-list above;
but we have preferred to keep this simpler form of the statement.
We provide in Remark \ref{rk:fullCases} some further discussion of the excluded cases.

\begin{proof}[Proof of Theorem \ref{thm:Liep(H1)}]
Note that under~(H1), we may assume by Theorem~1.6 of~\cite{PS} that~$\Outposet_G(L)$ is non-empty.

Here most of the work is done by the ``In particular'' statement for~(H1) in the generic-Theorem~\ref{theoremLiep}:
that is, we are done in the vast majority of cases, where~$L$ has both~(sLie-$p$) and~($p$-cyclic) (and indeed ($p$-$\Phi$).)
Thus we only need to treat the cases where one of the hypotheses fails. 

Now the simple groups failing the strict Lie-theory condition of~(sLie-$p$)
are the four commutator subgroups in Definition~\ref{defn:sLie-p}(2).
And our analysis above isolated the cases where~($p$-$\Phi$) and ($p$-cyclic) can fail:
our excluded-list records the cases that we will not attempt to treat---but we will in fact
cover~$B_2(q)$ and~$F_4(q)$ for~$q$ an odd power of~$p = 2$.

We note that the condition~($p$-cyclic) arises in each of the separate cases just listed in the paragraph above
(it is needed as hypothesis for the results in~\cite{PS}).

\medskip

\noindent
We first consider the four non-generic commutator groups:

First~$L \groupiso {^2}F_4(2)'$ has~$\Outposet_G(L) = \emptyset$,
since outer $2$-automorphisms~$B$ have order~$4$ as group elements in~$LB$;
so this case is already handled by our initial remark. 

Next, the smallest case~$L \groupiso \Symplectic_4(2)'$ is handled specifically for $p = 2$ in Corollary~8.1 of~\cite{PS}.

Furthermore Corollary~8.1 of~\cite{PS} invokes Proposition~6.9 there;
and indeed via an easier application of that Proposition, we can in fact treat in parallel the remaining two cases:
i.e.~$L$=($G_2(2)' ,  {^2}G_2(3)'$), for~$p$=($2,3$).
Since we may assume~$\Outposet_G(L)^*$ is non-empty,
we see that it is given by~$B^*$ inducing an automorphism of type~(graph,graph-field) respectively
(though the first case is instead also called graph-field in~\cite{GLS98}---see Definition~2.5.13(b)(2) there).
We check then that~$C_L(B)$ has structure (order~$24$, $\Sym_3$) with~$O_p \bigl( C_L(B) \bigr) > 1$,
and it follows that the reduced homology of~$\Ap \bigl( C_L(B) \bigr)$ for any term vanishes (i.e., in all degrees). 
However, $\Ap(L)$ is a wedge of spheres of topological dimension~($1$,$0$).  
So taking these values for~``$k$'' in Proposition~6.9 of~\cite{PS}, we conclude that that~$G$ satisfies~(H-QC).

The elimination of the groups of type ($B_2(q),F_4(q)$) (for~$q$ an odd power of~$p = 2$) is only marginally more complicated:
Here $\Outposet_G(L)^*$ is either empty, or given by $B^*$ inducing an automorphism of (graph,graph) type
(again with~\cite{GLS98} using the graph-field terminology).
Now we get~$C_L(B)$ of structure~(${^2}B_2(q),{^2}F_4(q)$):
so that~$\A_2 \bigl( C_L(B) \bigr)$ is a wedge of spheres in topological dimension~($0,1$),
with reduced homology vanishing in degrees above those values.
However, $\A_2(L)$ is a wedge of spheres in topological dimension~($1,3$).
So taking these values for~``$k$'' in Proposition~6.9 of~\cite{PS}, we conclude that that~$G$ satisfies~(H-QC).
\end{proof}

\bigskip

\noindent
We now start the details for our earlier preview, and culminating in the proof of our generic-Theorem~\ref{theoremLiep}.

Recall that the more technical aspects of our study of~$p$-outer automorphisms 
are presented in Appendix Section~\ref{sec:fldgraphautcharp}---notably
the conventions in Definition~\ref{defn:fldgraphconvtypePhi} for the terminology of  ``field'' and ``graph'' automorphisms.

\bigskip

First, we obtain an analogue of the Quillen-dimension property of~\cite{AS93} (recall Definition~\ref{defn:QD}), in the sense 
that, for our almost-simple extensions, we obtain nonzero homology in a particular natural degree:
Given~$L$ of type~(sLie-$p$) with Lie rank~$m$, and a~$p$-outer~$B$ of type~$\Phi$, this degree for~$\Ap(LB)$ is in fact~$m$. 
(Since the behavior arises from an underlying use of the Bouc poset~$\Bp(L)$ of~$L$,
we have nicknamed the property as ``Bouc dimension'' below.)
And this homology will be the starting-point for our propagation up to~$\Ap(G)$.
We establish:

\begin{theorem}[``Bouc dimension'']
\label{theoremHomologyCharP}
Assume that~$L$ is of type~(sLie-$p$) with Lie rank~$m$, and we have some~$B \in \Phiposet_{\Aut(L)}(L)$.
Then~$\tilde{H}_m \bigl( \Ap(LB) \bigr) \neq 0$.%
%\footnote{
%  fakenote:
%  Dear Ron/Richard:  We suspect this easy result \ref{theoremHomologyCharP} may already be known?  Ie, either published, or at 
%   least known to you? More likely, the combinatorial bound \ref{lemmaLiepBoundCentralizer} on number of tori, on which it depends.  (Namely We'd be happy to replace our numerical proof with a reference to anything easier.)
%}
\end{theorem}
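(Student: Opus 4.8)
The plan is to exhibit a nonzero homology class of dimension~$m$ for~$\Ap(LB)$ by working with a convenient replacement-poset, rather than~$\Ap(LB)$ itself. First I would apply the pre-join machinery of Section~\ref{sec:prejoinandreplposets}: taking~$H := L$, $K := 1$, and noting that since~$B$ has order~$p$ and acts faithfully on~$L$ we are in the central-product Hypothesis~\ref{hyp:HKcentralprod} situation (trivially, as~$K=1$). Then by Theorem~\ref{theoremChangeABSPosets} we have~$\Ap(LB) \simeq W_{LB}^{\BoucPoset}(L,1) = \Bp(L) \cup \F_{LB}(L)$; and here~$\F_{LB}(L) \cap \Ap(N_{LB}(L)) = \Outposet_{LB}(L)$, which (since~$B$ is a cyclic~$p$-outer of type~$\Phi$) consists exactly of the single point~$\gen{B}$ — more precisely, it is the discrete subposet~$\Phiposet_{LB}(L)$, which in our situation is just~$\{\gen{B}\}$ (or at most the set of~$LB$-conjugates of~$\gen{B}$, all of rank~$0$). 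Thus~$\Ap(LB)$ is homotopy equivalent to~$\Bp(L)$ with one extra point~$\gen{B}$ glued below those~$A \in \Bp(L)$ with~$C_A(B) > 1$, in the ordering~$\prec$.

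The topological-dimension count then works as follows. The group~$L$ has Lie rank~$m$, so the Bouc poset~$\Bp(L)$ — whose maximal chains correspond to chains of parabolic-type~$p$-radical subgroups — has topological dimension~$m-1$ (the maximal length of a chain of proper parabolics of~$L$ over a fixed Borel corresponds to the~$m$ simple-root subsets, giving chains of size~$m$, i.e. dimension~$m-1$). Adjoining the single point~$\gen{B}$ to the left of such chains raises the dimension by one, to~$m$. I would then show that~$W_{LB}^{\BoucPoset}(L,1)$ is in fact homotopy equivalent to a wedge of~$m$-spheres — or at least that its top reduced homology~$\tilde{H}_m$ is nonzero. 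The natural route: the point~$\gen{B}$ is glued below the subposet~$\Bp(L)_{\succ \gen{B}} = \{A \in \Bp(L) : C_A(B) > 1\}$, and by the standard description of fixed points of a field-like automorphism~$B$ of type~$\Phi$ (here I would invoke the structure results of Appendix Section~\ref{sec:fldgraphautcharp}, which give that~$C_L(B)$ is again of Lie type of the same rank~$m$ over a subfield, with~$B$-fixed parabolics corresponding bijectively to parabolics of~$L$), this upper link is order-isomorphic to~$\Bp(C_L(B)) \simeq$ a wedge of~$(m-1)$-spheres, hence non-contractible and of the correct dimension. Gluing a cone-point below a wedge of~$(m-1)$-spheres, sitting inside~$\Bp(L)$ which is itself a wedge of~$(m-1)$-spheres, produces — via a Mayer--Vietoris / cofiber-sequence argument, or via the explicit left-focused chain description of Remark~\ref{rk:QDfullbdrycalc} — a nonzero class in~$\tilde{H}_m$.

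More concretely, and avoiding the need for a full homotopy-type computation, I would argue at the chain level: pick a maximal chain~$c = (A_1 < A_2 < \cdots < A_m)$ in~$\Bp(C_L(B))$ of length~$m-1$ exhibiting top homology of~$\Bp(C_L(B))$ (which exists since that poset is a nonempty wedge of~$(m-1)$-spheres — this uses that~$C_L(B)$ is of Lie type of rank~$m$, so~$\Bp$ has the homotopy type given by the Solomon--Tits theorem applied to its building). Each~$A_i$ lies in~$\Bp(L)$ and satisfies~$C_{A_i}(B) > 1$, so~$\gen{B} \prec A_1$ in~$W_{LB}^{\BoucPoset}(L,1)$. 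Then~$a := (\gen{B} \prec A_1 < \cdots < A_m)$ is a chain of size~$m+1$, i.e. an~$m$-chain; and taking~$\alpha$ to be the shuffle-type product of~$\gen{B}$ with the top cycle of~$\Bp(C_L(B))$ (in the degenerate form where there is no~$Y$-factor), $\alpha$ is a cycle in~$Z_m$. To see~$\alpha \neq 0$ in~$\tilde{H}_m$: a bounding chain would have size~$m+2$, hence would contain a proper extension of~$a$; but~$a$ is a full chain — there is ``no room'' below or among its terms (the~$A_i$ have consecutive~$p$-radical structure, and~$\gen{B}$ is forced to the left end by the~$\F$-left property), and above~$A_m$ the relevant link reduces to the link of the top chain of~$\Bp(C_L(B))$, which is empty by maximality. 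By the full-chain boundary argument (Lemma~\ref{lemmaPropertiesInitialShuffleProduct}(5), exactly as in the proof of Theorem~\ref{theoremNewHomologyPropagation}), no such bounding chain can have nonzero~$a$-initial part, forcing the corresponding cycle in~$\Bp(C_L(B))$ to bound — contradicting the nonvanishing of top homology of~$\Bp(C_L(B))$.

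\textbf{Main obstacle.} The delicate point — which I expect to be the real work — is establishing precisely the structure of the upper link~$\{A \in \Bp(L) : C_A(B) > 1\}$ and identifying it with (something homotopy equivalent to)~$\Bp(C_L(B))$, together with the claim that~$C_L(B)$ is again a group of Lie type of \emph{full} rank~$m$ in characteristic~$p$. This requires the detailed theory of field automorphisms of type~$\Phi$ — fixed subgroups of parabolics, the action on the building, and the fact that a field automorphism (unlike a graph automorphism) descends to a subfield without collapsing the Dynkin diagram — which is exactly what Appendix Section~\ref{sec:fldgraphautcharp} and Definition~\ref{defConventionFieldAndGraphsCharp} are set up to provide. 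Care is also needed when~$B$ is not literally a power of the Frobenius~$x \mapsto x^p$ but only~$\Outdiag(L)$-conjugate to one (the~$*$-reduction of Remark~\ref{rk:outerautsofsimple}); one must check that replacing~$B$ by such a conjugate does not change~$\Ap(LB)$ up to the needed equivalence, which follows since inner-diagonal automorphisms of~$L$ extend to~$LB$ and conjugate the relevant posets isomorphically. The remaining steps — the dimension count, the full-chain argument, and the Solomon--Tits input for~$\Bp(C_L(B))$ — are then routine.
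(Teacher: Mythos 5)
Your reduction to the replacement-poset $W_{LB}^{\BoucPoset}(L,1) = \Bp(L) \cup \Outposet_{LB}(L)$ and the dimension count are correct (and match the paper's first step), but the homological core of your argument has a genuine gap, for two related reasons. First, the class you propose is not a cycle: if $\gamma$ is a nonzero $(m-1)$-cycle of the upper link of $\gen{B}$, then $\partial\bigl(\gen{B} * \gamma\bigr) = \gamma \neq 0$, and since $\Bp(L)$ has topological dimension $m-1$ there is no $m$-chain available inside $\Bp(L)$ to correct this boundary (the ``degenerate shuffle with empty $Y$-factor'' does not raise degree, cf.\ Lemma~\ref{lemmaPropertiesInitialShuffleProduct}(3) with $\beta$ in degree $-1$). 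Second, the Mayer--Vietoris heuristic for attaching a single cone-point $D$ yields a class in $\tilde H_m$ only when $\tilde H_{m-1}\bigl(\Lk(D)\bigr) \to \tilde H_{m-1}\bigl(\Bp(L)\bigr)$ has nontrivial kernel; for a single $D$ this is the map induced by including a wedge of $|C_L(D)|_p$ spheres (the fixed sub-building) into a wedge of $|L|_p$ spheres, and there is no reason for it to fail injectivity. Relatedly, $\Outposet_{LB}(L)$ is not ``the single point $\gen{B}$'': it is the full set of order-$p$ complements to $L$ in $LB$, of size at least $|L : N_L(B)|$, and this multiplicity is exactly what makes the theorem true.

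The paper's proof is a counting argument. After first replacing $B$ by some $D$ with $O_p\bigl(C_L(D)\bigr)=1$ via Proposition~\ref{propertiesPhiType}(3) --- a step needed because in the graph case (2)(a) the centralizer is conical and $\Ap\bigl(C_L(D)\bigr)$ is contractible --- it applies the Segev--Webb long exact sequence to $L \normal LB$, which gives
\[ 0 \to \tilde H_m\bigl( \Ap(LB) \bigr) \to \bigoplus_{D \in \Outposet_{LB}(L)} \tilde H_{m-1}\bigl(\ \Ap( C_L(D) )\ \bigr) \to \tilde H_{m-1}\bigl( \Ap(L) \bigr) \to \tilde H_{m-1}\bigl( \Ap(LB) \bigr) \to 0 , \]
and then shows the middle direct sum has dimension strictly larger than $|L|_p = \dim \tilde H_{m-1}\bigl(\Ap(L)\bigr)$, so that the first map has nonzero domain modulo the image constraint --- equivalently, the combined restriction map cannot be injective. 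This is where the real work lies: one sums $|C_L(D)|_p$ over the at least $|L:N_L(B)|$ outers with nonconical centralizer and invokes the numerical estimate $|L|_{p'} > (p-1)\,|C_L(B)|_{p'}$ of Lemma~\ref{lemmaLiepBoundCentralizer}. Your proposal contains no substitute for this count, and without it the nonvanishing does not follow.
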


\begin{proof}
Fix~$B \in \Phiposet_{\Aut(L)}(L)$.
By part~(3) of Proposition~\ref{propertiesPhiType},
we may choose~$D \in \Outposet^{\Phi}_{LB}(L)$ such that~$O_p \bigl( C_L(D) \bigr) = 1$.
(We would only need to make this choice, if our original~$B$ happened to be in case~(2)(a) of the Proposition.)
Since~$LB = LD$, we may take~$D$ in the role of~``$B$'';
thus we have~$O_p \bigl(C_L(B) \bigr) = 1$---that is, we have our chosen~$B$ in case~(1) or~(2)(b) of the Proposition.

We are going to work with the Bouc poset~$\Bp(L)$.
Recall that~$\Bp(L)$ is homotopy equivalent to the Tits building of~$L$:
it is a poset of topological dimension~$m-1$, where~$m$ is the Lie-rank of~$L$,
and it is homotopy equivalent to a wedge of spheres of dimension~$m-1$.
The number of such spheres is the vector-space dimension of~$\tilde{H}_{m-1} \bigl( \Ap(L) \bigr) = \tilde{H}_{m-1}(\Bp(L))$,
and that dimension is in turn the order of a Sylow~$p$-subgroup of~$L$, namely~$|L|_p$.
In particular we have~$\tilde{H}_* \bigl( \Ap(L) \bigr) = \tilde{H}_{m-1} \bigl( \Bp(L) \bigr)$.

We will use the abbreviation~$\Outposet := \Outposet_{LB}(L)$.
By Theorem~\ref{theoremChangeABSPosets} applied to~$LB$---with $L$,$1$ in the roles of~``$H$,$K$''---we get: 
  \[ \Ap(LB) \simeq W_{LB}^\BoucPoset(L,1)\     (\ = \Bp(L) \cup \Outposet\ )\ . \]
  %\simeq  W_{LB}^\QuillenPoset(L,1)\ (\ = \Ap(L) \cup \Outposet\ )
Therefore~$\Ap(LB)$ is homotopy equivalent to the poset~$\Bp(L) \cup \Outposet$ of topological dimension~$m$.

We will now let~$D$ vary over~$\Outposet$.  
By~(3) of Proposition~\ref{propertiesPhiType}, we get~$D^* \subseteq \Phi_L$;
and since we also have~$B^* = \Omega_1 \bigl( O_p(\Phi_L) \bigr)$ there, we see~$D^* = B^*$.  
Next apply the Proposition to this general~$D$: 
then either case~(2)(a) holds, with~$O_p \bigl( C_L(D) \bigr) > 1$---so that~$\Ap \bigl( C_L(D) \bigr)$ is contractible;
or else case~(1) or case~(2)(b) holds, with~$C_L(D)$ again a group of Lie-type in characteristic~$p$, of the same Lie-rank~$m$
(possibly extended by $p'$-diagonal automorphisms),
so that~$\Ap \bigl( C_L(D) \bigr)$ is homotopy equivalent to a wedge of~$|C_L(D)|_p$ spheres of topological dimension~$(m-1)$.
Thus the reduced homology of~$\Ap \bigl( C_L(D) \bigr)$ either is zero in the contractible case; 
or in the Lie-type centralizer case, vanishes in degrees below~$(m-1)$ (just as for~$L$ itself).

At this point, we invoke the long exact sequence of~\cite[Main Theorem]{SW}:
We take~$L,LB$ in the roles of~``$N,G$'' there;
note that~``$\mathcal{M}$'' there is~$\Outposet$ here, which consists of cyclic elements of order~$p$ of type~$\Phi$.
With the notation of that article (where~$\Sigma$ denotes suspension),
we have that~$\Ap(LB)_L \simeq \bigvee_{D \in \Outposet} \Sigma \Ap \bigl( C_L(D) \bigr)$.
Now we saw that homology from~$\Ap(L)$ and~$\Ap \bigl( C_L(D) \bigr)$ vanishes
(in any case, contractible or otherwise) in degrees below~$(m-1)$;
and similarly homology from~$\Ap(LB)$ vanishes in degrees above the topological dimension~$m$. 
So we get:
  \[ 0 \to \tilde{H}_m \bigl( \Ap(LB) \bigr) \to \bigoplus_{D \in \Outposet}\ \tilde{H}_{m-1} \left(\ \Ap \bigl( C_L(D) \bigr)\ \right)
    \overset{ }{\to} \tilde{H}_{m-1}\bigl( \Ap(L) \bigr) \to \tilde{H}_{m-1} \bigl( \Ap(LB) \bigr) \to 0 . \]
Using the above exact sequence of vector spaces, we now calculate:
with the aim of lower-bounding the vector-space dimension of~$\tilde{H}_m \bigl( \Ap(LB) \bigr)$.
We get zero-contribution from any cases where~$\Ap \bigl( C_L(D) \bigr)$ is contractible; 
and we will use the fact, from~(1) or~(2)(b) of Proposition~\ref{propertiesPhiType},
that there is just one~$\Inndiag(L)$-conjugacy class of~$D$ with the indicated centralizer structure,
represented by our original choice~$B$:
\begin{align*}
  \dim ( \tilde{H}_m &  \bigl(\ \Ap(LB) \bigr)\ ) = \\
    & = \dim \left(\ \tilde{H}_{m-1} \bigl( \Ap(LB) \bigr)\ \right) - \dim \left(\ \tilde{H}_{m-1} \bigl( \Ap(L) \bigr)\ \right)
                          + \sum_{D \in \Outposet} \dim \left(\ \tilde{H}_{m-1} (\ \Ap \bigl( C_L(D) \bigr)\ ) \right) \\
    & \geq  - \dim \left( \tilde{H}_{m-1} \bigl( \Ap(L) \bigr) \right)
                          + \sum_{D \in \Outposet}\ \dim \left( \tilde{H}_{m-1}(\ \Ap \bigl( C_L(D) \bigr)\ ) \right) \\
    & \geq  - \dim \left( \tilde{H}_{m-1} \bigl( \Ap(L) \bigr) \right)
                          + \sum_{gN_L(B) \in L/N_L(B)}\ \dim \left( \tilde{H}_{m-1} (\ \Ap \bigl( C_L(B^g) \bigr)\ ) \right)\\
    & = - |L|_p + |L:N_L(B)| |C_L(B)|_p\\
    & = - |L|_p + \frac{|L|}{|C_L(B)|} \frac{|C_L(B)|}{|N_L(B)|} |C_L(B)|_p\\
    & \geq - |L|_p + \frac{|L|}{|C_L(B)|} \frac{1}{(p-1)} |C_L(B)|_p\\
    & = |L|_p \left( - 1 + \frac{|L|_{p'}}{(p-1)|C_L(B)|_{p'}}\right).
\end{align*}
Now our chosen~$B$ has~$O_p \bigl( C_L(B) \bigr) = 1$
(that is,  case~(1) or~(2)(b) of Proposition~\ref{propertiesPhiType});
so Lemma~\ref{lemmaLiepBoundCentralizer} gives the inequality~$\frac{|L|_{p'}}{(p-1)|C_L(B)|_{p'}} > 1$,
and hence the above dimension is positive.
We conclude that~$\tilde{H}_m \bigl( \Ap(LB) \bigr) \neq 0$, as desired.
\end{proof}

\bigskip

We now begin the details for our main proof, of the generic Theorem~\ref{theoremLiep}, 
splitting it into the two cases of its conicality-hypotheses.
We first deal essentially with the case~(2) there,
corresponding to condition~(someNC) in Remark~\ref{remarkClosingSection}:
%we can in fact use the hypothesis~($p$-cyclic), which is slightly weaker than~($p$-$\Phi$) assumed in~(someNC+$\Phi$):

% And the nonzero homology for~$L B_0$, which is the input to the propagation,
% will in fact supplied by Theorem~\ref{theoremHomologyCharP}.
% Then in the proof of Lemma~\ref{lemmaCharPandFG-typeNC}, which covers this~(someNC+$\Phi$) case, 
% the propagation itself is provided by Theorem~\ref{theoremNewHomologyPropagation}---using 
% the replacement-poset $W = W^{\BoucPoset}_G \bigl( L , C_G(L B_0) \bigr)$ of Proposition~\ref{theoremChangeABSPosets}.
%By contrast, hypothesis~(allC+H) adds to~(allC) a propagation-friendly nonzero-homology condition on~$C_G(L)$.
% and then in the proof of Lemma~\ref{lemmaCharPAndConicalFields}, which covers this~(allC+H) case, 
% we again apply propagation via Theorem~\ref{theoremNewHomologyPropagation}---but now using 
% the tilde-variant replacement-poset given by~$W = \tilde{W}^{\BoucPoset}_G \bigl( L , C_G(L) \bigr)$
% of Proposition~\ref{propHomotEquivTildePosets}.

\begin{lemma}
\label{lemmaCharPandFG-typeNC}
Let $G$ be a finite group such that:

\begin{tabular}{lcl}
  (sLie-$p$)            & & \pbox{13cm}{$L$ is a component of~$G$ of type~(sLie-$p$) ; } \\
  ($p$-cyclic)          & & \pbox{13cm}{$\Outposet_G(L)$ contains only cyclic~$p$-outers; } \\
  (someNC+$\Phi$)$^{-}$ & & \pbox{12cm}{There exists~$B_0 \in \Phiposet_G(L)$
                                  such that $\tilde{H}_*(\ \Ap \bigl( C_G(LB_0) \bigr)\ ) \neq 0$ . } 
\end{tabular}

\noindent
Then~$G$ satisfies~(H-QC).
\end{lemma}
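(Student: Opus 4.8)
\textbf{Proof proposal for Lemma~\ref{lemmaCharPandFG-typeNC}.}
The plan is to apply the new pre-join propagation result Theorem~\ref{theoremNewHomologyPropagation}, with the homotopy-equivalent replacement-poset $W := W_G^{\BoucPoset}(H,K)$ of Theorem~\ref{theoremChangeABSPosets} playing the role of the ``$W$'' in the (Zleft)-Hypothesis~\ref{hyp:Zleft}. First I would set up the groups: take the component $L$ together with the given $B_0 \in \Phiposet_G(L)$; since by hypothesis $\tilde{H}_*\bigl(\Ap(C_G(LB_0))\bigr)\neq 0$ we are in the ``nonconical'' situation, and in particular $O_p\bigl(C_G(LB_0)\bigr)=1$. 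Following Hypothesis~\ref{hyp:LBforW}, I would choose the \emph{non-classical} $H := L$ (not $LB_0$) and $K := C_G(LB_0)$; by Lemma~\ref{lm:HypLBimpliesHypHKcentralprod} this still yields the $p'$-central product Hypothesis~\ref{hyp:HKcentralprod}, so I may form $W = W_G^{\BoucPoset}(L,K) = \Bp(L)\ujoin\Ap(K)\ \cup\ \F_G(LK)$, with $X := \Bp(L)$, $Y := \Ap(K)$, and $Z := \Outposet_{LB_0}(L)$. By Lemma~\ref{lm:LBforWgivesZleft}(b) the quadruple $X,Y,Z,W$ satisfies (Zleft); and by Theorem~\ref{theoremChangeABSPosets}, $W\simeq\Ap(G)$, so nonzero homology for $W$ gives (H-QC).

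Next I would produce the three input data for Theorem~\ref{theoremNewHomologyPropagation}. For condition~(1) and (2)(a): the ``Bouc dimension'' Theorem~\ref{theoremHomologyCharP}, applied with $B_0$ in the role of ``$B$'', gives $\tilde{H}_m\bigl(\Ap(LB_0)\bigr)\neq 0$ where $m$ is the Lie rank of $L$; transporting this across the equivalence $\Ap(LB_0)\simeq W_{LB_0}^{\BoucPoset}(L,1) = \Bp(L)\cup\Outposet_{LB_0}(L) = X\cup Z$, I get a nonzero-homology cycle $\alpha\in Z_m(X\cup Z)$. Since we work with rational homology I may rescale so that some chain $a\in\alpha$ has an invertible coefficient $q$ — this is (2)(a). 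For condition~(3), the hypothesis $\tilde{H}_*\bigl(\Ap(K)\bigr) = \tilde{H}_*\bigl(\Ap(C_G(LB_0))\bigr)\neq 0$ directly furnishes a nonzero-homology cycle $\beta\in Z_n(Y)$ for some $n\geq -1$. With $M := W$, the conclusion of Theorem~\ref{theoremNewHomologyPropagation} then says $T_*(\alpha\otimes\beta)$ is nonzero in $\tilde{H}_*(W)$, hence $\tilde{H}_*\bigl(\Ap(G)\bigr)\neq 0$.

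The main obstacle — and the bulk of the work — will be verifying condition~(2)(b): namely that $\Lk_W(a)\subseteq\K\bigl(\{\max(a)\}\times Y\bigr)$, equivalently that $a$ is a full chain of $X\cup Z$ in the sense of Definition~\ref{defn:fullandinitialchains}(2) and that everything strictly above $\max(a)=:A$ in $W$ lies in $\{A\}\times\Ap(K)$. Here I can exploit the left-focused simplex format of Remark~\ref{rk:QDfullbdrycalc}(i): a chain of $X\cup Z$ reads $(B_1<\cdots<B_r\prec A_1<\cdots<A_s)$ with $B_i\in\Outposet_{LB_0}(L)$ and $A_i\in\Bp(L)$, and the $p$-outer terms are concentrated at the left end. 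I would argue in three stages, as outlined in that Remark: first eliminate the possibility that any member of $\F_G(LK)$ lies in $\Lk_W(a)$ — this is where ($p$-cyclic) is essential, forcing $\Outposet_G(L)$ to be a $0$-dimensional poset so that the $B_i$-part of $a$ has no room to be extended or refined; next rule out members of $\bigl(\Bp(L)\ujoin\Ap(K)\bigr)_{<A}$, using that $A$ realizes the top Bouc-dimension for $\Bp(L)$ and that the chain from Theorem~\ref{theoremHomologyCharP} can be taken to descend through a maximal flag of the building; and finally show that members of $W_{>A}$ must have first coordinate equal to $A$ and hence lie in $\{A\}\times\Ap(K)$ — this uses that $A$ has maximal $p$-rank among the relevant subgroups together with the product structure $A\leq p_L(E)\times p_K(E)$ from~(\ref{eq:HKprojs}). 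Assembling these three stages gives (2)(b), and the proof concludes by invoking Theorem~\ref{theoremNewHomologyPropagation}.
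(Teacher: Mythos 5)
Your proposal follows exactly the paper's route: the same choices $H=L$, $K=C_G(LB_0)$, $X=\Bp(L)$, $Y=\Ap(K)$, $Z=\Outposet_{LB_0}(L)$, $W=W_G^{\BoucPoset}(L,K)$, the same invocations of Theorem~\ref{theoremChangeABSPosets}, Lemma~\ref{lm:LBforWgivesZleft}(b), Theorem~\ref{theoremHomologyCharP}, and Theorem~\ref{theoremNewHomologyPropagation}, and the same identification of ($p$-cyclic) as the ingredient that blocks extra $\F_G(HK)$-terms in $\Lk_W(a)$. Your three-stage outline of condition~(2)(b) matches the paper's ``Claim'' argument in substance (the paper's version notes that a cycle in degree $m$ forces $a$ to consist of a single $B\in Z$ followed by a maximal chain of $\Bp(L)$ ending in a Sylow $R_m$, from which the product ordering immediately gives $W_{\succ(R_m,1)}\subseteq\{R_m\}\times Y$), so the proposal is correct and not a different route.
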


\begin{proof}
As usual to show~(H-QC) for~$G$, we assume~$O_p(G)=1$, and show that~$\tilde{H}_* \bigl( \Ap(G) \bigr) \neq 0$.

Let~$L$ and~$B_0$ be as in the hypotheses~(sLie-$p$) and~(someNC+$\Phi$)$^{-}$.
Then the choices~$H := L$ and~$K := C_G(LB_0)$ satisfy the component-Hypothesis~\ref{hyp:LB}; 
and we showed in Lemma~\ref{lm:HypLBimpliesHypHKcentralprod} that the less-usual choice of~$H = L$ rather than~$LB_0$
still gives the central-product Hypothesis~\ref{hyp:HKcentralprod} for~$HK$.
So by Theorem~\ref{theoremChangeABSPosets}:
  \[ \Ap(G) \simeq W := W^\BoucPoset_G \bigl( L,C_G(LB_0) \bigr)
                      = \Bp(L) \ujoin \Ap \bigl( C_G(LB_0) \bigr)\ \cup\ \F_{G} \bigl( L C_G(L B_0)\bigr) , \]
where we recall that~$\prec$ denotes the order-relation in the poset~$W$.
%By Proposition \ref{propertiesFGType}(4), we may suppose that $O_p(C_L(B_0)) = 1$.
Now note that in the above we have~$\Outposet:=\Outposet_{LB_0}(L) \subseteq \F_{G}(LC_G(LB_0))$; and also we have an inclusion of posets:
  \[ \Ap(LB_0) \simeq     W_{LB_0}^{\BoucPoset}(L,1) = \bigl(\ \Bp(L) \times \{ 1 \}\ \bigr) \cup \Outposet
               \subseteq  W^\BoucPoset_G \bigl( L,C_G(LB_0) \bigr) . \]
We are going to invoke Theorem~\ref{theoremNewHomologyPropagation}:
with the subcomplex~$M = \K(W)$, and making the choices that~$X = \Bp(L)$, $Y =\Ap(K)$, and~$Z = \Outposet$.
Recall that these are the choices of~$X,Y,Z,W$ made in our earlier augmented-Hypothesis~\ref{hyp:LBforW}
(extending the component-Hypothesis~\ref{hyp:LB}); 
and that in Lemma~\ref{lm:LBforWgivesZleft}(b), we showed that these choices satisfy the (Zleft)-Hypothesis~\ref{hyp:Zleft}, 
as needed for that Theorem.

Now we check the other conditions for Theorem~\ref{theoremNewHomologyPropagation}:
By Theorem~\ref{theoremHomologyCharP}, we see that~$W_{LB_0}^{\BoucPoset}(L,1)$ has dimension~$m$,
and that it has nonzero homology in degree~$m$.
Since~$\Bp(L)$ has dimension~$m-1$,
there is a nonzero homology cycle~$\alpha$ of~$\tilde{H}_m(W_{LB_0}^{\BoucPoset}(L,1)) \groupiso \tilde{H}_m(\Ap(LB_0))$ with:
  \[ \alpha = \sum_{i \in I}\ q_i \bigl( B^i \prec (R_1^i,1) < \ldots < (R^i_m,1) \bigr) , \]
where~$R^i_j \in \Bp(L)$ for all~$j = 1 , \ldots , m$, and~$B^i \in \Outposet$.
Moreover, since~$\Bp(L)$ has dimension~$m-1$,
note that:

\centerline{
   $(R_1^i,1) < \ldots < (R^i_m,1)$ is a maximal chain in~$\Bp(L)$;
            }

\noindent
so~e.g.~$R^i_m \in \Syl_p(L)$ for all~$i$.
In particular, if~$x \in W$ and~$x \succ (R^i_m,1)$,
then just from the ordering in the Cartesian-product we have~$x = (R^i_m,E) \in  \K \bigl( \{ \max(a) \} \times Y)$,
for some~$E \in \Ap(K)$.

Now let~$a \in \alpha$ be one of the above chains involved in~$\alpha$, with invertible coefficient~$q = q_i \neq 0$.
We immediately get conditions~(1) and~(2)(a) of Theorem~\ref{theoremNewHomologyPropagation},
and the above-$a$ part of condition~(2)(b) there.
By~(someNC+$\Phi$)$^-$, we can take a nonzero cycle~$\beta$ of~$\Ap(K)$,
giving condition~(3) of Theorem~\ref{theoremNewHomologyPropagation}.
It remains to establish the rest of condition~(2)(b); at this point, we will strongly use our hypothesis~($p$-cyclic).
Namely for~$a$ as just chosen, we will establish:

\vspace{0.2cm}

\textbf{Claim.} $\Lk_M(a) \subseteq \K \bigl( \{ \max(a) \} \times Y)$.
      In particular, $a$ is a maximal chain of~$X \cup Z$.

\begin{proof}
Suppose that~$a = \bigl( B \prec (R_1,1) < \ldots < (R_m,1) \bigr)$,
and let $w \in M - a$ be~$\prec$-comparable with every element of~$a$.
Note that~$|B| = p$, since~$B \in \Outposet_{LB_0}(L) = \Outposet$ and~$|B_0|=p$.
We also have~$B \in \F_{G}(HK)$; so~$B \prec w$ by the~$\F$-left property in the definition of the ordering.
In fact we will show that~$w > \max(a) = R_m$, where we saw earlier that such a~$w$ satisfies the Claim.

First, we also saw earlier that~$a - \{ B \}$ is a maximal chain in~$\Bp(L)$;
so we cannot have~$1 \leq i < m$ with~$(R_i,1) < w < (R_{i+1},1)$.
Thus we only need to eliminate the case
that~$B \prec w \prec (R_1,1)$.

So suppose that $B \prec w \prec (R_1,1)$.
Now~$w \notin \Bp(L)$, again  since we saw~$a - \{ B \}$ is a maximal chain in~$\Bp(L)$.
Hence~$w = D \in \F_{G}(HK)$; that is, $D \cap  \bigl( L C_G(LB_0) \bigr) = 1$.
Since $D \prec (R_1,1)$ we see that~$C_{R_1}(D) > 1$, so~$D$ normalizes~$L$.
Then~$D > B$ implies that~$D$ also normalizes~$LB$.
Now since $D~\leq C_G(B)$ we get:
  \[D \cap \bigl( L C_G(L) \bigr) = D \cap \bigl( L C_G(L) \bigr) \cap C_G(B)
                                  = D \cap \bigl( C_L(B) C_G(LB)) \leq D \cap \bigl( L C_G(LB_0) \bigr) = 1 . \]
Here we have used the fact that~$LB = LB_0$ since~$B \in \Outposet_{LB_0}(L)$.
Thus~$D \in \Outposet_{N_G(LB_0)}(L) = \Outposet$, which consists only of cyclic~$p$-outers by our hypothesis~($p$-cyclic).
This is contrary to~$D$ of~$p$-rank at least~$2$, since~$D > B$.
This contradiction shows that no such~$D$ exists.

We have reduced to the case that~$w > (R_m,1) = \max(a)$; which we saw earlier satisfies the Claim. 
In particular, $a$ is a maximal $M$-chain~in $X \cup Z$.
\end{proof}

Therefore Theorem \ref{theoremNewHomologyPropagation} applies,
with these choices of~$X,Y,Z,W,M=\K(W)$, and of~$a,\alpha,\beta$;
so we get~$\tilde{H}_* \bigl( \Ap(G) \bigr) \groupiso \tilde{H}_*(W) \neq 0$, as desired.
\end{proof}

Next we prove case~(1) of generic-Theorem~\ref{theoremLiep},
corresponding to condition~(allC) in Remark~\ref{remarkClosingSection}:

\begin{lemma}
\label{lemmaCharPAndConicalFields}
Let $G$ be a finite group such that:

\begin{tabular}{lcl}
(sLie-$p$) & & \pbox{13cm}{$L$ is a component of~$G$ of type~(sLie-$p$);} \\
(allC+H) & & \pbox{13cm}{For all~$B \in \Outposet_G(L)$, $O_p \bigl( C_G(LB) \bigr) > 1$; and~$C_G(L)$ satisfies~(H-QC).}
\end{tabular}

\noindent
Then~$G$ satisfies~(H-QC).
\end{lemma}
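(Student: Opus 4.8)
The plan is to run the same machine as in the proof of Lemma~\ref{lemmaCharPandFG-typeNC}, but with the \emph{tilde}-posets of Proposition~\ref{propTildePosetsXandW} in place of the posets of Theorem~\ref{theoremChangeABSPosets}, and with the equivalence of Proposition~\ref{propHomotEquivTildePosets} replacing that of Theorem~\ref{theoremChangeABSPosets}. Passing to the tilde-version is exactly what lets us absorb the conical $p$-outers that the hypothesis (allC+H) forces on us: under (allC) the set $\F_1$ of Proposition~\ref{propositionContractibleCentralizers} is all of $\Outposet_G(L)$, and the tilde-order is rigged precisely so that these conical outers never sit below a ``pure-$L$'' element.

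First I would assume $O_p(G)=1$, so that proving (H-QC) for $G$ amounts to showing $\tilde{H}_*(\Ap(G))\neq 0$. As $L$ has type (sLie-$p$) it is simple, so $Z(L)=1$ and $H:=LC_G(L)=L\times C_G(L)$; hence the standing hypotheses of Propositions~\ref{propTildePosetsXandW} and~\ref{propHomotEquivTildePosets} hold, and by the latter
\[ \Ap(G)\ \simeq\ \tilde{W}^{\BoucPoset}_G\bigl(L,C_G(L)\bigr)\ =\ \Bp(L)\ujoin\Ap\bigl(C_G(L)\bigr)\ \cup\ \F_G(H). \]
I will apply the propagation-Theorem~\ref{theoremNewHomologyPropagation} with $W:=\tilde{W}^{\BoucPoset}_G(L,C_G(L))$, $M:=\K(W)$, $X:=\Bp(L)$, $Y:=\Ap(C_G(L))$ and $Z:=\emptyset$. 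With $Z$ empty the (Zleft)-Hypothesis~\ref{hyp:Zleft} is immediate (its conditions~(ii),(iii) are vacuous, and $X\ujoin Y$ is a genuine subposet of $W$ since $\sqsubset_W$ restricts to group-inclusion on the pre-join), just as in Lemma~\ref{lm:LBforWgivesZleft}(a). For conditions~(1) and~(2)(a) I would use the Tits-building description of $\Bp(L)$ for $L$ of Lie type in defining characteristic~$p$: $\Bp(L)$ has dimension $m-1$ (with $m$ the Lie rank) and $\tilde{H}_{m-1}(\Bp(L))=\tilde{H}_{m-1}(\Ap(L))$ is the nonzero Steinberg module, so I pick a nonzero cycle $\alpha\in Z_{m-1}(\Bp(L))$ and a chain $a=(R_1<\cdots<R_m)\in\alpha$ with (rationally invertible) coefficient $q\neq 0$; then $a$ is a maximal chain of $\Bp(L)$ and $\max(a)=R_m\in\Syl_p(L)$. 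For condition~(3) I would note $O_p(C_G(L))=1$ by Lemma~\ref{lemmaOpandp}(4), and then invoke the hypothesis that $C_G(L)$ satisfies (H-QC) to get a nonzero-homology cycle $\beta\in Z_n(\Ap(C_G(L)))$, $n\geq -1$ (in the degenerate case $\Ap(C_G(L))=\emptyset$ one simply takes $n=-1$ and $\beta$ the generator of $\tilde{H}_{-1}$).

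The hard part will be condition~(2)(b): $\Lk_M(a)\subseteq\K(\{\max(a)\}\times Y)$. I would first rule out members of $\F_G(H)$: such an $F$ can only lie \emph{below} pre-join elements in $\sqsubset_W$, so if some $F$ were $\sqsubset_W$-comparable with every $(R_i,1)$ we would need $F\sqsubset_W(R_i,1)$, hence $1<C_{R_i}(F)$; but $R_i$ is a $p$-subgroup of the simple group $L$, so $C_{R_i}(F)$ is non-central in $L$, whence $F$ normalises $L$ by Lemma~\ref{lemmaNormalizeComponent} and $F\in\F_G(H)\cap\Ap(N_G(L))=\Outposet_G(L)=\F_1$ --- and the tilde-order explicitly \emph{excludes} $F\sqsubset_W(R_i,1)$ when $F\in\F_1$ and $C_{R_i}(F)\leq L$, a contradiction. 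Since $a$ is a maximal chain of $\Bp(L)$ with $\max(a)$ Sylow-maximal, the only pre-join elements comparable with all of $a$ and properly enlarging it are those of the form $(R_m,C)$ with $C\in\Ap(C_G(L))$, i.e.\ exactly $\{\max(a)\}\times Y$; this establishes~(2)(b) (and the stated maximality of $a$ in $X\cup Z=\Bp(L)$). Theorem~\ref{theoremNewHomologyPropagation} then yields $0\neq T_*(\alpha\otimes\beta)\in\tilde{H}_{m+n}(M)$, and since $M=\K(W)\simeq\Ap(G)$ by Proposition~\ref{propHomotEquivTildePosets}, we conclude $\tilde{H}_*(\Ap(G))\neq 0$. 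I expect the only genuine work to be the link-computation of this paragraph --- carefully matching the tilde-order's exclusions (and the identification $\F_1=\Outposet_G(L)$ forced by (allC)) against the elements that would otherwise obstruct fullness of $a$; everything else transcribes the (someNC)-case proof, simplified by the absence both of a distinguished nonconical $p$-outer and of the ``Bouc-dimension'' boost, since here $\alpha$ already lives in the top degree $m-1$ of $\Bp(L)$ itself.
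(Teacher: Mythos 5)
Your proposal is correct and follows essentially the same route as the paper's proof: the same replacement-poset $\tilde{W}^{\BoucPoset}_G\bigl(L,C_G(L)\bigr)$ via Proposition~\ref{propHomotEquivTildePosets}, the same choices $X=\Bp(L)$, $Y=\Ap\bigl(C_G(L)\bigr)$, $Z=\emptyset$ in Theorem~\ref{theoremNewHomologyPropagation}, and the same link-computation using the identification $\F_1=\Outposet_G(L)$ forced by (allC) together with the exclusions built into the tilde-order to rule out elements of $\F_G(H)$ below $\min(a)$.
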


\begin{proof}
As usual to show~(H-QC) for~$G$, we assume~$O_p(G)=1$, and show that~$\tilde{H}_* \bigl( \Ap(G) \bigr) \neq 0$.

\medskip

\noindent
We begin with a slightly easier version of the setup as in the previous lemma:

Let~$H := L$ and~$K := C_G(L)$: again these choices satisfy the component-Hypothesis~\ref{hyp:LB}; 
and again using Lemma~\ref{lm:HypLBimpliesHypHKcentralprod} we see that these usual choices
give the central-product Hypothesis~\ref{hyp:HKcentralprod} for~$HK$.
This time, in order to avoid centralizers in~$X \ujoin Y$ (of members of~$\Outposet_G(L)$) which lie entirely in~$L$,
we instead apply Proposition~\ref{propHomotEquivTildePosets} (rather than Theorem~\ref{theoremChangeABSPosets})
in order to be able to make use of the variant-poset:
  \[ \Ap(G) \simeq W := \tilde{W}_G^{\BoucPoset}(H,K) = \Bp(H) \ujoin \Ap(K) \cup \F_G(HK) , \] 
namely, the poset of Proposition~\ref{propTildePosetsXandW};
where we recall that~$\sqsubset$ denotes the order-relation in the poset~$W$.

Note now by the initial~(allC)-part of hypothesis~(allC+H), we have:
  \[ \F_1 = \{ B \in \Outposet_G(L) \tq O_p \bigl( C_G(LB) \bigr) > 1 \} = \Outposet_G(L) ; \]
so that the subset~$\F_1$ in fact covers all of~$\Outposet_G(L)$. 

In view of this, we will again be applying Theorem~\ref{theoremNewHomologyPropagation} for propagation:
again with~$M = \K(W)$ but now for the new variant-$W$ as above;
and with~$X := \Bp(L)$, $Y := \Ap(K)$ and now we can use the ``trivial'' choice~$Z := \emptyset$.
These choices of~$X,Y,Z,W$ extend the component-Hypothesis~\ref{hyp:LB}; 
and we can also directly mimic the proof in the simpler case~(a) of Lemma~\ref{lm:LBforWgivesZleft}:
namely we see that these choices satisfy the (Zleft)-Hypothesis~\ref{hyp:Zleft}, 
as needed for Theorem~\ref{theoremNewHomologyPropagation}:
that is, the inclusion of~$X \ujoin Y$ in our present choice of~$W$ for~(Zleft)(i)
is automatic---by construction; and the choice of~$Z = \emptyset$ makes~(ii) and~(iii) there vacuous.

\medskip

\noindent
So we turn to verifying the other conditions needed for Theorem~\ref{theoremNewHomologyPropagation}:

Since~$O_p(G) = 1$, we have~$O_p \bigl( C_G(L) \bigr) = 1$ by Lemma~\ref{lemmaOpandp}(4).
Indeed by the latter part of hypothesis~(allC+H), we see that~$\tilde{H}_*(\Ap(K)) =  \tilde{H}_*(\ \Ap \bigl( C_G(L) \bigr)\ ) \neq 0$;
and thus we can a take a nonzero homology cycle~$\beta \in Z_n \bigl( \Ap(K) \bigr)$ for some~$n \geq -1$.
This establishes condition~(3) of Theorem~\ref{theoremNewHomologyPropagation}.

By hypothesis~(sLie-$p$), there exists a nonzero homology cycle~$\alpha \in Z_{m-1} \bigl( \Bp(L)\bigr)$, $m-1 \geq 0$, 
where~$m$ is the Lie rank of~$L$;
giving condition (1) of Theorem \ref{theoremNewHomologyPropagation}.
To finish, we need to show condition~(2) of Theorem~\ref{theoremNewHomologyPropagation} for such an~$\alpha$.
Pick~$a \in \alpha$; so~$a = \bigl( (R_1,1)< \ldots <(R_m,1) \bigr) \subseteq W$.
First, since we work with homology with rational coefficients,
we automatically get condition~(2)(a) of Theorem~\ref{theoremNewHomologyPropagation}.
For condition~(2)(b), we need to show that~$\Lk_M(a) \subseteq \K( \{ \max(a) \} \times Y)$.
Since~$M = \K(W)$, this is equivalent to showing that~$a$ is a full chain in~$W$,
and that if~$w > \max(a)$ then~$w \in \{ \max(a) \} \times Y$.

So let~$w \in W$ be an element comparable with every element of~ $a$.
Since~$a$ is a maximal chain of~$\Bp(L)$, we see that either~$w > \max(a)$, or else~$w \sqsubset \min(a)$ with~$w \notin X$.
In the former case, by the definition of the order in~$W$ we see that~$w = (\max(a) , y) \in \{ \max(a) \} \times Y$, for some~$y \in Y$.
In the latter case, $w =: D \in \F_G \bigl( L C_G(L) \bigr)$.
Since $D \sqsubset (R_1,1)$, we have~$C_{R_1 \cdot 1}(D) > 1$, and this forces~$D \in \Ap \bigl( N_G(L) \bigr)$.
Now the condition~$D \cap \bigl( L C_G(L) \bigr) = 1$ implies that~$D \in \Outposet_G(L) = \F_1$,
an equality we observed earlier using the (allC)-part of our hypothesis.
By definition of the order-relation in the poset~$W = \tilde{W}_G^{\BoucPoset}(L,C_G(L))$ in Proposition~\ref{propTildePosetsXandW},
we see that $D \sqsubset (E,F) \in X \ujoin Y$ implies~$C_{EF}(D) \nleq L$, so~$F > 1$, contrary to~$D \sqsubset (R_1,1)$ above. 
This contradiction shows that no such~$w$ exists, so this case cannot hold.
In conclusion, $\Lk_M(a) \subseteq \{ \max(a) \} \times Y$, giving condition~(2)(b) of Theorem~\ref{theoremNewHomologyPropagation}.
 
We have now fulfilled the conditions of Theorem~\ref{theoremNewHomologyPropagation},
so~$0 \neq \tilde{H}_*(M) = \tilde{H}_*(W) \groupiso \tilde{H}_* \bigl( \Ap(G) \bigr)$, completing the proof.
\end{proof}

Now we can prove our generic-Theorem~\ref{theoremLiep}:

\begin{proof}[Completing the proof of Theorem~\ref{theoremLiep}]
We saw that Case~(2)=(someNC+$\Phi$) holds by Lemma~\ref{lemmaCharPandFG-typeNC}; 
while the other Case~(1)=(allC+H) follows from Lemma~\ref{lemmaCharPAndConicalFields}:
since both Lemmas assume the initial hypothesis~(sLie-$p$) of the Theorem.

Thus we had already essentially completed the proof of the main assertion of the Theorem; 
so we turn to the final ``In particular'' statements:

\bigskip

Suppose first that~$G$ satisfies~(H1).
Then that assumption gives~(H-QC) for~$C_G(L)$; and for every centralizer~$C_G(LB)$, where~$B \in \Outposet_G(L)$.
Suppose further that~$L$~has type~(sLie-$p$), and that~($p$-$\Phi$) holds.
We see now that either~$O_p(C_G(LB)) > 1$ for all~$B \in \Outposet_G(L)$
(so that Case~(1)=(allC+H) holds, using~(H1) for~$C_G(L)$ as above),
or else that some~$B \in \Outposet_G(L) = \Outposet_G^\Phi(L)$ has~$O_p \bigl( C_G(LB) \bigr) = 1$---so
since~$C_G(LB)$ satisfies~(H-QC), we get Case~(2)=(someNC+$\Phi$). 
Then we have indeed reduced to the hypotheses of the main statement of the Theorem:
Namely~$G$ satisfies~(sLie-$p$); and one of~(allC+H) or~(someNC+$\Phi$). 
Hence~$G$ satisfies~(H-QC) by that main result.

\bigskip

Now suppose finally that we replace~``(H1)'' above with~``(H1u)+($p$ odd)''; 
in particular, we are still assuming that~$G$ also satisfies~(sLie-$p$) and~($p$-$\Phi$) (and hence~($p$-cyclic)).
As usual for~(H-QC), we assume~$O_p(G) = 1$,  and show that~$\tilde{H}_*(\Ap(G)) \neq 0$.

As we are assuming the inductive hypothesis~(H1u) of~\cite{AS93}, we will use the results of that article.
Since~$C_G(L) < G$, and the components of~$C_G(L)$ are components of~$G$ (by Lemma~\ref{lemmaOpandp}(3)),
we see that~$C_G(L)$ satisfies~(H-QC) by~(H1u).
Hence, if either~$\Outposet_G(L) = \emptyset$, or~$O_p \bigl( C_G(LB) \bigr) > 1$ for all~$B \in \Outposet_G(L)$,
then~(allC) holds,
and in fact we have reduced to the hypothesis~(1)=(allC+H) for the main statement of the Theorem,
and we get~(H-QC) for~$G$ just as before. 

Thus to finish, we may assume~(allC) fails,
so there is~$B \in \Outposet_G(L)$ such that~$O_p \bigl( C_G(LB) \bigr) = 1$.
And we need to show that~$\tilde{H}_* (\ \Ap \bigl( C_G(LB) \bigr)\ ) \neq 0$.
In view of our still-assumed conditions above, this will complete the additional hypothesis~(2)=(someNC+$\Phi$) as earlier, 
again reducing us to the main statement of the Theorem.

For this nonzero-condition, we will adapt arguments from~\cite{AS93} (notably Theorem~2.4 there),
to show that there is a replacement~$B' \in \Outposet_G(L)$ for~$B$ satisfying the nonzero-homology condition
for the desired centralizer. 
At some points in the argument,
(H1u) will directly imply~(H-QC) for~$G$, and in those cases we will not need to get nonzero homology for the centralizer.

Following Propositions~1.4 and~1.5 of~\cite{AS93}, if we have nontrivial~$N := Z(G)$ or~$Z(E(G))$,
then the components of~$G/N$ are covered by components of~$G$.
Since~$\tilde{H}_* \bigl( \Ap(G/N) \bigr) \subseteq \tilde{H}_* \bigl(\Ap(G) \bigr)$ by Lemma~0.12 of~\cite{AS93},
without loss of generality we can suppose that~$Z(G) = 1 = Z(E(G))$, since otherwise we get~(H-QC) for~$G$ using~(H1u).
%We show that $G$ satisfies (H-QC) either by the results of \cite{AS93}, or else by showing that condition (1) holds for some $B$.
%Note that by (H1u) and Propositions 1.4 and 1.5 of \cite{AS93}, we get that $Z(E(G)) = 1 = Z(G)$.
By our quoted elimination-result Theorem~\ref{alternativeParticularComponentes},
we can also assume that~$G$ has no component of type~$\Ln_2(2^3)$, $\U_3(2^3)$, or~$\Sz(2^5)$, for~$p=3,3,5$ respectively.

Now Proposition~1.6 of~\cite{AS93} allows us to reduce to the case~$O_{p'}(G) = 1$;
and in particular~$F(G) = 1$ using our Lemma~\ref{lemmaOpandp}.
Note that the argument in~\cite{AS93} that we are quoting here
depends on Theorem~2.4 there---and hence on Theorem~2.3 there,
which requires $p$~odd; so that we are now using that part of our current ``In particular'' hypothesis.
We are also using the elimination-result indicated above, again in order to apply~\cite[Thm~2.3]{AS93}.

Now we pick~$B\in \Outposet_G(L)$ as above.
At this point, we again invoke Theorem~2.4 of~\cite{AS93}, this time with~$C_G(L)$ in the role of~``$I$'' there;
so in (ii) below, as in the previous paragraph, we are once again using our hypothesis that~$p$ is odd, and our eliminations,  
in order to apply~\cite[Thm~2.3]{AS93}.
We check conditions~(i) and~(ii) of~\cite[Theorem~2.4]{AS93} for this~$I$, namely:

\bigskip

(i) Using our Lemma~\ref{lemmaOpandp}(4), $Z(I) \leq F(I) = F \bigl( C_G(L) \bigr) = F(G) = 1$.

\smallskip

(ii) The components of~$I$, which are components of~$G$ by Lemma~\ref{lemmaOpandp},
     have nonconical complements as defined in~\cite[Theorem~2.3]{AS93}:
     since we have~$p$ odd,
     and we eliminated components of~$G$ of type~$\Ln_2(2^3)$, $\U_3(2^3)$ or $\Sz(2^5)$ for $p=3,3,5$ respectively.

\bigskip

\noindent
This establishes conditions~(i) and~(ii) of~\cite[Theorem~2.4]{AS93} for~$I = C_G(L)$.
By that result, there exists a complement~$B' \in\Ap(IB)$ to~$I$ in~$IB$
such that~$1 = O_p \bigl( C_I(B')  \bigr) = O_p \bigl( C_G(LB') \bigr)$.
In particular, $B'\in \Outposet_G(L)$ and $O_p \bigl( C_G(LB') \bigr) = 1$.
Also unitary components of $C_I(B') = C_G(LB')$ satisfy the conditions of~(H1u) by the conclusions of~\cite[Theorem~2.4]{AS93}.
In consequence, by~(H1u), we see that~$C_G(LB')$ satisfies~(H-QC).
So~$B'$ fulfills the requirements of Case~(2)=(someNC+$\Phi$) of Theorem~\ref{theoremLiep};
reducing us, as desired, to the hypotheses of the main statement there---so that~$G$ satisfies~(H-QC).
\end{proof}

\begin{remark}
\label{rk:fullCases}
From Theorem \ref{theoremLiep}, if $G$ is a counterexample of minimal order to (H-QC) and contains a simple component $L$ of Lie type in characteristic $r = p$, then one of the following three (mutually exclusive) conditions should hold:
\begin{enumerate}
\item ($p$-cyclic) fails and there exists $B\in \Outposet_G(L)$ with $O_p(C_G(LB)) = 1$.

\item ($p$-cyclic) holds, $\Outposet_G(L)\neq\emptyset$ and $\Outposet^{\Phi}_G(L) = \emptyset$.

\item ($p$-cyclic) holds, $\Outposet^{\Phi}_G(L)\neq \emptyset $, every $B \in \Outposet^{\Phi}_G(L)$ has $O_p(C_G(LB))>1$, and there exists $D\in \Outposet_G(L) - \Outposet^{\Phi}_G(L)$ with $O_p(C_G(LD)) = 1$.
\end{enumerate}

Case (2) could be in fact ruled out by an application of Proposition 6.9 of \cite{PS}.
Namely, under the conditions of (2), if $B\in \Outposet_G(L)$, then $\Ap(C_L(B))$ has homological dimension less than that of $\Ap(L)$.
This can be proved by using a more detailed description of the structure of centralizers of graph and graph-field automorphisms: since $C_L(B)$ is a group of Lie type in characteristic $p$ but of Lie rank less than that of $L$.
See also Propositions 4.9.1 and 4.9.2 of \cite{GLS98}. 
Then the inclusion map $\Ap(C_L(B))\hookrightarrow \Ap(L)$ is zero in homology.
This fulfills the requirement for the application of Proposition 6.9 of \cite{PS}, where the value ``$k$" there is given by the Lie rank of $L$ minus $1$.
\end{remark}

\bigskip
\bigskip

\section{Eliminating QD-components under~(H1)}
\label{sec:elimQCunderH1}

In this section, we provide in Theorem~\ref{QDpReductionPS} an alternative version of Proposition~1.7 of~\cite{AS93}
(namely elimination of $\QD$-components)---that works for any prime $p$, under a somewhat different inductive hypothesis.
Our proof by contrast does not invoke the~CFSG.
We then use Theorem~\ref{QDpReductionPS}
in our proof, at the end of Section~\ref{sec:2varASforp=35}, 
of the variant Theorem~\ref{theoremASExtensionAlternative}
of our more direct extension Theorem~\ref{theoremASExtension} to $p = 3,5$ of the Aschbacher-Smith Main Theorem.
 
\bigskip

But first, we will recall the original result~\cite[Prop~1.7]{AS93}: which roughly establishes that under~(H1u) for odd~$p$,
either~$G$ satisfies~(H-QC), or every component of~$G$ has a~$p$-extension failing~$\QD_p$.
Namely, we give in Proposition~\ref{QDpReduction} below
a slightly different alternative formulation of Proposition~1.7 of~\cite{AS93}---which is closer to the original argument
than our Theorem~\ref{QDpReductionPS}, and which we expect could be applied in wider contexts, beyond the present paper.

The proof of Proposition~1.7 of~\cite{AS93} uses Theorems~2.3 and~2.4 there,
to obtain the nonconical-complement~$A$ needed for homology propagation.
In Hypothesis~(MaxNC) for our version in the Proposition below,
we in effect isolate that nonconicality-argument (and in particular, its dependence on having odd~$p$)---by
roughly encoding the maximal choices made for~$A$ in the original Aschbacher-Smith proof.

\begin{proposition}
\label{QDpReduction}
Suppose that $G$ satisfies the following conditions:
%\begin{enumerate}
%\item[($p$-ext)] $G$ contains a simple component $L$ of order divisible by $p$ for which all its $p$-extensions satisfy $\QD_p$,
%\item[(Maxnonconical)] there exists $A\in\Ap(N_G(L))$ faithful on $L$ and of maximal $p$-rank, such that $|A\cap L|$ is maximal and $\tilde{H}_*(\Ap(C_G(LA))) \neq 0$.
%\end{enumerate}

\vspace{0.2cm}
\begingroup
\renewcommand{\arraystretch}{2}
\begin{tabular}{ccl}
($p$-extQD) & & \pbox{10cm}{$G$ contains a simple component~$L$ of order divisible by~$p$
                            for which all the~$p$-extensions satisfy~$\QD_p$;} \\
(MaxNC)     & & \pbox{10cm}{There exists~$A \in \Ap(N_G(L))$ faithful on~$L$ and of maximal~$p$-rank,
                            such that~$|A \cap L|$ is maximal and~$\tilde{H}_* (\ \Ap \bigl( C_G(LA) \bigr)\ ) \neq 0$.}
\end{tabular}
\endgroup

\vspace{0.2cm}

Then~$G$ satisfies~(H-QC).

In particular, if $p$ is odd, ($p$-extQD) holds, and~(H1) or~(H1u) holds, then~$G$ satisfies~(H-QC).
\end{proposition}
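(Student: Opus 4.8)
The plan is to derive the ``In particular'' clause from the main body of the Proposition: under the stated hypotheses we shall either obtain~(H-QC) outright, or reduce to a configuration in which~(MaxNC) holds, so that the first assertion applies. The argument is a transcription, into the~(H1)/(H1u) framework, of the proof of~\cite[Proposition~1.7]{AS93}. As always, we assume $O_p(G) = 1$ (otherwise there is nothing to prove) and aim at $\tilde{H}_*(\Ap(G)) \neq 0$. The first step is the standard structural reduction to $O_{p'}(G) = 1$. Under~(H1): if $O_{p'}(G) \neq 1$ then~(H-QC) holds for $G$ by~\cite[Theorem~4.1]{KP20}, so we may assume $O_{p'}(G) = 1$. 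Under~(H1u): if $N := Z(G)$ or $N := Z(E(G))$ is nontrivial (both are $p'$-groups since $O_p(G) = 1$), then by~\cite[Propositions~1.4 and~1.5]{AS93} the components of $G/N$ are covered by those of $G$, so $G/N$ still satisfies~($p$-extQD) and~(H2u); since $|G/N| < |G|$, (H1u) gives~(H-QC) for $G/N$, hence $\tilde{H}_*(\Ap(G)) \neq 0$ by~\cite[Lemma~0.12]{AS93} --- so we may assume $Z(G) = 1 = Z(E(G))$, and then~\cite[Proposition~1.6]{AS93} (one of the places where $p$ odd is used, via~\cite[Theorem~2.3]{AS93}) lets us reduce to $O_{p'}(G) = 1$. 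In either case $F(G) = 1$, so by Lemma~\ref{lemmaOpandp}(1) every component of $G$ is simple; recall that~($p$-extQD) supplies a simple component $L$ with $p \mid |L|$.

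Next I would produce the nonconical data required by~(MaxNC). Fix $A \in \Ap(N_G(L))$ faithful on $L$ of maximal $p$-rank, and among those with $|A \cap L|$ maximal. I would then apply~\cite[Theorem~2.4]{AS93} with $I := C_G(L)$ in the role of ``$I$'' (exactly as in the proof of~\cite[Proposition~1.7]{AS93}, and as in the last part of the proof of Theorem~\ref{theoremLiep} above): its hypotheses hold because $Z(I) \le F(I) = F(G) = 1$ by Lemma~\ref{lemmaOpandp}(4), and because the components of $I$ are components of $G$ by Lemma~\ref{lemmaOpandp}(3) and hence possess nonconical complements by~\cite[Theorem~2.3]{AS93} --- here again $p$ is odd, and the few small components $\Ln_2(2^3)$, $\U_3(2^3)$, $\Sz(2^5)$ (for $p = 3, 3, 5$) are eliminated beforehand via the quoted result Theorem~\ref{alternativeParticularComponentes}. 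Theorem~2.4 of~\cite{AS93} then allows us to arrange, without decreasing the $p$-rank or $|A \cap L|$, that $O_p(C_G(LA)) = O_p(C_I(A)) = 1$, and moreover that the unitary components of $C_G(LA)$ inherit the pertinent hypotheses. Since $C_G(LA) < G$, (H1) gives~(H-QC) for it directly, while~(H1u) does so in view of the inheritance just noted; either way $\tilde{H}_*(\Ap(C_G(LA))) \neq 0$. Combined with the maximality of the rank and of $|A \cap L|$, this is precisely~(MaxNC), and the first part of the Proposition yields~(H-QC) for $G$.

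The main obstacle is not conceptual --- the whole argument mirrors~\cite[Proposition~1.7]{AS93} --- but lies in the bookkeeping of the two middle steps: checking that~($p$-extQD) and (in the~(H1u)-case) the unitary hypothesis~(H2u) genuinely descend to the relevant quotients $G/N$ and to the subgroup $C_G(LA)$, and, more substantively, in invoking the Aschbacher--Smith nonconicality machinery of~\cite[Theorems~2.3 and~2.4]{AS93} with the correct choices. It is exactly this machinery that forces $p$ to be odd, and it is there that the handful of small exceptional Lie-type components must be handled separately via Theorem~\ref{alternativeParticularComponentes}.
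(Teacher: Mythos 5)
Your proposal only addresses the ``In particular'' clause of the Proposition, openly \emph{presupposing} the main assertion (that ($p$-extQD) together with~(MaxNC) implies~(H-QC)) as if it had already been established --- you say you will ``reduce to a configuration in which~(MaxNC) holds, so that the first assertion applies.'' But the main assertion is itself the substantive new content here; it is a genuine variant of~\cite[Prop~1.7]{AS93}, not a verbatim copy, and it requires an argument. The paper's own proof says exactly what is needed: one repeats the propagation argument from pages~489--490 of~\cite{AS93} (Steps~i--iii and the boundary calculation using~\cite[Lemma~0.27]{AS93}), except that Step~iv of that proof --- the construction, via~\cite[Theorems~2.3, 2.4]{AS93}, of a nonconical complement $A$ with nonzero homology for $\Ap(C_G(LA))$ --- is replaced outright by the hypothesis~(MaxNC). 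It is precisely this substitution that makes the main clause independent of the~CFSG and of the restriction to odd~$p$, and your write-up never confronts it. As it stands, you have derived one corollary of the Proposition, not the Proposition.

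Your treatment of the ``In particular'' clause itself is essentially the same route the paper takes, with one bookkeeping slip: under the hypothesis~(H1) you invoke Theorem~\ref{alternativeParticularComponentes} to exclude the components~$\Ln_2(2^3)$, $\U_3(2^3)$, $\Sz(2^5)$, but that theorem is stated under~(H1u); under~(H1) the appropriate references are items~(5) and~(6) of Theorem~\ref{generalReduction} (which eliminate components of $p$-rank~$1$, respectively of type $\U_3(8)$ for $p=3$), as the paper does. The logic is otherwise sound --- the reduction to $O_{p'}(G)=1$, the application of~\cite[Theorem~2.4]{AS93} with $I = C_G(L)$, and the verification that~(H1) or~(H1u) supplies the nonzero homology of $\Ap(C_G(LA))$ all match the paper's argument --- so once you supply a proof of the main clause and correct that one citation, the proposal would be complete.
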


\begin{proof}
The proof of this Proposition essentially follows the original proof given by Aschbacher-Smith
for their Proposition~1.7 on pages~489--490 of~\cite{AS93}.
But note that Step~iv of that original proof is now replaced by the hypothesis~(MaxNC);
that is, here we in effect assume the conclusion
of Theorem~2.4 of~\cite{AS93}; so we are not using their Theorem~2.3, which would require~$p$ odd.
Also we do not need to invoke the~CFSG since we have~(MaxNC). 

For the ``In particular'' part under~(H1):
We can use the earlier reductions under~(H1) that we quoted in our Theorem~\ref{generalReduction}---first
parts~(1,2), to establish~1.4--1.6 of~\cite{AS93}; 
and then parts~(5,6) to eliminate the problematic components~$\PSL_2(2^3)$, $\PSU_3(2^3)$ and~$\Sz(2^5)$, $p=3,3,5$ respectively.
Then we may use the remainder of the original proofs of Theorems~2.3 and~2.4 of~\cite{AS93},
to establish (MaxNC)---completing the reduction to the hypotheses of the main statement, and hence giving~(H-QC).

Under~(H1u):
The proof is essentially given using the adjustment to~\cite{AS93}
that we used for the corresponding ``In particular'' statement in Theorem~\ref{theoremLiep} above; 
which we now summarize only briefly:
Here we again get~(MaxNC) from Theorem~2.4 of~\cite{AS93}---recalling that we may apply Theorem~2.3 there,
by first eliminating the above components~$\PSL_2(2^3)$, $\PSU_3(2^3)$ and~$\Sz(2^5)$, for~$p=3,3,5$ respectively,
now via~\cite[1.5]{AS93} and our quoted result Theorem~\ref{alternativeParticularComponentes} under~(H1u),
and then reducing to~$O_{p'}(G) = 1$ via~\cite[1.6]{AS93}. 
\end{proof}

We now provide, as Theorem~\ref{QDpReductionPS} below,
a further alternative version of Proposition~1.7 of~\cite{AS93}, that works for \textit{any} prime~$p$.
In particular, we do not invoke Theorems~2.3 and~2.4 of~\cite{AS93}:
which depend on the CFSG; and assume~$p$ odd;
and require the exclusion of components of type~$\PSL_2(2^3)$, $\PSU_3(2^3)$, and~$\Sz(2^5)$, $p=3,3,5$ respectively.
The proof of our theorem instead relies on the combinatorial properties of the~$\Ap$-posets, and does not invoke the~CFSG.

In contrast with Proposition~\ref{QDpReduction} just above,
below we do not assume~$\tilde{H}_*(\Ap \bigl( C_G(LA) \bigr)) \neq 0$,
or some~$A \in \Ap \bigl( N_G(L) \bigr)$ satisfying subtle conditions of maximality.
Instead, we will develop fairly elementary conditions of maximality for~$A$, within the proof---and then
we will see roughly that we can homotopically remove any such~$A$ which would have zero-homology for~$\Ap \bigl( C_G(LA) \bigr)$
(that is, guaranteeing that our~$A$ produces a nonconical complement). 

As a brief summary of the proof: because of the hypothesis~($p$-extHQC) of Theorem~\ref{QDpReductionPS} below,
zero-homology for~$\Ap \bigl( C_G(LA) \bigr)$ would force~$O_p \bigl( C_G(LA) \bigr) > 1$.
This guarantees the contractibility of links in~$\Ap(G)$,
so that by Proposition~\ref{propositionContractibleCentralizers} we can remove such points~$A$ from our poset---getting
a homotopy equivalent smaller poset~$X$, to which we can apply the earlier propagation-result
which we quoted as Lemma~\ref{lemmaHomologyPropagationKP20}.
Any such remaining~$A$ will have the nonconical requirement for hypothesis~(v) of Lemma~\ref{lemmaHomologyPropagationKP20};
and the removals also guarantee the condition on~$X_{>A}$ needed in hypothesis~(iv) of Lemma~\ref{lemmaHomologyPropagationKP20}.

\begin{theorem}
\label{QDpReductionPS}
Suppose that $G$ satisfies the following conditions:

\vspace{0.2cm}
\begingroup
\renewcommand{\arraystretch}{2}
\begin{tabular}{ccl}
($p$-extQD) & & \pbox{11.5cm}{$G$ contains a simple component~$L$ of order divisible by~$p$
                              for which all of the~$p$-extensions~$LB \leq G$, $B\in\hat{\Outposet}_G(L)$, satisfy~$\QD_p$;} \\
($p$-extHQC) & & \pbox{11.5cm}{If~$LB \leq G$ is a~$p$-extension of~$L$, then~$C_G(LB)$ satisfies~(H-QC).}
\end{tabular}
\endgroup

\vspace{0.2cm}

Then~$G$ satisfies~(H-QC).

Notice in particular that assuming~(H1) guarantees~($p$-extHQC).%
%\footnote{
%  fakenote:
%  I added this "in particular" statement, right?  
%  (If I understand, we use this under~(H1) in sec10?)
%          }
\end{theorem}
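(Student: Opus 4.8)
Looking at Theorem~\ref{QDpReductionPS}, the goal is to establish (H-QC) for $G$ under the hypotheses ($p$-extQD) and ($p$-extHQC), working over $\QQ$-homology and without invoking the CFSG. The plan is to follow the summary given right before the theorem statement: set up a homotopy-equivalent smaller poset by removing points with conical centralizers, then apply the propagation Lemma~\ref{lemmaHomologyPropagationKP20}.

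\textbf{Setup and reductions.} As usual, assume $O_p(G) = 1$ and aim to show $\tilde{H}_*\bigl(\Ap(G)\bigr) \neq 0$. Fix the simple component $L$ from ($p$-extQD). I would first apply the earlier general reductions (Theorem~\ref{generalReduction} under (H1); here we have the weaker hypothesis ($p$-extHQC), but note that the core reductions needed are to $O_{p'}(G) = 1$, which follows from \cite[Theorem~4.1]{KP20} requiring only the $p$-solvable case, and to the case $\Outposet_G(L) \neq \emptyset$ via Theorem~\ref{PStheorem}, whose hypothesis (1) is supplied by ($p$-extHQC) applied to $LB$ with $B=1$ together with Lemma~\ref{lemmaOpandp}(3) giving the components of $C_G(\hat L)$ as components of $G$). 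So we may assume $O_{p'}(G) = 1$, hence $F(G) = 1$ by Lemma~\ref{lemmaOpandp}, and that there are nontrivial $p$-outers of $L$ in $G$.

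\textbf{Removing conical points.} Now invoke Proposition~\ref{propositionContractibleCentralizers} for $L$: let $\F_1 = \{B \in \Outposet_G(L) : O_p(C_G(LB)) > 1\}$, which is upward-closed in $\Outposet_G(L)$, and let $\N_1$ be the associated set of faithful inflation-members containing some element of $\F_1$. By that Proposition, $\Ap(G) - \N_1 \hookrightarrow \Ap(G)$ is a homotopy equivalence; write $X := \Ap(G) - \N_1$, which still contains $\Ap(LC_G(L))$. The key point is that ($p$-extHQC) forces good behaviour: if $B \in \Outposet_G(L)$ has $O_p(C_G(LB)) = 1$, then since $C_G(LB)$ satisfies (H-QC) we get $\tilde{H}_*\bigl(\Ap(C_G(LB))\bigr) \neq 0$; conversely, zero homology for $\Ap(C_G(LB))$ can only happen when $O_p(C_G(LB)) > 1$, i.e.\ $B \in \F_1$, precisely the points we have removed. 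So every $p$-outer surviving in $X$ is nonconical with nonzero centralizer-homology.

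\textbf{Applying propagation.} Now set $H := LB$ for a suitable choice of $B$, and $K := C_G(LB)$; by Lemma~\ref{lm:HypLBimpliesHypHKcentralprod} this gives the $p'$-central product Hypothesis~\ref{hyp:HKcentralprod} (indeed a direct product, since $O_{p'}(G) = 1$). Using ($p$-extQD), $H = LB$ has $\QD_p$, so there is an $A \in \Ap(LB)$ exhibiting it: a chain $a$ of maximal rank $m_p(LB)$ with $\max(a) = A$, and a nonzero cycle $\alpha \in \tilde H_{m_p(LB)-1}(\Ap(LB))$ with $a \in \alpha$ of invertible (rational) coefficient; this is hypothesis~(iii) of Lemma~\ref{lemmaHomologyPropagationKP20}. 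One must choose $A$ with enough maximality — maximal $p$-rank faithful on $L$, and then $|A \cap L|$ maximal — so that $A \notin \N_1$ is arranged and $A$ is a full chain in $X$ with $X_{>A} \subseteq \N_G(K)$: this is hypothesis~(iv), and is where the removals of $\N_1$ do the real work, because any element of $X$ strictly above $A$ that failed to lie in $\N_G(K)$ would have to be a faithful-on-$L$ inflation member properly containing a member of $\F_1$ (or extendable to one), hence in $\N_1$, contradiction. Hypothesis~(ii), $\N_G(K) \subseteq X$, should follow since members of $\N_G(K)$ centralizing $LB$ up to $p'$ are not faithful-plus-$\F_1$-containing. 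Finally hypothesis~(v), $\tilde{H}_*(\Ap(K)) \neq 0$, holds because $A$ is nonconical (so $O_p(C_G(LB)) = 1$, using Lemma~\ref{lemmaTrivialOpPropagationCentralizer} to pass from the $p$-outer part of $A$ to $A$ itself) and then ($p$-extHQC) gives (H-QC) for $K = C_G(LB)$, hence nonzero homology. Lemma~\ref{lemmaHomologyPropagationKP20} then yields $\tilde{H}_*(X) \neq 0$, and since $X \simeq \Ap(G)$ we conclude $\tilde{H}_*\bigl(\Ap(G)\bigr) \neq 0$, which is (H-QC). The ``In particular'' is immediate: (H1) says proper subgroups satisfy (H-QC), and each $C_G(LB)$ with $LB$ a genuine $p$-extension is a proper subgroup (as $LB$ contains elements outside $C_G(LB)$ by faithfulness), so ($p$-extHQC) holds.

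\textbf{Main obstacle.} The delicate step is verifying hypothesis~(iv) of Lemma~\ref{lemmaHomologyPropagationKP20} — that the chosen $A$ is a full chain in $X$ with $X_{>A} \subseteq \N_G(K)$ — which requires extracting from ($p$-extQD)/$\QD_p$ the precise maximality conditions on $A$ (maximal $p$-rank, then maximal $|A\cap L|$) and then checking that, after deleting $\N_1$, nothing survives above $A$ outside $\N_G(K)$. This is essentially a reworking of Aschbacher--Smith's nonconical-complement argument at \cite[pp.~488--490]{AS93}, but now done combinatorially via the removals rather than via Theorems~2.3--2.4 of \cite{AS93}; getting the bookkeeping on which inflation-members lie in $\N_1$ exactly right is where the care is needed.
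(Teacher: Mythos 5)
Your overall architecture is the paper's: remove the points with conical centralizers via Proposition~\ref{propositionContractibleCentralizers} to get an equivalent subposet~$X$, then propagate with Lemma~\ref{lemmaHomologyPropagationKP20} using a chain exhibiting~$\QD_p$ for a nonconical~$p$-extension~$LB$. However, your justification of the crucial hypothesis~(iv) is backwards, and this is a genuine gap. You claim that an element~$D \in X_{>A}$ failing to lie in~$\N_G(K)$ ``would have to be a faithful-on-$L$ inflation member properly containing a member of~$\F_1$, hence in~$\N_1$.'' There is no reason such a~$D$ should contain a member of~$\F_1$: writing~$D = (D \cap L) \times D'$ (which itself needs an argument that~$D \cap LC_G(L) = D \cap L$), the outer part~$D'$ may perfectly well satisfy~$O_p\bigl(C_G(LD')\bigr) = 1$, in which case~$D \notin \N_1$, $D$ survives in~$X$, and your removal does nothing to exclude it. The paper handles exactly these survivors by a maximality argument you never set up: one first chooses~$LE$ to maximize~$m_p(LE)$ over the set~$\mathcal{E}$ of all \emph{nonconical}~$p$-extensions (then~$|LE|$ maximal), takes~$A$ of maximal rank in~$\Ap(LE)$ with~$|A \cap L|$ maximal, and observes that a faithful~$D \in X_{>A}$ is not in~$\N_1$ (being outside the removed set while having~$m_p(LD) > m_p(LE)$), so~$D' \notin \F_1$ and~$LD' \in \mathcal{E}$ has strictly larger~$p$-rank than~$LE$ --- contradiction. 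Your phrase ``maximal~$p$-rank faithful on~$L$'' without restriction to nonconical extensions would instead require the globally rank-maximal faithful element to be nonconical, which is precisely the Aschbacher--Smith Theorem~2.3/2.4 input (hypothesis~(MaxNC) of Proposition~\ref{QDpReduction}) that this theorem is designed to avoid.

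Two smaller points. First, the reduction to~$O_{p'}(G) = 1$ via~\cite[Theorem~4.1]{KP20} is not available here: that result is proved under~(H1), whereas the present theorem assumes only~($p$-extHQC); fortunately the reduction is not needed, since~$L$ simple already gives~$H \cap K \leq Z(L) = 1$. Second, you must also verify that~$A$ can be taken to \emph{exhibit}~$\QD_p$ while retaining the maximality properties (the paper does this by comparing~$|B'|$ and~$|A' \cap L|$ for a~$\QD_p$-exhibiting~$A'$ and showing equality is forced); and that the members of the exhibiting cycle~$\alpha$ avoid the removed set --- with your choice~$X = \Ap(G) - \N_1$ this still works (via Lemma~\ref{lemmaTrivialOpPropagationCentralizer} applied inside~$LB$), but it requires saying.
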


\begin{proof}
As usual for~(H-QC), we suppose that~$O_p(G) = 1$,
and show that~$\tilde{H}_* \bigl( \Ap(G) \bigr) \neq 0$ under the hypotheses of the Theorem.
We use the notation of Proposition~\ref{propositionContractibleCentralizers},
in particular recalling the undesirable conical-subset of~$p$-outers defined by:
  \[ \F_1 := \{\ B \in \Outposet_G(L)  \tq O_p \bigl( C_G(LB) \bigr) > 1\  \} . \]
Eventually we will choose an~$A$ exhibiting~$\QD_p$ for a suitable~$LB$,
so this~$LB$ will need to instead be a member of the corresponding nonconical-subset:
  \[\mathcal{E} := \{ LE  \tq E \in \hat{\Outposet}_G(L) - \F_1\} . \]
Note that~$L \in \mathcal{E}$ (the case where~$E = 1$)
since~$O_p \bigl( C_G(L) \bigr) = 1$ by Lemma~\ref{lemmaOpandp}(4); in particular, we have~$\mathcal{E} \neq \emptyset$.

Now in pursuing a suitable~$LB$, it is natural to pick a configuration~$LE \in \mathcal{E}$,
with~$m_p(LE)$ maximal among the~$p$-ranks of the groups in~$\mathcal{E}$.
Then we take~$LE$ of maximal order subject to this property;
in particular then~$E$ is a maximal element of~$\hat{\Outposet}_G(L) - \F_1$.
Just from the definition of~$E$, as a~$p$-outer of the component~$L$ not lying in~$\F_1$, we have:%
%\footnote{
%  fakenote:
%  I adjusted the wording here, in view of your email of~17nov21;
%  and added a reference for the second statement, having added the general statement to the end of earlier~2.5...
%  Please check me!
%          }

\vspace{0.2cm}

\textbf{Claim 1.} $O_p \bigl( C_G(LE) \bigr) = 1$,
and hence~$O_p(LE) = 1$ using Lemma~\ref{lemmaTrivialOpPropagationCentralizer}.

\vspace{0.2cm}

\noindent
Next we develop some conditions for suitable~$A$ and~$B$ in $\Ap(LE)$.
Notice that any~$A \in \Ap(LE)$ of maximal~$p$-rank intersects~$L$ non-trivially:
since~$p$ divides the order of~$L$ by~($p$-extQD).
We see:

\vspace{0.2cm}

\textbf{Claim 2.} We now take~$A \in \Ap(LE)$ of~$p$-rank~$m_p(LE)$ such that~$|A \cap L|$ is maximal.
Then~$| A \cap L | > 1$; and in particular, using Claim~1 we have~$O_p(LA) \leq O_p(LE) = 1$.

\vspace{0.2cm}

\noindent
Now decompose~$A = (A \cap L) \times B$.
Since~$O_p(LE) = 1$ by Claim~1, $LE$ embeds into~$\Aut_G(L)$; so we see~$LA = LB \leq LE$.
Now by Claim~1 and Lemma~\ref{lemmaTrivialOpPropagationCentralizer},
we get~$O_p(LB) = 1 = O_p \bigl( C_G(LB) \bigr)$; thus~$LB \in \E$.
In particular, $B \in \hat{\Outposet}_G(L)$ and we get:
  \[ |B| = |A / A \cap L| = |LA| / |L| \leq |LE| / |L| = |E| , \]
so that using our choice in Claim~2 with~$m_p(A) = m_p(LE)$ we have:
  \[ m_p(LE) = m_p(A) \leq m_p(LB) \leq m_p(LE), \text{ and hence equality holds for these ranks.} \]

Finally we show that we can indeed take this~$A$ to exhibit~$\QD_p$ for~$LB$:
Since we saw above that~$O_p(LB) = 1$, 
we know by our hypothesis~($p$-extQD) that~$LB$ has~$\QD_p$---exhibited by some~$A' \in \Ap(LB)$ of~$p$-rank~$m_p(LB) = m_p(A)$.
If we write~$A' = (A'\cap L)\times B'$, then:
  \[ |B'| = |A' / A' \cap L| = |LA'| / |L| \leq |LA| / |L| = |B|,\]
and by maximality of~$|A \cap L|$ we also have:
  \[ |A' \cap L| \leq  |A \cap L| . \]
Now~$m_p(A') = m_p(A)$ implies that the two inequalities above must be equalities;
that is, we must have~$|B'| = |B|$ and~$|A' \cap L| = |A \cap L|$.
Therefore, on replacing our original~$A$ by this~$A'$ which satisfies the same maximality properties,
we can suppose without loss of generality that~$A$ exhibits~$\QD_p$ for~$LB$.
This occurs via some nonzero homology cycle, say~$\alpha$ involving a full chain~$a$ ending in~$A$, of~$\Ap(LB)$.

\bigskip

\noindent
We now set up to verify the hypotheses for Lemma~\ref{lemmaHomologyPropagationKP20}.

Recall from Proposition~\ref{propositionContractibleCentralizers} the definition of
the subset~$\N_1$ of members of the inflation~$\N_G(L)$ acting faithfully on~$L$, and containing some member of~$\F_1$:
 \[ \N_1  :=  \{\ D \in \Ap \bigl( N_G(L) \bigr)\ \tq\ D \cap L > 1 ,\ C_D(L) = 1 ,\ \exists B_0 \in \F_1 \tq B_0 < D\ \}. \] 
Set~$X := \Ap(G) - \N$, where:
  \[ \N := \{ D \in \N_1 \tq m_p(LD) > m_p(LE) \} . \]
Note that~$\N$ is an upward-closed subposet of~$\N_1$.
Therefore~$X \hookrightarrow \Ap(G)$ is a homotopy equivalence by Proposition~\ref{propositionContractibleCentralizers}.

Set~$H := LB$ and $K := C_G(LB)$.
Since elements of~$\N$ are faithful on~$L$,
We immediately get conditions~(i) and~(ii) of Lemma~\ref{lemmaHomologyPropagationKP20}.

Furthermore our cycle~$\alpha$ above exhibiting~$\QD_p$ for~$LB=H$ now extends to a cycle for~$X$:
For note that if~$D \in \Ap(LB)$ is involved in~$\alpha$, then we get~$m_p(LD) \leq m_p(LB) = m_p(LE)$;
so by definition, $D$ cannot be contained in~$\N$.
This proves the following claim:

\vspace{0.2cm}

\textbf{Claim 3.} If~$D \in \Ap(LB)$ is involved in~$\alpha$, then~$D \in X$.
In particular, $\alpha$ is also a cycle of~$X$ containing a (full) chain~$a$ with~$A = \max(a)$.
Hence we get condition~(iii) of Lemma~\ref{lemmaHomologyPropagationKP20}.

\vspace{0.2cm}

\textbf{Claim 4.} If~$D \in X_{>A}$, then~$C_D(LB) > 1$.
Hence we get condition~(iv) of Lemma~\ref{lemmaHomologyPropagationKP20}.

\begin{proof}
Suppose by way of contradiction that~$D \in X_{>A}$, but $1 = C_D(LB) = C_D(LA) = C_D(L)$.
Since~$D \cap L \geq A \cap L > 1$, we get~$D \leq N_G(L)$.

Now suppose by way of further contradiction that we have some~$d \in (D \cap LC_G(L)) - L$;
write~$d = lc$, with~$l \in L$ and $c \in C_G(L)$, $|c| = p$.
Since~$[d,A]= 1$, we get~$[l,A] = 1 = [c,A]$.
But~$l \in L \leq LA$, and~$A$ is of maximal~$p$-rank in~$LA$, so~$l \in A \leq D$.
Therefore $c\in D$, that is, $c \in C_D(L) = 1$, contrary to~$c$ of order~$p$ above.
This intermediate contradiction shows that~$D \cap  LC_G(L) = D \cap L$.

So we may write~$D = (D \cap L) \times D'$, where now~$D' \in \hat{\Outposet}_G(L)$.
Since~$D \notin \N$, while we have~$m_p(LD) \geq m_p(D) > m_p(LE) = m_p(A)$, we must have~$D \notin \N_1$.
In particular, $D' \notin  \F_1$; and hence~$LD = LD' \in \mathcal{E}$ with~$m_p(LD') > m_p(LE)$.
But this contradicts the earlier maximality of~$m_p(LE)$ in~$\E$.
This final contradiction shows that~$1 < C_D(L) = C_D(LB)$, as we wanted.
\end{proof}

Finally we saw earlier that~$O_p(K) = O_p \bigl( C_G(LB) \bigr) = 1$.
Hence we get condition~(v) of Lemma~\ref{lemmaHomologyPropagationKP20}, using our hypothesis~($p$-extHQC).

\vspace{0.2cm}

In conclusion, we have shown that~$X \simeq \Ap(G)$ with~$H = LB$ and~$K = C_G(LB)$
satisfy the hypotheses of Lemma~\ref{lemmaHomologyPropagationKP20}.
Thus~$\tilde{H}_* \bigl( \Ap(G) \bigr) \groupiso \tilde{H}_*(X) \neq 0$, as required.
\end{proof}

\bigskip
\bigskip

\section{Robinson subgroups and the Lefschetz-module version of Quillen's Conjecture}
\label{sectionLefschetz}

In this Section, we recall how to establish the Lefschetz-module version~(L-QC) of Quillen's Conjecture
via the construction of \textit{Robinson subgroups};
cf.~the conditions~(Rob-xx) described in our discussion of strategy in Remark~\ref{rk:stratelimRob}.

In our context, the idea is to show that for a minimal counterexample to~(H-QC),
once we have eliminated (e.g.~via propagation-methods) as many simple components as possible,
we can in fact establish this stronger version~(L-QC) of the conjecture, and so obtain a final contradiction.
The key point is that we can reduce the study of this conjecture to analyzing particular aspects of the components of the group.
To that end, we take advantage of the very restrictive structures of any remaining components in a minimal counterexample to~(H-QC).
The idea goes back to Robinson~\cite{Rob};
and was later exploited by Aschbacher-Kleidman to establish the almost-simple case of the conjecture~\cite{AK90}
(which we have quoted as Theorem~\ref{almostSimpleQC}),
and then by Aschbacher-Smith~\cite{AS93} to conclude the proof of their Main Theorem
(compare~(Rob-nonQD) in Remark~\ref{rk:stratelimRob}).

\bigskip

We denote by~(L-QC) the following version of Quillen's conjecture in terms of the (reduced) Lefschetz module:

\vspace{0.3cm}
\begin{flushleft}
(L-QC) \quad If~$O_p(G) = 1$, then $\tilde{L}_G \bigl( \Ap(G) \bigr) := \sum_{n \geq -1}\ (-1)^n [ C_n \bigl( \Ap(G) \bigr) ] \neq 0$.
\end{flushleft}
\vspace{0.3cm}

\noindent
Recall that~$C_*(X)$ denotes the augmented chain complex of a poset~$X$ with coefficients in the rational numbers.
The above alternating sum is taken in the {\em Grothendieck ring\/} of representations of~$G$ over~$\QQ$,
with  brackets denoting the character of a representation,
and~$\tilde{L}_G(\Ap(G))$ is the {\em reduced Lefschetz module\/} of~$\Ap(G)$.

By the Hopf trace formula, we get a homology-version of the Lefschetz module:
  \[ \tilde{L}_G(\Ap(G)) = \sum_{n\geq -1} (-1)^n [\tilde{H}_n(\Ap(G))] . \]
In particular, we see that~(L-QC) implies~(H-QC).
Moreover, the elements of the Grothendieck ring of~$G$ are completely determined by their (virtual) characters,
so $\tilde{L}_G \bigl( \Ap(G) \bigr) = 0$ if and only if its virtual character is~$0$.
By the Lefschetz fixed~point formula, the value of this character in a given element~$g \in G$ is~$\tilde{\chi}(\Ap(G)^g)$,
the reduced Euler characteristic of the fixed point subposet~$\Ap(G)^g$.
Therefore, (L-QC) is equivalent to the following version of the conjecture.

\vspace{0.3cm}

\begin{flushleft}
(L$^{\prime}$-QC) \quad If~$O_p(G) = 1$, then~$\tilde{\chi} \bigl( \Ap(G)^g  \bigr) \neq 0$ for some~$g \in G$.
\end{flushleft}

\vspace{0.3cm}

\noindent
Following~\cite{AK90,AS93,Rob}, in some particular situations we will establish~(L$^{\prime}$-QC), and hence~(H-QC),
by exhibiting an element~$g \in G$ such that~$\tilde{\chi} \bigl (\Ap(G)^g \bigr) \neq 0$.
In fact, we will show that there exists a suitable~$p'$-subgroup~$Q \leq G$ of the form~$Q = \gen{g} \times O_q(Q)$, with~$q$ a prime,
and:
  \[ \tilde{\chi} \bigl( \Ap(G)^g \bigr) \equiv \tilde{\chi} \bigl( \Ap(G)^Q \bigr) \not\equiv 0 \pmod{q} . \]
Note that~$Q$ is a {\em $q$-elementary\/} group, that is, a direct product of a cyclic group with a~$q$-group.
The point of considering $q$-elementary~$p'$-subgroups~$Q$ is
that the fixed point subposet~$\Ap(G)^Q$ can be smaller and easier to control.
For example, we typically look for~$\Ap(G)^Q$ empty.

With the aim of proving~(H-QC), we will try to establish~(L-QC)
for the groups~$G$ not satisfying the hypotheses of some of the theorems of the previous sections,
such as Theorems~\ref{generalReduction}, \ref{PStheorem}, and~\ref{QDpReductionPS}, and Theorem~\ref{theoremLiep};
(compare~(Rob-nonelim) in Remark~\ref{rk:stratelimRob}).
For these groups~$G$, we have several restrictions on the components;
and possibly a concrete list such as the $\QD$-List Theorem~\ref{theoremQDList} when~$p$ is odd.
The goal is to construct~$q$-elementary~$p'$-subgroups~$Q_L$ for each component~$L$ of~$G$,
in such a way that we can produce a~$q$-elementary~$p'$-subgroup~$Q$ of~$G$ satisfying the extra property:%
\footnote{
  Below we are using product homology as in Remark~\ref{rk:prodhomolforjoins};
  notice this is a crucial use of the basic group-theory fact that~$E(G)$ is a central product---even
  a direct product, in our usual situation of Lemma~\ref{lemmaOpandp}(1).
          }
\begin{equation}
\label{eqHomotopEquivFixedPoints}
  \Ap(G)^Q \simeq \Ap(L_1)^{Q_{L_1}} \join \ldots\join \Ap(L_t)^{Q_{L_t}},
\end{equation}
where~$L_1,\ldots,L_t$ are the components of~$G$.
Then using product homology:
\begin{equation}
\label{eqLQCViaReducedEuler}
  \tilde{\chi}(\Ap(G)^Q) \equiv (-1)^{t-1} \prod_{i=1}^t \tilde{\chi}(\Ap(L_i)^{Q_{L_i}}) \pmod{q}.
\end{equation}
Hence if~$\tilde{\chi}(\Ap(L_i)^{Q_{L_i}}) \not \equiv 0 \pmod{q}$ for all~$i$, then~$\tilde{\chi}(\Ap(G)^Q) \not \equiv 0 \pmod{q}$.
This reduces the problem of establishing~(L$^{\prime}$-QC) for~$G$ to understanding the structural properties on the components~$L_i$,
that allow us to construct these subgroups $Q_{L_i}$.
These~$q$-elementary~$p'$-subgroups~$Q$ are also known as~\textit{Robinson subgroups},
the terminology employed in~\cite{AS93} to indicate the origin of the method in~\cite{Rob}.

Now we will study properties that these $Q_{L_i}$ have to satisfy,
in order to get such subgroup $Q$ satisfying conditions~(\ref{eqHomotopEquivFixedPoints}) and~(\ref{eqLQCViaReducedEuler}).
Note that each~$Q_i$ is~$q$-elementary, for a fixed prime~$q$ common to all the~$i$:
that is, we fix the same prime~$q \neq p$ for all the components.
For example, when~$p$ is odd, the natural choice is~$q = 2$.
When~$p = 2$, one fairly natural choice could be~$q = 3$, since most of the simple groups have order divisible by~$3$.

Thus for our extension of~\cite{AS93},
we want to extend the context of Theorem~5.3 at the end of Section~1 there by considering the following properties:

\vspace{0.2cm}

\begin{flushleft}
\textbf{Property~$\R_0(p)$}:
        $L$  is a simple group, having a subgroup~$Q_L$ such that for some prime~$q \neq p$:
\end{flushleft}
\begin{enumerate}
\item $Q_L$ is a~$q$-elementary~$p'$-subgroup.
\item $O_q(Q_L) > 1$.
\item $\tilde{\chi} \bigl( \Ap(L)^{Q_L} \bigr) \not\equiv 0 \pmod{q}$.
\item $\Ap \bigl( C_{\Aut(L)}(Q_L) \bigr) \subseteq \Ap(L)$.
\end{enumerate}
\textbf{Property $\R(p)$}: $Q_L$ satisfies~(1--4) as in~$\R_0(p)$ above, and further:
\begin{enumerate}
\item[(5)] $q = 2$ when~$p \neq 2$; while~$q = 3$ when~$p = 2$.
\end{enumerate}

\vspace{0.3cm}

\noindent
We pause to state some consequences of~(4) above:

  \quad (4+)  If~$A \in \Ap \bigl( \Aut(L) \bigr)^{Q_L}$, then $A \cap L > 1$. 
    In particular, $\Ap \bigl( \Aut(L) \bigr)^{Q_L} \simeq \Ap(L)^{Q_L}$.
      
\noindent
For suppose by way of contradiction that $A \cap L = 1$: 
Then~$[A , Q_L] \leq A \cap L = 1$, so~$A = C_A(Q_L)$, and then~(4) forces~$A \in \Ap(L)$, contrary to our assumption $A \cap L = 1$;
so this contradiction shows that~$A \cap L > 1$.
In particular, $A \mapsto A \cap L$ is a poset endomorphism of $\Ap \bigl( \Aut(L) \bigr)^{Q_L}$ with image~$\Ap(L)^{Q_L}$, 
giving the final statement via Lemma~\ref{lm:increndo}(2).

Furthermore we will often be able to establish the stronger condition:%
%\footnote{
%  fakenote:
%  Since we seem to use this a lot, I thought it would be better to state it more formally...
%          }
 
 \quad (3+) $\Ap \bigl( \Aut(L) \bigr)^{Q_L} = \emptyset$; which implies~(3) and~(4): 

\noindent
For certainly~(3+) gives~$\Ap(L)^{Q_L} = \emptyset$, with the value of~$-1$ for~(3); 
and gives~$\Ap(C_{\Aut(L)}(Q_L) ) = \emptyset$, so that~(4) holds vacuously.

\bigskip

In the following extension of \cite{AS93},
we show that we can establish~(L-QC) if the components of~$G$ satisfy~$\R(p)$.
It roughly states that, under those very particular Robinson-type conditions,
we can propagate~(L-QC): from the components of~$G$, to~$G$ itself.
An important observation here is that this propagation depends exclusively on the individual structure of each simple component~$L$.
The proof is based on the original argument given in~\cite{AS93}---at the end of Section~1 there.

\begin{proposition}
\label{propRobinson}
Suppose that~$O_p(G) = 1 = O_{p'}(G)$, with~$F^*(G) = L_1 \ldots L_t$, and let~$q$ be a fixed prime distinct from~$p$.
Assume that every component~$L_i$ of~$G$ satisfies Property~$\R_0(p)$ for some~$q$-elementary subgroup~$Q_i \leq L_i$.
Then~$G$ satisfies~(L-QC).
\end{proposition}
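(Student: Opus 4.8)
\textbf{Proof plan for Proposition~\ref{propRobinson}.}
The plan is to establish~(L$'$-QC) by constructing a single Robinson subgroup~$Q \leq G$ from the individual subgroups~$Q_i \leq L_i$, and then computing~$\tilde{\chi} \bigl( \Ap(G)^Q \bigr)$ via the product-homology formula for joins. First I would set~$E := F^*(G) = L_1 \times \cdots \times L_t$, which is a \emph{direct} product by Lemma~\ref{lemmaOpandp}(1) since~$O_p(G) = 1 = O_{p'}(G)$; in particular~$G$ embeds in~$\Aut(E)$, and each~$\Aut(L_i)$ appears inside~$\Aut(E)$. The naive choice~$Q := Q_1 \times \cdots \times Q_t \leq E \leq G$ is~$q$-elementary (a direct product of~$q$-elementary groups for the \emph{same} prime~$q$ is again~$q$-elementary, using that~$O_q(Q) = \prod_i O_q(Q_i) > 1$ by condition~(2)) and is a~$p'$-group by condition~(1). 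The key structural fact to record is that, because~$E$ is a direct product, $\Ap(E)$ is equivalent to the join~$\Ap(L_1) * \cdots * \Ap(L_t)$ (iterating Corollary~\ref{joinApPosets}), and this equivalence is~$Q$-equivariant since~$Q = \prod Q_i$ acts coordinatewise; hence~$\Ap(E)^Q \simeq \Ap(L_1)^{Q_1} * \cdots * \Ap(L_t)^{Q_t}$.

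The main work is to upgrade this from~$\Ap(E)^Q$ to~$\Ap(G)^Q$, i.e.\ to prove~(\ref{eqHomotopEquivFixedPoints}). Here I would use condition~(4) of~$\R_0(p)$. Take any~$A \in \Ap(G)^Q$. Since~$Q$ centralizes~$A$ and~$Q$ has the direct-product form above, for each~$i$ the projection of~$A$ into~$\Aut(L_i)$ lands in~$C_{\Aut(L_i)}(Q_i)$, so by condition~(4) (in its consequence~(4+): any~$Q_i$-fixed elementary subgroup of~$\Aut(L_i)$ meets~$L_i$ nontrivially, hence the $Q_i$-fixed subposet of $\Ap(\Aut(L_i))$ retracts to $\Ap(L_i)^{Q_i}$) we get that the relevant projections behave as in~$\Ap(L_i)$. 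More precisely, I would show that the map~$A \mapsto A \cap E$ is a well-defined $Q$-equivariant poset endomorphism of~$\Ap(G)^Q$ with image inside~$\Ap(E)^Q$: the point is that~$A \cap E > 1$ for $A \in \Ap(G)^Q$, because otherwise~$[A, Q] \leq [A, E] \leq A \cap E = 1$ would force $A = C_A(Q)$ to project into each $C_{\Aut(L_i)}(Q_i) \subseteq \Aut(L_i)$ with trivial intersection with $L_i$, contradicting~(4+) coordinatewise. Since $A \cap E \leq A$, Lemma~\ref{lm:increndo}(2) gives $\Ap(G)^Q \simeq \Ap(E)^Q$, and combining with the join decomposition above yields~(\ref{eqHomotopEquivFixedPoints}).

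Once~(\ref{eqHomotopEquivFixedPoints}) is in hand, the reduced Euler characteristic of a join multiplies (up to sign) as recorded in Remark~\ref{rk:prodhomolforjoins}, giving
\[
  \tilde{\chi} \bigl( \Ap(G)^Q \bigr) = (-1)^{t-1} \prod_{i=1}^{t} \tilde{\chi} \bigl( \Ap(L_i)^{Q_i} \bigr).
\]
By condition~(3) of~$\R_0(p)$, each factor is~$\not\equiv 0 \pmod q$, so the product is~$\not\equiv 0 \pmod q$, hence in particular~$\tilde{\chi} \bigl( \Ap(G)^Q \bigr) \neq 0$. Finally I would pass from~$Q$ to a single element: writing~$Q = \gen{g} \times O_q(Q)$ for a suitable~$q'$-element~$g$ (the cyclic direct factor in the~$q$-elementary decomposition), a standard fixed-point congruence argument---$\tilde{\chi}(\Ap(G)^g) \equiv \tilde{\chi}(\Ap(G)^Q) \pmod q$, since the $q$-group $O_q(Q)$ acts on $\Ap(G)^g$ and Euler characteristic is congruent mod $q$ to that of the fixed subcomplex---shows~$\tilde{\chi} \bigl( \Ap(G)^g \bigr) \not\equiv 0 \pmod q$, so it is nonzero. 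This is exactly~(L$'$-QC), which is equivalent to~(L-QC) by the Lefschetz fixed-point formula and the Hopf trace formula, as discussed above; hence~$G$ satisfies~(L-QC).

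\textbf{Expected main obstacle.} The routine parts are the $q$-elementary bookkeeping and the Euler-characteristic multiplicativity. The delicate step is the reduction~$\Ap(G)^Q \simeq \Ap(E)^Q$, i.e.\ verifying that condition~(4) really forces every $Q$-fixed elementary $p$-subgroup of $G$ to meet $E$ nontrivially and to decompose compatibly with the coordinates. One must be careful that the projection of $A$ into $\Aut(L_i)$ is $Q_i$-fixed (not merely $Q$-fixed) and genuinely lands in $C_{\Aut(L_i)}(Q_i)$, and that the endomorphism $A \mapsto A \cap E$ is order-preserving and monotone so that Lemma~\ref{lm:increndo}(2) applies; this is where the precise direct-product structure of $E$ from Lemma~\ref{lemmaOpandp}(1) is essential, and it is the place most likely to require a careful argument rather than a one-line citation.
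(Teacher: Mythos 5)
Your overall plan is the paper's: build a single Robinson subgroup $Q$ from the $Q_i$, show $\Ap(G)^Q \simeq \Ap\bigl(F^*(G)\bigr)^Q$ via $A \mapsto A \cap F^*(G)$, decompose the latter as a join of the $\Ap(L_i)^{Q_i}$, and finish with the mod-$q$ Euler-characteristic congruence. However, there is a genuine gap at the very first step. The claim that $Q := Q_1 \times \cdots \times Q_t$ is $q$-elementary is false: writing each $Q_i = \gen{g_i} \times O_q(Q_i)$, the $q'$-part of $\prod_i Q_i$ is $\prod_i \gen{g_i}$, which need not be cyclic --- if two of the $g_i$ have orders sharing a common prime $\ell$, then $\prod_i \gen{g_i}$ contains $\Cyclic_\ell \times \Cyclic_\ell$. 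This is not a cosmetic lapse: the closing step of your argument needs a single element $g$ with $Q = \gen{g} \times O_q(Q)$, so that $\tilde{\chi}\bigl(\Ap(G)^g\bigr) \equiv \tilde{\chi}\bigl(\Ap(G)^Q\bigr) \pmod q$ can deliver (L$'$-QC); but for your $Q$ no such $g$ exists in general, and the argument does not close.

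The fix, which is exactly what the paper does, is to take $g := g_1 \cdots g_t$ as a single element of $F^*(G)$ (the $g_i$ lie in distinct commuting direct factors $L_i$), and set $Q := \gen{g} \times \prod_i O_q(Q_i)$. This $Q$ is $q$-elementary by construction, and it projects onto the same subgroup $Q_i$ of each $L_i$ as $\prod_j Q_j$ does, because $g$ projects to $g_i$ in $L_i$. Consequently the retraction $\Ap(G)^Q \simeq \Ap\bigl(F^*(G)\bigr)^Q$ and the join computation $\Ap\bigl(F^*(G)\bigr)^Q \simeq \Ap(L_1)^{Q_1} * \cdots * \Ap(L_t)^{Q_t}$ go through unchanged with this corrected $Q$, and the rest of your proposal then matches the paper's proof. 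A secondary slip worth fixing: you write $[A,Q] \leq [A,E] \leq A \cap E$, but the second inclusion fails in general, since $E = F^*(G)$ need not normalize $A$. The containment you actually need is $[A,Q] \leq A \cap F^*(G)$, obtained from $Q$ normalizing the $Q$-fixed subgroup $A$ (so $[A,Q] \leq A$) together with $Q \leq F^*(G) \normal G$ (so $[A,Q] \leq F^*(G)$); with that correction the rest of your nontriviality argument is sound.
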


\begin{proof}
In overview: we construct a single group~$Q$, to satisfy~(\ref{eqHomotopEquivFixedPoints}) and~(\ref{eqLQCViaReducedEuler});
roughly, we need to establish for~$Q$ properties like~$\R_0(p)$---but simultaneously, that is, with respect to
{\em all\/}  the components~$L$ in the product~$E(G) = F^*(G)$.

Write~$Q_i := \gen{g_i} \times O_q(Q_i)$, and let~$g := g_1 \ldots g_t$, $C := \gen{g}$, $R := O_q(Q_1) \times \ldots \times O_q(Q_t)$,
and~$Q := C \times R$.
Then~$Q$ is a~$q$-elementary~$p'$-subgroup of~$G$ by conditions~(1,2) of Property~$\R_0(p)$ of each~$Q_i \leq L_i$.
We show now that~$\tilde{\chi}(\Ap(G)^g) \neq 0$ by showing that this integer is not zero mod~$q$.
Recall that~$\tilde{\chi} \bigl( \Ap(G)^g \bigr) = \tilde{\chi} \bigl( \Ap(G)^C \bigr) \equiv \tilde{\chi}(\Ap(G)^Q) \pmod{q}$
as~$Q/C = R$ is a~$q$-group.
Therefore, it is enough to establish that~$\tilde{\chi} \bigl( \Ap(G)^Q \bigr) \not\equiv 0 \pmod{q}$.

Consider any~$E \in \Ap(G)^Q$.
By coprime action, $E = C_E(Q)[E,Q]$.
Moreover, we note that~$[E,Q] \leq E \cap F^*(G)$.
We first eliminate a potentially troublesome case for~$E$:

\vspace{0.2cm}

\textbf{Case 1.} We cannot have~$E \cap F^*(G) = 1$.

\noindent
For in this case, we have~$E = C_E(Q)$,
and in particular~$E \leq C_G \bigl( O_q(Q_i) \bigr)$ for all~$i$.
Since~$O_q(Q_i)$ is a nontrivial subgroup of~$L_i$ by~(2) of condition~$\R_0(p)$, we see that~$E \leq N_G(L_i)$.
Fix some~$i$, and decompose~$E$ as the direct product of~$E \cap \bigl( L_i C_G(L_i) \bigr)$ with some complement~$E_0$.
Then~$E_0$ induces outer automorphisms on~$L_i$, and~$Q_i$ centralizes~$E_0$.
Thus we can embed $E_0$ into $\Aut(L_i)$ and we get~$E_0 \leq C_{\Aut(L_i)}(Q_i)$.
However, (4) of condition $\R_0(p)$ along with our case hypothesis that~$E \cap F^*(G) = 1$ forces~$E_0 = 1$.
Therefore $E \leq L_i C_G(L_i)$.
Varying $i$, we see that
  \[ E \leq \bigcap_i L_i C_G(L_i) = F^*(G) C_G \bigl( F^*(G) \bigr) = F^*(G) . \]
This is contrary to our case hypothesis that~$E \cap F^*(G) = 1$,
and this contradiction completes the proof that this case cannot arise. 

Thus all~$E$ must satisfy the remaining case:

\vspace{0.2cm}

\textbf{Case 2.} Every~$E \in \Ap(G)^Q$ intersects~$F^*(G)$ non-trivially.

\noindent
Thus via Lemma~\ref{lm:increndo}(2), we have a homotopy equivalence given by the poset endomorphism:
  \[\Ap(G)^Q \to \Ap \bigl( F^*(G) \bigr)^Q, \quad E \mapsto E\cap F^*(G) , \]
with the homotopy inverse given by the inclusion.

Finally, note that the direct product of components gives us a homotopy equivalence:
 \[ \Ap \bigl( F^*(G) \bigr)^Q = \Ap(L_1 \times\ldots\times L_t)^Q \simeq (\Ap(L_1)* \ldots * \Ap(L_t))^{Q}
                               = \Ap(L_1)^{Q_1}* \ldots * \Ap(L_t)^{Q_t} . \]
That is, we have established~(\ref{eqHomotopEquivFixedPoints}).
For~(\ref{eqLQCViaReducedEuler}), 
we compute the reduced Euler characteristics of these fixed point subposets:
Recall using product homology as in Remark~\ref{rk:prodhomolforjoins}
that the Euler characteristic of a join of spaces is the product of the Euler characteristics of the spaces involved,
up to a dimension-shift sign:
  \[ \tilde{\chi} \bigl( \Ap(G)^Q \bigr) = \tilde{\chi} \bigl(\ \Ap(L_1)^{Q_1}* \ldots*\Ap(L_t)^{Q_t}\ \bigr)
                                         = (-1)^{t+1}\ \prod_i\ \tilde{\chi} \bigl( \Ap(L_i)^{Q_i} \bigr) . \]
The rightmost term is nonzero mod $q$ by~(3) of condition $\R_0(p$) on the~$Q_i$-subgroups.
In consequence, $\tilde{\chi}(\Ap(G)^g)\neq 0$; and so~$G$ satisfies~(L-QC).
\end{proof}

In~\cite{AS93}, Robinson-methods are used to treat components~$L$ which fail the~$\QD_p$-property for odd primes $p$,
as given in the~$\QD$-list, which we have quoted as Theorem~\ref{theoremQDList}.
In this context, simple groups of Lie type are a major exception tor~ $\QD$:
For example, in that list we see that there are basically three sub-families of groups of Lie type which might fail~$\QD_p$ for odd~$p$:
Lie-type groups in the same characteristic~$p$;
Lie-type groups in characteristic~$2$; and unitary groups~$\U_n(q)$ with~$p \mid q+1$.
There are also some assorted particular cases (arising from exceptional or twisted groups).

The following Proposition shows in particular
that groups of Lie type in characteristic~$r = 2$ satisfy Property~$\R(p)$ for odd~$p$.
Consequently we will then have to focus our attention
on studying the remaining components of Lie type described in the previous paragraph.
Most of the work for the Proposition was already  done in~\cite[Theorem~5.3]{AS93} for odd~$p$.

The proof of the Proposition is based on~\cite[Lemma~4]{AK90}.
However, the proof of that Lemma seems to have a small gap, which we show below how to fix.
We are grateful to R.~Solomon, who provided us some references on situations related to the Borel-Tits Theorem.

\begin{proposition}\label{propRobinsonEvenChar}
Suppose that~$L$ is a simple group of Lie type in characteristic~$r \neq p$.
Suppose in addition that~$L\neq B_2(2)' (\groupiso \AA_6)$.
Then a Sylow~$r$-subgroup~$X$ of~$L$ satisfies conditions~(1--4) of Property~$\R_0(p)$, where we take~$r$ as~$q$ in that definition.
Moreover, $\Ap \bigl( \Aut(L) \bigr)^X = \emptyset$.
In particular, if ($p$ is odd, and~$r = 2$) or ($p=2$, and~$r=3$),
we even have Property $\R(p)$ with $Q_L$ a Sylow $r$-subgroup of~$L$.
\end{proposition}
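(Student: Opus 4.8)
The plan is to take $X$ to be a Sylow $r$-subgroup of $L$ and to reduce everything to the single assertion
\[
  \Ap \bigl( \Aut(L) \bigr)^{X} = \emptyset .
\]
Conditions~(1) and~(2) of Property~$\R_0(p)$ are immediate: $X$ is an $r$-group, hence $r$-elementary and (since $r \ne p$) a $p'$-subgroup, and $O_r(X) = X \ne 1$ because $r$ divides $|L|$. Granting the displayed assertion, every member of $\Ap(L)^{X}$ lies in $\Ap \bigl( \Aut(L) \bigr)^{X}$, and every member of $\Ap \bigl( C_{\Aut(L)}(X) \bigr)$ is centralized --- hence normalized --- by $X$, so also lies in $\Ap \bigl( \Aut(L) \bigr)^{X}$; thus $\Ap(L)^{X} = \emptyset$, giving $\tilde{\chi} \bigl( \Ap(L)^{X} \bigr) = -1 \not\equiv 0 \pmod{r}$ (condition~(3)) and $\Ap \bigl( C_{\Aut(L)}(X) \bigr) = \emptyset \subseteq \Ap(L)$ (condition~(4)); the ``moreover'' is the assertion itself, and the final ``in particular'' merely records that $q := r$ satisfies condition~(5) of $\R(p)$ in the two indicated cases.

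So the real content is $\Ap \bigl( \Aut(L) \bigr)^{X} = \emptyset$, which is essentially Lemma~4 of~\cite{AK90}; the plan is to follow that argument, inserting the Borel--Tits-type inputs (for which I would use the references kindly supplied by R.~Solomon, as acknowledged above) needed to repair its gap. Suppose for contradiction that $1 \ne A \in \Ap \bigl( \Aut(L) \bigr)^{X}$, so $X$ normalizes the nontrivial elementary abelian $p$-subgroup $A \le \Aut(L)$. The first step is to show $A \cap \Inn(L) = 1$: writing $A_0 := A \cap \Inn(L)$, viewed as an elementary abelian $p$-subgroup of $L$, the group $X$ normalizes $A_0$, so $N_L(A_0)$ contains the Sylow $r$-subgroup $X$; but, by a standard consequence of the Borel--Tits theorem, $N_L(A_0)$ cannot contain a Sylow $r$-subgroup of $L$ when $A_0 \ne 1$ (concretely, such an $A_0$ would be forced into the centralizer of the unipotent radical of a parabolic subgroup of $L$, which for simple $L$ has trivial $p$-part). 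Hence $A \cap \Inn(L) = 1$. From this one deduces $[X, A] = 1$: for $x \in X \subseteq \Inn(L)$ and $a \in A$ we have $[x,a] = x^{-1} x^{a} \in \Inn(L)$ (as $a$ normalizes $\Inn(L)$) and $[x,a] \in A$ (as $X$ normalizes $A$), so $[x,a] \in A \cap \Inn(L) = 1$; thus $A \le C_{\Aut(L)}(X)$.

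To finish, one shows that $C_{\Aut(L)}(X)$ is an $r$-group, which contradicts $1 \ne A \le C_{\Aut(L)}(X)$ with $A$ a $p$-group. Indeed, if $1 \ne a \in C_{\Aut(L)}(X)$ were a $p$-element, then $C_L(a) \supseteq X$, so $r$ would not divide $[L : C_L(a)]$, i.e. $|C_L(a)|_r = |L|_r$; but for \emph{any} nontrivial automorphism $a$ of $L$ of order prime to $r$ --- inner, diagonal, field, graph, graph-field, or a product --- the fixed-point subgroup $C_L(a)$ is a proper subgroup of Lie type with $|C_L(a)|_r < |L|_r$, by the description of automorphisms and their centralizers in~\cite{GLS98} (cf. Propositions~4.9.1 and~4.9.2 there). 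This contradiction shows $C_{\Aut(L)}(X)$ has trivial $p$-part for every $p \ne r$, i.e. it is an $r$-group, completing the proof that no such $A$ exists.

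The main obstacle is this last ingredient: that every nontrivial automorphism of $L$ of order prime to the defining characteristic $r$ has centralizer with strictly smaller $r$-part --- and it is precisely here that the hypothesis $L \ne B_2(2)'$ enters, since $B_2(2)' \cong \Alt_6$ is only honorarily of Lie type in characteristic~$2$ (it is the index-$2$ commutator subgroup of $\Symplectic_4(2)$) and carries the exceptional graph automorphism, so the uniform Lie-theoretic analysis fails there; that case is small and is treated by other methods elsewhere in the paper. A secondary point --- which is the actual gap in~\cite{AK90} --- is that one does not get $A$ normalizing $X$ for free, only $X$ normalizing $A$, so the Borel--Tits theorem cannot be applied directly to $X$ through $A$; the fix is the two-stage argument above (first forcing $A \cap \Inn(L) = 1$ via normalizers of $p$-subgroups, then deducing $[X,A] = 1$, and only then analyzing $C_{\Aut(L)}(X)$ via Borel--Tits and centralizers of automorphisms).
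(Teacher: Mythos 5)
Your proposal is correct in substance and follows essentially the same route as the paper's proof: reduce everything to the single claim $\Ap\bigl(\Aut(L)\bigr)^X=\emptyset$ (which yields conditions~(3) and~(4) in the strong form~(3+)), kill the inner part of a putative fixed point by pushing $A_0X$ into a proper parabolic and using the self-centralizing unipotent radical, deduce $[X,A]=1$, and finish with the fact that the centralizer in $\Aut(L)$ of a Sylow $r$-subgroup of $L$ is an $r$-group.

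Two caveats. First, your phrase ``a standard consequence of the Borel--Tits theorem'' for the claim that $N_L(A_0)$ cannot contain a Sylow $r$-subgroup conceals the one input that Borel--Tits alone does not supply: since $A_0$ is an $r'$-group, the subgroup $M=A_0X$ has no obvious nontrivial normal $r$-subgroup, and one must first invoke Proposition~C of~\cite{SST} (or~\cite{Seitz}) to obtain $O_r(M)>1$ from $|L:M|_r=1$ before Borel--Tits can place $M$ in a parabolic. This is precisely the gap in~\cite[Lemma~4]{AK90} that the Proposition repairs --- not the normalization asymmetry you describe in your closing paragraph. Second, for the final step the paper simply quotes~\cite[(13-2)]{GL83}, which gives $C_{\Aut(L)}(X)\le X$ directly (since $N_L(X)$ is a Borel subgroup), and which is also the precise source of the $B_2(2)'$ exclusion; your alternative via centralizers of automorphisms is workable in spirit, but Propositions~4.9.1 and~4.9.2 of~\cite{GLS98} treat only field and graph automorphisms, so you would still need a separate argument (or that same~\cite{GL83} reference) to rule out nontrivial inner-diagonal $r'$-elements centralizing $X$.
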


\begin{proof}
Let $X$ be a Sylow $r$-subgroup of $L$.
We show that $X$ satisfies conditions~(1--4) of Property~$\R_0(p)$ with~$q = r$, by using our hypothesis that~$r \neq p$:

Clearly~$X$ satisfies conditions~(1,2) of Property~$\R_0(p)$.
On the other hand, since $P : = N_L(X)$ is a Borel subgroup of~$L$, we have:
  \[ C_{\Aut(L)}(X) \leq X, \]
by \cite[(13-2)]{GL83}, except when~$L = B_2(2)' \groupiso \AA_6$.%
\footnote{
  In the latter case,
  the centralizer of a Sylow~$2$-subgroup of~$B_2(2)'$ in~$\Aut \bigl( B_2(2)' \bigr)$ is isomorphic to~$C_2 \times C_2$,
  and contains nontrivial elementary abelian~$2$-subgroups inducing outer automorphisms on~$L = B_2(2)'$.
          }
This shows that~$X$ satisfies condition~(4) under our hypothesis that~$L \neq B_2(2)'$.

It remains to establish condition~(3) of Property~$\R_0(p)$.
Indeed we will show the stronger condition~(3+) mentioned earlier,
namely~$\Ap \bigl( \Aut(L) \bigr)^X = \emptyset$,
via the ideas of~\cite[Lemma~4]{AK90}:

We first show~$\Ap(L)^X  =\emptyset$ (which in fact is enough for~(3)).
Assume by way of contradiction that we have some~$E \in \Ap(L)^X$.
Let~$M := EX \leq L$.
At this point, \cite{AK90} claims that~$M$ lies in a parabolic by the Borel-Tits theorem.
However, Borel-Tits does not apply here, since~$r \neq p$.
So instead, we need to quote some further development, such as~\cite{SST}:
Since~$|L:M|_r = 1$, we have~$O_r(M) > 1$ by Proposition~C of~\cite{SST} (see also~\cite{Seitz}).
We may now apply the Borel-Tits theorem:
to see that there exists a parabolic subgroup~$P$ of~$L$ such that~$O_r(M) \leq O_r(P)$ and~$N_L \bigl( O_r(M) \bigr) \leq P$.
In particular, we see that~$EX = M \leq P$ and~$O_r(P)$ lies in the~$r$-Sylow~$X$ of~$L$.
Hence, $E$ and~$O_r(P)$ normalize each other, so~$[E, O_r(P)] \leq E\cap O_r(P) = 1$ since~$p \neq r$.
This implies that~$E \leq C_L \bigl( O_r(P) \bigr)$.
Since~$O_r(L) = 1$ and~$O_r(P) > 1$, $P$ is proper in~$L$;
and then using~\cite[(13-2)]{GL83}, $C_{\Aut(L)} \bigl( O_r(P) \bigr) \leq O_r(P)$.
Again~$E \leq O_r(P)$ forces~$E = 1$ since~$p \neq r$, contrary to~$E > 1$ in our assumption that~$E \in \Ap(L)^X$.
This contradiction shows that~$\Ap(L)^X = \emptyset$, and~(3) follows.
(So the proof is actually now complete.)

Finally, if we had some~$E \in \Ap \bigl( \Aut(L) \bigr)^X$, then~$E \cap L$ is invariant under~$X$,
and by so by the previous paragraph, $E \cap L = 1$.
Therefore $[E,X] \leq E \cap L = 1$, so $E \leq C_{\Aut(L)}(X) \leq X$, again contrary to~$p \neq r$.
This contradiction shows~$\Ap \bigl( \Aut(L) \bigr)^X = \emptyset$, so we even get~(3+).
As we had mentioned earlier, this gives another proof that~$X$ satisfies condition~(4) of Property~$\R_0(p)$.
\end{proof}

\bigskip
\bigskip

\section{Proving our two variants of Aschbacher-Smith for the primes~3 and~5}
\label{sec:2varASforp=35}

This section is devoted to proving our two extensions of~\cite[Main Theorem]{AS93} to~$p=3,5$: 
first under~(H1u), and then instead under~(H1). 

\bigskip

So we begin by considering the variant under~(H1u): 
that is, we extend~\cite[Main~Theorem]{AS93} to every odd prime~$p$.
Recall from the Introduction that our extension Theorem~\ref{theoremASExtension}
will be established below as Theorem~\ref{proofQCOdd}, where the proof will involve~(H1u).

\medskip

In fact in~\cite{KP20}, we already extended to~$p=5$ the Aschbacher-Smith theorem.
We recall that the only obstruction there was the possible presence of a component of Suzuki type~$\Sz(2^5)$.
So in~\cite{KP20}, we showed that, if~$G$ contains such a component, then---under~(H1)---$G$ satisfies~(H-QC).
This is of course an example of an elimination-result.

We note that the proof in~\cite{KP20} for the extension to~$p=5$ has a small gap:
For in the proof of~\cite[Main Theorem]{AS93}, the authors do not work under~(H1),
but instead under the~(H1u) hypothesis;
whereas in~\cite{KP20}, to extend~\cite[Main~Theorem]{AS93} to $p=5$, Theorem~5.1 of~\cite{KP20} is invoked;
and the statement of that Theorem requires the~(H1) inductive assumption.
This discrepancy can be easily filled as follows:
We provide below a correspondingly adjusted statement of~\cite[Theorem~5.1]{KP20} instead using~(H1u). 
We will then check that the proof of the original Theorem adapts just as well to this alternative version:

\begin{theorem}
[{cf.~\cite[Theorem~5.1]{KP20}}]
\label{alternativeParticularComponentes}
Suppose that~$G$ satisfies~(H1u) and that:
\begin{quote}
  $G$ has a component~$L \groupiso \PSL_2(2^3)$ ($p=3$), $\PSU_3(2^3)$ ($p=3$), or~$\Sz(2^5)$ ($p=5$).
\end{quote}
Then~$G$ satisfies~(H-QC).
\end{theorem}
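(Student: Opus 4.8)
The plan is to mimic the original proof of Theorem~5.1 of~\cite{KP20}, tracking where that proof invokes the inductive hypothesis~(H1) and checking that each such invocation can instead be supported by~(H1u) together with the results already available in this paper. The key observation is that~(H1u) is the ``natural'' minimal-counterexample hypothesis for the present context: if~$G$ is a counterexample of minimal order to the conclusion (among groups satisfying~(H1u) and containing such a component~$L$), then every proper subgroup~$H < G$ satisfying~(H2u)---and more to the point, every subgroup arising in the proof, namely proper subgroups and quotients by central~$p'$-subgroups---will itself satisfy~(H-QC), since these operations preserve~(H2u) (indeed~(H2u) is visibly inherited by subgroups and central~$p'$-quotients, the key being that components of such subgroups/quotients are components of~$G$, by Lemma~\ref{lemmaOpandp}(3) and Propositions~1.4--1.5 of~\cite{AS93}). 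So in effect~(H1u) delivers the same ``induction-replacement'' service inside the proof that~(H1) did, on exactly the collection of smaller groups that the argument of~\cite[Thm~5.1]{KP20} actually touches.

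Concretely, I would proceed as follows. First, reduce to the standard configuration: assume~$O_p(G)=1$ (else trivial), and using the reductions available under our hypothesis---via Theorem~\ref{generalReduction} in its~(H1u)-compatible form, or directly via Propositions~1.4, 1.5 and~1.6 of~\cite{AS93}, which are stated under~(H1u)---reduce to~$Z(G)=1=Z(E(G))$, $O_{p'}(G)=1$, and hence~$F(G)=1$, so that~$F^*(G)=E(G)$ is a direct product of simple components including our given~$L$. Next, isolate the component~$L$ and its centralizer~$K := C_G(\hat L)$; by Lemma~\ref{lemmaOpandp}, $K$ is a proper subgroup of~$G$ with~$O_p(K)=1$, and it inherits~(H2u), so~(H1u) gives~(H-QC) for~$K$. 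Then run the homology-propagation machinery exactly as in~\cite[Thm~5.1]{KP20}: one analyzes the small group~$LB$ for $p$-outers~$B$ (these Suzuki and low-rank Lie-type groups in characteristic~$2$ have very constrained outer automorphism structure, cf.~Remark~\ref{rk:outerautsofsimple}), exhibits the relevant nonzero homology for~$\Ap(LB)$ or for an equivalent replacement poset, and propagates it up to~$\Ap(G)$ using Lemma~\ref{lemmaHomologyPropagationKP20} (whose hypotheses are about a $p'$-central product~$H=LB$, $K=C_G(LB)$ and require nonzero homology of~$\Ap(K)$, supplied again by~(H1u) applied to the proper subgroup~$C_G(LB)$). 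The nonconicality input---some~$B$ with~$O_p(C_G(LB))=1$---is obtained as in~\cite{AS93} via Theorem~2.4 there; since~$p=3,5$ is odd, Theorem~2.3 of~\cite{AS93} applies, after first eliminating any components of the very types~$\PSL_2(2^3)$, $\PSU_3(2^3)$, $\Sz(2^5)$ that obstruct it---but that elimination is precisely the content we are proving, applied inductively to proper subgroups under~(H1u), so the argument is self-consistent in the usual minimal-counterexample sense.

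The main obstacle I anticipate is bookkeeping rather than mathematics: one must verify that every place where~\cite[Thm~5.1]{KP20} wrote ``by~(H1)'' is in fact an application to a \emph{proper} subgroup or a \emph{central $p'$-quotient} of~$G$---never to~$G$ itself, and never to a quotient that is not of that restricted form---so that the weaker~(H1u) (which only promises~(H-QC) for proper subgroups satisfying~(H2u)) genuinely suffices. I would go through the proof of~\cite[Thm~5.1]{KP20} line by line, and at each use of induction record the group~$H$ to which it is applied, check~$|H|<|G|$ and that~$H$ inherits~(H2u) (using that components of~$H$ are components of~$G$), and conclude~(H-QC) for~$H$ from~(H1u). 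A secondary, purely expository point is that Theorem~\ref{generalReduction} as quoted is phrased under~(H1); one either re-derives its relevant parts~(1)--(6) directly from~\cite{KP20, PS} noting those proofs only use induction on proper subgroups/central quotients, or simply cites the corresponding lemmas of~\cite{AS93} (1.4--1.6) which are already stated under~(H1u).

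Finally I would remark, as the paper does in the text following the statement of Corollary~\ref{extensionsASQC5}, that this adjusted Theorem~\ref{alternativeParticularComponentes} is exactly what is needed to close the small gap in~\cite{KP20}: it supplies, under~(H1u) rather than~(H1), the elimination of the components~$\PSL_2(2^3)$, $\PSU_3(2^3)$, $\Sz(2^5)$ for~$p=3,3,5$, which is the one ingredient the Aschbacher-Smith-style proof of our extension Theorem~\ref{proofQCOdd} invokes beyond~\cite{AS93} itself.
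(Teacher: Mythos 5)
Your overall strategy---track every appeal to~(H1) in the proof of~\cite[Theorem~5.1]{KP20} and check each can be replaced by~(H1u)---is the right one, but the specific route you fill in differs substantially from what the paper does, and one of your intermediate steps is genuinely problematic.

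The paper's actual proof is much shorter than your plan and exploits two observations you miss. First: the reduction to~$O_{p'}(G)=1$ is unnecessary here. The original proof of~\cite[Theorem~5.1]{KP20} carries out that reduction \emph{only} in order to conclude that~$Z(L)=1$; but the component types~$\PSL_2(2^3)$, $\PSU_3(2^3)$, $\Sz(2^5)$ in the hypothesis are already simple (these groups have trivial Schur multipliers at the relevant primes), so~$Z(L)=1$ is given outright, and the reduction can simply be skipped. Second: the only remaining appeal to~(H1) in that proof is the single claim~$\tilde{H}_*(\Ap(C_G(L)))\neq 0$. That claim is obtainable directly from~(H1u): by Lemma~\ref{lemmaOpandp}(3,4) the components of~$C_G(L)$ are components of~$G$, so any unitary component of~$C_G(L)$ is one of~$G$ and inherits~(H2u), while~$O_p(C_G(L))=O_p(G)=1$; hence~$C_G(L)$ is a proper subgroup satisfying the conditions of~(H1u), which then supplies~(H-QC) for it. With those two observations the original argument goes through unchanged.

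By contrast, the specific scaffolding you propose---reducing to~$O_{p'}(G)=1$ via \cite[Prop~1.6]{AS93} and invoking \cite[Thm~2.4]{AS93} for the nonconicality input---has a real circularity: \cite[Thm~2.4]{AS93} rests on \cite[Thm~2.3]{AS93}, whose hypotheses exclude precisely components of the types~$\PSL_2(2^3)$, $\PSU_3(2^3)$, $\Sz(2^5)$. Since~$L$ itself is one of these components of~$G$ (not of a proper subgroup), the inductive elimination you invoke cannot remove it; induction on~$|G|$ gives no purchase on~$G$ itself. You acknowledge this tension but dismiss it as ``self-consistent,'' which is a hand-wave rather than an argument. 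The paper avoids the issue entirely because, once the $O_{p'}(G)=1$ reduction is seen to be unnecessary, neither \cite[Prop~1.6]{AS93} nor \cite[Thms~2.3/2.4]{AS93} need be invoked at all. So: your plan to re-read \cite[Thm~5.1]{KP20} line by line is sound and would likely have led you to the correct argument, but the conjectural details you supplied in lieu of that reading contain a step that would fail as stated.
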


\begin{proof}
As usual for~(H-QC) We may suppose that~$O_p(G) = 1$.
The original proof in \cite{KP20} begins by reducing to~$O_{p'}(G) = 1$, just to claim that~$Z(L) = 1$.
However, here we have the property~$Z(L) = 1$ directly by hypothesis; so we do not need to reduce to~$O_{p'}(G) = 1$.

And the rest of the original proof in~\cite{KP20} invokes~(H1) to claim that~$\tilde{H}_*(\ \Ap \bigl( C_G(L) \bigr)\ ) \neq 0$.
For that purpose, we will invoke instead~(H1u):
Hence we need to show that~$O_p \bigl( C_G(L) \bigr) = 1$, and that~$C_G(L)$ satisfies~(H-QC).
By Lemma~\ref{lemmaOpandp}(3,4), the components of~$C_G(L)$ are the components of~$G$ distinct from~$L$,
and $O_p \bigl( C_G(L) \bigr) = O_p(G) = 1$.
Therefore, since $C_G(L)$~is proper in~$G$, while any unitary component of~$C_G(L)$ is a unitary component of~$G$,
$C_G(L)$ satisfies the conditions of~(H1u).
Hence~$\tilde{H}_*(\ \Ap \bigl( C_G(L) \bigr)\ )\neq 0$, as desired.
And now the original proof in~\cite{KP20} goes through.
\end{proof}

With the above re-statement of~\cite[Theorem~5.1]{KP20} in hand,
we can now complete the extension of~\cite[Main~Theorem]{AS93} to every odd prime, by also using the results of this article;
in summary:
While the extension to~$p = 5$ can be done here by using Theorem~\ref{alternativeParticularComponentes}
and the existing argument of~\cite{AS93},
the extension of~\cite[Main~Theorem]{AS93} to~$p=3$ now further requires us to deal with some more exceptional components,
and also to extend certain results of~\cite{AS93} to~$p = 3$.
Concretely: although most of the hard work has already been done in~\cite{AS93},
it remains to eliminate some further components on the~$\QD$-List (recall we have quoted this list as Theorem~\ref{theoremQDList}),
before constructing Robinson subgroups---to finish as in the cases for~$p \geq 5$.
Indeed, the only component that presents a real obstruction to constructing Robinson subgroups is the Ree group~$^2G_2(3^a)'$;
and our earlier result Theorem~\ref{theoremLiep} shows that such a component is not a problem in our setting.

We proceed now with the details of our extension of~\cite[Main Theorem]{AS93} to every odd prime~$p$:

\begin{theorem}
\label{proofQCOdd}
Let~$p$ be an odd prime.
Suppose that if~$G$ has a unitary component~$L \groupiso \PSU_n(q)$ such that~$q$ is odd and~$p \mid q+1$,
then~$\QD_p$ holds for the~$p$-extensions of~$\PSU_m(q^{p^e})$ for~$m \leq n$ and~$e \in \ZZ$.

Then~$G$ satisfies~(H-QC).
\end{theorem}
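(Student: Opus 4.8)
The plan is to argue by induction on $|G|$, so that we may assume the inductive-replacement hypothesis~(H1u): every proper subgroup, and every proper $p'$-central quotient, satisfying the unitary hypothesis in the statement also satisfies~(H-QC). This is exactly the setup of~\cite[Main Theorem]{AS93}; and for $p > 5$ the conclusion is already theirs (Theorem~\ref{aschbachersmithQC}), so the real content is the two new primes $p = 3, 5$. First I would dispose of $p = 5$: by Theorem~\ref{alternativeParticularComponentes} (the (H1u)-version we just established), any component of type $\Sz(2^5)$ leads directly to~(H-QC); once such components are excluded, the intermediate result~\cite[Thm~2.3]{AS93} becomes applicable, and the rest of the Aschbacher--Smith argument for $p > 5$ goes through verbatim for $p = 5$. (This is essentially Corollary~\ref{extensionsASQC5}, but now with the gap in~\cite{KP20} repaired by using~(H1u) throughout rather than~(H1).)

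The bulk of the work is therefore $p = 3$. Following the strategy of Remark~\ref{rk:stratelimRob} in the adapted form (elim)/(Rob-nonelim): I would first apply as many elimination-results as possible to cut down the list of possible components $L$ of a minimal counterexample $G$. Reducing as usual to $O_p(G) = 1 = O_{p'}(G)$ (via~\cite[Props~1.4--1.6]{AS93}, legitimate once the problem components $\PSL_2(2^3)$, $\PSU_3(2^3)$ are removed by Theorem~\ref{alternativeParticularComponentes}), the components become simple, and $G$ is neither $p$-solvable nor almost-simple (Theorems~\ref{psolvableQC},~\ref{almostSimpleQC}); by Theorem~\ref{PStheorem} no component is alternating or sporadic; so every component lies on the $\QD_3$-List of Theorem~\ref{theoremQDList}. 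Next I invoke Theorem~\ref{theoremLiep}: any component $L$ of type~(sLie-$p$) with $\Outposet_G(L)^* \subseteq \Ap(\Phi_L)$ forces~(H-QC) — in particular this handles the Ree groups $^2G_2(3^{2n+1})$, which are Lie-rank~$1$ (so automatically satisfy ($p$-$\Phi$) by Remark~\ref{rk:casesPhiGamma}(0)) and which are precisely the same-characteristic components on which the Robinson-subgroup argument of~\cite[Thm~5.3]{AS93} breaks down (Remark~\ref{remarkReeFailsRobinson}). The remaining Lie-type-in-characteristic-$2$ components are dispatched by Proposition~\ref{propRobinsonEvenChar}, which gives them Property~$\R(3)$ via a Sylow $2$-subgroup. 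Finally the unitary components $\PSU_n(q)$ with $3 \mid q+1$ are handled by the hypothesis of the theorem exactly as in~\cite{AS93}.

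After all these eliminations, every surviving component of $G$ has Property~$\R(3)$ (with $q = 2$); so Proposition~\ref{propRobinson} applies and yields~(L-QC), hence~(H-QC), contradicting that $G$ was a counterexample. I expect the main obstacle to be bookkeeping: verifying case-by-case that each component type left on the $\QD_3$-List after the Lie-type and unitary reductions — in particular the assorted exceptional and twisted entries in items~(8)--(12) of Theorem~\ref{theoremQDList} such as $\Ln_3(4)$, $\Ln_2(8)$, $^2F_4(r^3)$, and the sporadic-adjacent cases — either is already excluded by Theorem~\ref{PStheorem}/Theorem~\ref{theoremLiep}, or carries a Robinson subgroup $Q_L$ satisfying $\R(3)$, following the constructions at~\cite[pp~490--492]{AS93} and the fix to~\cite[Lemma~4]{AK90} supplied in the proof of Proposition~\ref{propRobinsonEvenChar}. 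The one genuinely new idea needed beyond~\cite{AS93} is the use of Theorem~\ref{theoremLiep} to remove the Ree groups before reaching the Robinson step; everything else is a careful transcription of the $p > 5$ argument with the two repaired inputs (Theorem~\ref{alternativeParticularComponentes} and the Ree-elimination) inserted at the appropriate places.
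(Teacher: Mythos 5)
Your proposal is correct and follows essentially the same route as the paper: minimal counterexample under~(H1u), the early insertion of Theorem~\ref{alternativeParticularComponentes} to unlock~\cite[Thms~2.3--2.4]{AS93} and hence the reductions of~\cite[Props~1.6--1.7]{AS93}, elimination of alternating/sporadic components via Theorem~\ref{PStheorem} and of the rank-one characteristic-$3$ (in particular Ree) components via Theorem~\ref{theoremLiep}, and then Property~$\R(3)$ for the survivors (Proposition~\ref{propRobinsonEvenChar} for $\Lie(2)$, the~\cite[Thm~5.3]{AS93} argument for higher-rank $\Lie(3)$) feeding into Proposition~\ref{propRobinson}. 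The only cosmetic slip is the "so every component lies on the $\QD_3$-List" inference, which of course rests on the $\QD$-elimination of~\cite[Prop~1.7]{AS93} rather than on Theorem~\ref{PStheorem}, but you clearly have that step in hand.
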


\begin{proof}
The proof largely follows the path of the original proof of~\cite[Main~Theorem]{AS93}:
indeed we quote those original arguments when~$p>5$; and also, when they apply equally well to the ``new" primes~$p=3,5$---but
when not, we must indicate alternative arguments.

We take~$G$ to be a counterexample of minimal order subject to the conditions of the theorem.
Hence~$G$ satisfies~(H1u).
Then we have~$O_p(G) = 1$ and~$\tilde{H}_*(\Ap(G)) = 0$.
The proofs of the reductions in Propositions~1.3, 1.4, and~1.5 of~\cite{AS93} apply for any~$p$ (including the primes~$p=3,5$);
so we also get~$Z(E(G)) = 1 = Z(G)$.
In particular~$E(G)$ is the direct product of the components of~$G$, which are simple.
 
\medskip

At this point, we insert into the original argument of~\cite{AS93}
an application of the elimination result Theorem \ref{alternativeParticularComponentes},
to exclude $\PSL_2(2^3)$ ($p=3$), $\PSU_3(2^3)$ ($p=3$), and~$\Sz(2^5)$ ($p=5$) as possible components~$L$ of~$G$.
We could optionally also apply further elimination-results here,
which would have the effect of some simplification, over the original path of~\cite{AS93}, of later details;
however, here we instead prefer to closely parallel that original path.

The three eliminations above now remove the obstruction, for the new primes $p=3,5$,
to the proof of the nonconical-complement result~\cite[Theorem~2.3]{AS93}.
Hence also the further result~\cite[Theorem~2.4]{AS93} can be applied in the remainder of our proof.

So we now obtain: first, the reduction~\cite[Proposition~1.6]{AS93} to $O_{p'}(G)=1$;
and then also~\cite[Proposition~1.7]{AS93}, namely the elimination of~$\QD$-components:
This is because their proofs use~\cite[Theorem~2.4]{AS93}, as allowed by the previous paragraph;
we note that this part of the proof does not depend on the particular prime $p$ (and so in particular goes through for~$p=3,5$).

\medskip

In the next few paragraphs, let us describe more precisely the effect of the eliminations above:
Now every component~$L$ of~$G$ has some~$p$-extension~$LB \leq G$ such that~$LB$ fails~$\QD_p$.
In particular, if $L$~is a component of~$G$,
then by the hypothesis~(H1u), $L$ is not~$\PSU_n(q)$ with~$q$ odd and~$p \mid q+1$.
This gives the hypothesis for~\cite[Theorem~3.1]{AS93}---which is stated for all odd~$p$ (hence including~$p=3,5$).
That result shows that~$L$ must appear in the~$\QD$-List---which we have reproduced as Theorem~\ref{theoremQDList} in the present paper.

Now for~$p>5$, Aschbacher and Smith complete the proof by invoking~\cite[Theorem 5.3]{AS93}---which
establishes~$\R(p)$ for any~$L$ in the above~$\QD$-List.
Indeed since that result applies also to~$p=5$, we obtain our extension of their Main Result to~$p = 5$ as well.

\medskip

Hence in the remainder of our proof, we may assume~$p=3$.
In brief overview:
we will first apply some further elimination-results, to avoid certain classes of groups in the~$\QD$-List; and after that,
finish with results on~$\R(3)$ much as in~\cite[Theorem~5.3]{AS93}---cf.~Proposition~\ref{propRobinson}.

So let's briefly assess the~$\QD$-sublist for~$p = 3$ that we must start with:

\begin{enumerate}
\item We must deal with the~$\Lie(3)$ cases in~(1) of the~$\QD$-List.
\item The unitary cases in~(2) of that list are ruled out by hypothesis~(H1u).
\item Case~(3) of the $\QD$-List is eliminated:
        For there, we would have~$q=r^3$, with the orders-mod-$3$ given by~$|q|=|r|=3,6,8,12$;
        but here, $3$ cannot divide~$r,q$ (otherwise these orders are not defined)---and
        so for us, those orders-mod-$3$ can only be~$1,2$.
\item We do not get case~(4), since there~$p=7$ or~$11$.
\item But we must still contend with the specific cases for~$p = 3$ in~(8-12) of the $\QD$-List.
        (We mention that the three special eliminations applied at the start of our proof
        have already removed~$\Ln_2(2^3)$ from~(11), and~$\U_3(2^3)$ from~(10).)
\end{enumerate}

\noindent
Thus it remains to deal with the appropriate cases in~(1) and~(8--12) of the~$\QD$-sublist for~$p = 3$.

We next set up to apply the elimination-result Theorem~\ref{PStheorem}.
Note by our early reduction to~$O_{p'}(G)=1$ that every component~$L$ has order divisible by~$p = 3$.
Furthermore by Lemma~\ref{lemmaOpandp}(3), the components of~$C_G(\hat{L})$ are components of~$G$ and hence satisfy~(H1u);
so since~$G$ is a counterexample of minimal order to our Theorem, we have that~$C_G(\hat{L})$ satisfies~(H-QC).
This gives the hypotheses of Theorem~\ref{PStheorem}, up to~(1) there.

Since~$G$ is in particular a counterexample to~(H-QC),
we see that the remaining hypothesis~(2) of Theorem~\ref{PStheorem} must fail;
so that~$p$ must divide the order of~$\Out_G(L)$.
Thus for~$p$ odd (and hence for our present~$p=3$), this eliminates~$L$ of alternating or sporadic type.
So we eliminate cases~(8) and~(12) in the~$\QD$-List from our list of possible components of~$G$.

We will here apply one further elimination-result, namely Theorem~\ref{theoremLiep}:
Note that we have the ``in particular'' hypotheses~(H1u) and~$p = 3$ odd there; 
furthermore the previous paragraph shows that we have~$B>1$.
And we saw that the remaining in-particular hypothesis of~($p$-$\Phi$) holds for the Lie-type groups of Lie-rank~$1$,
by Remark~\ref{rk:casesPhiGamma}(0).
So we eliminate the subcase of Lie-rank~$1$ from the~$\Lie(3)$ part of case~(1) of the~$\QD$-list.
(Note that the smallest Ree group~$L \cong {^2}G_2(3)' \cong \Ln_2(2^3)$ was already eliminated 
by our earlier application of Theorem~\ref{alternativeParticularComponentes};
we cannot apply Theorem~\ref{theoremLiep} to it, 
since it is one of the exceptional commutators which are not of type~(sLie-$p$) in Definition~\ref{defn:sLie-p}(2).)

To summarize the effect of these eliminations:
we have reduced our~$p = 3$ sublist~(1,8-12) of the~$\QD$-List to:
Case~(1) for Lie-rank~$>1$; and cases~(9-11) (with~$\Ln_2(2^3)$ already eliminated from~(11)).

\medskip

And now we are ready to finish, by turning to results which will give~$\R(3)$ for this latest sublist.
Proposition~\ref{propRobinsonEvenChar} gives~$\R(3)$ for the~$\Lie(2)$ cases---except~$\Symplectic_4(2)'=\Alt_6$,
which was already eliminated above via Theorem~\ref{PStheorem} as the alternating group~$\Alt_6$.
This completes the treatment of~ the cases in~(9--11); reducing us to Case~(1) with Lie-rank~$>1$.

We now observe that the proof of \cite[Theorem~5.3]{AS93} for the~$\Lie(p)$ case actually works for~$p=3$ and~$\R(3)$---except
when~$L \cong {}^2G_2(3)^a$ for~$a > 1$ odd,
which we already eliminated above via Theorem~\ref{theoremLiep}.

In conclusion:
We have shown that every remaining possible component~$L$ of~$G$ (that is, not ruled out by our elimination-results above)
satisfies~$\R(3)$.
By Proposition~\ref{propRobinson}, $G$ satisfies~(L-QC)---and hence also~(H-QC).
This completes our proof of Theorem~\ref{proofQCOdd}.
\end{proof}

\begin{remark}
\label{remarkReeFailsRobinson}
Property~$\R(3)$ cannot be established for the Ree groups $^2G_2(3^a)$ (for~$a > 1$ odd),
since there every field automorphism of order~$3$ of the Ree group commutes with some Sylow $2$-subgroup.
However, $\R(3)$ does hold for the smallest Ree group~${^2}G_2(3)'$, which has no field automorphisms.
\donerk
\end{remark}

\bigskip

Finally, we close this section by proving Theorem~\ref{theoremASExtensionAlternative},
the alternative version under~(H1) of Theorem~\ref{theoremASExtension} (that is, of Theorem~\ref{proofQCOdd} above).
We will particularly emphasize the points of the proof that use the~CFSG.

\begin{proof}
[Proof of Theorem \ref{theoremASExtensionAlternative}]
We assume that~$G$ satisfies both~(H1) and~(H2).
As usual for~(H-QC),  we may suppose further that~$O_p(G) = 1$,
and we must show then that~$\tilde{H}_* \bigl( \Ap(G) \bigr) \neq 0$---now for all odd~$p$, including~$p = 3,5$.

By Theorem~\ref{generalReduction}(2), we can assume that~$O_{p'}(G) = 1$
(this reduction makes use of the~CFSG to invoke the~$p$-solvable case of~(H-QC)).
Note that this implies that every component of~$G$ is simple and of order divisible by~$p$ (see Lemma~\ref{lemmaOpandp}).

We get the following constraints on the components of~$G$, which do not invoke the CFSG:

\begin{enumerate}
\item By~(5) and~(6) of Theorem~\ref{generalReduction}, we can suppose that~$G$ does not contain
      components isomorphic to $\PSL_2(2^3)$, $\PSU_3(2^3)$, or~$\Sz(2^5)$ for~$p=3,3,5$ respectively.

\item By Theorem~\ref{PStheorem}, we can suppose that~$G$ does not contain components of alternating or sporadic type.

\item By Theorem~\ref{QDpReductionPS}, we can suppose that for every component~$L$ of~$G$ there is a~$p$-extension of~$L$ in~$G$
      failing~$\QD_p$.

\item By Theorem~\ref{theoremLiep},
      we can suppose that~$G$ does not contain a component of Lie-type and Lie-rank~$1$ in characteristic~$p$.
\end{enumerate}

\noindent
In view of~(3) above,
we can invoke at this point the~$\QD$-List, which we have quoted as Theorem~\ref{theoremQDList},
to eliminate possible components of $G$.
Here we do use the~CFSG.

Note that~(H2) guarantees that the unitary groups in item~(2) of Theorem~\ref{theoremQDList} are not a possibility in our context.
Hence using the further eliminations noted in (1,2,4) just above,
we see that items~(1), (3), (5), (6), (9), (10), and~(11) of the $\QD$-List Theorem~\ref{theoremQDList}
contain the possible components of~$G$.
Now all of these simple groups satisfy the Robinson property~$\R(p)$ in view of~\cite[Theorem~5.3]{AS93}
and its proof (compare with the end of the proof of Theorem~\ref{proofQCOdd}).
Recall here that~$^2G_2(3^a)'$ does not arise as a component of~$G$ when~$p=3$,
in view of the elimination in item~(4) above, along with~(1) when~$a = 1$ since~$\PSL_2(2^3) \cong {^2}G_2(3)'$.
Finally, note that this step does not require the~CFSG, since Property~$\R(p)$ is shown for specific families of simple groups.
\end{proof}

\begin{remark}
In summary:
In the proof of Theorem~\ref{theoremASExtensionAlternative} above, the~CFSG is used just in two steps:
first, to appeal to the~$p$-solvable case of the Quillen conjecture in reducing to~$O_{p'}(G) = 1$;
and then, to invoke the~$\QD$-List of~\cite{AS93}.
\donerk
\end{remark}

\bigskip
\bigskip

\section{Some results toward the case~\texorpdfstring{$p=2$}{p=2} of Quillen's Conjecture}
\label{sectionEvenQC}

In view of the previous results on odd~$p$, in this section we give some insights for the study of~(H-QC) for~$p=2$.

\medskip

\noindent
First for overall context, recall our general discussions of strategy in Remark~\ref{rk:stratelimRob}.

In general, the theorems of~\cite{AS93} only work for~$p$ odd.
We have been able to extend some of them to every prime~$p$, by using more general combinatorial methods in~$\Ap$-posets.
For example, the reduction~$O_{p'}(G) =1$ was originally stated in~\cite[1.6]{AS93} for~$p>5$;
and an extension to all primes~$p$ was given in~\cite{KP20} by using a more general combinatorial argument.
In addition, the main reductions in Theorems~\ref{generalReduction} and~\ref{PStheorem} here also have no restrictions on~$p$,
and so we can apply them to~$p = 2$.
In a similar vein, Theorem~\ref{QDpReductionPS} extends~\cite[1.7]{AS93} to every prime~$p$,
at the cost of assuming an alternative inductive hypothesis,  namely~(H1);
so in this section, we will make use of the~(H1)-versions of various results.
Finally, the Robinson-path to~(L-QC) described in the previous two sections
(notably Proposition~\ref{propRobinson}) has no a-priori restrictions on~$p$,
so it also admits the possibility of treating~$p = 2$.

Thus (in view of~(MOC) in Remark~\ref{rk:contextofH1results}) we see using~(H1)
that a minimal counterexample~$G$ to~(H-QC) when~$p = 2$ must satisfy~$O_2(G) = 1 = O_{2'}(G)$,
using Theorem~4.1 of~\cite{KP20}.
Further for every component~$L$ of~$G$ we have that:
some~$p$-extensions in~$G$ fails~$\QD_p$ (by Theorem~\ref{QDpReductionPS});
it is not of Lie~type in characteristic~$p = 2$, except possibly for the cases listed in Theorem~\ref{thm:Liep(H1)};
and~$\Out_G(L)$ is not a~$2'$-group (by Theorem~\ref{PStheorem}).

In consequence, if we adapt the strategy of~\cite{AS93} in the spirit of~(elim)/(Rob-nonelim) in Remark~\ref{rk:stratelimRob},
we can try to exhibit a~$\QD$-List for~$p = 2$,
and then show that the possible components in such a list satisfy the Robinson property~$\R(2)$.

So in this section, we give some further elimination-results for~$p = 2$, as consequences of our methods.
We also establish the Robinson property~$\R(2)$ for some of the simple groups.
These results will allow us to conclude (for example)
that a minimal counterexample~$G$ to~(H-QC) for~$p = 2$ has some component~$L$ 
which is of Lie type in some characteristic~$\neq 3$.
(Cf.~Theorem~\ref{mainTheoremEvenCase}, and its proof at the end of this section.)

\bigskip

\noindent
So we now embark on the various results just described:

The following initial elimination-result is an easy consequence of Theorem~\ref{PStheorem}:

\begin{proposition}\label{propNoSuzuki}
Let~$p=2$, and suppose that~$G$ satisfies~(H1).
If a component~$L$ of~$G$ is of Suzuki type~$\Sz(2^{2n+1})$, or of Ree type~$^2F_4(2^{2n+1})'$ or~$^2G_2(3^{2n+1})'$,
then~$G$ satisfies~(H-QC).

In particular, if~$G$ satisfies~(H1) but fails~(H-QC), then~$G$ contains no component of Suzuki type nor of Ree type.
\end{proposition}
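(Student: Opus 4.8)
The statement to prove is Proposition~\ref{propNoSuzuki}: if $p=2$, $G$ satisfies~(H1), and $G$ has a component $L$ of Suzuki type $\Sz(2^{2n+1})$, or Ree type ${}^2F_4(2^{2n+1})'$ or ${}^2G_2(3^{2n+1})'$, then $G$ satisfies~(H-QC). The plan is to invoke Theorem~\ref{PStheorem} (the elimination-result quoted from~\cite{PS}) for the component $L$. That theorem requires two hypotheses: (1) $p$ divides $|L|$ and $C_G(\hat L)$ satisfies~(H-QC); and (2) the induced map $\Ap(L) \to \A$ is nonzero in homology for one of the listed posets $\A$ (e.g.~$\Ap(\Aut(L))$). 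The strategy is to verify both hypotheses, using~(H1) to get hypothesis~(1) and the specific structure of these Suzuki/Ree-type simple groups and their outer automorphism groups to get hypothesis~(2).

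First I would dispose of hypothesis~(1). For the Suzuki and Tits-type groups, $2 \mid |L|$ trivially; for ${}^2G_2(3^{2n+1})'$, note $2 \mid |L|$ as well (and recall ${}^2G_2(3)' \cong \PSL_2(8)$ still has even order). For the centralizer condition: since $L$ is a component of $G$, Lemma~\ref{lemmaOpandp}(3) says the components of $C_G(\hat L)$ are components of $G$ distinct from (the conjugates of) $L$, and Lemma~\ref{lemmaOpandp}(4) gives $O_p(C_G(\hat L)) = O_p(G)$; so $C_G(\hat L)$ is a proper subgroup of $G$. Then~(H1) (proper subgroups satisfy~(H-QC)) gives that $C_G(\hat L)$ satisfies~(H-QC). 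This is exactly the pattern used in the text for deducing~(1) of Theorem~\ref{PStheorem} under a minimality/(H1) hypothesis.

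The main work is hypothesis~(2): the map $\Ap(L) \to \Ap(\Aut(L))$ must be nonzero in homology. Here I would use that for $L$ of these types, $\Out(L)$ is cyclic of order $2n+1$ (odd, from field automorphisms only—Suzuki and Ree groups have no graph or diagonal outer automorphisms), so $\Out(L)$ is a $2'$-group. Then $\Aut(L)$ has $L$ as a normal $2$-Sylow-containing subgroup of odd index $2n+1$, so $\Ap(\Aut(L)) = \Ap(L)$ as posets (every nontrivial elementary abelian $2$-subgroup of $\Aut(L)$ lies in the normal subgroup $L$, since its image in the odd-order quotient $\Out(L)$ is trivial). Hence the map $\Ap(L) \to \Ap(\Aut(L))$ is the identity, and it is nonzero in homology provided $\tilde H_*(\Ap(L)) \neq 0$. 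But $L$ is almost-simple (being simple), so by the Aschbacher--Kleidman result, Theorem~\ref{almostSimpleQC}, $L$ satisfies~(H-QC); and $O_2(L) = 1$ since $L$ is simple (hence $\neq$ a $2$-group); so $\tilde H_*(\Ap(L)) \neq 0$. Thus hypothesis~(2) holds with $\A = \Ap(\Aut(L))$.

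With both hypotheses of Theorem~\ref{PStheorem} verified, that theorem yields~(H-QC) for $G$, proving the first assertion. The ``In particular'' statement is then immediate by contraposition: if $G$ satisfies~(H1) but fails~(H-QC), then by the just-proved result $G$ has no component of Suzuki or Ree type (where for the Ree case one includes both ${}^2F_4(2^{2n+1})'$ and ${}^2G_2(3^{2n+1})'$, the Tits group ${}^2F_4(2)'$ arising at $n=0$). The main obstacle I anticipate is purely bookkeeping around the outer automorphism groups: confirming that for each of the three families $\Out(L)$ is indeed of odd order $2n+1$ with no diagonal or graph part (this is standard, e.g.~from~\cite[Sec~5.2]{GLS98} / Remark~\ref{rk:outerautsofsimple}), and handling the small degenerate case ${}^2F_4(2)'$ (the Tits group) where $\Out(L) = \Cyclic_2$ is \emph{not} odd—there one instead notes that outer $2$-elements have order $4$ as group elements in $LB$, so $\Outposet_{\Aut(L)}(L) = \emptyset$ and again $\Ap(\Aut(L)) = \Ap(L)$, exactly as in the treatment of ${}^2F_4(2)'$ in the proof of Theorem~\ref{thm:Liep(H1)}.
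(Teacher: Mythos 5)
Your proof is correct and follows essentially the same route as the paper: observe that $\Out(L)$ is (in effect) a $2'$-group so that $\A_2(L)\to\A_2\bigl(\Aut(L)\bigr)$ is the identity, conclude it is nonzero in homology via the almost-simple case (Theorem~\ref{almostSimpleQC}), and apply the elimination result Theorem~\ref{PStheorem}. You are in fact slightly more careful than the paper's one-line proof, both in spelling out hypothesis~(1) of Theorem~\ref{PStheorem} via Lemma~\ref{lemmaOpandp} and~(H1), and in treating the Tits group ${}^2F_4(2)'$ separately (where $\Out(L)=\Cyclic_2$ but outer $2$-elements have order~$4$, so $\A_2\bigl(\Aut(L)\bigr)=\A_2(L)$ still holds).
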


\begin{proof}
We saw in the discussion preceding Definition~\ref{defn:fldgraphconvtypePhi} that~$\Out(L)$ is a~$2'$-group in each case.
Hence~$\A_2(L)=\A_2 \bigl( \Aut_G(L) \bigr)$;
and~$\A_2(L) \to \A_2 \bigl( \Aut_G(L) \bigr)$ is the identity map, and so is nonzero in homology
using the almost-simple case of the conjecture, which we quoted earlier as Theorem~\ref{almostSimpleQC}.
The result follows then by Theorem~\ref{PStheorem} (cf.~\cite[Corollary~1.5]{PS}).
\end{proof}

\begin{corollary}
For~$p=2$, if~$G$ satisfies~(H1), then either~$G$ satisfies~(H-QC), or else each component of~$G$ has order divisible by~$3$.
\end{corollary}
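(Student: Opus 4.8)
The plan is to argue by contraposition: assuming $G$ satisfies (H1) but fails (H-QC), I will show that every component of $G$ has order divisible by $3$; combined with the trivial observation that nothing need be said when $G$ already satisfies (H-QC), this gives the stated dichotomy.

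First I would carry out the usual opening reductions. Since (H-QC) asserts the implication ``$O_2(G) = 1 \Rightarrow \tilde H_*(\Ap(G)) \neq 0$'', the assumption that $G$ fails (H-QC) already forces $O_2(G) = 1$. Applying Theorem~\ref{generalReduction}(2) to $G$ --- which satisfies (H1) --- the failure of (H-QC) likewise forces $O_{2'}(G) = 1$. Hence $F(G) = 1$ (as $F(G) \le O_2(G)\,O_{2'}(G)$), and Lemma~\ref{lemmaOpandp}(1) then shows that $F^*(G) = E(G) = L_1 \cdots L_t$ is the direct product of the components $L_i$ of $G$, each of which is now a non-abelian simple group.

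Second, suppose toward a contradiction that some component $L = L_i$ has order coprime to $3$. The one genuinely external input is then the consequence of the CFSG that the only non-abelian finite simple groups of order not divisible by $3$ are the Suzuki groups $\Sz(2^{2n+1})$, $n \geq 1$ (recall $|\Sz(q)| = q^2(q^2+1)(q-1)$ with $q = 2^{2n+1}$, where modulo $3$ one has $q \equiv -1$ and $q^2 \equiv 1$, so each of the three factors is prime to $3$). Thus $L \cong \Sz(2^{2n+1})$. But then Proposition~\ref{propNoSuzuki} --- which in turn rests on the elimination-result Theorem~\ref{PStheorem} together with the almost-simple case Theorem~\ref{almostSimpleQC} --- applies to $G$, and yields that $G$ satisfies (H-QC), contrary to our standing assumption.

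Hence no component of $G$ can have order prime to $3$, which is exactly the second alternative in the corollary. I do not anticipate any real obstacle: in essence the corollary is just the ``In particular'' clause of Proposition~\ref{propNoSuzuki} --- that a counterexample under (H1) contains no Suzuki component --- combined with the classification-fact that Suzuki groups are precisely the simple groups of order coprime to $3$. The only point needing a little care is to perform the reduction to $O_{2'}(G) = 1$ first, so that all components are honestly simple before quoting that fact.
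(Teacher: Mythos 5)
Your proof is correct and takes essentially the same approach as the paper: identify Suzuki groups as the only simple groups of order prime to~$3$, and invoke Proposition~\ref{propNoSuzuki} to eliminate them as components. Your extra opening reduction to~$O_{2'}(G)=1$ (ensuring components are simple before quoting the Suzuki classification) is a welcome piece of care that the paper leaves implicit; the only small quibble is that the fact that Suzuki groups are the unique simple groups of order prime to~$3$ is actually a pre-CFSG theorem of Thompson and Glauberman (as the paper's footnote notes), rather than a consequence of the CFSG.
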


\begin{proof}
The only%
\footnote{
   This is a well-known pre-CFSG result, going back to Thompson and Glauberman;
   cf.~Theorem~4.174 and the discussion beforehand (using also Theorem~4.126) in Gorenstein (MR 0698782).
          }%
%\footnote{
%  fakenote:
%  Ron/Richard: Do you have a better reference for this $3'$-fact?
%          }
simple groups of order not divisible by~$3$ are the Suzuki type groups,
which as components lead to~(H-QC) by the previous Proposition.
\end{proof}

\noindent
So it is natural in this section to use~$q=3$ in our~$q$-elementary Robinson-subgroup analysis.

\medskip

In the remainder of the section, we in particular show
that alternating or sporadic components of~$G$ are either eliminated from a minimal counterexample to~(H-QC) with~$p=2$
(by using one of our homology propagation methods), or else they satisfy the Robinson property~$\R(2)$.
We summarize the results for sporadic components, in Proposition~\ref{propSporadicComponents} below;
and for alternating components, in later Corollary~\ref{coroAlternatinComponents}.

The following lemma will give us a quick condition for establishing Property~$\R(2)$ on a simple group.
For primes~$p,q$, recall that the~$p$-local~$q$-rank of a finite group~$L$ is the largest~$q$-rank of a~$p$-local subgroup of~$L$.
More concretely, it is the following number:
  \[ m_{p,q}(L) := \max\{ m_q \bigl( N_L(P) \bigr) \tq P \in \BrownPoset_p(L) \} . \]

\begin{lemma}
\label{lemmaRobinsonCriterion}
Let~$L$ be a simple group such that one of the following conditions holds:
\begin{enumerate}
  \item Whenever~$E \in \A_2 \bigl( \Aut(L) \bigr)$, then~$N_L(E)$ does not contain a Sylow~$3$-subgroup of~$L$.
  \item $m_{2,3}(\Aut(L)) < m_3(L)$.
\end{enumerate}
Then~$L$ has property~$\R(2)$.
\end{lemma}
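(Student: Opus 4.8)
The plan is to verify directly the four conditions of Property $\R(2)$ for a suitable choice of $Q_L$, namely a Sylow $3$-subgroup $S$ of $L$ (so that $q = 3$, as required by condition (5) of $\R(2)$ since $p = 2$). Conditions (1), (2), and (5) are then immediate: $S$ is certainly a $3$-elementary $2'$-subgroup of $L$ with $O_3(S) = S > 1$. So the work is entirely in establishing conditions (3) and (4), i.e.\ $\tilde{\chi}\bigl(\A_2(L)^{S}\bigr) \not\equiv 0 \pmod 3$ and $\A_2\bigl(C_{\Aut(L)}(S)\bigr) \subseteq \A_2(L)$. In fact I expect to prove the stronger condition (3+) discussed in the excerpt, namely $\A_2\bigl(\Aut(L)\bigr)^{S} = \emptyset$, which simultaneously forces (3) (with reduced Euler characteristic $-1 \not\equiv 0 \pmod 3$) and (4) (vacuously).

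First I would handle case (1) of the hypothesis. Suppose for contradiction that $E \in \A_2\bigl(\Aut(L)\bigr)^{S}$, i.e.\ $E$ is a nontrivial elementary abelian $2$-subgroup of $\Aut(L)$ normalized by $S$. Then $S$ acts on $E$, and since $|E|$ is a power of $2$ while $|S|$ is a power of $3$, the set $E$ is $S$-invariant; I want to conclude $S \leq N_L(E)$ after intersecting with $L$. More precisely, $E \cap L$ is $S$-invariant, and coprime action together with $S$ acting on the $2$-group $E \cap L$ shows $S$ normalizes $E \cap L$; but I need $E \cap L \neq 1$. If $E \cap L = 1$ then $[E,S] \leq E \cap L = 1$ (here one uses that $S \leq L$ and $L \trianglelefteq \Aut(L)$, so commutators of $E$ with $S \leq L$ land in $E \cap L$), so $E$ centralizes a Sylow $3$-subgroup of $L$; then I would invoke a structural fact — the centralizer in $\Aut(L)$ of a full Sylow $3$-subgroup normalizes it, hence $E$ normalizes $S$, and in any case $S \leq N_{\Aut(L)}(E)$, and $N_L(E) \supseteq [S, \ldots]$... this degenerate branch I would dispatch by noting $N_L(E)$ with $E \cap L = 1$ means $E$ normalizes $S$ so $ES/S$ is a $2$-group of outer type commuting with $S$, and apply hypothesis (1) to $E$ directly only if $E \cap L \neq 1$. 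The cleaner route: hypothesis (1) is about $N_L(E)$ containing a Sylow $3$-subgroup, so I would argue that $S$-invariance of $E$ forces $N_L(E)$ (or rather $N_L(E\cap L)$ when $E \cap L > 1$, handled via Lemma~\ref{lemmaNormalizeComponent}-type reasoning) to contain $S$, a Sylow $3$-subgroup — contradicting (1). The $E \cap L = 1$ case I would reduce to the centralizer formulation and a Borel--Tits/$(13\text{-}2)$-of-\cite{GL83} style argument as in Proposition~\ref{propRobinsonEvenChar}, or simply note it cannot occur because $S$ would centralize $E$ forcing $E \leq C_{\Aut(L)}(S)$ which, being normalized by $S$ and intersecting $L$ trivially, contradicts condition (1) applied in its contrapositive form.

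For case (2) of the hypothesis, $m_{2,3}\bigl(\Aut(L)\bigr) < m_3(L)$, the argument is more transparent. Take $S \in \Syl_3(L)$, so $m_3(S) = m_3(L)$. Suppose $E \in \A_2\bigl(\Aut(L)\bigr)^{S}$; then $ES$ is a subgroup with $E$ a nontrivial $2$-group normal in $ES$ (since $S$ normalizes $E$ — wait, rather $E$ is normalized by $S$, so $ES$ is a group with $E \trianglelefteq ES$), hence $ES \leq N_{\Aut(L)}(E)$, a $2$-local subgroup of $\Aut(L)$. But then $S$ is a $3$-subgroup of this $2$-local subgroup with $m_3(S) = m_3(L)$, so $m_{2,3}\bigl(\Aut(L)\bigr) \geq m_3\bigl(N_{\Aut(L)}(E)\bigr) \geq m_3(L)$, contradicting hypothesis (2). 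Hence $\A_2\bigl(\Aut(L)\bigr)^{S} = \emptyset$, giving (3+) and therefore $\R(2)$.

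The main obstacle I anticipate is the $E \cap L = 1$ sub-case under hypothesis (1): ensuring that an elementary abelian $2$-group of outer automorphisms normalized by a full Sylow $3$-subgroup cannot exist. I would resolve this by observing that such $E$ lies in $C_{\Aut(L)}(S)$ (coprime action: $[E,S]$ is both a $2$-group and contained in... no — rather, $E$ is $S$-invariant and $S$-action on the $2$-group $E$ by coprime automorphisms with $E = C_E(S)[E,S]$, and if additionally $E$ centralizes $S$ we are done, but in general I would use that $N_{\Aut(L)}(S)/S C_{\Aut(L)}(S)$ embeds in a small group and chase), and then quote the relevant fact that $C_{\Aut(L)}\bigl(O_3(\cdot)\bigr)$ of a Sylow-normalizer situation forces containment in $L$ for the simple groups at hand — essentially the content that makes condition (4) hold. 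In the actual applications (sporadic and alternating components, per Propositions~\ref{propSporadicComponents} and the later alternating corollary) one checks hypothesis (1) or (2) case-by-case, and there the delicate branch does not genuinely arise, so I would phrase the lemma's proof to route cleanly through the contradiction with (1) or (2) and relegate the $E \cap L = 1$ bookkeeping to a short remark invoking $N_L(E) \supseteq S$ via $S$-invariance.
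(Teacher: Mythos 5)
Your proposal is correct in substance and takes the same route as the paper: choose $Q_L = S \in \Syl_3(L)$ and establish the strengthened condition (3+), namely $\A_2\bigl(\Aut(L)\bigr)^{S} = \emptyset$, which yields (3) and (4) at once. Your treatment of hypothesis (2) is a sound direct argument (the paper instead shows (2) implies (1) by the inequality $m_3(N_L(E)) \leq m_{2,3}(\Aut(L)) < m_3(L) = m_3(S)$, and then handles only (1); the two are essentially equivalent).

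The one substantive comment concerns your handling of hypothesis (1), where you have buried a two-line argument under a mass of unnecessary case analysis. The entire discussion of $E \cap L = 1$ versus $E \cap L > 1$, coprime action, Borel--Tits, and $C_{\Aut(L)}(S)$ is a red herring: if $E \in \A_2\bigl(\Aut(L)\bigr)^{S}$, then by definition $S$ normalizes $E$, and since $S \leq L$ this gives $S \leq N_{\Aut(L)}(E) \cap L = N_L(E)$ immediately — so $N_L(E)$ contains a Sylow $3$-subgroup of $L$, contradicting (1). Note that hypothesis (1) is stated for arbitrary $E \in \A_2\bigl(\Aut(L)\bigr)$, so there is no need to pass to $E \cap L$ or to worry about whether $E$ meets $L$; the normalizer $N_L(E)$ makes sense for any subgroup $E$ of $\Aut(L)$, and $S$ lands in it for free. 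This is exactly the paper's proof, and it is the argument you yourself state as the "cleaner route" before second-guessing it; in a final writeup you should delete everything else in that paragraph.
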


\begin{proof}
We show first that~(2) implies~(1):
Assuming~(2), take a Sylow~$3$-subgroup~$P$ of~$L$.
Since~$m_3(L) = m_3(P)$, and~$m_{2,3}(\Aut(L)) < m_3(L)$ by hypothesis,
for every~$E \in \A_2 \bigl( \Aut(L) \bigr)$ we have:
  \[ m_3 \bigl( N_L(E) \bigr) \leq m_3 \bigl( N_{\Aut(L)}(E) \bigr) \leq m_{2,3} \bigl( \Aut(L) \bigr) < m_3(L) = m_3(P) . \]
Therefore~$N_L(E)$ cannot contain~$P$, giving~(1) as claimed. 

It remains to show that condition~(1) implies Property~$\R(2)$ (with~$P$ in the role of~``$Q_L$'').
By~(1), $P$ does not normalize an element~$E \in \A_2 \bigl( \Aut(L) \bigr)$.
Therefore we have established~(3+),
namely that~$\A_2 \bigl( \Aut(L) \bigr)^P = \emptyset$;
and we had seen earlier that~(3+) suffices to guarantee~(3) and~(4) of~$\R_0(2)$, 
so we see~$L$ satisfies Property~$\R(2)$ for~$P$.
\end{proof}

We show next how to deal with sporadic components when studying~(H-QC) for~$p= 2$:

\begin{proposition}
\label{propSporadicComponents}
Let $L$ be a sporadic simple group and let $p = 2$.
Then $\A_2(L) \to \A_2(\Aut(L))$ is not the zero-map in homology if $L$ is one of the following~$17$ sporadic groups:
\begin{equation}
\label{casesHP}
     \Mathieu_{11}, \Mathieu_{12},\Mathieu_{22}, \Mathieu_{23}, \Mathieu_{24}, \Janko_1, \Janko_4,
    \Conway{1},\Conway{2},\Conway{3}, \Fi{23}, \HS, \Ly, \Ru, \Th, \Baby,\Monster.
\end{equation}
In the remaining~$9$ sporadic cases, $L$ satisfies Property~$\R(2)$:
\begin{equation}
\label{casesRobinsonNC}
  \Janko_2,\Janko_3, \Fi{22}, \Fi{24}', \McL, \He, \Suz, \ON, \HN.
\end{equation}
\end{proposition}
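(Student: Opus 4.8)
The plan is to split the $26$ sporadic groups into the two indicated lists, and to treat each list by a uniform mechanism. For the $17$ groups in~\eqref{casesHP}, the goal is to show that the map $\A_2(L)\to \A_2(\Aut(L))$ is nonzero in homology; this is exactly hypothesis~(2) of the elimination-result Theorem~\ref{PStheorem} (with $G$ replaced by $L$ or by an almost-simple overgroup, and using that $L$ is a component), so establishing the nonzero-map statement is the whole content. For the remaining $9$ groups in~\eqref{casesRobinsonNC}, the goal is instead to verify Property~$\R(2)$; here the tool is Lemma~\ref{lemmaRobinsonCriterion}, and it suffices to check one of its two numerical conditions --- most conveniently condition~(2), the inequality $m_{2,3}(\Aut(L)) < m_3(L)$, which is a finite piece of data for each of the nine groups.

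First I would handle~\eqref{casesRobinsonNC}. For each of $\Janko_2,\Janko_3, \Fi{22}, \Fi{24}', \McL, \He, \Suz, \ON, \HN$ I would look up (from the Atlas, or from the $2$-local and $3$-local structure tables in~\cite{GLS98}) the value $m_3(L)$ and the maximal $3$-rank of the normalizer of a nontrivial $2$-subgroup in $\Aut(L)$. In each case one expects $m_{2,3}(\Aut(L)) < m_3(L)$: the normalizer of an involution (or larger $2$-group) in a sporadic group typically has strictly smaller $3$-rank than a Sylow $3$-subgroup of the whole group. When the strict inequality~(2) of Lemma~\ref{lemmaRobinsonCriterion} fails but condition~(1) still holds --- i.e.\ a Sylow $3$-subgroup is simply never contained in any $2$-local subgroup --- I would fall back on checking~(1) directly from the list of maximal $2$-local subgroups. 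Either way Lemma~\ref{lemmaRobinsonCriterion} then yields Property~$\R(2)$ with $Q_L$ a Sylow $3$-subgroup of $L$, establishing~(3+), hence~(3) and~(4), and trivially~(1),~(2),~(5) of $\R_0(2)$/$\R(2)$.

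For the $17$ groups in~\eqref{casesHP} I would argue that the inclusion $\A_2(L)\hookrightarrow \A_2(\Aut(L))$ induces a nonzero homology map. When $\Out(L)=1$ this is automatic: the map is the identity and $\tilde H_*(\A_2(L))\neq 0$ by the almost-simple case Theorem~\ref{almostSimpleQC}. This already disposes of the many groups on the list with trivial outer automorphism group (e.g.\ $\Mathieu_{11},\Mathieu_{23},\Mathieu_{24},\Janko_1,\Janko_4,\Conway{1},\Conway{2},\Conway{3},\Fi{23},\Ly,\Th,\Baby,\Monster$). For the few with $\Out(L)=\Cyclic_2$ on the list, namely $\Mathieu_{12},\Mathieu_{22},\HS,\Ru$, I would use the long exact sequence of~\cite{SW} relating $\tilde H_*(\A_2(L))$, $\tilde H_*(\A_2(L\langle t\rangle))$ and the suspensions of $\tilde H_*(\A_2(C_L(t)))$ for the outer involution $t$: a dimension count shows that the top homology of $\A_2(L)$ in its topological dimension $m_2(L)-1$ cannot be entirely killed in $\A_2(L\langle t\rangle)$, because the relevant centralizer $\A_2(C_L(t))$-contributions sit in lower degree. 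This forces the restriction (equivalently corestriction) map to be nonzero, hence the inclusion into $\A_2(\Aut(L))$ is nonzero in homology.

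The main obstacle I expect is precisely the handful of groups in~\eqref{casesHP} with a nontrivial graph/field outer automorphism --- there the ``identity map'' shortcut is unavailable, and one must genuinely control the homology of $\A_2(C_L(t))$ relative to that of $\A_2(L)$, i.e.\ verify that the outer involution does not annihilate the whole top homology class. This requires concrete knowledge of $m_2(L)$, $m_2(C_L(t))$, and the mod-$2$ Tits-type or wedge-of-spheres description of $\A_2(L)$ for each such $L$, extracted from~\cite{Smi11} or the $2$-local geometry literature. A secondary, purely bookkeeping obstacle is confirming that the partition of the $26$ sporadic groups into the two lists is exhaustive and that no group on list~\eqref{casesRobinsonNC} in fact needs the homology-propagation argument instead; this is settled by running the Lemma~\ref{lemmaRobinsonCriterion} check against the Atlas data group by group.
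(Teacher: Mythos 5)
Your overall architecture matches the paper's (split into the two lists; use Theorem~\ref{almostSimpleQC} when $\Out(L)=1$; use Lemma~\ref{lemmaRobinsonCriterion} for most of the second list), but there is a genuine gap in your treatment of $\He$. Your fallback chain is: check condition~(2) of Lemma~\ref{lemmaRobinsonCriterion}, and if the rank inequality fails, check condition~(1) directly, concluding in either case that a Sylow $3$-subgroup serves as $Q_L$. For $\He$ \emph{both} conditions fail: $\He$ has a maximal $2$-local subgroup of shape $2^6{:}3.\Sym_6$, whose order $2^{10}\cdot 3^3\cdot 5$ shows that the normalizer of this $2^6$ contains a full Sylow $3$-subgroup of $\He$. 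So a Sylow $3$-subgroup $P$ normalizes a nontrivial elementary abelian $2$-group, $\A_2(L)^P \neq \emptyset$, and the entire Sylow-$3$ strategy is dead for this group. The paper needs a separate argument here (Lemma~\ref{lemmaRobinsonHE}): $Q_L$ is taken to be a cyclic group of order $21$ generated by elements of classes $3B$ and $7A$ (still $3$-elementary, but not a $3$-group), one verifies $C_{\Aut(L)}(Q_L)=Q_L$, and — since the fixed-point set is \emph{not} empty — one computes $\B_2(L)^{Q_L}$ explicitly (two points) to get $\tilde{\chi}=+1\not\equiv 0 \pmod 3$. That idea, of leaving the "empty fixed-point set / condition (3+)" regime and doing an honest Euler-characteristic count mod $3$ for a non-Sylow $q$-elementary subgroup, is absent from your proposal and cannot be recovered from Lemma~\ref{lemmaRobinsonCriterion}.

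Two secondary points. First, among the groups in~(\ref{casesHP}) with $\Out(L)=\Cyclic_2$, only $\Mathieu_{12},\Mathieu_{22},\HS$ qualify ($\Out(\Ru)=1$, so $\Ru$ falls into the trivial case). For $\Mathieu_{12}$ and $\Mathieu_{22}$ the paper does not run a Segev--Webb dimension count: it observes that every outer involution $B$ has $O_2\bigl(C_L(B)\bigr)>1$, so Lemma~\ref{lemmaReconstructionFromSubposet}(3) makes the inclusion $\A_2(L)\hookrightarrow\A_2(\Aut(L))$ a homotopy equivalence outright — a much lighter argument than yours, which would require knowing the homology of $\A_2(C_L(t))$ and of $\A_2(L)$ in top degree (data you have not verified, and which is the only thing standing between your sketch and a proof for $\HS$, where the paper instead quotes~\cite[Thm~7.1]{PS}). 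Second, for $\Janko_2$ your condition-(1) fallback does work (the paper verifies $\A_2(L)^S=\emptyset$ and $C_{\Aut(L)}(S)\le S$ directly), so the only group for which your method has no route to the conclusion is $\He$.
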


\begin{proof}
For the proof of this proposition, we use the notation and results of Table~5.3 of~\cite{GLS98}.

We first consider~$L$ in the list~(\ref{casesHP}).
Then either~$\Out(L)=1$ or else~$L = \Mathieu_{12},\Mathieu_{22}, \HS$.
In the cases where~$\Out(L)=1$, we get that~$\A_2(L) \to \A_2 \bigl( \Aut(L) \bigr) = \A_2(L)$ is the identity map,
and hence nonzero in homology as usual by the almost-simple case of~(H-QC),
which we had quoted as Theorem~\ref{almostSimpleQC}.
So we study the remaining three cases of this list:

\begin{itemize}
\item If~$L = \Mathieu_{12}$, then for all~$B \in \Outposet_{\Aut(L)}(L)$,
      $B$ is of type~$2C$ and hence~$O_2 \bigl( C_L(B) \bigr) > 1$ (see Table~5.3b of~\cite{GLS98}).
      So~$\A_2(L) \to \A_2 \bigl( \Aut(L) \bigr)$ is a homotopy equivalence
      using part~(3) of Lemma~\ref{lemmaReconstructionFromSubposet},
      and hence is nonzero in homology using the almost-simple case as above.

\item If~$L = \Mathieu_{22}$, then for all~$B \in \Outposet_{\Aut(L)}(L)$, $B$ is of type~$2B$ or~$2C$,
      and in either case we have $O_2 \bigl( C_L(B) \bigr) > 1$ (see Table~5.3c of~\cite{GLS98}).
      Then~$\A_2(L) \to \A_2 \bigl( \Aut(L) \bigr)$ is again a homotopy equivalence
      using part~(3) of Lemma~\ref{lemmaReconstructionFromSubposet},
      and is nonzero in homology as above.

\item If $L = \HS$, then using~\cite[Theorem~7.1]{PS}, we see that~$\A_2(L) \to \A_2 \bigl( \Aut(L) \bigr)$ is nonzero in homology.
\end{itemize}
This completes the treatment of the list~(\ref{casesHP}).

It remains to consider~$L$ in the list~(\ref{casesRobinsonNC}).
Then~$|\Out(L)| = 2$.
Note that, in each case, if~$B \in \Outposet_{\Aut(L)}(L)$, then~$|B| = 2$, so every element there is maximal, and also $\Aut(L) = LB$.
%Moreover, for each possibility of $L$ we can pick some $B\in \Outposet_{\Aut(L)}(L)$ with $O_2(C_L(B)) = 1$ (see the list of cases below).
We study Property~$\R(2)$ in each case.
From Tables~5.6.1 and~5.6.2 of~\cite{GLS98}, we see that we in fact have~$m_3 \bigl( L \bigr) > m_{2,3}(\Aut(L))$
if~$L$ is one of the following~seven sporadic groups:
  \[ \Janko_3, \Fi{22}, \Fi{24}', \McL, \Suz, \ON, \HN ; \]
and then we may apply Lemma~\ref{lemmaRobinsonCriterion}(2) to obtain~$\R(2)$. 
We complete the details of the proof by analyzing the~two remaining cases~$\Janko_2$ and~$\He$:

\vspace{0.2cm}
\textbf{Case} $L = \Janko_2$.
Let~$S \in \Syl_3(L)$.
Note that~$S$ does not centralize outer involutions.
Moreover, we get~$C_{\Aut(L)}(S) \leq S$ and~$\Ap(L)^S =\emptyset$---either by looking into the maximal subgroups,
or by direct computation.
This gives~(4) and~(3) of Property~$\R_0(2)$, so we see that~$L$ satisfies Property~$\R(2)$.

\vspace{0.2cm}
\textbf{Case} $L=\He$.
We get Property~$\R(2)$ by Lemma~\ref{lemmaRobinsonHE} below.
\end{proof}

The assertions in the following lemma were checked with GAP \cite{GAP}.
We follow the conventions of Table 5.3p of \cite{GLS98}.

\begin{lemma}
\label{lemmaRobinsonHE}
Let~$L = \He$, and let~$Q$ be a cyclic subgroup of order~$21$ generated by elements of type~$3B$ and~$7A$.
Then $Q$ satisfies the requirements of Property $\R(2)$.
%Then~$Q$ is~$q$-elementary for $q = 3$, giving~(1) and~(2) of Property~$\R_0(2)$.
%And~$C_{\Aut(L)}(Q) = Q$, giving~$\A_2 \bigl( C_{\Aut(L)}(Q) \bigr) = \emptyset$, for~(4) there.
%Further~$\B_2(L)^Q$ consists of exactly two non-conjugate elements~$E_1$ and~$E_2$.%
%\footnote{
%  Both are isomorphic to~$C_2 \times C_2$, and are generated by involutions of type~$2A$, with~$E_1 \cap E_2 = 1$;
%  and~$E_1$ is~$L$-conjugate to~$O_2 \bigl( C_L(2A) \bigr)$, with~$\gen{E_1,E_2} \groupiso \PSL_3(2)$.
%          }
%Since the equivalence of~$\A_2(L)$ with~$\B_2(L)$ in Proposition~\ref{homotopyEquivalenPosets}
%is~$L$-equivariant, we get the value of~$+1$
%for the reduced Euler characteristic in~(3) of~$\R_0(2)$. 
%So as $q = 3$, $Q$ satisfies the requirements of property~$\R(2)$.
%\hfill $\Box$
\end{lemma}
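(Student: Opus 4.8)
The plan is to verify the four conditions of Property $\R_0(2)$ (and condition~(5) of $\R(2)$, which holds automatically since $q=3$ and $p=2$) for the explicit cyclic subgroup $Q = \gen{g} \times O_3(Q)$ of $L = \He$, where $g$ is of type $7A$ and $O_3(Q)$ is generated by an element of type $3B$. Conditions~(1) and~(2) are immediate: $Q$ is $3$-elementary (cyclic of order $21 = 3 \cdot 7$, so $Q = C_7 \times C_3$ with $O_3(Q) = C_3 > 1$) and $Q$ is a $2'$-group since $21$ is odd. So the real content is establishing conditions~(3) and~(4), namely that $\tilde{\chi}\bigl(\Ap(L)^Q\bigr) \not\equiv 0 \pmod 3$ and that $\Ap\bigl(C_{\Aut(L)}(Q)\bigr) \subseteq \Ap(L)$.

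For condition~(4), since $\Out(\He) = \Cyclic_2$, I would compute $C_{\Aut(L)}(Q)$ directly in GAP: one checks that $C_{\Aut(L)}(Q)$ is a $2'$-group (indeed one expects it to be small, of order dividing $21$ or so, built from the $3B$- and $7A$-centralizer structure recorded in Table~5.3p of~\cite{GLS98}), so that $\Ap\bigl(C_{\Aut(L)}(Q)\bigr) = \emptyset \subseteq \Ap(L)$; in particular no outer involution centralizes $Q$. For condition~(3), the strategy is to compute the fixed-point poset $\Ap(L)^Q$ — equivalently $\Ap(C_L(Q))$ up to the relevant identifications, or more directly the poset of $Q$-invariant nontrivial elementary abelian $2$-subgroups of $\He$ — and evaluate its reduced Euler characteristic modulo $3$. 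Here I would use the coprime-action reduction: a $Q$-invariant $E \in \Ap(L)$ satisfies $E = C_E(Q)[E,Q]$, and since $C_{\He}(Q)$ has odd order (again from the $3B \cap 7A$ centralizer data), we get $C_E(Q) = 1$, so actually every such $E$ lies in $[E,Q] \leq$ (the $2$-part controlled by) $C_L(O_3(Q))$ acted on by $g$. This should cut $\Ap(L)^Q$ down to a very small, explicitly enumerable poset, whose reduced Euler characteristic GAP computes; the claim is that this value is $\not\equiv 0 \pmod 3$. (In the cleanest scenario $\Ap(L)^Q = \emptyset$, giving $\tilde\chi = -1 \not\equiv 0$, and condition~(3+) holds outright — which would also re-prove condition~(4) — but the lemma is stated merely for the cyclic $C_{21}$, so one should be prepared for a small nonempty poset and an explicit nonzero count.)

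The main obstacle is genuinely computational rather than conceptual: one must pin down, using a concrete permutation or matrix representation of $\He$ in GAP, the precise subgroup $Q$ of order $21$ generated by a $3B$-element and a $7A$-element (checking it is cyclic and that the generators have the asserted conjugacy classes), then compute $C_{\He}(Q)$, $C_{\Aut(\He)}(Q)$, and the fixed subposet $\Ap(\He)^Q$ reliably. The delicate point is ensuring that the $C_7$ and $C_3$ factors are chosen so that their product really is cyclic of order $21$ (i.e.\ they commute), since not every $3B$-element commutes with a given $7A$-element; the existence of such a commuting pair follows from the structure of $N_{\He}(\gen{7A})$ or of $C_{\He}(3B)$ as read off from the GLS tables. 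Once $Q$ is correctly identified, the verification of $\R_0(2)$(1)--(4) is a routine finite computation, and condition~(5) of $\R(2)$ is automatic; this completes the proof, and hence (via Proposition~\ref{propSporadicComponents} and Proposition~\ref{propRobinson}) the treatment of $L = \He$ in the case $p=2$.
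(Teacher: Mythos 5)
Your proposal is correct in structure and verifies the right conditions, but the computational route you propose for condition~(3) differs from the paper's in a way worth noting. You plan to compute $\A_2(\He)^Q$ directly and evaluate its reduced Euler characteristic mod~$3$; the paper instead passes through the Bouc poset, using the $L$-equivariant homotopy equivalence $\A_2(L) \simeq \B_2(L)$ from Proposition~\ref{homotopyEquivalenPosets} to replace $\A_2(L)^Q$ with $\B_2(L)^Q$. The latter turns out to consist of exactly two (non-conjugate) $2$-radical subgroups, both four-groups generated by $2A$-involutions, so $\tilde\chi = 2 - 1 = +1 \not\equiv 0 \pmod 3$ is read off immediately. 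Your direct computation of $\A_2(\He)^Q$ would give the same answer (the fixed posets are homotopy equivalent), but that poset is substantially larger and the count less transparent, so the Bouc-poset reduction is a genuine simplification for both the GAP check and the exposition. On condition~(4), the paper records the sharper fact $C_{\Aut(\He)}(Q) = Q$ (not merely that the centralizer is a $2'$-group), which settles~(4) instantly; your weaker claim still suffices but leaves more to verify. One caveat on your coprime-action heuristic: from $C_E(Q)=1$ you correctly get $E=[E,Q]$, but the subsequent step ``$E \leq$ the $2$-part controlled by $C_L(O_3(Q))$'' does not follow — $[E,Q]=E$ places no such constraint on $E$. This does not break your argument since you ultimately defer to the GAP computation, but it should not be presented as a deduction. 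Your observation that one must choose commuting $3B$- and $7A$-elements so that $\gen{3B,7A}$ is genuinely cyclic of order $21$ is a real point of care in setting up the computation, though in the paper the existence of such a $Q$ is taken as part of the hypothesis.
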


\begin{proof}
Clearly $Q$ is~$q$-elementary for $q = 3$, giving~(1) and~(2) of Property~$\R_0(2)$.
And~$C_{\Aut(L)}(Q) = Q$, giving~$\A_2 \bigl( C_{\Aut(L)}(Q) \bigr) = \emptyset$, for~(4) there.
Further~$\B_2(L)^Q$ consists of exactly two non-conjugate elements~$E_1$ and~$E_2$.%
\footnote{
  Both are isomorphic to~$C_2 \times C_2$, and are generated by involutions of type~$2A$, with~$E_1 \cap E_2 = 1$;
  and~$E_1$ is~$L$-conjugate to~$O_2 \bigl( C_L(2A) \bigr)$, with~$\gen{E_1,E_2} \groupiso \PSL_3(2)$.
          }
Since the equivalence of~$\A_2(L)$ with~$\B_2(L)$ in Proposition~\ref{homotopyEquivalenPosets}
is~$L$-equivariant, we get the value of~$+1$
for the reduced Euler characteristic in~(3) of~$\R_0(2)$. 
So as $q = 3$, we conclude that $Q$ satisfies the requirements of Property~$\R(2)$.
\end{proof}

\bigskip

We now make an analysis for alternating components,
similar in spirit to Proposition~\ref{propSporadicComponents} above for sporadic components.

Note that the alternating components~$\Alt_5 \cong \PSL_2(2^2)$ and~$\Alt_6 \cong \Symplectic_4(2)'$
are eliminated from a minimal counterexample in view of Theorem~\ref{thm:Liep(H1)}.
So we show that~$\Alt_n$, with~$n \geq 7$, satisfies the Robinson property~$\R(2)$:

%For the component~$\Alt_6$, we have the following result from~\cite{PS}:
%
%\begin{proposition}
%[{Cf. \cite[Corollary 8.1]{PS}}]
%\label{alt6component}
%Let~$p = 2$.
%Then $\A_2(\Alt_6) \to \A_2(\Aut \bigl( \Alt_6) \bigr)$ is not the zero map in homology.
%Hence, if~$L \groupiso \Alt_6$ is a component of~$G$, and $C_G(\hat{L})$ satisfies~(H-QC), then~$G$ satisfies (H-QC).
%
%In particular, if~$G$ is a minimal counterexample to~(H-QC), $\Alt_6$ is not a component of~$G$.
%\end{proposition}

\begin{proposition}
\label{prof:R(2)forAltngeq7}
Let~$L$ be an alternating group~$\Alt_n$ with~$n \geq 7$.
Then $L$ satisfies Property~$\R(2)$.
\end{proposition}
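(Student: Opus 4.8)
The plan is to verify Property $\R(2)$ directly, taking $q=3$ in every case (so condition~(5) is automatic), and using $\Aut(\Alt_n)=\Sym_n$ for $n\ge 7$. I would split according to the residue of $n$ modulo $3$, since the two criteria of Lemma~\ref{lemmaRobinsonCriterion} behave quite differently in the three cases.

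For $n\equiv 0\pmod 3$ I would apply Lemma~\ref{lemmaRobinsonCriterion}(2) with $Q_L$ a Sylow $3$-subgroup of $\Alt_n$. Here $m_3(\Alt_n)=\lfloor n/3\rfloor$, and the point is to show $m_{2,3}(\Sym_n)<\lfloor n/3\rfloor$. If $E\in\A_2(\Sym_n)$ has $f$ fixed points then $m_3(N_{\Sym_n}(E))\le m_3(\text{normalizer of }E\text{ on the }n-f\text{ moved points})+\lfloor f/3\rfloor$, and a Legendre/Kummer estimate on $3$-adic valuations of factorials — using that, since $3\mid n$, at least two of any four consecutive integers up to $n$ are divisible by $3$ — shows the contribution from the moved points can never make up the deficit $v_3(n!)-v_3(f!)$. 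Thus every elementary abelian $2$-local subgroup of $\Sym_n$ has $3$-rank strictly below that of a Sylow $3$-subgroup, and Lemma~\ref{lemmaRobinsonCriterion}(2) gives $\R(2)$.

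For $n\equiv 1\pmod 3$ and $n\equiv 2\pmod 3$ both criteria of Lemma~\ref{lemmaRobinsonCriterion} fail (the Klein four-group on four points already produces a $2$-local subgroup of full $3$-rank), so one must build $Q_L$ by hand. I would take $Q_L$ a Sylow $3$-subgroup of $\Alt_n$ when $n\equiv 1\pmod 3$ (it has a single fixed point), and $Q_L=\langle\sigma\rangle\times P$ when $n\equiv 2\pmod 3$, where $\sigma$ is a $5$-cycle and $P$ is a Sylow $3$-subgroup of $\Alt_{n-5}$ acting on the complementary $n-5\equiv 0\pmod 3$ points; the smallest of these, $n=8$, would be handled separately with $Q_L=\langle(1\,2\,3\,4\,5)\rangle\times\langle(6\,7\,8)\rangle\cong C_{15}$. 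In all cases conditions~(1),(2) are immediate, and for~(4) one computes $C_{\Sym_n}(Q_L)$: it is a $3$-group when $3\mid n$ — a standard fact about centralizers of Sylow $3$-subgroups of symmetric groups, provable via iterated wreath products (no block-swap centralizes the Sylow subgroup) — it is $C_3\times\Sym_1$ when $n\equiv 1$, and $\langle\sigma\rangle\times(3\text{-group})$ when $n\equiv 2$; in each case it contains no involution, so $\Ap(C_{\Aut(L)}(Q_L))=\emptyset\subseteq\Ap(L)$ and~(4) holds.

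This reduces everything to condition~(3), $\tilde\chi\bigl(\Ap(\Alt_n)^{Q_L}\bigr)\not\equiv 0\pmod 3$, which is the main obstacle. The plan is to exploit the fixed-point principle that for a $q$-group $Q$ acting on a finite poset $X$ one has $\tilde\chi(X)\equiv\tilde\chi(X^Q)\pmod q$ (the fixed subcomplex of $\K(X)$ being $\K(X^Q)$, since $Q$ preserves the chain order): for $n\equiv 1\pmod 3$, $Q_L$ is a $3$-group so~(3) is equivalent to $\tilde\chi(\Ap(\Alt_n))\not\equiv 0\pmod 3$, while for $n\equiv 2\pmod 3$ applying this to $O_3(Q_L)$ reduces~(3) to $\tilde\chi\bigl(\Ap(\Alt_n)^{\langle\sigma\rangle}\bigr)\not\equiv 0\pmod 3$. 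These residual Euler-characteristic congruences would then be established by analyzing the relevant $2$-subgroup posets: describing the (few) conjugacy classes of $Q_L$-invariant nontrivial elementary abelian $2$-subgroups of $\Alt_n$ inside the explicit group $N_{\Sym_n}(Q_L)$, showing the fixed-point poset is a disjoint union of contractible pieces (a discrete set in the small cases), and counting those pieces modulo $3$; the base cases $n=7,8,10,11,13,14$ are verified by hand or with GAP (as in Lemma~\ref{lemmaRobinsonHE}), with a uniform argument for larger $n$. The genuinely delicate point — and the one I expect to require the most care — is this modulo-$3$ enumeration of the fixed-point poset, precisely because, in contrast to the case $n\equiv 0\pmod 3$, the large $3$-group $Q_L$ really does normalize nontrivial elementary abelian $2$-subgroups of $\Alt_n$.
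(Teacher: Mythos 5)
Your choice of $Q_L$ diverges from the paper's in the two residue classes $n\not\equiv 0\pmod 3$, and that is exactly where the proposal has a genuine gap: condition~(3) is not established there. Taking $Q_L$ to be a full Sylow $3$-subgroup (resp.\ a $5$-cycle times a Sylow $3$-subgroup of $\Alt_{n-5}$) reduces~(3), as you say, to $\tilde{\chi}\bigl(\A_2(\Alt_n)\bigr)\not\equiv 0\pmod 3$ (resp.\ $\tilde{\chi}\bigl(\A_2(\Alt_n)^{\langle\sigma\rangle}\bigr)\not\equiv 0\pmod 3$). These are strong global statements about large, genuinely non-empty fixed-point posets --- already for $n=7$ a Sylow $3$-subgroup normalizes the Klein four-group $O_2(\Alt_4)$, so emptiness fails --- and your proposal offers only a promissory note ("a uniform argument for larger $n$") in their place. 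Worse, if $\tilde{\chi}\bigl(\A_2(\Alt_n)\bigr)\equiv 0\pmod 3$ for some $n\equiv 1\pmod 3$, no Sylow $3$-subgroup can ever work, since they are all conjugate; so the approach is not merely incomplete but potentially unrepairable as stated. The paper avoids this entirely by a different choice of $Q$: it always includes a \emph{long cycle} ($(4\,5\ldots n)$ when $n$ is even, an $n$-cycle when $2\nmid n$ and $3\mid n$, and $(7\ldots n)$ otherwise), so that every $Q$-orbit on $\{1,\ldots,n\}$ has odd size. Then the parity Lemma~\ref{lemmaAK} (a nontrivial $2$-group normalized by $Q$ and acting nontrivially on a $Q$-orbit forces that orbit to have even size) yields the strong condition~(3+), $\A_2(\Sym_n)^Q=\emptyset$, which gives~(3) and~(4) for free; the only surviving computation is the tiny case $n=7$, where the fixed-point poset is two points. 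This long-cycle device is the key idea your proposal is missing.

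Two secondary points. First, for $n\equiv 0\pmod 3$ your appeal to Lemma~\ref{lemmaRobinsonCriterion}(2) is plausible (a rank-$(n/3)$ elementary abelian $3$-subgroup of $\Sym_n$ is necessarily a product of $n/3$ disjoint $3$-cycle groups, hence self-centralizing, and one must then rule out $2$-groups it normalizes), but your justification conflates the $3$-rank $m_3(\Sym_m)=\lfloor m/3\rfloor$ with the $3$-adic valuation $v_3(m!)$; the Legendre-type estimate you invoke bounds the wrong invariant, and the actual inequality $m_3\bigl(N_{\Sym_m}(E)\bigr)<m/3$ for fixed-point-free $E$ still requires an orbit analysis you do not supply. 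Second, your condition~(4) verifications are fine, and the congruence $\tilde{\chi}(X)\equiv\tilde{\chi}(X^Q)\pmod q$ for a $q$-group $Q$ is legitimate; the failure is concentrated entirely in the unperformed mod-$3$ enumeration that your choice of $Q_L$ makes unavoidable and the paper's choice makes vacuous.
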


\begin{proof}
Recall that we have~$\Aut(\Alt_n) = \Sym_n$,  since~$n \geq 7$.
In particular, $\Outposet_{\Sym_n}(\Alt_n)$ consists only of elements of order $2$, which in particular are minimal.
So we see that~$\Sym_n = \Alt_n \cdot B$, for all~$B \in \Outposet_{\Sym_n}(\Alt_n)$.

%Let $\tau$ be the involution $(1\, 2)\in \Sym_n$ and put $E = \langle \tau\rangle$.
%Then $E\in\Outposet_{\Sym_n}(\Alt_n)$ and
%\[C_{\Alt_n}(E)\groupiso \Sym_{n-2}\]
%by \cite[Proposition 5.2.8(a)]{GLS98}.
%Therefore $\Sym_n = \Alt_n E$ and $O_2(C_{\Alt_n}(E)) = 1$.
%Moreover, by Lemma \ref{lemmaPropertyNC}, we get condition (3) of Property $\NC$.
%This shows that $\Alt_n$ satisfies Property $\NC$ if $n\geq 7$.

Now we establish Property~$\R(2)$ for~$\Alt_n$ for our value of~$n \geq 7$.
We let~$Q$ denote the following subgroup of~$\Alt_n$:
  \[Q =\begin{cases}
   \langle (1\, 2\, 3) ,(4\, 5\, \ldots n) \rangle        & 2\mid n\\
   \langle (1\, 2\, \ldots n) \rangle                     & 2\nmid n, 3\mid n\\
   \langle (1\, 2\, 3) ,(4\, 5\,6),(7\, \ldots n) \rangle & 2\nmid n, 3\nmid n .
       \end{cases} \]
Note that in each case $Q$ is a~$3$-elementary $2'$-subgroup of~$\Alt_n$ with $O_3(Q) >   1$,
so we get conditions~(1), (2),  and~(5) of Property~$\R(2)$ for this~$Q \leq \Alt_n$.
So it remains to prove conditions~(3) and~(4) for each case of~$Q$.

\vspace{0.2cm}
\textbf{Case 1.} $2 \mid n$.

\noindent
This case is done in~\cite[p 212]{AK90}, below their Lemma~3, where it is shown that~$\A_2(\Sym_n)^Q = \emptyset$
(including for $n = 6$).
Therefore we have~(3+), which we saw suffices to give conditions~(3) and~(4) of Property~$\R(2)$ for this~$Q$.
This finishes the proof of Case~1.

%Let $x = (1\, 2\, 3)$ and $y = (4\, 5 \, \ldots n)$.
%Note that $Q$ acts on $\Fix(E)\neq \{1,\ldots,n\}$, so it is a union of $Q$-orbits, which are $\{1,2,3\}$ and $\{4,5,\ldots,n\}$.
%Now, if $E$ fixes $\{1,2,3\}$, then, since $E\neq 1$, we see that $E$ acts non-trivially on the $Q$-orbit $\{4,5,\ldots,n\}$ with $n-3$ elements (odd).
%This contradicts Lemma \ref{lemmaAK}.
%Hence $\Fix(E) = \{4,5,\ldots,n\}$ or $\emptyset$.
%
%If $\Fix(E) =\{4,5,\ldots,n\}$ then $E\leq \Sym_4$ and $x\in \Sym_4$ normalizes $E$.
%This implies that $E = O_2(\Sym_4)$, which is not normalized by $y$ (note that $y\neq 1$).
%Therefore $\Fix(E) = \emptyset$.

\vspace{0.2cm}
\textbf{Case 2.} $2 \nmid n$, $3 \mid n$.

\noindent
Note that~$Q$ is transitive on~$X := \{ 1 , \ldots , n \}$, which has an odd number of elements by hypothesis.
If there is any~$E \in \A_2(\Sym_n)$, then~$E$ acts non-trivially on~$X$;
hence Lemma~\ref{lemmaAK} below would show~$n$ is even, contrary to the hypothesis of this case. 
This contradiction shows that~$\A_2(\Sym_n)^Q = \emptyset$.
So again we have~(3+), finishing the proof of Case~2.

\vspace{0.2cm}
\textbf{Case 3a.} $2 \nmid n$, $3 \nmid n$, $n > 7$ (so~$n \geq 11$).

\noindent
Suppose that there is some~$E \in \A_2(\Sym_n)^Q$.
Write~$x := (1 \, 2 \, 3)$, $y = (4 \, 5 \, 6)$, and~$z := (7 \, 8 \, \ldots \, n)$, with~$Q := \Span{x,y,z}$.
Note that both~$C_E(x)$ and~$C_E(y)$ are~$Q$-invariant.

\vspace{0.2cm}
\textbf{Claim.} $C_E(x) = 1 = C_E(y)$; in particular, $E$ is not trivial on~$\{ 1,2,3 \}$ or~$\{ 4,5,6 \}$:

\noindent
Note that~$C_E(x) \leq C_{\Sym_n}(x) = \Span{x} \times \Sym_{ \{ 4,5,\ldots,n \} }$ by~\cite[Proposition~5.2.6]{GLS98}.
Now since~$E$ is a~$2$-group, we conclude that~$C_E(x) \leq \Sym_{ \{ 4,5,\ldots,n \} }$.
But also note that~$y,z \in \Sym_{ \{ 4,5,\ldots,n \} }$, so~$C_E(x) \in \A_2(\Sym_{ \{ 4,5,\ldots,n \} })^{\Span{y,z}} \cup \{ 1 \}$.
By Case~1 (with~$n-3 \geq 8$ in the role of~``$n$''),
we see that we have~$\A_2(\Sym_{ \{ 4,5,\ldots,n \} })^{\Span{y,z}} = \emptyset$:
so that we must have~$C_E(x) = 1$.

Since~$y$ is also a~$3$-cycle, an analogous argument shows that~$C_E(y) = 1$.
This finishes the proof of the Claim.

\smallskip

%The above claim shows that~$E = [E,x] = [E,y]$ using coprime action.
Now since~$n$ is odd with~$E$ a~$2$-group, we see that~$\Fix(E) \neq \emptyset$; and~$Q$ acts on~$\Fix(E)$.
Recall that the~$Q$-orbits of the action of~$Q$ on~$\{ 1,\ldots,n \}$ are~$\{ 1,2,3 \}$, $\{ 4,5,6 \}$, and~$\{ 7,8,\ldots,n \}$.
Further~$\Fix(E)$ is a non-empty union of~$Q$-orbits.
On the other hand, the above Claim shows that~$E$ cannot be trivial on~$\{ 1,2,3 \}$ or~$\{ 4,5,6 \}$.
This forces~$\Fix(E) = \{ 7,8,\ldots,n \}$; and so~$E \leq \Sym_6$, that is, $E \in \A_2(\Sym_6)^{\Span{x,y}}$.
But this is Case~1 again, and we saw there that also for~$n = 6$, we get~$\A_2(\Sym_6)^{\Span{x,y}} = \emptyset$;
contrary to our choice of~$E$ just before the Claim.
This contradiction shows that~$\A_2(\Sym_n)^Q = \emptyset$.
In particular we again get condition~(3+),  and hence Property~$\R(2)$ for this~$Q$.
This concludes the proof of Case~3a.

\vspace{0.2cm}
\textbf{Case 3b.} $n = 7$.

\noindent
It follows by direct computation that~$\A_2(\Sym_7)^Q = \A_2(\Alt_7)^Q$ is discrete with two points.
In particular, $\A_2(C_{\Sym_n}(Q)) \subseteq \A_2(\Alt_7)$ and~$\tilde{\chi} \bigl( \A_2(\Alt_7)^Q \bigr) = 1 \not\equiv 0 \pmod{3}$.
Hence $Q$ satisfies conditions~(4) and~(3) of Property $\R(2)$ for~$L = \Alt_7$.
This finishes the proof of Case~3b, and hence of Case~3.

We have shown in all cases that~$\Alt_n$ for~$n \geq 7$ satisfies Property~$\R(2)$.
\end{proof}

During the proof,
we used the following easy consequence of nontrivial action under a~$p$-group:

\begin{lemma}[Cf.~Lemma~3 of~\cite{AK90}]
\label{lemmaAK}
Assume that~$Q \leq G$ and that~$E \in \Ap(G)^Q$ acts non-trivially on some~$Q$-orbit~$\mathcal{O}$.
Then~$p \mid |\mathcal{O}|$.
\hfill $\Box$
\end{lemma}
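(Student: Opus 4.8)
\textbf{Plan for proving Lemma~\ref{lemmaAK}.}
The statement is the elementary fact that if $Q \leq G$, and $E \in \Ap(G)^Q$ acts non-trivially on some $Q$-orbit $\mathcal{O}$, then $p \mid |\mathcal{O}|$. The plan is to argue via the action of the $p$-group $E$ on the finite set $\mathcal{O}$, exploiting that $E$ normalizes $Q$ (since $E \in \Ap(G)^Q$ means $Q$ normalizes $E$, and as both are subgroups in $G$ with $Q$ acting on $E$, the subgroup $\langle E,Q\rangle$ makes $E$ act on the $Q$-set $\mathcal{O}$ in the required compatible way).

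\textbf{Key steps.}
First I would set up the combinatorial framework: $Q$ acts on the set $X$ on which $G$ acts (in the application $X = \{1,\dots,n\}$), $\mathcal{O}$ is a $Q$-orbit in $X$, and $E$ is an elementary abelian $p$-subgroup normalized by $Q$. Since $Q$ normalizes $E$, the group $E$ permutes the set of $Q$-orbits: for $e \in E$ and $x \in \mathcal{O}$, we have $e \cdot (Q x) = (eQe^{-1})(ex) = Q(ex)$, so $e$ sends the $Q$-orbit $\mathcal{O}$ to the $Q$-orbit of $ex$; and since $E$ acts on $X$ preserving $Q$-orbits, $E$ in fact permutes the points \emph{within} a union of $Q$-orbits. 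Restricting attention to the single orbit $\mathcal{O}$: because $E$ is assumed to act non-trivially on $\mathcal{O}$ and $\mathcal{O}$ is a single $Q$-orbit, I want to conclude $p \mid |\mathcal{O}|$. Second step: suppose for contradiction that $p \nmid |\mathcal{O}|$. Then the $p$-group $E$, acting on the set $\mathcal{O}$ of size coprime to $p$, must have a fixed point $x_0 \in \mathcal{O}$ (standard orbit-counting: orbit sizes are powers of $p$, they sum to $|\mathcal{O}|$, so not all can be divisible by $p$). Third step: now use that $Q$ is transitive on $\mathcal{O}$ together with the normalization relation. For any $x \in \mathcal{O}$, write $x = q x_0$ for some $q \in Q$. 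Then for $e \in E$, $e x = e q x_0 = q (q^{-1} e q) x_0 = q e' x_0$ where $e' = q^{-1}eq \in E$ since $Q$ normalizes $E$; and $e' x_0 = x_0$ since $x_0$ is fixed by all of $E$. Hence $ex = q x_0 = x$, so $E$ fixes every point of $\mathcal{O}$, contradicting the hypothesis that $E$ acts non-trivially on $\mathcal{O}$. Therefore $p \mid |\mathcal{O}|$, as claimed.

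\textbf{Main obstacle.}
There is no real obstacle here: the only point requiring care is the compatibility bookkeeping in the third step, namely that ``$Q$ normalizes $E$'' lets us transport an $E$-fixed point along $Q$ to get that \emph{every} point of the $Q$-orbit is $E$-fixed. This is a one-line conjugation argument but must be written cleanly. One should also note explicitly that the conclusion is exactly the contrapositive of: ``if $E$ moves some point of the $Q$-orbit $\mathcal{O}$ and $|\mathcal{O}|$ is coprime to $p$, then $E$ has a global fixed point on $\mathcal{O}$, hence (by transitivity of $Q$) fixes all of $\mathcal{O}$'' --- so the proof is genuinely just the fixed-point lemma for $p$-groups plus transitivity. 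I would present it in three or four sentences, citing the standard fact that a $p$-group acting on a set of size prime to $p$ fixes a point.
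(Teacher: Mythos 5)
The paper gives no proof of this lemma (it is stated with a terminal $\Box$, citing Lemma~3 of \cite{AK90}), so there is no in-paper argument to compare against. Your steps (2) and (3) are exactly the expected argument: the fixed-point congruence $|\mathcal{O}| \equiv |\Fix_{\mathcal{O}}(E)| \pmod p$ for the $p$-group $E$ acting on $\mathcal{O}$, combined with the observation that $Q \leq N_G(E)$ makes $\Fix(E)$ a $Q$-invariant set, so that $\Fix_{\mathcal{O}}(E)$ is either empty or all of $\mathcal{O}$; your conjugation computation $eqx_0 = q(q^{-1}eq)x_0 = qx_0$ is the correct way to use the normalization. That part is right.

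Your step (1), however, contains a genuine error. From $E \in \Ap(G)^Q$ you only know that $Q$ normalizes $E$; the identity $e\cdot(Qx) = (eQe^{-1})(ex) = Q(ex)$ needs the \emph{opposite} normalization $eQe^{-1} = Q$, which is not a hypothesis. (What $E \trianglelefteq QE$ actually gives is that $Q$ permutes the $E$-orbits, not that $E$ permutes the $Q$-orbits.) Worse, the fact you are trying to establish there --- that $E$ stabilizes $\mathcal{O}$ setwise, which is precisely what your fixed-point count in step (2) requires --- cannot be deduced from the stated hypotheses at all: in $\Sym_6$ with $p=2$, take $Q = \langle (1\,3\,5)(2\,4\,6)\rangle$ and $E = \langle (1\,2)(3\,4)(5\,6)\rangle$; then $Q$ centralizes $E$, the $Q$-orbit $\mathcal{O} = \{1,3,5\}$ has odd size, yet $E$ moves every point of $\mathcal{O}$ (into the other $Q$-orbit). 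So the phrase ``$E$ acts non-trivially on $\mathcal{O}$'' must be read as presupposing that $E$ preserves $\mathcal{O}$ and that the induced action is non-trivial --- which is how the lemma is invoked in the paper, where $\mathcal{O}$ is the full permutation domain or visibly $E$-invariant. With that reading your step (1) should simply be deleted as both unnecessary and incorrect; the remainder of your proof then stands as written. Your closing ``contrapositive'' remark should be adjusted in the same way, since ``$E$ moves some point of $\mathcal{O}$'' is strictly weaker than ``$E$ acts non-trivially on $\mathcal{O}$'' and, by the example above, does not suffice.
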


We combine the results above on the alternating-component case in the following corollary.

\begin{corollary}
\label{coroAlternatinComponents}
Let~$p = 2$, and suppose that~$G$ satisfies~(H1).
Suppose in addition that~$L$ is a component of~$G$, such that~$L/Z(L) \groupiso \Alt_n$.

\begin{enumerate}
\item If~$n = 5$ or~$6$, then~$G$ satisfies~(H-QC).
\item If~$n \geq 7$, then~$L/Z(L)$ satisfies Property $\R(2)$.
\end{enumerate}
\hfill $\Box$
\end{corollary}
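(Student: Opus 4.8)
\textbf{Proof plan for Corollary~\ref{coroAlternatinComponents}.}
The statement is a routine packaging of the two preceding Propositions together with the general-reduction and Robinson machinery already developed, so the plan is mostly to assemble existing pieces in the right order. First I would dispose of the hypothesis that the component is quasisimple rather than simple: since $L/Z(L) \groupiso \Alt_n$, and we are assuming~(H1), I can invoke Theorem~\ref{generalReduction}(1) to reduce to the case $Z(L) = 1$ (if $Z(G) \neq 1$ or $\Omega_1(G) < G$ we are done), so that $L$ itself is $\Alt_n$; strictly, what I really need is only that the relevant propagation/Robinson input is stated for the simple quotient, which is how Theorem~\ref{PStheorem} and Proposition~\ref{propRobinson} are phrased anyway. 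Alternatively—and this is cleaner—I would note that for $n \geq 7$ the Schur multiplier of $\Alt_n$ is a $2$-group unless $n = 6,7$, but in any case part~(2) only asserts that $L/Z(L)$ has Property~$\R(2)$, which is a statement purely about the simple group, so no reduction is needed there at all.

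For part~(1), with $n = 5$ or $6$: here $\Alt_5 \groupiso \PSL_2(2^2)$ and $\Alt_6 \groupiso \Symplectic_4(2)'$, so $L/Z(L)$ is simple of Lie type in characteristic $p = 2$ in the wider CFSG-sense. I would then apply Theorem~\ref{thm:Liep(H1)}: since $G$ satisfies~(H1), and neither $\PSL_2(4)$ nor $\Symplectic_4(2)'$ is of one of the excluded untwisted Lie-types $\PSL_n(2^m)\ (n\geq 3)$, $D_n(p^m)\ (n\geq 4)$, $E_6(2^m)$ with a graph or graph-field outer $p$-element (indeed $\PSL_2(4)$ has Lie rank $1$, hence satisfies~($p$-$\Phi$) by Remark~\ref{rk:casesPhiGamma}(0), and $\Symplectic_4(2)'$ is handled explicitly inside the proof of Theorem~\ref{thm:Liep(H1)} via Corollary~8.1 of~\cite{PS}), that Theorem yields~(H-QC) for $G$. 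This is exactly the elimination recorded in the sentence just before Proposition~\ref{prof:R(2)forAltngeq7}, so part~(1) is immediate.

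For part~(2), with $n \geq 7$: this is nothing more than Proposition~\ref{prof:R(2)forAltngeq7} applied to $L/Z(L) \groupiso \Alt_n$, which states precisely that $\Alt_n$ has Property~$\R(2)$ for all $n \geq 7$. So the proof of~(2) is a one-line citation.

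\textbf{Anticipated obstacle.} There is essentially no mathematical obstacle here—the Corollary is a bookkeeping statement. The only point requiring a modicum of care is making sure the quasisimple-versus-simple distinction is handled consistently: in part~(1) I must be careful that Theorem~\ref{thm:Liep(H1)} is genuinely applicable when $L$ is a quasisimple component with simple quotient $\Alt_5$ or $\Alt_6$ (the statement of that Theorem is phrased for ``a component $L$ which is simple of Lie-type \ldots or one of the four commutator groups,'' so I would first reduce via Theorem~\ref{generalReduction}(1) to $Z(G) = Z(E(G)) = 1$, whence the component is simple, exactly as is done in the proof of Theorem~\ref{proofQCOdd}). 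Once that is in place, both parts follow by direct quotation, and the write-up should be only a few sentences long.
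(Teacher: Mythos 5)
Your proposal is correct and follows essentially the same route as the paper: part~(2) is a direct citation of Proposition~\ref{prof:R(2)forAltngeq7}, and part~(1) uses the isomorphisms $\Alt_5 \cong \PSL_2(2^2)$, $\Alt_6 \cong \Symplectic_4(2)'$ together with Theorem~\ref{thm:Liep(H1)}, exactly as the paper records in the sentence just before that Proposition. One small correction to your ``obstacle'' paragraph: to pass from $L/Z(L) \cong \Alt_n$ to $L$ simple, what you actually want is Theorem~\ref{generalReduction}(2) (the reduction to $O_{p'}(G)=1$), not part~(1); under $O_p(G)=1=O_{p'}(G)$ one gets $F(G)=1$, hence $Z(E(G))=1$ and all components simple via Lemma~\ref{lemmaOpandp}(1), whereas part~(1) of that Theorem only addresses $Z(G)\neq 1$ or $\Omega_1(G)<G$ and does not by itself force $Z(L)=1$.
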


We can now prove Theorem~\ref{mainTheoremEvenCase}; the elimination-parts~(1)--(3) are established as follows: 
Note that from Theorem~\ref{thm:Liep(H1)}
and the list~(\ref{casesHP}) in Proposition~\ref{propSporadicComponents},
we get conclusion~(1) there.
Then Theorem~\ref{QDpReductionPS} gives conclusion~(2); and Theorem~\ref{thm:Liep(H1)} gives conclusion~(3).

In proving conclusion~(4),
we may as well assume we have already eliminated components~$L$ described in (1)--(3). 
Then all the remaining components~$L$ of~$G$ satisfy~$\R(2)$:
the alternating components by Proposition~\ref{prof:R(2)forAltngeq7};
the sporadic components via the ``remaining-$9$''-list~(\ref{casesRobinsonNC}) in Proposition~\ref{propSporadicComponents};
and the Lie-type components, which must be defined in characteristic~$3$ by the hypothesis of~(4),
by Proposition~\ref{propRobinsonEvenChar}.
Then the standard Robinson-type argument on~$\R(2)$
in Proposition~\ref{propRobinson} gives~(L-QC), and hence~(H-QC) as required.

\begin{remark}
The simple group $L = \PSL_3(2^2)$ is a potential candidate to fail most arguments that we describe here, for the case $p = 2$.

\begin{itemize}
\item This simple group satisfies (sLie-$p$), but corresponds to one of groups in the exclusion list of Theorem \ref{thm:Liep(H1)}: There exist $2$-outers $B\in \Outposet_{\Aut(L)}(L)$ of $2$-rank $2$, generated by graph and field automorphisms.
Hence $\Outposet_G(L)$ may contain non-cyclic $2$-outers if $L$ is a component of $G$ (see also the discussion in Remark \ref{rk:fullCases}.)

\item We have $m_2(L) = 4$ and $\A_2(L)\simeq \B_2(L)$ has the homotopy type of a bouquet of $1$-spheres.
This shows that $\QD_2$ fails for $L$.
Thus Theorem \ref{QDpReductionPS} cannot be applied for $L$.

\item It can also be proved that $\A_2(\Aut(L))$ has the homotopy type of a bouquet of $2$-spheres, so the map $\A_2(L) \to \A_2(\Aut(L))$ is the zero map in homology.
This shows that we cannot apply Theorem \ref{PStheorem} in general.

\item Finally, we see that we cannot produce Robinson subgroups for $q=3$.
That is, $\PSL_3(2^2)$ fails $\R(2)$.
Indeed, it fails $\R_0(2)$ for any possible choice of $q$: Since $|\PSL_3(4)| = 20160 = 2^6\cdot 3^2\cdot 5 \cdot 7$, here $q$ could be $3,5,7$.
In any case, for a nontrivial $q$-subgroup $Q$ of $L$, we have that $C_L(Q)$ is a Sylow $q$-subgroup of $L$, and $C_{\Aut(L)}(Q)$ contains $2$-outers of $L$.
So $\A_2(C_{\Aut(L)}(Q))$ is not included in $\A_2(L)$.
\end{itemize}

The pathological behavior of this simple group shows that, as a component of a finite group, this cannot be treated by using some of the theorems described in this article.
In conclusion, further elimination results will be necessary to study the case $p=2$ of the conjecture, if we attach to the usual strategy (elim)/(Rob-nonelim).

This behavior of $\PSL_3(2^2)$ is closely related to the fact that $\Aut(\PSL_3(2^2))$ satisfies the Robinson Property (R2), as defined in \cite{AK90} (Cf. Theorem 2 there.)
\end{remark}

\bigskip
\bigskip

\appendix

\bigskip
\bigskip

\part{Appendix: some more technical background results}

\section{An extended form of Quillen's fiber-Theorem}
\label{sec:appendixAfiber}

We had stated Quillen's fiber-Theorem in earlier Theorem~\ref{variantQuillenFiber}.
Below we recall generalized version, and outline a combinatorial proof.

Recall that an~$n$-equivalence is a continuous function~$f : X \to Y$
such that~$f$ induces isomorphisms in the homotopy groups~$\pi_i$ with~$i<n$, and an epimorphism in~$\pi_n$.
By the Hurewicz theorem, an~$n$-equivalence also induces isomorphisms in the homology groups of degree~$\leq n-1$,
and an epimorphism in degree~$n$.

\begin{proposition}[Extended Quillen Fiber Theorem]
\label{prop:QuilFiberConn}
Let~$f : X \to Y$ be a map between finite posets, and let~$n \geq 0$.

\begin{enumerate}
\item Suppose that for all~$y \in Y$, $f^{-1}(Y_{\leq y}) * Y_{>y}$ (resp.~$f^{-1}(Y_{\geq y}) * Y_{<y}$) is in fact~$(n-1)$-connected.
Then~$f$ is an~$n$-equivalence.
\item In addition, assume that $X,Y$ are $G$-posets for some group $G$, and that $f$ is a $G$-equivariant map of posets such that for all~$y \in Y$, $f^{-1}(Y_{\leq y}) * Y_{>y}$ (resp.~$f^{-1}(Y_{\geq y}) * Y_{<y}$) is $G_y$-contractible.
Then $f$ is a $G$-homotopy equivalence.
\end{enumerate}

\end{proposition}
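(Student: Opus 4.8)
\textbf{Plan of proof for Proposition~\ref{prop:QuilFiberConn}.}

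The plan is to treat part~(1) by a standard homotopy-colimit argument, and then deduce part~(2) by combining part~(1) with an equivariant version of the fiber theorem and the Equivariant Whitehead Theorem. For part~(1), I would fix the ``$\leq$'' version (the ``$\geq$'' version being entirely dual, via passing to opposite posets). The key tool is the Quillen homotopy-colimit / Projection Lemma philosophy: the order complex~$\K(X)$ can be recovered, up to homotopy, as a homotopy colimit over~$Y$ of the diagram sending~$y$ to~$\K\bigl(f^{-1}(Y_{\leq y})\bigr)$, while~$\K(Y)$ is the homotopy colimit of the diagram sending~$y$ to a point (equivalently to~$\K(Y_{\leq y})$, which is a cone hence contractible). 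The map~$f$ is then the map of homotopy colimits induced by the natural transformation of diagrams $\K\bigl(f^{-1}(Y_{\leq y})\bigr)\to \K(Y_{\leq y})\simeq *$. The hypothesis that each $f^{-1}(Y_{\leq y}) * Y_{>y}$ is $(n-1)$-connected is precisely what controls the relative behaviour of the fibres: more concretely, I would run the argument of~\cite[Prop~1.6]{Qui78} but keeping track of connectivity degrees. One clean way: for each simplex (chain) $\sigma = (y_0 < \cdots < y_k)$ of~$Y$, the relevant ``mapping cone'' contribution is governed by $f^{-1}(Y_{\leq y_0}) * Y_{> y_0}$ restricted appropriately, and an induction on the poset~$Y$ (using the decomposition of~$Y$ by removing a maximal element, together with a Mayer--Vietoris / van Kampen argument and the gluing lemma for $n$-equivalences) yields that $f$ induces isomorphisms on $\pi_i$ for $i < n$ and a surjection on $\pi_n$. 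The base case and inductive step both reduce to the statement that a join $A * B$ which is $(n-1)$-connected forces the inclusion-type maps in the relevant square to be $n$-equivalences, which is the standard connectivity-of-joins estimate (connectivity of $A*B$ is at least $\mathrm{conn}(A) + \mathrm{conn}(B) + 2$).

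For part~(2), I would argue as follows. First, part~(1) with $n$ arbitrarily large (each relevant join is $G_y$-contractible, hence $\infty$-connected, hence $(n-1)$-connected for every $n$) gives that $f : X \to Y$ is an ordinary homotopy equivalence. Next I must upgrade this to a $G$-homotopy equivalence, and here the plan is to invoke the Equivariant Whitehead Theorem: a $G$-map between $G$-CW complexes which restricts, for every subgroup $K \leq G$, to an ordinary homotopy equivalence $f^K : X^K \to Y^K$ on fixed-point sets, is automatically a $G$-homotopy equivalence. So it suffices to show that $f^K$ is a homotopy equivalence for each $K \leq G$. For this I would apply part~(1) (the non-equivariant statement already proved) to the map $f^K : X^K \to Y^K$ between the fixed-point posets: one checks that $(f^K)^{-1}\bigl((Y^K)_{\leq y}\bigr) * (Y^K)_{> y} = \bigl(f^{-1}(Y_{\leq y}) * Y_{> y}\bigr)^K$ for $y \in Y^K$ (using that taking $K$-fixed points commutes with the poset operations $Y_{\leq y}$, $Y_{>y}$, with preimages under the equivariant map $f$, and with joins — the last because $|X * Y|^K \cong |X|^K * |Y|^K$ for the simplicial join of $K$-posets), and then $K$-contractibility of $f^{-1}(Y_{\leq y}) * Y_{>y}$, which is part of the hypothesis when $y$ is fixed by $K \leq G_y$, gives contractibility — hence in particular $(n-1)$-connectedness for all $n$ — of the $K$-fixed join. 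Thus part~(1) applies and $f^K$ is a homotopy equivalence.

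The main obstacle I anticipate is the bookkeeping in part~(1): making the homotopy-colimit / induction argument precise enough to extract the exact connectivity bound ``$n$-equivalence'' rather than merely ``weak equivalence when all fibres are contractible.'' In practice this is handled by Quillen's original Theorem~A-style argument combined with the join-connectivity estimate, and one should cite the appropriate reference (e.g.~the generalized fiber theorem in the poset literature, or Björner's work on connectivity) rather than redo it in full. The equivariant deduction in part~(2) is comparatively routine once one notes the three commutation facts (fixed points commute with $Y_{\leq y}$, with preimages, and with simplicial joins), together with the Equivariant Whitehead Theorem; the only subtlety there is to make sure the $G_y$-contractibility hypothesis is applied with the correct stabilizer — namely for $y \in Y^K$ we have $K \leq G_y$, so $G_y$-contractibility restricts to $K$-contractibility as needed. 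I would close by remarking that part~(2) recovers the equivariant statement of Theorem~\ref{variantQuillenFiber}, and in the special case $X \subseteq Y$ (with $f$ the inclusion) that the hypothesis need only be checked for $y \in Y - X$, since for $y \in X$ the join $X_{\leq y} * Y_{>y}$ is already a cone.
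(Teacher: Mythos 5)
Your proposal is correct in substance but organizes both halves differently from the paper. For part~(1), the paper avoids homotopy colimits entirely: it forms the non-Hausdorff mapping cylinder~$B(f)$ (the poset on the disjoint union of~$X$ and~$Y$ with~$x < y$ iff~$f(x) \leq y$), notes that~$Y$ is a strong deformation retract of~$B(f)$ with~$r \circ i = f$, and then deletes the elements of~$Y$ from~$B(f)$ one at a time along a linear extension; the point is that the link of~$y_i$ at the moment of its removal is exactly~$f^{-1}(Y_{\leq y_i}) * Y_{>y_i}$, so each deletion is an~$n$-equivalence by a Mayer--Vietoris argument on the star/complement cover (Lemma~\ref{lm:pointRemovalNequivalence}). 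This makes the ``bookkeeping'' you worry about essentially trivial and keeps the proof self-contained, whereas your plan defers the connectivity-tracking to an external reference. For part~(2), your route --- ordinary equivalence from~(1), then the fixed-point maps~$f^K$ for each~$K \leq G$ plus the Equivariant Whitehead Theorem, using that fixed points commute with~$Y_{\leq y}$, with preimages and with joins, and that~$K \leq G_y$ for~$y \in Y^K$ --- is genuinely different from the paper's, which stays inside the cylinder, removes whole~$G$-orbits of~$Y$ compatibly with the order, passes to chain posets, and quotes the Th\'{e}venaz--Webb equivariant removal result~\cite{TW}. Both are valid; your version is the more standard one and is in fact the strategy the paper uses at the end of the proof of Proposition~\ref{propHomotEquivPosetReverseOrdering}, while the paper's orbit-removal keeps the argument combinatorial and avoids invoking Whitehead. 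One small imprecision in your part~(1): the inductive step does not reduce to the join-connectivity estimate $\mathrm{conn}(A*B) \geq \mathrm{conn}(A) + \mathrm{conn}(B) + 2$ --- the hypothesis directly asserts the~$(n-1)$-connectivity of the relevant join, and what is actually needed is the point-removal lemma above, not a lower bound computed from the two factors.
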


\begin{proof}
(Sketch)
We begin by showing item (1).
To that end, we consider the non-Hausdorff cylinder of the map $f$.
This is the finite poset~$B(f)$ whose underlying set is the disjoint union of~$X$ and~$Y$.
It keeps the given order in~$X$ and~$Y$; and if~$x \in X$ and~$y \in Y$, set~$x < y$ if~$f(x) \leq y$.
It can be shown that~$Y$ is a strong deformation retract of~$B(f)$,
via the map~$r : B(f) \to Y$ defined by~$r(x) = f(x)$ if~$x \in X$ (and the identity in~$Y$).
Moreover, if~$i : X\hookrightarrow B(f)$ is the inclusion map, then~$(r \circ i)(x) = f(x)$, so $ri = f$.
Since $r$ is a homotopy equivalence, we see that $f$ is an~$n$-equivalence if and only if the inclusion~$i : X \hookrightarrow B(f)$ is an~$n$-equivalence.

Let~$Y := \{ y_1 , \ldots , y_s \}$ be a linear extension of~$Y$, such that~$y_i \leq y_j$ implies~$i \leq j$.
Further set~$X_i := X \cup \{ y_i , \ldots , y_s \}$.
Hence~$X_0 = B(f)$ and~$X_{s+1} = X$.
Note that~$X \hookrightarrow B(f)$ is the composition of all the inclusions~$X_{i+1} \hookrightarrow X_i$.
We show that each inclusion is an~$n$-equivalence.
Note that:
  \[ X_i = X \cup \{ y_i , \ldots , y_s \} = X_{i+1} \cup \{ y_i \}, \]
  \[ {X_i}_{>y_i} = Y_{>y_i} \quad \text{ and } \quad {X_i}_{<y_i} = X_{<y_i} = f^{-1}(Y_{\leq y_i}). \]
Therefore the conclusion of item (1) follows from Lemma \ref{lm:pointRemovalNequivalence} below.

Next, we prove item (2).
Here we will use the fact that if $f,g:X\to Y$ are two equivariant maps between $G$-posets such that $f\leq g$, then $f$ and $g$ are equivariantly homotopic (see also the discussion in \cite[p 285]{KP19}).
Observe that the non-Hausdorff cylinder~$B(f)$ is naturally a~$G$-poset;
we have~$X \subseteq B(f)$ and~$r : B(f)\to Y$ is a~$G$-equivariant strong deformation retract (and hence a $G$-homotopy equivalence) since $r(z) \geq z$ for all $z\in B(f)$.
And by the equality $ri = f$, we see that it remains to show that the inclusion $i:X\hookrightarrow B(f)$ is a $G$-homotopy equivalence.

Now, instead of taking an arbitrary linear extension of~$Y$,
we decompose~$Y$ into a disjoint union of~$G$-orbits~$Y_1 , \dots , Y_m$,
such that if~$y \in Y_i$ and~$y' \in Y_j$ are such that~$y < y'$, then~$i<j$.
(Note that we can never have~$y<y'$ with~$i=j$.)
Let~$X_i = X \cup Y_i \cup \dots \cup Y_m$;
it remains to see that if we remove a whole orbit~$Y_i$,
then we get a~$G$-homotopy equivalence~$X_{i+1} \hookrightarrow X_i$.
Hence it is enough to establish the case~$B(f) = Z = X \cup Y_0$,
where~$Y_0$ is a~$G$-orbit, and for all~$y \in Y_0$ we have that~$\Lk_Z(y) = X_{<y} * X_{>y}$ is~$G_y$-contractible.

At this point, we pass through the posets of chains $X'$ and $Z'$.
Since the natural maps $X'\to X$ and $Z'\to Z$, which send a chain to its maximal element, are $G$-homotopy equivalences, we see that $i:X\hookrightarrow Z$ is a $G$-homotopy equivalence if and only if the inclusion $j:X' \hookrightarrow Z'$ is.
We show that the latter map is a $G$-homotopy equivalence.
To this aim, we prove that the maps $j_1:Z'-Y_0' \hookrightarrow Z'$ and $j_2:X' \hookrightarrow Z' - Y_0'$ are $G$-homotopy equivalences.
Since $j = j_1\circ j_2$, this will conclude the proof.

First, note that an element $s\in Y_0'$ is just a chain of the form~$s = s_y := ({y})$ for some $y \in Y_0$.
Then we see that $(Z'-Y_0')_{>s_y} = Z'_{>s_y} = \bigl( \Lk_Z(y) \bigr)'$ is~$G_y$-contractible by hypothesis.
Therefore $j_1$ is a $G$-homotopy equivalence by the Thevenaz-Webb result \cite[Thm 1]{TW}.

Second, we show that $j_2$ is a $G$-homotopy equivalence.
If $\tau \in Z'-Y_0'$, then $r(\tau) := \tau - Y_0$ is a non-empty chain lying in~$(Z-Y_0)'=X'$.
Thus, the map~$r : Z'-Y_0' \to (Z-Y_0)'$ is an equivariant map that satisfies~$r(\tau) \leq \tau$.
To finish the argument, note that $r\circ j_2 = \Id_{X'}$ and $j_2\circ r \leq \Id_{Z'-Y_0'}$.
Since all these maps are $G$-equivariant, we conclude that $j_2$ is a $G$-homotopy equivalence with inverse $r$.
\end{proof}

\begin{lemma}
\label{lm:pointRemovalNequivalence}
Let~$X$ be a finite poset, and~$x \in X$ such that~$X_{<x}*X_{>x}$ is~$(n-1)$-connected.
Then the inclusion~$X - \{ x \} \hookrightarrow X$ is an~$n$-equivalence.
\end{lemma}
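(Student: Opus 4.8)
The plan is to prove Lemma~\ref{lm:pointRemovalNequivalence} by the standard ``attaching a cell via a nullhomotopic map'' mechanism, adapted to the combinatorial (order-complex) setting, and then note that this immediately yields Proposition~\ref{prop:QuilFiberConn} by the iterated point-removal argument already sketched in the proof above. So the real content is the Lemma.

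First I would recall that for a point $x$ of a finite poset $X$, the order complex $\K(X)$ is obtained from $\K(X-\{x\})$ by attaching the (closed) star of $x$, namely $\overline{\St}_{\K(X)}(x) = \{x\} \join \K(\Lk_X(x)) = \{x\} \join \K(X_{<x} \join X_{>x})$, glued along its ``boundary'' $\K(\Lk_X(x)) = \K(X_{<x} \join X_{>x}) = \K(X_{<x}) \join \K(X_{>x})$. Indeed the simplices of $\K(X)$ containing $x$ are exactly $\{x\} \cup \sigma$ for $\sigma$ a (possibly empty) simplex of $\K(\Lk_X(x))$, and removing $x$ deletes precisely these. Since $\{x\}\join \K(\Lk_X(x))$ is a cone, it is contractible; so at the level of spaces we have a pushout
\[
  \begin{array}{ccc}
    \K(\Lk_X(x)) & \hookrightarrow & \K(X-\{x\}) \\
    \downarrow & & \downarrow \\
    C\,\K(\Lk_X(x)) & \hookrightarrow & \K(X),
  \end{array}
\]
where $C(-)$ denotes the cone. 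In other words, $\K(X)$ is the mapping cone of the inclusion $\K(\Lk_X(x)) \hookrightarrow \K(X-\{x\})$.

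Now the hypothesis is that $\Lk_X(x) = X_{<x}\join X_{>x}$ has order complex $\K(X_{<x})\join\K(X_{>x})$ which is $(n-1)$-connected. The key step is then purely homotopy-theoretic: if $A$ is an $(n-1)$-connected CW-complex (or simplicial complex), $g: A\to B$ a map, and $B' = B\cup_g CA$ the mapping cone, then the inclusion $B\hookrightarrow B'$ is an $n$-equivalence. I would prove this via the long exact sequences of the pairs: by excision/the pushout, $H_k(B',B) \cong \tilde H_{k-1}(A)$, which vanishes for $k-1 \le n-1$, i.e.\ for $k\le n$, so $H_k(B)\to H_k(B')$ is an isomorphism for $k<n$ and an epimorphism for $k=n$; for the $\pi_1$-statement one uses van Kampen (attaching cells of dimension $\ge 2$, since $A$ is $0$- and $1$-connected when $n\ge 2$, does not change $\pi_1$; and when $n\le 1$ the homological statement plus connectivity of $B'$ suffices after passing to $\pi_0,\pi_1$ abelianized, handled directly). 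Combining, the inclusion is an $n$-equivalence, which by Hurewicz gives the stated homology/homotopy conclusions. Applying this with $A = \K(\Lk_X(x))$, $B = \K(X-\{x\})$, $B' = \K(X)$ finishes the Lemma.

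The main obstacle I anticipate is the low-dimensional/fundamental-group bookkeeping: ``$n$-equivalence'' for $n=0,1$ mixes $\pi_0$ and $\pi_1$ with a not-necessarily-simply-connected target, so the clean homological excision argument must be supplemented by a van Kampen argument (or one simply restricts to connected components and notes that attaching a cone on a $(-1)$- or $0$-connected complex is covered by elementary connectivity considerations). Once the Lemma is in place, Proposition~\ref{prop:QuilFiberConn}(1) follows exactly as in the sketch above: replace $f$ by the inclusion $X\hookrightarrow B(f)$ into the non-Hausdorff mapping cylinder, filter $B(f)$ down to $X$ by removing the elements of $Y$ one at a time in a linear extension, observe ${X_i}_{<y_i} = f^{-1}(Y_{\le y_i})$ and ${X_i}_{>y_i} = Y_{>y_i}$ so that $\Lk_{X_i}(y_i)$ has the $(n-1)$-connected join as its order complex by hypothesis, apply the Lemma at each stage, and compose the resulting $n$-equivalences (noting that a composite of $n$-equivalences is an $n$-equivalence). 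Part~(2) is the equivariant refinement and is already fully argued in the displayed proof above via the Th\'{e}venaz--Webb theorem and the chain-poset maps, so nothing further is needed there.
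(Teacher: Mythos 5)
Your proof is correct and is essentially the paper's own argument, just repackaged: identifying $\K(X)$ as the mapping cone of $\K(\Lk_X(x))\hookrightarrow\K(X-\{x\})$ and then using excision/the long exact sequence of the pair is the same calculation as the paper's Mayer--Vietoris argument for the covering $X=(X-x)\cup\St_X(x)$ with contractible $\St_X(x)$ and intersection $X_{<x}*X_{>x}$. Your extra care on the $n=0,1$ and $\pi_1$ bookkeeping is precisely what the paper compresses into the remark that ``the homotopy version follows from the homotopy excision theorem.''
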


\begin{proof}
(Sketch)
For simplicity, we only show the homology version of the result.
The homotopy version follows from the homotopy excision theorem.

Consider the covering~$X = (X-x) \cup \St_X(x)$, where~$\St_X(x) = \{ y \in X \tq y \leq x \text{ or } y \geq x \}$.
Then~$(X-x) \cap \St_X(x) = X_{<x}*X_{>x}$.
Note that~$\St_X(x)$ is contractible since every element is comparable with~$x$.
By the Mayer-Vietoris sequence applied to this decomposition, and since the intersection is~$(n-1)$-connected,
if~$m \leq n$ then we have:
  \[ \underset{=0\text{ when }m<n}{\tilde{H}_m(X_{<x}*X_{>x})}
           \to \tilde{H}_m(X-x) \to \tilde{H}_m(X) \to \tilde{H}_{m-1}(X_{<x}*X_{>x}) = 0. \]
Hence the inclusion~$X-x \hookrightarrow X$ is an isomorphism in the homology groups of degree at most~$n-1$,
and it is an epimorphism in degree~$n$.
\end{proof}

%\bigskip
%
%Here are some notes (KIP email 9/13/21)%
%\footnote{
%  "fake footnote":
%  (K) Then in the proof of A.1,~maybe we should consider including this equivariant part?
%      Anyway, we can decide this later, but since I didn't find it in the literature, maybe we can add the proof in the Appendix.
%  (S) Here I'm just editing in, for the moment, the draft-form from your email.
%      I'll leave it to you to flesh it out---when the tex files are back with you...
%          }
%towards a future equivariant form of the above extension of the fiber theorem:
%
%\bigskip
%
%The proof could be as follows (this also contains the non-equivariant proof by taking~$G=1$, or the trivial action):
%
%\medskip

\section{Field and graph automorphisms in the same characteristic~\texorpdfstring{$p$}{p}}
\label{sec:fldgraphautcharp}

In this Section, we collect some more-technical aspects of the automorphisms in the set~$\Phi$---which
we had indicated in Definition~\ref{defConventionFieldAndGraphsCharp}
as our preferred ``field-like'' outer automorphisms of~$L$ under~(sLie-$p$).

\bigskip

\textbf{Introduction: Motivation for the terminology.\/}
Our first goal in the Section will be to lead up to the general naming-conventions
for ``field'' and ``graph'' automorphisms in~$\Phi$---we will follow the conventions used in~\cite[Sec~2.5]{GLS98}.%
%\footnote{
%  fakenote:
%  In response to your email of 7nov21, I'm adding some more exposition---ideally to get a better "feel" for the GLS-conventions.
%  Then when you have the tex files, please adjust to your satisfaction...
%          }

{\em Caution\/}:
Much as we did before in the area of~(\ref{eq:Out*=PhiGamma}),
we temporarily---up to Definition~\ref{defn:fldgraphconvtypePhi}---suspend our assumption of (sLie-$p$) (and hence~$p = r$);
so that this initial discussion will hold for any prime~$r$, indepdendent of our prime~$p$ used for~(H-QC).

\medskip

Recall that the group automorphisms of~$L$ in~$\Phi_L$ are defined, at the level of the overlying algebraic group~$\bar L$,
using powers of the generator~$x \mapsto x^r$ of the Galois group
of the algebraic closure~$\overline{ {\mathbb F}_r }$ of the prime field~${\mathbb F}_r$.
So from that algebraic-group viewpoint, it would be natural to call the members of~$\Phi_L$ by the name ``field automorphisms'' of~$L$.

And indeed this is ``mostly'' the convention in~\cite[Sec~2.5]{GLS98}:
Namely we'll see in Proposition~\ref{propertiesPhiType}(1) below that usually the members~$B$ of~$\Phiposet_G(L)$ 
have the intuitively-natural property of centralizing in~$L$ a subgroup of the same Lie-type---but defined instead
over the fixed subfield of~$B$.
However: we'll also see that in some of the~$d$-twisted groups~$L$, 
certain members of~$\Phi_L$ (a subset that we will call~$\Phi_d$) exhibit a further diagram-symmetry
aspect---causing them to centralize in~$L$ a subgroup which is defined instead by a ``twisted'' Dynkin diagram;
as we will record in Proposition~\ref{propertiesPhiType}(2).
These two cases for~$\Phi$ are correspondingly called ``field'' and ``graph'' automorphisms in~\cite{GLS98};
%these can be easily checked to coincide with the conventions of~\cite{GL83}.)%
this post-classical terminology, determined by the above centralizer-behavior, appears in Definition~2.5.13 there.%
%\footnote{
%  fakenote:
%  Ron/Richard:  have we correctly understood your notation, in our discussion in the first few paragraphs of Appendix SecB up to here?}
We will then indicate our particular special subcase, under~(sLie-$p$), in Definition~\ref{defn:fldgraphconvtypePhi} below.

\bigskip

\noindent
We turn to providing some details implementing the above overview: 

We'll now make use of the following expanded-notation of~\cite{GLS98}:
For~$q$ denoting the order of the characteristic-$r$ field of definition, we can write~$L$ in the form:
  \[ L \groupiso {^d\Sigma(q)} : \]
where the case~$d = 1$ indicates that~$L$ is the untwisted group with Dynkin diagram of type~$\Sigma$;
but~$d = 2$ or~$3$ indicates that~$L$ is the twisted group corresponding to~$\Sigma$, 
constructed using a diagram-symmetry of order~$d$.

In particular, this value of~$d > 1$ is one of the possibilities~$s$ in Remark~\ref{rk:outerautsofsimple}(i) for elements
of the group of diagram-symmetries~$\Delta \groupiso \Cyclic_2$, or~$\Sym_3$ for~$D_4$.
We emphasize these distinct notations, as follows:
Here~$d > 1$ indicates the order of a symmetry from~$\Delta$ that was {\em previously fixed\/}---and used:
in the construction of twisted~$L$, and indeed identifying it by name.
By contrast, as we go on to further consider the possible automorphisms~$x \in \Aut(L)$,
the various order-$s$ symmetries in~$\Delta$ are still available---as potential ingredients in the action of~$x$ on~$L$. 
In particlar, the~$d$-notation does {\em not\/} mean that~$L$ itself has a graph automorphism of order~$d$.
Indeed we saw in Remark~\ref{rk:casesPhiGamma}(0) that for twisted~$L$ (where~$d > 1$),
we have~$\Gamma_L = 1$---so that even though~$\Delta$ has order at least~$d$,
it makes no graph-contribution to~$x \in \Aut(L)$.
%The {\em ground\/} field of~$L$ is~$\GF{q^d}$;
%and we saw in discussing~(\ref{eq:Out*=PhiGamma})
%that there is an embedding~$\Aut( {\mathbb F}_{q^d} ) \groupiso \Phi_L \leq \Aut(L)$,

\medskip

We now examine the automorphisms in~$\Phi$, in the above~$d$-context. 

\medskip

\noindent
Consider first the case~$d = 1$: so~$L$ is untwisted, and~$\Sigma$ can be any Dynkin diagram:  

Here~$\Phi_L$ consists just of the usual field automorphisms. 
And since we will be working mainly under~($p$-$\Phi$) later, this is basically what we need to know for untwisted~$L$.

But for completeness, and especially as background to the twisted groups below,
we now examine the conventions related to classical graph autmorphisms for the untwisted groups~$L = {^1}\Sigma(q)$.
So we consider the subcases for~$\Sigma$, namely with~$\Delta > 1$, where such automorphisms might arise from diagram symmetries:

We saw for the single-bond diagrams~$\Sigma = A_n,D_n,E_6$ in Remark~\ref{rk:casesPhiGamma}(2) 
that~$\Out(L)^* = \Phi_L \times \Gamma_L$, with~$\Gamma_L \cong \Delta \cong \Cyclic_2$, or~$\Sym_3$ for~$D_4$. 
Here the nontrivial members of~$\Gamma_L \cong \Delta$ give graph automorphisms in the classical sense of Steinberg;
and their products with nontrivial members of~$\Phi_L$ give graph-field automorphisms.  

And we also saw in Remark~\ref{rk:casesPhiGamma}(1)
for the multiple-bond diagrams~$\Sigma = B_2,F_4,G_2$ with the further restriction~$r=2,2,3$
that we somewhat-similarly have~$\Out(L)^* = \Phi_L \Gamma_L$ cyclic, with~$\Phi_L$ of index~$2$. 
Now in the special case where~$\Phi$ is an~$r'$-group, we get $\Phi_L \Gamma_L = \Phi_L \times \Gamma_L$, 
so that the order-$r$ subgroup~$\Gamma_L$ as above gives a classical graph automorphism in the Steinberg-sense;
which also determine graph-field automorphisms via products as in the previous paragraph. 
But more generally, a generator of~$\Gamma_L$ has~$r$-th power given by a nontrivial member of~$\Phi_L$, 
and so should be considered a graph-field automorphism.
Indeed, the later terminology-convention in Definition~2.5.13(b) of~\cite{GLS98}
is to just call {\em all\/} the members of~$\Phi_L\Gamma_L - \Phi_L$ by the name of graph-field automorphisms 
(that is, even the classical Steinberg-sense graph automorphisms in the above special case);
so there are {\em never\/} any graph automorphisms in this multiple-bond case, in the new-convention.

As already mentioned,
we will be avoiding, via the~$\Phi$-part of our assumption~($p$-cyclic)---indeed typically the more general ($p$-$\Phi$)--- the above graph- and graph-field cases for such untwisted~$L$,
in our generic-Theorem~\ref{theoremLiep}; although we will be able to treat the special case for double-bonds above, where~$\Phi$ is an~$r'$-group for~$p=r$,
via different methods in the non-generic Theorem~\ref{thm:Liep(H1)}. 
(Indeed some of our results might be generalized to include also some of the other possibilities with no~$p$-outers in~$\Phi$.)

\bigskip

\noindent
Now consider the remaining case~$d > 1$, so that~$L = {^d}\Sigma(q)$ is twisted (for the relevant~$\Sigma$ above):

Here we saw in Remark~\ref{rk:casesPhiGamma}(0) that~$\Gamma_L = 1$:
Thus there are no Steinberg-sense graph automorphisms;
and~$\Out(L)^* = \Phi_L$, which classically would be called field automorphisms.
So we now examine the revised-convention in Definition~2.5.13(c) of~\cite{GLS98},
in which the name ``graph automorphism'' is used for certain members of~$\Phi_L$:

In the multiple-bond cases~$\Sigma=B_2,F_4,G_2$ with~$r=2,2,3$, we have~$d=2$;
and here there is no change of terminology:
that is, all members of~$\Phi_L$ are still called ``field automorphisms''.
Furthermore, we recall (e.g.~Theorem~2.2.3(b) in~\cite{GLS98})
that our twisted~$L$ in these cases is defined over an {\em odd\/} power of~$r$;
so we see that for $d=2$ here, we have~$\Phi$ a~$d'$-group.

So now consider the single-bond cases~$\Sigma=A_n,D_n,E_6$, with~$d=2$ (or possibly~$3$ in case~$D_4$).
Again there is no change of terminology,
for elements~$a \in \Phi_L$ of order coprime to~$d$---they are still called ``field automorphisms''.
However, if~$d$ does divide the order of~$a$, the revised-terminology of~\cite{GLS98} is to call~$a$ a ``graph automorphism''.  
And correspondingly for this~$d > 1$ we set:

\centerline{
  $\Phi_d := \{\ a \in \Phi \tq d \text{ divides } |a|\ \}$---which is non-empty iff~$L = {^d}\Sigma(q)$ with single-bond~$\Sigma$;
            }

\noindent
where for the final statement, recall we saw above that~$\Phi$ is a~$d'$-group for the twisted multiple-bond cases.
The motivation for this choice of terminology is indicated in the Remark after Definition~2.5.13 of~\cite{GLS98};
we now expand on one aspect which suggests ``graph-like'' behavior of such~$a$:
Namely our~$d$-twisted group~$L$ arises as the fixed-point subgroup~$C_{\bar L}(\bar{f} \bar{g})$,
where~$\bar L$ is an overlying algebraic group of (untwisted) type~$\Sigma$, 
for a suitable order-$d$ field automorphism~$\bar f \in \Phi_{\bar L}$,
and a commuting order-$d$ graph automorphism~$\bar g \in \Gamma_{\bar L}$.
In particular, we can regard~$L$ as the subgroup on which~$\bar f$ acts as~${\bar g}^{-1}$.
Since~$\Phi_{\bar L}$ is cyclic, we can expect~$a$ to be influenced by this graph-behavior of~$\bar f$
(notably when~$\bar a = \bar f$, which leads to the ``if'' part of the ``iff'' statement above).
And indeed we'll see in Proposition~\ref{propertiesPhiType}(2)(b) below
that we can get~$C_L(a)$ of Lie-type---for a diagram {\em different\/} from~$\Sigma$ for~$L = {^d}\Sigma(q)$.

\bigskip

With the above discussion of general naming-conventions in place,
in the remainder of the Section we will obtain some results
which are related to our further hypothesis of~$L$ satisfying~(sLie-$p$):
Recall this means that~$L$ is simple, and of Lie-type in characteristic~$r$; 
where now~$r = p$ for the prime~$p$ we use in studying~(H-QC).
Note since~$L$ is simple that~$L$ is adjoint (cf.~Theorems~2.2.6 and~2.2.7 of~\cite{GLS98}).

So for this context,
we will first specialize our naming-conventions to outer automorphisms of order~$p$.
Recall that~$\Phi$ is cyclic and normal in~(\ref{eq:Out*=PhiGamma}).
Thus for~$x \in \Aut(L)$, we see that we have~$x$ conjugate to~$y$ with~$y^* \in \Phi$
if and only if we already have~$x^* \in \Phi$. 
In particular, for such an~$x$ of order~$p$, its location in~$\Phi$ is determined, since:

\centerline{
  Cyclic~$\Phi_L$ has a unique subgroup~$\Omega_1 \bigl( O_p(\Phi_L) \bigr)$ of order~$p$ (which is also normal).
             }

\noindent
We can now check, using the discussion above,
that such nontrivial elements~$x \in \Omega_1 \bigl( O_p(\Phi_L) \bigr)$ lie in~$\Phi_d$ 
if and only if~$p = d$; recall we already had~$p = r$.
(Here we are again using the fact that for the twisted groups with multiple-bond diagrams, $\Phi$ is a~$d'$-group for~$d = 2$.)

Below, we indicate the~\cite{GLS98}-naming for such~$x$ of order~$p$ in~$\Phi$;
we will use the case division on whether~$\Phi_d$ is non-empty;
since the alternative form of the division on whether~$p = d$ is not so suggestive of the structures that are really involved.

\begin{definition}[``Field'' and ``graph'' naming-conventions for order-$p$ automorphisms in~$\Phi$]
\label{defn:fldgraphconvtypePhi}
Assume that~$L = {^d}\Sigma(q)$ satisfies~(sLie-$p$), so that~$p = r$;
and we have some~$x \in \Aut(L)$ of order~$p$, with~$x$ conjugate under~$\Aut(L)$ to an element of~$\Phi$. 
Then in fact~$x^*$ itself lies in~$\Phi$, and so generates~$\Omega_1 \bigl( O_p(\Phi) \bigr)$. 
And the naming-convention of~\cite{GLS98} gives us the following two cases:

(1) ($\Phi_L - \Phi_d$=field):
When~$x^* \in \Phi_L - \Phi_d$, $x$ is called a \textit{field automorphism}.
Notice, when we now consider the specific diagram-types $\Sigma$ for $L = {^d}\Sigma(q)$,
that this field-type is the ``usual'' case:
since we saw~$\Phi_d = \emptyset$ for untwisted~$L$ (i.e.~$d = 1$); 
and also for many twisted cases~$L$---namely with ~$d >1$ but not indicated in~(2) below:
Indeed, we saw in our preliminary discussion above 
that for twisted~$L$ where~$\Sigma$ has multiple-bonds (namely~$B_2,F_4,G_2$ with~$d = 2$),
we have~$\Phi$ is a~$2'$-group in each case.
Hence since we are now further assuming~(sLie-$p$) so that~$p = r = 2,2,3$,
such an~$x$ exists if and only if~$p =3$ and~$L$ has type~${^2}G_2$.
By contrast, when~$\Sigma$ has single-bonds, our twisted~$L$ here is in the proper-subcase where~$p \neq d$:
namely~$p \neq 2$ in all types~$A_n$, $D_n$, $E_6$; or~$p \neq 3$ in type~${^3}D_4$. 

(2) ($\Phi_d$=graph): Otherwise~$x^*$ is in~$\Phi_d$, and~$x$ is called a \textit{graph automorphism}.
Recall that~$\Phi_d \neq \emptyset$ requires~$L$ twisted,
with single-bond diagram~$\Sigma$ (namely of type~$A_n,D_n,E_6$), and~$d$ dividing the order of~$\Phi_L$;
while~$p =d $ by our assumption that~$x$ has order~$p$.
Thus the possibilities for the group~$L$ are:

\centerline{
  $L\ =\ {^2A_{n}(q)}=\PSU_{n+1}(q)$ ($n \geq 3$);\ $^2D_n(q)$ ($n \geq 4$);\ $^2E_6(q)$;\ and~$^3D_4(q)$;
            }

\noindent
with~$p=d=2,2,2,3$ respectively.
\donerk
\end{definition}

Next we summarize the structure of the centralizers of order-$p$ elements of type~$\Phi$:
%The following assertions are deduced from Propositions 4.9.1 and 4.9.2 of \cite{GLS98}, and also from \cite[(9-1)]{GL83}.
%See also Theorems 2.2.6 and 2.2.7 of \cite{GLS98}.

\begin{proposition}
\label{propertiesPhiType}
Let~$L \groupiso {^d\Sigma(q)}$ be of type~(sLie-$p$).
Assume~$B \in \Outposet^{\Phi}_{\Aut(L)}(L)$;
and note since~$\Phi$ is cyclic in~(\ref{eq:Out*=PhiGamma}) that~$B^* = \Omega_1 \bigl( O_p(\Phi) \bigr)$.

\begin{enumerate}
\item Assume~$B$ is in case~(1) = $\Phi - \Phi_d$=field of Definition~\ref{defn:fldgraphconvtypePhi}.
      Then the elements of~$\Phiposet_{\Aut(L)}(L)$ are all~$\Inndiag(L)$-conjugate to~$B$.
      Further~$O^{p'} \bigl( C_L(B) \bigr) \groupiso { {^d}\Sigma(q^{1/p}) } $:
      i.e.~the (adjoint) group of the same Lie-type (and hence the same Lie-rank), but over the~$B$-fixed subfield;
      and we have~$C_{\Inndiag(L)}(B) = \Inndiag\left({^d \Sigma(q^{1/p})}\right)$.
      In particular, $O_p \bigl( C_L(B) ) = 1$.

\item Assume instead~$B$ is in case~(2)=$\Phi_d$=graph of Definition~\ref{defn:fldgraphconvtypePhi}.
      Then~$B$ satisfies one of the following two mutually exclusive conditions:
\begin{enumerate}
\item $F^* \bigl( C_L(B) \bigr) = O_p \bigl( C_L(B) \bigr) > 1$ is a nontrivial~$p$-group; or else:
\item $B$ is~$\Inndiag(L)$-conjugate to~$\gen{\gamma}$, where~$\gamma\in \Phi_d$ has order~$p$,
      and the centralizer~$C_L(B)$ is a subgroup of the same Lie-rank~$m$ below as~$L$,
      but of the indicated different, untwisted Lie-type over~${\mathbb F}_q$:%
\footnote{
      The Lie-type of~$C_L(B)$ can be regarded as that for a suitable quotient-diagram of the Dynkin diagram for~$L$,
      under the order-$d$ diagram symmetry.
          }
%\footnote{
%  fakenote:
%  Ron/Richard:  Are we correct in understanding that these centralizers are also adjoint?
%Though seemingly this isn't needed, since larger centralizers would just improve our bound?}
  \[ \begin{array}{|c|r|c|l|}
\hline
        p & L &m & C_L(B)\\
\hline
        2 & \PSU_n(q) \groupiso {^2}A_{n-1}(q) & \left\lfloor \frac{n}{2} \right\rfloor
 	             & C_{\left \lfloor \frac{n}{2} \right\rfloor}(q) \groupiso \PSp_{2 \left\lfloor \frac{n}{2} \right\rfloor }(q) \\
        2 & ^2D_n(q) & n-1 & C_{n-1}(q) \groupiso \PSp_{2(n-1)}(q) \\
        2 & ^2E_6(q) & 4   & F_4(q)\\
        3 & ^3D_4(q) & 2   & G_2(q).\\
\hline
     \end{array}
  \]

      In particular, $O_p \bigl( C_L(B) ) = 1$.
\end{enumerate}

\item For any~$C \in \Phiposet_{\Aut(L)}(L)$, we have~$\Outposet_{LC}(L) = \Phiposet_{LC}(L)$,
      so that we can take~$C$ in the role of~``$B$'' above.
      Further there is~$D \in \Outposet_{LC}(L)$ (hence of type~$\Phi$) with~$O_p \bigl( C_L(D) \bigr) = 1$.
      In particular: if~(2)(a) holds for~$C$, then~(2)(b) holds for~$D$.

\end{enumerate}

\end{proposition}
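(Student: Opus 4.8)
\textbf{Plan of proof for Proposition~\ref{propertiesPhiType}.}

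The plan is to treat the four numbered assertions in order, since parts~(1) and~(2) describe the structure of $C_L(B)$ explicitly, and part~(3) is then mostly a bookkeeping consequence. First, for part~(1): given $B$ of ``field'' type, i.e.\ $B^* = \Omega_1(O_p(\Phi_L))$ lies in $\Phi_L - \Phi_d$, I would invoke the standard structure theory of centralizers of field automorphisms in the adjoint Lie-type groups---this is Theorem~4.9.1 (and the surrounding discussion) in \cite{GLS98}: a field automorphism of order $p$ acting on ${^d}\Sigma(q)$ with $q = q_0^p$ has fixed-point subgroup (on the algebraic group $\bar L$, hence on $L$ up to a $p'$ diagonal factor) equal to the group of the same Lie-type over the subfield ${\mathbb F}_{q_0}$. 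Thus $O^{p'}(C_L(B)) \cong {^d}\Sigma(q^{1/p})$, which is a group of Lie-type in characteristic $p$, so $O_p$ of it is trivial; and since $C_L(B)/O^{p'}(C_L(B))$ is a $p'$ group (diagonal), $O_p(C_L(B)) = 1$. The $\Inndiag$-conjugacy assertion---that all of $\Phiposet_{\Aut(L)}(L)$ of field type form a single $\Inndiag(L)$-class, and $C_{\Inndiag(L)}(B) = \Inndiag({^d}\Sigma(q^{1/p}))$---follows from the conjugacy statements in \cite[2.5.12]{GLS98} together with the cyclicity of $\Phi_L$ from~(\ref{eq:Out*=PhiGamma}).

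Next, for part~(2), where $B^* \in \Phi_d$ is of ``graph'' type: here $L = {^d}\Sigma(q)$ is twisted with single-bond diagram and $p = d$. The key is that a graph (graph-field) automorphism of order $p$ of $L$ behaves---via the overlying algebraic group---like a graph automorphism of the untwisted group $\bar L$ of type $\Sigma$; its fixed-point structure in $\bar L$ is described by Propositions~4.9.1 and~4.9.2 of \cite{GLS98}. There are two cases according to whether the relevant automorphism is (conjugate to) a genuine element of $\Phi_d$ (an ``honest'' graph automorphism, giving the Lie-type centralizer listed in the table---$\PSp$, $F_4$, or $G_2$, each again of characteristic $p$, hence with trivial $O_p$), or whether it is one of the other $\Inndiag(L)$-classes of outer order-$p$ elements covered by the graph terminology, which in the twisted setting can have $C_L(B)$ with a nontrivial unipotent radical (case (a)). Concretely: I would enumerate the $\Inndiag(L)$-classes of outer automorphisms of order $p$ with image in $\Phi_d$ using \cite[4.9.1, 4.9.2]{GLS98}; the generic representative $\gen{\gamma}$ with $\gamma \in \Phi_d$ of order $p$ yields the reductive centralizer in the table (and its Lie rank $m$ is read off from the quotient Dynkin diagram under the order-$d$ symmetry), while the remaining classes have $F^*(C_L(B)) = O_p(C_L(B)) > 1$.

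For part~(3): given any $C \in \Phiposet_{\Aut(L)}(L)$, since $\Phi_L$ is cyclic we have $C^* = \Omega_1(O_p(\Phi_L))$, and then any $p$-outer $D \le LC$ with $D \cap (L C_L(L)) = 1$ must also have $D^* \le \Phi_L$ (the image lies in $C^* \le \Phi_L$), so $\Outposet_{LC}(L) = \Phiposet_{LC}(L)$. Now within $LC$ we may look for a complement $D$ to $L$ with $O_p(C_L(D)) = 1$: if $C$ is itself in case~(1) or~(2)(b) we can take $D = C$; if $C$ is in the troublesome case~(2)(a), we use the fact (from \cite[2.5.12]{GLS98} or the classification of outer-element classes in $LC \cong L\gen{\gamma}$) that $L\gen{\gamma}$ also contains a complement $\gen{\gamma'}$ lying in the ``honest graph'' class, i.e.\ case~(2)(b), giving $O_p(C_L(\gamma')) = 1$. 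I expect the main obstacle to be part~(2): correctly organizing the $\Inndiag(L)$-conjugacy classes of outer order-$p$ elements in $\Phi_d$ and matching each to the right centralizer structure (reductive with the tabulated type, versus nontrivial unipotent radical). This requires careful use of \cite[Propositions~4.9.1, 4.9.2]{GLS98}, including the subtlety that for $\PSU_n(q)$ the two parities of $n$ give slightly different pictures (reflected in the floor functions in the table), and similarly checking that ${^2}E_6$ and ${^3}D_4$ give exactly $F_4(q)$ and $G_2(q)$ as the reductive centralizer. The remaining verifications---triviality of $O_p$ of a characteristic-$p$ Lie-type group, and the $p'$-ness of diagonal factors---are routine once the centralizer type is pinned down.
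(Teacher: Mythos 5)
Your proposal is correct and follows essentially the same route as the paper: parts (1) and (2) are read off from Propositions~4.9.1 and~4.9.2 of~\cite{GLS98} (with $O_p(C_L(B))=1$ coming from the centralizer again being of Lie type in characteristic~$p$), and part~(3) reduces to locating a complement in the ``good'' class. The only place the paper is more explicit than you are is the last step of~(3): it notes that by the proof of~\cite[4.9.2]{GLS98} every element of $\Phiposet_{\Aut(L)}(L)$ is $\Inndiag(L)$-conjugate to $\gen{\gamma}$ or $\gen{\gamma z}$ with $z$ in a long-root subgroup $Z \leq L$, so if $LC$ contains $(\gamma z)^g$ it also contains $\gamma^g$ (since $z^g \in L$), which pins down the desired $D$ in case~(2)(b).
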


\begin{proof}
Item~(1) follows directly from Proposition~4.9.1 of~\cite{GLS98};
here~$O_p \bigl( C_L(B) \bigr) = 1$ follows from the Lie-type structure of~$O^{p'} \bigl( C_L(B) \bigr)$.
(Indeed that Lie-type group is usually even simple.)
Item~(2) follows from Proposition~4.9.2 of~\cite{GLS98}---using similar Lie-type remarks
for~$O_p \bigl( C_L(B) \bigr) = 1$ in case~(2)(b).
%Item (3) follows from the structure of the centralizers.
The first part of~(3) then follows
from the observation early in the statement that~$B^* = \Omega_1 \bigl( O_p(\Phi_L) \bigr)$.
And then by that first part, we can---and do---now take~$B$ in the role of the original~``$C$''.

If~(1) holds for~$B$, then the second part of~(3) already holds, with~$B$ itself in the role of~``$D$''.
So we may  assume~(2) holds for~$B$; and in particular,
$L$ is twisted, for~$\Sigma$  with single-bonds (hence all roots are ``long''). 
If in fact case~(2)(b) already holds for~$B$, then again the second part of~(3) holds with~$B$ in the role of~``$D$''. 
So we may assume that case~(2)(a) holds for~$B$.
With the notation of Proposition~4.9.2 of~\cite{GLS98}, let~$Z$ be a long-root subgroup of~$L$.
Then~$Z \leq L$; and as in the proof  of that Proposition,
any member of~$\Phiposet_{\Aut(L)}(L)$ is~$\Inndiag(L)$-conjugate
to~$\gen{\gamma}$ or~$\gen{\gamma z}$ (for~$z \in Z^{\sharp})$, where~$\gamma \in \Phi_L$ of order~$p$ is as in item~(2)(b).
That is, $LB$ contains an element of the form~$\gamma^g$ or~$(\gamma z)^g$, with~$g \in \Inndiag(L)$.
%if~$O_p \bigl( C_L(B) \bigr) > 1$,
If we have the latter, that is~$(\gamma z)^g \in LB$, 
then since~$z^g \in Z^g \leq L^g = L$, we also get~$\gamma^g \in LB$;
so we have~$O_p \bigl( C_L(\gamma^g) \bigr) \groupiso O_p \bigl( C_L(\gamma) \bigr) = 1$, as we had noted for case~(2)(b).
Hence we can take~``$D$'' to be~$\gen{\gamma^g} \in \Outposet_{LB}(L)$. 
This final case completes the proof of the second part of~(3).
(See also the Remark right after the statement of Proposition~4.9.2 in~\cite{GLS98}.)
\end{proof}

Note that item~(3) of the previous Proposition provides a ``nonconical complement",
as in Theorem~2.3 of~\cite{AS93}---we had mentioned these via~(someNC) in Remark~\ref{remarkClosingSection}, 
and we will need them for the corresponding hypothesis~(someNC+$\Phi$) in Theorem~\ref{theoremLiep}.

\bigskip

The starting-point for our homology propagation in Theorem~\ref{theoremLiep}
will be the nonzero homology of~$\Ap(LB)$ provided by Theorem~\ref{theoremHomologyCharP}.
And to bound the vector-space dimension of that homology group above zero, 
we will want the following easy but somewhat-technical numerical observation:
Roughly, we check below, based on our determination of centralizer-structures in Proposition~\ref{propertiesPhiType},
and using just the standard group-order formulas for the individual simple groups~$L$ of Lie-type,
that the index~$|L : C_L(B)|_{p'}$ (which basically records the number of tori) grows very strongly with increasing powers of~$p$;
and in particular, exceeds the very modest bound of~$(p-1)$ that we require.
This result may be well-known, or clear to those familiar with the polynomials in the orders:%
%\footnote{
%   fakenote: Ron/Richard: It seems to us this easy bound may already be known?  Do you have a reference? Maybe even a simpler proof?}

\begin{lemma}
\label{lemmaLiepBoundCentralizer}
Let~$L$ be of type~(sLie-$p$).
If~$B \leq \Phiposet_{\Aut(L)}(L)$ is such that~$O_p \bigl( C_L(B) ) = 1$
(that is, case~(1) or~(2)(b) of Proposition~\ref{propertiesPhiType} holds), 
then $|L:C_L(B)|_{p'} > (p-1)$.
\end{lemma}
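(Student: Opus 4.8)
The plan is to treat the cases of Proposition~\ref{propertiesPhiType} separately, using the explicit group-order polynomials for each Lie-type family. In all situations we have $p = r$ (the defining characteristic), and by hypothesis $O_p(C_L(B)) = 1$, so Proposition~\ref{propertiesPhiType} tells us precisely what $C_L(B)$ is: in case~(1)=field, $C_L(B)$ is (up to $p'$-diagonal automorphisms) the same-type group ${}^d\Sigma(q^{1/p})$ over the fixed subfield; in case~(2)(b)=graph, $C_L(B)$ is one of the four explicitly-listed groups $\PSp_{2\lfloor n/2\rfloor}(q)$, $\PSp_{2(n-1)}(q)$, $F_4(q)$, $G_2(q)$ for $p = 2,2,2,3$ respectively. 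In each case, $|L:C_L(B)|_{p'}$ is a ratio of products of cyclotomic-type factors $q^i - \varepsilon$ (with $\varepsilon = \pm 1$), and the claim is that this ratio exceeds the tiny bound $p-1$.

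First I would dispose of the field case~(1). Here $|L|_{p'}$ is the $p'$-part of the standard order polynomial evaluated at $q$, while $|C_L(B)|_{p'}$ is the same polynomial evaluated at $q_0 := q^{1/p}$ (the diagonal part contributes only a bounded factor that is actually the same for $L$ and $C_L(B)$, or is easily absorbed). Since $q = q_0^p$ with $q_0 \geq p \geq 2$, each factor $q^i - \varepsilon = q_0^{ip} - \varepsilon$ dominates the corresponding factor $q_0^i - \varepsilon$ by a factor of at least roughly $q_0^{i(p-1)}$. As the order polynomial has at least one such factor with $i \geq 1$, the ratio $|L:C_L(B)|_{p'}$ is at least on the order of $q_0^{p-1} \geq 2^{p-1} \geq p$ — and a short estimate $2^{p-1} > p-1$ for all $p \geq 2$ (with strict inequality) finishes this case. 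The only mild care needed is when $L$ is twisted: then $q_0$ must be chosen as the appropriate defining-field order, but the same domination holds because we still write $q$-factors over $q_0$-factors of strictly larger exponent.

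Next I would handle the four explicit groups in case~(2)(b), simply by writing down the order polynomials. For $\PSU_n(q)$ versus $\PSp_{2\lfloor n/2\rfloor}(q)$ (and similarly ${}^2D_n(q)$ versus $\PSp_{2(n-1)}(q)$), one cancels the common factors and is left with a product of terms of the form $q^j \pm 1$ with $j \geq 2$, whose value is at least $q^2 - 1 \geq 3 > 1 = p-1$ for $p = 2$. For ${}^2E_6(q)$ versus $F_4(q)$ the leftover $p'$-part of the index is a product of several cyclotomic factors, all exceeding $q^2 - 1 \geq 3$; and for ${}^3D_4(q)$ versus $G_2(q)$ (here $p = 3$, so we need the index to exceed $2$) the leftover factor is $q^4 - q^2 + 1$ (the value of $\Phi_{12}$ at $q$), which is at least $3^4 - 3^2 + 1 = 73 > 2$. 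In each of these finitely many cases it is a one-line polynomial evaluation at $q \geq p$.

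I do not expect any serious obstacle here; the main (very minor) annoyance will be bookkeeping the $p'$-parts of the diagonal/center corrections $\gcd(n, q-1)$, $\gcd(n, q+1)$, $(2, q-1)$, etc., and making sure they do not accidentally cancel the surplus. But these correction factors are bounded independently of $q$ and, more to the point, they appear comparably on both sides, so after taking $p'$-parts they contribute at most a bounded factor that is easily dominated by the $q^{i(p-1)}$-growth already extracted. I would present the field case as the general argument and then give a compact table for the four graph cases, citing only the standard order formulas for the finite groups of Lie type (as in, e.g., the tables of \cite{GLS98}).
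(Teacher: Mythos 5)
Your proposal takes the same route as the paper: split according to Proposition~\ref{propertiesPhiType}(1)/(2)(b), use the resulting explicit description of $C_L(B)$, and bound the index via the standard Lie-type order polynomials. The graph case is handled the same way in both --- a finite check on the four pairs $(L, C_L(B))$, cancel common cyclotomic-type factors, and verify the leftover exceeds $p-1$ --- and the field case is set up identically, writing $q = q_0^p$ and comparing the two polynomial products factor by factor.

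Where your field-case estimate needs tightening is the handling of the diagonal divisor $f$. You argue that a single cyclotomic factor with exponent $i \geq 1$ already provides a surplus of order $q_0^{p-1} > p-1$, and that the divisor $f$ is a ``bounded factor \ldots easily dominated.'' But the smallest exponent actually occurring in the order formula is $e=2$, not $1$; and $f$ is bounded by an $n$-dependent constant (e.g.\ $f = \gcd(n+1, q-1) \leq n+1$ for type $A_n$), not an absolute one. A single factor of size $\sim q_0^{2(p-1)}$ therefore does not beat $f\,(p-1)$ uniformly --- already for $p=2$, $q_0 = 2$, and $n$ large the inequality $q_0^{2(p-1)} > (n+1)(p-1)$ fails. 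Your argument is rescued if you keep the \emph{full} product (the number of exponents is the Lie rank, so $\prod_e q_0^{e(p-1)}$ grows at least quadratically in $n$ and swamps $n+1$), but you should say this rather than fold it into ``easily dominated.'' The paper closes the gap more cleanly by instead bounding $f \leq q$ (an $n$-independent but $q$-dependent bound, $\leq q+1$ in the twisted unitary case) and then checking that the single $e=2$ factor exceeds $q(p-1)$: explicitly $r^2 + 1 > r^2 = q$ for $p=2$, and $r^{2(p-1)} \geq r^{p+1} = qr > q(p-1)$ for $p \geq 3$ using $r > p-1$. With that resolution one factor suffices in all cases and no growth-in-$n$ argument is needed.
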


\begin{proof}
For the order-formulas discussed below, we will use Table~2.2 of~\cite{GLS98} as our basic source.
And rather than checking each detailed formula, we will try to exploit common features of the formulas
as products of cyclotomic polynomials in~$q$, which are easy to read from that Table.

Recall we write~$L \groupiso {^d\Sigma(q)}$.
We split the verification of the bound into the two cases in the hypothesis:
the field-automorphism case~$B^* \subseteq \Phi_L - \Phi_d$ of~(1) of Proposition~\ref{propertiesPhiType};
and the graph-automorphism case~$B^* \subseteq \Phi_d$ of~(2)(b) there. 

\bigskip

\noindent
\textbf{The field-case~$B^* \subseteq \Phi_L - \Phi_d$:}

We now deviate from our earlier notational convention of using~$r$ for the characteristic of~$L$---of
course~$p$ supplies that characteristic for us under~(sLie-$p$)---to instead write~$r$ for~$q^{1/p}$, 
the order of the fixed-subfield under~$B$. 
Thus~$q = r^p$, where $r$ itself is a power of $p$ (so in particular, $r \geq p$). 

\bigskip

\noindent
We first treat the case of untwisted~$L$~($d = 1$):

The formula for~$|L|_{p'}$ for the adjoint simple group~$L$ has the form:
  \[ \frac{1}{f}\ \Pi_e\ (q^e - 1) , \]
where the exponents~$e$ and the divisor~$f$ are specified in Table~2.2 of~\cite{GLS98}.
In type~$A_n$, we have~$f = \gcd(n+1,q-1) \leq q-1$, while~$f \leq 4 \leq q$ for all other Lie types;
so for crude bounding, it will usually suffice to just say in summary  that: 

\centerline{
   $f \leq q$.
           }

\noindent
Now we will need to compare the above product for $|L|_{p'}$ with the similar product for~$|C_L(B)|_{p'}$:
By case~(1) of Proposition \ref{propertiesPhiType},
$C_L(B)$ lies between also-adjoint~${^d}\Sigma(r)$ and~$\Inndiag \left( {^d\Sigma(r)} \right)$.
Here the possible extra order in the~$\Inndiag$-form is controlled by~$f$; 
that is, for our bounding-purposes, the $\Inndiag$-order formula we need to use
is the analogue for~$r$ of the polynomial-product for~$|L|_{p'}$ above. 
Note that:
  \[        \Pi_e\ \frac{(r^p)^e - 1}{r^e - 1} 
          = \Pi_e\ \frac{(r^e)^p - 1}{r^e - 1} 
          = \Pi_e\ \bigl(\ (r^e)^{p-1} + (r^e)^{p-2} + \cdots + 1\ \bigr) ; \]
so it will suffice to verify the following variant-form of our original bound:
\begin{equation}
\label{eq:prodqtwofcyclpolys}
     \Pi := \Pi_e\ \bigl(\ r^{e(p-1)} + r^{e(p-2)} + \cdots + 1\ \bigr) \text{ should be } > f (p-1) . 
\end{equation}

%\begin{equation}
%\frac{ |L|_{p'} }{I_0 } > (p-1).
%\end{equation}

\noindent
We now observe that the number of exponents~$e$ is equal to the Lie-rank $n$---giving the number of factors in~$\Pi$. 
Furthermore the smallest exponent always takes the value~$e = 2$. 
In particular, note then that each factor in~$\Pi$ exceeds~$r^2 + 1 > 1$.
So it will suffice to find a sub-product satisfying our bound---and it turns out that the first factor (for~$e=2$) already works:
Namely since we saw~$f \leq q$, it suffices for the purposes of~(\ref{eq:prodqtwofcyclpolys}) to set~$e = 2$ there,
and get a value~$> q(p-1)$.

To that end, consider first the case that~$p=2$:
Then~(\ref{eq:prodqtwofcyclpolys}) gives $r^2 + 1 > r^2 = q(p-1)$---as required.
So now we assume the remaining case, that is, $p \geq 3$:
Then~$2(p-1) \geq p+1$, so the leading term $r^{2(p-1)}$ in  the~$e=2$ factor in~(\ref{eq:prodqtwofcyclpolys})
is at least $r^{p+1} = qr > q(p-1)$, since~$r$ is a power of~$p$.
This completes the proof of the bound for field-automorphisms of untwisted~$L$. 

\bigskip

\noindent
So we turn to field-automorphisms for the case of twisted~$L$, where~$d > 1$; we can use similar ideas:

Let's first recall the cases here:
We saw in the corresponding case~(1) of Definition~\ref{defn:fldgraphconvtypePhi}
that the only multiple-bond case arising here is type~${^2}G_2$ for an odd power of~$p = 3$;
while we can have the single-bond twisted types when~$p \neq d$: 
namely~${^2}A_n$, ${^2}D_n$, ${^2}E_6$ when~$p \neq 2$; and type~${^3}D_4$ when~$p \neq 3$. 

Again we have a product~$\Pi$ of quotients of polynomials, in an analogue of~(\ref{eq:prodqtwofcyclpolys}); 
but this time, there are some differences from the untwisted-$L$ case above:
The number of exponents is now the Lie-rank of the overlying Lie type group~$\Sigma(q)$,%
%\footnote{
%  fakenote:
%  I adjusted notation to standard GLS, away from my sloppy $L(q)$; I hope this addresses your comment in email of 5nov21?
%          }
which exceeds the Lie-rank of the twisted group~$L = {^d}\Sigma(q)$; and this number is always~$\geq 2$---so
we have no case where~$\Pi$ has just one single factor (even when~$L$ has Lie-rank~$1$).
And now the exponents~$e$ come with signs~$\epsilon = \pm 1$ (or cube roots of unity for type ${^3}D_4$),
giving the polynomial~$q^e - \epsilon$.  
We still have minimum exponent-value~$e=2$, now always with sign~$\epsilon = -1$. 
One effect is to give, in type~${^2}A_n$, the adjusted value~$f = \gcd(n+1,q+1) \leq q+1$---and correspondingly now:

\centerline{
  $f \leq q+1$.
            }

\noindent
Another effect is that for~$\epsilon \neq  1$, we don't get the even-division of~$q^e + 1$ by ~$r^e + 1$ (nor analogously for cube roots);
however, we still get the property that each factor in~$\Pi$ exceeds~$1$, 
just using crude estimates, such as:
  \[ \frac{r^{pe} + 1} {r^e + 1} > \frac{r^{pe}} {2r^e} \geq \frac{r^{p(e-1)}} {2} \geq 2 > 1 . \]
So again, we may establish our bound on a sub-product (and again it suffices that we have two factors). 

Namely consider any of the above types other than~${^3}D_4$: 
Then in each case, there is a choice of a non-minimal exponent~$e \geq 3$ with sign~$\epsilon = 1$.
So we recall the calculation in the untwisted case, showing this second-factor contributes at least~$qr$ to the product. 
This time using~$r < q$ since~$L$ is twisted, we get:
  \[ qr > qr + (r - q) + (-1) = (q+1)(r-1) \geq f(p-1) , \]
again giving the bound desired in the analogue of~(\ref{eq:prodqtwofcyclpolys}). 

So we turn to the remaining case of type~${^3}D_4$:
Here~$f=1$; so we can just quote our earlier crude bound~$\frac{r^{p(e-1)}} {2}$ with~$e=2$,
giving at least~$\frac{r^p}{2} \geq r^{p-1} \geq r > p-1$, as desired. 

This completes the proof for field automorphisms of twisted~$L$,
and hence for field automorphisms of all~$L$---the case $B^* \in \Phi_L - \Phi_d$.

\bigskip

\noindent
\vspace{0.2cm}
\textbf{The graph-case~$B^* \subseteq \Phi_d$:}

Notice here we do not consider the fixed-subfield under~$B$, 
and so our quotient-calculations in~$\Pi$ will involve only the power~$q$ of~$p$ (that is, no~``$r$'').

The four types for~$L$ in Proposition~\ref{propertiesPhiType}(2)(b) are~${^2}A_{n-1}(q),{^2}D_n(q),{^2}E_6(q),{^3}D_4(q)$, 
where we have~$p=2,2,2,3$; and~$C_L(B) = C_{\lfloor \frac{n}{2} \rfloor}(q),C_{n-1}(q),F_4(q),G_2(q)$. 
Here as before in type~${^2}A_{n-1}(q)$ we have~$f \leq \gcd(n,q+1) \leq q+1$, and~$f \leq 4 \leq q$ in other types,
so that again: 
  \[ f \leq q+1 ; \]
and though this estimate might be sharpened by dividing out~$f$-type terms in~$C_L(B)$, it will suffice to use the above rough bound.

Again we get a product~$\Pi$, in an analogue of~(\ref{eq:prodqtwofcyclpolys})--now with still more differences:
First, the number of exponents for twisted~$L={^d}\Sigma(q)$ is the Lie-rank for untwisted~$\Sigma(q)$, namely~$n,n,6,4$;
which strictly exceeds the number of exponents for untwisted~$C_L(B)$, i.e.~its Lie-rank~$\lfloor \frac{n}{2} \rfloor, n-1,4,2$. 
Now we check in each~$L$ (again excepting type~${^3}D_4(q)$, which varies slightly)
that we may pair each exponent~$e$ for~$C_L(B)$ (which has positive sign $\epsilon = +1$)
with an exponent~$e$ of the same value, but negative sign~$\epsilon = - 1$, starting at the minimum value~$e=2$.
As a result, the factor at~$e$ in the product~$\Pi$ has the value~$\frac{q^e+1}{q^e-1}$, only slightly larger than~$1$.
Thus to get our bound, we focus instead on the exponents~$e$ of~$L$ which are {\em not\/} paired with those of~$C_L(B)$;
these give ``numerator-only'' terms in the product~$\Pi$.

And just one is enough:  note we have~$e \geq 3$ for such an exponent, giving a term at least:
  \[  q^3 + 1 > q^2 - 1 = (q+1)(q-1) \geq f (p-1) , \]
which is the bound that we needed.

To adapt the above proof to cover the remaining type~$L = {^3}D_4(q)$:
Just ``pair''the exponent~$e = 6$ for~$C_L(B) = G_2(q)$ with the two exponents~$e=4$
(and their ``signs''~$\omega$ and~$\bar{\omega}$)---the quotient of~$(q^4-\omega)(q^4-\bar{\omega})=q^8+q^4+1$
by~$q^6-1$ certainly exceeds~ $1$.
This leaves an unpaired numerator-term of $q^8+q^4+1$, which exceeds the desired bound $p - 1$, since $f = 1$ in this case.

We have now verified the bound of Lemma~\ref{lemmaLiepBoundCentralizer} in all cases.
\end{proof}

\section{Additional lemma(s)}

The following lemma provides an alternative way of eliminating certain components $L$ of a minimal counterexample $G$ to (H-QC):
Namely, those simple components $L$ for which $O_p(C_L(B)) > 1$ for every $p$-outer $B$.
The proof of this lemma invokes Theorem \ref{theoremInductiveHomotopyType} with a non-standard $p$-subgroup poset $\B$.
Indeed, we take $\B$ to be the image-poset $\Imageposet_{G,L}$, for a simple component $L$ of a group $G$.

For example, if $L = \PSL_2(2^3),\PSU_3(2^3),\Sz(2^5)$ with $p=3,3,5$ respectively, then every $p$-outer $B$ of $L$ satisfies that $O_p(C_L(B)) > 1$.
Therefore, Lemma \ref{lemmaAllConical} provides an alternative argument, via Theorems \ref{PStheorem} and \ref{theoremInductiveHomotopyType}, to the elimination of these components (cf. \cite[Thm 5.1]{KP20}.)

\begin{lemma}
\label{lemmaAllConical}
Let~$G$ be a finite group, and let~$L \leq G$ be a simple subgroup.
If~$O_p \bigl( C_L(B) \bigr) > 1$ for all~$B \in \Outposet_G(L)$,
then the inclusion~$\Ap(L)\hookrightarrow \Imageposet_{G,L}$ is a homotopy equivalence.

In particular,
if~$L$ is a component of~$G$ of order divisible by~$p$ and~$C_G(\hat{L})$ satisfies~(H-QC), then~$G$ satisfies~(H-QC).%
\footnote{
  Recall from Definition~\ref{defOrbitComponentes} that~$\hat{L}$ denotes the product of the~$G$-orbit of~$L$.
          }
\end{lemma}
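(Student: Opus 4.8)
The plan is to split the statement into the equivalence claim and its corollary. For the equivalence $\Ap(L) \hookrightarrow \Imageposet_{G,L}$, I would apply Theorem~\ref{theoremInductiveHomotopyType} with an unusual choice of the ambient poset $\B$. Recall from Remark~\ref{remarkImagePoset} that since $L$ is simple (so $Z(L) = 1$), we have $\Ap(L) \subseteq \Imageposet_{G,L} \subseteq \Ap(\Aut_G(L))$, and the ``outer part'' $\Imageposet_{G,L} - \Ap(L)$ consists of groups of the form $(BC_G(L)/C_G(L))A$ for $A \in \Ap(L) \cup \{1\}$ and $B \in \Outposet_G(L)$. So I would take $H := L$, work inside $\overline{G} := \Aut_G(L) = N_G(L)/C_G(L)$, and set $\B := \Imageposet_{G,L}$, viewed as a subposet of $\Ap(\overline{G})$ with $\Ap(L) = \Ap(\overline{L}) \subseteq \B$ (using $L \cong \overline{L}$). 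First I would check that $\B$ is indeed closed under the operations required by hypothesis~(i) of Theorem~\ref{theoremInductiveHomotopyType}: given $F \in \F_\B(L)$ (a $p$-outer configuration) and $B \in \N_\B(L)$ with $C_{B\cap L}(F) > 1$, I need $C_B(F)$ and $C_B(F)F$ to still lie in $\Imageposet_{G,L}$ — this should follow from the explicit description of the image-poset elements together with the fact that $F$ centralizes the nontrivial subgroup $C_{B \cap L}(F) \leq L$, hence normalizes $L$, so products of the relevant form remain of image-poset shape.

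Granting hypothesis~(i), Theorem~\ref{theoremInductiveHomotopyType} gives an $N_{\overline{G}}(L, \B)$-homotopy equivalence $\alpha_{\B,L} : \B \to X_\B(L) = \Ap(L) \cup \F_\B(L)$, and by part~(2) of that theorem, for each $F \in \F_\B(L)$ the relevant link retracts to $\Ap(C_L(F))$. Here is where the conical hypothesis enters: every $F \in \F_\B(L) = \Outposet_G(L)$ (more precisely its image) satisfies $O_p(C_L(F)) > 1$ by assumption, so $\Ap(C_L(F))$ is contractible by Quillen's Proposition~\ref{homotopyEquivalenPosets} (equivalently Example~\ref{ex:zigzagconical}). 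Then Lemma~\ref{lemmaReconstructionFromSubposet}(3) — applied with $\B$ in place of $\Ap(K)$, or more safely a direct application of the Quillen fiber-Theorem~\ref{variantQuillenFiber} to the inclusion $\Ap(L) \hookrightarrow X_\B(L)$ using the contractibility of these links — shows $\Ap(L) \hookrightarrow X_\B(L)$ is a homotopy equivalence. Composing with $\alpha_{\B,L}$ (which restricts to the identity on $\Ap(L)$ by part~(1)) yields that $\Ap(L) \hookrightarrow \Imageposet_{G,L}$ is a homotopy equivalence, as desired.

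For the ``in particular'' statement: assume $L$ is a component of $G$ with $p \mid |L|$ and $C_G(\hat{L})$ satisfying (H-QC). I would invoke Theorem~\ref{PStheorem} with $\A = \Imageposet_{G,L}$. Its hypothesis~(1) holds by assumption. For hypothesis~(2), I need the induced map $\Ap(L) \to \Imageposet_{G,L}$ to be nonzero in homology; but the equivalence just established makes it an \emph{isomorphism} in homology, and $\tilde{H}_*(\Ap(L)) \neq 0$ by the almost-simple case Theorem~\ref{almostSimpleQC} (applied to $L$, or to $\Aut(L)$ and pulled back — but here $\Ap(L)$ itself is a wedge of spheres of positive dimension since $p \mid |L|$ and $L$ is simple, so its reduced homology is nonzero). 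Hence hypothesis~(2) holds, and Theorem~\ref{PStheorem} gives (H-QC) for $G$. The main obstacle I anticipate is the verification that $\Imageposet_{G,L}$ satisfies condition~(i) of Theorem~\ref{theoremInductiveHomotopyType} — i.e.\ that it is suitably closed under forming centralizers and products $C_B(F)F$ — since $\Imageposet_{G,L}$ is not one of the ``standard'' posets $\Ap(K)$ for which that theorem's conclusion is automatic; this requires carefully unwinding the description of image-poset elements in Remark~\ref{remarkImagePoset} and checking the relevant subgroups of $\Aut_G(L)$ still have trivial intersection with the image of $LC_G(L)$ or land back in the image poset.
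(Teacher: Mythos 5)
Your proposal follows essentially the same path as the paper: take $\B := \Imageposet_{G,L}$, verify hypothesis~(i) of Theorem~\ref{theoremInductiveHomotopyType} to obtain $\B \simeq X_\B(L)$, use the conical hypothesis $O_p(C_L(B)) > 1$ together with the Quillen fiber theorem to collapse $X_\B(L)$ onto $\Ap(L)$, and then feed the resulting homology isomorphism into Theorem~\ref{PStheorem}. Two small remarks: the paper carries out the verification of hypothesis~(i) that you leave sketched, by lifting $E \in \N_\B(L)$ to a preimage $F \in \Ap(N_G(L))$ and computing $C_E(B) = C_F(D)C_G(L)/C_G(L)$ and $C_E(B)B = (C_F(D)D)C_G(L)/C_G(L)$; and your parenthetical claim that $\Ap(L)$ is a wedge of spheres for arbitrary simple $L$ with $p \mid |L|$ is false in general — the nonvanishing of $\tilde{H}_*(\Ap(L))$ only comes from the Aschbacher--Kleidman almost-simple result (Theorem~\ref{almostSimpleQC}), which you do cite correctly.
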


\begin{proof}
Since $L$ is simple, we can regard~$\Ap(L) \subseteq \Imageposet_{G,L} \subseteq \Ap(\Aut(L))$.
Let~$\B:= \Imageposet_{G,L}$.
Then:
  \[ \F_\B(L) = \{ D C_G(L)/C_G(L) \tq D \in \Outposet_G(L)\} = \Outposet_{\Aut(L)}(L)\cap \Imageposet_{G,L} , \]
using Remark~\ref{remarkImagePoset}, again since~$Z(L)=1$ by the hypothesis of simplicity of~$L$.
Fix~$D \in \Outposet_G(L)$, and let~$B$ be the image of~$D$ in~$\B$.
Suppose that~$E\in \N_\B(L)$ is such that~$C_{E \cap L}(B) > 1$.
We may pick some~$F\in \Ap(N_G(L))$ with $FC_G(L)/C_G(L) = E$.
In particular, $F \cap L > 1$.
Then it is straightforward to verify that:
 \[ C_E(B)  = C_F(D) C_G(L) / C_G(L) \in \B , \]
 \[ C_E(B)B = (C_F(D)D) C_G(L)  /C_G(L) \in \B . \]
Therefore $\B$ satisfies the requirements of Theorem~\ref{theoremInductiveHomotopyType},
so we conclude that we have a homotopy equivalence~$\B \simeq X_\B(L) = \Ap(L) \cup \F_\B(L)$.

Now if~$B \in \F_\B(L)$, then~$O_p \bigl( C_L(B) \bigr) > 1$ by our hypothesis on such~$B$.
Then we have:
  \[ X_\B(L)_{>B} \cap \Ap(L) = \{ E \in \Ap(L) \tq C_E(B) > 1 \}  \simeq \Ap \bigl( C_L(B) \bigr) \simeq * . \]
Therefore~$X_\B(L)$ collapses to the subposet~$\Ap(L)$; that is, the inclusion~$\Ap(L) \hookrightarrow X_\B(L)$
is a homotopy equivalence by the Quillen fiber-Theorem~\ref{variantQuillenFiber}.
Since the equivalence~$\alpha_{\B,L}:\B\to X_\B(L)$ is the identity when restricted to $\Ap(L)$
by Theorem \ref{theoremInductiveHomotopyType}(1), we conclude that $\Ap(L)\hookrightarrow \B$ is also a homotopy equivalence.
This finishes the proof of the first part.

The ``In particular" part follows from Theorem~\ref{PStheorem},
since~$\Ap(L) \to \Imageposet_{G,L}$ is an isomorphism in homology,
and $\tilde{H}_* \bigl( \Ap(L) \bigr) \neq 0$ by the almost-simple case  of the Conjecture
(recall the result of~\cite{AK90} quoted as Theorem~\ref{almostSimpleQC}.)
\end{proof}

\bigskip
\bigskip

\end{document}